\documentclass[12pt]{article}

\usepackage[utf8]{inputenc}

\usepackage[dvipsnames]{xcolor}

\usepackage{algorithm, algorithmic}
\usepackage{amsmath, amssymb, amsthm}
\usepackage{bbm}
\usepackage[giveninits=true, maxnames=10, style=alphabetic]{biblatex}
\usepackage{cancel}
\usepackage{caption}
\usepackage{enumitem}
\usepackage{euscript}
\usepackage{float}
\usepackage{geometry}
\usepackage{graphicx}
\usepackage{letltxmacro}
\usepackage{libertine}
\usepackage{mathrsfs}
\usepackage{mathtools}
\usepackage{mdframed}
\usepackage{microtype}
\usepackage[libertine]{newtxmath}
\usepackage{thmtools}
\usepackage{tikz}
\usepackage[Symbolsmallscale]{upgreek}

\usepackage{sinhomath}

\usepackage{hyperref}

\hypersetup{citecolor=YBlue, colorlinks=true, linkcolor=YBlueA}

\DeclareMathOperator{\thresh}{\msf{thresh}}

\allowdisplaybreaks{}

\title{Lectures on Optimization}
\author{Sinho Chewi}
\date{\today}

\mdfdefinestyle{mdcolored}{backgroundcolor=YBlue!10, leftmargin=0em, rightmargin=0em, hidealllines=true, innertopmargin=0em}
\mdfdefinestyle{mdcoloredtwo}{backgroundcolor=YBlueA!10, leftmargin=0em, rightmargin=0em, hidealllines=true, innertopmargin=0em}

\declaretheoremstyle[headfont=\color{YBlue}\normalfont\bfseries, spaceabove=\topsep{}, mdframed={style=mdcolored}, spacebelow=\topsep{}]{colored}
\declaretheoremstyle[headfont=\color{YBlueA}\normalfont\bfseries, spaceabove=\topsep{}, mdframed={style=mdcoloredtwo}, spacebelow=\topsep{}]{coloredtwo}
\declaretheoremstyle[headfont=\color{YBlue}\normalfont\bfseries, spaceabove=\topsep{}, spacebelow=\topsep{}]{question}

\declaretheorem[name=Theorem, numberwithin=section, style=colored]{thm}

\declaretheorem[name=Corollary, sibling=thm, style=colored]{cor}
\declaretheorem[name=Definition, sibling=thm, style=colored]{defn}
\declaretheorem[name=Example, sibling=thm, style=coloredtwo]{ex}
\declaretheorem[name=Exercise, numberwithin=section, style=question]{question}
\declaretheorem[name=Lemma, sibling=thm, style=colored]{lem}
\declaretheorem[name=Proposition, sibling=thm, style=colored]{prop}
\declaretheorem[name=Remark, sibling=thm, style=colored]{rmk}

\numberwithin{equation}{section}

\begin{document}

\maketitle

\tableofcontents{}

\newpage

\section{Introduction and basics of convex functions}

These lecture notes accompany S\&DS 4320/6320 (Advanced Optimization Techniques), taught at Yale University in Spring 2025 and Spring 2026.
They are not meant to be comprehensive.

The notes are primarily based on the books~\cite{Bub15CvxOpt, Beck17FirstOrder, Nes18CvxOpt}, as well as my personal understanding of the subject formed through discussions with many people over the years.
Please send me feedback via email.
I thank Linghai Liu, Leda Wang, Ruixiao Wang, Ilias Zadik, and Matthew S.\ Zhang for corrections.

\paragraph*{Audience.}
This course focuses on the theory of optimization.
In particular, the course is \textbf{mathematical} in nature and taught in a theorem{--}proof format.
The course assumes familiarity with basic proofs and logical reasoning, as well as linear algebra, multivariate calculus, and probability theory.

The reader should also be familiar with asymptotic notions (big-$O$ notation).
We use the shorthand notation $a \lesssim b$ (resp.\ $a \gtrsim b$) to mean that $a \le Cb$ (resp.\ $a \ge b/C$) for an absolute constant $C > 0$ (i.e., a constant that does not depend on other parameters of the problem), and $a \asymp b$ to mean that both $a \lesssim b$ and $a \gtrsim b$ hold.
We use $a = O(b)$ and $a \lesssim b$ interchangeably.

\subsection{Overview of the course}\label{ssec:overview}

The basic problem of optimization is to compute an approximate minimizer of a given function $f : \eu X \to \R$.
In this course, $\eu X$ is always taken to be a subset of $\R^d$, although generalizations are possible (e.g., to manifolds).

\paragraph*{Black-box optimization and the oracle model.}
What does it mean to ``compute''?
The answer depends on the representation of $f$ and our model of computation.
We start by studying \emph{black-box optimization}.
In this model, we presume that we can \emph{evaluate} $f$, and possibly its derivatives, at any chosen point $x \in \eu X$.

The advantage of the black-box model is that it applies very \emph{generally}: it is difficult to find situations in which we need to optimize a function but we cannot even evaluate it!
Consequently, algorithms developed in this model can be applied to the majority of problems encountered in practice\footnote{There is a caveat: in this course, we solely consider continuous optimization problems. Combinatorial optimization is an entirely different beast.}{---}witness the ubiquity of gradient descent.

The disadvantage is that by its very generality, it cannot take advantage of additional structural information about $f$ which can bring computational savings.
That is why, later in the course, we turn toward the study of \emph{structured} optimization problems.

It is easy, at least at an intuitive level, to describe algorithms which are valid in the black-box model. Namely, they are algorithms which only ``interact'' with $f$ through evaluations of $f$ and its derivatives.
The existence of an algorithm, together with a corresponding mathematical analysis of the number of iterations to reach an approximate minimizer contingent upon assumptions on $f$, provide an \emph{upper bound} on the complexity of the optimization task.
In this course, we are also interested in \emph{lower bounds}, which delineate fundamental limitations encountered by \emph{any} algorithm.
In order to prove such a lower bound, we need to formalize the notion of ``interaction'' alluded to above, and this leads to the important concept of an \emph{oracle}.

First, observe that it does not make sense to discuss the complexity of optimizing a \emph{single} function $f$.
For if $x_\star$ is the minimizer of $f$, we can consider the algorithm ``output $x_\star$'', which yields the correct answer in one iteration.
But this algorithm is silly, since it utterly fails at optimizing any other function whose minimizer does not happen to be $x_\star$.
Reflecting upon this situation, we do not consider an optimization algorithm to be sensible when it happens to succeed for one particular problem; rather, we expect it to succeed on many similar problems.
Hence, we talk about a \emph{class} of functions $\mc F$ of interest, and we require our algorithms to succeed on \emph{every} $f \in \mc F$.

The algorithm is designed to succeed on $\mc F$ and thus, in an anthropomorphic sense, it ``knows'' $\mc F$.
However, it does not know which particular $f \in \mc F$ it is trying to optimize.
(If it possessed knowledge of $f$, then we run into the issue from before, namely it could simply output the minimizer.)
The role of the oracle is to act as an intermediary between the algorithm and the function.
Namely, we assume that the algorithm is allowed to ask certain questions (``queries'') to the oracle for $f$, and this is the only means by which the algorithm can gather more information about $f$.
The allowable queries and responses determine the nature of the oracle, e.g.:
\begin{itemize}
    \item a \textbf{zeroth-order oracle} accepts a query point $x\in\R^d$ and outputs $f(x)$;
    \item a \textbf{first-order oracle} accepts a query point $x\in\R^d$ and outputs $(f(x),\nabla f(x))$.
\end{itemize}
Most of the course focuses on optimization with a first-order oracle, but other oracles are possible (e.g., \emph{linear optimization oracles} and \emph{proximal oracles}).
The zeroth-order and first-order oracles are easy to justify, as they correspond to the black-box model described above.
As the oracles become more exotic, it becomes necessary to show that they are reasonable, by describing important applications in which such access to $f$ is feasible.

The \emph{query complexity} of $\mc F$ for a particular choice of oracle, as a function of the prescribed tolerance $\varepsilon$, is then (informally) defined to be the minimum number $N$ such that there exists an algorithm which, for any $f \in \mc F$, makes $N$ queries to the oracle for $f$ and outputs a point $x$ with $f(x) - \min f \le \varepsilon$.

It is worth noting that query complexity is not the same as computational complexity.
Indeed, query complexity only counts the number of interactions with the oracle, and the algorithm is allowed to perform unlimited computations between interactions.
In principle, this could lead to a situation in which query complexity is wholly unrepresentative of the true computational cost of optimization{---}this would be the case if optimal algorithms in the oracle model were contrived and impractical.
Thankfully, this is not the case.
The oracle model is widely adopted as the standard model for optimization because it is the setting in which we can make precise claims about complexity, and because it generally aligns with optimization in practice.

This summarizes the conceptual framework for optimization theory{---}the ``identity cards of the field''~\cite{Nes18CvxOpt}, although a careful treatment of the framework only becomes necessary when discussing lower bounds (and hence we elaborate on the details then).
As a branch of mathematics, the theory of optimization could be defined as the quest to characterize the query complexity of various classes $\mc F$, under various oracle models, and thereby identify optimal algorithms.
This indeed remains a core element of the field, but as query complexity reaches maturity, research has shifted toward different types of questions, often inspired by practical developments.

\paragraph*{The role of convexity.}
In order to optimize efficiently, we need to place assumptions on $f$, ideally minimal ones.
For example, we can assume that $f$ is continuous.
In this course, however, we are interested in \emph{quantitative} rates of convergence for algorithms, and for this purpose, a \emph{qualitative} assumption such as continuity is not enough.
A quantitative form of continuity is to assume that $f$ is $L$-\emph{Lipschitz in the $\ell_\infty$ norm}:
\begin{align}\label{eq:linf_Lip}
    \abs{f(x) - f(y)} \le L\max_{i\in [d]}{\abs{x[i]-y[i]}} \qquad\text{for all}~x,y\in\eu X\,.
\end{align}
Also, for concreteness, let us take $\eu X$ to be the cube, $\eu X = {[0,1]}^d$.
In the language of the framework above, we consider the class
\begin{align}\label{eq:Lip_class}
    \mc F
    &= \{f : {[0,1]}^d \to\R \mid f~\text{satisfies}~\eqref{eq:linf_Lip}\}\,.
\end{align}
One can then prove the following negative result.

\begin{thm}
    For any $0 < \varepsilon < L/2$ and any deterministic algorithm, the complexity of $\varepsilon$-approximately minimizing functions in the class defined in~\eqref{eq:Lip_class} to within $\varepsilon$ using a zeroth-order oracle is at least $\lfloor \frac{L}{2\varepsilon} \rfloor^d$.
\end{thm}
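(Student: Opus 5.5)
The plan is a resisting-oracle (adversary) argument against a grid of small cubes. Set $K = \lfloor \frac{L}{2\varepsilon} \rfloor$, which is at least $1$ since $\varepsilon < L/2$. Partition ${[0,1]}^d$ into $K^d$ closed subcubes of side length $1/K$, and write $c_1,\dots,c_{K^d}$ for their centers. Suppose a deterministic algorithm makes $N < K^d$ queries. I will build a function $f \in \mc F$ that it fails to optimize to accuracy $\varepsilon$.

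The adversary answers every query with the value $0$. The algorithm is deterministic, so against the constant function $f_0 \equiv 0$ it produces a fixed sequence of query points $x_1,\dots,x_N$ and a fixed output point $\hat x$. These $N+1$ points lie in at most $N+1 \le K^d$ subcubes. Since $N < K^d$, the $N$ query points alone miss at least one subcube. The output point also needs handling, which I address below. Pick a subcube $Q$, with center $c$, that contains none of the query points in its interior.

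Define the bump
\begin{align*}
    f(x) = -\bigl(L/(2K) - L\,\norm{x - c}_\infty\bigr)_+\,.
\end{align*}
This function is $L$-Lipschitz in $\ell_\infty$, because it is the negative part of $L$ times a distance to a point, truncated. It vanishes outside the open $\ell_\infty$-ball of radius $1/(2K)$ around $c$, which is exactly the interior of $Q$. Hence $f(x_i) = 0 = f_0(x_i)$ for every query. By determinism, the algorithm run on $f$ issues the same queries, receives the same answers, and outputs the same $\hat x$.

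Now compare values. We have $\min f = -L/(2K) \le -\varepsilon$, because $K \le L/(2\varepsilon)$. The algorithm must therefore output a point with $f(\hat x) \le \min f + \varepsilon$.

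The main obstacle is the boundary case: we need this to fail. If $\hat x \notin \operatorname{int} Q$, then $f(\hat x) = 0 > -L/(2K) + \varepsilon$ provided $L/(2K) > \varepsilon$, and the algorithm fails. Two issues remain, the equality case $L/(2K) = \varepsilon$ and the case where $\hat x$ lands in $Q$.

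To handle both, I will treat the output as an extra query. That is, I count $N$ queries plus one output against $K^d$ cubes, or equivalently I assume the algorithm must certify by querying its output. Then $N+1 \le K^d$ points leave an empty open cube $Q$ whenever $N < K^d$, possibly after the convention that the output counts. For the non-strict inequality I will shrink the bump radius slightly, or else read the statement with strict accuracy. Concretely, I pick $\varepsilon' \in (\varepsilon, L/(2K)]$ when $L/(2K) > \varepsilon$. In the equality case I will note that the bound is interpreted as requiring $f(\hat x) - \min f < \varepsilon$, or I will argue with the bump height $L/(2K)$ and a slightly enlarged support, which remains inside ${[0,1]}^d$ up to truncation.

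Pinning down this convention cleanly is the delicate step. The combinatorial pigeonhole step and the Lipschitz verification are routine.
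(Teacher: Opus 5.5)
Your construction is the standard resisting-oracle argument, the one in Nesterov's Theorem 1.1.2, which the notes cite instead of proving. You answer $0$ to every query, find a grid cell the queries miss, and hide an $\ell_\infty$-cone there. The Lipschitz check and the pigeonhole step are fine. The final step, which you rightly flag as delicate, does not close, and as written it contains an off-by-one error.

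It is not true that $N+1$ points leave an empty open cell whenever $N<K^d$. With $N=K^d-1$ queries plus a free output, you can place one point in the interior of every cell. So counting the output as an extra point only gives a bound of $K^d-1$. The convention that gives $K^d$ is that the output is one of the $N$ queried points. This is implicit in Nesterov's proof, where the method "can only find $\bar x$ with $f(\bar x)=0$". Under that convention only $N<K^d$ points must be avoided, and $f(\hat x)=0$ automatically. The two conventions you call "equivalent" are not.

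The equality case, where $L/(2\varepsilon)$ is an integer and so $L/(2K)=\varepsilon$, is also left unresolved, and none of your three remedies works:
\begin{enumerate}
    \item Shrinking the bump pushes its depth below $\varepsilon$, which is the wrong direction.
    \item The choice $\varepsilon'\in(\varepsilon,L/(2K)]$ only exists when $L/(2K)>\varepsilon$, which is the case with no problem.
    \item Enlarging the support beyond the cell can make $f$ nonzero at a query. Pigeonhole only guarantees that the \emph{open} cell is empty, and queries may sit on its boundary or just outside.
\end{enumerate}
The missing idea is to replace the fixed grid with a volume argument. The $N<K^d$ closed $\ell_\infty$-balls of radius $1/(2K)$ around the queries have total volume at most $NK^{-d}<1$. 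Hence some $c\in[0,1]^d$ satisfies $r=\min_i\norm{c-x_i}_\infty>1/(2K)\ge\varepsilon/L$. Then $f(x)=-L\,(r-\norm{x-c}_\infty)_+$ is $L$-Lipschitz, vanishes at every query and hence at $\hat x$, and has $\min f=-Lr<-\varepsilon$. So the algorithm fails strictly, whether or not $L/(2\varepsilon)$ is an integer.

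Alternatively, you may read the theorem as requiring accuracy strictly better than $\varepsilon$. That is what Nesterov's argument, with a bump of depth exactly $\varepsilon$, actually proves. If you take that route, commit to it explicitly rather than listing it as one option among several.
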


Thus, for $\varepsilon < L/2$, the complexity grows \emph{exponentially} with the dimension.
The proof is not difficult; see, e.g.,~\cite[Theorem 1.1.2]{Nes18CvxOpt}.
It is also robust: variants of the result can be proven when the notion of Lipschitzness is w.r.t.\ the $\ell_2$ norm; when the oracle is taken to be a first-order oracle; when the algorithm is allowed to be randomized; etc.
The message is clear: in order for optimization to be tractable in the worst case, we must impose some structural assumptions.

The black-box oracles we have been considering are \emph{local} in nature: given a query point $x \in \R^d$, the oracle reveals some information about the behavior of $f$ in a local neighborhood of $x$.
Assumptions such as Lipschitzness effectively govern how large this local neighborhood is.
But ultimately, to render optimization tractable, we must ensure that local information yields global consequences.
As justified in the next subsection, a key assumption that makes this possible is \emph{convexity}.

Of course, not every problem is convex, and non-convex optimization often still succeeds.
But for the purpose of understanding the core principles underlying optimization, there is no better starting place.
It is important to remember that convex problems abound in every application domain; here, we give two classical examples from statistics.

\begin{ex}[logistic regression]\label{ex:logistic_regression}
    The data consists of $n$ pairs $(X_i,Y_i) \in \R^d\times \{0,1\}$, where $X_i$ is a vector of covariates and $Y_i$ is a binary response.
    The statistical model assumes that the pairs are independently drawn, the covariates are deterministic, and $Y_i$ has a Bernoulli distribution with parameter $\exp(\langle \theta, X_i \rangle)/\{1+\exp(\langle \theta, X_i\rangle)\}$.
    The goal is to infer the parameter $\theta$.

    The maximum likelihood estimator (MLE) for this model is the solution to the convex optimization problem
    \begin{align*}
        \widehat \theta_{\rm MLE}
        &\in \argmin_{\theta\in\R^d} \frac{1}{n} \sum_{i=1}^n \bigl(\log(1+\exp{\langle\theta, X_i\rangle}) - Y_i\,\langle \theta, X_i \rangle\bigr)\,.
    \end{align*}
\end{ex}

\begin{ex}[LASSO]\label{ex:LASSO}
    The data consists of $n$ pairs $(X_i,Y_i)\in\R^d\times \R$.
    The statistical model assumes that the pairs are independently drawn, and that $Y_i = \langle \theta, X_i \rangle + \xi_i$, where the $\xi_i$'s are i.i.d.\ noise variables independent of the $X_i$'s.
    When the parameter $\theta$ is assumed to be sparse, it is standard to use the LASSO estimator, which is the solution to the convex optimization problem
    \begin{align*}
        \widehat \theta_{\rm LASSO} = \argmin_{\theta\in\R^d}{\Bigl\{ \frac{1}{2n}\sum_{i=1}^n {(Y_i - \langle \theta, X_i \rangle)}^2 + \lambda\,\norm \theta_1\Bigr\}}\,.
    \end{align*}
    Here, $\lambda > 0$ is the regularization parameter and $\norm \cdot_1$ denotes the $\ell_1$ norm, defined via $\norm \theta_1 \deq \sum_{i=1}^d \abs{\theta[i]}$.
\end{ex}

In these examples, the estimator is defined as the solution to a convex problem which is not solvable in closed form, necessitating the use of numerical optimization.
Actually, it is not that most problems in the ``wild'' are convex and hence there was a need to develop convex optimization.
In fact, it often goes the other way around: convex optimization is such a powerful tool that problems are intentionally formulated to be convex.
This is the case for the LASSO estimator, which can be motivated as a convex relaxation of the (statistically superior) $\ell_0$-constrained least-squares estimator.

\paragraph*{First-order methods.}
This course largely focuses on first-order methods, namely, gradient descent and its variants.
This class of methods is natural from the perspective of the theory.
Equally importantly, first-order methods are lightweight and therefore scalable to large problem sizes, making them the method of choice even for highly non-convex settings which fall squarely outside of the theory.

\paragraph*{Beyond the black-box model.}
After developing results for the black-box model, we study structured problems which admit more efficient solutions.
The LASSO estimator of~\autoref{ex:LASSO} can be treated as a ``composite'' optimization problem (a sum of a smooth and a non-smooth function), and the estimators in both~\autoref{ex:logistic_regression} and~\autoref{ex:LASSO} (and empirical risk minimization more generally) are ``finite sum'' problems whose computation can be sped up via the use of stochastic gradients.
Other examples include the use of alternative geometries (mirror descent) and the use of coordinate-wise structure (alternating minimization/coordinate descent).

We also study interior-point methods, which are a practically effective suite of algorithms which solve linear programs (LPs) and semidefinite programs (SDPs) with polynomial iteration complexities.

Further topics are considered as time permits.

\subsection{Preliminaries on convexity and smoothness}\label{ssec:prelim}

We assume familiarity with the basic notion of convexity, and we briefly review it here.

\begin{defn}
    A subset $\eu C \subseteq \R^d$ is \textbf{convex} if for all $x,y\in\eu C$ and all $t\in [0,1]$, the point $(1-t)\,x+t\,y$ also lies in $\eu C$.
\end{defn}

\begin{defn}
    Let $\eu C$ be convex and let $\alpha \ge 0$.
    A function $f : \eu C\to\R$ is \textbf{$\alpha$-convex} if for all $x,y\in\eu C$ and all $t\in [0,1]$,
    \begin{align}\label{eq:str_cvx}
        f((1-t)\,x+t\,y)
        \le (1-t)\,f(x) + t\,f(y) - \frac{\alpha}{2}\,t\,(1-t)\,\norm{y-x}^2\,.
    \end{align}
\end{defn}

When $\alpha = 0$, this is just the usual definition of a convex function.
When $\alpha > 0$, we say that the function is \emph{strongly} convex.

The definition above has the advantage that it does not require $f$ to be differentiable.
However, for the purposes of checking and utilizing convexity, it is convenient to have the following equivalent reformulations, which should be committed to memory.
For simplicity, we focus on the case $\eu C = \R^d$.

\begin{prop}[convexity equivalences]\label{prop:cvx_equiv}
    Let $\eu C = \R^d$ and $\alpha\ge 0$.
    \begin{enumerate}
        \item If $f$ is continuously differentiable,~\eqref{eq:str_cvx} is equivalent to each of the following:
            \begin{equation}\label{eq:str_cvx_1}
                f(y) \ge f(x) + \langle \nabla f(x), y-x \rangle + \frac{\alpha}{2}\,\norm{y-x}^2 \qquad\text{for all}~x,y\in\R^d\,.
            \end{equation}
            \begin{equation}\label{eq:str_cvx_2}
                \langle \nabla f(y) - \nabla f(x), y-x\rangle \ge \alpha\,\norm{y-x}^2 \qquad\text{for all}~x,y\in\R^d\,.
            \end{equation}
        \item If $f$ is twice continuously differentiable,~\eqref{eq:str_cvx} is equivalent to
            \begin{align}\label{eq:str_cvx_3}
                \langle v, \nabla^2 f(x)\,v\rangle \ge \alpha\,\norm v^2 \qquad\text{for all}~v,x\in\R^d\,.
            \end{align}
    \end{enumerate}
\end{prop}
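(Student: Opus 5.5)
The plan is to reduce everything to the convex case $\alpha = 0$ by subtracting the quadratic. Set $g \deq f - \frac{\alpha}{2}\,\norm{\cdot}^2$. A direct expansion, using the identity
\begin{align*}
    \norm{(1-t)\,x + t\,y}^2 = (1-t)\,\norm x^2 + t\,\norm y^2 - t\,(1-t)\,\norm{y-x}^2\,,
\end{align*}
shows that $f$ satisfies~\eqref{eq:str_cvx} if and only if $g$ is convex. The same substitution turns~\eqref{eq:str_cvx_1}, \eqref{eq:str_cvx_2}, \eqref{eq:str_cvx_3} into their $\alpha = 0$ versions for $g$, because $\nabla g = \nabla f - \alpha\,\mathrm{id}$ and $\nabla^2 g = \nabla^2 f - \alpha I$. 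Each of these is a one-line algebraic check. So it suffices to prove the equivalences when $\alpha = 0$.

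For the convex case I will prove the cycle \eqref{eq:str_cvx} $\Rightarrow$ \eqref{eq:str_cvx_1} $\Rightarrow$ \eqref{eq:str_cvx_2} $\Rightarrow$ \eqref{eq:str_cvx}.

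\textbf{Convexity implies the first-order condition.} Rearrange convexity as
\begin{align*}
    \frac{f(x+t\,(y-x)) - f(x)}{t} \le f(y) - f(x)
\end{align*}
and let $t \downarrow 0$; the left side tends to $\langle \nabla f(x), y-x\rangle$.

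\textbf{The first-order condition implies monotonicity.} Write~\eqref{eq:str_cvx_1} with $x, y$ in both orders and add the two inequalities.

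\textbf{Monotonicity implies convexity.} This is the step needing the most care. Let $z_t \deq (1-t)\,x + t\,y$ and $\varphi(t) \deq f(z_t)$, so that $\varphi'(t) = \langle \nabla f(z_t), y-x\rangle$. For $s < t$, applying~\eqref{eq:str_cvx_2} to the pair $z_s, z_t$ gives $(t-s)\,(\varphi'(t) - \varphi'(s)) \ge 0$. Hence $\varphi'$ is nondecreasing and $\varphi$ is a convex function of one variable. To avoid invoking one-dimensional facts, I will derive this directly: by the mean value theorem there are $\xi_1 \in (0,t)$ and $\xi_2 \in (t,1)$ with
\begin{align*}
    \varphi(t) - \varphi(0) = t\,\varphi'(\xi_1)\,, \qquad \varphi(1) - \varphi(t) = (1-t)\,\varphi'(\xi_2)\,.
\end{align*}
Since $\varphi'(\xi_1) \le \varphi'(\xi_2)$, combining these gives $\varphi(t) \le (1-t)\,\varphi(0) + t\,\varphi(1)$, which is~\eqref{eq:str_cvx}.

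For part 2, the plan is to show \eqref{eq:str_cvx_2} $\Leftrightarrow$ \eqref{eq:str_cvx_3} at $\alpha = 0$.

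\textbf{Monotonicity implies the Hessian condition.} Take $y = x + s\,v$, divide~\eqref{eq:str_cvx_2} by $s^2$, and let $s \to 0$. Continuity of $\nabla^2 f$ gives $(\nabla f(x+s\,v) - \nabla f(x))/s \to \nabla^2 f(x)\,v$, and hence $\langle v, \nabla^2 f(x)\,v\rangle \ge 0$.

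\textbf{The Hessian condition implies monotonicity.} Use the fundamental theorem of calculus along the segment:
\begin{align*}
    \langle \nabla f(y) - \nabla f(x), y-x\rangle = \int_0^1 \langle y-x, \nabla^2 f(x+t\,(y-x))\,(y-x)\rangle \, \mathrm{d} t \ge 0\,.
\end{align*}

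The main obstacle is only bookkeeping: checking that the quadratic shift preserves each condition exactly with the right constants, and justifying the limits, which is where continuous differentiability is used.
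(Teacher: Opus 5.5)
Your proof is correct, but it is organized differently from the paper's in two respects. First, the paper never subtracts the quadratic: it runs the same cycle \eqref{eq:str_cvx} $\Rightarrow$ \eqref{eq:str_cvx_1} $\Rightarrow$ \eqref{eq:str_cvx_2} $\Rightarrow$ \eqref{eq:str_cvx} with $\alpha$ carried along throughout. Second, for the hard step \eqref{eq:str_cvx_2} $\Rightarrow$ \eqref{eq:str_cvx}, the paper does not restrict to a line and use the mean value theorem on a nondecreasing $\varphi'$. Instead it writes both $f(y)$ and $f((1-t)\,x+t\,y)$ as line integrals of $\nabla f$ starting from $x$. It then applies \eqref{eq:str_cvx_2} pointwise to the pair $x+sv$, $x+stv$, whose difference is $s\,(1-t)\,v$, so the modulus $\frac{\alpha}{2}\,t\,(1-t)\,\norm v^2$ comes directly out of $t\int_0^1 \alpha s\,(1-t)\,\norm v^2\,\D s$.

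Your reduction to $g = f - \frac{\alpha}{2}\,\norm\cdot^2$ (essentially \autoref{qu:quad_strcvx}) makes the constants automatic and isolates the purely convex content. Your mean-value argument also needs only differentiability of $f$, not continuity of $\nabla f$; so your part~1 never actually uses the $C^1$ hypothesis, contrary to your closing remark. The price is that the reduction hinges on the identity $\norm{(1-t)\,x+t\,y}^2 = (1-t)\,\norm x^2 + t\,\norm y^2 - t\,(1-t)\,\norm{y-x}^2$, which holds only for inner-product norms. The paper's direct computation uses nothing beyond homogeneity of the norm, so it would go through verbatim for strong convexity with respect to an arbitrary norm.

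Part~2 is the same as in the paper.
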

\begin{proof}
    Assume that $f$ is continuously differentiable.

    {\eqref{eq:str_cvx}} $\Rightarrow$~\eqref{eq:str_cvx_1}: Rearranging~\eqref{eq:str_cvx} yields, for $t > 0$,
    \begin{align*}
        f(y) \ge f(x) + \frac{f((1-t)\,x+t\,y) - f(x)}{t} + \frac{\alpha\,(1-t)}{2}\,\norm{y-x}^2\,.
    \end{align*}
    Sending $t\searrow 0$ yields~\eqref{eq:str_cvx_1}.

    {\eqref{eq:str_cvx_1}} $\Rightarrow$~\eqref{eq:str_cvx_2}: Swap $x$ and $y$ in~\eqref{eq:str_cvx_1} and add the resulting inequality back to~\eqref{eq:str_cvx_1}.

    {\eqref{eq:str_cvx_2}} $\Rightarrow$~\eqref{eq:str_cvx}: By the fundamental theorem of calculus, for $v \deq y-x$,
    \begin{align*}
        f(y)
        &= f(x) + \int_0^1 \langle \nabla f(x+sv), v\rangle\,\D s\,, \\
        f((1-t)\,x+t\,y)
        &= f(x) + \int_0^1 \langle \nabla f(x+stv), tv \rangle\,\D s\,.
    \end{align*}
    Hence,~\eqref{eq:str_cvx_2} yields
    \begin{align*}
        f((1-t)\,x+t\,y) - (1-t)\,f(x) - t\,f(y)
        &= -t\int_0^1 \langle \nabla f(x+sv) - \nabla f(x+stv), v \rangle\,\D s \\
        &\le -t\int_0^1 \alpha s\,(1-t)\,\norm v^2\,\D s
        = - \frac{\alpha}{2}\,t\,(1-t)\,\norm v^2\,.
    \end{align*}

    Finally, assume that $f$ is twice continuously differentiable.
    Letting $y = x + \varepsilon v$ in~\eqref{eq:str_cvx_2} and sending $\varepsilon \searrow 0$ establishes~\eqref{eq:str_cvx_3}.
    Conversely, the fundamental theorem of calculus shows that
    \begin{align*}
        \langle \nabla f(y) - \nabla f(x), y-x\rangle
        &= \int_0^1 \langle \nabla^2 f(x + t\,(y-x))\,(y-x),\, y-x \rangle\,\D t\,,
    \end{align*}
    and hence~\eqref{eq:str_cvx_3} implies~\eqref{eq:str_cvx_2}.
\end{proof}

The equivalent statements each have their own interpretation: for $\alpha = 0$,~\eqref{eq:str_cvx} states that $f$ lies below each of its secant lines between the intersection points;~\eqref{eq:str_cvx_1} states that $f$ globally lies above each of its tangent lines;~\eqref{eq:str_cvx_2} states that $\nabla f$ is a monotone vector field; and~\eqref{eq:str_cvx_3} is a statement about curvature.

As noted above, the key feature of convexity is that local information yields global conclusions.
Before describing this, let us first recall some basic facts about optimization.
For simplicity, we consider unconstrained optimization throughout.

\begin{lem}[existence of minimizer]\label{lem:existence_minimizer}
    Let $f : \R^d\to\R$ be continuous and its level sets be bounded.
    Then, there exists a global minimizer of $f$.
\end{lem}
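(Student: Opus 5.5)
The plan is to reduce to the Weierstrass extreme value theorem: a continuous function on a nonempty compact subset of $\R^d$ attains its minimum. The idea is to restrict $f$ to one bounded sublevel set, which already contains every candidate for the minimum, and then minimize on that set.

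Concretely, I fix any point $x_0 \in \R^d$ and consider the sublevel set
\begin{align*}
    \eu S \deq \{x \in \R^d : f(x) \le f(x_0)\}\,.
\end{align*}
It is nonempty since $x_0 \in \eu S$, and it is bounded by hypothesis. I read ``level sets'' as sublevel sets $\{f \le c\}$, which is the standard meaning in this context. It is also closed, because it is the preimage of the closed set $(-\infty, f(x_0)]$ under the continuous map $f$. By Heine--Borel, $\eu S$ is therefore compact, so the extreme value theorem gives a point $x_\star \in \eu S$ with $f(x_\star) \le f(x)$ for all $x \in \eu S$.

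It remains to show that $x_\star$ is a \emph{global} minimizer. Take any $x \notin \eu S$. Then $f(x) > f(x_0) \ge f(x_\star)$, where the last inequality holds because $x_0 \in \eu S$. Together with the previous step, this gives $f(x_\star) \le f(x)$ for every $x \in \R^d$.

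None of these steps is a real obstacle. The only point needing care is the interpretation of ``level sets''. If it meant the exact sets $\{f = c\}$, the statement could fail: for example, $f = \arctan$ on $\R$ has bounded (singleton) level sets and no minimizer. So I will state explicitly that the hypothesis is about sublevel sets, and then the argument above goes through.
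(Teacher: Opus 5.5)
Your proof is correct and essentially the paper's argument: both restrict attention to the compact sublevel set $\{f \le f(x_0)\}$. The only difference is that the paper takes a minimizing sequence and extracts a convergent subsequence instead of citing the extreme value theorem, which amounts to proving that theorem inline; your reading of ``level sets'' as sublevel sets matches the paper's usage, and your check that the minimizer over this set is global spells out a step the paper leaves implicit.
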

\begin{proof}
    The proof uses some analysis.
    Let $x_0\in\R^d$ and let $\eu K \deq \{f \le f(x_0)\}$ denote the level set.
    By the continuity assumption, $\eu K$ is closed and bounded, thus compact.
    Let ${\{x_n\}}_{n\in\N}$ be a minimizing sequence, $f(x_n) \to \inf f$.
    By compactness, it admits a subsequence, still denoted ${\{x_n\}}_{n\in\N}$, which converges to some $x_\star\in\R^d$.
    By continuity, $f(x_\star) = \lim_{n\to\infty} f(x_n) = \inf f$.
\end{proof}

\begin{lem}[necessary conditions for optimality]\label{lem:necc_conditions}
    Let $f : \R^d\to\R$ be minimized at $x_\star$.
    \begin{enumerate}
        \item If $f$ is continuously differentiable, then $\nabla f(x_\star) = 0$.
        \item If $f$ is twice continuously differentiable, then $\nabla^2 f(x_\star) \succeq 0$.
    \end{enumerate}
\end{lem}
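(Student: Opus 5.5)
The plan is to reduce both claims to one-dimensional statements by restricting $f$ to lines through $x_\star$, and then use Taylor expansion along each line. Fix a direction $v \in \R^d$ and define $\varphi(t) \deq f(x_\star + tv)$ for $t \in \R$. Since $x_\star$ is a global minimizer of $f$, the scalar function $\varphi$ is minimized at $t = 0$, so $\varphi(t) \ge \varphi(0)$ for every $t$.

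For the first claim, assume $f$ is continuously differentiable. Then $\varphi$ is differentiable with $\varphi'(0) = \langle \nabla f(x_\star), v\rangle$. For $t > 0$ we have $(\varphi(t)-\varphi(0))/t \ge 0$, and letting $t \searrow 0$ gives $\varphi'(0) \ge 0$. For $t < 0$ the same difference quotient is $\le 0$, and letting $t \nearrow 0$ gives $\varphi'(0) \le 0$. Hence $\langle \nabla f(x_\star), v\rangle = 0$ for every $v$. Choosing $v = \nabla f(x_\star)$ yields $\norm{\nabla f(x_\star)}^2 = 0$.

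For the second claim, assume $f$ is twice continuously differentiable. By the first part, $\varphi'(0) = 0$, and $\varphi''(s) = \langle v, \nabla^2 f(x_\star + sv)\,v\rangle$. Taylor's theorem with integral remainder (the fundamental theorem of calculus applied twice, as in the proof of Proposition~\ref{prop:cvx_equiv}) gives
\begin{align*}
    0 \le \varphi(t) - \varphi(0) = t^2 \int_0^1 (1-s)\,\varphi''(st)\,\D s\,.
\end{align*}
Divide by $t^2$ and let $t \to 0$. By continuity of $\nabla^2 f$, the integrand converges uniformly on $[0,1]$ to $(1-s)\,\varphi''(0)$, so the limit is $\tfrac12\,\varphi''(0) \ge 0$. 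That is, $\langle v, \nabla^2 f(x_\star)\,v\rangle \ge 0$ for all $v$, which means $\nabla^2 f(x_\star) \succeq 0$.

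The only slightly delicate step is this last limit, which is exactly where continuity of the Hessian is used. If one prefers to avoid the limit, there is a contradiction argument. Suppose $\varphi''(0) < 0$. By continuity, $\varphi'' < 0$ on a neighborhood of $0$. Since $\varphi'(0) = 0$, this forces $\varphi(t) < \varphi(0)$ for small $t \neq 0$, contradicting minimality.
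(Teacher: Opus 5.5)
Your proof is correct and takes essentially the same approach as the paper. Both restrict $f$ to the line through $x_\star$ in direction $v$, write the increment via the fundamental theorem of calculus (once for the gradient, twice for the Hessian), divide by $\varepsilon$ or $\varepsilon^2$, and pass to the limit using continuity of $\nabla f$ or $\nabla^2 f$. Your $t^2\int_0^1 (1-s)\,\varphi''(st)\,\D s$ is just the paper's double integral after a change of variables, and your two one-sided difference quotients play the role of the paper's ``$\langle \nabla f(x_\star), v\rangle \ge 0$ for all $v$'' (apply it to $\pm v$).
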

\begin{proof}
    Let $v \in \R^d$ and $\varepsilon > 0$; then, $f(x_\star + \varepsilon v) - f(x_\star) \ge 0$.
    If $f$ is continuously differentiable, this yields $\int_0^1 \langle \nabla f(x_\star + \varepsilon tv), v \rangle\,\D t \ge 0$.
    By continuity of $\nabla f$, sending $\varepsilon \searrow 0$ proves that $\langle \nabla f(x_\star), v \rangle \ge 0$ for all $v\in\R^d$, which entails $\nabla f(x_\star) = 0$.

    If $f$ is twice continuously differentiable, we can expand once more to obtain $0 \le \int_0^1 t \int_0^1 \langle \nabla^2 f(x_\star + \varepsilon st v) \,v, v\rangle\,\D s \, \D t$.
    By continuity of $\nabla^2 f$, sending $\varepsilon \searrow 0$ then proves that $\langle \nabla^2 f(x_\star)\,v, v \rangle \ge 0$ for all $v\in\R^d$.
\end{proof}

The conditions $\nabla f(x_\star) = 0$, $\nabla^2 f(x_\star) \succeq 0$ are necessary for optimality, but not sufficient in general.
The issue is that the proof of~\autoref{lem:necc_conditions} is entirely local, so the same conclusion holds even if $x_\star$ is only assumed to be a \emph{local} minimizer.
On the other hand, under the assumption of convexity, the first-order necessary condition becomes sufficient.

\begin{lem}[sufficient condition for optimality]
    Let $f : \R^d\to\R$ be \emph{convex} and continuously differentiable, and let $\nabla f(x_\star) = 0$.
    Then, $x_\star$ is a global minimizer of $f$.

    In particular, every local minimizer of $f$ is a global minimizer.
\end{lem}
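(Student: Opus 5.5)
The plan is to use the first-order characterization of convexity from \autoref{prop:cvx_equiv} with $\alpha = 0$. Since $f$ is convex and continuously differentiable, inequality~\eqref{eq:str_cvx_1} gives, for every $y \in \R^d$,
\begin{align*}
    f(y) \ge f(x_\star) + \langle \nabla f(x_\star), y - x_\star\rangle = f(x_\star)\,,
\end{align*}
because $\nabla f(x_\star) = 0$. Thus $f(x_\star) \le f(y)$ for all $y$, so $x_\star$ is a global minimizer. This is the entire first claim; there is no real obstacle beyond invoking the equivalence \eqref{eq:str_cvx}~$\Leftrightarrow$~\eqref{eq:str_cvx_1}.

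For the second claim, let $x_\star$ be a local minimizer. The proof of \autoref{lem:necc_conditions} is purely local, as remarked after it: it only uses $f(x_\star + \varepsilon v) \ge f(x_\star)$ for small $\varepsilon > 0$. Hence it still gives $\nabla f(x_\star) = 0$. The first part then upgrades $x_\star$ to a global minimizer.

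The only subtlety is whether the second claim should also hold without differentiability, since convexity is defined in~\eqref{eq:str_cvx} without it. In that case I would argue directly from the definition. Suppose some $y$ had $f(y) < f(x_\star)$. Then for $t \in (0,1]$,
\[
f((1-t)\,x_\star + t\,y) \le (1-t)\,f(x_\star) + t\,f(y) < f(x_\star),
\]
and letting $t \searrow 0$ produces points arbitrarily close to $x_\star$ with strictly smaller value. This contradicts local minimality. I would include this short argument as a remark, so that the ``in particular'' statement holds for every convex $f$.
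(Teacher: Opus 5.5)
Your proof is correct and matches the paper's, which simply applies \eqref{eq:str_cvx_1} at $x = x_\star$. For the second claim the paper likewise relies on its remark that the proof of \autoref{lem:necc_conditions} only uses local minimality; your additional argument directly from \eqref{eq:str_cvx} is also valid and shows that ``local implies global'' does not need differentiability.
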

\begin{proof}
    This easily follows from~\eqref{eq:str_cvx_1} with $x = x_\star$.
\end{proof}

Next, we note that the minimizer is unique if $f$ is strictly convex.

\begin{lem}[uniqueness of minimizer]\label{lem:unique_min}
    Assume that $f : \R^d\to\R$ is strictly convex, i.e., for all distinct $x,y\in\R^d$ and $t\in (0,1)$, $f((1-t)\,x+t\,y) < (1-t)\,f(x) + t\,f(y)$.
    Then, if $f$ admits a minimizer $x_\star$, it is unique.
\end{lem}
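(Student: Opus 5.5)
I would argue by contradiction, using nothing beyond the definition of strict convexity in the statement. Suppose $x_\star$ minimizes $f$, and suppose there is a second minimizer $y_\star \neq x_\star$. Since both are global minimizers, $f(x_\star) = f(y_\star) = \min f$.

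The key step is to evaluate $f$ at the midpoint $z \deq \frac12\,x_\star + \frac12\,y_\star$, taking $t = 1/2 \in (0,1)$ in the definition. Strict convexity applies because $x_\star$ and $y_\star$ are distinct, and it gives
\begin{align*}
    f(z) < \tfrac12\,f(x_\star) + \tfrac12\,f(y_\star) = \min f\,.
\end{align*}
This contradicts the minimality of $x_\star$, since $z\in\R^d$ attains a strictly smaller value. Hence no second minimizer exists, and $x_\star$ is unique.

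There is no real obstacle here. The only point to check is that strict convexity is applied to a genuinely distinct pair of points with $t$ strictly inside $(0,1)$, which the choice $t = 1/2$ ensures. Differentiability is not needed, so none of the earlier equivalences in \autoref{prop:cvx_equiv} are required.
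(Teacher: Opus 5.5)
Your proof is correct and is essentially the paper's own argument: assume two distinct minimizers and apply strict convexity with $t = 1/2$ to get a value at the midpoint strictly below $\min f$, a contradiction.
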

\begin{proof}
    If we had two distinct minimizers $x_\star$, $\tilde x_\star$, so that $f(x_\star) = f(\tilde x_\star)$, then strict convexity would imply $f(\frac{1}{2}\,x_\star + \frac{1}{2}\,\tilde x_\star) < f(x_\star)$, which is a contradiction.
\end{proof}

If $f$ is strongly convex, then it is strictly convex.
Also, from, e.g.,~\eqref{eq:str_cvx_1}, we see that $f$ grows at least quadratically at $\infty$, which implies that it has bounded level sets.
We can therefore conclude:

\begin{cor}
    Let $f : \R^d\to\R$ be strongly convex and continuously differentiable.
    Then, it admits a unique minimizer $x_\star$, which is characterized by $\nabla f(x_\star) = 0$.
\end{cor}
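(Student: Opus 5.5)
The plan is to combine the three preceding lemmas, and the only real work is checking their hypotheses. Let $\alpha > 0$ be the strong convexity parameter.

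\emph{Existence.} Fix any $x_0 \in \R^d$ and apply~\eqref{eq:str_cvx_1} (valid by \autoref{prop:cvx_equiv}, since $f$ is continuously differentiable) with $x = x_0$. By Cauchy--Schwarz,
\begin{align*}
    f(y) \ge f(x_0) - \norm{\nabla f(x_0)}\,\norm{y-x_0} + \frac{\alpha}{2}\,\norm{y-x_0}^2\,.
\end{align*}
The right-hand side tends to $+\infty$ as $\norm{y-x_0}\to\infty$. Hence every level set $\{f \le c\}$ is contained in a ball around $x_0$, whose radius is the larger root of $\frac{\alpha}{2}\,r^2 - \norm{\nabla f(x_0)}\,r + f(x_0) - c = 0$ (or $0$ if there are no real roots), so every level set is bounded. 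Since $f$ is continuous, \autoref{lem:existence_minimizer} yields a global minimizer $x_\star$.

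\emph{Uniqueness.} Strong convexity implies strict convexity. For distinct $x,y$ and $t\in(0,1)$, the term $-\frac{\alpha}{2}\,t\,(1-t)\,\norm{y-x}^2$ in~\eqref{eq:str_cvx} is strictly negative, so the inequality is strict. Then \autoref{lem:unique_min} gives that $x_\star$ is unique.

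\emph{Characterization.} If $x_\star$ is the minimizer, then $\nabla f(x_\star) = 0$ by \autoref{lem:necc_conditions}. Conversely, if $\nabla f(x) = 0$, then $f$ is convex (an $\alpha$-convex function with $\alpha>0$ is $0$-convex, since the extra term is nonpositive), so the sufficient condition lemma shows $x$ is a global minimizer. By uniqueness, $x = x_\star$.

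There is no real obstacle here. The only step requiring care is the boundedness of the level sets, where the quadratic term must dominate the linear term; the explicit root bound above settles this.
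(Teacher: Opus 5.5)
Your proof is correct and follows the paper's own argument: the quadratic lower bound from \eqref{eq:str_cvx_1} gives bounded level sets and hence existence via \autoref{lem:existence_minimizer}, strong convexity gives strict convexity and hence uniqueness via \autoref{lem:unique_min}, and the first-order necessary and sufficient optimality conditions give the characterization. You supply the details the notes leave implicit, namely the Cauchy--Schwarz step and the explicit root bound on the radius of the level sets.
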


Finally, when discussing algorithms, we also need a dual condition{---}an \emph{upper} bound on the Hessian{---}which in this context is called \emph{smoothness}.\footnote{This is not to be confused with the mathematical usage of ``smoothness'' as ``infinitely differentiable''.}

\begin{defn}
    Let $\beta \ge 0$.
    We say that $f : \R^d\to\R$ is \textbf{$\beta$-smooth} if it is continuously differentiable and
    \begin{align}\label{eq:smooth}
        f(y) \le f(x) + \langle \nabla f(x), y-x \rangle + \frac{\beta}{2}\,\norm{y-x}^2\qquad\text{for all}~x,y\in\R^d\,.
    \end{align}
\end{defn}

The following proposition is established in the same way as~\autoref{prop:cvx_equiv}, so we omit the proof.

\begin{prop}[smoothness equivalences]
    Let $f : \R^d\to\R$ be continuously differentiable and $\beta\ge 0$.
    Then, $f$ is $\beta$-smooth if and only if
    \begin{align*}
        \langle \nabla f(y) - \nabla f(x), y - x\rangle
        \le \beta\,\norm{y-x}^2 \qquad\text{for all}~x,y\in\R^d\,.
    \end{align*}
    If $f$ is twice continuously differentiable, this is also equivalent to
    \begin{align*}
        \langle v, \nabla^2 f(x)\,v \rangle \le \beta\,\norm v^2 \qquad\text{for all}~v,x\in\R^d\,.
    \end{align*}
\end{prop}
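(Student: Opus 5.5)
The plan is to mirror the proof of~\autoref{prop:cvx_equiv}, with every inequality reversed and $\alpha$ replaced by $\beta$. Throughout, write $v \deq y-x$.

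\emph{Smoothness $\Rightarrow$ monotonicity bound.} Write~\eqref{eq:smooth} at the pair $(x,y)$ and again with the roles of $x$ and $y$ swapped. Adding the two inequalities cancels the function values and leaves
\begin{align*}
    0 \le \langle \nabla f(x) - \nabla f(y), y-x\rangle + \beta\,\norm{y-x}^2\,.
\end{align*}
This is exactly $\langle \nabla f(y)-\nabla f(x), y-x\rangle \le \beta\,\norm{y-x}^2$.

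\emph{Monotonicity bound $\Rightarrow$ smoothness.} By the fundamental theorem of calculus,
\begin{align*}
    f(y) - f(x) - \langle \nabla f(x), v\rangle = \int_0^1 \langle \nabla f(x+sv) - \nabla f(x), v\rangle\,\D s\,.
\end{align*}
Apply the hypothesis to the pair $x$ and $x+sv$, whose difference is $sv$. Dividing by $s$ bounds the integrand by $\beta s\,\norm v^2$. Integrating $\int_0^1 \beta s\,\D s = \beta/2$ then gives~\eqref{eq:smooth}. This direction is in fact simpler than the corresponding step of~\autoref{prop:cvx_equiv}, since it goes straight to the tangent-line form rather than the secant form.

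\emph{Hessian characterization.} Assume $f$ is twice continuously differentiable.
\begin{itemize}
    \item From the monotonicity bound to the Hessian bound: take $y = x+\varepsilon v$ in the monotonicity bound and divide by $\varepsilon^2$. The left side becomes $\varepsilon^{-1}\,\langle \nabla f(x+\varepsilon v)-\nabla f(x), v\rangle$, which tends to $\langle v, \nabla^2 f(x)\,v\rangle$ as $\varepsilon\searrow 0$. This yields $\langle v, \nabla^2 f(x)\,v\rangle \le \beta\,\norm v^2$.
    \item From the Hessian bound to the monotonicity bound: use
    \begin{align*}
        \langle \nabla f(y)-\nabla f(x), y-x\rangle = \int_0^1 \langle \nabla^2 f(x+t\,(y-x))\,(y-x), y-x\rangle\,\D t\,,
    \end{align*}
    and bound the integrand pointwise by $\beta\,\norm{y-x}^2$.
\end{itemize}

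I do not expect a genuine obstacle. The only point needing care is the limit in the first Hessian step, which is justified by continuity of $\nabla^2 f$, exactly as in the earlier proofs. Note that no convexity is needed anywhere: these are one-sided upper bounds, so the argument is purely the reversed-inequality analogue of~\autoref{prop:cvx_equiv}.
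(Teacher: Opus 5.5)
Your proposal is correct and takes the route the paper intends. The paper omits this proof, saying only that it is established in the same way as \autoref{prop:cvx_equiv}, and your argument is exactly that reversed-inequality analogue: swap-and-add for one direction, the fundamental theorem of calculus (going straight to the tangent form, which is the paper's definition of smoothness here) for the converse, and the difference-quotient limit plus integrated Hessian for the second-order equivalence.
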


If $f$ is convex, $\beta$-smooth, and twice continuously differentiable, then $0 \le \nabla^2 f \preceq \beta I$, which implies that the gradient $\nabla f$ is $\beta$-Lipschitz:
\begin{align}\label{eq:lip_grad}
    \norm{\nabla f(y) - \nabla f(x)} \le \beta\,\norm{y-x} \qquad\text{for all}~x,y\in\R^d\,.
\end{align}
This remains true even without assuming twice differentiability (\autoref{qu:smooth_implies_lipgrad}).

\subsection*{Bibliographical notes}

For further discussion on the oracle model, see~\cite[\S 1]{NemYud1983Complexity}.

\subsection*{Exercises}

\begin{question}\label{qu:quad_strcvx}
    Let $f = \frac{\alpha}{2}\,\norm \cdot^2$, where $\alpha \ge 0$.
    Show via direct computation that~\eqref{eq:str_cvx} holds with equality.
\end{question}

\begin{question}
    Show that for any twice continuously differentiable function $f : \R \to \R$ and any $t\in [0,1]$,
    \begin{align*}
        f(t) = (1-t) \,f(0) + t\,f(1) - \int_0^1 f''(s) \min\{s\,(1-t)\,,\,(1-s)\,t\}\,\D s\,.
    \end{align*}
\end{question}

\section{Gradient flow}\label{sec:grad_flow}

Before we turn toward our main first-order algorithm of interest, namely gradient descent, we first study the situation in continuous time via the gradient flow.
Throughout this section, we let ${(x_t)}_{t\ge 0}$ denote the gradient flow for $f$:
\begin{align}\label{eq:GF}\tag{$\msf{GF}$}
    \dot x_t = -\nabla f(x_t)\,.
\end{align}
This is an ordinary differential equation (ODE), and since the main purpose of this section is to develop intuition, we assume that $f$ is twice continuously differentiable and do not worry about showing that~\eqref{eq:GF} is well-posed.
We use the following notation throughout these notes:
\begin{align*}
    x_\star \in \argmin f\,, \qquad f_\star \deq \min f = f(x_\star)\,.
\end{align*}
Generally, we always assume that $f$ admits a minimizer.

The most basic property of~\ref{eq:GF} is that it always decreases the function value.

\begin{lem}[descent property of~\ref{eq:GF}]\label{lem:gf_descent}
    For any $f : \R^d\to\R$, the gradient flow ${(x_t)}_{t\ge 0}$ of $f$ satisfies
    \begin{align*}
        \partial_t f(x_t)
        &= -\norm{\nabla f(x_t)}^2
        \le 0\,.
    \end{align*}
\end{lem}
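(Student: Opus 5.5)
The plan is a direct application of the chain rule along the trajectory; no convexity or smoothness beyond the standing assumptions is needed. Since the section assumes that $f$ is twice continuously differentiable (in particular $C^1$) and that~\eqref{eq:GF} is well-posed, the curve $t \mapsto x_t$ is differentiable with $\dot x_t = -\nabla f(x_t)$. The composition $t\mapsto f(x_t)$ is therefore differentiable.

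The first step is to apply the multivariate chain rule:
\begin{align*}
    \partial_t f(x_t) = \langle \nabla f(x_t), \dot x_t\rangle\,.
\end{align*}
The second step is to substitute the defining equation~\eqref{eq:GF} for $\dot x_t$. This gives
\begin{align*}
    \langle \nabla f(x_t), -\nabla f(x_t)\rangle = -\norm{\nabla f(x_t)}^2\,.
\end{align*}
The final step is the observation that a negated squared norm is nonpositive, which yields the inequality $\le 0$.

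There is essentially no obstacle. The only point requiring care is justifying the chain rule: $f$ is $C^1$, hence (Fréchet) differentiable, and $x_t$ is differentiable in $t$ as a solution of the ODE. Both facts are covered by the section's standing assumptions. If one wanted to avoid the chain rule, one could instead write
\begin{align*}
    f(x_{t+h}) - f(x_t) = \int_0^1 \langle \nabla f(x_t + s\,(x_{t+h}-x_t)), x_{t+h}-x_t\rangle\,\D s\,,
\end{align*}
divide by $h$, and send $h\to 0$ using continuity of $\nabla f$, exactly as in the proof of~\autoref{lem:necc_conditions}.
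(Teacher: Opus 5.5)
Your proof is correct and is exactly the paper's argument: the chain rule gives $\partial_t f(x_t) = \langle \nabla f(x_t), \dot x_t\rangle$, and substituting~\eqref{eq:GF} yields $-\norm{\nabla f(x_t)}^2 \le 0$. Your remark on justifying the chain rule, and the integral-form alternative, are fine but not needed under the section's standing assumptions.
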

\begin{proof}
    By the chain rule, $\partial_t f(x_t) = \langle \nabla f(x_t), \dot x_t \rangle = -\norm{\nabla f(x_t)}^2$.
\end{proof}

To obtain quantitative convergence results, we now use the assumption of convexity.
Our first result shows that under strong convexity, the gradient flow \emph{contracts}.

\begin{thm}[contraction of~\ref{eq:GF}]\label{thm:gf_contraction}
    Let $f : \R^d\to\R$ be $\alpha$-convex.
    Let ${(y_t)}_{t\ge 0}$ be another gradient flow for $f$, i.e., $\dot y_t = -\nabla f(y_t)$.
    Then, for all $t\ge 0$,
    \begin{align*}
        \norm{y_t - x_t}
        &\le \exp(-\alpha t)\,\norm{y_0-x_0}\,.
    \end{align*}
\end{thm}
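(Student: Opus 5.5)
We will differentiate the squared distance between the two flows along time and then close with Grönwall's inequality. Set $\Delta_t \deq \norm{y_t - x_t}^2$. By the chain rule and the two gradient flow equations,
\begin{align*}
    \partial_t \Delta_t
    = 2\,\langle y_t - x_t, \dot y_t - \dot x_t\rangle
    = -2\,\langle \nabla f(y_t) - \nabla f(x_t), y_t - x_t\rangle\,.
\end{align*}

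The key step is to invoke the monotonicity characterization~\eqref{eq:str_cvx_2} from \autoref{prop:cvx_equiv}. Since $f$ is assumed twice continuously differentiable throughout this section, it is in particular continuously differentiable, so $\alpha$-convexity gives
\[
    \langle \nabla f(y_t) - \nabla f(x_t), y_t - x_t\rangle \ge \alpha\,\Delta_t\,.
\]
Hence $\partial_t \Delta_t \le -2\alpha\,\Delta_t$.

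To integrate this differential inequality, consider $t\mapsto \exp(2\alpha t)\,\Delta_t$. Its derivative is
\[
    \exp(2\alpha t)\,(\partial_t \Delta_t + 2\alpha\,\Delta_t) \le 0\,,
\]
so this quantity is non-increasing. Therefore $\Delta_t \le \exp(-2\alpha t)\,\Delta_0$, and taking square roots yields the claimed bound $\norm{y_t - x_t} \le \exp(-\alpha t)\,\norm{y_0 - x_0}$.

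There is no real obstacle here. The only subtlety is that we work with the squared norm rather than the norm itself, since the norm is not differentiable where $x_t = y_t$. Working with $\Delta_t$ avoids this issue entirely, and the well-posedness of~\eqref{eq:GF} is taken for granted as stated in the section.
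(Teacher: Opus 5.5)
Your proof is correct and follows the paper's argument: differentiate the squared distance, apply the monotonicity inequality~\eqref{eq:str_cvx_2}, and integrate the resulting differential inequality. The only difference is cosmetic. You carry out the integrating-factor step for $t\mapsto \exp(2\alpha t)\,\Delta_t$ directly, whereas the paper cites Gr\"onwall's lemma (\autoref{lem:gronwall}) with $A=-2\alpha$, $B=0$, which is proved by that same computation.
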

\begin{proof}
    We differentiate the squared distance between the two flows:
    \begin{align*}
        \partial_t(\norm{y_t - x_t}^2)
        &= 2\,\langle y_t - x_t, \dot y_t - \dot x_t \rangle
        = -2\,\langle y_t - x_t, \nabla f(y_t) - \nabla f(x_t)\rangle
        \le -2\alpha\,\norm{y_t - x_t}^2\,,
    \end{align*}
    where the last inequality is~\eqref{eq:str_cvx_2}.
    The proof is concluded by applying Gr\"onwall's lemma (see~\autoref{lem:gronwall}) below.
\end{proof}

The proof above arrives at what is called a \emph{differential inequality}, that is, an inequality which holds between a quantity and its derivative{(s)}.
This is a common strategy for analyzing ODEs/PDEs, and it can be loosely viewed as the continuous-time analogue of induction.
The following standard lemma is useful for handling such inequalities.

\begin{lem}[Gr\"onwall]\label{lem:gronwall}
    Suppose that $u : [0,T]\to\R$ is a continuously differentiable curve that satisfies the differential inequality
    \begin{align*}
        \dot u(t)
        &\le Au(t) + B(t)\,, \qquad t \in [0,T]\,.
    \end{align*}
    Then, it holds that
    \begin{align*}
        u(t)
        &\le u(0)\exp(At) + \int_0^t B(s) \exp(A\,(t-s))\,\D s\,, \qquad t\in [0,T]\,.
    \end{align*}
\end{lem}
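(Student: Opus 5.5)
The plan is the standard integrating-factor argument: multiply by $\exp(-At)$ to turn the differential inequality into a statement that a certain function is nonincreasing, then integrate. Assume, as the statement implicitly requires, that $B$ is continuous (or at least integrable) on $[0,T]$, so that the integral in the conclusion makes sense.

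\textbf{Step 1 (auxiliary function).} Define
\begin{align*}
    v(t) \deq \exp(-At)\,u(t) - \int_0^t B(s)\exp(-As)\,\D s\,, \qquad t\in[0,T]\,.
\end{align*}
Since $u$ is continuously differentiable and $B$ is continuous, $v$ is differentiable. By the product rule and the fundamental theorem of calculus,
\begin{align*}
    \dot v(t) = \exp(-At)\,\bigl(\dot u(t) - A u(t) - B(t)\bigr)\,.
\end{align*}
The hypothesis $\dot u \le Au + B$ then gives $\dot v(t) \le 0$.

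\textbf{Step 2 (monotonicity).} From $\dot v \le 0$ on $[0,T]$, the mean value theorem shows that $v$ is nonincreasing. Hence $v(t) \le v(0) = u(0)$ for all $t \in [0,T]$.

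\textbf{Step 3 (rearrange).} Multiply the inequality $v(t) \le u(0)$ by $\exp(At) > 0$, which preserves its direction. This yields
\begin{align*}
    u(t) \le u(0)\exp(At) + \int_0^t B(s)\exp(A\,(t-s))\,\D s\,,
\end{align*}
which is the claim.

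\textbf{Main obstacle.} There is no real difficulty here. The only delicate points are regularity: we need $B$ continuous so that the integral term in $v$ is differentiable with derivative $B(t)\exp(-At)$. If $B$ were only integrable, I would instead integrate the inequality $\frac{\D}{\D t}\bigl(\exp(-At)\,u(t)\bigr) \le \exp(-At)\,B(t)$ directly from $0$ to $t$, which gives the same bound. In the application to \autoref{thm:gf_contraction}, one has $B \equiv 0$ and $A = -2\alpha$, so $\norm{y_t-x_t}^2 \le \exp(-2\alpha t)\,\norm{y_0-x_0}^2$, and taking square roots gives the stated contraction.
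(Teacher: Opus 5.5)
Your proof is correct and is essentially the paper's argument: the paper also differentiates $t\mapsto \exp(-At)\,u(t)$ and integrates $\partial_t[\exp(-At)\,u(t)] \le B(t)\exp(-At)$ from $0$ to $t$. That is exactly your fallback route, while your main version folds the integral into the auxiliary function $v$ and argues by monotonicity. Your regularity remark is also apt: integrating the derivative inequality directly, as the paper does, needs only integrability of $B$, whereas differentiating $v$ uses continuity of $B$.
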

\begin{proof}
    The idea is to differentiate $t \mapsto \exp(-At)\,u(t)$:
    \begin{align*}
        \partial_t[\exp(-At)\,u(t)]
        &= \exp(-At)\,\{-Au(t) + \dot u(t)\}
        \le B(t)\exp(-At)\,.
    \end{align*}
    By the fundamental theorem of calculus,
    \begin{align*}
        \exp(-At)\,u(t) - u(0)
        \le \int_0^t B(s) \exp(-As)\,\D s\,.
    \end{align*}
    Rearranging yields the result.
\end{proof}

There are many variants of Gr\"onwall's lemma that can be proven in similar ways, e.g., we can allow time-varying $A$ as well.

Returning to~\autoref{thm:gf_contraction}, we can apply~\autoref{lem:gronwall} with $A = -2\alpha$ and $B = 0$ to conclude that $\norm{y_t - x_t}^2 \le \exp(-2\alpha t)\,\norm{y_0 - x_0}^2$, which proves the theorem.
Note in particular that we can take $y_t = x_\star$ for all $t\ge 0$, so it yields the following statement about convergence to the minimizer: $\norm{x_t - x_\star} \le \exp(-\alpha t)\,\norm{x_0-x_\star}$.

The next result is about convergence in function value, and unlike~\autoref{thm:gf_contraction}, it yields convergence for the case $\alpha = 0$ as well.

\begin{thm}[convergence of~\ref{eq:GF} in function value]\label{thm:gf_fn_value}
    Let $f : \R^d\to\R$ be $\alpha$-convex, $\alpha \ge 0$.
    Then, for all $t\ge 0$,
    \begin{align*}
        f(x_t) - f_\star
        &\le \frac{\alpha}{2\,(\exp(\alpha t) - 1)}\,\norm{x_0 - x_\star}^2\,.
    \end{align*}
    When $\alpha = 0$, the right-hand side should be interpreted as its limiting value as $\alpha \to 0$, namely, $\frac{1}{2t}\,\norm{x_0 - x_\star}^2$.
\end{thm}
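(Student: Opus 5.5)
The plan is a Lyapunov argument with a time-weighted energy that combines distance to the minimizer and function suboptimality. The standard choice for $\alpha=0$ is
\[
\mathcal{E}_t \deq t\,(f(x_t)-f_\star) + \tfrac12\,\norm{x_t-x_\star}^2 .
\]
For general $\alpha$, I would replace the weight $t$ by a function $a(t)$ and multiply the distance term by a function $b(t)$. The choice will be
\[
\mathcal{E}_t \deq a(t)\,(f(x_t)-f_\star) + \tfrac{1}{2}\,e^{\alpha t}\,\norm{x_t-x_\star}^2,
\qquad a(t) \deq \frac{e^{\alpha t}-1}{\alpha}.
\]
Here $a(0)=0$ and $a'(t)=e^{\alpha t}$. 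The target is to show $\partial_t \mathcal{E}_t \le 0$. Then
\[
a(t)\,(f(x_t)-f_\star) \le \mathcal{E}_t \le \mathcal{E}_0 = \tfrac12\,\norm{x_0-x_\star}^2 ,
\]
which is exactly the claimed bound. For $\alpha=0$ this reduces to $a(t)=t$, matching the limiting interpretation.

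Differentiating, I would use \autoref{lem:gf_descent} for $\partial_t f(x_t) = -\norm{\nabla f(x_t)}^2$ and the chain rule for the distance term:
\[
\partial_t \mathcal{E}_t = e^{\alpha t}\,(f(x_t)-f_\star) - a(t)\,\norm{\nabla f(x_t)}^2 + \tfrac{\alpha}{2}\,e^{\alpha t}\,\norm{x_t-x_\star}^2 - e^{\alpha t}\,\langle \nabla f(x_t), x_t-x_\star\rangle .
\]
The gradient-norm term is nonpositive because $a\ge 0$, so I drop it. For the remaining terms, I apply the strong convexity inequality~\eqref{eq:str_cvx_1} with $x=x_t$ and $y=x_\star$. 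Rearranged, it reads
\[
f(x_t)-f_\star + \tfrac{\alpha}{2}\,\norm{x_t-x_\star}^2 \le \langle \nabla f(x_t), x_t-x_\star\rangle .
\]
This makes the bracket multiplying $e^{\alpha t}$ nonpositive, so $\partial_t\mathcal{E}_t\le 0$ and the proof is complete.

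The main obstacle is choosing the weights so that the cross terms cancel exactly. One must see that the distance weight $e^{\alpha t}$ generates the $\tfrac{\alpha}{2}$ term needed to absorb the quadratic part of~\eqref{eq:str_cvx_1}. One must also see that the function-value weight has to satisfy $a' = e^{\alpha t}$ with $a(0)=0$, so that $\mathcal{E}_0$ involves only the initial distance. I would find this by positing $\mathcal{E}=a(f-f_\star)+\tfrac b2\norm{x-x_\star}^2$, requiring $b'=\alpha b$ and $a'=b$, and normalizing $b(0)=1$. Once the weights are fixed, the remaining verification is routine.
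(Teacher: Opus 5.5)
Your proof is correct and is essentially the paper's argument in differential form. The paper differentiates $\norm{x_t-x_\star}^2$, applies~\eqref{eq:str_cvx_1} at $y=x_\star$, integrates via Gr\"onwall with the same integrating factor $e^{\alpha t}$, and then uses the descent property to get $\int_0^t e^{\alpha s}\,(f(x_s)-f_\star)\,\D s \ge a(t)\,(f(x_t)-f_\star)$; this is exactly what your dropped term $-a(t)\,\norm{\nabla f(x_t)}^2\le 0$ encodes (and writing $a(t)=\int_0^t e^{\alpha s}\,\D s$ covers $\alpha=0$ without a limiting argument).
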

\begin{proof}
    We differentiate $t\mapsto \norm{x_t - x_\star}^2$, but this time we apply~\eqref{eq:str_cvx_1}:
    \begin{align*}
        \partial_t(\norm{x_t - x_\star}^2)
        &= -2\,\langle\nabla f(x_t), x_t - x_\star\rangle
        \le -\alpha\,\norm{x_t - x_\star}^2 - 2\,(f(x_t) - f_\star)\,.
    \end{align*}
    Applying Gr\"onwall's lemma (\autoref{lem:gronwall}) with $A = -\alpha$, $B(t) = -2\,(f(x_t) - f_\star)$,
    \begin{align*}
        0 \le \norm{x_t - x_\star}^2
        \le \exp(-\alpha t)\,\norm{x_0 - x_\star}^2 - 2\int_0^t \exp(-\alpha\,(t-s))\,(f(x_s) - f_\star) \, \D s\,.
    \end{align*}
    By the descent property (\autoref{lem:gf_descent}), $f(x_s) \ge f(x_t)$, so that
    \begin{align*}
        \int_0^t \exp(-\alpha\,(t-s))\,(f(x_s) - f_\star) \, \D s
        &\ge (f(x_t) - f_\star) \int_0^t \exp(-\alpha\,(t-s))\,\D s \\[0.25em]
        &= (f(x_t) - f_\star)\, \frac{1-\exp(-\alpha t)}{\alpha}\,.
    \end{align*}
    Rearranging yields the result.
\end{proof}

When $\alpha > 0$,~\autoref{thm:gf_fn_value} shows that $f(x_t) - f_\star = O(\exp(-\alpha t))$.
When $\alpha = 0$, the rate becomes $f(x_t) - f_\star = O(1/t)$.
Actually, the rate in~\autoref{thm:gf_fn_value} is not sharp (see~\autoref{qu:gf_sharp} and~\autoref{qu:gf_sharp_strcvx}).
However, the statement and proof are chosen because they form the basis of our approach in discrete time.

Next, we observe that convexity is not needed for convergence in function value.
Due to the descent property (\autoref{lem:gf_descent}), it suffices to have a lower bound on the norm of the gradient to ensure that we make sufficient progress.
For example, we can impose the following condition.

\begin{defn}
    Let $f : \R^d\to\R$ be continuously differentiable and $\alpha > 0$.
    We say that $f$ satisfies a \textbf{Polyak{--}\L{}ojasiewicz} (\textbf{P\L{}}) \textbf{inequality} with constant $\alpha$ if
    \begin{align}\label{eq:PL}\tag{$\msf{P\L{}}$}
        \norm{\nabla f(x)}^2
        &\ge 2\alpha\,(f(x) - f(x_\star)) \qquad\text{for all}~x\in\R^d\,.
    \end{align}
\end{defn}

The next statement is an immediate corollary of~\autoref{lem:gf_descent},~\eqref{eq:PL}, and Gr\"onwall's lemma (\autoref{lem:gronwall}).

\begin{cor}[convergence of~\ref{eq:GF} under~\ref{eq:PL}]\label{cor:gf_pl}
    Let $f : \R^d\to\R$ satisfy~\eqref{eq:PL} with constant $\alpha > 0$.
    Then, for all $t \ge 0$,
    \begin{align*}
        f(x_t) - f_\star
        &\le (f(x_0) - f_\star) \exp(-2\alpha t)\,.
    \end{align*}
\end{cor}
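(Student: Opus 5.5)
The plan is to derive a linear differential inequality for the optimality gap $u(t) \deq f(x_t) - f_\star$ and then integrate it with Gr\"onwall's lemma (\autoref{lem:gronwall}). There are three steps.

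First, I compute the derivative of $u$. By the descent property (\autoref{lem:gf_descent}),
\begin{align*}
    \dot u(t) = \partial_t f(x_t) = -\norm{\nabla f(x_t)}^2\,.
\end{align*}
The function $u$ is continuously differentiable, because $f$ is $C^1$ and $t\mapsto x_t$ is $C^1$ as the solution of~\eqref{eq:GF}. This smoothness is what \autoref{lem:gronwall} requires.

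Second, I evaluate~\eqref{eq:PL} at the point $x = x_t$, which gives $\norm{\nabla f(x_t)}^2 \ge 2\alpha\,(f(x_t) - f_\star) = 2\alpha\,u(t)$. Substituting this into the identity from the first step yields the differential inequality
\begin{align*}
    \dot u(t) \le -2\alpha\, u(t)\,, \qquad t \ge 0\,.
\end{align*}

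Third, I apply \autoref{lem:gronwall} on an arbitrary interval $[0,T]$ with $A = -2\alpha$ and $B \equiv 0$. This gives $u(t) \le u(0)\exp(-2\alpha t)$ for all $t\in[0,T]$. Since $T$ is arbitrary, the bound holds for all $t \ge 0$, which is exactly the claimed estimate $f(x_t) - f_\star \le (f(x_0) - f_\star)\exp(-2\alpha t)$.

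I do not expect a real obstacle here. The only point needing care is that~\eqref{eq:PL} is stated relative to $f(x_\star)$, so $f$ must admit a minimizer. This is the standing assumption of this section, and under it $f_\star = f(x_\star)$. Convexity is never used; the argument relies only on descent and the gradient lower bound supplied by~\eqref{eq:PL}.
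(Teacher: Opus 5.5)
Your proposal is correct and is exactly the argument the paper has in mind: the paper presents this result as an immediate corollary of the descent identity $\partial_t f(x_t) = -\norm{\nabla f(x_t)}^2$, the P\L{} inequality evaluated along the flow, and Gr\"onwall's lemma with $A = -2\alpha$ and $B = 0$. Your remarks on the $C^1$ regularity of $t \mapsto f(x_t)$ and on the standing assumption that a minimizer exists are appropriate. They tidy up details the paper leaves implicit, without changing the argument.
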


We present a few key properties of the P\L{} inequality.

\begin{prop}[strong convexity $\Rightarrow$~\ref{eq:PL} $\Rightarrow$ quadratic growth]
    Let $f : \R^d\to\R$ and $\alpha > 0$.
    The following implications hold.
    \begin{enumerate}
        \item If $f$ is $\alpha$-convex, then $f$ satisfies~\eqref{eq:PL} with constant $\alpha$.
        \item If $f$ satisfies~\eqref{eq:PL} with constant $\alpha$, then it satisfies the following \textbf{quadratic growth} property:
            \begin{align}\label{eq:qg}\tag{$\msf{QG}$}
                f(x) - f_\star
                &\ge \frac{\alpha}{2} \inf_{x_\star \in \eu X_\star} \norm{x-x_\star}^2\,, \qquad\text{for all}~x\in\R^d\,,
            \end{align}
            where $\eu X_\star$ denotes the set of minimizers of $f$.
    \end{enumerate}
\end{prop}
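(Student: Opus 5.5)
For the first implication, I will minimize both sides of the strong convexity inequality~\eqref{eq:str_cvx_1} over $y$. Fixing $x$, the inequality reads
\begin{align*}
    f(y) \ge f(x) + \langle \nabla f(x), y-x\rangle + \frac{\alpha}{2}\,\norm{y-x}^2\qquad\text{for all}~y\in\R^d\,.
\end{align*}
Taking the infimum over $y$ on the left gives $f_\star$. The right-hand side is a quadratic in $y$, minimized at $y = x - \nabla f(x)/\alpha$, with minimum value $f(x) - \frac{1}{2\alpha}\,\norm{\nabla f(x)}^2$. Hence $f_\star \ge f(x) - \frac{1}{2\alpha}\,\norm{\nabla f(x)}^2$, which is exactly~\eqref{eq:PL}. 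This step is routine.

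For the second implication, I will use the gradient flow started at $x_0 = x$ and track the quantity $g(x_t) \deq \sqrt{f(x_t) - f_\star}$. If $f(x) = f_\star$, then $x$ is itself a minimizer and there is nothing to prove, so assume $f(x) > f_\star$. By \autoref{lem:gf_descent} and~\eqref{eq:PL}, as long as $f(x_t) > f_\star$,
\begin{align*}
    \partial_t \sqrt{f(x_t) - f_\star}
    = -\frac{\norm{\nabla f(x_t)}^2}{2\sqrt{f(x_t) - f_\star}}
    \le -\sqrt{\frac{\alpha}{2}}\,\norm{\nabla f(x_t)}
    = -\sqrt{\frac{\alpha}{2}}\,\norm{\dot x_t}\,.
\end{align*}
Integrating from $0$ to $T$ gives
\begin{align*}
    \sqrt{\alpha/2}\int_0^T \norm{\dot x_t}\,\D t \le \sqrt{f(x) - f_\star}\,.
\end{align*}
So the curve ${(x_t)}$ has finite length, bounded by $\sqrt{2\,(f(x)-f_\star)/\alpha}$. (If $f(x_t)$ reaches $f_\star$ at a finite time, the gradient vanishes there by~\eqref{eq:PL} and the flow stops, so the bound still applies.) Finite length implies that $x_t$ converges to some limit $x_\infty$ as $t\to\infty$. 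By \autoref{cor:gf_pl}, $f(x_t) \to f_\star$, so continuity gives $f(x_\infty) = f_\star$, i.e., $x_\infty \in \eu X_\star$. Then
\begin{align*}
    \inf_{x_\star\in\eu X_\star}\norm{x - x_\star} \le \norm{x - x_\infty} \le \int_0^\infty \norm{\dot x_t}\,\D t \le \sqrt{2\,(f(x) - f_\star)/\alpha}\,,
\end{align*}
and squaring yields~\eqref{eq:qg}.

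The main obstacle is the second part, specifically two points. The first is justifying the differentiation of the square root near the time when $f(x_t) = f_\star$, where $\sqrt{\cdot}$ is not differentiable; I will handle this by integrating only up to times where $f(x_t) > f_\star$ and then passing to the limit. The second is the existence of the limit point $x_\infty$. As in the rest of this section, I will assume the flow is well-posed; the Cauchy property then follows from the finite length bound, since $\norm{x_t - x_s} \le \int_s^t \norm{\dot x_r}\,\D r$.
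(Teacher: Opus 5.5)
Your proof is correct, and its core coincides with the paper's. In part 1, the paper plugs $y = x_\star$ into \eqref{eq:str_cvx_1} and finishes with Cauchy--Schwarz and Young's inequality, whereas you minimize the quadratic lower bound over all $y$; these are the same computation. In part 2, you use the same gradient flow started at $x$ and the same key estimate $\partial_t \sqrt{f(x_t) - f_\star} \le -\sqrt{\alpha/2}\,\norm{\nabla f(x_t)}$, but you close the argument differently.

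The paper pairs this estimate with $\partial_t \norm{x_t - x_0} \le \norm{\nabla f(x_t)}$ in the Lyapunov function $\sqrt{\alpha/2}\,\norm{x_t - x_0} + \sqrt{f(x_t) - f_\star}$. That function controls only the displacement from $x_0$, so the paper explicitly \emph{assumes} that the flow converges to a point. You instead integrate the estimate into a bound on the arc length $\int_0^\infty \norm{\dot x_t}\,\D t$. Finite length gives the Cauchy property and hence convergence, so your version removes the assumption the paper flags, at no extra cost; this is the classical \L{}ojasiewicz finite-length argument.

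One small slip: suppose $f(x_{t_0}) = f_\star$ at some finite time. The gradient vanishes there because $x_{t_0}$ is a global minimizer (\autoref{lem:necc_conditions}), not because of \eqref{eq:PL}, which only bounds $\norm{\nabla f}$ from below. In fact, since $f$ is $C^2$ in that section, uniqueness for the ODE $\dot x_t = -\nabla f(x_t)$ shows this case cannot occur when $f(x) > f_\star$.
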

\begin{proof}\mbox{}
    \begin{enumerate}
        \item Setting $y = x_\star$ in~\eqref{eq:str_cvx_1}, we obtain
            \begin{align*}
                -(f(x) - f_\star)
                &\ge \langle \nabla f(x), x_\star - x \rangle + \frac{\alpha}{2} \,\norm{x-x_\star}^2 \\
                &\ge -\norm{\nabla f(x)}\, \norm{x_\star - x} + \frac{\alpha}{2} \,\norm{x-x_\star}^2
                \ge - \frac{1}{2\alpha}\,\norm{\nabla f(x)}^2\,,
            \end{align*}
            where the last inequality uses $ab \le \frac{\lambda}{2}\,a^2 + \frac{1}{2\lambda}\,b^2$ for all $\lambda > 0$.
        \item Let ${(x_t)}_{t\ge 0}$ denote the gradient flow for $f$ started at $x_0 = x$.
            For simplicity, we present a proof \emph{assuming} that the gradient flow converges to a point $x_\star$, although this assumption can be avoided (cf.~\cite{KarNutSch16PL}).
            By~\autoref{cor:gf_pl}, we see that $x_\star \in \eu X_\star$.

            We start by observing that
            \begin{align*}
                \partial_t(\norm{x_t - x_0}^2)
                &= -2\,\langle \nabla f(x_t), x_t - x_0\rangle
                \le 2\,\norm{\nabla f(x_t)}\,\norm{x_t - x_0}
            \end{align*}
            and hence
            \begin{align*}
                \partial_t \norm{x_t - x_0}
                &\le \norm{\nabla f(x_t)}\,.
            \end{align*}
            We differentiate the following quantity: $\ms L_t \deq \sqrt{\frac{\alpha}{2}}\,\norm{x_t - x_0} + \sqrt{f(x_t) - f_\star}$.
            \begin{align*}
                \dot{\ms L}_t
                &\le \sqrt{\frac{\alpha}{2}}\,\norm{\nabla f(x_t)} - \frac{\norm{\nabla f(x_t)}^2}{2\sqrt{f(x_t) - f_\star}}
                \le 0\,,
            \end{align*}
            where we applied~\eqref{eq:PL}.
            Since $\ms L_0 = \sqrt{f(x) - f_\star}$ and $\ms L_\infty = \sqrt{\frac{\alpha}{2}}\,\norm{x - x_\star}$, we deduce the result from $\ms L_0 \ge \ms L_\infty$.
    \end{enumerate}
\end{proof}

Hence, strong convexity implies~\eqref{eq:PL}, but is~\eqref{eq:PL} truly weaker than convexity?
Indeed, there are examples.
In particular, the P\L{} condition has been of interest in recent years because it holds for certain overparametrized models (\autoref{qu:PL_example}).

We conclude this section by studying the implication of~\autoref{lem:gf_descent} alone.
The fundamental theorem of calculus shows that
\begin{align*}
    \frac{1}{t} \int_0^t \norm{\nabla f(x_s)}^2\,\D s
    &\le \frac{f(x_0) - f(x_t)}{t}
    \le \frac{f(x_0) - f_\star}{t}\,.
\end{align*}
We therefore arrive at the following simple consequence.

\begin{cor}[convergence of~\ref{eq:GF} in gradient norm]
    For any $f : \R^d\to\R$,
    \begin{align*}
        \min_{s\in [0,t]}{\norm{\nabla f(x_s)}}
        &\le \sqrt{\frac{f(x_0) - f_\star}{t}}\,.
    \end{align*}
\end{cor}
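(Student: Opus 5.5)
The plan is to integrate the descent identity of \autoref{lem:gf_descent} over $[0,t]$ and then replace the time average of $\norm{\nabla f(x_s)}^2$ by its minimum. This is exactly the computation displayed just before the statement; what remains is to spell out each step and take care of the minimum and the square root.

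First, \autoref{lem:gf_descent} gives $\partial_s f(x_s) = -\norm{\nabla f(x_s)}^2$. Since $f$ is assumed twice continuously differentiable, $s \mapsto x_s$ is $C^1$, so $s\mapsto f(x_s)$ is $C^1$. The fundamental theorem of calculus on $[0,t]$ then yields
\begin{align*}
    \int_0^t \norm{\nabla f(x_s)}^2\,\D s = f(x_0) - f(x_t) \le f(x_0) - f_\star\,,
\end{align*}
where the inequality uses only $f(x_t) \ge f_\star$, by definition of $f_\star = \min f$. No convexity is used anywhere, consistent with the statement holding for any $f$.

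Next, the map $s\mapsto \norm{\nabla f(x_s)}$ is continuous on the compact interval $[0,t]$, because $\nabla f$ is continuous and the flow is continuous. Hence the minimum in the statement is attained, and we have
\begin{align*}
    t\,\min_{s\in[0,t]} \norm{\nabla f(x_s)}^2 \le \int_0^t \norm{\nabla f(x_s)}^2\,\D s\,.
\end{align*}
Dividing by $t>0$ and taking square roots is legitimate since both sides are nonnegative and square root is monotone. This uses $\min_s \norm{\nabla f(x_s)}^2 = (\min_s \norm{\nabla f(x_s)})^2$, and gives the claim.

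There is no real obstacle. The only points needing care are the existence of the minimum, handled by continuity and compactness, and the well-posedness of the flow on $[0,t]$, which the section assumes throughout.
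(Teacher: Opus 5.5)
Your proposal is correct and follows the paper's argument: integrate the descent identity of \autoref{lem:gf_descent} over $[0,t]$, bound $f(x_0)-f(x_t)$ by $f(x_0)-f_\star$, and use that the minimum of $\norm{\nabla f(x_s)}^2$ over $[0,t]$ is at most its time average. Your extra remarks, that the minimum is attained by continuity and compactness and that no convexity is used, are accurate additions to the paper's one-line computation.
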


(In contrast, note that if we additionally assume convexity, then~\autoref{qu:gf_sharp} shows that $\norm{\nabla f(x_t)} = O(1/t)$.)

This implies there exists a sequence of times ${\{t_n\}}_{n\in\N} \nearrow \infty$ such that $\norm{\nabla f(x_{t_n})} \to 0$.
(Indeed, $\min_{s\in [n, 2n]}{\norm{\nabla f(x_s)}} = O(1/n^{1/2})$, so we can choose $t_n \in [n, 2n]$.)
However, the gradient flow may not converge.
Famously, it is a result of~\cite{Loj1963Top} that for \emph{real analytic} $f$, if the gradient flow remains bounded, then it does converge, and hence necessarily to a stationary point.
Of course, such a stationary point may not be a global minimizer.

The idea of subsequent sections is to replicate the preceding analysis in discrete time.

\subsection*{Bibliographical notes}

My understanding of~\autoref{thm:gf_fn_value},~\autoref{qu:gf_sharp}, and~\autoref{qu:gf_sharp_strcvx} is based on extensive discussions with Jason M.\ Altschuler, Adil Salim, Andre Wibisono, and Ashia Wilson.
The proof in~\autoref{qu:gf_sharp} is taken from~\cite{OttVil01Comment}, and the extension in~\autoref{qu:gf_sharp_strcvx} to $\alpha > 0$ is recorded in~\cite[\S F]{LiaMitWib24IndepSamples}.
Both of these references pertain to the Langevin diffusion, but underneath the hood they make use of principles from optimization; see~\cite{Chewi26Book} for an introduction to this perspective.

The P\L{} inequality is attributed to~\cite{Loj1963Top, Pol1963Gradient} and it was popularized in~\cite{KarNutSch16PL}.
The proof that~\eqref{eq:PL} implies the quadratic growth inequality goes back at least to the celebrated work of~\cite{OttVil00LSI}.

\subsection*{Exercises}

\begin{question}\label{qu:gf_sharp}
    Let $f$ be convex.
    Show that the following quantity is decreasing, $\dot{\ms L_t} \le 0$:
    \begin{align*}
        \ms L_t
        &\deq t^2\,\norm{\nabla f(x_t)}^2 + 2t\,(f(x_t) - f_\star) + \norm{x_t - x_\star}^2\,.
    \end{align*}
    Deduce the following gradient bound:
    \begin{align*}
        \norm{\nabla f(x_t)}^2
        &\le \frac{1}{t^2}\, \norm{x_0 - x_\star}^2\,.
    \end{align*}
    Moreover, use~\eqref{eq:str_cvx_1} to argue that $2t\,(f(x_t) - f_\star) \le t^2\,\norm{\nabla f(x_t)}^2 + \norm{x_t - x_\star}^2$, hence
    \begin{align}\label{eq:gf_sharp}
        f(x_t) - f_\star
        &\le \frac{1}{4t}\,\norm{x_0 - x_\star}^2\,.
    \end{align}
    Note that this improves upon~\autoref{thm:gf_fn_value} by a factor of $2$.
    Furthermore, show that~\eqref{eq:gf_sharp} is sharp, as follows: for any $R, t > 0$, let $f : x\mapsto \frac{R}{2t}\max\{0, x\}$, $x_0 = R$, and show that~\eqref{eq:gf_sharp} holds with equality.
\end{question}

\begin{question}\label{qu:gf_sharp_strcvx}
    Extend~\autoref{qu:gf_sharp} to the case $\alpha > 0$.
    Toward this end, consider
    \begin{align*}
        \ms L_t
        &\deq A_t \,\norm{\nabla f(x_t)}^2 + 2B_t\,(f(x_t) - f_\star) + \norm{x_t - x_\star}^2\,.
    \end{align*}
    Choose $A_t$, $B_t$ carefully to ensure that $\dot{\ms L_t} \le -\alpha \ms L_t$, and thereby deduce the following sharp bounds:
    \begin{align*}
        \norm{\nabla f(x_t)}^2
        &\le \frac{\alpha^2\,\norm{x_0-x_\star}^2}{\exp(2\alpha t)\,{(1-\exp(-\alpha t))}^2}\,,
        \qquad f(x_t) - f_\star\le \frac{\alpha\,\norm{x_0-x_\star}^2}{2\,(\exp(2\alpha t) - 1)}\,.
    \end{align*}
\end{question}

\begin{question}\label{qu:PL_example}
    Let $f : \R^n \to \R$ be $\alpha$-convex with $\alpha > 0$, and let $g : \R^d\to \R^n$ with $d \ge n$.
    Assume that $g$ is surjective and that for all $x\in \R^d$, if $\nabla g(x)$ denotes the Jacobian at $x$ (interpreted as a $d\times n$ matrix), then ${\nabla g(x)}^\T\,\nabla g(x) \succeq \sigma I_n$.
    Show that the composition $f\circ g$ satisfies~\eqref{eq:PL} with constant $\alpha\sigma$.
    Note that for $d > n$, there are typically multiple minimizers of $f\circ g$.
\end{question}

\section{Gradient descent: smooth case}\label{sec:gd}

In this section, we study the \textbf{gradient descent} algorithm:
\begin{align}\label{eq:GD}\tag{$\msf{GD}$}
    x_{n+1}
    &\deq x_n - h\,\nabla f(x_n)\,.
\end{align}
From the perspective of numerical analysis, this is the \emph{Euler} or \emph{forward} discretization of~\eqref{eq:GF}.
Our aim is to show that if $f$ is smooth, and the step size is sufficiently small (as a function of the smoothness), then the conclusions for~\eqref{eq:GF} transfer to~\eqref{eq:GD}.
Throughout this section, we assume that $f$ is twice continuously differentiable and $\beta$-smooth.

Some of the results in this section pertain to a single step of~\eqref{eq:GD}, so we use the following notation:
\begin{align*}
    x^+
    &\deq x - h\,\nabla f(x)\,.
\end{align*}

The first step is to establish the descent property.

\begin{lem}[descent lemma]\label{lem:descent}
    For any $\beta$-smooth $f : \R^d\to\R$, if $h \le 1/\beta$, then
    \begin{align*}
        f(x^+) - f(x)
        &\le - \frac{h}{2}\,\norm{\nabla f(x)}^2\,.
    \end{align*}
\end{lem}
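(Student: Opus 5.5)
The plan is to apply the definition of $\beta$-smoothness~\eqref{eq:smooth} directly with the base point $x$ and the target point $y = x^+ = x - h\,\nabla f(x)$. Since $y - x = -h\,\nabla f(x)$, the linear term becomes $\langle \nabla f(x), -h\,\nabla f(x)\rangle = -h\,\norm{\nabla f(x)}^2$. The quadratic term becomes $\frac{\beta}{2}\,h^2\,\norm{\nabla f(x)}^2$. This gives the intermediate bound
\begin{align*}
    f(x^+) - f(x)
    &\le -h\,\Bigl(1 - \frac{\beta h}{2}\Bigr)\,\norm{\nabla f(x)}^2\,.
\end{align*}

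The remaining step is to use the step size restriction. If $h \le 1/\beta$, then $\beta h/2 \le 1/2$, so $1 - \beta h/2 \ge 1/2$. Since $-h\,\norm{\nabla f(x)}^2 \le 0$, multiplying it by a factor at least $1/2$ yields a quantity at most $-\frac{h}{2}\,\norm{\nabla f(x)}^2$, which is exactly the claimed inequality. The case $\beta = 0$ is consistent with this: the hypothesis $h \le 1/\beta$ is then vacuous, and the same computation gives the even stronger bound $-h\,\norm{\nabla f(x)}^2$.

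I do not expect a real obstacle here. The only point to be careful about is the direction of the inequality when replacing $1 - \beta h/2$ by $1/2$, which is legitimate precisely because $\norm{\nabla f(x)}^2 \ge 0$. I will also note in passing that the optimal choice $h = 1/\beta$ gives the decrease $-\frac{1}{2\beta}\,\norm{\nabla f(x)}^2$. This mirrors the continuous-time identity in~\autoref{lem:gf_descent}, up to the factor $1/2$ lost to discretization.
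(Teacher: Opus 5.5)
Your proposal is correct and follows essentially the same route as the paper: plug $y = x^+$ into the smoothness inequality~\eqref{eq:smooth} to get $f(x^+) - f(x) \le -h\,(1 - \beta h/2)\,\norm{\nabla f(x)}^2$, then use $h \le 1/\beta$ to bound $-h\,(1-\beta h/2) \le -h/2$. The only tacit assumption, shared with the paper, is that the step size satisfies $h \ge 0$.
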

\begin{proof}
    By the smoothness inequality~\eqref{eq:smooth},
    \begin{align*}
        f(x^+)
        &\le f(x) + \langle \nabla f(x), x^+ - x \rangle + \frac{\beta}{2}\,\norm{x^+ - x}^2
        = f(x) - h\,\norm{\nabla f(x)}^2 + \frac{\beta h^2}{2}\,\norm{\nabla f(x)}^2\,.
    \end{align*}
    If $h \le 1/\beta$, then $-h\,(1-\beta h/2) \le -h/2$.
\end{proof}

It is natural to state the subsequent results in terms of the following parameter.

\begin{defn}\label{defn:cond_numb}
    Let $f$ be $\alpha$-convex and $\beta$-smooth.
    Then, the \textbf{condition number} of $f$ is defined to be the ratio $\kappa \deq \beta/\alpha \ge 1$.
\end{defn}

When $f$ is quadratic, $f(x) = \frac{1}{2}\,\langle x, A\,x\rangle$ with $A$ symmetric, then $\alpha$, $\beta$ correspond to the minimum and maximum eigenvalues of $A$ respectively, and the ratio $\beta/\alpha$ is known in numerical linear algebra as the condition number of the matrix $A$.
Thus,~\autoref{defn:cond_numb} provides a natural generalization of this notion.
With this definition in hand, we now arrive at our first convergence result for~\eqref{eq:GD}.

\begin{thm}[contraction of~\ref{eq:GD}]\label{thm:contraction_gd}
    Let $f$ be $\alpha$-convex and $\beta$-smooth.
    For all $x,y\in\R^d$ and step size $h \le 1/\beta$,
    \begin{align*}
        \norm{y^+ - x^+}
        &\le {(1-\alpha h)}^{1/2}\,\norm{y-x}\,.
    \end{align*}
\end{thm}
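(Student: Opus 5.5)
Write $\delta \deq y - x$ and $g \deq \nabla f(y) - \nabla f(x)$, so that $y^+ - x^+ = \delta - h\,g$. Expanding the square gives
\begin{align*}
    \norm{y^+ - x^+}^2
    &= \norm{\delta}^2 - 2h\,\langle g, \delta\rangle + h^2\,\norm{g}^2\,.
\end{align*}
The goal is to bound the last two terms by $-\alpha h\,\norm{\delta}^2$. Since $f$ is assumed twice continuously differentiable, the cleanest route is via the Hessian. By the fundamental theorem of calculus (as in the proof of~\autoref{prop:cvx_equiv}), $g = H\,\delta$ with $H \deq \int_0^1 \nabla^2 f(x + t\,\delta)\,\D t$. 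By~\eqref{eq:str_cvx_3} and the smoothness equivalences, each Hessian satisfies $\alpha I \preceq \nabla^2 f \preceq \beta I$, so the averaged symmetric matrix $H$ satisfies the same bounds.

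We then have $y^+ - x^+ = (I - hH)\,\delta$, and it suffices to bound the operator norm of $I - hH$. Its eigenvalues are $1 - h\lambda$ with $\lambda \in [\alpha, \beta]$. Since $h \le 1/\beta$, these lie in $[0, 1 - \alpha h]$. Hence $\norm{y^+ - x^+} \le (1-\alpha h)\,\norm{\delta}$, which is even stronger than the stated bound because $1 - \alpha h \le (1-\alpha h)^{1/2}$ when $0 \le 1 - \alpha h \le 1$.

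If we want an argument that avoids second derivatives (matching the weaker exponent $1/2$), we can instead use co-coercivity, $\langle g, \delta\rangle \ge \frac{1}{\beta}\,\norm{g}^2$. Then $h^2\,\norm{g}^2 \le h\beta h\,\langle g,\delta\rangle \le h\,\langle g,\delta\rangle$. This reduces the expansion to $\norm{\delta}^2 - h\,\langle g,\delta\rangle \le (1-\alpha h)\,\norm{\delta}^2$ by~\eqref{eq:str_cvx_2}, giving exactly the stated rate. Co-coercivity is not yet established in the paper (it is related to~\autoref{qu:smooth_implies_lipgrad}), so under the standing $C^2$ assumption I would prove it through $H$: since $0 \preceq H \preceq \beta I$, we have $H^2 \preceq \beta H$, so $\norm{H\delta}^2 \le \beta\,\langle H\delta, \delta\rangle$.

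The main obstacle is controlling the $h^2\,\norm{g}^2$ term, which requires a relation between $\norm{g}^2$ and $\langle g, \delta\rangle$ stronger than~\eqref{eq:lip_grad} alone. Under the $C^2$ assumption, the averaged Hessian $H$ resolves this cleanly. The one point to check is that $H$ is symmetric with spectrum in $[\alpha,\beta]$, which follows by averaging the quadratic-form bounds.
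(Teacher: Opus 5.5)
Your proof is correct, but your main route differs from the paper's. The paper's proof is your fallback: it expands the square and absorbs the $h^2\,\norm{\nabla f(y)-\nabla f(x)}^2$ term using co-coercivity~\eqref{eq:coercivity} from \autoref{qu:smooth_implies_lipgrad} together with $h\beta\le 1$. That leaves $\norm{y-x}^2 - h\,\langle \nabla f(y)-\nabla f(x), y-x\rangle$, which~\eqref{eq:str_cvx_2} bounds by $(1-\alpha h)\,\norm{y-x}^2$.

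Your primary route instead writes the gradient step as $(I-hH)(y-x)$ with the averaged Hessian $H$ and bounds the spectrum of $I-hH$. This is the linearization mechanism of \autoref{qu:gd_sharp_contraction}. It gives a genuinely stronger factor, $1-\alpha h$ instead of $(1-\alpha h)^{1/2}$, which halves the iteration count for a given accuracy. The cost is that it uses the section's standing $C^2$ assumption.

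The paper's route is purely first order. \autoref{qu:smooth_implies_lipgrad} derives~\eqref{eq:coercivity} from the descent lemma applied to $z\mapsto f(z)-\langle\nabla f(x),z\rangle$, so the theorem holds for $C^1$ functions. It also illustrates the template the section emphasizes: the first-order term mimics the flow, and smoothness controls the $h^2$ term.

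Your fallback, as written, does not actually avoid second derivatives, because you prove co-coercivity via $H^2\preceq\beta H$. That is valid under $C^2$, but the derivative-free version needs the exercise's argument instead.

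The sharper factor $1-\alpha h$ also does not depend on $C^2$. You can obtain it by applying co-coercivity to the convex, $(\beta-\alpha)$-smooth function $f-\frac{\alpha}{2}\,\norm{\cdot}^2$; the resulting cross term has the right sign because $h(\alpha+\beta)\le 2$.
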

\begin{proof}
    Expanding the square,
    \begin{align*}
        \norm{y^+ - x^+}^2
        &= \norm{y-x}^2 - 2h\,\langle y-x, \nabla f(y) - \nabla f(x) \rangle + h^2\,\norm{\nabla f(y) - \nabla f(x)}^2\,.
    \end{align*}
    By~\eqref{eq:coercivity} in~\autoref{qu:smooth_implies_lipgrad} below, for $h \le 1/\beta$ and from~\eqref{eq:str_cvx_2} we have
    \begin{align*}
        \norm{y^+ - x^+}^2
        &\le \norm{y-x}^2 - h\,\langle \nabla y-x, \nabla f(y) - \nabla f(x) \rangle
        \le (1-\alpha h)\,\norm{y-x}^2\,. \qedhere
    \end{align*}
\end{proof}

In particular, if we take $y = x_\star$, $h=1/\beta$, and iterate, it yields
\begin{align*}
    \norm{x_N - x_\star}
    &\le \bigl(1 - \frac{1}{\kappa}\bigr){\bigsp}^{N/2}\,\norm{x_0 - x_\star}
    \le \exp\bigl( - \frac{N}{2\kappa}\bigr)\,\norm{x_0 - x_\star}\,.
\end{align*}
Thus, to obtain $\norm{x_N - x_\star} \le \varepsilon$, it suffices to take $N \ge 2\kappa\log(\norm{x_0 - x_\star}/\varepsilon)$.

The essence of these proofs is that the first-order term (scaling as $h$) replicates the continuous-time calculation, and we must apply smoothness in an appropriate way to control the second-order term (scaling as $h^2$).
In the above proof, note that if we na\"{\i}vely use Lipschitzness of the gradient~\eqref{eq:lip_grad} to control the second-order term, it leads to the suboptimal choice of step size $h = 1/(\beta\kappa)$, and a contraction factor of $(1-1/\kappa^2){}^{1/2}$.
To obtain $\norm{x_N - x_\star} \le \varepsilon$, we would then have the estimate $N \ge 2\kappa^2 \log(\norm{x_0 - x_\star}/\varepsilon)$, which is substantially worse.
In conclusion, a bit of finesse is necessary.
(In fact,~\autoref{thm:contraction_gd} can also be improved, and the sharp rate is derived in~\autoref{qu:gd_sharp_contraction}.)

Next, we turn toward the analogue of~\autoref{thm:gf_fn_value}.

\begin{thm}[convergence of~\ref{eq:GD} in function value]\label{thm:gd_fn_value}
    Let $f$ be $\alpha$-convex and $\beta$-smooth.
    For any step size $h \le 1/\beta$,
    \begin{align}\label{eq:gd_recursion}
        \norm{x^+ - x_\star}^2
        &\le (1-\alpha h)\,\norm{x-x_\star}^2 - 2h\,(f(x^+) - f_\star)\,.
    \end{align}
    Therefore,
    \begin{align}\label{eq:gd_fn_value}
        f(x_N) - f_\star
        &\le \frac{\alpha}{2\,\{{(1-\alpha h)}^{-N} - 1\}}\,\norm{x_0 - x_\star}^2\,.
    \end{align}
    When $\alpha = 0$, the right-hand side should be interpreted as its limiting value as $\alpha \to 0$, namely, $\frac{1}{2Nh}\,\norm{x_0 - x_\star}^2$.
\end{thm}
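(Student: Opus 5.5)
The plan is to mimic the continuous-time proof of \autoref{thm:gf_fn_value}. First establish the one-step recursion~\eqref{eq:gd_recursion}, then iterate it with discrete weights (a discrete Gr\"onwall argument), using the descent property (\autoref{lem:descent}) to replace a weighted sum of function gaps by the last one.

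For the one-step inequality, expand the square:
\begin{align*}
    \norm{x^+ - x_\star}^2 = \norm{x - x_\star}^2 - 2h\,\langle \nabla f(x), x - x_\star\rangle + h^2\,\norm{\nabla f(x)}^2\,.
\end{align*}
The first-order term is handled by~\eqref{eq:str_cvx_1} with $y = x_\star$:
\begin{align*}
    -\langle \nabla f(x), x - x_\star\rangle \le -(f(x) - f_\star) - \frac{\alpha}{2}\,\norm{x-x_\star}^2\,,
\end{align*}
which produces the factor $(1-\alpha h)$ and a term $-2h\,(f(x)-f_\star)$. The second-order term is absorbed using \autoref{lem:descent}: since $h \le 1/\beta$, we have $h^2\,\norm{\nabla f(x)}^2 \le 2h\,(f(x) - f(x^+))$. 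Combining the two gives $-2h\,(f(x) - f_\star) + 2h\,(f(x)-f(x^+)) = -2h\,(f(x^+) - f_\star)$, which is exactly~\eqref{eq:gd_recursion}. This is the main point requiring care: the $h^2$ term must be traded against the decrease in $f$ rather than bounded via~\eqref{eq:lip_grad}, so that $f(x)$ becomes $f(x^+)$ in the final estimate.

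For the iteration, write $D_n \deq \norm{x_n - x_\star}^2$, $\delta_n \deq f(x_n) - f_\star$ and $q \deq 1-\alpha h \in [0,1)$ (assume $\alpha h<1$; the case $\alpha h = 1$ gives $\delta_N\le \alpha D_0/2$ directly and matches the formula). Divide the recursion $D_{n+1} \le q\,D_n - 2h\,\delta_{n+1}$ by $q^{n+1}$ and telescope from $n=0$ to $N-1$:
\begin{align*}
    0 \le q^{-N} D_N \le D_0 - 2h\sum_{n=1}^N q^{-n}\,\delta_n\,.
\end{align*}
By \autoref{lem:descent}, $\delta_n \ge \delta_N$ for $n \le N$. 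Hence
\begin{align*}
    2h\,\delta_N \sum_{n=1}^N q^{-n} \le D_0\,,
\end{align*}
where the geometric sum equals $\frac{q^{-N}-1}{1-q} = \frac{q^{-N}-1}{\alpha h}$. Rearranging yields~\eqref{eq:gd_fn_value}. For $\alpha = 0$, the sum equals $N$ and we obtain $\delta_N \le D_0/(2Nh)$ directly, which matches the stated limit as $\alpha\to 0$.
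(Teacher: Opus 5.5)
Your proof is correct and follows the paper's argument essentially step for step. You expand the square, apply~\eqref{eq:str_cvx_1} at $y=x_\star$, absorb the $h^2\norm{\nabla f(x)}^2$ term via \autoref{lem:descent}, then unroll the recursion (your division by $q^{n+1}$ is exactly the discrete Gr\"onwall lemma) and use monotonicity of $f(x_n)$. One small slip is in the degenerate aside: when $\alpha h = 1$ the recursion reads $D_{n+1}\le -2h\,\delta_{n+1}$, which forces $\delta_N = 0$ for $N\ge 1$; that is what the formula gives in the limit $q\to 0$, whereas $\alpha D_0/2$ is only a weaker bound.
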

\begin{proof}
    Expanding the square and applying convexity via~\eqref{eq:str_cvx_1},
    \begin{align*}
        \norm{x^+ - x_\star}^2
        &= \norm{x-x_\star}^2 - 2h\,\langle \nabla f(x), x - x_\star \rangle + h^2\,\norm{\nabla f(x)}^2 \\
        &\le (1-\alpha h)\,\norm{x-x_\star}^2 - 2h\,(f(x) - f_\star) + h^2\,\norm{\nabla f(x)}^2\,.
    \end{align*}
    For $h\le 1/\beta$, the descent lemma (\autoref{lem:descent}) now implies~\eqref{eq:gd_recursion}.

    The proof of~\eqref{eq:gd_fn_value}, based on iterating the recursive inequality~\eqref{eq:gd_recursion}, is justified after~\autoref{lem:discrete_gronwall} below.
\end{proof}

We remark for later use that the proof of~\eqref{eq:gd_recursion} goes through even if we replace $x_\star$ with any other point $z\in\R^d$, i.e.,
\begin{align}\label{eq:evi}
    \norm{x^+ - z}^2
    &\le (1-\alpha h)\,\norm{x-z}^2 - 2h\,(f(x^+) - f(z))\,, \qquad\text{for all}~z\in\R^d\,.
\end{align}

Iterating~\eqref{eq:gd_recursion} is a matter of unrolling the recursion, but in order to maintain the analogy with continuous time, we refer to the lemma below as ``discrete Gr\"onwall''.

\begin{lem}[discrete Gr\"onwall]\label{lem:discrete_gronwall}
    Suppose that for some $A > 0$,
    \begin{align*}
        u_{n+1}
        &\le Au_n + B_n \qquad\text{for}~n = 0,1,\dotsc,N-1\,.
    \end{align*}
    Then,
    \begin{align*}
        u_N
        &\le A^N u_0 + \sum_{n=1}^N A^{N-n} B_{n-1}\,.
    \end{align*}
\end{lem}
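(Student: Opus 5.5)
The plan is to mirror the continuous-time proof of \autoref{lem:gronwall}, with the integrating factor $\exp(-At)$ replaced by $A^{-n}$. Since $A > 0$, multiplying an inequality by a power of $A$ preserves its direction. Multiplying the hypothesis $u_{n+1} \le Au_n + B_n$ by $A^{-(n+1)}$ gives
\begin{align*}
    A^{-(n+1)}\,u_{n+1} - A^{-n}\,u_n
    &\le A^{-(n+1)}\,B_n\,, \qquad n = 0,1,\dotsc,N-1\,.
\end{align*}
This is the discrete analogue of the bound $\partial_t[\exp(-At)\,u(t)] \le B(t)\exp(-At)$.

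Next, I sum these inequalities over $n = 0,\dotsc,N-1$. The left-hand side telescopes, which plays the role of the fundamental theorem of calculus, and leaves
\begin{align*}
    A^{-N}\,u_N - u_0
    &\le \sum_{n=0}^{N-1} A^{-(n+1)}\,B_n\,.
\end{align*}
Multiplying through by $A^N > 0$ gives $u_N \le A^N u_0 + \sum_{n=0}^{N-1} A^{N-n-1} B_n$. Reindexing with $m = n+1$ turns this into exactly the claimed form $\sum_{m=1}^N A^{N-m} B_{m-1}$.

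As an alternative, one could argue by induction on $N$. The case $N = 0$ is trivial. For the inductive step, write $u_{N+1} \le Au_N + B_N$ and substitute the inductive bound for $u_N$; this again uses $A > 0$ so that multiplying by $A$ preserves the inequality. There is no real obstacle here. The only points needing care are that the sign of $A$ is what justifies scaling the inequalities, and that the index shift at the end is done correctly.
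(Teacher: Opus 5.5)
Your proposal is correct and follows the paper's proof essentially verbatim: multiply by the discrete integrating factor $A^{-(n+1)}$ (using $A>0$), telescope the sum over $n=0,\dotsc,N-1$, then multiply by $A^N$ and reindex. The induction alternative you sketch is also valid.
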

\begin{proof}
    We multiply the given inequality by $A^{-(n+1)}$ to form a telescoping sum:
    \begin{align*}
        A^{-N}\,u_N - u_0
        &= \sum_{n=0}^{N-1} A^{-(n+1)} \,(u_{n+1} - Au_n)
        \le \sum_{n=0}^{N-1} A^{-(n+1)}\,B_n\,.
    \end{align*}
    Rearrange to obtain the result.
\end{proof}

To complete the proof of~\autoref{thm:gd_fn_value}, we apply~\autoref{lem:discrete_gronwall} with $u_n = \norm{x_n - x_\star}^2$, $A = 1-\alpha h$, and $B_n = -2h\,(f(x_{n+1}) - f_\star)$, yielding
\begin{align*}
    2h\sum_{n=1}^N {(1-\alpha h)}^{N-n}\, (f(x_n) - f_\star)
    &\le {(1-\alpha h)}^N\,\norm{x_0 - x_\star}^2\,.
\end{align*}
For $h\le 1/\beta$, the descent lemma (\autoref{lem:descent}) implies $f(x_n) - f_\star \ge f(x_N) - f_\star$, so
\begin{align*}
    f(x_N) - f_\star
    \le \frac{\norm{x_0 - x_\star}^2}{2h \sum_{n=1}^N {(1-\alpha h)}^{-n}}
    = \frac{\alpha\,\norm{x_0 - x_\star}^2}{2\,\{{(1-\alpha h)}^{-N} - 1\}}\,.
\end{align*}
In particular, let us set $h = 1/\beta$.
For $\alpha > 0$ it yields
\begin{align*}
    f(x_N) - f_\star
    &\le \frac{\alpha\,\norm{x_0 - x_\star}^2}{2\,\{{(1-1/\kappa)}^{-N} - 1\}}
\end{align*}
and for $\alpha = 0$, it yields
\begin{align*}
    f(x_N) - f_\star
    &\le \frac{\beta\,\norm{x_0 - x_\star}^2}{2N}\,.
\end{align*}

The proof of convergence under~\eqref{eq:PL} is strikingly easy.

\begin{thm}[convergence of~\ref{eq:GD} under~\ref{eq:PL}]
    Let $f$ be $\beta$-smooth and satisfy~\eqref{eq:PL} with constant $\alpha > 0$.
    Then, for all $h \le 1/\beta$,
    \begin{align*}
        f(x_N) - f_\star
        &\le {(1-\alpha h)}^N\,(f(x_0) - f_\star)\,.
    \end{align*}
\end{thm}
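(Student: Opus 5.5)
The plan is to mimic the continuous-time argument behind \autoref{cor:gf_pl}, replacing the chain rule by the descent lemma (\autoref{lem:descent}) and Gr\"onwall's lemma by its discrete counterpart (\autoref{lem:discrete_gronwall}). No convexity is needed. The only ingredient besides \eqref{eq:PL} is that, for $h\le 1/\beta$, a single step decreases $f$ by at least $\frac h2\,\norm{\nabla f(x)}^2$.

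Concretely, fix $n$ and apply \autoref{lem:descent} at $x = x_n$, which gives
\begin{align*}
    f(x_{n+1}) - f_\star \le f(x_n) - f_\star - \frac{h}{2}\,\norm{\nabla f(x_n)}^2\,.
\end{align*}
Next, lower bound the gradient norm using \eqref{eq:PL}, namely $\norm{\nabla f(x_n)}^2 \ge 2\alpha\,(f(x_n) - f_\star)$. Substituting yields the one-step contraction
\begin{align*}
    f(x_{n+1}) - f_\star \le (1-\alpha h)\,(f(x_n) - f_\star)\,.
\end{align*}
Finally, iterate this inequality, which is \autoref{lem:discrete_gronwall} with $u_n = f(x_n) - f_\star$, $A = 1-\alpha h$ and $B_n = 0$. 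This gives $f(x_N) - f_\star \le (1-\alpha h)^N\,(f(x_0)-f_\star)$.

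There is essentially no obstacle. The one point to check is that $A = 1-\alpha h$ is nonnegative, and strictly positive if we want to invoke \autoref{lem:discrete_gronwall} as stated with $A>0$. This holds because $\alpha \le \beta$ under \eqref{eq:PL} and $\beta$-smoothness. Indeed, combining \eqref{eq:smooth} at $x$ with $y = x - \frac1\beta\,\nabla f(x)$ gives $f_\star \le f(x) - \frac{1}{2\beta}\,\norm{\nabla f(x)}^2$, and hence $2\alpha\,(f(x)-f_\star)\le \norm{\nabla f(x)}^2 \le 2\beta\,(f(x)-f_\star)$. Choosing any $x$ with $f(x)>f_\star$ then forces $\alpha\le\beta$; if no such $x$ exists, $f$ is constant and the claim is trivial. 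Consequently $\alpha h \le 1$. In the edge case $\alpha h = 1$, the one-step bound already gives $f(x_1)=f_\star$, and the descent property keeps $f(x_n) = f_\star$ thereafter. Alternatively, one can simply unroll the recursion by induction and avoid \autoref{lem:discrete_gronwall} entirely.
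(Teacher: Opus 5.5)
Your proof is correct and is essentially the paper's argument: combine the descent lemma (\autoref{lem:descent}) with \eqref{eq:PL} to get $f(x_{n+1}) - f_\star \le (1-\alpha h)\,(f(x_n) - f_\star)$, then iterate. Iterating requires $1-\alpha h \ge 0$, and your check of this via $\alpha \le \beta$ is a detail the paper leaves implicit.
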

\begin{proof}
    By the descent lemma (\autoref{lem:descent}) and~\eqref{eq:PL},
    \begin{align*}
        f(x^+) - f_\star
        = f(x) - f_\star + f(x^+) - f(x)
        &\le f(x) - f_\star - \frac{h}{2}\,\norm{\nabla f(x)}^2 \\
        &\le (1-\alpha h)\,(f(x) - f_\star)\,. \qedhere
    \end{align*}
\end{proof}

Finally, we present the result for obtaining a stationary point.

\begin{thm}
    Let $f$ be $\beta$-smooth and $h\le 1/\beta$.
    Then,
    \begin{align*}
        \min_{n=0,1,\dotsc,N-1}{\norm{\nabla f(x_n)}}
        &\le \sqrt{\frac{2\,(f(x_0) - f_\star)}{Nh}}\,.
    \end{align*}
\end{thm}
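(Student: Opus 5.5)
The plan is to mirror the continuous-time argument that preceded the corollary on convergence of~\eqref{eq:GF} in gradient norm. There, integrating the descent identity of~\autoref{lem:gf_descent} bounded the time-average of $\norm{\nabla f(x_s)}^2$. In discrete time, the role of that identity is played by the descent lemma (\autoref{lem:descent}), which applies because $h \le 1/\beta$. For each iterate $x_n$ of~\eqref{eq:GD}, it gives
\begin{align*}
    \frac{h}{2}\,\norm{\nabla f(x_n)}^2
    &\le f(x_n) - f(x_{n+1})\,, \qquad n = 0,1,\dotsc,N-1\,.
\end{align*}

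Next, I will sum this inequality over $n = 0,\dotsc,N-1$. The right-hand side telescopes to $f(x_0) - f(x_N)$. Since $f_\star = \min f$, this is at most $f(x_0) - f_\star$. Hence
\begin{align*}
    \frac{h}{2}\sum_{n=0}^{N-1} \norm{\nabla f(x_n)}^2
    &\le f(x_0) - f_\star\,.
\end{align*}

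Finally, I will bound the sum from below by $N$ times its smallest term, $N \min_{n} \norm{\nabla f(x_n)}^2$. Dividing by $Nh/2$ and taking square roots then gives the stated bound $\sqrt{2\,(f(x_0)-f_\star)/(Nh)}$. The only step that needs any care is recognizing that the descent lemma gives \emph{per-step} progress proportional to the squared gradient norm, so that telescoping works; no convexity is needed. I do not anticipate a genuine obstacle, since the argument is a direct discrete analogue of the gradient-flow computation.
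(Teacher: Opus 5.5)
Your proposal is correct and follows the paper's proof: telescope the descent lemma (\autoref{lem:descent}) over $n=0,\dotsc,N-1$, use $f(x_N)\ge f_\star$, and lower-bound the average of $\norm{\nabla f(x_n)}^2$ by its minimum before taking square roots.
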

\begin{proof}
    Telescope the descent lemma (\autoref{lem:descent}):
    \begin{align*}
        \frac{h}{2N} \sum_{n=0}^{N-1} \norm{\nabla f(x_n)}^2
        &\le \frac{1}{N} \sum_{n=0}^{N-1}\,(f(x_n) - f(x_{n+1}))
        \le \frac{f(x_0) - f_\star}{N}\,. \qedhere
    \end{align*}
\end{proof}

We summarize the results for~\ref{eq:GD} in~\autoref{tab:gd}.

\begin{table}[h]
    \centering
    \begin{tabular}{ccc}
        \textbf{Assumptions} & \textbf{Criterion} & \textbf{Iterations} \\
        $\alpha$\text{-convex}, $\beta$\text{-smooth} & $\norm{x_N - x_\star} \le \varepsilon$ & $O(\kappa \log(R/\varepsilon))$ \\
        $\alpha$\text{-convex}, $\beta$\text{-smooth} & $f(x_N) - f_\star \le \varepsilon$ & $O(\kappa \log(\alpha R^2/\varepsilon))$ \\
        \text{convex}, $\beta$\text{-smooth} & $f(x_N) - f_\star \le \varepsilon$ & $O(\beta R^2/\varepsilon)$ \\
        $\alpha$\text{-\eqref{eq:PL}}, $\beta$\text{-smooth} & $f(x_N) - f_\star \le \varepsilon$ & $O(\kappa \log(\Delta_0/\varepsilon))$ \\
        $\beta$\text{-smooth} & $\displaystyle\min_{n=0,1,\dotsc,N-1}{\norm{\nabla f(x_n)}} \le \varepsilon$ & $O(\beta \Delta_0/\varepsilon^2)$
    \end{tabular}
    \caption{Rates for~\ref{eq:GD} with step size $1/\beta$. Here, $R \deq \norm{x_0 - x_\star}$ and $\Delta_0 \deq f(x_0) - f_\star$.}\label{tab:gd}
\end{table}

\begin{ex}[logistic regression revisited]\label{ex:logistic_gd}
    For fun, let us revisit logistic regression (\autoref{ex:logistic_regression}) from a statistical lens.
    For concreteness, we consider Gaussian design, $X_i \simiid \normal(0, I)$, and assume that the data is generated from the model with a true parameter $\theta^\star$.
    Let $\widehat{\eu L}$ denote the MLE objective, let $\eu L \deq \E\widehat{\eu L}$ denote the population risk, and let $R \deq \norm{\theta^\star} \ge 1$.
    The state-of-the-art result~\cite{ChaLerMou24Logistic} shows that if $n \gtrsim Rd$ for a sufficiently large implied constant, $\widehat \theta_{\rm MLE}$ exists with probability $\ge 1-\exp(-d)$ and satisfies the optimal risk bound $\eu L(\widehat \theta_{\rm MLE}) - \eu L(\theta^\star) \lesssim d/n$.

    In practice, we cannot compute $\widehat \theta_{\rm MLE}$ exactly, so we use~\ref{eq:GD}.
    From~\cite{ChaLerMou24Logistic}, any estimator $\widehat\theta$ satisfying $\widehat{\eu L}(\widehat \theta) - \widehat{\eu L}(\widehat \theta_{\rm MLE}) \lesssim d/n$ satisfies the same statistical risk bound as $\widehat \theta_{\rm MLE}$, up to a universal constant.
    We take $\widehat \theta = \widehat \theta_{\rm GD}$ to be the output of~\ref{eq:GD} after $N$ steps, and check how large $N$ must be in order for this to hold.
    As justified in~\autoref{qu:logistic_gd}, we can expect an iteration complexity of $N \asymp R^2 n/d$.
\end{ex}

\subsection*{Bibliographical notes}

My understanding of~\autoref{thm:gd_fn_value} is again based on extensive discussions with Jason M.\ Altschuler, Adil Salim, Andre Wibisono, and Ashia Wilson.

\subsection*{Exercises}

\begin{question}\label{qu:smooth_implies_lipgrad}
    Let $f : \R^d\to\R$ be convex and $\beta$-smooth.
    Apply~\autoref{lem:descent} to the function $y\mapsto f(y) - \langle \nabla f(x), y \rangle$ and observe that this function is minimized at $x$ in order to prove
    \begin{align}\label{eq:pre_coercivity}
        f(y)
        &\ge f(x) + \langle \nabla f(x), y-x\rangle + \frac{1}{2\beta}\,\norm{\nabla f(y) - \nabla f(x)}^2\,.
    \end{align}
    From this, deduce that
    \begin{align}\label{eq:coercivity}
        \norm{\nabla f(y) - \nabla f(x)}^2
        &\le \beta\,\langle \nabla f(y) - \nabla f(x), y-x\rangle\,.
    \end{align}
    Finally, use the Cauchy{--}Schwarz inequality to show that $\nabla f$ is $\beta$-Lipschitz, i.e., that~\eqref{eq:lip_grad} holds.
    Note that this proof that convexity and $\beta$-smoothness together imply~\eqref{eq:lip_grad} does not require $f$ to be twice differentiable.
\end{question}

\begin{question}\label{qu:gd_sharp_contraction}
    Let $f$ be $\alpha$-convex and $\beta$-smooth.
    Let $T \deq {\id} - h\,\nabla f$ denote the one-step~\ref{eq:GD} mapping.
    By the fundamental theorem of calculus,
    \begin{align*}
        \norm{y^+ - x^+}
        = \norm{T(y) - T(x)}
        &= \Bigl\lVert \int_0^1 \nabla T((1-t)\,x + t\,y)\,(y-x)\, \D t \Bigr\rVert \\[0.25em]
        &\le \Bigl(\int_0^1 \norm{\nabla T((1-t)\,x + t y)}_{\rm op}\,\D t\Bigr)\,\norm{y-x}\,.
    \end{align*}
    For any $z \in \R^d$, bound the eigenvalues of $\nabla T(z)$ and show that the choice of step size $h$ which minimizes the bound on $\norm{\nabla T(z)}_{\rm op}$ is $h = 2/(\alpha + \beta)$.
    Deduce the sharp rate
    \begin{align*}
        \norm{y^+ - x^+}
        &\le \frac{\kappa-1}{\kappa+1}\,\norm{y-x}\,.
    \end{align*}
    Note that for large $\kappa$, the contraction factor is approximately $\exp(-2/\kappa)$, so this improves upon the iteration complexity implied by~\autoref{thm:contraction_gd} by a factor of nearly $4$.
\end{question}

\begin{question}
    Let $f(x) \deq \frac{1}{2}\,\langle x, A\,x\rangle$ where $A \succ 0$.
    Write out the iterates of GD explicitly, and check how sharp the results of this section are.
\end{question}

\begin{question}\label{qu:logistic_gd}
    What does~\autoref{thm:gd_fn_value} imply for logistic regression (\autoref{ex:logistic_regression})?
    In the setting of~\autoref{ex:logistic_gd}, use the fact that $\lambda_{\max}(\frac{1}{n} \sum_{i=1}^n X_i X_i^\T) \lesssim 1$ with high probability\footnote{This is a standard fact about the Wishart distribution; see, e.g.,~\cite[Theorem 4.4.5]{Ver18HighDimProb}.} to justify the claimed $R^2 n/d$ iteration complexity.
\end{question}

\begin{question}\label{qu:softmax_smoothing}
    Suppose that we wish to minimize $f : \R^d\to\R$ given by
    \begin{align*}
        f(x) \deq \max_{i\in [m]}{\{\langle a_i, x\rangle - b_i\}} + \frac{\lambda}{2}\,\norm x^2\,.
    \end{align*}
    Here, $\lambda > 0$.
    Now, consider smoothing the objective: for a parameter $\beta > 0$, let
    \begin{align*}
        f_\beta(x) \deq \frac{1}{\beta} \log \sum_{i=1}^m \exp(\beta\,\{\langle a_i, x \rangle - b_i\}) + \frac{\lambda}{2}\,\norm x^2\,.
    \end{align*}
    \begin{enumerate}
        \item Show that $f \le f_\beta \le f + \beta^{-1} \log m$.
        \item Show that $f_\beta$ is strongly convex and smooth, and compute the parameters in terms of $\beta$, $\lambda$, and $A$, where $A \in \R^{m\times d}$ has rows $a_i^\T$, $i\in [m]$.
        \item Suppose that $R_\beta \ge \norm{x_0 - x_{\beta,\star}}$ where $x_{\beta,\star} \in \argmin f_\beta$, and run~\ref{eq:GD} on $f_\beta$ with initialization $x_0$.
            Find choices for $\beta$ and the number of steps $N$ such that~\ref{eq:GD} obtains an $\varepsilon$-approximate minimizer for the original function $f$.
    \end{enumerate}
\end{question}


\section{Lower bounds for smooth optimization}

The goal of this section is to establish lower complexity bounds for convex smooth optimization.
Refer to \S\ref{ssec:overview} for a conceptual first discussion of the oracle model.

Before doing so, we present some reductions between the convex and strongly convex settings which save us some effort.

\subsection{Reductions between the convex and strongly convex settings}\label{ssec:reductions}

For brevity, let us say that an algorithm \emph{successfully optimizes} a function class $\ms F$ in $\phi(\ms F, R, \varepsilon)$ iterations if, given any $f \in \ms F$ and $x_0\in\R^d$ with $\norm{x_0 - x_\star} \le R$, it outputs $x$ with $f(x) - f_\star \le \varepsilon$ using no more than $\phi(\ms F, R, \varepsilon)$ queries to a first-order oracle for $f$.

\begin{lem}\label{lem:weakcvx_to_strcvx}
    Assume there is an algorithm which successfully optimizes the class of convex and $\beta$-smooth functions in $\phi(\beta R^2/\varepsilon)$ iterations.

    Then, there is an explicit algorithm which successfully optimizes the class of $\alpha$-convex and $\beta$-smooth functions in $O(\phi(8\kappa) \log(\alpha R^2/\varepsilon))$ iterations.
\end{lem}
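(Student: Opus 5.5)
The plan is a restarting argument. The $\alpha$-convex, $\beta$-smooth function $f$ is in particular convex and $\beta$-smooth, so the given algorithm can be run on it directly. We run it in epochs, and in each epoch we use strong convexity to convert a function-value guarantee into a distance guarantee, which resets the radius for the next epoch.

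Let $R_k$ denote an upper bound on $\norm{y_k - x_\star}$ at the start of epoch $k$, with $y_0 = x_0$ and $R_0 = R$.

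In epoch $k$, run the algorithm from $y_k$ with target accuracy $\varepsilon_k \deq \alpha R_k^2/8$. Since $\norm{y_k - x_\star} \le R_k$, it uses at most $\phi(\beta R_k^2/\varepsilon_k) = \phi(8\kappa)$ queries. It outputs $y_{k+1}$ with $f(y_{k+1}) - f_\star \le \alpha R_k^2/8$.

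Next we translate this into a distance bound. Applying~\eqref{eq:str_cvx_1} at $x = x_\star$, where $\nabla f(x_\star) = 0$, gives
\begin{align*}
    \frac{\alpha}{2}\,\norm{y_{k+1} - x_\star}^2 \le f(y_{k+1}) - f_\star \le \frac{\alpha}{8}\,R_k^2\,.
\end{align*}
Hence $\norm{y_{k+1} - x_\star} \le R_k/2 \eqqcolon R_{k+1}$. The radius halves in every epoch, so $R_k = 2^{-k} R$, and the explicit algorithm only needs the known quantities $\alpha$ and $R$ to set each $\varepsilon_k$.

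To finish, stop after $K$ epochs, where $K$ is the first index with $\alpha R_{K-1}^2/8 \le \varepsilon$. At that point the last output already satisfies $f(y_K) - f_\star \le \varepsilon_{K-1} \le \varepsilon$. Since $\alpha R_{k}^2/8 = \alpha R^2 4^{-k}/8$, we get $K = O(\log(\alpha R^2/\varepsilon))$, or $K = 1$ when $\alpha R^2 \le 8\varepsilon$, in which case the bound is just $\phi(8\kappa)$. The total number of queries is therefore $K\,\phi(8\kappa) = O(\phi(8\kappa)\log(\alpha R^2/\varepsilon))$.

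The only delicate point is the choice of constants: the target $\varepsilon_k$ must make the argument of $\phi$ exactly $8\kappa$ while still producing a constant-factor contraction of the distance. The factor $8$ does both, since it yields contraction by $1/2$. Beyond that, one only needs to handle the ceiling and the trivial case $\alpha R^2 \lesssim \varepsilon$ when counting epochs.
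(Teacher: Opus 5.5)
Your proposal is correct and is essentially the paper's restarting argument: the same per-round targets $\varepsilon_k = \alpha R_k^2/8$ with $R_k = R/2^k$, each round costing $\phi(8\kappa)$, and strong convexity (quadratic growth at $x_\star$) halving the distance bound after every round. The only difference is the stopping rule. The paper keeps restarting until the distance is at most $\sqrt{\varepsilon/\alpha}$ and then makes one final call at accuracy $\varepsilon$, costing $\phi(\kappa)$. You instead stop as soon as a round's target is already at most $\varepsilon$, which saves that extra call and avoids having to compare $\phi(\kappa)$ with $\phi(8\kappa)$.
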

\begin{proof}
    For any $\bar x\in\R^d$ and $\varepsilon > 0$, let $\eu A(\bar x, \bar\varepsilon)$ denote the output of the given algorithm, starting from $\bar x$ and with error tolerance $\bar \varepsilon$.
    By assumption, it outputs $\hat x$ with $f(\hat x) - f_\star \le \bar\varepsilon$ in $\phi(\beta \bar R^2/\bar \varepsilon)$ iterations, where $\bar R \deq \norm{\bar x - x_\star}$.

    Let $f$ be $\alpha$-strongly convex and $\beta$-smooth, and let $R \deq \norm{x_0 - x_\star}$.
    We define a sequence of points as follows.
    Let $x_1 \deq \eu A(x_0, \varepsilon_1)$.
    By~\eqref{eq:qg}, we have
    \begin{align*}
        \frac{\alpha}{2}\,\norm{x_1 - x_\star}^2 \le f(x_1) - f_\star \le \varepsilon_1\,.
    \end{align*}
    Set $\varepsilon_1 = \alpha R^2/8$, so that
    \begin{align}\label{eq:geom_decr}
        \norm{x_1-x_\star} \le R_1 \deq R/2 = \norm{x_0-x_\star}/2\,.
    \end{align}
    For $\kappa \deq \beta/\alpha$, this requires $\phi(8\kappa)$ iterations.

    In general, at round $k$, let $x_{k+1} \deq \eu A(x_k, \alpha R_k^2/8)$, where $R_k \deq R/2^k$.
    By the same reasoning as~\eqref{eq:geom_decr}, each round requires $\phi(8\kappa)$ iterations and yields a point $x_{k+1}$ with $\norm{x_{k+1} - x_\star} \le R_{k+1}$.
    Thus, if repeat this procedure for $O(\log(\alpha R^2/\varepsilon))$ rounds, we can reach a point $\tilde x$ satisfying $\tilde R \deq \norm{\tilde x-x_\star} \le \sqrt{\varepsilon/\alpha}$.

    Finally, let $x \deq \eu A(\tilde x, \varepsilon)$, so that $f(x) - f_\star \le \varepsilon$.
    The complexity of this final step is $\phi(\beta \tilde R^2/\varepsilon) = \phi(\kappa)$.
\end{proof}

For example, if we combine the $\alpha=0$ case of~\autoref{thm:gd_fn_value} with~\autoref{lem:weakcvx_to_strcvx}, taking $\phi(x) = O(x)$, we recover the $\alpha > 0$ case of~\autoref{thm:gd_fn_value}, up to constants.

\begin{lem}\label{lem:strcvx_to_weakcvx}
    Assume there is an algorithm which successfully optimizes the class of $\alpha$-convex and $\beta$-smooth functions in $\phi(\kappa) \log(\alpha R^2/\varepsilon)$ iterations.

    Then, there is an explicit algorithm which successfully optimizes the class of convex and $\beta$-smooth functions in $O(\phi(2\beta R^2/\varepsilon))$ iterations.
\end{lem}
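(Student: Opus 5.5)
The plan is to reduce to the strongly convex case by adding a small quadratic regularizer centered at the starting point. Given a convex, $\beta$-smooth $f$ and $x_0$ with $\norm{x_0 - x_\star} \le R$, define
\begin{align*}
    f_\lambda(x) \deq f(x) + \frac{\lambda}{2}\,\norm{x - x_0}^2\,,\qquad \lambda \deq \frac{\varepsilon}{R^2}\,.
\end{align*}
By \autoref{prop:cvx_equiv} (using \autoref{qu:quad_strcvx} for the quadratic term), $f_\lambda$ is $\lambda$-convex and $(\beta+\lambda)$-smooth. A first-order oracle for $f_\lambda$ is obtained from one for $f$ at no extra query cost, since $\nabla f_\lambda(x) = \nabla f(x) + \lambda\,(x - x_0)$. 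We will run the given strongly convex algorithm on $f_\lambda$ from $x_0$, with target accuracy $\varepsilon/2$.

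Next we relate minimization of $f_\lambda$ to minimization of $f$. Let $x_\lambda$ be the minimizer of $f_\lambda$. If $f_\lambda(x) - f_\lambda(x_\lambda) \le \varepsilon/2$, then
\begin{align*}
    f(x) \le f_\lambda(x) \le f_\lambda(x_\lambda) + \frac{\varepsilon}{2} \le f_\lambda(x_\star) + \frac{\varepsilon}{2} = f_\star + \frac{\lambda}{2}\,\norm{x_\star - x_0}^2 + \frac{\varepsilon}{2} \le f_\star + \varepsilon\,.
\end{align*}
So an $\varepsilon/2$-minimizer of $f_\lambda$ is an $\varepsilon$-minimizer of $f$.

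We also need the initial distance $\norm{x_0 - x_\lambda}$ for $f_\lambda$, which enters the logarithm. Applying \eqref{eq:str_cvx_1} for $f_\lambda$ at $x_\lambda$ (where $\nabla f_\lambda(x_\lambda)=0$) gives $f_\lambda(x_0) - f_\lambda(x_\lambda) \ge \frac{\lambda}{2}\norm{x_0 - x_\lambda}^2$. Moreover,
\begin{align*}
    f_\lambda(x_0) - f_\lambda(x_\lambda) \le f(x_0) - f_\star \le \frac{\beta}{2}\,R^2\,,
\end{align*}
by \eqref{eq:smooth} at $x_\star$. Therefore $\lambda\,\norm{x_0 - x_\lambda}^2 \le \beta R^2$. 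Alternatively, $f(x_\lambda) + \frac\lambda2\norm{x_\lambda-x_0}^2 \le f(x_\star) + \frac\lambda2 R^2$ gives $\norm{x_\lambda - x_0} \le R$, and hence $\lambda\,\norm{x_0-x_\lambda}^2 \le \varepsilon$ directly.

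It remains to count iterations. The condition number of $f_\lambda$ is $(\beta+\lambda)/\lambda = 1 + \beta R^2/\varepsilon \le 2\beta R^2/\varepsilon$, assuming $\varepsilon \le \beta R^2$; otherwise $x_0$ is already $\varepsilon$-optimal by smoothness. The logarithmic factor is $\log(\lambda\,\norm{x_0-x_\lambda}^2/(\varepsilon/2)) \le \log 2 = O(1)$. If that logarithm is read as possibly zero or too small, we simply take $\max\{1,\cdot\}$. The total is therefore $O(\phi(2\beta R^2/\varepsilon))$, assuming $\phi$ is monotone.

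The main obstacle is this bookkeeping: showing that the logarithmic factor is $O(1)$. This is what the bound $\norm{x_\lambda - x_0}\le R$ supplies, and it makes the choice $\lambda = \varepsilon/R^2$ essential.
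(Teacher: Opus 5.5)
Your proof is correct and is essentially the paper's argument. It uses the same regularizer $f + \frac{\varepsilon}{2R^2}\norm{\cdot - x_0}^2$, the same chain $f(x)\le f_\lambda(x)\le f_\lambda(x_\star)+\varepsilon/2\le f_\star+\varepsilon$, the same bound $\norm{x_\lambda - x_0}\le R$ obtained from $f_\lambda(x_\lambda)\le f_\lambda(x_\star)$, and the same condition-number estimate under $\varepsilon\le\beta R^2$. Your explicit $\log 2$ computation and your handling of the trivial case simply spell out what the paper leaves as ``substitute these quantities''.

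One small simplification: you do not need to assume $\phi$ is monotone. Since $\beta+\lambda\le 2\beta$, $f_\lambda$ belongs to the class of $\lambda$-convex and $2\beta$-smooth functions, whose condition number is exactly $2\beta R^2/\varepsilon$.
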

\begin{proof}
    Let $f$ be convex and $\beta$-smooth.
    We apply the given algorithm to the regularized function $f_\delta \deq f + \frac{\delta}{2} \,\norm{\cdot - x_0}^2$, obtaining a point $x$ such that $f_\delta(x) \le \min f_\delta + \varepsilon/2$.
    If $x_{\delta,\star}$ denotes the minimizer of $f_\delta$, then
    \begin{align*}
        f(x)
        \le f_\delta(x)
        &\le f_\delta(x_{\delta,\star}) + \frac{\varepsilon}{2}
        \le f_\delta(x_\star) + \frac{\varepsilon}{2}
        = f_\star + \frac{\delta}{2}\,\norm{x_0-x_\star}^2 + \frac{\varepsilon}{2}\,.
    \end{align*}
    We now set $\delta = \varepsilon/R^2$, so that $f(x) - f_\star \le \varepsilon$.

    It remains to estimate the complexity.
    We first note that $f_\delta(x_{\delta,\star}) \le f_\delta(x_\star)$ implies $\norm{x_0 - x_{\star,\delta}} \le \norm{x_0 - x_\star}$, so the initial distance to the minimizer of $f_\delta$ is also bounded by $R$.
    We can assume that $\varepsilon \le \beta R^2$ (or else the minimization problem is trivial).
    Then, the smoothness of $f_\delta$ is bounded by $\beta + \delta \le 2\beta$, and the condition number of $f_\delta$ is bounded by $2\beta R^2/\varepsilon$.
    Substitute these quantities into the complexity of the given algorithm.
\end{proof}

Thus, the $\alpha > 0$ case of~\autoref{thm:gd_fn_value} and~\autoref{lem:strcvx_to_weakcvx} recover the $\alpha = 0$ case of~\autoref{thm:gd_fn_value} up to constants.

Taken together,~\autoref{lem:weakcvx_to_strcvx} and~\autoref{lem:strcvx_to_weakcvx} show that the $0$-convex and strongly convex settings are essentially equivalent to each other, in that an optimal method for one class yields an optimal method for the other class.
Thus, we now aim to address the following question: what is the smallest possible $\phi(\cdot)$?

\subsection{Lower bounds}

According to the discussion in \S\ref{ssec:overview}, establishing a lower complexity bound requires showing that \emph{any} algorithm which interacts with the first-order oracle using at most a prescribed number of queries cannot have performance better than the lower bound.
Actually, although this is possible (see~\cite{NemYud1983Complexity}), it is not especially easy.
It was shown by Nesterov in an earlier edition of~\cite{Nes18CvxOpt} that by imposing natural restrictions on the class of algorithms under consideration, it is possible to establish the lower bounds in a more transparent way.
Accordingly, his approach has become standard in the field, and it is the approach we adopt here as well.
It does, however, have the drawback of not applying to general query algorithms; for example, it does not apply against randomized algorithms.

The class of algorithms we consider is the following one.

\begin{defn}\label{def:gradient_span}
    An algorithm is called a \textbf{gradient span} algorithm if it deterministically generates a sequence of points ${\{x_n\}}_{n\in\N}$ such that for all $n\in\N$,
    \begin{align*}
        x_{n+1}
        &\in x_0 + \spn\{\nabla f(x_0),\dotsc,\nabla f(x_n)\}\,.
    \end{align*}
\end{defn}

For example,~\ref{eq:GD} is a gradient span algorithm.
On the basis of this assumption, we now establish the following result; recall the asymptotic notation $\gtrsim$, which only hides a universal constant.

\begin{thm}[lower bound for convex, smooth minimization]\label{thm:cvx_smooth_lb}
    For any $1 \le N \le \frac{d-1}{2}$, $\beta > 0$, and $x_0\in\R^d$, there exists a convex and $\beta$-smooth function $f : \R^d\to\R$ such that for any gradient span algorithm,
    \begin{align*}
        f(x_N) - f_\star
        &\gtrsim \frac{\beta\,\norm{x_0 - x_\star}^2}{N^2}\,.
    \end{align*}
    In other words, in order to obtain $f(x_N) - f_\star \le \varepsilon$, the number of iterations must satisfy
    \begin{align*}
        N
        &\gtrsim \sqrt{\frac{\beta \,\norm{x_0-x_\star}^2}{\varepsilon}}\,.
    \end{align*}
\end{thm}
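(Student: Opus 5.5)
We will use Nesterov's ``worst function in the world'' construction. By translating, we may assume $x_0 = 0$: given the claim at $x_0 = 0$, the function $x \mapsto f(x - x_0)$ works in general, since gradient span algorithms commute with translations. Fix $k \deq 2N+1 \le d$ and let $A_k \in \R^{d\times d}$ be the matrix that acts on the first $k$ coordinates as the tridiagonal matrix with $2$ on the diagonal and $-1$ off the diagonal, and is zero elsewhere. Define
\begin{align*}
    f(x) \deq \frac{\beta}{4}\,\Bigl(\frac12\,\langle x, A_k\,x\rangle - \langle e_1, x\rangle\Bigr)\,.
\end{align*}
Since $\langle x, A_k\,x\rangle = x[1]^2 + \sum_{i=1}^{k-1}(x[i]-x[i+1])^2 + x[k]^2 \ge 0$, the function $f$ is convex. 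The bound $(a-b)^2 \le 2a^2+2b^2$ gives $A_k \preceq 4I$, so $f$ is $\beta$-smooth. The minimizer is found by solving $A_k\,x = e_1$ on the first $k$ coordinates, which gives $x_\star[i] = 1 - \frac{i}{k+1}$ for $i\le k$ and $0$ afterwards. Plugging in, we get $f_\star = -\frac{\beta}{8}\,(1 - \frac{1}{k+1})$ and $\norm{x_\star}^2 = \sum_{i=1}^k (1-\frac{i}{k+1})^2 \le \frac{k+1}{3}$.

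The key structural step is an induction on the span property. Let $\R^{d}_{j}$ denote the vectors supported on the first $j$ coordinates. Then $\nabla f(x) = \frac{\beta}{4}\,(A_k\,x - e_1)$, and because $A_k$ is tridiagonal, $x \in \R^d_j$ implies $\nabla f(x) \in \R^d_{j+1}$. Since $x_0 = 0$ and $\nabla f(0) \propto e_1$, induction using \autoref{def:gradient_span} shows $x_n \in \R^d_n$ for every $n$. Hence
\begin{align*}
    f(x_N) \ge \min_{x\in\R^d_N} f(x) = f_N^\star\,.
\end{align*}
On $\R^d_N$ the function $f$ agrees with the analogous function built from $A_N$, because the matrix entries coupling coordinate $N$ to coordinate $N+1$ vanish when $x[N+1]=0$. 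Therefore $f_N^\star = -\frac{\beta}{8}\,(1 - \frac{1}{N+1})$.

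It remains to compare the two values:
\begin{align*}
    f(x_N) - f_\star \ge \frac{\beta}{8}\,\Bigl(\frac{1}{N+1} - \frac{1}{2N+2}\Bigr) = \frac{\beta}{16\,(N+1)}\,,
\end{align*}
while $\norm{x_0 - x_\star}^2 \le \frac{2N+2}{3}$. The ratio is therefore at least $\frac{3\beta}{32\,(N+1)^2} \gtrsim \frac{\beta\,\norm{x_0-x_\star}^2}{N^2}$, using $N \ge 1$. The iteration-count form follows by rearranging. Note that $k = 2N+1 \le d$ is exactly the hypothesis $N \le \frac{d-1}{2}$.

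The main care point is the invariance argument together with the precise computation of the restricted minimum. One must check that restricting to the first $N$ coordinates really produces the same tridiagonal quadratic with $N$ in place of $k$; the boundary term $x[N]^2$ arises because $x[N+1]=0$. One must also check that solving the linear system gives the stated closed forms. The choice $k \approx 2N$ is what balances the gap in function value, which is of order $\beta/N$, against $R^2$, which is of order $N$.
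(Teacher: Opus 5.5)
Your proposal is correct and is essentially the paper's proof: the same tridiagonal quadratic, the same span induction, the same restriction identity on the first $N$ coordinates, the same explicit minima, and the same choice $2N+1$. It adds two minor refinements: you embed $f_{2N+1}$ into $\R^d$ so that every $d \ge 2N+1$ is covered, and you use the sharper bound $\norm{x_\star}^2 \le (k+1)/3$. One small point is that once $k<d$ the minimizer is not unique, so your $x_\star$ should be read as the minimum-norm minimizer, i.e.\ $\norm{x_0 - x_\star}$ is the distance from $x_0$ to $\argmin f$, which is the natural reading of the statement.
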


Before proving this result, we observe that by applying~\autoref{lem:strcvx_to_weakcvx} with $\phi(x) \asymp \sqrt x$, it yields the following corollary.

\begin{thm}[lower bound for strongly convex, smooth minimization]\label{thm:strcvx_smooth_lb}
    For any $0 < \alpha < \beta$, any $\varepsilon > 0$, any $d$ sufficiently large, and any $x_0\in\R^d$, there exists an $\alpha$-convex and $\beta$-smooth function $f : \R^d\to\R$ such that for any gradient span algorithm, in order to obtain $f(x_N) - f_\star \le \varepsilon$, the number of iterations must satisfy
    \begin{align*}
        N
        &\gtrsim \sqrt\kappa\log \frac{\alpha\,\norm{x_0 - x_\star}^2}{\varepsilon}\,.
    \end{align*}
\end{thm}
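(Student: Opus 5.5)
The plan is to argue by contraposition through the regularization reduction of \autoref{lem:strcvx_to_weakcvx}, and then to sharpen the argument with a direct ``chain'' construction to recover the logarithmic factor.

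\textbf{Step 1: the reduction preserves the gradient span property.} Given a gradient span algorithm for the strongly convex class, run it on $f_\delta \deq f + \frac{\delta}{2}\,\norm{\cdot - x_0}^2$. I first check that the resulting procedure is again a gradient span algorithm for $f$. A first-order oracle for $f_\delta$ is simulated by one query to $f$, since $\nabla f_\delta(x) = \nabla f(x) + \delta\,(x - x_0)$. By induction, if $x_0,\dotsc,x_n \in x_0 + \spn\{\nabla f(x_0),\dotsc,\nabla f(x_{n-1})\}$, then every $\nabla f_\delta(x_k)$ with $k \le n$ lies in $\spn\{\nabla f(x_0),\dotsc,\nabla f(x_n)\}$. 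Hence $x_{n+1}$ lies in $x_0 + \spn\{\nabla f(x_0),\dotsc,\nabla f(x_n)\}$, as required by \autoref{def:gradient_span}.

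\textbf{Step 2: the $\sqrt\kappa$ factor.} Suppose some gradient span method reached accuracy $\varepsilon/2$ on every $\alpha$-convex, $\beta$-smooth function within $N \le c\sqrt\kappa\,\log(\alpha R^2/\varepsilon)$ iterations, for a small constant $c$. Take $\delta = \varepsilon/R^2$, so that $\alpha R^2/(\varepsilon/2) = 2$ and the logarithm is a constant. By the proof of \autoref{lem:strcvx_to_weakcvx}, we would then optimize every convex, $\beta$-smooth function in $O(c\sqrt{2\beta R^2/\varepsilon})$ iterations. For $c$ small enough this contradicts \autoref{thm:cvx_smooth_lb}, which is where the requirement ``$d$ sufficiently large'' enters. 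Alternatively, one can apply \autoref{thm:cvx_smooth_lb} directly to its hard instance with smoothness $\beta - \alpha$, regularized with $\delta = \alpha$, which gives the bound in the regime where $\alpha R^2/\varepsilon$ is of constant order.

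\textbf{Step 3: the logarithmic factor (main obstacle).} The reduction alone only yields $\sqrt\kappa$, because it runs in a regime where $\alpha R^2/\varepsilon = O(1)$; I expect this to be the hardest part. My first attempt would be a direct construction in the spirit of the proof of \autoref{thm:cvx_smooth_lb}. Take
\[
f(x) = \frac{\beta - \alpha}{8}\,\Bigl(\langle x, A\,x\rangle - 2\,\langle e_1, x\rangle\Bigr) + \frac{\alpha}{2}\,\norm x^2,
\]
where $A$ is the tridiagonal $(2,-1)$ chain matrix (in large dimension), and shift so that $x_0 = 0$.

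\begin{itemize}
    \item Since $0 \preceq A \preceq 4I$, the function $f$ is $\alpha$-convex and $\beta$-smooth.
    \item Because of the chain structure, the gradient span property forces $x_N$ to be supported on the first $N$ coordinates.
    \item Solving the linear recursion $\nabla f(x_\star) = 0$ shows that, in the limit of infinite dimension, $x_\star[k] = q^k$ with $q = (\sqrt\kappa - 1)/(\sqrt\kappa + 1)$.
    \item Consequently $\norm{x_N - x_\star}^2 \ge \sum_{k > N} q^{2k} \gtrsim q^{2N}\,\norm{x_0 - x_\star}^2$.
    \item Applying \eqref{eq:qg}, which holds by strong convexity, gives $f(x_N) - f_\star \ge \frac{\alpha}{2}\,q^{2N} R^2 \approx \frac{\alpha}{2}\exp(-4N/\sqrt\kappa)\, R^2$.
\end{itemize}

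Rearranging the last bound gives $N \gtrsim \sqrt\kappa\,\log(\alpha R^2/\varepsilon)$. The delicate point is the finite-dimensional truncation: I would handle it by taking $d$ large compared to $N$ and controlling the boundary error of the recursion, or by working in $\ell_2$ and truncating at the end.
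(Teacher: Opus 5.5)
Your proposal is correct, but the part that actually establishes the statement (Step 3) is not the paper's argument. The paper derives \autoref{thm:strcvx_smooth_lb} as a one-line corollary of \autoref{thm:cvx_smooth_lb}, via the regularization reduction \autoref{lem:strcvx_to_weakcvx} with $\phi(x)\asymp\sqrt x$; this is essentially your Steps 1--2. That route is short and reuses the convex hard instance. Your Step 1 also supplies a check the paper leaves implicit: a gradient span method run on $f+\frac{\delta}{2}\norm{\cdot-x_0}^2$ is still a gradient span method for $f$. But, as you correctly observe, the reduction only probes the strongly convex problem at $\alpha R^2/\varepsilon=O(1)$, so by itself it yields only $N\gtrsim\sqrt\kappa$. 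The logarithmic factor in the paper's corollary comes from presupposing that the complexity has the product form $\phi(\kappa)\log(\alpha R^2/\varepsilon)$, rather than from the argument itself. Your Step 3 is the direct construction that the paper relegates to \autoref{qu:strcvx_lb}: the chain quadratic with $x_\star[k]=q^k$, where $q=\frac{\sqrt\kappa-1}{\sqrt\kappa+1}$. It gives the full $\sqrt\kappa\log$ bound for a single fixed function, which makes Steps 1--2 unnecessary.

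Two points in Step 3 need tightening.

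\textbf{Truncation.} This can be made explicit. Set $x[0]=1$ and $x[d+1]=0$. Then the system $(A_d+\frac{4}{\kappa-1}I)\,x=e_1$ becomes the recursion $x[k-1]+x[k+1]=(q+q^{-1})\,x[k]$ for $k\in[d]$, whose solution is $x_\star[k]=\frac{q^k-q^{2(d+1)-k}}{1-q^{2(d+1)}}$. Hence $q^k\,(1-q^{2(d+1-k)})\le x_\star[k]\le q^k$. So for $d\ge N+C\sqrt\kappa$, with $C$ an absolute constant, you get $\sum_{k>N}x_\star[k]^2\ge\frac18\,q^{2N}\norm{x_\star}^2$.

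\textbf{The regime of $\kappa$.} Passing from $q^{2N}$ to $\exp(-O(N/\sqrt\kappa))$ uses $\log(1/q)\le 2/(\sqrt\kappa-1)$. This is $O(1/\sqrt\kappa)$ only when $\kappa$ is bounded away from $1$ (say $\kappa\ge 4$). The restriction is genuinely needed, and it is missing from the statement itself. For $\kappa=1+\eta$, gradient descent with step $2/(\alpha+\beta)$ contracts by $\eta/(2+\eta)$ per step. It therefore reaches accuracy $\varepsilon$ in roughly $\log(\beta R^2/\varepsilon)/(2\log(2/\eta))$ iterations, which beats $c\log(\alpha R^2/\varepsilon)$ for any fixed $c$ once $\eta$ is small.
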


\begin{proof}[Proof of~\autoref{thm:cvx_smooth_lb}]
    By translating the problem, we may assume $x_0 = 0$.
    The construction is based on the following function:
    \begin{align*}
        f_n : \R^d\to\R\,, \qquad
        f_n(x)
        &\deq \frac{\beta}{4}\,\Bigl\{ \frac{1}{2}\,\Bigl( {x[1]}^2 + \sum_{k=1}^{n-1} {(x[k] - x[k+1])}^2 + {x[n]}^2\Bigr) - x[1]\Bigr\}\,.
    \end{align*}
    For any $v\in\R^d$,
    \begin{align*}
        \langle v,\nabla^2 f_n(x)\,v\rangle
        &= \frac{\beta}{4} \,\Bigl({v[1]}^2 + \sum_{k=1}^{n-1} {(v[k] - v[k+1])}^2 + {v[n]}^2\Bigr)
        \le \beta\,\norm v^2\,,
    \end{align*}
    so each $f_n$ is convex and $\beta$-smooth.

    We prove by induction that when we apply a gradient span algorithm to $f_d$, the $n$-th iterate $x_n$ belongs to the subspace
    \begin{align*}
        \eu V_n
        &\deq \{x\in\R^d : x[k] = 0~\text{for all}~k=n+1,\dotsc,d\}\,.
    \end{align*}
    Clearly, $x_0 \in \eu V_0$.
    Inductively, suppose that $x_k \in \eu V_k$ for all $k \le n$.
    Then,
    \begin{align*}
        \nabla f_d(x_k)
        &= \frac{\beta}{4}\, \bigl(x_k[1]\,e_1 + \sum_{j=1}^k (x_k[j] - x_k[j+1])\,(e_j - e_{j+1})\bigr) - \frac{\beta}{4}\,e_1
        \in \eu V_{k+1}\,,
    \end{align*}
    hence
    \begin{align*}
        x_{n+1} \in \spn\{\nabla f_d(x_0),\dotsc,\nabla f_d(x_n)\} \subseteq \eu V_{n+1}\,.
    \end{align*}
    This completes the induction.
    Also, since $f_N = f_d$ on $\eu V_N$, it follows that
    \begin{align*}
        f_d(x_N)
        = f_N(x_N)
        \ge {(f_N)}_\star\,.
    \end{align*}

    The next step is to estimate ${(f_n)}_\star \deq \min f_n$ for all $n$.
    By setting the gradient to zero, $\nabla f_n(x_{n,\star}) = 0$, we obtain the following system of equations:
    \begin{align*}
        2x_{n,\star}[1] - x_{n,\star}[2]
        &= 1\,, \\
        x_{n,\star}[k-1] - 2x_{n,\star}[k] + x_{n,\star}[k+1]
        &= 0\,, \qquad\text{for}~k=2,\dotsc,n-1\,, \\
        -x_{n,\star}[n-1] + 2x_{n,\star}[n]
        &= 0\,.
    \end{align*}
    The solution is $x_{n,\star}[k] = 1-\frac{k}{n+1}$ for all $k \in [n]$.
    Writing $f_n(x) = \frac{\beta}{4} \,\{ \frac{1}{2}\,\langle x, A_n\,x\rangle - \langle e_1, x \rangle\}$, the system above reads $A_n x_{n,\star} = e_1$, hence
    \begin{align*}
        {(f_n)}_\star
        &= f_n(x_{n,\star})
        = - \frac{\beta}{8}\,\langle e_1, x_{n,\star}\rangle
        = - \frac{\beta}{8}\,\bigl(1 - \frac{1}{n+1}\bigr)\,.
    \end{align*}
    Moreover, $\norm{x_0 - x_{n,\star}}^2 = \norm{x_{n,\star}}^2 \le n$.
    Finally, it yields
    \begin{align*}
        f_d(x_N) - {(f_d)}_\star
        \ge {(f_N)}_\star - {(f_d)}_\star
        &= \frac{\beta}{8}\,\bigl( \frac{1}{N+1} - \frac{1}{d+1}\bigr) \\[0.25em]
        &\ge \frac{\beta\,\norm{x_0 - x_{d,\star}}^2}{8d}\, \bigl( \frac{1}{N+1} - \frac{1}{d+1}\bigr)\,.
    \end{align*}
    Choosing $d \asymp N$, e.g., $d = 2N+1$, yields the stated lower bound.
\end{proof}

Notably, the iteration complexity lower bounds~\autoref{thm:cvx_smooth_lb} and~\autoref{thm:strcvx_smooth_lb} are smaller than the bounds attained by~\ref{eq:GD} in~\autoref{thm:gd_fn_value} by a square root.
As developed in the next sections, in fact the lower bounds are tight and~\ref{eq:GD} is suboptimal.

We make two further remarks.
First, it is perhaps surprising that the lower bound construction is a \emph{quadratic} function; in some sense, quadratics are the hardest convex and smooth functions to optimize.
Second, the lower bound requires the ambient dimension to be larger than the iteration count; this is crucial for the proof technique, which relies on the algorithm discovering one new dimension per iteration.
This turns out to be fundamental because there are better methods in low dimension, for quadratics and even for general convex functions.

\subsection*{Exercises}

\begin{question}\label{qu:weakcvx_distance_lb}
    In the setting of~\autoref{thm:cvx_smooth_lb} and using the same construction as in the proof, show that $\norm{x_N - x_\star}^2 \gtrsim \norm{x_0 - x_\star}^2$.
    In other words, in the $0$-convex case, it is not possible to make progress in the sense of distance to the minimizer by more than a constant factor.
\end{question}

\begin{question}\label{qu:strcvx_lb}
    We used the reductions from \S\ref{ssec:reductions} to reduce the strongly convex lower bound to the $0$-convex lower bound for the sake of brevity, but it is of course possible to develop the strongly convex lower bound directly.
    Consider the function
    \begin{align*}
        f : \R^\infty \to\R\,, \qquad f(x)
        &\deq \frac{\beta - \alpha}{8} \,\Bigl\{{x[1]}^2 + \sum_{n=1}^\infty {(x[n] - x[n+1])}^2 - 2x[1]\Bigr\} + \frac{\alpha}{2}\,\norm x^2\,.
    \end{align*}
    By adapting the proof of~\autoref{thm:cvx_smooth_lb}, show that any gradient span algorithm satisfies
    \begin{align*}
        f(x_N) - f_\star
        \ge \frac{\alpha}{2}\,\norm{x_N - x_\star}^2
        \ge \frac{\alpha}{2}\,\Bigl( \frac{\sqrt\kappa-1}{\sqrt\kappa+1}\Bigr){\Bigsp}^{2N}\,\norm{x_0 - x_\star}^2\,.
    \end{align*}
\end{question}

\section{Acceleration}

We now show that the lower bounds of~\autoref{thm:cvx_smooth_lb} and~\autoref{thm:strcvx_smooth_lb} can be attained via algorithms which improve upon~\ref{eq:GD}.
This is known as the \emph{acceleration} phenomenon in optimization.
We begin with the quadratic case.

\subsection{Quadratic case: the conjugate gradient method}

In this section, the objective function is quadratic:
\begin{align*}
    f : \R^d\to\R\,, \qquad f(x) = \frac{1}{2}\,\langle x, A\,x\rangle - \langle b, x\rangle\,,
\end{align*}
where $A$ is a symmetric matrix, $A \succ 0$.
Note also that minimizing $f$ corresponds to solving the system of equations $Ax_\star = b$.
We now introduce the \emph{conjugate gradient} method~\cite{HesSti1952Conjugate}.

The method is succinctly described as follows:
\begin{align}\label{eq:CG}\tag{$\msf{CG}$}
    x_{n+1}
    &\deq \argmin\bigl\{f(x) \bigm\vert x \in x_0 + \spn\{\nabla f(x_0),\nabla f(x_1),\dotsc,\nabla f(x_n)\}\bigr\}\,.
\end{align}
This scheme is very natural in light of the definition of a gradient span algorithm (\autoref{def:gradient_span}) that we encountered for the lower bounds.
However, it is not yet clear that~\eqref{eq:CG} can be implemented cheaply.
Using the fact that $f$ is quadratic, our aim is to show that~\eqref{eq:CG} can be rewritten as a simple iteration that uses one gradient query per step.

As is usually the case in linear algebra, instead of working with the set of vectors $\{\nabla f(x_0),\nabla f(x_1),\dotsc,\nabla f(x_n)\}$, it is more convenient to work with an \emph{orthogonal} set $\{p_0,p_1,\dotsc,p_n\}$.
Here, orthogonality is with respect to the inner product $\langle \cdot, \cdot \rangle_A$, i.e., we will require $\langle p_i, A\,p_j \rangle = 0$ for all $i\ne j$.
We start with $p_0 \deq \nabla f(x_0)$, and we write $\eu K_n \deq \spn\{p_0,p_1,\dotsc,p_n\}$.
We must address the following two questions:
\begin{itemize}
    \item Given $\eu K_n$ and $x_n$, how can we compute $x_{n+1} = \argmin_{x_0+\eu K_n} f$?
    \item Given $\eu K_n$ and $\nabla f(x_{n+1})$, how can we compute $p_{n+1}$ and thus $\eu K_{n+1}$?
\end{itemize}

For the first question, we may assume inductively that $x_n = \argmin_{x_0 + \eu K_{n-1}} f$, which means that $\langle \nabla f(x_n), p_k\rangle = 0$ for all $k < n$.
The next point is taken to be $x_{n+1} = x_n + h_n p_n$, chosen so that $\langle \nabla f(x_{n+1}), p_k \rangle = 0$ for all $k \le n$.
Since $\nabla f$ is linear,
\begin{align*}
    \langle \nabla f(x_{n+1}), p_k \rangle
    &= \langle \nabla f(x_n) + h_n\,Ap_n, p_k \rangle\,.
\end{align*}
For $k < n$, this equals zero by the inductive hypothesis on $x_n$, and the orthogonality of $\{p_0,p_1,\dotsc,p_n\}$.
We choose $h_n$ to ensure that this equals zero for $k=n$ too:
\begin{align*}
    h_n
    &= -\frac{\langle \nabla f(x_n), p_n \rangle}{\norm{p_n}_A^2}\,.
\end{align*}

For the second question, we want to compute the Gram{--}Schmidt orthogonalization of $\nabla f(x_{n+1})$ w.r.t.\ $\{p_0,p_1,\dotsc,p_n\}$ in the $\langle \cdot, \cdot \rangle_A$ inner product.
We claim that $\nabla f(x_{n+1})$ is already $A$-orthogonal to $p_k$ for $k < n$, so that
\begin{align}\label{eq:CG_p}
    p_{n+1}
    &= \nabla f(x_{n+1}) - \langle \nabla f(x_{n+1}), p_n\rangle_A\, \frac{p_n}{\norm{p_n}_A^2}\,.
\end{align}
To justify this, we show that for $k < n$, $\boxed{Ap_k \in \eu K_{k+1}}$, hence
\begin{align*}
    \langle \nabla f(x_{n+1}), p_k \rangle_A
    &= \langle\nabla f(x_{n+1}), Ap_k \rangle
    = 0
\end{align*}
using the fact shown above that $\nabla f(x_{n+1})$ is orthogonal (in the usual inner product) to $\eu K_n$.
Finally, the boxed equation is shown through the following lemma.

\begin{lem}\label{lem:krylov}
    For all $n\in \N$,
    \begin{align*}
        \eu K_n
        = \spn\{p_0, Ap_0, \dotsc, A^n p_0\}\,.
    \end{align*}
\end{lem}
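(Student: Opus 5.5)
We will prove the claim by induction on $n$, carrying along two facts simultaneously: (i) $\eu K_n = \spn\{p_0, Ap_0, \dotsc, A^n p_0\}$, and (ii) $\eu K_n = \spn\{\nabla f(x_0), \dotsc, \nabla f(x_n)\}$. The second fact is what the~\eqref{eq:CG} definition actually refers to, so it is natural to establish both together. The base case $n=0$ is immediate from $p_0 = \nabla f(x_0)$. Throughout, we work in the nondegenerate regime where the $p_k$ are nonzero. If some $\nabla f(x_{n+1}) = 0$, the method has terminated at $x_\star$ and the spans stabilize, so nothing further needs to be proved.

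For the inductive step, assume both facts hold up to index $n$. The key identity is linearity of the gradient, $\nabla f(x_{n+1}) = \nabla f(x_n) + h_n\,Ap_n$.
\begin{itemize}
    \item By the inductive hypothesis, $\nabla f(x_n) \in \eu K_n \subseteq \spn\{p_0, \dotsc, A^{n+1}p_0\}$.
    \item Since $p_n \in \spn\{p_0, \dotsc, A^n p_0\}$, we have $Ap_n \in \spn\{Ap_0, \dotsc, A^{n+1}p_0\}$.
    \item Hence $\nabla f(x_{n+1}) \in \spn\{p_0, \dotsc, A^{n+1}p_0\}$.
    \item By~\eqref{eq:CG_p}, $p_{n+1}$ is $\nabla f(x_{n+1})$ minus a multiple of $p_n$, so it also lies in this Krylov space. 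This gives $\eu K_{n+1} \subseteq \spn\{p_0, \dotsc, A^{n+1}p_0\}$.
\end{itemize}
The same formula~\eqref{eq:CG_p} shows that $\eu K_{n+1} = \eu K_n + \spn\{\nabla f(x_{n+1})\}$, which is fact (ii) at index $n+1$.

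For the reverse inclusion, we compare dimensions rather than trying to express $A^{n+1}p_0$ directly. The vectors $p_0, \dotsc, p_{n+1}$ are nonzero and pairwise $A$-orthogonal with $A \succ 0$, so they are linearly independent and $\dim \eu K_{n+1} = n+2$. The Krylov space is spanned by $n+2$ vectors, so its dimension is at most $n+2$. An inclusion of a space of dimension $n+2$ into a space of dimension at most $n+2$ forces equality, which closes the induction.

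The main obstacle is circularity. The $A$-orthogonality of the $p_k$, and in particular the reduced Gram--Schmidt formula~\eqref{eq:CG_p}, was itself justified using the boxed fact $Ap_k \in \eu K_{k+1}$, which is a consequence of this lemma. We therefore need a joint induction: at stage $n$, assume the lemma for indices $\le n$ and $A$-orthogonality of $p_0, \dotsc, p_n$. Then $Ap_k \in \eu K_{k+1} \subseteq \eu K_n$ for $k<n$, and by the first-question argument $\nabla f(x_{n+1}) \perp \eu K_n$, so~\eqref{eq:CG_p} yields a $p_{n+1}$ that is $A$-orthogonal to all earlier $p_k$. With this in hand, the argument above gives the lemma at $n+1$. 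If an iterate $\nabla f(x_{n+1})$ vanishes, or more generally lies in $\eu K_n$, we have $\nabla f(x_{n+1}) \perp \eu K_n$ and $\nabla f(x_{n+1}) \in \eu K_n$, so it is zero. In that case we stop, since the iterate is already the minimizer.
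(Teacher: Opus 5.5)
Your proof is correct. The forward inclusion matches the paper's, which simply unrolls $\nabla f(x_{n+1}) = p_0 + h_0\,Ap_0 + \cdots + h_n\,Ap_n$. The reverse inclusion, however, takes a different route.

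\textbf{The paper's route.} It proves $A^{n+1}p_0 \in \eu K_{n+1}$ directly. Writing $A^n p_0 = \sum_{k=0}^n c_k p_k$, it needs $Ap_k \in \eu K_{n+1}$ for every $k\le n$. This holds for $k<n$ by induction, and for $k=n$ because $Ap_n = h_n^{-1}\,(\nabla f(x_{n+1}) - \nabla f(x_n))$. In effect, it proves the boxed fact $Ap_n \in \eu K_{n+1}$ head-on.

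\textbf{Your route.} You count dimensions instead. This requires $p_0,\dotsc,p_{n+1}$ to be nonzero and pairwise $A$-orthogonal, so the lemma must sit inside a joint induction with the orthogonality claims. You set this up explicitly and correctly. That usefully exposes a circularity the paper leaves implicit: the boxed fact justifies \eqref{eq:CG_p}, and \eqref{eq:CG_p} is used to prove this lemma, which in turn gives the boxed fact. As a byproduct you get $\dim \eu K_{n+1} = n+2$ before termination, and hence termination in at most $d$ steps.

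\textbf{What the paper's route buys.} Within the lemma it uses only the two recursions and $h_n \neq 0$ (i.e.\ $\nabla f(x_n)\neq 0$). It also stays valid at the terminal index, where $\nabla f(x_{n+1}) = 0$ and $p_{n+1}=0$: there it gives $Ap_n = -h_n^{-1}\nabla f(x_n) \in \eu K_n$. Your dimension count cannot reach this case, and you dismiss it with ``the spans stabilize''. That claim is true, but it is the paper's identity that justifies it.
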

\begin{proof}
    We proceed via induction, where the case $n=0$ is obvious.
    Assuming it holds at iteration $n$, let us show that $p_{n+1} \in \widetilde{\eu K}_{n+1} \deq \spn\{p_0,Ap_0,\dotsc,A^{n+1} p_0\}$.
    By~\eqref{eq:CG_p}, it suffices to show that $\nabla f(x_{n+1}) \in \widetilde{\eu K}_{n+1}$.
    However, as discussed above, $\nabla f(x_{n+1}) = \nabla f(x_n) + h_n\,Ap_n = p_0 + h_0\,Ap_0 + \cdots + h_n \, Ap_n \in \widetilde{\eu K}_{n+1}$.

    Conversely, we must show that $A^{n+1} p_0 \in \eu K_{n+1}$.
    Since $A^n p_0 \in \eu K_n$, we can write $A^n p_0 = \sum_{k=0}^n c_k p_k$, thus $A^{n+1} p_0 = \sum_{k=0}^n c_k\,Ap_k$.
    By the inductive hypothesis, each $Ap_k$ for $k < n$ belongs to $\eu K_n$, so it suffices to have $Ap_n \in \eu K_{n+1}$.
    However, we can observe that $Ap_n = h_n^{-1}\,(\nabla f(x_{n+1}) - \nabla f(x_n)) \in \eu K_{n+1}$ by~\eqref{eq:CG_p}.
\end{proof}

\begin{defn}
    The subspaces ${\{\eu K_n\}}_{n\in\N}$ are called \textbf{Krylov subspaces}.
\end{defn}

Finally, let us write the iterations in a form which is convenient for implementation.
Note first that $\langle \nabla f(x_n), \nabla f(x_{n+1}) \rangle = 0$ (indeed, $\nabla f(x_{n+1})$ is orthogonal to all of $\eu K_n$).
So,
\begin{align*}
    \frac{\langle \nabla f(x_{n+1}), p_n \rangle_A}{\norm{p_n}_A^2} 
    &= \frac{\langle \nabla f(x_{n+1}), \nabla f(x_{n+1}) -\nabla f(x_n) \rangle}{h_n\,\norm{p_n}_A^2}
    = -\frac{\norm{\nabla f(x_{n+1})}^2}{\langle \nabla f(x_n), p_n\rangle}
\end{align*}
and $\norm{\nabla f(x_n)}^2 = \langle \nabla f(x_n), \nabla f(x_n) \rangle = \langle \nabla f(x_n), p_n \rangle$ using~\eqref{eq:CG_p} and the fact that $\nabla f(x_n)$ is orthogonal to $\eu K_{n-1}$.
This yields the following iteration, where we write $r_n \deq Ax_n - b = \nabla f(x_n)$ for the residual.
\begin{align*}
    x_{n+1}
    &= x_n - \frac{\norm{r_n}^2}{\langle p_n, A\,p_n\rangle}\,p_n\,, \quad r_{n+1} = r_n - \frac{\norm{r_n}^2}{\langle p_n, A\,p_n\rangle}\,Ap_n\,, \quad p_{n+1} = r_{n+1} + \frac{\norm{r_{n+1}}^2}{\norm{r_n}^2}\,p_n\,.
\end{align*}
This algorithm requires one matrix-vector multiplication per iteration, namely, the computation of $Ap_n$.

Note that if $p_{n+1} = 0$, then $\nabla f(x_{n+1}) \in \eu K_n$, yet $\nabla f(x_{n+1}) \perp \eu K_n$ and thus $\nabla f(x_{n+1}) = 0$, $x_{n+1} = x_\star$.
Since $p_d = 0$ (an orthogonal set in $\R^d$ cannot have more than $d$ non-zero elements), we arrive at the following conclusion.

\begin{thm}[termination of~\ref{eq:CG}]\label{thm:termination_CG}
    The~\ref{eq:CG} algorithm returns the exact minimizer in at most $d$ iterations.
\end{thm}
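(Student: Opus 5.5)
The argument rests on two facts already established in the construction: the $A$-orthogonality of the directions $p_0,\dotsc,p_n$, and the orthogonality of $\nabla f(x_{n+1})$ to $\eu K_n$. I will first make the invariants of the recursion explicit and verify them together by induction on $n$. As long as no earlier direction vanished, the following hold:
\begin{itemize}
    \item $x_{n+1} = \argmin_{x_0 + \eu K_n} f$;
    \item $\langle \nabla f(x_{n+1}), p_k\rangle = 0$ for all $k\le n$;
    \item the nonzero vectors among $p_0,\dotsc,p_{n+1}$ are pairwise $A$-orthogonal.
\end{itemize}
The first item follows from the second because $f$ is strictly convex on the affine set $x_0 + \eu K_n$. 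Its first-order optimality condition on that set is exactly orthogonality of the gradient to $\eu K_n$, by the sufficient condition for optimality restricted to the subspace. The remaining items are precisely what the choice of $h_n$, formula~\eqref{eq:CG_p}, and \autoref{lem:krylov} provide. This confirms that the recursion really implements~\eqref{eq:CG}.

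Next comes the termination mechanism. Suppose $p_{n+1} = 0$ for some $n$. Then~\eqref{eq:CG_p} gives $\nabla f(x_{n+1}) \in \spn\{p_n\} \subseteq \eu K_n$. On the other hand, $\nabla f(x_{n+1}) \perp \eu K_n$. So $\nabla f(x_{n+1})$ is orthogonal to itself, hence zero. Since $f$ is strictly convex ($A\succ 0$), $x_{n+1} = x_\star$ by the sufficient condition for optimality and \autoref{lem:unique_min}. A degenerate start must also be handled: if $p_0 = \nabla f(x_0) = 0$, then already $x_0 = x_\star$.

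It then suffices to show that some $p_m$ with $m \le d$ vanishes. Nonzero, pairwise $A$-orthogonal vectors are linearly independent. Indeed, if $\sum_k c_k p_k = 0$, pairing with $Ap_j$ gives $c_j \norm{p_j}_A^2 = 0$, and $\norm{p_j}_A > 0$ because $A \succ 0$. Hence at most $d$ of them can be nonzero, so among $p_0,\dotsc,p_d$ at least one is zero. Let $m$ be the first index with $p_m = 0$. If $m = 0$, we are done at $x_0$. Otherwise the previous paragraph applied with $n+1 = m$ gives $x_m = x_\star$ with $m \le d$.

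The main obstacle is checking that the recursion stays well defined up to the first vanishing $p_m$. Concretely, each step size $h_n$ requires $\norm{p_n}_A \neq 0$, which holds exactly while $p_n \neq 0$. The Krylov identity of \autoref{lem:krylov} must also remain valid up to that index. I would phrase the induction with the hypothesis \enquote{$p_0,\dotsc,p_n$ all nonzero}, so that it is self-consistent. Once some $p_m = 0$, the algorithm has already produced $x_\star$, and subsequent iterates stay at $x_\star$ because the minimizer over any larger affine set containing $x_\star$ is $x_\star$.
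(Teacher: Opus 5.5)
Your proposal is correct and follows the paper's argument: if $p_{n+1} = 0$, then $\nabla f(x_{n+1}) \in \eu K_n$ while also $\nabla f(x_{n+1}) \perp \eu K_n$, so the gradient vanishes, and $A$-orthogonality caps the number of nonzero directions at $d$. Your extra bookkeeping makes rigorous a point the paper's one-line proof glosses over: you work with the first vanishing index $m \le d$ rather than asserting $p_d = 0$, and you note that $h_n$, formula~\eqref{eq:CG_p}, and \autoref{lem:krylov} stay well defined only while the $p_k$ are nonzero.
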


Let us now show that~\ref{eq:CG} can find an approximate minimizer at the accelerated rate.

\begin{thm}[accelerated convergence for~\ref{eq:CG}]\label{thm:CG_rate}
    Let $0 \prec \alpha I \preceq A \preceq \beta I$.
    Then,~\ref{eq:CG} outputs $x_N$ satisfying $f(x_N) - f_\star \le \varepsilon$ in $N = O(\sqrt \kappa \log \frac{f(x_0) - f_\star}{\varepsilon})$ iterations.
\end{thm}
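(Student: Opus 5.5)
The plan is to use the optimality property built into \eqref{eq:CG} together with the Krylov characterization of \autoref{lem:krylov}, and to reduce the convergence question to a polynomial approximation problem that Chebyshev polynomials solve.

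First, I identify the search space. Since $p_0 = \nabla f(x_0) = A x_0 - b = A(x_0 - x_\star)$, \autoref{lem:krylov} gives $\eu K_{N-1} = \spn\{A^k (x_0 - x_\star) : 1\le k\le N\}$. Consequently, every $x \in x_0 + \eu K_{N-1}$ satisfies $x - x_\star = q(A)\,(x_0 - x_\star)$ for some polynomial $q$ of degree at most $N$ with $q(0) = 1$, and every such polynomial arises in this way. Next, I use the identity $f(x) - f_\star = \frac12\,\norm{x - x_\star}_A^2$, valid because $Ax_\star = b$. Since $x_N$ minimizes $f$ over $x_0 + \eu K_{N-1}$, this yields
\begin{align*}
    f(x_N) - f_\star = \min_{\deg q \le N,\ q(0)=1} \frac12\,\norm{q(A)\,(x_0 - x_\star)}_A^2\,.
\end{align*}

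I then diagonalize $A$ in an orthonormal eigenbasis with eigenvalues $\lambda_i \in [\alpha,\beta]$. Writing $x_0 - x_\star = \sum_i c_i u_i$, we have
\begin{align*}
    \frac12\,\norm{q(A)\,(x_0-x_\star)}_A^2 = \frac12 \sum_i \lambda_i\, q(\lambda_i)^2 c_i^2 \le \Bigl(\max_{\lambda\in[\alpha,\beta]} q(\lambda)^2\Bigr)\,(f(x_0) - f_\star)\,.
\end{align*}
This reduces the theorem to exhibiting a polynomial $q$ of degree $N$ with $q(0)=1$ and $\sup_{[\alpha,\beta]}\abs{q} \le 2\bigl(\frac{\sqrt\kappa - 1}{\sqrt\kappa+1}\bigr)^N$. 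Given such a $q$, we obtain $f(x_N) - f_\star \le 4\exp(-4N/\sqrt\kappa)\,(f(x_0)-f_\star)$ (for $\kappa$ bounded away from $1$; for small $\kappa$ we just use $\frac{\sqrt\kappa-1}{\sqrt\kappa+1} \le \exp(-2/(\sqrt\kappa+1))$). Choosing $N \asymp \sqrt\kappa\log\frac{f(x_0)-f_\star}{\varepsilon}$ then finishes the proof.

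The main obstacle is constructing this polynomial, and I take the standard shifted Chebyshev choice
\begin{align*}
    q(\lambda) = \frac{T_N\bigl(\frac{\beta+\alpha-2\lambda}{\beta-\alpha}\bigr)}{T_N\bigl(\frac{\beta+\alpha}{\beta-\alpha}\bigr)}\,.
\end{align*}
Since $\abs{T_N}\le 1$ on $[-1,1]$, the numerator is bounded by $1$ on $[\alpha,\beta]$. For the denominator, I use the representation $T_N(t) = \frac12\bigl((t+\sqrt{t^2-1})^N + (t-\sqrt{t^2-1})^N\bigr)$ for $\abs t\ge1$. At $t = \frac{\kappa+1}{\kappa-1}$ one has $t + \sqrt{t^2-1} = \frac{\sqrt\kappa+1}{\sqrt\kappa-1}$, so $T_N(t) \ge \frac12\bigl(\frac{\sqrt\kappa+1}{\sqrt\kappa-1}\bigr)^N$, which gives the required bound. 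The remaining steps are routine checks; if $\kappa = 1$, then $A = \alpha I$ and \ref{eq:CG} terminates in one step.
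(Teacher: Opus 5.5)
Your proof is correct. It is not the argument the paper gives for this theorem, however. It is essentially the alternative route the paper only sketches afterwards (\autoref{lem:CG_poly_approx} together with \autoref{qu:CG_chebyshev}), and you carry it out in full.

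The paper's proof, taken from Nemirovski--Yudin, never uses polynomials. It runs as follows:
\begin{itemize}
\item The CG iterate is at least as good as a gradient step with step size $1/\beta$, so the descent lemma gives $f(x_0)-f_\star \ge \frac{1}{2\beta}\sum_{n<N}\|\nabla f(x_n)\|^2$.
\item Since $\nabla f(x_n)\perp x_n-x_0$, one has $f(x_n)-f_\star \le \langle \nabla f(x_n), x_0-x_\star\rangle$.
\item Summing, and using mutual orthogonality of the gradients, Cauchy--Schwarz and quadratic growth, gives $N\,(f(x_N)-f_\star)\le 2\sqrt\kappa\,(f(x_0)-f_\star)$.
\item Hence the gap halves every $4\sqrt\kappa$ iterations.
\end{itemize}
That argument is shorter and needs only the subspace-optimality/orthogonality structure of CG plus generic smoothness and strong-convexity inequalities. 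No spectral decomposition or Chebyshev theory is required. The price is poor constants: a factor $2$ per $4\sqrt\kappa$ steps.

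Your approach uses the quadratic structure fully: the exact identity $f-f_\star=\frac12\|\cdot\|_A^2$, the spectral decomposition, and Chebyshev extremality. In return it gives an exact characterization of the CG error as a best polynomial approximation. It also gives the much sharper bound $f(x_N)-f_\star\le 4\bigl(\frac{\sqrt\kappa-1}{\sqrt\kappa+1}\bigr)^{2N}(f(x_0)-f_\star)$, whose exponential rate matches the lower bound of \autoref{qu:strcvx_lb}, and it makes CG's adaptivity to the spectrum visible.

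One small remark: your caveat about $\kappa$ near $1$ is unnecessary. Since $\log\frac{1-u}{1+u}\le -2u$ for $u\in[0,1)$, you have $\frac{\sqrt\kappa-1}{\sqrt\kappa+1}\le \exp(-2/\sqrt\kappa)$ for every $\kappa\ge 1$.
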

\begin{proof}
    By the descent lemma (\autoref{lem:descent}) and the defining property of~\ref{eq:CG},
    \begin{align*}
        f(x_{n+1})
        &\le f\bigl(x_n - \frac{1}{\beta}\,\nabla f(x_n)\bigr)
        \le f(x_n) - \frac{1}{2\beta}\,\norm{\nabla f(x_n)}^2\,,
    \end{align*}
    so that
    \begin{align*}
        f(x_0) - f_\star
        &\ge \frac{1}{2\beta} \sum_{n=0}^{N-1} \norm{\nabla f(x_n)}^2\,.
    \end{align*}
    On the other hand, since $\nabla f(x_n) \perp x_{k+1} - x_k$ for $k < n$,
    \begin{align*}
        f_\star - f(x_n)
        &\ge \langle \nabla f(x_n), x_\star - x_n \rangle
        = \langle \nabla f(x_n), x_\star - x_0 \rangle\,.
    \end{align*}
    If we sum these inequalities and use orthogonality of the gradients,
    \begin{align*}
        N\,(f(x_N) - f_\star)
        &\le \sum_{n=0}^{N-1} (f(x_n) - f_\star)
        \le \Bigl\langle \sum_{n=0}^{N-1} \nabla f(x_n), x_0 - x_\star \Bigr\rangle
        \le \Bigl\lVert \sum_{n=0}^{N-1} \nabla f(x_n) \Bigr\rVert\,\norm{x_0 - x_\star} \\
        &\le \Bigl(\sum_{n=0}^{N-1} \norm{\nabla f(x_n)}^2\Bigr){\Bigsp}^{1/2} \,\sqrt{\frac{2\,(f(x_0) - f_\star)}{\alpha}}
        \le 2\sqrt{\kappa}\,(f(x_0) - f_\star)\,.
    \end{align*}
    Let $N$ be such that $f(x_N) - f_\star \ge (f(x_0) - f_\star)/2$.
    The inequality above then implies that $N \le 4\sqrt\kappa$.
    Thus, every $4\sqrt\kappa$ iterations, the objective gap decreases by a factor of $2$.
\end{proof}

Using the reduction in~\autoref{lem:strcvx_to_weakcvx}, this also yields an accelerated algorithm for minimizing smooth, weakly convex quadratics.
We now sketch an alternative proof in order to explain the classical link with polynomial approximation.

Due to~\autoref{lem:krylov}, $x_N - x_0 \in \eu K_{N-1}$ can be written in the form $x_N - x_0 = \sum_{n=0}^{N-1} c_n A^n p_0$, so $x_N - x_\star = x_0 - x_\star + \sum_{n=0}^{N-1} c_n A^{n+1}\,(x_0 - x_\star) = P_N(A)\,(x_0 - x_\star)$ where $P_N$ is a polynomial of degree at most $N$ satisfying $P_N(0) = 1$.
Conversely, if $Q_N$ is any other degree-$N$ polynomial with $Q_N(0) = 1$, then $\tilde x_N \deq x_0 + A^{-1}\,(Q_N(A) - I)\,p_0 \in x_0 + \eu K_{N-1}$ satisfies $\tilde x_N - x_\star = x_0 - x_\star + A^{-1}\,(Q_N(A) - I)\,p_0 = Q_N(A)\,(x_0 - x_\star)$.

This equivalence, together with the fact that the output $x_N$ of~\ref{eq:CG} minimizes $f$ over $x_0 + \eu K_{N-1}$, shows that
\begin{align*}
    f(x_N) - f_\star
    &= \frac{1}{2} \min\{\norm{Q_N(A)\,(x_0 - x_\star)}_A^2 : Q_N \in \R_{\le N}[X]\,,\; Q_N(0) = 1\}\,,
\end{align*}
where $\R_{\le N}[X]$ denotes the set of polynomials with real-valued coefficients and with degree at most $N$.
Furthermore, since $A$ and $Q_N(A)$ commute,
\begin{align*}
    \norm{Q_N(A)\,(x_0 - x_\star)}_A^2
    &\le \norm{Q_N(A)}_{\rm op}^2\,\norm{x_0 - x_\star}_A^2
    \le \bigl(\max_{[\lambda_{\min}(A),\, \lambda_{\max}(A)]}{\abs{Q_N}^2}\bigr)\,\norm{x_0 - x_\star}_A^2\,.
\end{align*}
We have arrived at the following result.

\begin{lem}[{\ref{eq:CG}} and polynomial approximation]\label{lem:CG_poly_approx}
    Assume that $0 \prec \alpha I \preceq A \preceq \beta I$.
    Then, the output $x_N$ of~\ref{eq:CG} satisfies
    \begin{align*}
        f(x_N) - f_\star
        &\le \min\bigl\{\max_{\lambda \in [\alpha,\beta]}\abs{Q_N(\lambda)}^2 : Q_N \in \R_{\le N}[X]\,, \; Q_N(0) = 1\bigr\}\,(f(x_0) - f_\star)\,.
    \end{align*}
\end{lem}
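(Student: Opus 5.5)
The plan is to make rigorous the sketch given just before the statement, which has three ingredients.

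\emph{Step 1: write $f(x) - f_\star$ as a quadratic form.} For the quadratic $f$ with $Ax_\star = b$, expanding gives the exact identity
\begin{align*}
    f(x) - f_\star = \tfrac12\,\norm{x - x_\star}_A^2\,.
\end{align*}
In particular $f(x_0) - f_\star = \frac12\,\norm{x_0 - x_\star}_A^2$, and the claim reduces to bounding $\norm{x_N - x_\star}_A^2$.

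\emph{Step 2: identify the affine search space with residual polynomials.} By \autoref{lem:krylov}, the CG search space is $x_0 + \eu K_{N-1} = x_0 + \spn\{p_0, A p_0, \dotsc, A^{N-1} p_0\}$. Since $p_0 = \nabla f(x_0) = A\,(x_0 - x_\star)$, every point $x = x_0 + \sum_{n=0}^{N-1} c_n A^n p_0$ of this space satisfies
\begin{align*}
    x - x_\star = Q(A)\,(x_0 - x_\star)\,, \qquad Q(\lambda) \deq 1 + \sum_{n=0}^{N-1} c_n\,\lambda^{n+1}\,.
\end{align*}
This $Q$ has degree at most $N$ and satisfies $Q(0) = 1$. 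Conversely, any such $Q$ can be written as $1 + \lambda R(\lambda)$ with $\deg R \le N-1$, so $x \deq x_0 + R(A)\,p_0$ lies in $x_0 + \eu K_{N-1}$ and realizes $Q$. This is the one point needing care. If the Krylov space degenerates before step $N$ (some $p_n = 0$), then by \autoref{thm:termination_CG} CG has already reached $x_\star$, and the bound holds trivially. The containment direction, that each such $Q$ gives a competitor in the search space, holds regardless, since it only uses the span.

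\emph{Step 3: use optimality of CG and a spectral bound.} By the defining property \eqref{eq:CG}, $x_N$ minimizes $f$ over $x_0 + \eu K_{N-1}$. Hence for any admissible $Q$,
\begin{align*}
    f(x_N) - f_\star \le \tfrac12\,\norm{Q(A)\,(x_0 - x_\star)}_A^2\,.
\end{align*}
To bound the right side, diagonalize $A = \sum_i \lambda_i\, u_i u_i^\T$ with $\lambda_i \in [\alpha, \beta]$ and write $x_0 - x_\star = \sum_i a_i u_i$. Then
\begin{align*}
    \norm{Q(A)\,(x_0 - x_\star)}_A^2 = \sum_i \lambda_i\, Q(\lambda_i)^2\, a_i^2 \le \max_{\lambda \in [\alpha, \beta]} \abs{Q(\lambda)}^2 \, \norm{x_0 - x_\star}_A^2\,.
\end{align*}
Equivalently, $Q(A)$ commutes with $A^{1/2}$ and has operator norm at most $\max_{[\alpha,\beta]} \abs{Q}$. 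Combining this with Step 1 and taking the minimum over admissible $Q$ gives the stated inequality.

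The main obstacle is the bookkeeping in Step 2: the shift between Krylov coefficients and polynomial degree (degree $N$, not $N-1$), and handling early termination. The rest is routine.
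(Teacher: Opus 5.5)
Your proposal is correct and follows the paper's argument. You write every point of $x_0 + \eu K_{N-1}$ as $x_\star + Q(A)\,(x_0 - x_\star)$ with $\deg Q \le N$ and $Q(0) = 1$, use the minimality of $x_N$ together with $f - f_\star = \frac12\,\norm{\cdot - x_\star}_A^2$, and bound $\norm{Q(A)}_{\rm op}$ by $\max_{[\alpha,\beta]}\abs{Q}$; your explicit treatment of early termination is a small piece of care that the paper leaves implicit.
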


Informally, this result states that~\ref{eq:CG} performs as well as the best possible degree-$N$ polynomial in $A$.
To bound the rate of convergence of~\ref{eq:CG}, it therefore remains to exhibit a judicious polynomial $Q_N$.
This is accomplished by the family of Chebyshev polynomials, on which many volumes have been written.

\begin{defn}\label{defn:chebyshev}
    The degree-$n$ \textbf{Chebyshev polynomial} $T_n$ is defined so that $\cos(n\theta) = T_n(\cos\theta)$ for all $\theta \in \R$.
\end{defn}

It is not obvious at first glance that $T_n$ is indeed a degree-$n$ polynomial, but this can be established via trigonometric identities.
The use of the Chebyshev polynomials to establish a rate of convergence for~\ref{eq:CG} is explored in~\autoref{qu:CG_chebyshev}.

Here, we point out another interesting fact that arises from this connection.
Recall from the proof of~\autoref{lem:CG_poly_approx} that if we can compute $\tilde x_N \deq x_0 + A^{-1}\,(Q_N(A) - I)\,p_0$, then it incurs error at most $f(\tilde x_N) - f_\star \le (\max_{\lambda \in [\alpha,\beta]}{\abs{Q_N(\lambda)}}^2)\,(f(x_0) - f_\star)$.
In particular, rather than using~\ref{eq:CG}, we can try to compute the polynomial $x \mapsto (Q_N(x)-1)/x$ directly, where $Q_N$ is the polynomial in~\autoref{qu:CG_chebyshev} which witnesses the fast convergence of~\ref{eq:CG}.
Although we omit the details, it is worth noting that the family of Chebyshev polynomials satisfies a so-called three-term recurrence:
\begin{align*}
    T_{n+1}(x)
    &= 2x\,T_n(x) - T_{n-1}(x)\,, \qquad x \in \R\,.
\end{align*}
In fact, orthogonal families of polynomials usually do.\footnote{This arises in connection with second-order differential operators.}
From an algorithmic standpoint, it leads to an optimization algorithm of the form
\begin{align*}
    x_{n+1}
    &= c_0\,Ax_n + c_1\,x_{n-1} + c_2\,b\,,
\end{align*}
where $c_0, c_1, c_2 \in \R$ are fixed coefficients.
Note that unlike~\ref{eq:GD}, $x_{n+1}$ depends on the previous \emph{two} iterates.
This is often referred to as \emph{momentum}, and also forms the basis for acceleration for general convex functions.

\begin{rmk}[practicality of~\ref{eq:CG}]
    Solving the linear system $Ax=b$ via Gaussian elimination requires $O(d^3)$ operations and is numerically unstable, whereas for well-conditioned matrices $A$,~\ref{eq:CG} returns an approximate solution in $\widetilde O(\sqrt \kappa)$ iterations, each of which requires a matrix-vector multiplication.
    A matrix-vector multiplication requires $O(d^2)$ time in the worst case, but can be faster if $A$ is sparse.
    In practice,~\ref{eq:CG} is widely used, especially when combined with other strategies such as preconditioning.
\end{rmk}

\subsection{General case: continuous time}

Although it does not follow the historical development of events, we begin our treatment of acceleration for general convex smooth functions in continuous time.
As identified in~\cite{SuBoyCan16Nesterov}, the continuous-time ODE is
\begin{align}\label{eq:AGF}\tag{$\msf{AGF}$}
    \begin{aligned}
        \dot x_t
        &= p_t\,, \\
        \dot p_t
        &= -\nabla f(x_t)-\gamma_t p_t\,.
    \end{aligned}
\end{align}
We refer to~\eqref{eq:AGF} as the \emph{accelerated gradient flow}, and the variable $p_t$ admits the physical interpretation of momentum (for a particle with unit mass).
The dynamics consists of two parts: the equations
\begin{align*}
    \dot x_t
    &= p_t\,, \\
    \dot p_t
    &= -\nabla f(x_t)
\end{align*}
are known as Hamilton's equations, and they are the standard first-order reformulation of Newton's law of motion $\ddot x_t = -\nabla f(x_t)$ with potential energy $f$.
Hamilton's equations conserve the energy (or Hamiltonian) $H(x,p) \deq f(x) + \frac{1}{2}\,\norm p^2$, and this conservation property is perhaps undesirable for an optimization algorithm which seeks to minimize $f$.
Thus, the second part of the dynamics, $\dot p_t = -\gamma_t p_t$ adds a dissipative \emph{friction} force, where $\gamma_t \ge 0$ is a possibly time-varying coefficient of friction.

In the case where $f$ is merely assumed to be convex, it turns out that the right choice of friction coefficient is $\gamma_t = 3/t$.
This is mysterious at first sight and was obtained by taking the continuous-time limit of Nesterov's discrete algorithm in the next subsection.
We begin with a convergence analysis in this setting.
(Similar caveats as for \S\ref{sec:grad_flow} apply here; we assume that $f$ is smooth, that it admits a minimizer $x_\star$, and that~\eqref{eq:AGF} is well-posed.)

\begin{thm}[convergence of~\ref{eq:AGF} under convexity]\label{thm:AGF_cvxty}
    Let $f : \R^d\to\R$ be convex and let ${(x_t)}_{t\ge 0}$ evolve along~\ref{eq:AGF} with $\gamma_t = 3/t$ and $p_0 = 0$.
    Then, for all $t\ge 0$,
    \begin{align*}
        f(x_t) - f_\star
        &\le \frac{2\,\norm{x_0-x_\star}^2}{t^2}\,.
    \end{align*}
\end{thm}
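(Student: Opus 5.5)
We will prove this with a Lyapunov function, in the same spirit as the proof of \autoref{thm:gf_fn_value} and \autoref{qu:gf_sharp}. The standard choice from~\cite{SuBoyCan16Nesterov} is
\begin{align*}
    \ms L_t
    &\deq t^2\,(f(x_t) - f_\star) + 2\,\norm{x_t + \tfrac{t}{2}\,p_t - x_\star}^2\,.
\end{align*}
The plan is to show that $\dot{\ms L}_t \le 0$. Then $t^2\,(f(x_t)-f_\star) \le \ms L_t \le \ms L_0$. At $t = 0$ the first term vanishes, and since $p_0 = 0$ the second term equals $2\,\norm{x_0 - x_\star}^2$. This gives $\ms L_0 = 2\,\norm{x_0-x_\star}^2$, which is exactly the claimed bound.

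The key computation is the time derivative. Write $z_t \deq x_t + \frac{t}{2}\,p_t$. Using~\eqref{eq:AGF} with $\gamma_t = 3/t$, we get
\[
\dot z_t = p_t + \tfrac12\,p_t + \tfrac t2\,\bigl(-\nabla f(x_t) - \tfrac{3}{t}\,p_t\bigr) = -\tfrac t2\,\nabla f(x_t).
\]
The friction coefficient $3/t$ is precisely what makes the momentum terms cancel. By the chain rule (as in \autoref{lem:gf_descent}),
\[
\partial_t[t^2\,(f(x_t)-f_\star)] = 2t\,(f(x_t)-f_\star) + t^2\,\langle \nabla f(x_t), p_t\rangle,
\]
and
\[
\partial_t\bigl(2\,\norm{z_t-x_\star}^2\bigr) = -2t\,\langle \nabla f(x_t), x_t - x_\star\rangle - t^2\,\langle \nabla f(x_t), p_t\rangle.
\]
The $t^2\,\langle\nabla f(x_t),p_t\rangle$ terms cancel. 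We are left with
\[
\dot{\ms L}_t = 2t\,\bigl(f(x_t) - f_\star - \langle \nabla f(x_t), x_t - x_\star\rangle\bigr).
\]
This is $\le 0$ by~\eqref{eq:str_cvx_1} with $\alpha = 0$, $x = x_t$, $y = x_\star$.

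The main obstacle is identifying the right weights in $\ms L_t$: the factor $2$ and the combination $x_t + \frac t2 p_t$. My approach is to posit the general form $a(t)\,(f(x_t)-f_\star) + \norm{x_t + b(t)\,p_t - x_\star}^2$ and choose $b$ so that $\dot z_t$ has no $p_t$ component. That condition is $1 + \dot b - b\gamma_t = 0$, which gives $b = t/2$ when $\gamma_t = 3/t$. We then choose $a$ so that the $\langle\nabla f,p\rangle$ cross terms cancel, which forces $a = 2b^2 = t^2/2$. Rescaling the whole function by $2$ gives the form above.

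A minor technical point is the singularity of $\gamma_t$ at $t=0$. For this we assume, as the paper does, that the flow is well-posed with $x_t$ continuous at $0$ and $t\,p_t \to 0$. We then run the monotonicity argument on $[\delta, t]$ and let $\delta \searrow 0$.
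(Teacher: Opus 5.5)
Your proof is correct and is essentially the paper's argument. Your Lyapunov function is exactly twice the paper's $\frac{t^2}{2}\,(f(x_t)-f_\star) + \norm{z_t - x_\star}^2$ with $z_t = x_t + \frac{t}{2}\,p_t$, and your derivative computation, the cancellation of the $\langle \nabla f(x_t), p_t\rangle$ terms, the final appeal to convexity, and your heuristic derivation of the weights (which parallels the paper's exercise on deriving this Lyapunov function) all match the paper.
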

\begin{proof}
    Consider the auxiliary point $z_t \deq x_t + \frac{t}{2}\,p_t$, and the Lyapunov function
    \begin{align*}
        \ms L_t
        &\deq \frac{t^2}{2}\,(f(x_t) - f_\star) + \norm{z_t - x_\star}^2\,.
    \end{align*}
    The computation below shows that $\dot{\ms L}_t \le 0$, which implies the result.
    The choice of Lyapunov function is mysterious, so we partially demystify it after the proof.

    Straightforward differentiation and convexity yield
    \begin{align*}
        \dot{\ms L}_t
        &= t\,(f(x_t) - f_\star) + \frac{t^2}{2}\,\langle \nabla f(x_t), p_t \rangle - t\,\langle \nabla f(x_t), z_t - x_\star\rangle \\
        &= t\,(f(x_t) - f_\star) - t\,\langle \nabla f(x_t), x_t - x_\star\rangle
        \le 0\,. \qedhere
    \end{align*}
\end{proof}

Although the Lyapunov function above appears fortuitous, it can be derived in a reasonably systematic manner; see~\autoref{qu:derive_su_boyd_candes}.
The strongly convex case is similar, and is left as~\autoref{qu:AGF_strcvx}.

\begin{thm}[convergence of~\ref{eq:AGF} under strong convexity]\label{thm:AGF_strcvx}
    Let $f : \R^d\to\R$ be $\alpha$-convex and let ${(x_t)}_{t\ge 0}$ evolve along~\ref{eq:AGF} with $\gamma_t = 2\sqrt \alpha$ and $p_0 = 0$.
    For all $t\ge 0$,
    \begin{align*}
        f(x_t) - f_\star
        &\le 2\exp(-\sqrt\alpha\,t)\,(f(x_0) - f_\star)\,.
    \end{align*}
\end{thm}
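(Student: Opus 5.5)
The plan is to mimic the proof of \autoref{thm:AGF_cvxty} with a Lyapunov function that mixes the objective gap with a ``momentum-corrected'' distance to the minimizer. Writing $a \deq \sqrt\alpha$, I would take
\begin{align*}
    \ms L_t \deq f(x_t) - f_\star + \frac{1}{2}\,\norm{v_t}^2\,, \qquad v_t \deq a\,(x_t - x_\star) + p_t\,.
\end{align*}
The goal is the differential inequality $\dot{\ms L}_t \le -a\,\ms L_t$. Gr\"onwall's lemma (\autoref{lem:gronwall}) with $B = 0$ then gives $\ms L_t \le \exp(-\sqrt\alpha\, t)\,\ms L_0$. Since $\frac12\norm{v_t}^2 \ge 0$, we have $f(x_t) - f_\star \le \ms L_t$. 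It remains to bound $\ms L_0$. Using $p_0 = 0$,
\begin{align*}
    \ms L_0 = f(x_0) - f_\star + \frac{\alpha}{2}\,\norm{x_0 - x_\star}^2\,.
\end{align*}
Quadratic growth~\eqref{eq:qg}, which holds for $\alpha$-convex functions (via~\eqref{eq:PL}, or directly from~\eqref{eq:str_cvx_1} at $x_\star$), gives $\frac{\alpha}{2}\norm{x_0 - x_\star}^2 \le f(x_0) - f_\star$. Hence $\ms L_0 \le 2\,(f(x_0) - f_\star)$, which produces the factor $2$ in the statement.

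For the derivative, with $\gamma_t = 2a$ we have $\dot v_t = a\,p_t - \nabla f(x_t) - 2a\,p_t = -a\,p_t - \nabla f(x_t)$. Therefore
\begin{align*}
    \dot{\ms L}_t = \langle \nabla f(x_t), p_t\rangle - \langle v_t, a\,p_t + \nabla f(x_t)\rangle\,.
\end{align*}
Expanding $v_t$, the $\langle \nabla f, p\rangle$ terms cancel, leaving
\begin{align*}
    \dot{\ms L}_t = -a\,\langle \nabla f(x_t), x_t - x_\star\rangle - a^2\,\langle x_t - x_\star, p_t\rangle - a\,\norm{p_t}^2\,.
\end{align*}
Next I apply strong convexity~\eqref{eq:str_cvx_1} with $y = x_\star$, namely
\begin{align*}
    \langle \nabla f(x_t), x_t - x_\star\rangle \ge f(x_t) - f_\star + \frac{\alpha}{2}\,\norm{x_t - x_\star}^2\,.
\end{align*}
For comparison, expand
\begin{align*}
    -a\,\ms L_t = -a\,(f(x_t)-f_\star) - \frac{a^3}{2}\,\norm{x_t-x_\star}^2 - a^2\,\langle x_t - x_\star, p_t\rangle - \frac{a}{2}\,\norm{p_t}^2\,.
\end{align*}
Matching terms then yields $\dot{\ms L}_t \le -a\,\ms L_t - \frac{a}{2}\,\norm{p_t}^2 \le -a\,\ms L_t$.

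The main obstacle is choosing the Lyapunov function correctly: the weight $a$ in $v_t$ and the friction $2a$ must be tuned together so that the cross terms $\langle x_t - x_\star, p_t\rangle$ cancel exactly. My first guess is the choice above, modeled on $z_t$ from \autoref{thm:AGF_cvxty} with $t/2$ replaced by the constant $1/a$, scaled by $a$. If the cross terms failed to cancel, I would instead parametrize $v_t = c\,(x_t - x_\star) + p_t$ and solve for $c$. Once the algebra closes, the remaining steps are routine.
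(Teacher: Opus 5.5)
Your proof is correct and follows the paper's intended approach: your $v_t = \sqrt\alpha\,(x_t - x_\star) + p_t$ equals $\sqrt\alpha\,(z_t - x_\star)$ with $z_t = x_t + \frac{2}{\gamma}\,p_t$ from the hint in \autoref{qu:AGF_strcvx}, so your $\ms L_t$ is exactly the Lyapunov function $f(x_t) - f_\star + \frac{\alpha}{2}\,\norm{z_t - x_\star}^2$. Your derivative computation giving $\dot{\ms L}_t \le -\sqrt\alpha\,\ms L_t - \frac{\sqrt\alpha}{2}\,\norm{p_t}^2$ is correct, as is the bound $\ms L_0 \le 2\,(f(x_0) - f_\star)$ via quadratic growth.
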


We plot the non-monotonic behavior of~\ref{eq:AGF} in~\autoref{fig:AGF}, which helps to explain the need for choosing a different Lyapunov function $\ms L_t$.

\begin{figure}[h]
    \centering
    \includegraphics[width=0.7\textwidth]{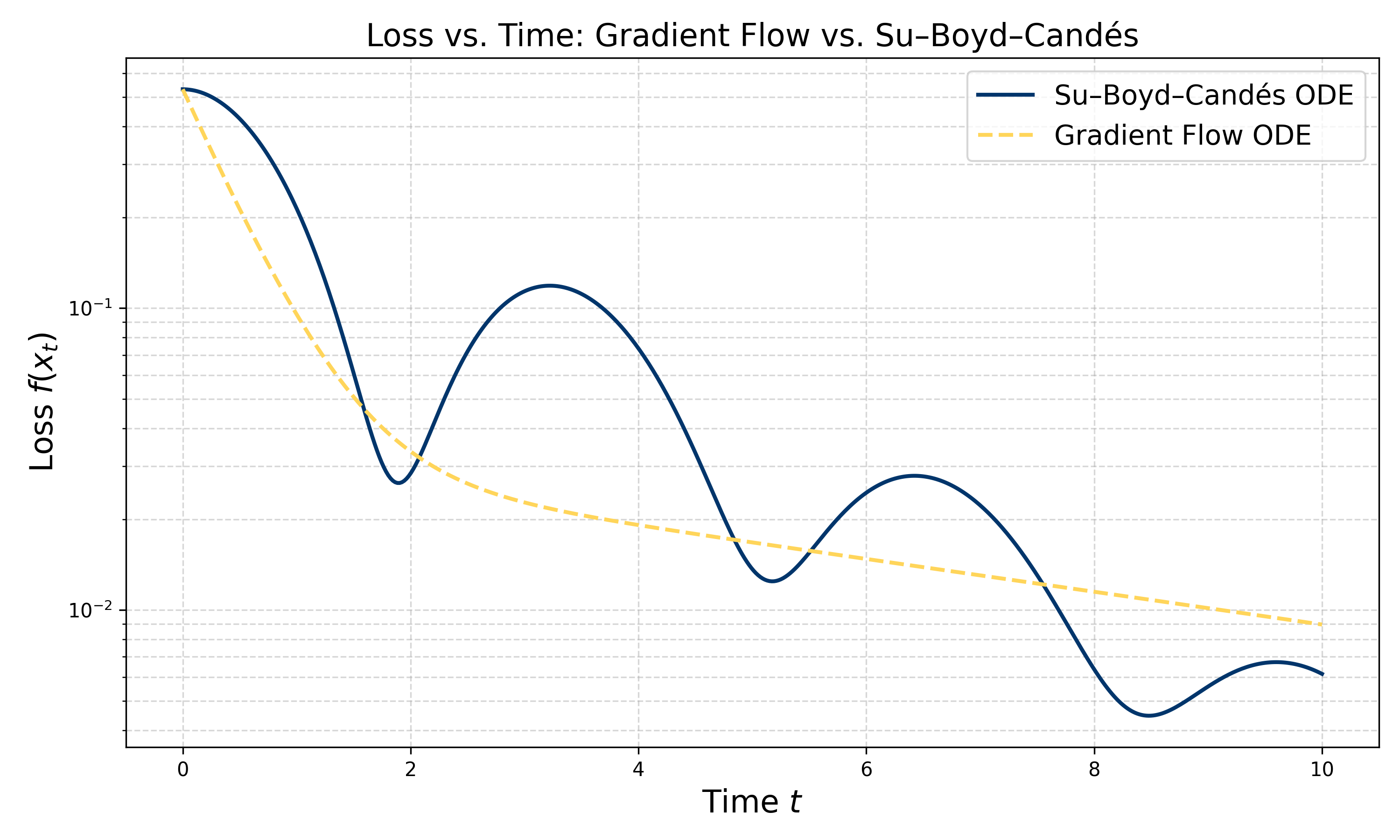}
    \caption{\footnotesize We compare the gradient flow with~\ref{eq:AGF} for $f(x)\deq \frac{1}{2}\,(\alpha\,{x[1]}^2 + {x[2]}^2)$ and $\gamma = 2\sqrt \alpha$, where $\alpha = 1/16$ and $x_0 = (1,1)$. Note that the loss is not monotonic over time, hence the need for a different Lyapunov function $\ms L_t$.}\label{fig:AGF}
\end{figure}

Recall that under convexity and $\alpha$-convexity, the objective gap $f(x_t) - f_\star$ for~\ref{eq:GF} converges at the rates $O(1/t)$ and $O(\exp(-2\alpha t))$ respectively.
On the other hand, for~\ref{eq:AGF}, the convergence happens at the rates $O(1/t^2)$ and $O(\exp(-\sqrt\alpha\,t))$ respectively.
This is strongly suggestive of the square root factor speed-up, that is, \emph{acceleration}.
However, we caution that it is dangerous to deduce conclusions from continuous-time analysis alone.
For example, we can run any ODE faster, which can make the continuous-time convergence rate arbitrarily fast; however, this does not translate into a better discrete-time algorithm, since speeding up time makes the ODE more unstable and therefore requires a smaller step size for discretization.

So how, then, can we discretize~\ref{eq:AGF}?
Part of the subtlety of acceleration is that not all discretizations work.
For example, we could consider
\begin{align*}
    x_{n+1}
    &\approx x_n +h\,p_{n+1}\,, \\
    p_{n+1}
    &\approx p_n - h\,\nabla f(x_n) - \gamma_n h\,p_n
\end{align*}
which is equivalent to the update
\begin{align*}
    x_{n+1}
    &= x_n - h^2\,\nabla f(x_n) + (1-\gamma_n h)\,(x_n - x_{n-1})\,.
\end{align*}
Or, if we do not presume to know the coefficients for the discrete-time scheme in advance, we could write the update as
\begin{align*}
    x_{n+1}
    &= x_n - \eta_n\,\nabla f(x_n) + \theta_n\,(x_n - x_{n-1})\,.
\end{align*}
In other words, we take a gradient step and then apply momentum.
This is known as Polyak's heavy ball method, and although it can be tuned to converge at the rate of~\ref{eq:CG} for quadratic objectives, this same tuning leads to divergence for general convex functions~\cite{LesRecPac16QuadConstraints}.
On the other hand, the optimal method in the next subsection can be written in the form
\begin{align*}
    x_{n+1}
    &= x_n + \theta_n\,(x_n - x_{n-1}) - \eta_n\,\nabla f\bigl(x_n + \theta_n\,(x_n - x_{n-1})\bigr)\,.
\end{align*}
In other words, we add momentum and then take a gradient step.

\subsection{General case: discrete time}

The acceleration phenomenon is undoubtedly one of the most elusive and fascinating aspects of optimization, so it is no surprise that it has been explored through many different angles over the course of countless research papers.
At this juncture, we must choose how to present the method and in what level of detail.

Having explored acceleration carefully in the quadratic case and in continuous time, here we follow the expedient route by giving perhaps the most direct and shortest proof, at the cost of generality and intuition.\footnote{Perhaps I will change my mind in a future edition of these notes.}

We analyze the following method, introduced in~\cite{Nes1983Accel}, with $x_{-1} = x_0$:
\begin{align}\label{eq:AGD}\tag{$\msf{AGD}$}
    x_{n+1}
    &\deq x_n + \theta_n\,(x_n - x_{n-1}) - \frac{1}{\beta}\,\nabla f\bigl(x_n + \theta_n\,(x_n - x_{n-1})\bigr)\,.
\end{align}

\begin{thm}[convergence of~\ref{eq:AGD}]\label{thm:AGD}
    Let $f$ be convex and $\beta$-smooth.
    Define the sequence: $\lambda_0 \deq 0$ and $\lambda_{n+1} \deq \frac{1}{2}\,(1+\sqrt{1+4\lambda_n^2})$ for $n\in\N$.
    Set $\theta_n \deq (\lambda_n-1)/\lambda_{n+1}$.
    Then,~\ref{eq:AGD} satisfies
    \begin{align*}
        f(x_N) - f_\star
        &\le \frac{2\beta\,\norm{x_0-x^\star}^2}{N^2}\,.
    \end{align*}
\end{thm}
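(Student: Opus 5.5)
We will run the standard Lyapunov/estimate-sequence argument of Beck--Teboulle type, using the one-step inequality~\eqref{eq:evi} with $\alpha=0$, $h=1/\beta$ as the sole analytic input. Write $y_n \deq x_n + \theta_n\,(x_n - x_{n-1})$, so that $x_{n+1} = y_n - \frac1\beta\,\nabla f(y_n)$ is a gradient step from $y_n$. Applying~\eqref{eq:evi} at the base point $y_n$ gives, for every $z$,
\begin{align*}
    \frac{2}{\beta}\,(f(x_{n+1}) - f(z)) \le \norm{y_n - z}^2 - \norm{x_{n+1} - z}^2\,.
\end{align*}
We use this with two choices of $z$: $z = x_n$ and $z = x_\star$. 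Set $\delta_n \deq f(x_n) - f_\star$. We multiply the first inequality by $\lambda_{n+1}-1$ and the second by $1$, and add. The result has the form
\begin{align*}
    \frac{2}{\beta}\,\bigl(\lambda_{n+1}\delta_{n+1} - (\lambda_{n+1}-1)\,\delta_n\bigr) \le (\text{quadratic terms})\,.
\end{align*}
We then multiply through by $\lambda_{n+1}$. The recursion $\lambda_{n+1}^2 - \lambda_{n+1} = \lambda_n^2$ is exactly what turns the left side into $\frac{2}{\beta}\,(\lambda_{n+1}^2\delta_{n+1} - \lambda_n^2\delta_n)$.

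The key step, and the main obstacle, is to show that the quadratic terms telescope. We use the identity $a\norm{u}^2 + b\norm{v}^2 - \ldots$, or more concretely the combination $\lambda_{n+1}\bigl[(\lambda_{n+1}-1)(\norm{y_n-x_n}^2 - \norm{x_{n+1}-x_n}^2) + \norm{y_n-x_\star}^2 - \norm{x_{n+1}-x_\star}^2\bigr]$, and we aim to rewrite it as $\norm{u_n}^2 - \norm{u_{n+1}}^2$ with
\begin{align*}
    u_n \deq \lambda_n x_n - (\lambda_n-1)\,x_{n-1} - x_\star\,.
\end{align*}
The route is to expand via $\norm{a}^2 - \norm{b}^2 = \langle a-b, a+b\rangle$, or to use the convexity-of-squares identity $\lambda\norm{p}^2 + \mu\norm{q}^2 = (\lambda+\mu)\,\norm{\text{weighted mean}}^2 + \text{cross term}$. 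Both give
\begin{align*}
    \lambda_{n+1}\,(\ldots) = \norm{\lambda_{n+1} y_n - (\lambda_{n+1}-1)\,x_n - x_\star}^2 - \norm{\lambda_{n+1}x_{n+1} - (\lambda_{n+1}-1)\,x_n - x_\star}^2\,,
\end{align*}
using again $\lambda_{n+1}^2 - \lambda_{n+1} = \lambda_n^2$. The momentum choice $\theta_n = (\lambda_n-1)/\lambda_{n+1}$ is designed so that $\lambda_{n+1}y_n - (\lambda_{n+1}-1)\,x_n = \lambda_n x_n - (\lambda_n-1)\,x_{n-1}$, which makes the first norm equal to $\norm{u_n}^2$ and the second equal to $\norm{u_{n+1}}^2$. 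Verifying this algebra carefully is where the work lies. A subtlety is that the weight $\lambda_{n+1}-1$ must be nonnegative. This holds because $\lambda_1 = 1$ and $\lambda_n \ge 1$ thereafter. The $n=0$ step uses $\lambda_0 = 0$, so the $\delta_0$ term drops out.

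Telescoping $\frac{2}{\beta}\lambda_n^2\delta_n + \norm{u_n}^2$ from $n=0$ gives
\begin{align*}
    \frac{2}{\beta}\,\lambda_N^2\,\delta_N \le \norm{u_0}^2 = \norm{x_0 - x_\star}^2\,,
\end{align*}
since $\lambda_0 = 0$ forces $u_0 = x_{-1} - x_\star = x_0 - x_\star$. Finally, a simple induction gives $\lambda_{n+1} \ge \lambda_n + \frac12$, hence $\lambda_N \ge N/2$, and we obtain $f(x_N) - f_\star \le \frac{2\beta\,\norm{x_0-x_\star}^2}{N^2}$.
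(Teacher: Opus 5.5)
Your proposal is correct and follows essentially the same route as the paper. Both apply the one-step gradient inequality at $y_n$ with $z = x_n$ (weighted by $\lambda_{n+1}-1 \ge 0$) and $z = x_\star$, combine the quadratic terms into $\norm{\lambda_{n+1} y_n - (\lambda_{n+1}-1)\,x_n - x_\star}^2 - \norm{\lambda_{n+1} x_{n+1} - (\lambda_{n+1}-1)\,x_n - x_\star}^2$, use $\theta_n$ to make these telescope and $\lambda_{n+1}(\lambda_{n+1}-1) = \lambda_n^2$ to telescope the function values, and conclude with $\lambda_N \ge N/2$. One small correction: the completing-the-square identity holds for any value of $\lambda_{n+1}$, so the recursion is needed only for the function-value telescoping, not for the quadratic terms as your proposal suggests.
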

\begin{proof}
    Let $y_n \deq x_n + \theta_n\,(x_n - x_{n-1})$, so that $x_{n+1} = y_n - \frac{1}{\beta}\,\nabla f(y_n)$.
    Recall from~\eqref{eq:evi} that for any $z\in\R^d$, it holds that
    \begin{align*}
        \norm{x_{n+1} - z}^2
        &\le \norm{y_n - z}^2 - \frac{2}{\beta}\,(f(x_{n+1}) - f(z))\,.
    \end{align*}
    Rearranging, it yields
    \begin{align*}
        f(x_{n+1}) - f(z)
        &\le \frac{\beta}{2}\,(\norm{y_n - z}^2 - \norm{x_{n+1} - z}^2)
        = - \frac{\beta}{2}\,\norm{x_{n+1} - y_n}^2 - \beta\,\langle x_{n+1} - y_n, y_n - z\rangle\,.
    \end{align*}
    We apply this inequality with two points, $z = x_n$ and $z = x_\star$.
    By multiplying the first inequality by $\lambda_{n+1}-1\ge 0$ and adding it to the second inequality, it implies
    \begin{align*}
        &(\lambda_{n+1}-1)\,(f(x_{n+1}) - f(x_n)) + f(x_{n+1}) - f_\star \\[0.25em]
        &\qquad \le - \frac{\beta\lambda_{n+1}}{2}\,\norm{x_{n+1}-y_n}^2 - \beta\,\langle x_{n+1} - y_n,\, \lambda_{n+1}\, y_n - (\lambda_{n+1} - 1)\,x_n - x_\star\rangle \\[0.25em]
        &\qquad = \frac{\beta}{2\lambda_{n+1}}\,\bigl(\norm{\lambda_{n+1}\,y_n - (\lambda_{n+1} - 1)\,x_n - x_\star}^2 - \norm{\lambda_{n+1}\,x_{n+1} - (\lambda_{n+1}-1)\,x_n - x_\star}^2\bigr)\,,
    \end{align*}
    where the last line uses the identity $\norm a^2 + 2\,\langle a, b \rangle = \norm{a+b}^2 - \norm b^2$.
    Our goal is to produce a telescoping sum, which is the case if we ensure that
    \begin{align*}
        \lambda_{n+1}\,x_{n+1} - (\lambda_{n+1} - 1)\,x_n = \lambda_{n+2}\,y_{n+1} - (\lambda_{n+2} - 1)\,x_{n+1}\,.
    \end{align*}
    By substituting in $y_{n+1} = x_{n+1} + \theta_{n+1}\,(x_{n+1} - x_n)$, some algebra shows that it suffices to take $\theta_{n+1} = (\lambda_{n+1} - 1)/\lambda_{n+2}$.

    After multiplying the above inequality by $\lambda_{n+1}$ and summing, we find that
    \begin{align*}
        \frac{\beta}{2} \,\norm{\lambda_1\, y_0 - (\lambda_1 - 1)\, x_0 - x_\star}^2
        &\ge \sum_{n=0}^{N-1} \{\lambda_{n+1}^2\,(f(x_{n+1}) - f_\star) - \lambda_{n+1}\,(\lambda_{n+1}-1)\,(f(x_n) - f_\star)\}\,.
    \end{align*}
    We also want the right-hand side to telescope, so we set $\lambda_{n+1}\,(\lambda_{n+1} - 1) = \lambda_n^2$, which yields the recursion $\lambda_{n+1} = \frac{1}{2}\,(1+\sqrt{1+4\lambda_n^2})$.
    With $\lambda_0 = 0$, it yields
    \begin{align*}
        f(x_N) - f_\star
        &\le \frac{\beta\,\norm{y_0 - x_\star}^2}{2\lambda_N^2}
        = \frac{\beta\,\norm{x_0 - x_\star}^2}{2\lambda_N^2}\,.
    \end{align*}
    Finally, it is straightforward to show by induction that $\lambda_N \ge N/2$.
\end{proof}

By applying the reduction in~\autoref{lem:weakcvx_to_strcvx}, it also yields an accelerated algorithm for the strongly convex case, i.e., an algorithm that achieves $f(x_N) - f_\star \le \varepsilon$ in $O(\sqrt \kappa \log \frac{\alpha R^2}{\varepsilon})$ iterations, where $R \deq \norm{x_0 - x_\star}$.

\begin{ex}
    If we apply the accelerated method to logistic regression (see~\autoref{ex:logistic_gd}), it improves the iteration complexity from $O(R^2 n/d)$ to $O(R\sqrt{n/d})$.
\end{ex}

\subsection*{Bibliographical notes}

The simple proof of~\autoref{thm:CG_rate} is taken from~\cite{NemYud1983Complexity}.
The discussion on Chebyshev polynomials follows~\cite{Vis12Lap}.

The literature on acceleration is too large to be surveyed here, but we mention a recent result in a somewhat different direction: what is the best rate of~\ref{eq:GD} just by changing the step sizes?
Thus, we consider the iteration $x_{n+1} = x_n - h_n\,\nabla f(x_n)$, with the only freedom being to choose the sequence ${\{h_n\}}_{n\in\N}$.
It turns out a constant step size schedule is not optimal, and as established in~\cite{AltPar24Hedging, AltPar24HedgingII}, the so-called silver step size schedule achieves the rates of~\autoref{lem:weakcvx_to_strcvx} and~\autoref{lem:strcvx_to_weakcvx} with $\phi(x) = x^{\log_\rho 2} \approx x^{0.786}$ with $\rho \deq 1+\sqrt 2$.
This is a rate intermediate between the unaccelerated rate of~\ref{eq:GD} and the accelerated rate of~\ref{eq:AGD}.

\subsection*{Exercises}

\begin{question}\label{qu:CG_chebyshev}
    Define the polynomial $Q_n(x) = T_n(\frac{\alpha+\beta-2x}{\beta-\alpha})/T_n(\frac{\alpha+\beta}{\beta-\alpha})$.
    Show that $Q_n(0) = 1$ and use~\autoref{defn:chebyshev} to establish the identity
    \begin{align*}
        T_n(x) = \frac{1}{2}\,\bigl((x-\sqrt{x^2 - 1}){}^n + (x+\sqrt{x^2-1}){}^n\bigr) \qquad\text{for}~\abs x \ge 1\,.
    \end{align*}
    One can show that this identity actually holds for all $x\in\R$.
    Use this to show that
    \begin{align*}
        \max_{x \in [\alpha,\beta]}{\abs{Q_n(x)}}
        &\le 2\,\Bigl( \frac{\sqrt\kappa-1}{\sqrt\kappa+1}\Bigr){\Bigsp}^n\,.
    \end{align*}
    Note that by combining this with~\autoref{lem:CG_poly_approx}, it yields an exponential rate of convergence for~\ref{eq:CG} matching the lower bound of~\autoref{qu:strcvx_lb}.
\end{question}

\begin{question}\label{qu:derive_su_boyd_candes}
    To better understand the proof of~\autoref{thm:AGF_cvxty}, consider a Lyapunov function of the form
    \begin{align*}
        \ms L_t
        &= \norm{x_t - x_\star}^2 + a_t\,\langle x_t - x_\star, p_t \rangle + b_t\,\norm{p_t}^2 + c_t\,(f(x_t) - f_\star)\,.
    \end{align*}
    Note that this is the most general Lyapunov function consisting of a combination of a quadratic function in $x_t - x_\star$ and $p_t$, as well as the objective gap; here, it is crucial that we include the mixed term $a_t\,\langle x_t - x_\star, p_t \rangle$.
    Our goal is to choose the coefficients $a_t$, $b_t$, $c_t$ so that $\dot{\ms L}_t \le 0$.

    Compute the derivative in time of $\ms L_t$ along~\ref{eq:AGF} with $\gamma_t = 3/t$, and apply convexity to the term $\langle \nabla f(x_t), x_t - x_\star\rangle$.
    In the resulting expression, since the terms $\langle x_t - x_\star, p_t \rangle$ and $\langle \nabla f(x_t), p_t \rangle$ do not have definite signs, ensure that the coefficients in front of these terms vanish through a suitable choice of $a_t$, $b_t$, $c_t$.
    Show that this leads to $a_t = t + \bar a t^3$ for some $\bar a \ge 0$.
    Next, from the remaining terms, obtain the condition $\dot b_t \le \min\{\frac{a_t}{2}, \,\frac{6b_t}{t} - a_t\}$, which implies $3\dot b_t \le 6b_t/t$, hence we consider $b_t = b_0 + \bar b t^2$ for some $b_0,\bar b \ge 0$.
    Furthermore, argue that we must take $\bar a = 0$ and $\bar b = \frac{1}{4}$.
    To ensure that $\ms L_0$ only depends on $\norm{x_0 - x_\star}$, we set $b_0 = c_0 = 0$.
    Finally, check that with these choices, we have $b_t \ge a_t^2/4$, which is necessary to ensure that $\ms L_t \ge c_t\,(f(x_t) - f_\star)$.

    Show that the Lyapunov function derived in this way coincides with the one used in~\autoref{thm:AGF_cvxty}.
\end{question}

\begin{question}\label{qu:AGF_strcvx}
    Prove~\autoref{thm:AGF_strcvx}.

    \emph{Hint}: Let $z_t \deq x_t + \frac{2}{\gamma}\,p_t$ and consider
    \begin{align*}
        \ms L_t
        &\deq f(x_t) - f_\star + \frac{\alpha}{2}\,\norm{z_t - x_\star}^2\,.
    \end{align*}
\end{question}

\section{Non-smooth convex optimization}

Thus far, we have considered the \emph{unconstrained} minimization of convex and \emph{smooth} functions $f$.
The next step is to consider a far more general class of problems by allowing for constraints and non-smoothness.

The two issues are related.
To minimize $f$ over a convex set $\eu C$, it is equivalent to minimize $f + \chi_{\eu C}$ over all of $\R^d$, where $\chi_{\eu C}$ is the convex indicator function for $\eu C$:
\begin{align}\label{eq:cvx_indicator}
    \chi_{\eu C}(x) \deq \begin{cases}
        0\,, & x \in \eu C\,, \\
        +\infty\,,&  x \notin \eu C\,.
    \end{cases}
\end{align}
In this reformulation, the objective function is allowed to take the value $+\infty$ and is certainly non-smooth.
Even if we do not reformulate the problem in this way, convex constraint sets often arise as the intersection of primitive constraints: $\eu C = \{f_i \le 0~\text{for all}~i\in [m]\}$.
This is equivalent to $\eu C = \{\max_{i\in [m]} f_i \le 0\}$, and the function $\max_{i\in [m]} f_i$ is non-smooth.

On the other hand, without strong convexity, it is not guaranteed that $f$ admits a minimizer over all of $\R^d$ (e.g., $f$ is a linear function, or consider the exponential function over $\R$).
It often makes sense to consider non-smooth minimization over bounded sets.
Thus, we tackle constraints and non-smoothness together.

Although we do not assume smoothness, we still need some minimal regularity for the function $f$.
As justified in~\autoref{lem:lip_cont}, convex functions are actually Lipschitz continuous in the interior of their domains, so it is natural to take as our new function class under consideration the class of convex and Lipschitz functions over bounded convex sets.

\subsection{Convex analysis}

We now work with convex functions $f : \R^d\to\R \cup \{\infty\}$.
The fact that $f$ can now take on the value $+\infty$ leads to some technical issues, but it allows us to seamlessly handle constraints.
Convexity can be defined in the usual way, but it is sometimes convenient to instead work with the epigraph.

\begin{defn}
    The \textbf{epigraph} of $f : \R^d\to\R\cup \{\infty\}$ is the following subset of $\R^d\times\R$:
    \begin{align*}
        \epi f
        &\deq \{(x,t) \in\R^d\times \R : f(x) \le t\}\,.
    \end{align*}
\end{defn}

\begin{defn}
    A function $f : \R^d\to\R \cup \{\infty\}$ is \textbf{convex} if for all $x,y\in\R^d$ and all $t\in [0,1]$, it holds that
    \begin{align*}
        f((1-t)\,x + t\,y) \le (1-t)\,f(x) + t\,f(y)\,.
    \end{align*}
    Equivalently, $f$ is convex if and only if $\epi f$ is a convex set.
\end{defn}

\begin{defn}
    The \textbf{domain} of a function $f : \R^d\to\R\cup\{\infty\}$ is the set
    \begin{align*}
        \dom f \deq \{x\in \R^d : f(x) < \infty\}\,.
    \end{align*}
\end{defn}

The first point to emphasize is that at this level of generality, $f$ can still be quite pathological.
Indeed, consider the following function:
\begin{align}\label{eq:pathology}
    f(x) \deq \begin{cases}
        0\,, & \norm x < 1\,, \\
        \phi(x)\,, & \norm x = 1\,, \\
        +\infty\,, & \norm x > 1\,,
    \end{cases}
\end{align}
where $\phi$ is an \emph{arbitrary} non-negative function defined on the sphere $\{\norm \cdot = 1\}$.
Then, one can check that $f$ is convex.
However, $\phi$ need not be continuous or be coherent in any way whatsoever.
To avoid these types of situations, the basic regularity property that we impose is that $f$ is lower semicontinuous.

\begin{defn}
    A function $f : \R^d\to\R \cup \{\infty\}$ is \textbf{lower semicontinuous} if for all sequences ${\{x_n\}}_{n\in\N}$ converging to a point $x\in\R^d$, it holds that
    \begin{align*}
        f(x) \le \liminf_{n\to\infty} f(x_n)\,.
    \end{align*}
\end{defn}

In other words, when we pass to the limit of a convergent sequence, the value of $f$ can only drop down.
One way to motivate the relevance of this condition for convex optimization is that we often consider suprema $f = \sup_{\omega\in\Omega} f_\omega$ where ${\{f_\omega\}}_{\omega\in\Omega}$ is a collection of continuous functions; in fact, in many cases, we consider suprema of affine functions.
When $\Omega$ is finite, we know that the maximum of finitely many continuous functions is continuous.
But when $\Omega$ is infinite, the suprema of infinitely many continuous functions need not be continuous.
The class of lower semicontinuous functions is the smallest class of functions which contains all continuous functions and is closed under taking arbitrary suprema.
Further properties are explored in~\autoref{qu:lsc}.

It follows from that exercise that $f$ is convex and lower semicontinuous if and only if its epigraph is closed and convex.
So, when it comes to functions, we impose convexity and lower semicontinuity; and when it comes to sets, we impose convexity and closedness.
For example, one can also check that the convex indicator $\chi_{\eu C}$ is lower semicontinuous if and only if $\eu C$ is closed.
We use the following terminology.\footnote{This is not standard terminology but it is convenient.}

\begin{defn}
    A convex function $f : \R^d\to\R\cup\{\infty\}$ is \textbf{regular} if: it is not identically equal to $+\infty$, it is lower semicontinuous, and its domain has non-empty interior.
\end{defn}

Note that the definition excludes one more pathological case, the function $f(x) = +\infty$ for all $x\in\R^d$, which is of no interest to us.
Since the domain of a convex function is a convex set, if it has empty interior then it must be contained in a lower-dimensional affine space, and when we restrict to that space, the domain then has a non-empty interior; this is usually summarized by saying that any non-empty convex set has a non-empty \emph{relative interior}.
We do not delve into the details here, but this is why we regard the condition that the domain has non-empty interior as ``without loss of generality''.

We also note that in the proof of existence of a minimizer, it is really only lower semicontinuity that matters.

\begin{lem}[existence of minimizer]\label{lem:existence_min_ii}
    Let $f : \R^d\to\R \cup \{\infty\}$ be lower semicontinuous and its level sets be bounded.
    Then, there exists a global minimizer of $f$.
\end{lem}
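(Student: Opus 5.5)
The plan is to follow the proof of \autoref{lem:existence_minimizer}, replacing continuity by lower semicontinuity wherever it is used. First, I will dispose of the degenerate case: if $f \equiv +\infty$, then every point is a global minimizer, so the claim is trivial. Otherwise, I fix $x_0$ with $f(x_0) < \infty$ and consider the sublevel set $\eu K \deq \{f \le f(x_0)\}$, which is non-empty and bounded by hypothesis. Since $f$ takes values in $\R\cup\{\infty\}$, the convention is that $\inf f = \inf_{\eu K} f$, and it suffices to find a minimizer within $\eu K$.

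Next, I will show that $\eu K$ is closed, which is the one place where the earlier proof used continuity. Let $y_n \in \eu K$ with $y_n \to y$. Lower semicontinuity gives $f(y) \le \liminf_n f(y_n) \le f(x_0)$, so $y \in \eu K$. Hence $\eu K$ is compact.

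Then I take a minimizing sequence $x_n \in \eu K$ with $f(x_n) \to \inf f \in [-\infty, f(x_0)]$. By compactness, I extract a subsequence converging to some $x_\star \in \eu K$. Lower semicontinuity then gives $f(x_\star) \le \liminf_n f(x_n) = \inf f$, and trivially $f(x_\star) \ge \inf f$. In particular, $\inf f$ is attained, and it is finite because $f(x_\star) \in \R$ by definition of the codomain. So $x_\star$ is a global minimizer.

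I do not expect a real obstacle here. The only subtle points are that the original argument's equality $f(x_\star) = \lim_n f(x_n)$ must be replaced by the inequality provided by lower semicontinuity, which is all that is needed, and the bookkeeping around the value $+\infty$: choosing $x_0 \in \dom f$ and noting that $\inf f$ cannot be $-\infty$ once it is attained.
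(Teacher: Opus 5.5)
Your proof is correct and follows the paper's route: the paper just says to rerun the proof of \autoref{lem:existence_minimizer} with lower semicontinuity in place of continuity, and you do exactly that, using lower semicontinuity both to show the sublevel set is closed and to get $f(x_\star) \le \liminf_n f(x_n)$. Your extra handling of the value $+\infty$, and of ruling out $\inf f = -\infty$ (via $f(x_\star) \le f(x_0) < \infty$ and the fact that $f$ never takes the value $-\infty$), fills in details the paper leaves implicit.
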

\begin{proof}
    The proof is the same as for~\autoref{lem:existence_minimizer}, except that lower semicontinuity substitutes for continuity.
\end{proof}

\paragraph*{Regularity.}
Our next order of business is to establish properties of regular convex functions which allow us to manipulate them in proofs.
In particular, we show that they are ``almost'' differentiable, even though we did not assume it a priori; the source of this regularity is the convexity condition.

\begin{lem}[Lipschitz continuity]\label{lem:lip_cont}
    Let $f : \R^d\to\R\cup \{\infty\}$ be convex and let $x_0 \in \interior \dom f$.
    Then, $f$ is locally Lipschitz continuous around $x_0$.
\end{lem}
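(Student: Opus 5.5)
The argument has two steps. First we show that $f$ is bounded above on a neighborhood of $x_0$. Then we convert this upper bound into a Lipschitz estimate using the convexity inequality along lines.

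\textbf{Step 1 (local boundedness).} Since $x_0 \in \interior \dom f$, there is $r > 0$ such that the cube $x_0 + [-2r, 2r]^d$ lies in $\dom f$. This cube is the convex hull of its $2^d$ vertices $v_1,\dotsc,v_{2^d}$, and each $f(v_i)$ is finite. Every point of the cube is a convex combination of the vertices, so iterating the definition of convexity (Jensen's inequality for finite convex combinations) gives $f \le M \deq \max_i f(v_i) < \infty$ on the cube. In particular, $f \le M$ on the Euclidean ball $B(x_0, 2r)$.

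We also need a lower bound. For $x \in B(x_0,2r)$, the reflected point $2x_0 - x$ also lies in $B(x_0,2r)$. Writing $x_0 = \frac12 x + \frac12 (2x_0 - x)$, convexity gives $f(x_0) \le \frac12 f(x) + \frac12 M$. Hence $f(x) \ge 2f(x_0) - M \eqqcolon m$. So $\abs f \le \max\{\abs M, \abs m\} \eqqcolon K$ on $B(x_0, 2r)$.

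\textbf{Step 2 (Lipschitz bound).} Fix distinct $x, y \in B(x_0, r)$ and extend the segment beyond $y$: set $z \deq y + r\,\frac{y-x}{\norm{y-x}}$. Then $z \in B(x_0,2r)$. The point $y$ is the convex combination $y = (1-t)\,x + t\,z$ with $t = \frac{\norm{y-x}}{\norm{y-x} + r}$. Convexity yields
\begin{align*}
    f(y) - f(x) \le t\,(f(z) - f(x)) \le \frac{2K}{r}\,\norm{y-x}\,.
\end{align*}
Swapping the roles of $x$ and $y$ gives the reverse inequality. Therefore $f$ is $(2K/r)$-Lipschitz on $B(x_0, r)$.

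The main obstacle is Step 1. There we must produce a uniform upper bound near $x_0$ using only finiteness of $f$ on $\dom f$, which is why we pass through the finitely many vertices of a cube. Obtaining the lower bound requires the reflection trick. Step 2 is then a routine application of the secant inequality.
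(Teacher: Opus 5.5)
Your proof is correct and follows essentially the same route as the paper. Both arguments get a uniform upper bound from the finitely many vertices of a polytope around $x_0$ (you use a cube, the paper a cross-polytope), a lower bound from the reflection $f(x) \ge 2f(x_0) - f(2x_0 - x)$, and then the secant inequality along a segment extended into the larger set. Working with Euclidean balls $B(x_0,r) \subseteq B(x_0,2r)$ has one small advantage: the extension length $r$, and hence the Lipschitz constant $2K/r$, is explicit, whereas the paper only asserts that some constant $c_{d,\varepsilon}$ exists.
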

\begin{proof}
    We may assume that $x_0 = 0$.
    Since $0$ belongs to the interior of $\dom f$, we can fit a cross-polytope centered at the origin inside the domain: namely, there exists $\varepsilon > 0$ such that $\eu C \deq \conv\{\pm \varepsilon e_k : k\in [d]\}$ belongs to $\dom f$.
    First, we show that $f$ is bounded on $\eu C$: the upper bound follows because $f(\pm \varepsilon e_k) < \infty$ for all $k\in [d]$ and the maximum of $f$ over $\eu C$ is attained at one of the vertices (why?).
    For the lower bound, by convexity we have $f(x) \ge 2f(0) - f(-x) \ge 2f(0) - \max_{\eu C} f$ for all $x\in\eu C$.

    Next, we show that $f$ is Lipschitz on the smaller set $\eu C' \deq \conv\{\pm \frac{\varepsilon}{2}\,e_k : k\in [d]\}$.
    The point is that there is a constant $c_{d,\varepsilon} > 0$ such that for all $x,y\in\eu C'$, there is a point $y^+ \in \eu C$ such that the line segment from $x$ to $y$ is contained in the line segment from $x$ to $y^+$, and the extension is not too short: $\norm{y^+-x} \ge c_{d,\varepsilon}$.
    Then, by convexity,
    \begin{align*}
        f(y)
        &= f\Bigl( \frac{\norm{y^+ - y}}{\norm{y^+ - x}}\, x + \frac{\norm{y-x}}{\norm{y^+ - x}}\,y^+\Bigr)
        \le \frac{\norm{y^+ - y}}{\norm{y^+ - x}}\,f(x) + \frac{\norm{y-x}}{\norm{y^+ - x}}\,f(y^+)\,,
    \end{align*}
    hence
    \begin{align*}
        f(y) - f(x)
        &\le \frac{\norm{y-x}}{\norm{y^+ - x}}\,(f(y^+) - f(x))
        \le \frac{\sup_{\eu C} f - \inf_{\eu C} f}{c_{d,\varepsilon}}\,\norm{y-x}\,.
    \end{align*}
    Interchanging $x$ and $y$ proves the Lipschitz bound.
\end{proof}

This lemma shows that locally near $x_0$, $f(x)$ grows at most linearly in the distance $\norm{x-x_0}$ (as opposed to, say, $\sqrt{\norm{x-x_0}}$).
This suggests that $f$ may be differentiable at $x_0$.
This is not quite right, because $f$ may have a kink at $x_0$, but nevertheless we can find an appropriate substitute for differentiability.

\begin{defn}
    Let $f : \R^d\to\R\cup\{\infty\}$ be convex.
    We say that $p \in \R^d$ is a \textbf{subgradient} of $f$ at $x$ if for all $y\in\R^d$, it holds that
    \begin{align}\label{eq:subgrad}
        f(y) \ge f(x) + \langle p, y-x\rangle\,.
    \end{align}
    We denote the set of subgradients of $f$ at $x$ as $\partial f(x)$, and we refer to this set as the \textbf{subdifferential} of $f$ at $x$.
    Also, we set
    \begin{align*}
        \partial f
        &\deq \{(x,p) \in \R^d \times \R^d : p \in \partial f(x)\}\,.
    \end{align*}
\end{defn}

Note that by definition, if $0 \in \partial f(x)$, then $x$ is a global minimizer of $f$.

If $f$ is differentiable at $x_0 \in \interior\dom f$, then $\partial f(x_0)$ is a singleton: $\partial f(x_0) = \{\nabla f(x_0)\}$ (\autoref{qu:subdiff_grad}).
However, the subdifferential can be multi-valued.
A key example is the absolute value function, $f : x\mapsto \abs x$, for which $\partial f(0) = [-1,1]$.

For the purpose of optimization, it is enough to have at least one subgradient, which is the content of the following theorem.

\begin{thm}[subdifferential]\label{thm:subdiff}
    Let $f : \R^d\to\R\cup\{\infty\}$ be a regular convex function.
    If $x_0 \in \interior \dom f$, then $\partial f(x_0)$ is \textbf{non-empty}, bounded, convex, and closed.
\end{thm}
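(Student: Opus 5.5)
We would dispose of convexity and closedness first, since they are immediate. By definition, $\partial f(x_0)$ is the intersection over $y \in \R^d$ of the sets $\{p : f(y) \ge f(x_0) + \langle p, y - x_0\rangle\}$. For $y \notin \dom f$ the constraint is vacuous. For $y \in \dom f$ it is a closed half-space in $p$, because $f(x_0) < \infty$. An intersection of closed convex sets is closed and convex.

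For boundedness we would invoke \autoref{lem:lip_cont}. It gives $r > 0$ and $L < \infty$ such that $f$ is $L$-Lipschitz on the ball $B(x_0, r) \subseteq \dom f$. Take $p \in \partial f(x_0)$ with $p \neq 0$, and set $y = x_0 + r\,p/(2\norm p)$. The subgradient inequality gives $\frac{r}{2}\,\norm p \le f(y) - f(x_0) \le L\,\frac{r}{2}$, so $\norm p \le L$.

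Non-emptiness is the real content, and we would prove it by separating a point from the closed convex epigraph. Regularity means $f$ is lower semicontinuous, and (as noted after the exercise on lower semicontinuity) this makes $\epi f$ closed and convex. For $\varepsilon > 0$, the point $(x_0, f(x_0) - \varepsilon)$ lies outside $\epi f$. Projecting it onto $\epi f$ gives a separating vector $(a, b) \neq 0$ and a scalar $c$ with $\langle a, x\rangle + bt \ge c > \langle a, x_0\rangle + b\,(f(x_0) - \varepsilon)$ for all $(x, t) \in \epi f$. The supporting fact used here, projection onto a closed convex set with its variational inequality, is elementary and would be proved inline.

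Sending $t \to \infty$ shows $b \ge 0$. If $b = 0$, then $\langle a, x - x_0\rangle > 0$ for every $x \in \dom f$, which contradicts $x_0$ being interior (take $x = x_0 - \delta a$). So $b > 0$, and with $p_\varepsilon \deq -a/b$ we get $f(x) \ge f(x_0) - \varepsilon + \langle p_\varepsilon, x - x_0\rangle$ for all $x$.

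This is only an $\varepsilon$-subgradient, so we pass to the limit $\varepsilon \to 0$. The argument used for boundedness applies verbatim to $p_\varepsilon$, absorbing the extra $\varepsilon$, and gives $\norm{p_\varepsilon} \le L + 2\varepsilon/r$. The family is therefore bounded. Take a convergent subsequence, and pass to the limit pointwise in $x$ in the inequality to obtain a genuine subgradient.

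The main obstacle we expect is this step: establishing the separation cleanly and ruling out the "vertical" hyperplane $b = 0$, which is exactly where $x_0 \in \interior \dom f$ is used. An alternative that avoids the limiting argument is to separate $(x_0, f(x_0))$ itself from the interior of the epigraph, which is non-empty by local boundedness. That route needs a separation theorem for open convex sets, so we would default to the projection-plus-limit route above.
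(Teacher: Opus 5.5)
Your proof is correct. It is essentially the paper's argument with the steps reordered.

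The paper separates exterior points from $\epi f$ and normalizes the normals $(p,q)$ to the unit sphere. It passes to the limit to get a supporting hyperplane at $(x_0,f(x_0))$, and only then uses the local Lipschitz bound to get $\norm p\le Lq$, hence $q\neq 0$. You instead normalize by the vertical component, so each $p_\varepsilon$ is already an $\varepsilon$-subgradient. You then use the same Lipschitz bound to get the compactness $\norm{p_\varepsilon}\le L+2\varepsilon/r$ that your limit requires.

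One small correction to your commentary. At fixed $\varepsilon$, $b=0$ is excluded without interiority: $(x_0,f(x_0))$ lies in the epigraph and the separation is strict, which is already a contradiction. So $x_0\in\interior\dom f$ really does its work in the uniform bound on $p_\varepsilon$, which is the counterpart of the paper's $\norm p\le Lq$.
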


We follow a traditional route of deducing the non-emptiness from a separation theorem.
The proof of the following result is deferred.

\begin{thm}[supporting hyperplane]\label{thm:supporting_hyperplane}
    Let $\eu C$ be a closed and convex set, and let $x \in \partial\eu C$.
    Then, there exists a non-zero $p\in\R^d$ such that
    \begin{align*}
        \langle p, x \rangle \le \inf_{\eu C}{\langle p, \cdot \rangle}\,.
    \end{align*}
\end{thm}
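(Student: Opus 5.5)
The plan is to derive the supporting hyperplane theorem from the projection onto a closed convex set, together with a limiting argument.

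\textbf{Step 1 (projection).} For $y\notin\eu C$, let $\pi(y)$ minimize $z\mapsto\norm{z-y}^2$ over $\eu C$. A minimizer exists by \autoref{lem:existence_min_ii}, applied to $\norm{\cdot-y}^2+\chi_{\eu C}$. This function is lower semicontinuous because $\eu C$ is closed, and its level sets are bounded. Next, compare $\pi(y)$ with $(1-t)\,\pi(y)+t\,z$ for $z\in\eu C$, which lies in $\eu C$ by convexity. Expanding the square and sending $t\searrow 0$ gives the variational inequality
\begin{align*}
    \langle y-\pi(y),\, z-\pi(y)\rangle \le 0 \qquad\text{for all}~z\in\eu C\,.
\end{align*}

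\textbf{Step 2 (strict separation of exterior points).} Set $p_y \deq (\pi(y)-y)/\norm{\pi(y)-y}$, a unit vector; this is well defined since $y\notin\eu C$ forces $\pi(y)\ne y$. The inequality from Step 1 rearranges to $\langle p_y, z\rangle \ge \langle p_y,\pi(y)\rangle$ for all $z\in\eu C$. It also gives $\langle p_y,\pi(y)\rangle > \langle p_y, y\rangle$. Hence
\begin{align*}
    \langle p_y, y\rangle \le \inf_{\eu C}\langle p_y,\cdot\rangle\,.
\end{align*}

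\textbf{Step 3 (limit to the boundary).} Since $x\in\partial\eu C$, there is a sequence $y_n\notin\eu C$ with $y_n\to x$. The associated unit vectors $p_{y_n}$ lie on the compact unit sphere, so a subsequence converges to some unit vector $p$, which is in particular non-zero. For each fixed $z\in\eu C$ we have $\langle p_{y_n}, y_n\rangle\le\langle p_{y_n}, z\rangle$, and passing to the limit gives $\langle p,x\rangle\le\langle p,z\rangle$. Taking the infimum over $z\in\eu C$ gives the claim.

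The only delicate point is Step 3. The difficulty is ensuring that the normal vector does not degenerate to zero in the limit, which is exactly why the vectors are normalized before passing to the limit. Uniqueness of the projection is not needed anywhere in the argument.
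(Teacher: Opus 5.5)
Your proof is correct and follows the paper's route. You separate each exterior point $y$ using the normalized vector $\Pi_{\eu C}(y) - y$ via the projection's variational inequality, approximate the boundary point by exterior points, and extract a convergent subsequence of unit normals. The only difference is that you re-derive the existence of the projection and the variational inequality yourself, while the paper cites closedness and \autoref{lem:proj_characterization}.
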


\begin{proof}[Proof of~\autoref{thm:subdiff}]
    Since $(x_0,f(x_0)) \in \partial \epi f$, and $\epi f$ is closed and convex (by regularity of $f$), there is a supporting hyperplane $(p,q)$:
    \begin{align*}
        \langle p, x_0 \rangle + qf(x_0) \le \inf_{(x,t)\in\epi f}\{\langle p, x \rangle + qt\}\,.
    \end{align*}
    We can normalize the coefficients so that $\norm p^2 + q^2 = 1$, and we note that $q\ge 0$.

    If $x$ is sufficiently close to $x_0$, then
    \begin{align*}
        \langle p, x_0 - x \rangle
        &\le q\,(f(x) - f(x_0))
        \le Lq\,\norm{x-x_0}\,,
    \end{align*}
    where $L$ is the Lipschitz constant of $f$ near $x_0$.
    Taking $x = x_0 - \varepsilon p$ for small $\varepsilon > 0$, we deduce that $\norm p \le Lq$, hence from the normalization condition, $q\ne 0$.
    Thus, for any $x\in\dom f$, we deduce that
    \begin{align*}
        f(x)
        &\ge f(x_0) - \frac{1}{q}\,\langle p, x - x_0 \rangle\,,
    \end{align*}
    thus, $-p/q \in \partial f(x_0)$.

    The set $\partial f(x_0)$ is closed and convex as an intersection of the constraints in~\eqref{eq:subgrad}.
    Boundedness follows from~\autoref{qu:subdiff_lip}.
\end{proof}

\paragraph*{Constraints.}
When the constraint set $\eu C$ is simple, it is reasonable to suppose that we can compute the projection onto $\eu C$.
We study some properties of this projection operator.

\begin{defn}
    Let $\eu C$ be closed and convex.
    The \textbf{projection onto $\eu C$} is the mapping $\Pi_{\eu C} : \R^d\to\eu C$ defined by
    \begin{align*}
        \Pi_{\eu C}(x) \deq \argmin_{y\in\eu C}{\norm{y-x}^2}\,.
    \end{align*}
\end{defn}

The ``$\argmin$'' is non-empty because $\eu C$ is closed, and the uniqueness of the minimizer follows from a strict convexity argument as in~\autoref{lem:unique_min}.
When $\eu C$ is a linear subspace, then $\Pi_{\eu C}$ coincides with the linear algebra definition of projection, and in this case $\Pi_{\eu C}$ is linear.
In general, however, $\Pi_{\eu C}$ is a \emph{non-linear} operator.

The following lemma characterizes the projection.

\begin{lem}[characterization of projection]\label{lem:proj_characterization}
    Let $\eu C$ be closed and convex, and let $x \notin \eu C$.
    Then, $\Pi_{\eu C}(x)$ is the unique point satisfying the following condition:
    \begin{align}\label{eq:projection_characterization}
        \langle \Pi_{\eu C}(x) - x, \,x' - \Pi_{\eu C}(x) \rangle \ge 0 \qquad\text{for all}~x' \in \eu C\,.
    \end{align}
\end{lem}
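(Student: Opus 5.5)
We prove two things: that $\bar x \deq \Pi_{\eu C}(x)$ satisfies~\eqref{eq:projection_characterization}, and that any point of $\eu C$ satisfying~\eqref{eq:projection_characterization} must equal $\bar x$. Throughout, we use only the defining minimization property of the projection, together with the existence and uniqueness of the minimizer discussed just before the lemma.

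\emph{The projection satisfies the condition.} Fix $x' \in \eu C$ and, for $t \in [0,1]$, consider $y_t \deq (1-t)\,\bar x + t\,x'$. By convexity, $y_t \in \eu C$, so the function
\begin{align*}
    g(t) \deq \norm{y_t - x}^2 = \norm{\bar x - x}^2 + 2t\,\langle \bar x - x, x' - \bar x\rangle + t^2\,\norm{x'-\bar x}^2
\end{align*}
is minimized over $[0,1]$ at $t = 0$. Hence its right derivative at $0$ is non-negative, i.e., $g'(0) = 2\,\langle \bar x - x, x' - \bar x\rangle \ge 0$. Equivalently, $g(t) - g(0) \ge 0$; dividing by $t$ and sending $t \searrow 0$ gives the same conclusion.

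\emph{Uniqueness.} Suppose $z \in \eu C$ satisfies $\langle z - x, x' - z\rangle \ge 0$ for all $x' \in \eu C$. For any $x' \in \eu C$, expand
\begin{align*}
    \norm{x' - x}^2 = \norm{x' - z}^2 + 2\,\langle x'-z, z-x\rangle + \norm{z-x}^2 \ge \norm{z-x}^2\,.
\end{align*}
So $z$ minimizes $\norm{\cdot - x}^2$ over $\eu C$, and by uniqueness of the minimizer (the strict convexity argument of~\autoref{lem:unique_min}), $z = \bar x$.

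Alternatively, one can argue directly: if $z$ and $\bar x$ both satisfy the condition, test each inequality at the other point and add them. This yields $-\norm{z - \bar x}^2 \ge 0$, so $z = \bar x$.

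There is no real obstacle here. The only subtleties are that we need a one-sided derivative, since $t$ is restricted to $[0,1]$, and that we must remember the candidate point is required to lie in $\eu C$. The hypothesis $x \notin \eu C$ is not actually needed.
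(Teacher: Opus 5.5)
Your proof is correct, and its first half is exactly the paper's argument. The paper simply invokes the first-order necessary condition restricted to the feasible directions $x' - \Pi_{\eu C}(x)$, $x' \in \eu C$, and your one-sided derivative of $g$ at $t = 0$ makes that explicit.

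The paper's proof does not address the uniqueness claim at all, so your second part completes the lemma. Either of your arguments works: showing that any such $z \in \eu C$ minimizes $\norm{\cdot - x}^2$, or adding the two inequalities to get $-\norm{z - \bar x}^2 \ge 0$. Your insistence that the candidate lie in $\eu C$ is essential, since $z = x$ satisfies the inequality trivially.
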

\begin{proof}
    As in the proof of~\autoref{lem:necc_conditions}, the first-order necessary condition for optimality reads $\langle \Pi_{\eu C}(x) - x, v \rangle \ge 0$.
    However, because the optimization problem is constrained to lie in $\eu C$, this time we do not have the inequality for all $v$, but only for $v$ of the form $x' - \Pi_{\eu C}(x)$ where $x'\in\eu C$.
\end{proof}

This lemma furnishes the following important property.

\begin{lem}[convex projections are non-expansive]\label{lem:projection_contraction}
    Let $\eu C$ be closed and convex.
    Then, for all $x,y\in\R^d$,
    \begin{align*}
        \norm{\Pi_{\eu C}(y) - \Pi_{\eu C}(x)} \le \norm{y-x}\,.
    \end{align*}
\end{lem}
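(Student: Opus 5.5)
The plan is to apply the variational characterization of the projection (\autoref{lem:proj_characterization}) twice, once at $x$ and once at $y$, and then add the two inequalities. Write $\bar x \deq \Pi_{\eu C}(x)$ and $\bar y \deq \Pi_{\eu C}(y)$.

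First, I will check that~\eqref{eq:projection_characterization} also holds when $x \in \eu C$. In that case $\bar x = x$, so the left-hand side is identically zero and the inequality is trivial. Hence for every $x \in \R^d$ and every $x' \in \eu C$,
\begin{align*}
    \langle \bar x - x,\, x' - \bar x\rangle \ge 0\,.
\end{align*}
Only this necessary direction of the lemma is needed, and it is the first-order optimality condition for the constrained problem.

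Next, I take $x' = \bar y \in \eu C$ in the inequality for $x$, and $x' = \bar x \in \eu C$ in the corresponding inequality for $y$:
\begin{align*}
    \langle \bar x - x,\, \bar y - \bar x\rangle \ge 0\,, \qquad \langle \bar y - y,\, \bar x - \bar y\rangle \ge 0\,.
\end{align*}
Adding them and rearranging gives the \emph{firm non-expansiveness} inequality
\begin{align*}
    \norm{\bar y - \bar x}^2 \le \langle y - x,\, \bar y - \bar x\rangle\,.
\end{align*}

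Finally, Cauchy--Schwarz bounds the right-hand side by $\norm{y-x}\,\norm{\bar y - \bar x}$. If $\bar y = \bar x$ there is nothing to prove; otherwise I divide by $\norm{\bar y - \bar x}$ to obtain the claim. This mirrors the passage from~\eqref{eq:coercivity} to~\eqref{eq:lip_grad}.

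There is no real obstacle here. The only points needing care are the case $x \in \eu C$, which is excluded in the statement of \autoref{lem:proj_characterization} and handled in the first step, and keeping the signs straight when adding the two inequalities.
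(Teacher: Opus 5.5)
Your proof is correct and matches the paper's: apply \eqref{eq:projection_characterization} at $x$ with $x' = \Pi_{\eu C}(y)$ and at $y$ with $x' = \Pi_{\eu C}(x)$, add the two inequalities, and finish with Cauchy--Schwarz. Your separate treatment of the case $x \in \eu C$ is a small improvement. \autoref{lem:proj_characterization} is stated only for $x \notin \eu C$, but the paper's proof uses it for arbitrary $x, y$.
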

\begin{proof}
    By~\eqref{eq:projection_characterization},
    \begin{align*}
        \langle \Pi_{\eu C}(x) - x,\, \Pi_{\eu C}(y) - \Pi_{\eu C}(x) \rangle
        &\ge 0\,, \\
        \langle \Pi_{\eu C}(y) - y,\, \Pi_{\eu C}(x) - \Pi_{\eu C}(y) \rangle
        &\ge 0\,.
    \end{align*}
    Adding these inequalities yields
    \begin{align*}
        \norm{\Pi_{\eu C}(y) - \Pi_{\eu C}(x)}^2
        &\le \langle \Pi_{\eu C}(y) - \Pi_{\eu C}(x),\, y-x\rangle \le \norm{\Pi_{\eu C}(y) - \Pi_{\eu C}(x)}\,\norm{y-x}\,. \qedhere
    \end{align*}
\end{proof}

Actually, we can now return to prove the supporting hyperplane theorem.

\begin{proof}[Proof of~\autoref{thm:supporting_hyperplane}]
    First, we show that if $\eu C$ is a closed convex set and $x\notin \eu C$, then we can separate $\eu C$ from $x$.
    Namely, by~\eqref{eq:projection_characterization}, the vector $p \deq \Pi_{\eu C}(x) - x$ is non-zero and satisfies
    \begin{align*}
        \inf_{x'\in\eu C}{\langle p, x'\rangle}
        &\ge \langle p, \Pi_{\eu C}(x)\rangle
        = \norm{\Pi_{\eu C}(x) - x}^2 + \langle p, x\rangle
        \ge \langle p, x\rangle\,.
    \end{align*}

    To prove the supporting hyperplane theorem, note that since $x \in \partial \eu C$, there is a sequence of points ${\{x_n\}}_{n\in\N}$ which lies outside of $\eu C$, such that $x_n\to x$.
    For each $n$, let $p_n$ be a hyperplane that separates $\eu C$ from $x_n$, and by normalizing we may assume that $\norm{p_n} = 1$.
    Since ${\{p_n\}}_{n\in\N}$ is a bounded sequence, it contains a subsequence which converges to some unit vector $p$.
    By taking limits, it is easy to see that $p$ is a supporting hyperplane.
\end{proof}

\subsection{Projected subgradient methods}\label{ssec:projected_subgradient}

Methods for constrained optimization differ based on what they assume about the constraint set.
The first method we study assumes access to the projection mapping $\Pi_{\eu C}$ for the set $\eu C$.
This assumption is appropriate when the set $\eu C$ is particularly ``simple'', e.g., $\eu C$ is the ball $\eu C = \{\norm \cdot \le R\}$, in which case the projection can be computed in closed form.
When $\eu C$ is more complex, e.g., $\eu C$ is a polytope, we need more sophisticated methods.

Projected subgradient descent is the following method:
\begin{align}\label{eq:PSD}\tag{$\msf{PSD}$}
    x_{n+1}
    &\deq \Pi_{\eu C}\bigl(x_n - h\,\frac{p_n}{\norm{p_n}}\bigr)\,, \qquad p_n \in \partial f(x_n)\,.
\end{align}
Note that we use the normalized subgradient $p_n/\norm{p_n}$.
If we think about the example of the absolute value function $\abs \cdot$ with subdifferential $[-1,1]$ at the origin, we see that the magnitude of an arbitrary element of the subdifferential need not be informative.
Instead, the intuition behind non-smooth optimization is to use the subgradients as separating directions: in particular, by convexity, $f(x) - f(x_n) \ge \langle p_n, x-x_n\rangle$, so any minimizer must lie on one side of the hyperplane defined by $p_n$.

We let $x_\star$ denote a minimizer of $f$ over the closed convex set $\eu C$, and $f_\star \deq f(x_\star)$.

\begin{thm}[convergence of~\ref{eq:PSD}]\label{thm:psd}
    Let $f$ be convex and $L$-Lipschitz continuous on the closed convex set $\eu C$.
    Then,~\ref{eq:PSD} satisfies
    \begin{align*}
        f\bigl( \frac{1}{N} \sum_{n=0}^{N-1} x_n\bigr) - f_\star
        &\le \frac{1}{N} \sum_{n=0}^{N-1} (f(x_n) - f_\star)
        \le \frac{L}{2Nh}\,\norm{x_0 - x_\star}^2 + \frac{Lh}{2}\,.
    \end{align*}
    In particular, by setting $h = R/\sqrt{N}$, where $R$ is an upper bound on $\norm{x_0 - x_\star}$, it yields the convergence rate
    \begin{align*}
        f\bigl( \frac{1}{N} \sum_{n=0}^{N-1} x_n\bigr) - f_\star
        \le \frac{LR}{\sqrt N}\,.
    \end{align*}
\end{thm}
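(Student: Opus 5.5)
The plan is the standard one-step distance recursion, now using non-expansiveness of the projection in place of smoothness. Write $y_{n+1} \deq x_n - h\,p_n/\norm{p_n}$. Since $x_\star \in \eu C$, we have $x_\star = \Pi_{\eu C}(x_\star)$, so \autoref{lem:projection_contraction} gives $\norm{x_{n+1} - x_\star} \le \norm{y_{n+1} - x_\star}$. Expanding the square then yields
\begin{align*}
    \norm{x_{n+1} - x_\star}^2
    \le \norm{x_n - x_\star}^2 - \frac{2h}{\norm{p_n}}\,\langle p_n, x_n - x_\star\rangle + h^2\,.
\end{align*}
The normalization is what makes the second-order term exactly $h^2$ with no further assumptions.

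Next I lower bound the first-order term via the subgradient inequality~\eqref{eq:subgrad} at $y = x_\star$, which gives $\langle p_n, x_n - x_\star\rangle \ge f(x_n) - f_\star \ge 0$. This turns the recursion into
\begin{align*}
    \frac{2h}{\norm{p_n}}\,(f(x_n) - f_\star) \le \norm{x_n - x_\star}^2 - \norm{x_{n+1} - x_\star}^2 + h^2\,.
\end{align*}
The key remaining point is that $\norm{p_n} \le L$, so that $1/\norm{p_n} \ge 1/L$. I would obtain this from the boundedness of the subdifferential by the Lipschitz constant (the exercise invoked in the proof of \autoref{thm:subdiff}), which I expect to be the main subtlety. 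Strictly speaking, that argument needs $x_n$ in the interior, or the Lipschitz extension of $f$ to a neighborhood, or a choice of a subgradient of norm at most $L$. I will assume a subgradient oracle returning $\norm{p_n} \le L$, as is implicit in the setup. If $p_n = 0$, then $x_n$ is already optimal and the bound is trivial, so we may assume $p_n \ne 0$.

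Multiplying by $L/(2h)$ and summing over $n = 0,\dotsc,N-1$, the distance terms telescope. Dropping the non-negative term $\norm{x_N - x_\star}^2$ gives
\begin{align*}
    \sum_{n=0}^{N-1} (f(x_n) - f_\star) \le \frac{L}{2h}\,\norm{x_0 - x_\star}^2 + \frac{NLh}{2}\,.
\end{align*}
Dividing by $N$ yields the second inequality of \autoref{thm:psd}. The first inequality is Jensen's inequality for the convex function $f$, noting that the average lies in $\eu C$ by convexity of $\eu C$. Finally, substituting $h = R/\sqrt N$ gives $\frac{LR}{2\sqrt N} + \frac{LR}{2\sqrt N} = \frac{LR}{\sqrt N}$.
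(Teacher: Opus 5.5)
Your proposal is correct and follows the paper's proof essentially line for line. Both arguments use non-expansiveness of $\Pi_{\eu C}$ (\autoref{lem:projection_contraction}), expand the square so the normalized step contributes exactly $h^2$, apply the subgradient inequality at $x_\star$, bound $\norm{p_n}\le L$ via \autoref{qu:subdiff_lip}, and telescope; the extra points you flag (Jensen for the first inequality, the degenerate case $p_n = 0$, and the interior-point caveat when bounding $\norm{p_n}$) are left implicit in the paper and do not change the argument.
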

\begin{proof}
    The first inequality holds by convexity, so we focus on the second.
    The idea is similar to the proof of~\autoref{thm:gd_fn_value}, except that instead of using smoothness to handle the error term, we use Lipschitzness.
    By expanding the squared distance to the minimizer,
    \begin{align*}
        \norm{x_{n+1} - x_\star}^2
        &= \bigl\lVert \Pi_{\eu C}\bigl(x_n - h\, \frac{p_n}{\norm{p_n}}\bigr) - \Pi_{\eu C}(x_\star)\bigr\rVert^2
        \le \bigl\lVert x_n - h\, \frac{p_n}{\norm{p_n}} - x_\star\bigr\rVert^2 \\[0.25em]
        &= \norm{x_n - x_\star}^2 - \frac{2h}{\norm{p_n}}\,\langle p_n, x_n - x_\star \rangle + h^2 \\[0.25em]
        &\le \norm{x_n - x_\star}^2 - \frac{2h}{\norm{p_n}}\,(f(x_n) - f_\star) + h^2\,,
    \end{align*}
    where we used~\autoref{lem:projection_contraction}.
    Since $\norm{p_n} \le L$ for all $n$ (\autoref{qu:subdiff_lip}), we sum the inequalities:
    \begin{align*}
        \frac{1}{N} \sum_{n=0}^{N-1} (f(x_n) - f_\star)
        &\le \frac{L}{2Nh}\, \norm{x_0 - x_\star}^2 + \frac{Lh}{2}\,.\qedhere
    \end{align*}
\end{proof}

Thus, the averaged iterate $\bar x_N$ satisfies $f(\bar x_N) - f_\star \le \varepsilon$ provided $N \ge L^2 R^2/\varepsilon^2$.
Note that this convergence rate is substantially worse than the one for the smooth case (\autoref{thm:gd_fn_value}).
Another difference is that the descent lemma (\autoref{lem:descent}) is available in the smooth case which implies monotonic decrease of the objective value; here, there is no descent lemma, so the guarantee only holds for the averaged iterate, and averaging is crucial (\autoref{fig:nonsmooth}).
The analysis can also be performed under strong convexity, see~\autoref{qu:psd_str_cvx}.

Interestingly, if we only assume that $f$ is $L$-Lipschitz continuous over $\msf B(x_\star, R)$, rather than on all of $\eu C$, it is still possible to show that $\min_{n=0,\dotsc,N-1} f(x_n) - f_\star \le LR/\sqrt N$, although the proof becomes more involved~\cite[\S 3.2.3]{Nes18CvxOpt}.

\begin{figure}[h]
    \centering
    \includegraphics[width=0.85\textwidth]{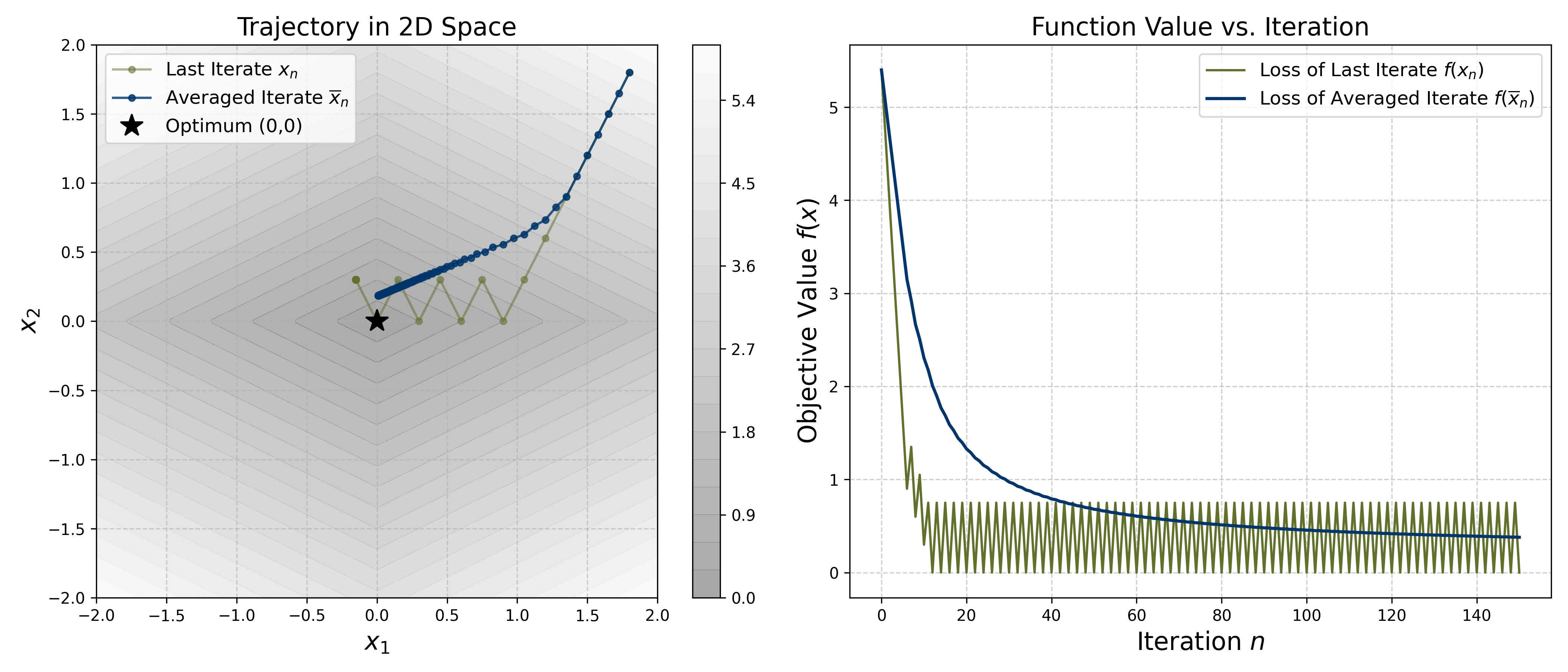}
    \caption{\footnotesize We plot the iterates of~\ref{eq:PSD}, with and without iterate averaging, for the function $f(x) \deq \abs{x[1]} + 2\,\abs{x[2]}$ with step size $h= 0.15$.}\label{fig:nonsmooth}
\end{figure}

The analysis above shows that when the projection operator is cheap to compute, optimization under constraints is straightforward provided that we interleave the gradient steps with projection steps.
We next tackle a more general setting in which we separate out the constraints into a ``simple'' set $\eu C$ for which we can compute the projection operator, and additional functional constraints $\{f_i \le 0~\text{for all}~i\in [m]\}$.
Thus, we consider
\begin{align*}
    \min\{f(x) \mid x \in \eu C\,,\; f_i(x) \le 0~\text{for all}~i\in [m]\}\,.
\end{align*}
We assume that $f$, $f_1,\dotsc,f_m$ are all regular convex functions, and write $f_{\max} \deq \max_{i\in [m]} f_i$.
The next algorithm is known as the projected subgradient method with functional constraints.
For $n=0,1,\dotsc,N-1$:
\begin{itemize}
    \item If $f_{\max}(x_n) \le \varepsilon$, set
        \begin{align*}
            x_{n+1}
            &\deq \Pi_{\eu C}\Bigl(x_n - \frac{\varepsilon}{\norm{p_n}^2}\,p_n\Bigr)\,, \qquad p_n \in \partial f(x_n)\,.
        \end{align*}
    \item Otherwise, set
        \begin{align*}
            x_{n+1}
            &\deq \Pi_{\eu C}\Bigl(x_n - \frac{f_{\max}(x_n)}{\norm{p_n}^2}\,p_n\Bigr)\,, \qquad p_n \in \partial f_{\max}(x_n)\,.
        \end{align*}
\end{itemize}

The algorithm requires computing elements of the subdifferential for the function $\max_{i\in [m]} f_i$.
We therefore first identify this subdifferential.

\begin{lem}[subdifferential of a maximum]\label{lem:subdiff_max}
    Let $f_1,\dotsc,f_m$ be regular convex functions.
    Then, for all $x\in\bigcap_{i\in [m]} \interior \dom f_i$,
    \begin{align*}
        \partial\bigl(\max_{i\in [m]} f_i\bigr)(x)
        &= \conv\bigl\{\partial f_i(x) \bigm\vert i\in [m]\,, \; f_i(x) = \max_{j\in [m]} f_j(x)\bigr\}\,.
    \end{align*}
\end{lem}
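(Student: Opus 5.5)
Write $F \deq \max_{i\in[m]} f_i$, fix $x$ in the common interior of the domains, let $I(x) \deq \{i : f_i(x) = F(x)\}$ be the active set, and let $S \deq \conv \bigcup_{i\in I(x)} \partial f_i(x)$. I will prove the two inclusions separately.

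\textbf{The inclusion $S \subseteq \partial F(x)$.} This direction is elementary. If $i\in I(x)$ and $p\in\partial f_i(x)$, then for every $y$,
\begin{align*}
    F(y) \ge f_i(y) \ge f_i(x) + \langle p, y-x\rangle = F(x) + \langle p, y-x\rangle\,,
\end{align*}
so $p\in\partial F(x)$. Since $\partial F(x)$ is convex (\autoref{thm:subdiff}), it contains the convex hull $S$.

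\textbf{The inclusion $\partial F(x) \subseteq S$.} I argue by contradiction and separate.

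First, $S$ is compact and convex. Each $\partial f_i(x)$ is non-empty, bounded, closed and convex by \autoref{thm:subdiff}, and the convex hull of finitely many compact convex sets is compact.

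Suppose $p \in \partial F(x)\setminus S$. The separation argument in the proof of \autoref{thm:supporting_hyperplane}, using the projection onto $S$, gives a vector $v$ with
\begin{align*}
    \langle v, p\rangle > \max_{q\in S} \langle v, q\rangle\,.
\end{align*}
The inequality is strict because $\norm{\Pi_S(p) - p}^2 > 0$.

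Next I compare directional derivatives along $v$. Write $f'(x;v) \deq \lim_{t\searrow 0} (f(x+tv) - f(x))/t$. This limit exists because the difference quotient is monotone in $t$ by convexity, and it is finite since $x$ is interior (\autoref{lem:lip_cont}). The subgradient inequality at $x$ gives $F'(x;v) \ge \langle p, v\rangle$.

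The inactive functions satisfy $f_j(x) < F(x)$ and are continuous near $x$ (\autoref{lem:lip_cont}). So for small $t$ the maximum is attained among active indices, and
\begin{align*}
    F'(x;v) = \max_{i\in I(x)} f_i'(x;v)\,.
\end{align*}
It then suffices to prove the max formula for a single regular convex $f_i$:
\begin{align*}
    f_i'(x;v) = \max_{q\in\partial f_i(x)}\langle q, v\rangle\,.
\end{align*}
Given this, $F'(x;v) = \max_{q\in S}\langle q,v\rangle < \langle p,v\rangle \le F'(x;v)$, which is a contradiction.

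\textbf{The main obstacle: the max formula.} The inequality $\ge$ is immediate from the subgradient inequality. For $\le$, I would rerun the supporting-hyperplane argument from the proof of \autoref{thm:subdiff}, but on a set adapted to the direction $v$.

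Set $g(y) \deq f_i(x) + f_i'(x;y-x)$. This $g$ is convex, since positive homogeneity and subadditivity of $f_i'(x;\cdot)$ follow from convexity of $f_i$. It is finite and Lipschitz. It also satisfies $g \ge f_i$, because the difference quotient of $f_i$ decreases as $t\searrow 0$. Apply \autoref{thm:subdiff} to $g$ at the point $x+v$ to get a subgradient $q$ of $g$ there. Testing the subgradient inequality at $y = x + sv$ for $s\to 0$ and $s\to\infty$, and using homogeneity, yields $\langle q, v\rangle = f_i'(x;v)$. Testing at general $y$ gives $f_i'(x;w) \ge \langle q,w\rangle$ for all $w$. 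Combined with $f_i(x+w) - f_i(x) \ge f_i'(x;w)$, this shows $q\in\partial f_i(x)$, completing the max formula and hence the proof.
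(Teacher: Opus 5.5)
Your proof is correct. For the inclusion $S\subseteq\partial F(x)$ it matches the paper's argument. The paper sums the subgradient inequalities with weights $\lambda_i$ directly, whereas you verify each active subgradient separately and then use convexity of $\partial F(x)$. That convexity is immediate from the definition of the subdifferential as an intersection of half-spaces, so \autoref{thm:subdiff} is not needed at that point.

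The genuine difference is the reverse inclusion. The paper omits it entirely, only remarking that it follows from Lagrangian duality or further subdifferential theory. You supply it using only tools already in the notes:
\begin{itemize}
    \item strict separation of $p$ from the compact convex set $S$ via the projection;
    \item the reduction $F'(x;v)=\max_{i\in I(x)} f_i'(x;v)$, which is valid because the inactive $f_j$ are continuous at the interior point $x$;
    \item the max formula $f_i'(x;v)=\max_{q\in\partial f_i(x)}\langle q,v\rangle$, obtained by applying \autoref{thm:subdiff} to the finite sublinear function $g$ at $x+v$.
\end{itemize}
A Lagrangian route would instead need the existence of Lagrange multipliers for the epigraphical reformulation, together with the sum rule $\partial(\sum_i\lambda_i f_i)(x)=\sum_i\lambda_i\,\partial f_i(x)$. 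Neither is developed in the notes. Your directional-derivative argument is therefore the more self-contained choice, and it yields the max formula as a reusable by-product.

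One slip to fix: you wrote $g\ge f_i$, but monotonicity of the difference quotient gives $f_i(x+w)-f_i(x)\ge f_i'(x;w)$, i.e.\ $g\le f_i$. This correct inequality is exactly what you use in the final step, so the argument is unaffected.
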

\begin{proof}
    ($\supseteq$)
    Let $f_{\max} \deq \max_{i\in [m]} f_i$ and $I_\star(x) \deq \{i\in [m] : f_i(x) = f_{\max}(x)\}$.
    If $\lambda$ is a probability vector and $p_i \in \partial f_i(x)$ for all $i \in I_\star(x)$, then
    \begin{align*}
        f_{\max}(y)
        \ge \sum_{i\in I_\star(x)} \lambda_i f_i(y)
        \ge \sum_{i\in I_\star(x)}\lambda_i\,(f_i(x) + \langle p_i, y - x\rangle)
        = f_{\max}(x) + \Bigl\langle \sum_{i\in I_\star(x)} p_i, \, y-x\Bigr\rangle\,.
    \end{align*}
    Hence, $\sum_{i\in I_\star(x)} \lambda_i p_i \in \partial f_{\max}(x)$.

    ($\subseteq$)
    Since the purpose of this lemma from the perspective of these notes is simply to compute an element of $\partial f_{\max}(x)$, we omit the proof of this direction.
    It can be proven, e.g., via Lagrangian duality or via more subdifferential theory.
\end{proof}

The next theorem provides the convergence rate for the method.

\begin{thm}[convergence of~\ref{eq:PSD} under functional constraints]\label{thm:PSD_functional}
    Let $f, f_1,\dotsc,f_m$ be convex and $L$-Lipschitz on the closed convex set $\eu C$.
    Then,~\ref{eq:PSD} under functional constraints satisfies
    \begin{align}\label{eq:psd_functional_success}
        \min\{f(x_n) \mid n=0,1,\dotsc,N-1\,, \; f_{\max}(x_n) \le \varepsilon\} - f_\star \le \varepsilon
    \end{align}
    provided that
    \begin{align*}
        N \ge \frac{L^2 \,\norm{x_0-x_\star}^2}{\varepsilon^2}\,.
    \end{align*}
\end{thm}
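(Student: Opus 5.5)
We argue by contradiction: suppose the success condition~\eqref{eq:psd_functional_success} fails. Then at every iteration $n$, either $f_{\max}(x_n) > \varepsilon$ (a ``constraint step''), or $f_{\max}(x_n) \le \varepsilon$ and $f(x_n) - f_\star > \varepsilon$ (an ``objective step''); if the set in~\eqref{eq:psd_functional_success} is empty we interpret the minimum as $+\infty$, so only these two cases arise. We will show that in either case the squared distance $\norm{x_n - x_\star}^2$ decreases by at least $\varepsilon^2/L^2$. After $N$ steps this gives $0 \le \norm{x_N - x_\star}^2 < \norm{x_0 - x_\star}^2 - N\varepsilon^2/L^2$, which contradicts $N \ge L^2\norm{x_0 - x_\star}^2/\varepsilon^2$.

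The one-step estimate follows the proof of~\autoref{thm:psd}. For a step of the form $x_{n+1} = \Pi_{\eu C}(x_n - \eta_n p_n)$, note that $x_\star \in \eu C$. By non-expansiveness (\autoref{lem:projection_contraction}),
\begin{align*}
    \norm{x_{n+1} - x_\star}^2 \le \norm{x_n - x_\star}^2 - 2\eta_n\,\langle p_n, x_n - x_\star\rangle + \eta_n^2\,\norm{p_n}^2\,.
\end{align*}

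\emph{Objective step.} Here $\eta_n = \varepsilon/\norm{p_n}^2$ and $p_n \in \partial f(x_n)$. By the subgradient inequality, $\langle p_n, x_n - x_\star\rangle \ge f(x_n) - f_\star > \varepsilon$. Substituting, the right-hand side is strictly less than $\norm{x_n - x_\star}^2 - 2\varepsilon^2/\norm{p_n}^2 + \varepsilon^2/\norm{p_n}^2 = \norm{x_n - x_\star}^2 - \varepsilon^2/\norm{p_n}^2$.

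\emph{Constraint step.} Here $\eta_n = f_{\max}(x_n)/\norm{p_n}^2$ and $p_n \in \partial f_{\max}(x_n)$. Since $x_\star$ is feasible, $f_{\max}(x_\star) \le 0$, so the subgradient inequality gives $\langle p_n, x_n - x_\star\rangle \ge f_{\max}(x_n) - f_{\max}(x_\star) \ge f_{\max}(x_n)$. The decrease is therefore at least $f_{\max}(x_n)^2/\norm{p_n}^2 > \varepsilon^2/\norm{p_n}^2$.

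In both cases we then use $\norm{p_n} \le L$ (\autoref{qu:subdiff_lip}). For $f_{\max}$, note that $f_{\max}$ is $L$-Lipschitz as a maximum of $L$-Lipschitz functions, or alternatively use~\autoref{lem:subdiff_max}: a convex combination of vectors of norm at most $L$ has norm at most $L$.

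The main obstacle is a technical one: the subgradient bound only controls Lipschitzness on $\eu C$, so we must check that $x_n \in \eu C$, which holds after projection (and we take $x_0 \in \eu C$). We also need $p_n \neq 0$ for the step to be defined. In an objective step, $p_n = 0$ would make $x_n$ a global minimizer of $f$, contradicting $f(x_n) - f_\star > \varepsilon$. In a constraint step, $p_n = 0$ would give $f_{\max} \ge f_{\max}(x_n) > 0$ everywhere, contradicting feasibility of $x_\star$.
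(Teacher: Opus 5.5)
Your argument is correct and is essentially the paper's proof. It has the same two-case one-step bound from non-expansiveness of the projection, the same use of feasibility of $x_\star$ (so $f_{\max}(x_\star) \le 0$) in the constraint case, and the same summation giving $\norm{x_N - x_\star}^2 < \norm{x_0 - x_\star}^2 - N\varepsilon^2/L^2$, which is impossible for $N \ge L^2\,\norm{x_0 - x_\star}^2/\varepsilon^2$; your extra checks ($p_n \ne 0$ while the success condition is unmet, and subgradients of $f_{\max}$ bounded by $L$) just make explicit details the paper leaves implicit.
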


The theorem says that after $N$ iterations, we can find a point $\hat x_N$ which almost satisfies the functional constraints, in the sense that $f_{\max}(\hat x_N) \le \varepsilon$, and moreover $f(\hat x_N) - f_\star \le \varepsilon$.
The number of iterations is no more than the case without functional constraints.

\begin{proof}[Proof of~\autoref{thm:PSD_functional}]
    There are two cases for the algorithm.
    If the iteration $n$ belongs to the first case, then as we saw in the proof of~\autoref{thm:psd},
    \begin{align*}
        \norm{x_{n+1} - x_\star}^2
        &\le \norm{x_n - x_\star}^2 - \frac{2\varepsilon}{\norm{p_n}^2}\,(f(x_n) - f_\star) + \frac{\varepsilon^2}{\norm{p_n}^2}\,.
    \end{align*}
    If $f(x_n) - f_\star \le \varepsilon$, then since $f_{\max}(x_n) \le \varepsilon$ (by the definition of the first case), we have met the success condition~\eqref{eq:psd_functional_success}.
    Otherwise, $f(x_n) - f_\star > \varepsilon$, and the inequality above implies
    \begin{align*}
        \norm{x_{n+1}-x_\star}^2
        &< \norm{x_n - x_\star}^2 - \frac{\varepsilon^2}{\norm{p_n}^2}
        \le \norm{x_n - x_\star}^2 - \frac{\varepsilon^2}{L^2}\,.
    \end{align*}

    What happens in the second case?
    Here, we \emph{also} show that $\norm{x_{n+1} - x_\star} < \norm{x_n - x_\star}$: since $x_\star$ satisfies the functional constraints and $x_n$ does not, the subgradient $p_n \in \partial f_{\max}(x_n)$ still acts as a separating hyperplane.
    Indeed,
    \begin{align*}
        \norm{x_{n+1} - x_\star}^2
        &= \bigl\lVert \Pi_{\eu C}\bigl(x_n - \frac{f_{\max}(x_n)}{\norm{p_n}^2}\,p_n\bigr) - \Pi_{\eu C}(x_\star)\bigr\rVert^2
        \le \bigl\lVert x_n - \frac{f_{\max}(x_n)}{\norm{p_n}^2}\,p_n - x_\star\bigr\rVert^2 \\[0.5em]
        &= \norm{x_n - x_\star}^2 - \frac{2f_{\max}(x_n)}{\norm{p_n}^2}\,\langle p_n, x_n - x_\star\rangle + \frac{{f_{\max}(x_n)}^2}{\norm{p_n}^2} \\[0.5em]
        &\le \norm{x_n - x_\star}^2 - \frac{2f_{\max}(x_n)}{\norm{p_n}^2}\,f_{\max}(x_n) + \frac{{f_{\max}(x_n)}^2}{\norm{p_n}^2}
        < \norm{x_n - x_\star}^2 - \frac{\varepsilon^2}{L^2}\,.
    \end{align*}

    Summing these inequalities across the iterations yields
    \begin{align*}
        \norm{x_N - x_\star}^2
        &< \norm{x_0 - x_\star}^2 - \frac{N\varepsilon^2}{L^2}\,.
    \end{align*}
    For $N \ge L^2\,\norm{x_0 - x_\star}^2/\varepsilon^2$, this is not possible unless we reach the success condition~\eqref{eq:psd_functional_success} by iteration $N$.
\end{proof}

\begin{ex}[soft-margin SVM]
    An example of a problem that can be tackled via projected subgradient methods is soft-margin support vector machine (SVM) classification.
    Suppose that we have a dataset ${\{(x_i, y_i)\}}_{i\in [n]}$, where $x_i \in \R^d$ and $y_i \in \{\pm 1\}$.
    The output of the soft-margin SVM is the classifier $x \mapsto \sgn(\langle \theta^\star, x \rangle)$, where $\theta^\star$ minimizes
    \begin{align*}
        \theta \mapsto \frac{1}{n}\sum_{i=1}^n \ell_{\msf{hinge}}(y_i, \langle \theta, x_i\rangle) + \frac{\lambda}{2}\,\norm \theta^2\,.
    \end{align*}
    Here, $\ell_{\msf{hinge}}(y,\hat y) \deq \max\{0,1-y\hat y\}$ is the hinge loss, $\lambda > 0$ is a regularization parameter, and we have omitted the bias term (which can be handled by augmenting the feature vector $x$ as usual).
    This objective is strongly convex and Lipschitz over bounded sets, so we can apply projected subgradient descent (projecting onto, e.g., a Euclidean ball).
\end{ex}

\subsection{Cutting plane methods}

Non-smooth optimization uses subgradient directions in order to ``localize'' the solution set.
Pursuing this line of reasoning further leads to the family of cutting plane methods.

Suppose that we wish to minimize $f$ over a bounded, closed, convex set $\eu C$.
Let $\eu C_\star$ denote the set of minimizers.
The idea is to construct a sequence of convex sets $\eu C = \eu C_0, \eu C_1, \eu C_2,\dotsc$, which shrink toward $\eu C_\star$.
The set $\eu C_n$ represents possible candidates for the solution to the problem at iteration $n$.

If $x_n \in \eu C_n$ and $p_n \in \partial f(x_n)$, then the subgradient inequality reads
\begin{align*}
    0 \ge f(x_\star) - f(x_n) \ge \langle p_n, x_\star - x_n\rangle \qquad\text{for all}~x_\star \in \eu C_\star\,.
\end{align*}
Thus,
\begin{align*}
    \eu C_\star \subseteq \eu C_n \cap \{x \in \R^d : \langle p_n, x \rangle \le \langle p_n, x_n \rangle\}\,.
\end{align*}
We can take $\eu C_{n+1}$ to be any superset of the right-hand side above.

To finish specifying the scheme, we need a rule for choosing the points $x_n$ and the sets $\eu C_n$, with the goal of $\eu C_n$ shrinking as fast as possible.
The key is the following lemma from convex geometry, which we do not prove.

\begin{lem}[Gr\"unbaum]
    Let $\eu C \subseteq \R^d$ be a convex body (i.e., a compact convex set with non-empty interior) and let $x_{\eu C}$ denote the \emph{centroid} of $\eu C$: $x_{\eu C} \deq {(\vol\eu C)}^{-1}\int_{\eu C} x\,\D x$.
    Then, for any half-space $\eu H$ containing $x_{\eu C}$,
    \begin{align*}
        \frac{\vol(\eu C \cap \eu H)}{\vol(\eu C)}
        &\ge \bigl( \frac{d}{d+1}\bigr){\bigsp}^d
        \ge \frac{1}{\e}\,,
    \end{align*}
    where $\e \approx 2.72$ is a numerical constant.
\end{lem}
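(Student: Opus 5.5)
The plan is the classical symmetrization argument for Grünbaum's inequality. We reduce to the case where $\eu C$ is a body of revolution (a ``rotationally symmetric'' body) and then to a cone, for which the fraction can be computed explicitly. By rotating and translating, we may assume that $x_{\eu C} = 0$ and that the half-space is $\eu H = \{x : x[1] \ge 0\}$. It suffices to treat the case where the boundary hyperplane passes through the centroid. Indeed, if $\eu H$ contains $x_{\eu C}$ in its interior, then $\eu H$ contains the half-space through $x_{\eu C}$ with the same normal, and so its volume fraction is at least as large.

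\emph{Step 1 (Schwarz symmetrization).} For each $t$, let $A(t) \deq \vol_{d-1}(\eu C \cap \{x[1] = t\})$. Replace each slice by a $(d-1)$-dimensional ball of the same volume centered on the $e_1$ axis, and call the resulting set $\eu C'$. By the Brunn{--}Minkowski inequality, $A^{1/(d-1)}$ is concave on its support, so $\eu C'$ is convex. The map $t\mapsto A(t)$ is unchanged, hence so are the volumes of $\eu C\cap\eu H$ and $\eu C$. The first coordinate of the centroid depends only on $A$, so the centroid of $\eu C'$ still lies on the hyperplane $\{x[1]=0\}$. Thus it suffices to prove the bound for $\eu C'$, i.e., for a body whose radius profile $r(t) \propto A(t)^{1/(d-1)}$ is concave.

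\emph{Step 2 (reduction to a cone).} This is the main obstacle. We want to replace $\eu C'$ by a cone $\eu K$ with apex on the positive side, whose base is perpendicular to $e_1$ and lies on the negative side. The cone should satisfy three properties:
\begin{enumerate}
    \item $\vol(\eu K\cap\{x[1]\ge 0\}) = \vol(\eu C'\cap\{x[1]\ge0\})$;
    \item $\vol(\eu K) \ge \vol(\eu C')$, or rather the ratio is not increased;
    \item the centroid of $\eu K$ has first coordinate $\le 0$.
\end{enumerate}
The construction I would try first is as follows. On the positive side, replace $\eu C'$ by the cone with apex at the far positive end, whose slice at $t=0$ has the same radius $r(0)$. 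By concavity of $r$, this cone lies inside $\eu C'^+$; scale its length so that it has equal positive volume. On the negative side, extend the same linear profile of $r$ until the negative part has the same volume as $\eu C'^-$. By concavity, this mass is moved \emph{away} from the hyperplane on both sides (on the positive side mass moves outward, on the negative side it moves further negative), so the centroid moves toward the negative side. We then shift the hyperplane forward until it passes through the new centroid. This only decreases the positive fraction, so it gives a lower bound for the fraction of the original body. Some care is needed to check the direction of the centroid movement on each side; this is the delicate bookkeeping part.

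\emph{Step 3 (cone computation).} For a cone of height $1$ with apex at $x[1]=1$ and base at $x[1]=0$, the cross-sectional area is proportional to $(1-t)^{d-1}$. The centroid is therefore at $t = 1/(d+1)$, and the part beyond the centroid has volume fraction $(1-\frac{1}{d+1})^d = (\frac{d}{d+1})^d$. The final inequality $(\frac{d}{d+1})^d = (1+\frac1d)^{-d} \ge \e^{-1}$ follows from $\log(1+u)\le u$.
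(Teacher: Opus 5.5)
The paper does not prove this lemma (it is quoted from convex geometry), so your argument has to stand on its own. Steps 1 and 3 are fine. The cone $\eu K$ you build in Step 2 is also the right object: same radius $r(0)$ at the cut, a single linear profile $\ell$, and equal volume on each side.

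The gap is the centroid comparison, which is the whole content of Step 2, and you have its sign backwards. Requirement (3) says the centroid $t_{\eu K}$ of $\eu K$ satisfies $t_{\eu K}\le 0$. That would give $\{x[1]\ge t_{\eu K}\}\supseteq\{x[1]\ge 0\}$, hence $\vol(\eu C'\cap\{x[1]\ge 0\})\le (\frac{d}{d+1})^d\vol(\eu C')$, which is an upper bound. What you need, and what is true, is $t_{\eu K}\ge 0$: the centroid moves toward the apex. Only then does shifting the hyperplane forward shrink the apex-side piece. Your justification also fails on the negative side: the frustum moves mass \emph{toward} the hyperplane, not away. In any case, ``away on both sides'' would not fix the sign of the net first moment, since the two sides would push it in opposite directions.

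Here is the missing argument, for $d\ge 2$ ($d=1$ is trivial). Write $A=\omega r^{d-1}$ with $\omega$ the volume of the unit $(d-1)$-ball. Let $A_{\eu K}$ be the cross-sectional area of $\eu K$, and set $g\deq r-\ell$. Then $g$ is concave on the support $[a,b]$ of $r$, with $g(0)=0$.

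First, equal volume on the positive side forces $g(s)\ge 0$ for some $s>0$. Otherwise $A<A_{\eu K}$ on $(0,b]$, and the cone would have strictly more positive volume. Taking $s$ maximal, $g\ge 0$ on $[0,s]$ and $g<0$ on $(s,b]$. By concavity, $g(t)\le (t/s)\,g(s)\le 0$ for all $t\in[a,0]$.

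On the positive side, $A-A_{\eu K}$ is therefore $\ge 0$ on $[0,s]$ and $\le 0$ on $(s,\infty)$, with zero integral. Hence $\int_0^\infty t\,(A-A_{\eu K})\,\D t\le s\int_0^\infty (A-A_{\eu K})\,\D t=0$, so positive mass moves outward.

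On the negative side, $A\le \omega\ell^{d-1}$ on $[a,0]$, so the equal-volume frustum stops at some $-h^-\ge a$. Then $A_{\eu K}-A$ is $\ge 0$ on $[-h^-,0]$ and $\le 0$ on $[a,-h^-)$, with zero integral. The same single-crossing bound gives $\int_{-\infty}^0 t\,(A_{\eu K}-A)\,\D t\ge -h^-\int_{-\infty}^0 (A_{\eu K}-A)\,\D t=0$, so negative mass moves inward.

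Adding the two sides, the first moment of $\eu K$ is at least that of $\eu C'$, which is $0$. So $t_{\eu K}\ge 0$, and
\begin{align*}
    \vol(\eu C'\cap\{x[1]\ge 0\})
    = \vol(\eu K\cap\{x[1]\ge 0\})
    \ge \vol(\eu K\cap\{x[1]\ge t_{\eu K}\})
    = \bigl(\frac{d}{d+1}\bigr)^d\vol(\eu K)
    = \bigl(\frac{d}{d+1}\bigr)^d\vol(\eu C')\,.
\end{align*}
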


Consequently, if we choose $x_n$ to be the centroid of $\eu C_n$ and set
\begin{align}\label{eq:CoGM}\tag{$\msf{CoGM}$}
    \eu C_{n+1} = \eu C_n \cap \{\langle p_n, \cdot \rangle \le \langle p_n, x_n \rangle\}\,, \qquad x_n = x_{\eu C_n}\,,
\end{align}
then Gr\"unbaum's inequality shows that $\vol(\eu C_n \setminus \eu C_{n+1})/\vol(\eu C_n) \le 1/\e$, or
\begin{align*}
    \frac{\vol(\eu C_{n+1})}{\vol(\eu C_n)} \le 1-\frac{1}{\e}\,.
\end{align*}
Thus, we cut away a constant fraction of the volume at each iteration.
This is known as the \emph{center of gravity} method.

As stated,~\ref{eq:CoGM} is not a practical method.
The feasible set $\eu C_n$ at iteration $n$ can be quite complicated, making it prohibitively expensive to compute its centroid.
Centroids can be computed via Markov chain Monte Carlo (MCMC) methods for numerical integration, with guarantees available due to recent advances in log-concave sampling, but it is generally understood that this is a more difficult computational problem than the original convex optimization problem we set out to solve.
Nevertheless,~\ref{eq:CoGM} achieves the optimal complexity bound in the oracle model, so let us analyze its efficiency.

\begin{thm}[center of gravity]\label{thm:CoGM}
    Let $D \deq \diam \eu C$ and let $f : \R^d\to\R$ be convex and $L$-Lipschitz on $\eu C$.
    Then,~\ref{eq:CoGM} satisfies
    \begin{align*}
        \min_{k=0,1,\dotsc,N-1} f(x_k) - f_\star
        &\le DL\,\bigl(1 - \frac{1}{\e}\bigr){\bigsp}^{N/d}\,.
    \end{align*}
\end{thm}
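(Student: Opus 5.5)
The plan is the standard volume argument. By Gr\"unbaum's inequality, each cut in~\eqref{eq:CoGM} satisfies $\vol(\eu C_{n+1}) \le (1-1/\e)\,\vol(\eu C_n)$, and hence
\begin{align*}
    \vol(\eu C_N) \le \bigl(1-\frac{1}{\e}\bigr)^N \vol(\eu C)\,.
\end{align*}
Set $\delta \deq (1-1/\e)^{N/d}$ (we may assume $\delta < 1$; otherwise the bound is trivial, since $f(x_0) - f_\star \le LD$ by Lipschitzness). Fix a minimizer $x_\star \in \eu C$ and consider the shrunken copy
\begin{align*}
    \eu C^\delta \deq x_\star + \delta\,(\eu C - x_\star) = \{(1-\delta)\,x_\star + \delta\,y : y\in\eu C\}\,.
\end{align*}
By convexity, $\eu C^\delta \subseteq \eu C$. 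Its volume is $\delta^d \vol(\eu C) = (1-1/\e)^N \vol(\eu C)$, which is at least $\vol(\eu C_N)$.

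Next, compare $\eu C^\delta$ with $\eu C_N$. If $\vol(\eu C^\delta) > \vol(\eu C_N)$, some point $z\in\eu C^\delta$ lies outside $\eu C_N$. Since $\eu C_N$ is $\eu C$ intersected with the half-spaces $\{\langle p_k,\cdot\rangle \le \langle p_k, x_k\rangle\}$, the point $z$ must have been cut at some step $k<N$, i.e., $\langle p_k, z - x_k\rangle > 0$. The subgradient inequality then gives $f(z) \ge f(x_k) + \langle p_k, z-x_k\rangle > f(x_k)$. Writing $z = (1-\delta)\,x_\star + \delta\,y$ with $y\in\eu C$, convexity yields
\begin{align*}
    f(z) \le (1-\delta)\,f_\star + \delta\, f(y)\,,
\end{align*}
so $f(z) - f_\star \le \delta\,(f(y)-f_\star) \le \delta L D$, using that $f$ is $L$-Lipschitz on $\eu C$ and $\norm{y-x_\star}\le D$. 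Combining the two estimates gives $f(x_k) - f_\star < \delta LD$, which is the claimed bound.

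The main obstacle is the equal-volume boundary case, together with degenerate situations: some $p_k = 0$ (then $x_k$ is already optimal, so that case is fine), or $\eu C_N$ having measure zero. The cleanest fix is to run the argument with $\delta' > \delta$ slightly larger, so that the volume inequality $\vol(\eu C^{\delta'}) > \vol(\eu C_N)$ is strict whenever $\eu C$ has non-empty interior. This gives $\min_k f(x_k) - f_\star \le \delta' LD$ for every such $\delta'$, and letting $\delta' \searrow \delta$ recovers the stated bound. A further implicit requirement is that each $\eu C_n$ be a convex body so that its centroid and Gr\"unbaum's lemma make sense. I would note that if $\eu C_n$ ever loses full dimension, the volume is already zero and the same limiting argument applies.
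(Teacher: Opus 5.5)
Your proposal is correct and follows essentially the same argument as the paper: you shrink $\eu C$ toward $x_\star$ by a factor slightly larger than $(1-1/\e)^{N/d}$ so that its volume exceeds $\vol(\eu C_N)$, pick a point of the shrunken set that was cut away at some step $k<N$, bound $f(x_k)-f_\star$ using convexity along the segment from $x_\star$ together with Lipschitzness, and pass to the limit. The only differences are cosmetic: you make explicit the subgradient step $f(z) > f(x_k)$ that the paper leaves implicit, and you flag the degenerate case $p_k = 0$, where the volume need not shrink but $x_k$ is already optimal.
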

\begin{proof}
    By the argument above, at iteration $N$, $\vol(\eu C_N)/\vol(\eu C) \le \lambda^N$, where we can take $\lambda = 1-1/\e$.
    Now consider the set $\hat{\eu C} \deq (1-t)\,x_\star + t\,\eu C$, where we choose $t$ so that $\vol(\hat{\eu C}) > \vol(\eu C_N)$; since $\vol(\hat{\eu C}) = t^d \vol(\eu C)$, we can take any $t > \lambda^{N/d}$.
    With this choice, there exists $\hat x \in \hat{\eu C} \setminus \eu C_N$.
    Since $\hat x \notin \eu C_N$, there exists $k < N$ for which
    \begin{align*}
        f(x_k) - f_\star
        &\le f(\hat x) - f_\star
        \le t\,\bigl(\sup_{\eu C} f - f_\star\bigr)
        \le tDL\,.
    \end{align*}
    The result follows by letting $t \searrow \lambda^{N/d}$.
\end{proof}

Thus, in principle, we can achieve $f(\hat x) - f_\star \le \varepsilon$ in $O(d\log(DL/\varepsilon))$ iterations.
Compared to~\autoref{thm:psd}, this result incurs only a logarithmic dependence on the ratio $DL/\varepsilon$, i.e., we can output a high-accuracy solution even for poorly conditioned convex sets.
On the other hand, it incurs dependence on the dimension.

Recall that the lower bound for convex smooth optimization (\autoref{thm:cvx_smooth_lb}) only applies in dimension $d \gtrsim \sqrt{\beta R^2/\varepsilon}$.
The center of gravity method explains why: a $\beta$-smooth function over a ball of radius $R$ is also $\beta R$-Lipschitz, so~\autoref{thm:CoGM} yields an oracle complexity of $O(d\log(\beta R^2/\varepsilon))$ in this case.
This is smaller than the lower bound of $\Omega(\sqrt{\beta R^2/\varepsilon})$ in~\autoref{thm:cvx_smooth_lb} when $d \ll \sqrt{\beta R^2/\varepsilon}/\log(\beta R^2/\varepsilon)$, so a lower bound construction cannot exist in any smaller dimension.\footnote{This discussion is not entirely correct since~\autoref{thm:cvx_smooth_lb} only applies to gradient span algorithms, which does not cover~\ref{eq:CoGM}. However, the moral of the discussion is true for bona fide oracle lower bounds.}
Note also that for convex quadratic minimization, there are methods which find the minimizer in $d$ queries (e.g.,~\autoref{thm:termination_CG} for~\ref{eq:CG}); the center of gravity method almost achieves this guarantee for general convex optimization.

Toward making cutting plane methods more practical, a famous example is the \emph{ellipsoid method}.
In this scheme, we take each set $\eu C_n$ to be an ellipsoid,
\begin{align}\label{eq:ellipsoid}
    \eu C_n
    &= \{x\in\R^d : \langle x-x_n, \Sigma_n^{-1}\,(x-x_n)\rangle\le 1\}\,.
\end{align}
At the next iteration, we must find a new ellipsoid $\eu C_{n+1}$ such that
\begin{align}\label{eq:new_ellipsoid}
    \eu C_{n+1} \supseteq \eu C_n \cap \{x\in\R^d : \langle p_n, x \rangle \le \langle p_n, x_n \rangle\}\,.
\end{align}
Here, we use the following geometric lemma (\autoref{qu:ellipsoid}).

\begin{lem}[ellipsoid]\label{lem:ellipsoid}
    Let $\eu C_n$ be the ellipsoid~\eqref{eq:ellipsoid} and let $p_n \in \R^d$ be a non-zero vector.
    Define $\eu C_{n+1} \deq \{x\in\R^d : \langle x-x_{n+1}, \Sigma_{n+1}^{-1}\,(x-x_{n+1})\rangle \le 1\}$, where
    \begin{align*}
        x_{n+1}
        &\deq x_n - \frac{1}{d+1}\, \frac{\Sigma_n p_n}{\sqrt{\langle p_n, \Sigma_n\,p_n\rangle}}\,, \\
        \Sigma_{n+1}
        &\deq \frac{d^2}{d^2 - 1}\,\Bigl( \Sigma_n - \frac{2}{d+1}\, \frac{\Sigma_n p_n p_n^\T \Sigma_n}{\langle p_n, \Sigma_n\,p_n\rangle}\Bigr)\,.
    \end{align*}
    Then, for $d > 1$, $\eu C_{n+1}$ satisfies~\eqref{eq:new_ellipsoid} and
    \begin{align*}
        \frac{\vol(\eu C_{n+1})}{\vol(\eu C_n)}
        &= \sqrt{\frac{d-1}{d+1}\, \bigl(\frac{d^2}{d^2-1}\bigr){\bigsp}^d}
        = 1 - \Omega\bigl( \frac{1}{d}\bigr)\,.
    \end{align*}
\end{lem}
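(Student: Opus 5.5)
The plan is to reduce to a canonical configuration by an affine change of variables. Both the containment~\eqref{eq:new_ellipsoid} and the volume ratio are invariant under invertible affine maps: such a map sends ellipsoids to ellipsoids and half-spaces to half-spaces, and it multiplies all volumes by the same factor $\abs{\det}$. Concretely, I would set $x = x_n + \Sigma_n^{1/2} u$. This maps the unit ball $\msf B(0,1)$ onto $\eu C_n$. The half-space $\{\langle p_n, x\rangle \le \langle p_n, x_n\rangle\}$ pulls back to $\{\langle q, u\rangle \le 0\}$, where $q \deq \Sigma_n^{1/2} p_n$. Applying a further rotation, I may assume $q/\norm q = e_1$, so the cut becomes $\{u[1] \le 0\}$.

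Next I would check that the formulas transform correctly. Since $\Sigma_n p_n/\sqrt{\langle p_n,\Sigma_n p_n\rangle} = \Sigma_n^{1/2} e_1$ after the rotation, the new center pulls back to $u_{n+1} = -\frac{1}{d+1}\,e_1$. Similarly, the new matrix pulls back to
\begin{align*}
    S \deq \frac{d^2}{d^2-1}\,\Bigl(I - \frac{2}{d+1}\,e_1e_1^\T\Bigr)\,,
\end{align*}
using $\Sigma_n^{1/2}e_1e_1^\T\Sigma_n^{1/2} = \Sigma_n p_np_n^\T\Sigma_n/\langle p_n,\Sigma_np_n\rangle$. The matrix $S$ is diagonal, with entry $\frac{d^2}{d^2-1}\cdot\frac{d-1}{d+1} = \frac{d^2}{(d+1)^2}$ in the $e_1$ direction and $\frac{d^2}{d^2-1}$ in the other $d-1$ directions.

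The volume ratio is then $\sqrt{\det S}$, which equals
\begin{align*}
    \sqrt{\frac{d-1}{d+1}\,\bigl(\frac{d^2}{d^2-1}\bigr)^{d}}\,.
\end{align*}
For the asymptotics, I would take logarithms. We have $\log\frac{d-1}{d+1} = -\frac{2}{d} + O(d^{-3})$ and $d\log\frac{d^2}{d^2-1} = \frac{1}{d} + O(d^{-3})$. Hence the ratio is $\exp(-\frac{1}{2d} + O(d^{-3})) = 1-\Omega(1/d)$. The constants must be kept honest for small $d$, but the ratio is strictly below $1$ for every $d>1$.

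The main step, and the only genuine obstacle, is the containment: if $\norm u \le 1$ and $u[1]\le 0$, then $(u-u_{n+1})^\T S^{-1}(u-u_{n+1}) \le 1$. Writing $a \deq u[1] \in [-1,0]$ and $r^2 \deq \norm u^2 - a^2 \le 1-a^2$, the left side equals
\begin{align*}
    \frac{(d+1)^2}{d^2}\,\Bigl(a + \frac{1}{d+1}\Bigr)^2 + \frac{d^2-1}{d^2}\,r^2\,.
\end{align*}
This is increasing in $r^2$, so I would substitute $r^2 = 1-a^2$. The result is a quadratic in $a$, and expanding it should give $1 + \frac{2(d+1)}{d^2}\,(a^2 + a)$. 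Since $a(a+1) \le 0$ on $[-1,0]$, this is at most $1$, with equality at $a\in\{-1,0\}$. That settles~\eqref{eq:new_ellipsoid}. I expect this expansion to be the only place where care is needed, and I would verify it directly.
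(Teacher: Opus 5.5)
The paper does not prove this lemma; it is left as \autoref{qu:ellipsoid}, so there is no proof to compare against. Your argument is the standard one, and it is correct and complete.

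To make the reduction literally correct, build the rotation into the map: take $x = x_n + \Sigma_n^{1/2}Q\,u$ with $Q$ orthogonal and $Q e_1 = \Sigma_n^{1/2}p_n/\norm{\Sigma_n^{1/2}p_n}$. Then $\Sigma_n p_n/\sqrt{\langle p_n,\Sigma_n p_n\rangle} = \Sigma_n^{1/2}Q e_1$ and $\Sigma_{n+1} = (\Sigma_n^{1/2}Q)\,S\,(\Sigma_n^{1/2}Q)^\T$, which gives exactly your $u_{n+1}$ and $S$. The expansion you flagged also checks out. The $a^2$ coefficient is $\frac{(d+1)^2-(d^2-1)}{d^2} = \frac{2(d+1)}{d^2}$, and the constant terms sum to $\frac{1}{d^2}+\frac{d^2-1}{d^2} = 1$.

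The one loose end is the uniform constant in $1-\Omega(1/d)$. You assert without proof that the ratio is below $1$ for every $d>1$. You can settle this exactly, for all $d\ge 2$, by expanding both logarithms in powers of $1/d$:
\begin{align*}
    \log\frac{d-1}{d+1} + d\log\frac{d^2}{d^2-1}
    = -\frac{1}{d} - \sum_{j\ge 1}\frac{d^{-(2j+1)}}{(j+1)\,(2j+1)}
    \le -\frac{1}{d}\,.
\end{align*}
Hence the volume ratio is at most $\exp\bigl(-\frac{1}{2d}\bigr) \le 1-\frac{1}{4d}$, with no separate treatment of small $d$ needed.
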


By following the proof of~\autoref{thm:CoGM}, replacing $\lambda$ by $1-\Omega(1/d)$, one obtains the same guarantee as for~\ref{eq:CoGM} but with iteration count $O(d^2 \log(LD/\varepsilon))$.
(See~\autoref{qu:cutting_plane} for details.)
Thus, the cost of obtaining an implementable version of the center of gravity method is a larger query complexity.
Naturally, there have been numerous follow-up works in the field which aim at achieving the best of both worlds.

\subsection{Lower bounds}

In this section, we study lower bounds for convex non-smooth optimization.

\begin{thm}[lower bound for convex, non-smooth minimization]\label{thm:cvx_nonsmooth_lb}
    For any $x_0\in\R^d$, $d > N$, and $L, R > 0$, there exists a convex and $L$-Lipschitz function $f$ over $\msf B(x_\star,R)$ such that $x_0 \in \msf B(x_\star,R)$ and for any gradient span algorithm,
    \begin{align*}
        f(x_N) - f_\star
        &\gtrsim \frac{LR}{\sqrt N}\,.
    \end{align*}
\end{thm}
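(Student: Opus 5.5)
We use Nesterov's classical construction: a strongly convex, piecewise-linear function that reveals only one new coordinate per query. Translate so that $x_0 = 0$. For parameters $\gamma, \mu > 0$ to be tuned, set
\begin{align*}
    f(x) \deq \gamma \max_{k\in[N+1]} x[k] + \frac{\mu}{2}\,\norm x^2\,,
\end{align*}
viewed as a function on $\msf B(x_\star, R)$ (using $d > N$, so $d \ge N+1$ coordinates exist). First we compute the minimizer explicitly. By symmetry among the first $N+1$ coordinates and strict convexity (\autoref{lem:unique_min}), the minimizer has $x_\star[k] = -c$ for $k \le N+1$ and zero elsewhere. Then \autoref{lem:subdiff_max} gives the optimality condition $0 \in \gamma\,\conv\{e_1,\dots,e_{N+1}\} + \mu x_\star$, which yields $c = \gamma/(\mu(N+1))$. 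Hence
\[
    f_\star = -\frac{\gamma^2}{2\mu(N+1)}\,, \qquad \norm{x_0 - x_\star}^2 = \frac{\gamma^2}{\mu^2(N+1)}\,.
\]
On a ball of radius $R$ around $x_\star$, every point has norm at most $2R$ (since $\norm{x_\star}\le R$), so the Lipschitz constant there is at most $\gamma + 2\mu R$.

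Second, we make the resisting-oracle argument. Gradient span algorithms only receive the subgradients the oracle returns. We specify the oracle to return $\gamma e_{j} + \mu x$, where $j$ is the \emph{smallest} index attaining the maximum among the first $N+1$ coordinates. This is a valid subgradient by \autoref{lem:subdiff_max}. Define $\eu V_n$ as in the proof of \autoref{thm:cvx_smooth_lb}. We show by induction that $x_n \in \eu V_n$. If $x \in \eu V_k$ with $k < N+1$, then coordinates $k+1,\dots,N+1$ of $x$ vanish. So the smallest maximizing index is at most $k+1$: either some coordinate in $[k]$ attains a positive maximum, or the maximum is $\ge 0$ and is attained first at an index $\le k+1$. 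The returned subgradient therefore lies in $\eu V_{k+1}$, and the span condition closes the induction.

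Third, we bound the error at step $N$. For $x_N \in \eu V_N$ we have $x_N[N+1] = 0$, so $\max_{k\le N+1} x_N[k] \ge 0$ and therefore $f(x_N) \ge 0$. This gives
\[
    f(x_N) - f_\star \ge \frac{\gamma^2}{2\mu(N+1)}\,.
\]
Finally we tune the parameters. Choose $\mu$ so that $\norm{x_0 - x_\star} = R$, namely $\mu = \gamma/(R\sqrt{N+1})$. Then the Lipschitz constant is $\gamma(1 + 2/\sqrt{N+1}) \le 3\gamma$, so setting $\gamma = L/3$ makes $f$ $L$-Lipschitz on $\msf B(x_\star,R)$, and $x_0 \in \msf B(x_\star, R)$ holds. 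The gap becomes
\[
    \frac{\gamma R\sqrt{N+1}}{2(N+1)} \gtrsim \frac{LR}{\sqrt N}\,.
\]

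The main obstacle is the oracle specification. Because $f$ is non-smooth, the subdifferential is multivalued exactly at the iterates, where ties among zero coordinates occur. The tie-breaking rule must be chosen so that each query reveals at most one new coordinate, and the induction has to check the tied case (maximum equal to $0$) carefully. A secondary point of care is bookkeeping the Lipschitz constant on the ball rather than globally.
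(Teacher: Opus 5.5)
Your proof is correct and takes essentially the same approach as the paper. Both use $\gamma\max_k x[k] + \frac{\mu}{2}\norm{x}^2$ with a smallest-maximizing-index subgradient oracle, the induction $x_n \in \eu V_n$, the bound $f(x_N) \ge 0$, and the tuning $\mu = \gamma/(R\sqrt{N+1})$ with $\gamma \asymp L$. Your one deviation, restricting the maximum to the first $N+1$ coordinates, makes the construction valid for every $d > N$, whereas the paper's choice only covers $d = N+1$.
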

\begin{proof}
    Assume $x_0 = 0$ and define the function $f : \R^d \to \R$ by
    \begin{align*}
        f(x) \deq \gamma \max_{i\in [d]} x[i] + \frac{\alpha}{2}\,\norm x^2\,,
    \end{align*}
    where $\alpha, \gamma > 0$ are to be chosen.
    Note that this function is Lipschitz with constant $\gamma + \alpha\,(\norm{x_\star} + R)$.
    Also, if $I_\star(x) \deq \{i\in [d] : x[i] = \max_{j\in [d]} x[j]\}$, then from~\autoref{lem:subdiff_max},
    \begin{align*}
        \partial f(x) = \alpha x + \gamma \conv\{e_i : i\in I_\star(x)\}\,.
    \end{align*}
    The optimal point is $x_\star[k] = -\gamma/(\alpha d)$ for $k\in [d]$, by checking that $0 \in \partial f(x_\star)$.
    Thus, $\norm{x_\star} = \gamma/(\alpha\sqrt d)$ and the Lipschitz constant is at most $2\gamma + \alpha R$.

    We take a subgradient oracle which, given a point $x$, outputs $\alpha x + \gamma e_i \in \partial f(x)$, where $i = \min I_\star(x)$ is the \emph{first} coordinate of $x$ that achieves the maximum.
    From this property, it is straightforward to show via induction that $x_n \in \eu V_n$ for all $n$, where $\eu V_n$ is the subspace from the proof of~\autoref{thm:cvx_smooth_lb}.

    Since $d > N$, it follows that $f(x_N) \ge 0$.
    On the other hand,
    \begin{align*}
        f_\star
        &= f(x_\star)
        = - \frac{\gamma^2}{\alpha d} + \frac{\gamma^2}{2\alpha d}
        = - \frac{\gamma^2}{2\alpha d}\,.
    \end{align*}
    We set $d=N+1$, $\gamma = L/4$, $\alpha = \gamma/(R\sqrt d)$ (to ensure that $\norm{x_0 - x_\star} \le R$), which leads to a Lipschitz constant of $L/2 + L/(4\sqrt d) \le L$.
    It yields
    \begin{align*}
        f(x_N) - f_\star
        &\ge -f(x_\star)
        \gtrsim \frac{LR}{\sqrt N}\,. \qedhere
    \end{align*}
\end{proof}

Note that this matches the guarantee of~\ref{eq:PSD} (\autoref{thm:psd}), so projected subgradient descent is \emph{optimal} in the non-smooth setting.
In other words, without smoothness, there is no acceleration phenomenon.

There is a version of~\autoref{thm:cvx_nonsmooth_lb} in the strongly convex case (\autoref{qu:strcvx_nonsmooth_lb}).

\begin{thm}[lower bound for strongly convex, non-smooth minimization]\label{thm:strcvx_nonsmooth_lb}
    For any $x_0\in\R^d$, $d > N$, and $\alpha, L > 0$, there exists $R > 0$ and an $\alpha$-convex and $L$-Lipschitz function $f$ over $\msf B(x_\star,R)$ such that $x_0 \in \msf B(x_\star,R)$ and for any gradient span algorithm,
    \begin{align*}
        f(x_N) - f_\star
        &\gtrsim \frac{L^2}{\alpha N}\,.
    \end{align*}
\end{thm}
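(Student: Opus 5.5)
I will reuse the construction from the proof of~\autoref{thm:cvx_nonsmooth_lb} verbatim, but tune the parameters differently: now $\alpha$ is prescribed, and $R$ is ours to choose. Taking $x_0 = 0$ by translation, set
\begin{align*}
    f(x) \deq \gamma \max_{i\in [d]} x[i] + \frac{\alpha}{2}\,\norm x^2\,,
\end{align*}
which is $\alpha$-convex because it is a convex function plus $\frac{\alpha}{2}\,\norm\cdot^2$. From the earlier proof we already have the following facts:
\begin{itemize}
    \item the minimizer is $x_\star = -\frac{\gamma}{\alpha d}\,(1,\dotsc,1)$, with $\norm{x_\star} = \gamma/(\alpha\sqrt d)$ and $f_\star = -\gamma^2/(2\alpha d)$;
    \item with the ``first maximal coordinate'' subgradient oracle, every gradient span algorithm satisfies $x_n \in \eu V_n$;
    \item since $d > N$, the iterate $x_N$ has a zero coordinate, so $\max_i x_N[i] \ge 0$ and hence $f(x_N) \ge 0$.
\end{itemize}
Therefore $f(x_N) - f_\star \ge \gamma^2/(2\alpha d)$ for every gradient span algorithm.

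Next I choose the parameters. Take $R \deq \norm{x_0 - x_\star} = \gamma/(\alpha\sqrt d)$, so that $x_0 \in \msf B(x_\star, R)$ holds with equality. On $\msf B(x_\star,R)$, any subgradient $\alpha x + \gamma v$ with $v \in \conv\{e_i\}$ has norm at most
\begin{align*}
    \gamma + \alpha\,(\norm{x_\star} + R) = \gamma + \frac{2\gamma}{\sqrt d} \le 3\gamma\,.
\end{align*}
So setting $\gamma = L/3$ makes $f$ $L$-Lipschitz on the ball, by the bound $\norm{p} \le L$ for subgradients in the spirit of~\autoref{qu:subdiff_lip}. Finally, choose $d = N+1$. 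This gives
\begin{align*}
    f(x_N) - f_\star \ge \frac{L^2}{18\,\alpha\,(N+1)} \gtrsim \frac{L^2}{\alpha N}\,.
\end{align*}
If a larger ambient dimension $d > N+1$ is forced, I would make $f$ depend only on the first $N+1$ coordinates, keeping the quadratic term on all coordinates; the same span argument and computation then go through unchanged.

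The only genuinely new point compared with~\autoref{thm:cvx_nonsmooth_lb} is the bookkeeping showing that $R$ scales as $L/(\alpha\sqrt N)$ rather than being fixed in advance. This is exactly why the statement quantifies $R$ existentially. The key consistency check is the Lipschitz bound over the ball: since both $\norm{x_\star}$ and $R$ equal $\gamma/(\alpha\sqrt d)$, the quadratic term contributes only $O(\gamma)$ to the Lipschitz constant, independent of $\alpha$. I expect no serious obstacle beyond verifying this; the induction $x_n \in \eu V_n$ carries over word for word from the earlier proof.
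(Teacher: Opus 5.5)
Your proposal is correct and follows the route the paper intends. The paper leaves this theorem as an exercise, with the hint to reuse the construction $\gamma\max_i x[i] + \frac{\alpha}{2}\,\lVert x\rVert^2$ from the convex non-smooth lower bound and re-tune the parameters, and your choices $\gamma \asymp L$, $R = \gamma/(\alpha\sqrt d)$, $d = N+1$ give $f(x_N) - f_\star \ge \gamma^2/(2\alpha d) \gtrsim L^2/(\alpha N)$ as intended.

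Your padding remark (the max runs over the first $N+1$ coordinates only, the quadratic over all of them) correctly handles the quantifier ``for any $d > N$''. The paper's own convex proof glosses over this by simply setting $d = N+1$. If the ball is meant to be open, take $R$ slightly larger, say $2\gamma/(\alpha\sqrt d)$, so that $x_0$ lies strictly inside; this only changes the constant in the Lipschitz bound.
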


Next, in the low-dimensional setting, the following lower bound holds.

\begin{thm}[lower bound for convex, non-smooth minimization II]\label{thm:cutting_plane_lb}
    The oracle complexity of minimizing convex, $L$-Lipschitz functions over ${[-R,R]}^d$ to accuracy $\varepsilon$ is at least $\Omega(d \log(LR/\varepsilon))$.
\end{thm}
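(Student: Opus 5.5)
The plan is to prove the bound first in one dimension and then tensorize across coordinates by a resisting-oracle (adversary) argument. Here $d=1$ contributes a factor $\log(LR/\varepsilon)$, and independent coordinates contribute the factor $d$. Against a deterministic algorithm we build the function adaptively: the oracle maintains, for each coordinate $i\in[d]$, an interval $I_i\subseteq[-R,R]$ of ``still possible'' minimizer locations. It answers every query consistently with some final function whose minimizer lies in $I_1\times\cdots\times I_d$.

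For the one-dimensional building block, consider $g_a(t)\deq L'\,\abs{t-a}$ on $[-R,R]$, with $L' \deq L/d$ so that sums over coordinates stay $L$-Lipschitz in $\ell_2$ (or take $L'=L/\sqrt d$; only the constant inside the logarithm changes). A query at $t$ with $t\notin I$ is answered using the sign of $t$ relative to $I$. If $t\in I$, the oracle splits $I$ at $t$, keeps the longer half, and answers with the value and slope of $g_a$ for any $a$ in that half. Here is the subtle point: the value $L'\abs{t-a}$ depends on $a$, not just on which side $a$ lies. To decouple this, I would use instead $g_a(t)=L'\max\{t-a,\,a-t\}$ written as $L'\abs{t-a}$ but have the oracle reveal only the subgradient and compute values relative to an unknown additive offset. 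The cleaner fix is to work with the full function
\begin{align*}
f(x)\deq \max_{i\in[d]}\; L\,\abs{x[i]-a_i}\,,
\end{align*}
whose value at a query $x$ is determined only by the coordinate $i$ achieving the max. Each query then refines at most one interval, namely that of the maximizing coordinate, and the oracle halves it toward the larger piece. The resisting oracle keeps every interval at length at least $2R\,2^{-k_i}$, where $k_i$ is the number of times coordinate $i$ was cut. After $N$ queries, some coordinate has been cut at most $N/d$ times.

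To conclude, we must show that the algorithm's output $\hat x$ has $f(\hat x)-f_\star\gtrsim LR\,2^{-N/d}$. Since $f_\star=0$ at $x=a$, the adversary can place $a_i$ in the surviving interval of a rarely cut coordinate, at distance at least a quarter of that interval's length from $\hat x[i]$. It can do this while remaining consistent with all previous answers, because the answers to past queries only constrained the $a_j$ through the intervals. Hence $f(\hat x)\ge L\abs{\hat x[i]-a_i}\gtrsim LR\,2^{-N/d}$. Requiring this to be at most $\varepsilon$ forces $N\gtrsim d\log_2(LR/\varepsilon)$.

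The main obstacle is the consistency step. The oracle's reported values $L\abs{x[j]-a_j}$ pin down $a_j$ exactly up to a reflection, rather than merely up to an interval. I would first try to repair this by making $f$ a maximum of one-sided linear pieces in each coordinate, using a piecewise-linear convex ``tent'' encoding. The goal is for a query to reveal only on which side of $x[i]$ the minimizer lies, together with a value that can be matched by a whole interval of $a_i$ (e.g., by adding slack constant offsets that are shrunk geometrically). If that becomes messy, the fallback is to cite the standard Nemirovski--Yudin volume argument: reduce to the feasibility problem of locating a point in a small box, and combine Gr\"unbaum-type volume counting with an information-theoretic count showing that each query yields $O(1)$ bits about which of $(LR/\varepsilon)^{\Omega(d)}$ well-separated candidate boxes contains the minimizer.
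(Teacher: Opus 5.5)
There is a genuine gap. It sits exactly at the step you flag as the main obstacle: your hard instances leak the minimizer through the first-order answers, so the resisting oracle you describe does not exist.

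Take $f(x)=\max_{i\in[d]} L\,\abs{x[i]-a_i}$. A query at $x$ returns $v=f(x)$ and some $g\in\partial f(x)$. If $v>0$, every coordinate $i$ with $g[i]\neq 0$ is active, so it satisfies $\abs{x[i]-a_i}=v/L$ and $\mathrm{sign}(x[i]-a_i)=\mathrm{sign}(g[i])$. Hence $a_i$ is determined exactly; already for $d=1$, one query to $L\,\abs{t-a}$ reveals $a$. In the next query, set the identified coordinates to their true values. This forces a new coordinate to be active, unless that query is already a minimizer. So this class is minimized exactly with at most $d$ queries, and no $\log(LR/\varepsilon)$ factor can be extracted from it. In particular, your claim that past answers constrain the $a_j$ only through intervals is false. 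Your pigeonhole and placement steps are fine, but they rest entirely on that claim.

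Neither repair is carried out, and the fallback does not close the gap. A feasibility lower bound does not transfer to minimization, because feasibility is the harder problem. The paper says exactly this, and for that reason it does not prove the stated theorem at all. It proves only the weaker feasibility bound, using essentially the same box-halving resisting oracle and counting argument as your skeleton. There the oracle returns only a direction $\pm e_i$, so no function values have to be matched. Transferring that bound would require simulating a full first-order oracle, values included, from separation answers.

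Likewise, ``$O(1)$ bits per query'' is not automatic for a real-valued first-order oracle. It holds only for instances whose answers (value and subgradient) coincide for all candidate minimizers in the surviving region. Constructing such instances is precisely the missing idea.

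In one dimension this can be done with $f=\max\{0,\ell_0,\ell_1,\dotsc\}$, where the affine piece $\ell_k$:
\begin{itemize}
\item vanishes at the $k$-th cut point;
\item is positive at the $k$-th query, which is kept at distance a constant fraction of the current interval from the cut;
\item has slope shrinking geometrically in $k$, so that later pieces never exceed earlier reported values.
\end{itemize}
In $d$ dimensions you must additionally keep the future pieces of every coordinate below each reported value. At the same time, the slopes may shrink only by a constant factor per round of $d$ cuts; shrinking them per query would cost you the factor $d$. That bookkeeping is the substance of the Nemirovski--Yudin bound, and it is absent from the proposal.
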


This shows that~\ref{eq:CoGM} is optimal as well.
Actually, we do not prove~\autoref{thm:cutting_plane_lb}; instead, we focus on the related but harder task of feasibility.

\begin{defn}
    Let $0 < \delta < R$.
    Let $\eu C \subseteq {[-R,R]}^d$ be a closed convex set such that there exists a ball $\msf B(x_\star, \delta) \subseteq \eu C$.
    The \textbf{feasibility problem} with parameters $(\delta,R)$ is the problem of outputting a point in $\interior \eu C$, given access to a separation oracle.
    Namely, given a point $x\in\R^d$, the separation oracle either reports that $x \in \eu C$, or it outputs a non-zero vector $p\in\R^d$ such that $\sup_{\eu C}{\langle p, \cdot \rangle} \le \langle p, x \rangle$.
\end{defn}

If one can solve the feasibility problem, then one can solve the convex Lipschitz minimization problem.
Indeed, given a convex, $L$-Lipschitz function $f$ over ${[-R,R]}^d$, suppose for the sake of argument that we know the optimal value $f_\star$.
Consider the feasibility problem for set $\eu C \deq \{f - f_\star \le \varepsilon\}$.
For $x_\star \deq \argmin_{{[-R,R]}^d} f$, we claim that $\msf B(x_\star, \varepsilon/L) \subseteq \eu C$; indeed this follows from $L$-Lipschitzness.\footnote{Actually this is not exactly true because $x_\star$ could lie near the boundary of ${[-R,R]}^d$. To fix this, one could instead look for a minimizer of $f$ over $\eu C' \deq {[-R+\delta, R-\delta]}^d$, i.e., define $x_{\delta,\star}$ to be a minimizer over this smaller cube and set $\eu C\deq \eu C' \cap \{f - f(x_{\delta,\star})\le \varepsilon\}$. If we take $\delta = \varepsilon/(L\sqrt d)$, then by $L$-Lipschitzness we see that any point in $\eu C$ is a $2\varepsilon$-minimizer of $f$ over ${[-R,R]}^d$, and now $\msf B(x_{\delta,\star}, \delta) \subseteq \eu C$. This does not really change the argument.}
Also, the subgradient oracle for $f$ yields a separation oracle for $\eu C$.
Thus, solving the feasibility problem for $\eu C$ with parameters $(\varepsilon/L, R)$ yields an $\varepsilon$-solution to the problem of minimizing $f$.

Since the feasibility problem is harder, the following theorem is weaker than~\autoref{thm:cutting_plane_lb}.
However, it is easier to prove, and it contains most of the main ideas.

\begin{thm}[lower bound for feasibility]
    For any deterministic algorithm, the feasibility problem with parameters $(\varepsilon,R)$ requires $\Omega(d\log(R/\varepsilon))$ queries.
\end{thm}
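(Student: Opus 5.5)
We construct an adversarial (``resisting'') oracle that answers each query of the deterministic algorithm so as to keep a large family of candidate sets consistent with all answers so far. Concretely, we maintain an axis-aligned box $\eu B_n = \prod_{i=1}^d [a_i^{(n)}, b_i^{(n)}] \subseteq {[-R,R]}^d$, starting from $\eu B_0 = {[-R,R]}^d$, with the invariant that every answer given so far is consistent with $\eu C = \eu B_n$. Responding with halfspaces whose normals are coordinate vectors will keep the candidate set a box.

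At step $n$, the algorithm queries $x_n$. We cycle through the coordinates, handling coordinate $i = (n \bmod d)+1$. Let $m_i$ be the midpoint of $[a_i^{(n)}, b_i^{(n)}]$. If $x_n[i] \ge m_i$, we answer with $p = e_i$ and set $b_i^{(n+1)} = m_i$. Otherwise we answer with $p = -e_i$ and set $a_i^{(n+1)} = m_i$. All other sides are left unchanged. In either case $x_n$ lies on the far side of the hyperplane $\{x[i] = m_i\}$, so $\sup_{\eu B_{n+1}}\langle p,\cdot\rangle \le \langle p, x_n\rangle$ and the answer is a valid separation for $\eu C = \eu B_{n+1}$. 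Since $\eu B_{n+1} \subseteq \eu B_n$, every earlier answer remains valid, because shrinking the set preserves each separating inequality. In particular, we never report ``$x\in\eu C$'', and $x_n \notin \interior \eu B_{n+1}$. Moreover, each earlier query $x_k$ lies outside $\interior \eu B_{k+1} \supseteq \interior \eu B_{n}$.

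After $N$ queries, each side has been halved either $\lfloor N/d\rfloor$ or $\lceil N/d\rceil$ times, so every side length is at least $2R\cdot 2^{-\lceil N/d\rceil}$. Suppose this exceeds $2\varepsilon$, which holds whenever $N < d\,\log_2(R/\varepsilon) - d$. Then $\eu B_N$ contains a ball $\msf B(x_\star,\varepsilon)$ centered at its center, and it is a legitimate instance of the feasibility problem with parameters $(\varepsilon,R)$. The algorithm must output a point $\hat x$. We then shrink once more in the same way, treating $\hat x$ as an additional query, to get $\eu C = \eu B_{N+1}$, which still contains an $\varepsilon$-ball provided $N+1$ is within the same budget. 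For this final instance, all $N$ responses are valid, yet $\hat x \notin \interior \eu C$. So the algorithm fails, which gives the lower bound $N \gtrsim d\log(R/\varepsilon)$ (for $R/\varepsilon$ larger than a constant, say $R \ge 4\varepsilon$).

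The main point needing care is consistency. The transcript the deterministic algorithm sees must be exactly the one produced by the oracle for the final set $\eu C$. This holds because the set only shrinks and each separating halfspace was chosen to contain the entire current box. Determinism of the algorithm then ensures it issues the same queries and output on the instance $\eu C$. The remaining details, namely the off-by-one in the halving count and the exact constant in the $\Omega(\cdot)$, are routine bookkeeping.
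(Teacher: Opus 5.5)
Your proposal is correct and follows essentially the same resisting-oracle argument as the paper. Both maintain boxes that are halved along coordinates in cyclic order, both get consistency from the sets only shrinking, and both use an inscribed-ball count to obtain $N \gtrsim d\log(R/\varepsilon)$. Two details of yours are slightly more careful than the paper's version: you make an explicit extra cut so that the algorithm's final output $\hat x$ lies outside $\interior \eu C$ (the paper implicitly treats the queries as the candidate outputs), and you bound the side lengths directly rather than by a loose radius formula.
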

\begin{proof}
    We play a game with the algorithm.
    Suppose that the algorithm has chosen query points $x_1,\dotsc,x_n$ thus far.
    Our goal is to choose a vector $p_n${---}which is supposed to correspond to the output of a separation oracle{---}and we provide the algorithm with this vector, which it then uses to produce a new point $x_{n+1}$ and so on.
    Simultaneously, we also maintain a sequence of convex bodies (actually, boxes) $\eu C_0,\eu C_1,\dotsc,\eu C_N$.

    At the end of the game, the algorithm has produced points $x_1,\dotsc,x_N$, and we have produced vectors $p_1,\dotsc,p_N$.
    By itself, this is not yet meaningful; the algorithm is not designed to produce useful results, \emph{unless} $p_1,\dotsc,p_N$ are valid outputs from a separation oracle corresponding to a convex body $\eu C$ satisfying the assumptions of the feasibility problem.
    So, we aim to choose $p_1,\dotsc,p_N$ so that this holds with $\eu C = \eu C_N$.
    Now, we can use the following post hoc reasoning: \emph{had we} run the algorithm with the separation oracle for $\eu C_N$ from the outset, then the algorithm would have output the same sequence of points $x_1,\dotsc,x_N$, because it is deterministic, so this construction yields a valid lower bound (i.e., it requires more than $N$ iterations to solve the feasibility problem).
    This proof technique is known as the method of \emph{resisting oracles}, and its main drawback is that it does not apply to randomized algorithms.\footnote{Lower bounds for randomized algorithms require the use of information theory.}

    Let us instantiate the resisting oracle for the feasibility problem.
    At each iteration $n$, the convex body $\eu C_n$ is the box $\{x\in\R^d : a_n \le x \le b_n\}$; here, $a_n,b_n\in\R^d$ and the inequality is interpreted pointwise.
    We start with $a_0 = -R\mb 1_d$, $b_0 = +R\mb 1_d$, where $\mb 1_d$ is the all-ones vector; thus, $\eu C_0 = {[-R,R]}^d$.

    When the algorithm makes the first query $x_1$, we update the box by cutting it in half, based on the first coordinate of $x_1$.
    Namely, if $x_1[1] \le 0$, we set $a_1[1] = 0$, and $a_1[k] = a_0[k]$ for all $k > 1$; we output the separating vector $-e_1$.
    If $x_1[1] \ge 0$, we set $b_1[1] = 0$ and $b_1[k] = b_0[k]$ for all $k > 1$; we output the separating vector $+e_1$.
    In either case, $\vol(\eu C_1) = \frac{1}{2}\vol(\eu C_0)$ and $x_1 \notin \interior \eu C_1$.

    When the algorithm makes the second query $x_2$, we repeat this procedure except that we cut the box in half along the second coordinate.
    We continue in this fashion, cycling through the coordinates.

    Let $c_n$ denote the center of $\eu C_n$.
    We now claim that for each $n$, $\msf B(c_n, r_n) \subseteq \eu C_n$, where $r_n = (R/2)\,{(1/2)}^{n/d}$.
    Indeed, this is true for $n=0$.
    Also, for $n=ad$ for integer $a$, each side of the box has length $R\,{(1/2)}^a$, so the result is true in this case too.
    Finally, for $n=ad+b$, we have $\msf B(c_{(a+1)d}, R/2^{a+1}) \subseteq \eu C_{(a+1)d} \subseteq \eu C_n$ hence $\msf B(c_n, R/2^{a+1}) \subseteq \eu C_n$, and we note that $R/2^{a+1} \ge (R/2)\,{(1/2)}^{n/d}$.

    The resisting oracle construction succeeds up to iteration $N$ provided that $\eu C_N$ contains a ball of radius $\varepsilon$.
    It therefore suffices to have $(R/2)\,{(1/2)}^{N/d} \ge \varepsilon$, i.e., $N \gtrsim d\log(R/\varepsilon)$.
\end{proof}

\subsection*{Exercises}

\begin{question}\label{qu:lsc}\mbox{}
    \begin{enumerate}
        \item Prove that a function $f$ is lower semicontinuous if and only if for all $c\in\R$, the level set $\{f \le c\}$ is closed.
        \item Prove that a supremum of lower semicontinuous functions is lower semicontinuous.
        \item Show that the function defined in~\eqref{eq:pathology} is lower semicontinuous if and only if $\phi = 0$.
    \end{enumerate}
\end{question}

\begin{question}\label{qu:subdiff_grad}
    Prove that if $f$ is differentiable at $x_0 \in \interior \dom f$, then $\partial f(x_0) = \{\nabla f(x_0)\}$.
\end{question}

\begin{question}\label{qu:subdiff_lip}
    Let $f : \R^d\to\R$ be continuous and convex on a convex set $\eu C$.
    Prove that $f$ is Lipschitz continuous over $\eu C$ with constant $L$ if and only if for every $x_0 \in \interior \eu C$ and every $p \in \partial f(x_0)$, we have $\norm p \le L$.
\end{question}

\begin{question}\label{qu:subdiff_calcs}\mbox{}
    \begin{enumerate}
        \item Compute the subdifferential of the Euclidean norm $\norm \cdot$.
        \item Let $\lambda_{\max} : \mb S^d \to \R$ be the maximum eigenvalue function, and let $A \in \mb S^d$.
            Show that if $v$ is a unit eigenvector corresponding to the largest eigenvalue of $A$, then $vv^\T \in \partial \lambda_{\max}(A)$.
    \end{enumerate}
\end{question}

\begin{question}\label{qu:psd_str_cvx}
    Assume that $f$ is $\alpha$-strongly convex and $L$-Lipschitz continuous over the closed convex set $\eu C$.
    Prove that for~\ref{eq:PSD},
    \begin{align*}
        f(\bar x_N) - f_\star
        &\le \frac{\alpha}{2\,\{{(1-\alpha h/L)}^{-N}-1\}}\,\norm{x_0 - x_\star}^2 + \frac{Lh}{2}\,,
    \end{align*}
    where $\bar x_N$ is a suitable averaged iterate.
    Deduce that by setting $h = \varepsilon/L$, one can achieve $f(\bar x_N) - f_\star \le \varepsilon$ in $O(\frac{L^2}{\alpha\varepsilon} \log(\frac{\alpha R^2}{\varepsilon}))$ iterations (compared with $O(L^2 R^2/\varepsilon^2)$ iterations, as implied by~\autoref{thm:psd}).

    Also, show that under these assumptions, $\norm{x_0 - x_\star} \le 2L/\alpha$.
\end{question}

\begin{question}\label{qu:psd_str_cvx_sharp}
    This exercise shaves off a logarithmic factor from the previous exercise.
    Assume that $f$ is $\alpha$-strongly convex and $L$-Lipschitz continuous over the closed convex set $\eu C$.
    Consider the iteration
    \begin{align*}
        x_{n+1} = \Pi_{\eu C}(x_n - h_n p_n)\,, \qquad p_n \in \partial f(x_n)\,,
    \end{align*}
    where $h_n \deq 2/(\alpha\,(n+1))$.
    Show that
    \begin{align*}
        f(\bar x_N) - f_\star
        &\le \frac{2L^2}{\alpha\,(N+1)}\,.
    \end{align*}
    where $\bar x_N$ is a suitably averaged iterate.
    Thus, we can achieve $f(\bar x_N) - f_\star \le \varepsilon$ in $O(\frac{L^2}{\alpha\varepsilon})$ iterations, without any extraneous logarithmic factors.
\end{question}



\begin{question}\label{qu:cutting_plane}
    The analysis of the ellipsoid method (and general cutting plane schemes) presents an additional difficulty: since the next set $\eu C_{n+1}$ is only chosen to be a superset of $\eu C_n \cap \{\langle p_n, \cdot \rangle \le \langle p_n, x_n \rangle\}$, it is not guaranteed that $\eu C \subseteq \eu C_n$ for all $n$; in particular, the chosen point $x_n$ may lie outside of $\eu C$.

    Assume that we have access to a separation oracle for $\eu C$: given a point $x\notin\eu C$, the oracle outputs a non-zero vector $p\in\R^d$ such that $\sup_{\eu C}{\langle p, \cdot\rangle} \le \langle p, x \rangle$.
    Modify the cutting plane method as follows: if a chosen point $x_n$ does not lie in $\eu C$, then let $p_n$ be vector that separates $x_n$ from $\eu C$ and instead update $\eu C_{n+1}$ to be a superset of $\eu C_n \cap \{\langle p_n, \cdot \rangle \le \langle p_n, x_n \rangle\}$.
    We also allow $\eu C_0 \supseteq \eu C$, so that $x_0$ is not necessarily feasible either.
    Prove that if the sets are chosen so that $\vol(\eu C_{n+1})/\vol(\eu C_n)\le \lambda < 1$ for all $n$, then the following assertions hold.
    \begin{enumerate}
        \item If $\vol(\eu C_N) < \vol(\eu C)$, then there exists $n < N$ with $x_n \in \eu C$.
        \item If $\vol(\eu C_N) < \vol(\eu C)$, then there exists $n < N$ with $x_n \in \eu C$ \emph{and}
            \begin{align*}
                f(x_n) - f_\star \le DL\lambda^{N/d}\,\bigl( \frac{\vol\eu C_0}{\vol\eu C}\bigr){\bigsp}^{1/d}\,.
            \end{align*}
            \emph{Hint:} Define a sequence of sets $\eu C_0',\eu C_1',\eu C_2',\dotsc$ as follows.
            Start with $\eu C_0' = \eu C$ and $n_{-1} \deq 0$.
            For each $k \in\N$, let $n_k$ denote the first integer greater than $n_{k-1}$ for which $x_{n_k} \in \eu C$ and set $\eu C_{k+1}' \deq \eu C_k' \cap \{\langle p_{n_k}, \cdot \rangle \le \langle p_{n_k}, x_{n_k} \rangle\}$.
            Prove via induction that if $k(N)$ is the largest integer such that $n_{k(N)} \le N$, then $\eu C_{n_{k(N)}}' \subseteq \eu C_N$.
    \end{enumerate}
\end{question}

\begin{question}\label{qu:ellipsoid}
    Prove~\autoref{lem:ellipsoid}.
\end{question}


\begin{question}\label{qu:strcvx_nonsmooth_lb}
    Prove~\autoref{thm:strcvx_nonsmooth_lb}.
    (Use the same construction as in the proof of~\autoref{thm:cvx_nonsmooth_lb}, but choose the  parameters $\alpha$ and $\gamma$ differently.)
\end{question}

\section{Frank{--}Wolfe}

In order to overcome the lower bounds in the black-box setting, we must take advantage of additional structure in the problem.
The first method we study in this vein is the \emph{Frank{--}Wolfe} or \emph{conditional gradient} method~\cite{FraWol1956}.
Instead of assuming access to a projection oracle for the constraint set $\eu C$, it instead assumes access to a \emph{linear optimization oracle} (LOO) over the set $\eu C$:
\begin{align}\label{eq:LOO}\tag{$\msf{LOO}$}
    \text{Given}~p\in\R^d\,,~\text{output}~\argmin_{\eu C}{\langle p, \cdot\rangle}\,.
\end{align}
Here, we assume that $\eu C$ is compact (bounded and closed).

The oracle equivalently maximizes the convex function $-\langle p, \cdot\rangle$ over $\eu C$, so the $\argmin$ is attained at a vertex of $\eu C$.
Let us define these concepts properly.

\begin{defn}
    A point $x\in\eu C$ is called an \textbf{extreme point} or a \textbf{vertex} of $\eu C$ if there do not exist $x_0, x_1 \in\eu C$ and $t\in (0,1)$ such that $x = (1-t)\,x_0 + t\,x_1$.
\end{defn}

\begin{thm}
    Every compact convex set is the convex hull of its extreme points.
\end{thm}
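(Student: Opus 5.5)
We argue by induction on the dimension of the set, using the supporting hyperplane theorem (\autoref{thm:supporting_hyperplane}) to pass from a compact convex set to a lower-dimensional compact convex face. Let $\eu C$ be compact and convex, and let $\eu E$ denote its set of extreme points. The inclusion $\conv \eu E \subseteq \eu C$ is immediate from convexity, so the task is to show that every $x\in\eu C$ is a convex combination of extreme points. We induct on $k \deq \dim \operatorname{aff}\eu C$. The case $k=0$ is trivial: $\eu C$ is a single point, which is extreme. Working inside the affine hull $\operatorname{aff}\eu C$, identified with $\R^k$, we may assume that $\eu C$ has non-empty interior.

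\emph{Boundary points.} Let $x\in\partial\eu C$. By \autoref{thm:supporting_hyperplane} there is a non-zero $p$ with $\langle p,x\rangle \le \inf_{\eu C}\langle p,\cdot\rangle$, so $x$ attains the minimum of $\langle p,\cdot\rangle$ over $\eu C$. Set $\eu F \deq \eu C\cap\{\langle p,\cdot\rangle = \langle p,x\rangle\}$. Then $\eu F$ is compact and convex. It lies in a hyperplane, and since $\eu C$ has interior, $\dim\operatorname{aff}\eu F < k$. By the inductive hypothesis, $x$ is a convex combination of extreme points of $\eu F$.

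The key lemma is that \emph{extreme points of $\eu F$ are extreme points of $\eu C$} ($\eu F$ is a face). Suppose $y\in\eu F$ and $y=(1-t)\,x_0+t\,x_1$ with $x_0,x_1\in\eu C$ and $t\in(0,1)$. Then $\langle p,x_0\rangle,\langle p,x_1\rangle \ge \langle p,x\rangle$, while their $t$-average equals $\langle p,y\rangle = \langle p,x\rangle$. This forces equality in both, so $x_0,x_1\in\eu F$, contradicting extremality of $y$ in $\eu F$. (The paper's definition of extreme point implicitly requires $x_0\ne x_1$; I would read it that way.)

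\emph{Interior points.} Let $x\in\interior\eu C$. Pick any direction $v\ne 0$ and consider the line $x+sv$. By compactness and convexity, $\{s : x+sv\in\eu C\}$ is a compact interval $[s_-,s_+]$ with $s_-<0<s_+$. The endpoints $x+s_\pm v$ lie in $\partial\eu C$, and $x$ is a convex combination of them. Each endpoint is a convex combination of extreme points by the boundary case, so $x$ is as well.

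The main obstacle is the dimension bookkeeping. The supporting hyperplane theorem is stated in $\R^d$ for full-dimensional use, so we must apply it within the affine hull, where $\eu C$ has non-empty interior, and check that the face $\eu F$ has strictly smaller affine dimension. This holds because $\eu F$ lies in a hyperplane of $\operatorname{aff}\eu C$, and the hyperplane is proper since $p$ is non-zero on the affine hull's direction space (we take $p$ in that space). Everything else is routine.
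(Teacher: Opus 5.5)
Your proof is correct, and there is nothing in the paper to compare it against: the paper states this theorem (Minkowski's theorem, the finite-dimensional case of Krein--Milman) without proof. Your argument is the standard one, and each step checks out:
\begin{itemize}
    \item the supporting hyperplane of \autoref{thm:supporting_hyperplane}, applied inside $\operatorname{aff}\eu C$;
    \item the face lemma, that extreme points of $\eu F$ are extreme in $\eu C$;
    \item the strict drop in affine dimension, which holds because $p$ is taken in the direction space of $\operatorname{aff}\eu C$;
    \item the chord argument for relative interior points, whose endpoints are relative boundary points since points just beyond them leave $\eu C$.
\end{itemize}
You are right that the paper's definition of an extreme point must be read with $x_0 \ne x_1$; otherwise no point would be extreme. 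The one fact you use silently is that $\eu C$ has non-empty interior relative to $\operatorname{aff}\eu C$. This holds because $\eu C$ contains $k+1$ affinely independent points and hence a $k$-simplex. Unlike the infinite-dimensional Krein--Milman theorem, your induction yields the convex hull itself, with no closure needed.

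A small change lets the same induction also prove Carath\'eodory's theorem (\autoref{thm:caratheodory}). The paper likewise states that theorem without proof and then relies on it in the approximate Carath\'eodory argument. In the interior case, do not use an arbitrary direction $v$. Instead, fix an extreme point $e$ of $\eu C$, which exists by your boundary case. Follow the ray from $e$ through $x$ until it exits $\eu C$ at a relative boundary point $y$. Then $x$ is a convex combination of $e$ and $y$, and $y$ lies in a face of affine dimension at most $k-1$. Now induct on the sharper claim that every point of a compact convex set of affine dimension $k$ is a convex combination of at most $k+1$ extreme points. Boundary points need at most $k$, via the face, and interior points need one more. This gives the bound $k+1 \le d+1$.
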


For example, the set of vertices of the closed unit ball $\overline{\msf B(0,1)}$ is the sphere $\partial \msf B(0,1)$.
It follows that to implement~\eqref{eq:LOO}, it suffices to solve $\argmin_{\text{vertices of}~\eu C}{\langle p, \cdot \rangle}$.

We now present the Frank{--}Wolfe method for minimizing $f$ over $\eu C$:
\begin{align}\label{eq:FW}\tag{$\msf{FW}$}
    x_{n+1}
    &\deq (1-h_n)\,x_n + h_n\,\msf{LOO}(\nabla f(x_n))\,.
\end{align}

\begin{thm}[convergence of~\ref{eq:FW}]\label{thm:FW}
    Let $f$ be convex and $\beta$-smooth over $\eu C$.
    Let $D \deq \diam \eu C$ and $h_n = 2/(n+2)$.
    Then, for any $N\ge 1$,~\ref{eq:FW} satisfies
    \begin{align*}
        f(x_N) - f_\star
        &\le \frac{2\beta D^2}{N+1}\,.
    \end{align*}
\end{thm}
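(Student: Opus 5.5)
Let $v_n \deq \msf{LOO}(\nabla f(x_n))$ and $\delta_n \deq f(x_n) - f_\star$. The plan is the standard one-step recursion for the objective gap, followed by an induction with the step sizes $h_n = 2/(n+2)$.

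First, note that the iterates stay in $\eu C$. Indeed $x_0\in\eu C$, and each $x_{n+1}$ is a convex combination of $x_n$ and $v_n\in\eu C$. Since $x_{n+1}-x_n = h_n\,(v_n - x_n)$ and $\norm{v_n - x_n}\le D$, the smoothness inequality~\eqref{eq:smooth} (applied within $\eu C$) gives
\begin{align*}
    f(x_{n+1}) \le f(x_n) + h_n\,\langle \nabla f(x_n), v_n - x_n\rangle + \frac{\beta h_n^2 D^2}{2}\,.
\end{align*}
The key use of the oracle is that $v_n$ minimizes $\langle \nabla f(x_n),\cdot\rangle$ over $\eu C$ and $x_\star\in\eu C$. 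Hence $\langle \nabla f(x_n), v_n - x_n\rangle \le \langle \nabla f(x_n), x_\star - x_n\rangle \le -(f(x_n) - f_\star)$, where the last step is convexity~\eqref{eq:str_cvx_1} with $\alpha=0$. Subtracting $f_\star$ yields the recursion
\begin{align*}
    \delta_{n+1} \le (1-h_n)\,\delta_n + \frac{\beta D^2}{2}\,h_n^2\,.
\end{align*}

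Second, unroll this with $h_n = 2/(n+2)$. Since $h_0 = 1$, we get $\delta_1 \le \beta D^2/2 \le 2\beta D^2/2$, which is the base case $N=1$. For the inductive step, assume $\delta_n \le 2\beta D^2/(n+1)$ for some $n\ge1$. Then
\begin{align*}
    \delta_{n+1} \le \frac{n}{n+2}\cdot\frac{2\beta D^2}{n+1} + \frac{2\beta D^2}{(n+2)^2}\,,
\end{align*}
and we must show this is at most $2\beta D^2/(n+2)$. Dividing by $2\beta D^2/(n+2)$, this reduces to $\frac{n}{n+1} + \frac{1}{n+2} \le 1$, i.e., $\frac{1}{n+2}\le\frac{1}{n+1}$, which is true. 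Alternatively, one can multiply the recursion by $(n+1)(n+2)$ and telescope, in the spirit of the discrete Gr\"onwall lemma (\autoref{lem:discrete_gronwall}).

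The only delicate points are the base case, which is handled cleanly because $h_0=1$ erases the dependence on $x_0$, and the requirement that~\eqref{eq:smooth} holds on $\eu C$. The latter is exactly the meaning of ``$\beta$-smooth over $\eu C$'', and it applies here since all iterates lie in $\eu C$. The main conceptual step is replacing $v_n$ by $x_\star$ via the optimality of the \ref{eq:LOO} output. The remainder is routine algebra.
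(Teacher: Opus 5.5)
Your proof is correct and follows the paper's argument essentially line for line. You use smoothness with $\norm{v_n - x_n}\le D$, then replace the oracle output by $x_\star$ and apply convexity to get $\delta_{n+1}\le(1-h_n)\,\delta_n + \beta D^2 h_n^2/2$, and then induct with the base case supplied by $h_0 = 1$. Your telescoping alternative also works and gives the same bound directly: multiply by $(n+1)(n+2)$ to get $(n+1)(n+2)\,\delta_{n+1} \le n(n+1)\,\delta_n + 2\beta D^2$, then sum from $n=0$ to $N-1$.
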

\begin{proof}
    Let $y_n \deq \msf{LOO}(\nabla f(x_n))$.
    Using $\beta$-smoothness,
    \begin{align*}
        f(x_{n+1}) - f(x_n)
        &\le \langle \nabla f(x_n), x_{n+1} - x_n \rangle + \frac{\beta}{2}\,\norm{x_{n+1}-x_n}^2 \\[0.25em]
        &\le h_n\,\langle \nabla f(x_n), y_n - x_n \rangle + \frac{\beta D^2 h_n^2}{2}
        \le h_n\,\langle \nabla f(x_n), x_\star - x_n \rangle + \frac{\beta D^2 h_n^2}{2} \\[0.25em]
        &\le -h_n\,(f(x_n) - f_\star) + \frac{\beta D^2 h_n^2}{2}\,.
    \end{align*}
    Rearranging,
    \begin{align*}
        f(x_{n+1}) - f_\star
        &\le (1-h_n)\,(f(x_n) - f_\star) + \frac{\beta D^2 h_n^2}{2}\,.
    \end{align*}
    For $h_n = 2/(n+2)$, we now prove the error bound by induction on $n$, where the base case $n=0$ follows from the inequality above.
    If the error bound holds at iteration $n$, then
    \begin{align*}
        f(x_{n+1}) - f_\star
        &\le \frac{n}{n+2}\, \frac{2\beta D^2}{n+1} + \frac{2\beta D^2}{{(n+2)}^2}
        \le \frac{2\beta D^2}{n+2}\,. \qedhere
    \end{align*}
\end{proof}

The analysis above is actually not the most natural one, since it fails to capture the affine invariance of the Frank{--}Wolfe algorithm (\autoref{qu:fw_affine_inv}).

Besides positing different oracle access than projected gradient methods, the Frank{--}Wolfe method has the appealing property of producing sparse solutions.
This connects with results known as approximate Carath\'eodory theorems.
First, let us recall the classical statement of Carath\'eodory's theorem.

\begin{thm}[Carath\'eodory]\label{thm:caratheodory}
    Let $\eu C \subseteq \R^d$ be a compact convex set and let $x\in\eu C$.
    Then, $x$ can be written as a convex combination of $d+1$ vertices of $\eu C$.
\end{thm}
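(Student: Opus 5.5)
We argue in two stages: first that $x$ is a convex combination of finitely many extreme points, and then that any such representation can be pruned down to at most $d+1$ points using affine dependence.

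For the first stage we use the theorem stated just above, that every compact convex set is the convex hull of its extreme points. The convex hull of a set $S$ is the set of all \emph{finite} convex combinations of points of $S$. So we can write $x = \sum_{i=1}^m \lambda_i v_i$ with each $v_i$ a vertex of $\eu C$, each $\lambda_i > 0$, and $\sum_i \lambda_i = 1$. Among all such representations we fix one with $m$ minimal.

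For the second stage, suppose toward a contradiction that $m \ge d+2$.

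\begin{itemize}
    \item The $m-1 \ge d+1$ vectors $v_2 - v_1, \dotsc, v_m - v_1$ lie in $\R^d$, so they are linearly dependent.
    \item Hence there exist $\mu_1,\dotsc,\mu_m$, not all zero, with $\sum_i \mu_i v_i = 0$ and $\sum_i \mu_i = 0$. Here $\mu_1$ is chosen to absorb the coefficient of $v_1$.
    \item Since the $\mu_i$ sum to zero and are not all zero, at least one $\mu_i$ is strictly positive.
    \item Set $t \deq \min\{\lambda_i/\mu_i : \mu_i > 0\}$ and $\lambda_i' \deq \lambda_i - t\mu_i$.
\end{itemize}

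The new weights have the following properties.

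\begin{itemize}
    \item $\lambda_i' \ge 0$ for all $i$. If $\mu_i \le 0$ this holds because $t > 0$; if $\mu_i > 0$ it holds by the minimality in the definition of $t$.
    \item $\sum_i \lambda_i' = 1$, because $\sum_i \mu_i = 0$.
    \item $\sum_i \lambda_i' v_i = x$, because $\sum_i \mu_i v_i = 0$.
    \item The index attaining the minimum in the definition of $t$ has $\lambda_i' = 0$.
\end{itemize}

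So $x$ is a convex combination of at most $m-1$ vertices, which contradicts the minimality of $m$. Therefore $m \le d+1$. If $m < d+1$, we pad the representation with additional vertices carrying zero weight, or read the statement as "at most $d+1$".

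The only delicate point is the first stage, which relies on the earlier (unproved) Krein--Milman-type theorem in finite dimensions. That theorem guarantees only that $x$ lies in the convex hull of the extreme points. We must use that this hull consists of finite convex combinations; we do not need any closure of it. Compactness of $\eu C$ enters only through that theorem, and the rest of the argument is elementary linear algebra.
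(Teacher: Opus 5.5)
Your proof is correct. The first stage legitimately uses that, in $\R^d$, a compact convex set is the convex hull of its extreme points (no closure needed, as you note). The pruning step via an affine dependence among $m \ge d+2$ points is also carried out without error: $t>0$, the new weights are nonnegative, sum to one, still represent $x$, and at least one of them vanishes, which contradicts the minimality of $m$.

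The paper gives no proof of this theorem. It quotes Carath\'eodory's theorem as a classical fact, just like the extreme-point theorem stated immediately before it, and uses it only as input to the approximate Carath\'eodory argument. So there is no competing route to compare against, and your argument is the standard one.

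Three small remarks follow.

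First, your second stage proves more than is needed: any point in the convex hull of an \emph{arbitrary} set $S \subseteq \R^d$ is a convex combination of at most $d+1$ points of $S$. Compactness therefore enters only through the cited theorem.

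Second, you can remove the dependence on that unproved theorem. An induction on the dimension of the affine hull, using the supporting hyperplane theorem (which the paper does prove), yields both statements at once.
\begin{itemize}
    \item A relative-boundary point lies in the lower-dimensional face $\eu C \cap H$ cut out by a supporting hyperplane $H$. The extreme points of this face are extreme in $\eu C$.
    \item A relative-interior point lies on a segment joining an extreme point of $\eu C$ (for instance, a point of $\eu C$ farthest from the origin) to a relative-boundary point. This costs exactly one extra vertex.
\end{itemize}

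Third, your padding step can fail as worded when $\eu C$ has fewer than $d+1$ extreme points, for example a segment in $\R^2$. In that case you should either repeat a vertex with zero weight or read the statement as ``at most $d+1$'', which is how the paper uses it.
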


Caution: in this theorem, the choice of $d+1$ vertices of course depends on $x$ itself.
If every point in $\eu C$ could be written as a convex combination of the \emph{same} $d+1$ vertices, this would say that $\eu C$ only has $d+1$ vertices at all.

Carath\'eodory's theorem says that even if a convex body has exponentially many vertices, such as the cube ${[-1,1]}^d$, any given point has a succinct representation using only $d+1$ vertices.
However, the size of the representation grows with the ambient dimension.
What happens if we relax the requirement that the representation is exact?
The following simple argument, often attributed to B.\ Maurey, shows that the size of the representation is \emph{dimension-free}, and the convex combination even uses equal weights.

\begin{thm}[approximate Carath\'eodory]
    Let $\eu C \subseteq \R^d$ be a compact convex set with diameter $D$, let $0 < \varepsilon < 1$, and let $x \in\eu C$.
    Then, there exist vertices $y_1,\dotsc,y_N \in\eu C$ with
    \begin{align*}
        \Bigl\lVert x - \frac{1}{N}\sum_{i=1}^N y_i \Bigr\rVert \le \varepsilon D\,, \qquad N \le \frac{1}{\varepsilon^2}\,.
    \end{align*}
\end{thm}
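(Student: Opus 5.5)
The plan is Maurey's probabilistic method, resting on the exact Carathéodory theorem (\autoref{thm:caratheodory}). First, write $x = \sum_{j=1}^{d+1} \lambda_j v_j$ as a convex combination of vertices $v_j$ of $\eu C$, with $\lambda$ a probability vector. This lets us define a random vertex $Y$ taking the value $v_j$ with probability $\lambda_j$, so that $\E Y = x$. Then draw $Y_1,\dotsc,Y_N$ i.i.d.\ copies of $Y$ and consider the empirical average $\bar Y_N \deq \frac{1}{N}\sum_{i=1}^N Y_i$.

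The key computation is the variance bound. Because the $Y_i - x$ are independent and mean zero, the cross terms vanish when we expand the square, giving
\begin{align*}
    \E\bigl\lVert x - \bar Y_N\bigr\rVert^2
    = \frac{1}{N}\,\E\norm{Y - x}^2
    \le \frac{D^2}{N}\,.
\end{align*}
The last inequality holds because $Y$ and $x$ both lie in $\eu C$, so $\norm{Y - x} \le D$ almost surely. (One could sharpen this to $\E\norm{Y-x}^2 \le D^2/4$ or similar, but that is unnecessary here.)

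To conclude, choose $N \deq \lceil 1/\varepsilon^2\rceil$. Since a random variable cannot exceed its expectation on every outcome, some realization $y_1,\dotsc,y_N$ of the vertices satisfies $\norm{x - \frac{1}{N}\sum_i y_i}^2 \le D^2/N \le \varepsilon^2 D^2$. The statement asks for $N \le 1/\varepsilon^2$ rather than $\lceil 1/\varepsilon^2\rceil$. The cleanest fix is to take $N = \lfloor 1/\varepsilon^2 \rfloor \ge 1$, using $\varepsilon < 1$. This gives $D^2/N \le 2\varepsilon^2 D^2$, which is off by a constant, so to get exactly $\varepsilon D$ I would use the sharper variance bound $\E\norm{Y-x}^2 \le \E\norm{Y - Y'}^2/2 \le D^2/2$ with $Y'$ an independent copy. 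Then $D^2/(2\lfloor \varepsilon^{-2}\rfloor) \le \varepsilon^2 D^2$, since $\lfloor t \rfloor \ge t/2$ for $t \ge 1$.

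The only place I expect care to be needed is this bookkeeping between $N$ being an integer and the constant in the variance bound. The probabilistic argument itself is routine. Alternatively, I would note that the proof could be made constructive by running \ref{eq:FW} on $\norm{\cdot - x}^2$, but the random sampling argument is the intended one.
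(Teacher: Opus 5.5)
Your proof is correct and follows the paper's argument: exact Carath\'eodory gives a distribution on vertices with mean $x$, and i.i.d.\ sampling with the identity $\E\norm{x - \bar Y_N}^2 = \E\norm{Y-x}^2/N$ finishes. Your integer bookkeeping improves on the paper, which only asks for $D^2/N \le \varepsilon^2 D^2$; that forces $N \ge 1/\varepsilon^2$ and so matches the stated $N \le 1/\varepsilon^2$ only when $1/\varepsilon^2$ is an integer, a gap your sharper bound $\E\norm{Y-x}^2 = \frac{1}{2}\,\E\norm{Y-Y'}^2 \le D^2/2$ with $N = \lfloor \varepsilon^{-2} \rfloor$ closes.
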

\begin{proof}
    By~\autoref{thm:caratheodory}, there exist vertices $\bar y_1,\dotsc,\bar y_{d+1} \in \eu C$ and a probability distribution $\lambda$ over $[d+1]$ such that $x = \sum_{j=1}^{d+1} \lambda_j \bar y_j$.
    Now consider the distribution $\mu = \sum_{j=1}^{d+1} \lambda_j \delta_{\bar y_j}$ and sample points $Y_1,\dotsc,Y_N\simiid \mu$.
    Note that each $Y_i$ is a vertex of $\eu C$.
    Then, since the mean of $\mu$ is $x$, the usual variance calculation shows that
    \begin{align*}
        \E\Bigl[\Bigl\lVert x - \frac{1}{N} \sum_{i=1}^N Y_i \Bigr\rVert^2\Bigr]
        &= \frac{\sum_{j=1}^{d+1} \lambda_j\,\norm{x-\bar y_j}^2}{N}
        \le \frac{D^2}{N}\,.
    \end{align*}
    Choose $N$ to make the right-hand side at most $\varepsilon^2 D^2$.
\end{proof}

The approximate Carathe\'odory theorem has implications, e.g., for controlling the covering numbers of polytopes.
But more broadly, the proof technique is quite influential and is at the root of other important developments, e.g., the existence of neural networks of small width which approximate functions in the Barron class~\cite{Bar1993Approx, Bach17BreakingCurse}.

Now comes the punchline: Frank{--}Wolfe renders the approximate Carath\'eodory theorem constructive.
Indeed, suppose that the~\ref{eq:LOO} always outputs a vertex.
After $N-1$ iterations of~\ref{eq:FW} starting from a vertex, the iterate $x_{N-1}$ is a convex combination of at most $N$ vertices.
At the same time, if we apply~\autoref{thm:FW} to the $2$-smooth function $f : z\mapsto \norm{x-z}^2$, where $x \in \eu C$ and $f_\star = 0$, we see that $\norm{x_{N-1} - x}^2 \le 4D^2/N$.

The full statement of~\autoref{thm:FW} can therefore be seen as a generalization of the approximate Carath\'eodory principle: the iterate of~\ref{eq:FW} is a sparse combination of vertices which is approximately optimal.
We next demonstrate an example in which this sparsity property is crucial.

%
%

\begin{ex}[low-rank estimation]
    Consider the nuclear norm ball
    \begin{align*}
        \eu C = \Bigl\{X \in\R^{d\times d} : \norm X_* = \sum_{i=1}^d \sigma_i(X) \le 1\Bigr\}\,.
    \end{align*}
    This constraint set often arises in low-rank matrix recovery as a convex relaxation of a rank constraint.
    Projection onto the set $\eu C$ requires projecting the singular values onto the simplex; this requires computing a full SVD, which uses $O(d^3)$ arithmetic operations.
    On the other hand, since
    \begin{align*}
        \eu C = \conv\{uv^\T : u,v \in\R^d\,, \; \norm u = \norm v = 1\}\,,
    \end{align*}
    the~\ref{eq:LOO} for $\eu C$ involves solving, for any $P \in \R^{d\times d}$,
    \begin{align*}
        \argmin_{X\in \eu C}{\langle P, X \rangle}
        &= \argmin\{\langle P, uv^\T \rangle : u,v\in\R^d\,, \; \norm u = \norm v = 1\}\,.
    \end{align*}
    Solving this amounts to computing the top singular vector of $P$, which is often implemented via power iteration at cost $O(d^2)$ per step.
    Moreover,~\ref{eq:FW} yields an $\varepsilon$-accurate solution with rank $O(1/\varepsilon)$.
\end{ex}

\subsection*{Exercises}

\begin{question}\label{qu:fw_affine_inv}
    Show that~\ref{eq:FW} is affine-invariant in the following sense.
    Let $A \in \R^{d\times d}$ be an invertible matrix.
    Show that the iterates ${\{\hat x_n\}}_{n\in\N}$ of~\ref{eq:FW} applied to the problem of minimizing $\hat x \mapsto f(A\hat x)$ over the set $A^{-1}\eu C$ are related to the iterates ${\{x_n\}}_{n\in\N}$ of~\ref{eq:FW} on the original problem via $x_n = A\hat x_n$.
\end{question}


\section{Proximal methods}\label{sec:prox}

Can we solve non-smooth problems at the same rate as smooth problems?
The black-box lower bounds say \emph{no} in general, but if the non-smooth part is ``simple'' in the sense that it admits an implementable proximal oracle, the answer becomes yes.

\begin{defn}
    Let $f : \R^d\to\R\cup\{\infty\}$.
    The \textbf{proximal oracle} for $f$ is the mapping $\prox_f : \R^d\to\R^d$ given by
    \begin{align*}
        \prox_f(y)
        \deq \argmin_{x\in\R^d}{\bigl\{ f(x) + \frac{1}{2}\,\norm{y-x}^2\bigr\}}\,.
    \end{align*}
\end{defn}

If $f$ is a regular convex function, then the optimization problem defining the proximal oracle is strongly convex, so it admits a unique minimizer by~\autoref{lem:unique_min} and~\autoref{lem:existence_min_ii}.
Note also that
\begin{align*}
    \prox_{hf}(y)
    = \argmin_{x\in\R^d}{\bigl\{ hf(x) + \frac{1}{2}\,\norm{y-x}^2\bigr\}}
    = \argmin_{x\in\R^d}{\bigl\{ f(x) + \frac{1}{2h}\,\norm{y-x}^2\bigr\}}\,,
\end{align*}
where $h > 0$ plays the role of a step size.

The value of the optimization problem defining $\prox_f$ also has a name.

\begin{defn}\label{defn:moreau_yosida}
    Let $f : \R^d\to\R\cup \{\infty\}$.
    The \textbf{Moreau{--}Yosida envelope} of $f$ with parameter $h > 0$ is the mapping $f_h : \R^d\to\R \cup \{\infty\}$ given by
    \begin{align*}
        f_h(y) \deq \inf_{x\in\R^d}{\bigl\{f(x) + \frac{1}{2h}\,\norm{y-x}^2\bigr\}}\,.
    \end{align*}
\end{defn}

\subsection{Algorithms and examples}

The proximal oracle is a regularized version of the original optimization problem.
Assuming for the moment that we can compute the proximal oracle easily, let us explore its uses for algorithm design.

The simplest algorithm is to repeatedly iterate the proximal mapping.
This is known as the \emph{proximal point method}~\cite{Roc1976PPM}.
\begin{align}\label{eq:PPM}\tag{$\msf{PPM}$}
    x_{n+1} \deq \prox_{hf}(x_n)\,.
\end{align}
Assume for the moment that $f$ is smooth and that the next point $x_{n+1}$ can be obtained from the first-order optimality condition for $\prox_{hf}$.
This leads to
\begin{align*}
    0 = \nabla f(x_{n+1}) + \frac{1}{h}\,(x_{n+1}-x_n) \iff x_{n+1} = x_n - h\,\nabla f(x_{n+1})\,.
\end{align*}
Note that this is similar to the~\ref{eq:GD} update, except that the gradient is evaluated at the subsequent point $x_{n+1}$.
In numerical analysis, we say that~\ref{eq:GD} is an \emph{explicit} discretization of the gradient flow, whereas~\ref{eq:PPM} is an \emph{implicit} discretization.
The advantage of an explicit method is ease of implementation; it does not require solving a (non-linear) system in order to perform an update.
The advantage of an implicit method is stability.

Recall that the results in \S\ref{sec:grad_flow} for~\ref{eq:GF} do not require smoothness of $f$, whereas the results in \S\ref{sec:gd} for~\ref{eq:GD} do.
(We studied the non-smooth case for~\ref{eq:GD} in \S\ref{ssec:projected_subgradient}, but it requires decreasing step sizes and averaging.)
Shortly, we shall see that~\ref{eq:PPM} is similar to~\ref{eq:GF}, in that it also does not require smoothness.

The most powerful results using the proximal oracle, however, are for the problem of \emph{composite optimization}.
Here, the goal is to minimize a sum of functions:
\begin{align*}
    \minimize\qquad F
    &\deq f + g\,.
\end{align*}
We assume that $f$ is smooth and that $g$ is non-smooth.

\begin{ex}[LASSO as composite optimization]\label{ex:LASSO_comp}
    The computation of the LASSO estimator from~\autoref{ex:LASSO} is the canonical example of composite optimization, where
    \begin{align*}
        f : \theta \mapsto \frac{1}{2n} \sum_{i=1}^n {(Y_i - \langle \theta, X_i \rangle)}^2\,, \qquad g : \theta \mapsto \lambda\,\norm \theta_1\,.
    \end{align*}

    In this example, the non-smooth part is particularly simple, so we can compute its proximal oracle in closed form.
    First, note that it is coordinate-wise decomposable:
    \begin{align*}
        \prox_{\lambda\,\norm\cdot_1}(y)
        &= \argmin_{x\in\R^d}{\bigl\{\lambda\,\norm x_1 + \frac{1}{2} \,\norm{y-x}^2\bigr\}} \\
        &= \sum_{i=1}^d \Bigl(\argmin_{x[i] \in \R}{\bigl\{\lambda\,\abs{x[i]} + \frac{1}{2}\,{(y[i] - x[i])}^2\bigr\}} \Bigr)\,e_i\,.
    \end{align*}
    Therefore, it suffices to solve the problem in dimension one.
    A direct computation (see~\autoref{qu:soft_thresh}) then yields
    \begin{align*}
        \prox_{\lambda\,\abs\cdot}(y)
        &= {(\abs y - \lambda)}_+\sgn y
        \eqqcolon \thresh_\lambda(y)
    \end{align*}
    where ${(\cdot)}_+ \deq \max\{0, \cdot\}$ denotes the positive part.
    The operator $\thresh_\lambda$, known as the \emph{soft thresholding operator} (\autoref{fig:soft_thresh}), reduces the magnitude of its input by $\lambda$, or to $0$ if the original magnitude is less than $\lambda$.
    The proximal operator for $\lambda\,\norm \cdot_1$ simply applies $\thresh_\lambda$ to each coordinate.
\end{ex}

\begin{figure}
    \centering
    \includegraphics[width=0.6\textwidth]{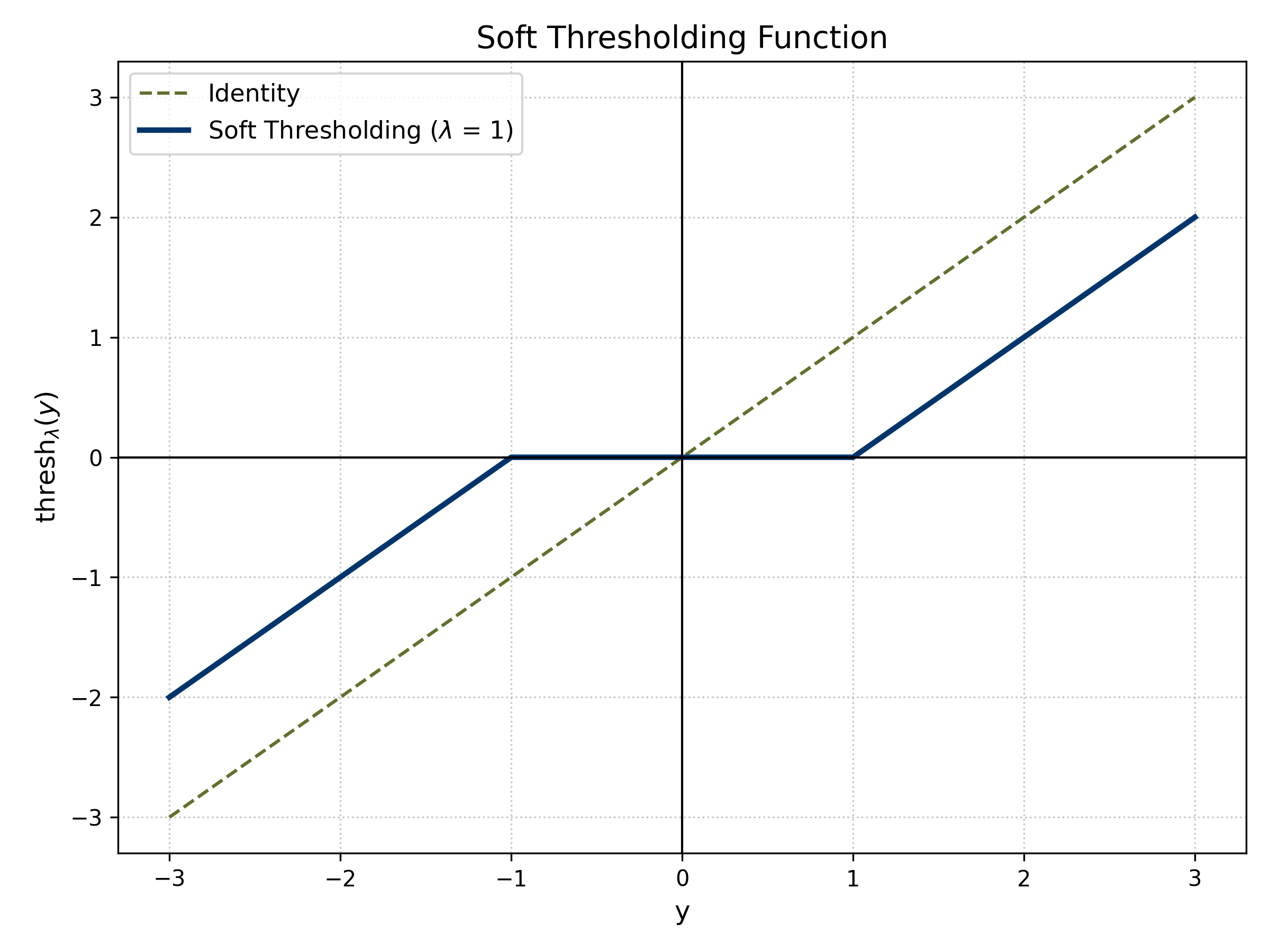}
    \caption{\footnotesize The soft thresholding operator.}\label{fig:soft_thresh}
\end{figure}

\begin{ex}[constrained optimization as composite optimization]
    Consider the problem of minimizing a smooth function $f$ over a closed convex set $\eu C$.
    We can also treat this as composite optimization with
    \begin{align*}
        g = \chi_{\eu C}\,.
    \end{align*}
    (Recall the convex indicator defined in~\eqref{eq:cvx_indicator}.)
    In this case, the proximal oracle for $g$ is
    \begin{align*}
        \prox_{h\chi_{\eu C}}(y)
        &= \argmin_{x\in\R^d}{\bigl\{\chi_{\eu C}(x) + \frac{1}{2h}\,\norm{y-x}^2\bigr\}}
        = \argmin_{x\in\eu C}{\bigl\{ \frac{1}{2h}\,\norm{y-x}^2\bigr\}}
        = \Pi_{\eu C}(y)\,.
    \end{align*}
    So, the proximal oracle for $\chi_{\eu C}$ is the projection oracle for $\eu C$.
\end{ex}

The above examples motivate the assumption that we have access to the proximal oracle for the non-smooth part $g$.
Further examples of computable proximal oracles can be found on the website \href{https://proximity-operator.net}{\textsf{proximity-operator.net}}.

The algorithm we consider in this context is known as \emph{proximal gradient descent}.
\begin{align}\label{eq:PGD}\tag{$\msf{PGD}$}
    x_{n+1}
    &\deq \argmin_{x\in\R^d}{\bigl\{f(x_n) + \langle \nabla f(x_n), x-x_n \rangle + g(x) + \frac{1}{2h}\,\norm{x-x_n}^2\bigr\}}\,.
\end{align}
In other words, we take the objective function $F = f+g$ and linearize only the smooth part.
The update can be rewritten as follows.
By completing the square,
\begin{align*}
    x_{n+1}
    &= \argmin_{x\in\R^d}{\bigl\{g(x) + \frac{1}{2h}\, \norm{x-x_n + h\,\nabla f(x_n)}^2\bigr\}}
    = \prox_{hg}(x_n - h\,\nabla f(x_n))\,.
\end{align*}
This corresponds to taking an explicit step on $f$, followed by an implicit step on $g$.
It is not obvious that this algorithm converges to $x_\star$, the minimizer of $F = f+g$.
However, note that if $g$ is differentiable, then
\begin{align*}
    x_{n+1}
    &= x_n - h\,\nabla f(x_n) - h\,\nabla g(x_{n+1})\,.
\end{align*}
If $x_n = x_\star$, then $x_{n+1} = x_\star$ is the solution since $0 = \nabla F(x_\star) = \nabla f(x_\star) + \nabla g(x_\star)$.
Thus, provided that $f$ and $g$ are convex and differentiable, $x_\star$ is the unique fixed point.

For the LASSO problem, the iteration reads
\begin{align*}
    x_{n+1}
    &= \thresh_{\lambda h}(x_n - h\,\nabla f(x_n))\,.
\end{align*}
In the literature, this is known as the \emph{iterative shrinking-thresholding algorithm} (ISTA).
For constrained optimization, proximal gradient descent is projected gradient descent.

\subsection{Convergence analysis}

We study the convergence of~\ref{eq:PGD}, since it includes~\ref{eq:PPM} as a special case (take $f=0$).

\begin{thm}[convergence of~\ref{eq:PGD}]\label{thm:PGD}
    Let $f$ be $\alpha_f$-convex and $\beta_f$-smooth, and let $g$ be $\alpha_g$-convex.
    Let the step size $h$ satisfy $h \le 1/\beta_f$, let $x^+$ denote the next iterate of~\ref{eq:PGD} started from $x$, and let $y\in\R^d$.
    Then,
    \begin{align}\label{eq:PGD_one_step}
        (1+\alpha_g h)\,\norm{y-x^+}^2
        &\le (1-\alpha_f h)\,\norm{y-x}^2 - 2h\,(F(x^+) - F(y))\,.
    \end{align}
    In particular, if we set $y = x_\star$ and iterate, it yields
    \begin{align*}
        F(x_N) - F_\star
        &\le \frac{\alpha_f + \alpha_g}{2\,(\lambda_h^{-N}-1)}\,\norm{x_0 - x_\star}^2\,,
    \end{align*}
    where $\lambda_h \deq (1-\alpha_f h)/(1+\alpha_g h)$.
\end{thm}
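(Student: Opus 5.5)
The plan is to mimic the proof of~\eqref{eq:evi} in~\autoref{thm:gd_fn_value}, replacing the descent lemma by the optimality condition of the proximal step, and then to iterate with the discrete Gr\"onwall lemma (\autoref{lem:discrete_gronwall}) as before.

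First I will establish the one-step inequality~\eqref{eq:PGD_one_step}. Write $x^+ = \prox_{hg}(x - h\,\nabla f(x))$. The defining objective $\phi(z) \deq g(z) + \frac{1}{2h}\,\norm{z - x + h\,\nabla f(x)}^2$ is $(\alpha_g + 1/h)$-convex and minimized at $x^+$. Combining the secant inequality~\eqref{eq:str_cvx} with minimality (take the point $(1-t)\,x^+ + t\,y$ and send $t\searrow 0$, as in the proof of~\autoref{prop:cvx_equiv}) yields the quadratic growth bound
\begin{align*}
    \phi(y) \ge \phi(x^+) + \frac{1}{2}\,\bigl(\alpha_g + \frac{1}{h}\bigr)\,\norm{y - x^+}^2\,.
\end{align*}
This step avoids subdifferential calculus for $g$. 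Expanding the squares, this gives
\begin{align*}
    g(y) + \frac{1}{2h}\,\norm{y-x}^2 + \langle \nabla f(x), y - x\rangle \ge g(x^+) + \frac{1}{2h}\,\norm{x^+-x}^2 + \langle \nabla f(x), x^+ - x\rangle + \frac{1+\alpha_g h}{2h}\,\norm{y-x^+}^2\,.
\end{align*}
Next, I will use $\beta_f$-smoothness with $h\le 1/\beta_f$ to bound $f(x) + \langle \nabla f(x), x^+ - x\rangle + \frac{1}{2h}\,\norm{x^+-x}^2 \ge f(x^+)$. I will also use $\alpha_f$-convexity via~\eqref{eq:str_cvx_1} to bound $f(x) + \langle \nabla f(x), y-x\rangle \le f(y) - \frac{\alpha_f}{2}\,\norm{y-x}^2$. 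Substituting both bounds and multiplying by $2h$ gives exactly~\eqref{eq:PGD_one_step}. The only care needed is bookkeeping: adding $f(x)$ to both sides so that the two $f$ bounds slot in.

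For the iteration, take $y = x_\star$ and divide by $1+\alpha_g h$. This gives
\begin{align*}
    u_{n+1} \le \lambda_h u_n - \frac{2h}{1+\alpha_g h}\,(F(x_{n+1}) - F_\star)\,, \qquad u_n \deq \norm{x_n - x_\star}^2\,.
\end{align*}
Applying~\autoref{lem:discrete_gronwall} and using $u_N\ge 0$ gives $\frac{2h}{1+\alpha_g h}\sum_{n=1}^N \lambda_h^{N-n}\,(F(x_n)-F_\star) \le \lambda_h^N u_0$.

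Monotonicity $F(x_n)\ge F(x_N)$ follows from~\eqref{eq:PGD_one_step} with $y = x$, since then the right-hand side gives $F(x^+)\le F(x)$. Pulling out $F(x_N)-F_\star$ leaves the geometric sum
\begin{align*}
    \frac{2h}{1+\alpha_g h}\sum_{n=1}^N \lambda_h^{-n} = \frac{2h}{1+\alpha_g h}\cdot\frac{\lambda_h^{-N}-1}{1-\lambda_h}\,.
\end{align*}
Since $1-\lambda_h = \frac{(\alpha_f+\alpha_g)\,h}{1+\alpha_g h}$, this equals $\frac{2\,(\lambda_h^{-N}-1)}{\alpha_f+\alpha_g}$, which gives the stated bound. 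When $\alpha_f+\alpha_g = 0$, the bound is read as the limit $\frac{1}{2Nh}\,\norm{x_0-x_\star}^2$, as in the earlier theorems.

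The main obstacle is the first step: obtaining the strengthened optimality inequality for the prox without assuming $g$ is differentiable. I will handle it via the secant definition~\eqref{eq:str_cvx} of strong convexity applied to $\phi$, as sketched above.
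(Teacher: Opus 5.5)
Your proposal is correct and follows essentially the same route as the paper. Both arguments use quadratic growth of the $(\alpha_g+1/h)$-convex prox objective at its minimizer $x^+$, bound that objective above at $y$ via $\alpha_f$-convexity and below at $x^+$ via $\beta_f$-smoothness with $h\le 1/\beta_f$, and then obtain the rate from the descent property (taking $y=x$) together with the discrete Gr\"onwall lemma. The only minor difference is that you derive the quadratic growth bound directly from the secant definition~\eqref{eq:str_cvx}, which justifies that step more carefully than the paper's appeal to the quadratic growth inequality when $g$ is non-differentiable.
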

\begin{proof}
    Let $\psi_x$ denote the objective function in the definition of~\ref{eq:PGD}.
    Then, $\psi_x$ is $(\alpha_g + 1/h)$-strongly convex with minimizer $x^+$, so by the quadratic growth inequality,
    \begin{align*}
        \psi_x(y)
        &\ge \psi_x(x^+) + \frac{\alpha_g + 1/h}{2}\,\norm{y-x^+}^2\,.
    \end{align*}
    On one hand, by $\alpha_f$-convexity,
    \begin{align*}
        \psi_x(y)
        &= f(x) + \langle \nabla f(x), y-x\rangle + g(y) + \frac{1}{2h}\,\norm{y-x}^2
        \le F(y) + \frac{1/h-\alpha_f}{2}\,\norm{y-x}^2\,.
    \end{align*}
    On the other hand, by $\beta_f$-smoothness,
    \begin{align*}
        \psi_x(x^+)
        &= f(x) + \langle \nabla f(x), x^+ - x \rangle + g(x^+) + \frac{1}{2h}\,\norm{x^+-x}^2 \\
        &\ge F(x^+) + \frac{1/h - \beta_f}{2}\,\norm{x^+-x}^2
        \ge F(x^+)\,.
    \end{align*}
    Combining these inequalities and rearranging,
    \begin{align*}
        (1+\alpha_g h)\,\norm{y-x^+}^2
        &\le (1-\alpha_f h)\,\norm{y-x}^2 -2h\,(F(x^+) - F(y))\,.
    \end{align*}

    Note that by taking $y = x$, it yields the descent property
    \begin{align*}
        F(x^+) - F(x)
        &\le -\frac{1+\alpha_g h}{2h}\,\norm{x-x^+}^2
        \le 0\,.
    \end{align*}
    The final bound follows from~\autoref{lem:discrete_gronwall} and algebra.
\end{proof}

Refined analyses of~\ref{eq:PPM} are presented in~\autoref{qu:ppm_sharp},~\autoref{cor:contract_ppm}, and~\autoref{qu:ppm_pl}.

The key feature of~\autoref{thm:PGD} is that it essentially recovers the \emph{smooth} rate for~\ref{eq:GD} despite the presence of non-smoothness in the objective.
Thus, for the LASSO problem (\autoref{ex:LASSO_comp}), we can solve it as quickly as if it were a smooth problem via ISTA\@.

Moreover, the one-step inequality~\eqref{eq:PGD_one_step} is the~\ref{eq:PGD} analogue of the inequality~\eqref{eq:evi} which holds for~\ref{eq:GD}, and in turn,~\eqref{eq:evi} is the only property of~\ref{eq:GD} which plays a role in the proof of Nesterov acceleration (\autoref{thm:AGD}); the remainder of the proof is purely algebraic.
This naturally leads to an accelerated algorithm for composite optimization.

Starting with $x_{-1} = x_0$, consider
\begin{align}\label{eq:APGD}\tag{$\msf{APGD}$}
    x_{n+1}
    &\deq \msf{PGD}_{F,1/\beta}(x_n + \theta_n\,(x_n - x_{n-1}))\,,
\end{align}
where $\msf{PGD}_{F,1/\beta}$ denotes one step of~\ref{eq:PGD} on $F = f+g$ with step size $h=1/\beta$.

\begin{thm}[convergence of~\ref{eq:APGD}]
    Let $f$ be convex and $\beta$-smooth, and let $g$ be convex.
    Define the sequence: $\lambda_0 \deq 0$ and $\lambda_{n+1} \deq \frac{1}{2}\,(1+\sqrt{1+4\lambda_n^2})$ for $n\in\N$.
    Set $\theta_n \deq (\lambda_n - 1)/\lambda_{n+1}$.
    Then,~\ref{eq:APGD} satisfies
    \begin{align*}
        F(x_N) - F_\star
        &\le \frac{2\beta\,\norm{x_0 - x_\star}^2}{N^2}\,.
    \end{align*}
\end{thm}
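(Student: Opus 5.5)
The plan is to reuse the proof of \autoref{thm:AGD} verbatim, with the one-step inequality~\eqref{eq:PGD_one_step} in place of~\eqref{eq:evi}. Set $y_n \deq x_n + \theta_n\,(x_n - x_{n-1})$, so that $x_{n+1} = \msf{PGD}_{F,1/\beta}(y_n)$. Apply~\eqref{eq:PGD_one_step} with $\alpha_f = \alpha_g = 0$, $h = 1/\beta$, base point $y_n$ and comparison point $z$. This yields, for every $z\in\R^d$,
\begin{align*}
    F(x_{n+1}) - F(z)
    &\le \frac{\beta}{2}\,(\norm{y_n - z}^2 - \norm{x_{n+1} - z}^2)
    = -\frac{\beta}{2}\,\norm{x_{n+1} - y_n}^2 - \beta\,\langle x_{n+1} - y_n, y_n - z\rangle\,,
\end{align*}
which is exactly the inequality used in the proof of \autoref{thm:AGD}, with $f$ replaced by $F$.

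From here I follow the algebra of that proof step by step. Take $z = x_n$, multiply by $\lambda_{n+1} - 1 \ge 0$, and add the case $z = x_\star$. Then complete the square with $\norm a^2 + 2\,\langle a,b\rangle = \norm{a+b}^2 - \norm b^2$. The result is
\begin{align*}
    \lambda_{n+1}^2\,(F(x_{n+1}) - F_\star) - \lambda_n^2\,(F(x_n) - F_\star)
    &\le \frac{\beta}{2}\,\bigl(\norm{u_n}^2 - \norm{u_{n+1}}^2\bigr)\,,
\end{align*}
where $u_n \deq \lambda_{n+1}\,y_n - (\lambda_{n+1}-1)\,x_n - x_\star$. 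Two identities are needed to reach this form. First, $\lambda_{n+1}\,(\lambda_{n+1}-1) = \lambda_n^2$, which holds by the recursion. Second, the matching condition $\lambda_{n+1} x_{n+1} - (\lambda_{n+1}-1)\,x_n = \lambda_{n+2}\,y_{n+1} - (\lambda_{n+2}-1)\,x_{n+1}$, which holds because $\theta_{n+1} = (\lambda_{n+1}-1)/\lambda_{n+2}$. Neither identity involves $f$ or $g$, so both carry over unchanged.

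Telescoping from $n = 0$ to $N-1$ gives $\lambda_N^2\,(F(x_N) - F_\star) \le \frac{\beta}{2}\,\norm{u_0}^2$, using $\lambda_0 = 0$. Since $\lambda_1 = 1$, we have $u_0 = y_0 - x_\star = x_0 - x_\star$, where $y_0 = x_0$ because $x_{-1} = x_0$. It remains to show $\lambda_N \ge N/2$, which follows by induction from $\lambda_{n+1} \ge \frac12 + \lambda_n$. This gives the stated bound $2\beta\,\norm{x_0 - x_\star}^2/N^2$.

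The only point requiring care is the first step: making sure~\eqref{eq:PGD_one_step} applies with an arbitrary comparison point $y = z$ and an arbitrary base point $y_n$, rather than only along the iterates. The statement of \autoref{thm:PGD} allows this, since both $x$ and $y$ are arbitrary there. Convexity of $f$ and $g$ gives the case $\alpha_f = \alpha_g = 0$, and the condition $h = 1/\beta \le 1/\beta_f$ is satisfied. Everything after that step is purely algebraic, as remarked after \autoref{thm:PGD}.
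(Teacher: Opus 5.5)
Your proposal is correct and is exactly the argument the paper intends. The paper justifies this theorem by observing that \eqref{eq:PGD_one_step} (with $\alpha_f=\alpha_g=0$, $h=1/\beta$) plays the role of \eqref{eq:evi} in the proof of \autoref{thm:AGD}, with the rest purely algebraic, and that is what you carry out; the one small point to add is that at $n=0$ you should use the $z=x_\star$ inequality alone (its partner is multiplied by $\lambda_1-1=0$), so nothing breaks if $x_0\notin\dom g$ and $F(x_0)=+\infty$.
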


When applied to LASSO\@, this algorithm is known as \emph{fast ISTA} or \emph{FISTA}~\cite{BecTeb09FISTA}.
Rates in the strongly convex setting can be obtained from the reduction in~\autoref{lem:weakcvx_to_strcvx}.

\subsection*{Exercises}

\begin{question}\label{qu:soft_thresh}
    Verify the computation of $\prox_{\lambda\,\abs\cdot}$ in~\autoref{ex:LASSO_comp}.
\end{question}

\begin{question}\label{qu:ppm_noncvx}
    Prove that even for non-convex $f$, as long as $x^+ \deq \prox_{hf}(x)$ is well-defined,
    \begin{align*}
        f(x^+) - f_\star
        &\le \frac{1}{2h}\,\norm{x-x_\star}^2\,.
    \end{align*}
    Thus, if we can implement~\ref{eq:PPM} for arbitrarily large step sizes $h > 0$, we can solve non-convex optimization.
\end{question}

\begin{question}\label{qu:ppm_sharp}
    To avoid technical difficulties, assume that $f$ is convex and differentiable everywhere.
    Show that for the~\ref{eq:PPM},~\eqref{eq:PGD_one_step} can be refined as follows: for all $y\in\R^d$,
    \begin{align*}
        \norm{x^+ - y}^2 \le \norm{x-y}^2 - 2h\,(f(x^+) - f(y)) - h^2\,\norm{\nabla f(x^+)}^2\,.
    \end{align*}
    Next, in analogy to~\autoref{qu:gf_sharp}, define the Lyapunov function
    \begin{align*}
        \ms L_n
        &\deq n^2 h^2\,\norm{\nabla f(x_n)}^2 + 2nh\,(f(x_n) - f_\star) + \norm{x_n - x_\star}^2\,,
    \end{align*}
    where ${\{x_n\}}_{n\in\N}$ are the iterates of~\ref{eq:PPM}, and show that $\ms L_{n+1} \le \ms L_n$.
    (Use the fact that~\ref{eq:PPM} is contractive, see~\autoref{cor:contract_ppm}.)
    Deduce the bounds
    \begin{align*}
        \norm{\nabla f(x_N)}
        &\le \frac{\norm{x_0 - x_\star}}{Nh}\,, \qquad f(x_N) - f_\star \le \frac{\norm{x_0 - x_\star}^2}{4Nh}\,.
    \end{align*}
    Observe that if $h\searrow 0$ while $Nh\to t$, it recovers the results of~\autoref{qu:gf_sharp}.
\end{question}

\section{Fenchel duality}

In this section, we study a notion of duality for convex functions.

\begin{defn}
    Let $f : \R^d\to\R \cup \{\infty\}$ be proper ($\dom f \ne \varnothing$).
    The \textbf{convex conjugate} or \textbf{Fenchel{--}Legendre conjugate} of $f$ is the function $f^* : \R^d\to\R \cup \{\infty\}$ defined by
    \begin{align*}
        f^*(y)
        &\deq \sup_{x\in\R^d}{\{\langle x, y \rangle - f(x)\}}\,.
    \end{align*}
\end{defn}

For any proper function $f$, the conjugate $f^*$ is always \emph{convex} and \emph{lower semicontinuous}, since it is a supremum of affine functions.
Conversely, if $f$ is a regular convex function (thus: proper, convex, and lower semicontinuous), then $f = f^{**}$ (\autoref{thm:double_conj}).

\begin{ex}\label{ex:cvx_conj}
    The verification of these examples is left as~\autoref{qu:cvx_conj_ex}.
    \begin{enumerate}
        \item If $f(x) = \frac{1}{2} \,\langle x, A\,x\rangle$ where $A \succ 0$, then $f^*(y) = \frac{1}{2}\,\langle y, A^{-1}\,y\rangle$.
        \item If $f(x) = \abs x^p/p$ for $p > 1$ and $x \in \R$, then $f^*(y) = \abs y^q/q$ where $1/p + 1/q = 1$.
        \item Let $\opnorm \cdot$ denote a norm over $\R^d$ (not necessarily Euclidean), and let $\opnorm \cdot_*$ denote the dual norm: $\opnorm y_* \deq \sup\{\langle x, y \rangle : x\in\R^d\,, \; \opnorm x \le 1\}$.

            If $f(x) = \opnorm x$, then $f^*(y) = \chi_{\eu C}(y)$ where $\eu C \deq \{y\in\R^d : \opnorm y_* \le 1\}$ is the closed unit ball in the dual norm.
    \end{enumerate}
\end{ex}

Before formally establishing further properties of this duality, we take a detour to explain the origin of this concept in classical mechanics.

\subsection{(Optional) Connection with classical mechanics}

\textbf{Disclaimer:} The material in this subsection is not necessarily the most relevant for optimization, and it is included for the sake of broader historical context.
We make no attempt to be rigorous: assume all functions are smooth, etc.

Newton's law of motion states that the trajectory ${(x_t)}_{t\ge 0}$ of a particle of mass $m$ obeys the differential equation $m\ddot x_t = F(x_t)$, where $F$ is the force.
The force is typically given as the gradient of a potential: $F = -\nabla \phi$.

In 1662, Pierre de Fermat proposed an explanation for the law of refraction via his principle of least action: light takes the path which minimizes the total travel time.
Is there such a principle for classical mechanics as well?
In 1760, Joseph-Louis Lagrange found such a variational principle: let $L(x, v) \deq \frac{1}{2}\,m\,\norm v^2 - \phi(x)$ denote the \emph{Lagrangian}, where $v$ denotes the velocity of the particle.
Note that the Lagrangian is the difference of the kinetic energy and the potential energy.
The action functional is
\begin{align*}
    \ms A({(x_t)}_{t\in [0,T]})
    &\deq \int_0^T L(x_t, \dot x_t)\,\D t\,.
\end{align*}
Lagrangian mechanics states that if a particle starts at $x_0$ at time $0$, and ends at $x_T$ at time $T$, then the path it takes in between is a stationary point of the action functional subject to the endpoint constraints.

We solve for the path using calculus of variations.
Let $x_{[0,T]} \deq {(x_t)}_{t\in [0,T]}$ be a shorthand for the path.
If $x_{[0,T]}$ is a stationary point, it means that for any perturbation $\delta x_{[0,T]}$, the difference $\ms A(x_{[0,T]} + \delta x_{[0,T]}) - \ms A(x_{[0,T]})$ should vanish to first order in $\delta x_{[0,T]}$.
The endpoint constraints require that $\delta x_0 = \delta x_T = 0$.
Thus,
\begin{align*}
    &\ms A(x_{[0,T]} + \delta x_{[0,T]}) - \ms A(x_{[0,T]})
    = \int_0^T \{L(x_t + \delta x_t, \dot x_t + \delta \dot x_t) - L(x_t, \dot x_t)\} \,\D t \\
    &\qquad = \int_0^T \{\langle \nabla_x L(x_t, \dot x_t), \delta x_t \rangle + \langle \nabla_v L(x_t,\dot x_t), \delta \dot x_t\rangle\}\,\D t + o(\norm{\delta x}) \\
    &\qquad = \int_0^T \langle \nabla_x L(x_t, \dot x_t) - \partial_t \nabla_v L(x_t,\dot x_t), \delta x_t \rangle \,\D t + o(\norm{\delta x})\,.
\end{align*}
The stationary point therefore satisfies the Euler{--}Lagrange equation
\begin{align*}
    \partial_t \nabla_v L(x_t,\dot x_t)
    &= \nabla_x L(x_t,\dot x_t)\,.
\end{align*}
For $L(x,v) = \frac{1}{2}\,m\,\norm v^2 - \phi(x)$, it recovers Newton's equation.

We now introduce the Legendre transform.
Define the \emph{Hamiltonian} $H$ to be the convex conjugate of $L$ with respect to the $v$-variable, i.e.,
\begin{align*}
    H(x,p)
    &\deq \sup_{v\in\R^d}{\{\langle p, v \rangle - L(x,v)\}}\,.
\end{align*}
The first-order condition reveals that
\begin{align*}
    p = \nabla_v L(x, v)\,.
\end{align*}
Instead of working with the variables $(x,v)$, we now work with the variables $(x,p)$.
The inverse of the transformation is given by
\begin{align}\label{eq:inverse_Legendre}
    v = \nabla_p H(x,p)\,.
\end{align}
Indeed, we will argue that a regular convex function $f$ satisfies $f = f^{**}$ (\autoref{thm:double_conj}).
Assuming that $v\mapsto L(x,v)$ is regular convex, it yields the dual representation
\begin{align*}
    L(x,v) = \sup_{p\in\R^d}{\{\langle p, v \rangle - H(x,p)\}}\,,
\end{align*}
and the first-order condition for this problem yields~\eqref{eq:inverse_Legendre}.

Thus, if we define $p_t \deq \nabla_v L(x_t,\dot x_t)$, we can reformulate the Euler{--}Lagrange equation as follows.
First, $\dot x_t = v_t = \nabla_p H(x_t, p_t)$ by~\eqref{eq:inverse_Legendre}.
Also, $\nabla_x H(x,p) = -\nabla_x L(x, v)$ by the envelope theorem, so $\dot p_t = \partial_t \nabla_v L(x_t, \dot x_t) = \nabla_x L(x_t,\dot x_t) = -\nabla_x H(x_t,p_t)$.
In summary,
\begin{align*}
    \dot x_t
    &= \nabla_p H(x_t, p_t)\,,
    \qquad \dot p_t = -\nabla_x H(x_t,p_t)\,.
\end{align*}
These are known as \emph{Hamilton's equations}, and it is easy to verify that they conserve the Hamiltonian: $\partial_t H(x_t, p_t) = 0$.
Compared to Newton's law, which is a second-order differential equation for the trajectory, Hamilton's equations are a system of coupled first-order differential equations evolving in phase space.

For our running example, $p = mv$ is interpreted as the \emph{momentum}, and
\begin{align*}
    H(x,p) = \bigl\langle p, \frac{p}{m}\bigr\rangle - \frac{1}{2}\,m\,\bigl\lVert \frac{p}{m} \bigr\rVert^2 + \phi(x)
    = \frac{1}{2m}\,\norm p^2 + \phi(x)\,,
\end{align*}
which is the total energy (kinetic plus potential).
Hamilton's equations read
\begin{align*}
    m\dot x_t = p_t\,, \qquad p_t = -\nabla \phi(x_t)\,.
\end{align*}

What does duality say about the action functional?
Surprisingly, it relates back to other concepts we have already seen.
Specialize now to the case where the Lagrangian only depends on $v$ ($\phi = 0$; no external potential, so we expect particles to move in straight lines).
Define the following function of space and time:
\begin{align*}
    u(t,x)
    &\deq \inf\Bigl\{\int_0^t L(\dot x_s)\,\D s + f(x_0) \Bigm\vert x : [0,t]\to\R^d\,,\; x_t = x\Bigr\}\,.
\end{align*}
In words, we minimize the action functional up to time $t$, subject to the constraint that we hit $x$ at time $t$.
We also add an initial cost $f(x_0)$.
The function $u$ resembles the notion of the \emph{value function} or \emph{cost-to-go function} in dynamic programming, and indeed it satisfies a dynamic programming principle: for $0 \le s < t$,
\begin{align}\label{eq:hjb_dp}
    u(t,y)
    &= \inf_{x\in\R^d}{\bigl\{ (t-s)\,L\bigl( \frac{y-x}{t-s}\bigr) + u(s,x)\bigr\}}\,.
\end{align}
The heuristic derivation of this identity is as follows: consider a potential candidate $x$ for the value of the path at time $s$.
Given $x$, the best possible value of $\int_0^s L(\dot x_r)\,\D r + f(x_0)$ is $u(s,x)$.
For the remaining part, by convexity,
\begin{align*}
    \int_s^t L(\dot x_r)\,\D r
    &\ge (t-s)\, L\Bigl(\frac{1}{t-s}\int_s^t \dot x_r\,\D r\Bigr)
    = (t-s)\,L\bigl( \frac{y-x}{t-s}\bigr)\,.
\end{align*}
The lower bound is achieved if $\dot x_r$ is constant for $r \in [s,t]$, i.e., $x_{[s,t]}$ is a straight line.

In particular, since $u(0,\cdot) = f$, we see that
\begin{align}\label{eq:hopf_lax}
    u(t,y)
    &\deq \inf_{x\in\R^d}{\bigl\{tL\bigl( \frac{y-x}{t}\bigr) + f(x)\bigr\}}\,.
\end{align}

\begin{defn}\label{defn:hopf_lax}
    The \textbf{Hopf{--}Lax semigroup} ${(Q_t)}_{t\ge 0}$ is a family of operators which maps functions to functions, such that $Q_t f(y)$ is defined to be the right-hand side of~\eqref{eq:hopf_lax}.
\end{defn}

The dynamic programming principle~\eqref{eq:hjb_dp} shows that $Q_t f = Q_{t-s}(Q_s f)$.
Thus, we have the properties $Q_0 = {\id}$, $Q_{s+t} = Q_s Q_t = Q_t Q_s$ for all $s,t\ge 0$, which are the defining properties of a semigroup.

These concepts are fundamental, so it is unsurprising that they have been rediscovered in different contexts.
In the context of convex analysis, the corresponding operation is known as the infimal convolution.

\begin{defn}
    Let $f, g : \R^d\to\R\cup\{\infty\}$.
    The \textbf{infimal convolution} of $f$ and $g$, denoted $f\infc g$, is the function defined by
    \begin{align*}
        (f\infc g)(y)
        &\deq \inf_{x\in\R^d}{\{f(x) + g(y-x)\}}\,.
    \end{align*}
\end{defn}

In this notation, $Q_t f = tL(\cdot/t) \infc f$.
Interestingly, the operation of convex conjugation turns addition into infimal convolution and vice versa.

\begin{thm}[convex conjugation and infimal convolution]
    Let $f$, $g$ be regular convex functions.
    Then,
    \begin{align*}
        {(f\infc g)}^*
        = f^* + g^*\,.
    \end{align*}
    Conversely, if $\interior\dom f \cap \interior \dom g \ne \varnothing$, then
    \begin{align*}
        {(f+g)}^*
        &= f^* \infc g^*\,.
    \end{align*}
\end{thm}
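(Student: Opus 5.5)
The first identity is a direct computation that needs no regularity beyond properness. For any $y$, I will unfold the definitions and merge the two suprema:
\begin{align*}
    {(f\infc g)}^*(y)
    &= \sup_{z}{\Bigl\{\langle y, z\rangle - \inf_x\{f(x) + g(z-x)\}\Bigr\}}
    = \sup_{x,z}{\{\langle y, x\rangle - f(x) + \langle y, z-x\rangle - g(z-x)\}}\,.
\end{align*}
Substituting $w \deq z - x$ decouples the supremum into $\sup_x\{\langle y,x\rangle - f(x)\} + \sup_w\{\langle y,w\rangle - g(w)\} = f^*(y) + g^*(y)$. The only point requiring care is the convention with $+\infty$ values, which is harmless because $f, g$ are proper.

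For the second identity I plan to bound each side by the other. The easy direction is ${(f+g)}^* \le f^*\infc g^*$: for any decomposition $y = y_1 + y_2$ and any $x$,
\[
    \langle y, x\rangle - f(x) - g(x) \le \bigl(\langle y_1,x\rangle - f(x)\bigr) + \bigl(\langle y_2,x\rangle - g(x)\bigr) \le f^*(y_1) + g^*(y_2)\,.
\]
Taking the supremum over $x$ and then the infimum over $y_1$ gives the inequality. An alternative route is to apply the first identity to $f^*, g^*$ and then use $f = f^{**}$ (\autoref{thm:double_conj}). That yields ${(f^*\infc g^*)}^* = f+g$ and hence ${(f+g)}^* = {(f^*\infc g^*)}^{**}$. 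This reduces the claim to showing that $f^*\infc g^*$ is convex, lower semicontinuous, and proper, and the lower semicontinuity is exactly the hard part. So I would instead prove the reverse inequality directly.

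For the reverse inequality, fix $y$ with $\sigma \deq {(f+g)}^*(y) < \infty$; if $\sigma = \infty$ there is nothing to show. The goal is to find $y_1$ with $f^*(y_1) + g^*(y - y_1) \le \sigma$, equivalently an affine function separating $f$ from $\sigma - \langle y,\cdot\rangle\ldots$. More precisely, the definition of $\sigma$ says
\[
    f(x) - \langle y, x\rangle + \sigma \ge -g(x) \qquad\text{for all}~x\,.
\]
The plan is to separate the convex set $\eu A \deq \epi(f - \langle y,\cdot\rangle + \sigma)$ from the set $\eu B \deq \{(x,t) : t < -g(x)\}$. These are disjoint convex sets, and $\eu B$ has non-empty interior because it is the strict hypograph of a concave function that is finite on $\interior\dom g$. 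I will obtain a separating hyperplane $(p,q)$ via the projection argument from the proof of \autoref{thm:supporting_hyperplane}, applied to the difference set $\eu A - \eu B$ and the origin. Concretely, take the closure of $\eu A - \eu B$ and use the supporting hyperplane at $0$ if $0$ lies on its boundary, or strict separation otherwise.

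The main obstacle is showing that the hyperplane is non-vertical ($q \neq 0$). For this I will mirror the argument in the proof of \autoref{thm:subdiff}. Pick $x_0 \in \interior\dom f \cap \interior\dom g$. By \autoref{lem:lip_cont}, both $f$ and $g$ are bounded near $x_0$, so both $\eu A$ and $\eu B$ contain vertical segments over a neighbourhood of $x_0$. If $q = 0$, the inequality $\langle p, x\rangle \le \langle p, x'\rangle$ would then hold for all $x, x'$ in a neighbourhood of $x_0$, which forces $p = 0$, a contradiction. After normalizing to $q = 1$, the separation reads
\[
    f(x) - \langle y,x\rangle + \sigma \ge \langle -p, x\rangle + c \ge -g(x)\,,
\]
for some constant $c$. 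Setting $y_1 \deq y - p$ and rearranging gives $f^*(y_1) \le \sigma - c$ and $g^*(y - y_1) = g^*(p) \le c$. Adding these yields $f^*\infc g^*(y) \le \sigma$, completing the proof; as a byproduct, the infimum in the infimal convolution is attained.
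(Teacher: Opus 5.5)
Your argument is correct. The first identity is the same direct computation the paper gives, but for the second identity you take a genuinely different route.

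\textbf{The paper's route.} The paper applies the first identity to $f^*,g^*$ and uses $f=f^{**}$, $g=g^{**}$ (\autoref{thm:double_conj}) to get ${(f+g)}^* = {(f^*\infc g^*)}^{**}$. It then cites Rockafellar's Theorem 16.4 for the fact that $f^*\infc g^*$ is regular under the domain condition. That is exactly the step you flag as the hard part.

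\textbf{Your route.} You prove $f^*\infc g^*\le {(f+g)}^*$ directly. You separate $\epi(f-\langle y,\cdot\rangle+\sigma)$ from the strict hypograph of $-g$, and get non-verticality from a common interior point of the domains, just as in the proof of \autoref{thm:subdiff}. This is essentially a proof of the cited result itself.

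\textbf{What each buys.} Your route is self-contained: it uses only \autoref{thm:supporting_hyperplane} and local boundedness from \autoref{lem:lip_cont}. It never uses lower semicontinuity of $f,g$ or double conjugation. It also shows for free that the infimum defining $f^*\infc g^*$ is attained and that $f^*\infc g^*$ is lower semicontinuous, since it equals a conjugate. The paper's route is shorter and shows the symmetry between the two identities, but it outsources the real work.

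\textbf{Two points to make explicit.} First, to invoke the supporting hyperplane theorem at $0$ you need $0\notin\interior\overline{\eu A-\eu B}$. This holds because a convex set with non-empty interior has the same interior as its closure, and $0\notin\eu A-\eu B$. This is where the non-empty interior of $\eu B$ is really used, and it requires $g$ to be bounded \emph{above} near a point of $\interior\dom g$, not merely finite there. Second, normalizing to $q=1$ needs $q>0$, not just $q\neq 0$. This follows from $q\ge 0$, which holds because $\eu A$ is an epigraph.
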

\begin{proof}
    For the first statement, note that
    \begin{align*}
        {(f\infc g)}^*(y)
        &= \sup_{x\in\R^d}{\{\langle x, y \rangle - (f\infc g)(x)\}}
        = \sup_{x\in\R^d}{\bigl\{\langle x, y \rangle - \inf_{z\in\R^d}{\{f(z) + g(x-z)\}}\bigr\}} \\
        &= \sup_{x,z\in\R^d}{\{\langle z, y \rangle - f(z) + \langle x-z, y \rangle - g(x-z)\}}
        = f^*(y) + g^*(y)\,.
    \end{align*}
    The first statement also implies that ${(f^* \infc g^*)}^* = f^{**} + g^{**} = f+g$ by~\autoref{thm:double_conj}.
    By applying convex conjugation to both sides, ${(f^* \infc g^*)}^{**} = {(f+g)}^*$, which implies the second statement if $f^* \infc g^*$ equals its double conjugate.
    For this, we need to know that $f^* \infc g^*$ is regular convex, which follows from the condition on the domains (see~\cite[Theorem 16.4]{Roc1997CvxAnalysis}).
\end{proof}

There is a surprising analogy with the Fourier transform, which transforms convolution into multiplication.
Recall that for $f, g  : \R^d\to\C$, the Fourier transform is given by $\ms Ff(\xi) \deq \int f(x)\exp(-2\uppi \mb i\,\langle \xi, x \rangle)\,\D x$, the convolution is given by $(f*g)(y) \deq \int f(x)\,g(y-x)\,\D x$, and we have the key property $\ms F(f*g) = \ms Ff\,\ms Fg$.

To see a connection more precisely, note that we usually work with the algebra $(+, \cdot)$ with its familiar properties: there is an additive identity $0$ such that $x + 0 = 0 + x = x$ for all $x$; every $x$ has an additive inverse $-x$ satisfying $x + (-x) = 0$; multiplication distributes over addition; etc.
Now introduce a new structure, consisting of the operations $(\min, +)$.
This shares some properties with the usual algebra: the identity element for $\min$ is $+\infty$, and $+$ distributes over $\min$, i.e., $x + \min(y,z) = \min(x+y,x+z)$.
However, we also lose some properties: e.g., not every element has an inverse for the $\min$ operation.
This is sometimes known as the \emph{min-plus algebra} despite the fact that it is not technically an algebra; more accurately, it is called the \emph{tropical semiring}.

If we think of integrals as continuous summations, then convolution is a sum of products; infimal convolution is a min of sums.
Hence, infimal convolution is the tropical analogue of convolution.
The following table summarizes further analogies.

\begin{center}
    \begin{tabular}{cc}
        \centering
        $(+,\times)$ & $(\min, +)$ \\
        convolution & infimal convolution \\
        Fourier transform & convex conjugate \\
        Gaussians & convex quadratics \\
        diffusion processes & gradient flow \\
        heat equation & Hamilton{--}Jacobi equation \\
        heat semigroup & Hopf{--}Lax semigroup
    \end{tabular}
\end{center}

We conclude this discussion by using this perspective to show that the Hopf{--}Lax semigroup solves the following PDE\@, known as the \emph{Hamilton{--}Jacobi equation}:
\begin{align}\label{eq:HJ}
    \partial_t u + H(\nabla_x u) = 0\,.
\end{align}

The proof is patterned on the following derivation of the solution to the heat equation $\partial_t u = \Delta u$ with initial condition $u(0,\cdot) = f$; here $\Delta u = \sum_{i=1}^d \partial_i^2 u$ is the Laplacian.
If we take the Fourier transform of both sides of the equation, then $\partial_t \ms Fu = \ms F\Delta u = -4\uppi^2\,\norm\cdot^2\,\ms Fu$, where the last equality follows from differentiating the Fourier transform under the integral.
This implies that $\partial_t \log \ms Fu = -4\uppi^2 \,\norm \cdot^2$, or $\ms Fu(t,\cdot) = \ms F f \exp(-4\uppi^2 t\,\norm \cdot^2)$.
Using the fact that the inverse Fourier transform transforms multiplication into convolution, one can then show that $u(t,\cdot) = f * \ms F\exp(-4\uppi^2 t\,\norm \cdot^2) = f * \normal(0, 4tI)$.

In the same way, we start with~\eqref{eq:HJ} and take the convex conjugate of both sides.
Using the shorthand notation $f_t \deq u(t,\cdot)$, and since $f_t^*(p) = \sup_{v\in\R^d}{\{\langle p, v \rangle - f_t(v)\}}$ with the supremum attained at $v = \nabla f_t^*(p)$,
\begin{align*}
    \partial_t f_t^*(p)
    &= -\partial_t f_t(\nabla f_t^*(p))
    = H\bigl(\nabla f_t(\nabla f_t^*(p))\bigr)
    = H(p)\,.
\end{align*}
Hence, $f_t^* = tH + f^*$ and $f_t = {(tH)}^* \infc f = tL(\cdot/t) \infc f$.
Thus, the solution to~\eqref{eq:HJ} is given by the Hopf{--}Lax semigroup as claimed.

When $L = H = \frac{1}{2}\,\norm \cdot^2$,~\eqref{eq:HJ} becomes $\partial_t u + \frac{1}{2}\,\norm{\nabla_x u}^2 = 0$ and the Hopf{--}Lax semigroup $Q_t f$ coincides with the Moreau{--}Yosida envelope (\autoref{defn:moreau_yosida}).
This yields an unexpected connection between the Hamilton{--}Jacobi equation and the~\ref{eq:PPM}.

\subsection{Duality correspondences}

\begin{thm}[Fenchel{--}Young]\label{thm:fenchel_young}
    Let $f : \R^d\to\R\cup\{\infty\}$ be regular and convex.
    Then,
    \begin{align*}
        f(x) + f^*(p) \ge \langle p, x \rangle \qquad\text{for all}~p,x\in\R^d\,.
    \end{align*}
    Moreover, equality holds if and only if $p \in \partial f(x)$, if and only if $x \in \partial f^*(p)$.
\end{thm}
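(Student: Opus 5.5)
The inequality is immediate from the definition of the conjugate. For any $x$, the supremum defining $f^*(p)$ dominates the value of its objective at that particular $x$, so $f^*(p) \ge \langle p, x\rangle - f(x)$. Rearranging gives $f(x) + f^*(p) \ge \langle p, x\rangle$. If $f(x) = +\infty$ the inequality is trivial. Since $f$ is regular, hence proper, $f^*(p) > -\infty$, and there is no $\infty - \infty$ ambiguity.

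For the first equivalence, we unpack equality directly. Equality $f(x) + f^*(p) = \langle p, x\rangle$ forces $f(x) < \infty$. It then says that $\sup_y\{\langle p, y\rangle - f(y)\} \le \langle p, x\rangle - f(x)$, i.e., $\langle p, y\rangle - f(y) \le \langle p, x\rangle - f(x)$ for all $y$. Rearranged, this reads $f(y) \ge f(x) + \langle p, y - x\rangle$ for all $y$, which is exactly the subgradient inequality~\eqref{eq:subgrad}. So equality holds iff $p \in \partial f(x)$. Each step is reversible, so this gives the equivalence. We use the convention that $\partial f(x) = \varnothing$ when $f(x) = \infty$.

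For the second equivalence, we apply the same argument to $f^*$ in place of $f$. Since $f^*$ is a supremum of affine functions, it is convex and lower semicontinuous. It is proper because equality (or, if equality fails, the existence of some subgradient of $f$ somewhere) makes $f^*$ finite at some point. So the previous paragraph applies to $f^*$ and shows that $f^*(p) + f^{**}(x) = \langle p, x\rangle$ iff $x \in \partial f^*(p)$. The key input is then $f^{**} = f$ for regular convex $f$ (\autoref{thm:double_conj}), which converts this into the equality condition for $f$.

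The main obstacle is this use of biconjugation, which the notes defer. If it must be proved here, I would show $f^{**} \le f$ directly from the definition. For the reverse inequality, I would separate a point $(x, t)$ with $t < f(x)$ from the closed convex set $\epi f$ via the separation step in the proof of \autoref{thm:supporting_hyperplane}. This produces an affine minorant of $f$ exceeding $t$ at $x$. The delicate case is a vertical separating hyperplane, which must be tilted using some affine minorant of $f$; such a minorant exists by \autoref{thm:subdiff} at an interior point of $\dom f$. A minor secondary point is the properness of $f^*$, which follows since $f$ has a subgradient at interior points of its domain.
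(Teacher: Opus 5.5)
Your proof is correct, and on the first equivalence it is the same as the paper's. The route differs on the second equivalence, and yours is the one that actually proves the full statement.

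\textbf{What the paper does.} It checks the subgradient inequality for $f^*$ directly from the equality case:
\[
f^*(p') \ge \langle p', x\rangle - f(x) = f^*(p) + \langle x, p'-p\rangle\,.
\]
This uses only the definition of the conjugate, so it is elementary and works for any proper $f$. However, it only gives ``equality $\Rightarrow x\in\partial f^*(p)$''. The paper's proof never addresses the converse.

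\textbf{What you do.} You rerun the first-equivalence argument on $f^*$, correctly relying only on properness of $f^*$. That care matters, since $f^*$ need not be regular in the paper's sense: for $f = \langle a,\cdot\rangle$ one gets $f^* = \chi_{\{a\}}$. You then invoke $f^{**} = f$, and this is what closes the converse.

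\textbf{Why the converse needs regularity.} The direction $x\in\partial f^*(p) \Rightarrow$ equality genuinely requires lower semicontinuity. Take the one-dimensional instance of \eqref{eq:pathology} with $\phi \equiv 1$. Then $f^* = |\cdot|$ and $1 \in \partial f^*(1)$, yet $f(1) + f^*(1) = 2 \ne 1$.

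\textbf{Where biconjugation is needed.} The point $x$ in question can lie on $\partial \dom f$, as in this example. That is precisely the case of \autoref{thm:double_conj} whose proof the notes defer. So your separation-and-tilting sketch is not optional. It is also non-circular, since it uses only the projection-based separation and \autoref{thm:subdiff}.

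As a small economy, you could keep the paper's direct argument for the forward direction. Biconjugation, needed only at the single point $x$, is then reserved for the converse.
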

\begin{proof}
    The inequality is trivial from the definition of $f^*$.
    If equality holds, then for any $p', x'\in\R^d$,
    \begin{align*}
        f(x')
        &\ge \langle p, x' \rangle - f^*(p)
        = f(x) + \langle p, x'-x\rangle\,, \\
        f^*(p')
        &\ge \langle p', x \rangle - f(x)
        = f^*(p) + \langle x, p'-p\rangle\,,
    \end{align*}
    i.e., $p\in\partial f(x)$ and $x \in \partial f^*(p)$.
    Conversely, if $p\in \partial f(x)$, then
    \begin{align*}
        f^*(p)
        &= \sup_{x'\in\R^d}{\{\langle p, x' \rangle - f(x')\}}
        \le \langle p, x \rangle - f(x)\,. \qedhere
    \end{align*}
\end{proof}

\begin{thm}[double conjugation]\label{thm:double_conj}
    Let $f : \R^d\to\R\cup \{\infty\}$.
    Then, $f \ge f^{**}$.

    Moreover, if $f$ is regular and convex, then equality holds: $f = f^{**}$.
\end{thm}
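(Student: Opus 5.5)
The inequality $f \ge f^{**}$ follows directly from the definitions. For any $x,p$, the Fenchel{--}Young inequality (whose first half uses only the definition of $f^*$, not regularity) gives $\langle p, x\rangle - f^*(p) \le f(x)$. Taking the supremum over $p$ yields $f^{**}(x) \le f(x)$. The substance of the theorem is the reverse inequality when $f$ is regular and convex. My plan is to show that $f$ is the pointwise supremum of the affine functions lying below it. Every affine minorant $x \mapsto \langle p, x\rangle - c$ of $f$ satisfies $c \ge f^*(p)$, so it is dominated by $x\mapsto \langle p, x\rangle - f^*(p)$. Hence $f^{**}$ is exactly this supremum of affine minorants, and it suffices to separate each point strictly below the graph from $\epi f$ by a non-vertical hyperplane.

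Fix $x_0$ and any $t_0 < f(x_0)$. Since $f$ is regular, $\epi f$ is closed, convex, and non-empty, and $(x_0,t_0) \notin \epi f$. The first step of the proof of \autoref{thm:supporting_hyperplane} then applies: projecting onto $\epi f$ yields $(p,q) \ne 0$ with
\begin{align*}
    \langle p, x_0\rangle + q t_0 < \inf_{(x,t)\in\epi f}\{\langle p, x\rangle + qt\}\,,
\end{align*}
with strict inequality because the gap is $\norm{\Pi(x_0,t_0) - (x_0,t_0)}^2 > 0$. Letting $t\to\infty$ inside $\epi f$ forces $q \ge 0$. If $q > 0$, dividing by $q$ gives $f(x) \ge -\langle p/q, x - x_0\rangle + t_0 + \delta$ for some $\delta>0$. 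This is an affine minorant whose value at $x_0$ exceeds $t_0$, so $f^{**}(x_0) > t_0$.

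The main obstacle is the \emph{vertical} case $q = 0$, which can occur when $x_0 \notin \dom f$ or lies on the boundary of the domain. In that case $\langle p, x_0\rangle < \inf_{x\in\dom f}\langle p, x\rangle$. To handle it, I first produce at least one affine minorant $\ell$ of $f$: take any $x_1 \in \interior \dom f$, where \autoref{thm:subdiff} gives a subgradient. Then I tilt, setting $\ell_\lambda(x) \deq \ell(x) + \lambda\,(\inf_{\dom f}\langle p,\cdot\rangle - \langle p, x\rangle)$ for $\lambda \ge 0$, adjusting by a small margin if the infimum is $-\infty$ (there it is finite, since it exceeds $\langle p,x_0\rangle$). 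On $\dom f$ the added term is non-positive, so $\ell_\lambda \le f$. At $x_0$ the added term is strictly positive, so $\ell_\lambda(x_0) \to \infty$ as $\lambda\to\infty$. Hence $f^{**}(x_0) > t_0$ in this case too.

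Since $t_0 < f(x_0)$ was arbitrary (including when $f(x_0)=\infty$), we conclude $f^{**}(x_0) \ge f(x_0)$, and therefore $f = f^{**}$.
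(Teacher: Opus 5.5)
Your proof is correct, and for the reverse inequality it takes a genuinely different route from the paper.

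The paper argues only at points $z \in \interior\dom f$. There \autoref{thm:subdiff} provides a subgradient $p$ at $z$, and choosing $y = p$ in \eqref{eq:double_conj_identity} gives $f^{**}(z) \ge f(z)$ in one line. The case $z \notin \interior\dom f$ is explicitly omitted and deferred to Rockafellar.

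You instead strictly separate every point $(x_0,t_0)$ below the graph from the closed convex set $\epi f$, using the projection step from the proof of \autoref{thm:supporting_hyperplane}. This treats all $x_0$ uniformly, so it also covers the boundary and the exterior of the domain, which is exactly the part the paper leaves out. That is also where lower semicontinuity is indispensable: for the function in \eqref{eq:pathology}, $f^{**}$ agrees with $f$ on the open ball but vanishes on the sphere. Your argument uses lower semicontinuity precisely through the closedness of $\epi f$. The cost is the vertical case $q = 0$, and your tilting of a fixed affine minorant handles it correctly.

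Two small remarks. First, a vertical strict separator gives $\langle p, x_0\rangle < \inf_{\dom f}\langle p,\cdot\rangle$, which forces $x_0 \notin \overline{\dom f}$. So, contrary to your aside, this case never arises at points of $\dom f$, boundary points included. Second, by the same observation, separating $(x_1, f(x_1) - 1)$ from $\epi f$ for any $x_1 \in \dom f$ must produce $q > 0$. You can therefore obtain your initial affine minorant without \autoref{thm:subdiff}. With that substitution your argument proves $f = f^{**}$ for every proper, lower semicontinuous, convex $f$, dropping the non-empty interior requirement built into regularity.
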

\begin{proof}
    For the first statement,
    \begin{align}\label{eq:double_conj_identity}
        f^{**}(z)
        &= \sup_{y\in\R^d}{\bigl\{\langle y, z\rangle - \sup_{x\in\R^d}{\{\langle x, y\rangle - f(x)\}}\bigr\}}
        = \sup_{y\in\R^d}\inf_{x\in\R^d}{\{\langle y, z -x\rangle + f(x)\}}
        \le f(z)
    \end{align}
    by choosing $x=z$.

    Now assume that $f$ is regular and convex.
    If $z \in \interior \dom f$, then by~\autoref{thm:subdiff} there exists $p \in \partial f(z)$, so that $f(x) \ge f(z) + \langle p, x-z \rangle$ for all $x\in\R^d$.
    By taking $y=p$,
    \begin{align*}
        f^{**}(z)
        \ge \inf_{x\in\R^d}{\{\langle p, z-x \rangle + f(x)\}}
        \ge f(z)\,,
    \end{align*}
    which proves the equality for such $z$.
    For brevity, we omit the proof for $z\notin \interior \dom f$ (see, e.g.,~\cite[Theorem 12.2]{Roc1997CvxAnalysis}).
\end{proof}

This result implies that in general, if $f_*$ is the largest convex and lower semicontinuous function which is smaller than $f$, then $f_* = f^{**}$.
Indeed, $f_* \ge f^{**}$ by definition, whereas $f \ge f_*$ implies $f^{**} \ge {(f_*)}^{**} = f_*$.
The proof above also shows that whenever $\partial f(x) \ne \varnothing$, then $f(x) = f^{**}(x)$.
In particular, if $x_\star$ is a minimizer of $f$, then $0 \in \partial f(x_\star)$ and $f(x_\star) = f^{**}(x_\star)$; moreover, by taking $y = 0$ in~\eqref{eq:double_conj_identity} we see that $\inf f = \inf f^{**}$.
Thus, we can start with a non-convex function $f$ and ``convexify'' it by replacing it with $f^{**}$ while preserving the optimal value, although this is seldom useful in practice.

Properties of $f$ are often reflected as ``dual'' properties for $f^*$.
For example, if $f$ is regular convex, the following assertions hold (see~\cite{Roc1997CvxAnalysis}):
\begin{itemize}
    \item $f$ is Lipschitz if and only if $\dom f^*$ is bounded.
    \item $\epi f$ contains no non-vertical half-lines if and only if $\dom f^* = \R^d$.
    \item $f$ has no lines along which it is affine if and only if $\interior \dom f^* \ne \varnothing$.
    \item $f$ has bounded level sets if and only if $0 \in \interior \dom f^*$.
    \item $f$ is differentiable at $x$ with $\nabla f(x) = p$ if and only if $(p, f^*(p))$ is an exposed point of $\epi f^*$.
        (An \emph{exposed point} of a convex set is a point at which some linear function attains its strict maximum over the convex set.)
\end{itemize}
For our purposes, we are most interested in conditions under which $\nabla f$ is a well-defined bijection from an open convex set $\eu C$ to its image $\nabla f(\eu C)$, with inverse given by ${(\nabla f)}^{-1} = \nabla f^*$.
In this case, the correspondence between $f$ and $f^*$ is known as the Legendre transformation and we informally discussed it in the previous subsection.
We accept the results in the following discussion without proof; see~\cite[\S26]{Roc1997CvxAnalysis} for details.

\begin{defn}
    Let $f : \R^d\to\R\cup\{\infty\}$ be regular convex.
    \begin{itemize}
        \item We say that $f$ is \textbf{essentially smooth} if $f$ is differentiable on $\eu C \deq \interior \dom f$ and $\lim_{n\to\infty}{\norm{\nabla f(x_n)}} \to \infty$ whenever ${\{x_n\}}_{n\in\N}$ is a sequence in $\eu C$ converging to $\partial \eu C$.
        \item We say that $f$ is \textbf{essentially strictly convex} if $f$ is strictly convex on every convex subset of $\dom \partial f \deq \{x\in\R^d : \partial f(x) \ne\varnothing\}$.
    \end{itemize}
\end{defn}

\begin{lem}\label{lem:ess_smooth}
    A regular convex function $f$ is essentially smooth if and only if $f^*$ is essentially strictly convex.
\end{lem}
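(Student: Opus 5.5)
The plan is to translate both properties into statements about subdifferentials. The bridge is the inverse relationship between $\partial f$ and $\partial f^*$.

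\emph{Step 1 (inverse relation).} Since $f$ is regular convex, $f = f^{**}$ by \autoref{thm:double_conj}. The equality case of \autoref{thm:fenchel_young} then gives $p \in \partial f(x) \iff x \in \partial f^*(p)$. So the graph of $\partial f^*$ is the transpose of the graph of $\partial f$.

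\emph{Step 2 (essential strict convexity).} For a regular convex $g$, I will show that $g$ is essentially strictly convex if and only if no vector is a subgradient of $g$ at two distinct points.
\begin{itemize}
    \item Suppose $p \in \partial g(x) \cap \partial g(y)$ with $x \ne y$. Adding $g(y) \ge g(x) + \langle p, y-x\rangle$ and $g(x) \ge g(y) + \langle p, x-y\rangle$ forces equality in both. For $z$ on the segment $[x,y]$, convexity gives $g(z) \le$ the affine interpolant, and the subgradient inequality at $x$ gives $g(z) \ge$ the same affine value. Hence $g$ is affine on $[x,y]$ and $p \in \partial g(z)$ there. Thus $[x,y] \subseteq \dom\partial g$ is a convex set on which $g$ is not strictly convex.
    \item Conversely, suppose $g$ fails strict convexity on a convex subset of $\dom \partial g$. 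Then $g$ is affine on some segment $[x,y]$ in that set. A subgradient $p$ at the midpoint supports an affine function, so it is a subgradient at $x$ and $y$ as well.
\end{itemize}
Applying this with $g = f^*$ and Step 1: $f^*$ is essentially strictly convex if and only if $\partial f(x)$ has at most one element for every $x$.

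\emph{Step 3 (essential smoothness; the main obstacle).} It remains to show that $f$ is essentially smooth if and only if $\partial f(x)$ is empty or a singleton for every $x$. The key geometric fact is the following. If $x \in \partial(\dom f)$ and $p \in \partial f(x)$, then \autoref{thm:supporting_hyperplane} applied to $\overline{\dom f}$ gives a nonzero $n$ with $\langle n, x' - x\rangle \le 0$ on $\dom f$. Hence $p + \lambda n \in \partial f(x)$ for all $\lambda \ge 0$, so $\partial f(x)$ is not a singleton.
\begin{itemize}
    \item ($\Rightarrow$) On $\interior \dom f$, differentiability gives $\partial f = \{\nabla f\}$ by \autoref{qu:subdiff_grad}. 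On the boundary, the fact above shows $\partial f$ is either empty or multivalued. To rule out multivalued, I must show it is empty: take $x_t = x + t(z-x)$ with $z$ interior and $t \searrow 0$. Since $\nabla f(x_t)$ blows up, I want to contradict $p \in \partial f(x)$ using monotonicity and the bound $f(x_t) \le (1-t)f(x) + tf(z)$. This contradiction is the delicate part. I will first try to show the component of $\nabla f(x_t)$ along $z - x$ stays bounded while its size diverges, and derive a contradiction from that.
    \item ($\Leftarrow$) At an interior point, \autoref{thm:subdiff} gives a singleton subdifferential. I then show differentiability from that singleton, via the directional derivative being $\max_{q\in\partial f(x)}\langle q, v\rangle$. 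For the blow-up condition, suppose $x_n \to x \in \partial(\dom f)$ with $\nabla f(x_n)$ bounded. A subsequential limit $p$ lies in $\partial f(x)$, passing to the limit in the subgradient inequality using lower semicontinuity. The boundary fact then makes $\partial f(x)$ multivalued, a contradiction.
\end{itemize}

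Combining Steps 1--3 proves \autoref{lem:ess_smooth}.
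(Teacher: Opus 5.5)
The paper does not prove this lemma; it defers to Rockafellar, \emph{Convex Analysis}, \S26. Your three-step decomposition is the standard route there: $\partial f^* = (\partial f)^{-1}$; essential strict convexity of $g$ is equivalent to distinct points having disjoint subdifferentials; and $f$ is essentially smooth iff $\partial f(x)$ has at most one element for every $x$. Steps 1 and 2 are sound, and so is the ``$\Leftarrow$'' half of Step 3. In that half, your lower-semicontinuity limit argument also correctly forces $x\in\dom f$. There is one small omission: the max formula for $f'(x;\cdot)$ only gives a linear directional derivative. You still need the finite-dimensional upgrade to genuine differentiability for convex $f$. Alternatively, local boundedness and closedness of the graph of $\partial f$ near $x$ give $0\le f(y)-f(x)-\langle q,y-x\rangle\le\langle q_y-q,y-x\rangle = o(\norm{y-x})$ for $q_y\in\partial f(y)$.

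The genuine gap is the ``$\Rightarrow$'' half of Step 3, which you leave open: you must show that an essentially smooth $f$ has $\partial f(x)=\varnothing$ at every boundary point $x$. Your proposed route cannot close it as stated. The component $\langle\nabla f(x_t),z-x\rangle$ is indeed bounded: below by $\langle p,z-x\rangle$ via monotonicity, and above by convexity of $t\mapsto f(x_t)$. But control of one directional component is perfectly compatible with $\norm{\nabla f(x_t)}\to\infty$, so no contradiction follows. (Rockafellar gets this step from a heavier result: $\partial f(x)$ equals the closed convex hull of limits of nearby gradients plus the normal cone of $\dom f$ at $x$.)

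The missing idea is to control all directions at once using a cone.
\begin{itemize}
\item Fix a closed ball $\msf{B}(z,r)\subseteq\interior\dom f$ and set $M\deq\max_{\msf{B}(z,r)}f$.
\item The set $x+t\,(\msf{B}(z,r)-x)$ is exactly the ball of radius $tr$ about $x_t$. On it, convexity gives $f\le(1-t)\,f(x)+tM$.
\item Meanwhile $p\in\partial f(x)$ gives $f(x_t)\ge f(x)+t\,\langle p,z-x\rangle$.
\end{itemize}
Take $w=x_t+tr\,u$ with $\norm{u}=1$ in the subgradient inequality at $x_t$:
\[
tr\,\langle\nabla f(x_t),u\rangle\le f(w)-f(x_t)\le t\,\bigl(M-f(x)-\langle p,z-x\rangle\bigr)\,.
\]
Hence $\norm{\nabla f(x_t)}\le(M-f(x)-\langle p,z-x\rangle)/r$ uniformly in $t\in(0,1)$. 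This contradicts essential smoothness as $x_t\to x$. The supporting-hyperplane fact is not needed in this direction. With this argument inserted, your proof is complete.
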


\begin{thm}\label{thm:legendre_duality}
    Let $f$ be regular, strictly convex, and essentially smooth over $\eu C = \interior \dom f$.
    Then, $f^*$ is regular, strictly convex, and essentially smooth over $\eu C^* \deq \interior \dom f^*$.
    Moreover, $\nabla f : \eu C\to\eu C^*$ is a continuous bijection with ${(\nabla f)}^{-1} = \nabla f^*$.
\end{thm}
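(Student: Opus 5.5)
The plan is to establish the claims in three steps: symmetry of the hypotheses, then invertibility of $\nabla f$, then continuity. Throughout, the main tools are the Fenchel{--}Young theorem (\autoref{thm:fenchel_young}), double conjugation (\autoref{thm:double_conj}), and \autoref{lem:ess_smooth}.

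\textbf{Step 1: the properties transfer to $f^*$.} Regularity of $f^*$ needs three things.
\begin{itemize}
    \item Lower semicontinuity and convexity hold automatically, since $f^*$ is a supremum of affine functions.
    \item Properness: $f^*$ is not identically $+\infty$ because $f$ has a subgradient at any interior point (\autoref{thm:subdiff}). That subgradient gives an affine minorant, so $f^*$ is finite at the slope.
    \item Non-empty interior of $\dom f^*$: this is the subtle point. I would argue that $\nabla f(\eu C)\subseteq \dom f^*$ and that this image is open (see Step 2).
\end{itemize}
Next, since $f$ is essentially smooth, \autoref{lem:ess_smooth} shows $f^*$ is essentially strictly convex. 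Since $f = f^{**}$ and $f$ is essentially strictly convex (it is strictly convex), applying \autoref{lem:ess_smooth} to $f^*$ shows $f^*$ is essentially smooth. Strict convexity of $f^*$ on $\eu C^*$ then follows from essential strict convexity, because $\eu C^*\subseteq \dom\partial f^*$ by \autoref{thm:subdiff}.

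\textbf{Step 2: $\nabla f$ is a bijection onto $\eu C^*$ with inverse $\nabla f^*$.} The backbone is the Fenchel{--}Young equality:
\[
p=\nabla f(x) \iff p\in\partial f(x) \iff x\in\partial f^*(p).
\]
\begin{itemize}
    \item Injectivity: if $\nabla f(x)=\nabla f(x')=p$, then both $x$ and $x'$ minimize the strictly convex function $f-\langle p,\cdot\rangle$ on the segment between them. This forces $x=x'$.
    \item Range inside $\eu C^*$: for $x\in\eu C$ and $p=\nabla f(x)$, we have $x\in\partial f^*(p)$, so $\partial f^*(p)\ne\varnothing$. A point in $\dom\partial f^*$ where $f^*$ is essentially smooth must lie in the interior of $\dom f^*$. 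This is because essential smoothness makes the subdifferential empty at boundary points, by the gradient blow-up together with the standard fact that $\dom\partial g$ lies between $\interior\dom g$ and $\dom g$. Hence $p\in\eu C^*$.
    \item Surjectivity: for $p\in\eu C^*$, $f^*$ is differentiable at $p$. Set $x=\nabla f^*(p)$, which lies in $\partial f^*(p)$, so $p\in\partial f(x)$. By the same boundary argument applied to $f$, $x\in\eu C$, and therefore $p=\nabla f(x)$.
\end{itemize}
These arguments give $\nabla f^*\circ\nabla f=\mathrm{id}_{\eu C}$ and $\nabla f\circ\nabla f^*=\mathrm{id}_{\eu C^*}$.

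\textbf{Step 3: continuity.} A differentiable convex function on an open set has a continuous gradient. I would prove this from monotonicity of subdifferentials plus local boundedness (\autoref{qu:subdiff_lip} via \autoref{lem:lip_cont}). Take $x_n\to x$. The gradients $\nabla f(x_n)$ are bounded, and every limit point lies in $\partial f(x)=\{\nabla f(x)\}$, by passing to the limit in the subgradient inequality and using continuity of $f$ on $\eu C$.

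\textbf{Main obstacle.} The hardest step is the boundary claim: that $\partial g(y)=\varnothing$ when $y\in\partial\dom g$ for an essentially smooth $g$. My first attempt would be as follows. Suppose $q\in\partial g(y)$ with $y$ on the boundary, and take $y_t=(1-t)y+tz$ with $z\in\interior\dom g$. Monotonicity gives $\langle\nabla g(y_t)-q,\,z-y\rangle\ge0$. The directional derivative along $z-y$ is nondecreasing in $t$, so $\langle\nabla g(y_t),z-y\rangle$ stays bounded as $t\to0$. Since $\norm{\nabla g(y_t)}\to\infty$, the normalized gradients must converge toward the normal cone direction at $y$. The goal is to derive a contradiction by using the subgradient inequality along points approaching $y$ tangentially. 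If this becomes too delicate, I would cite \cite[Theorem 26.1]{Roc1997CvxAnalysis}, consistent with the paper's stated policy of accepting this discussion without proof.
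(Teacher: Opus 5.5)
The paper gives no proof of this theorem; it defers to \cite[\S26]{Roc1997CvxAnalysis}. Your outline follows the route taken there: the boundary lemma, then \autoref{lem:ess_smooth}, then Fenchel--Young to identify $(\nabla f)^{-1}=\nabla f^*$. Your injectivity and continuity arguments are sound. So is the inverse identification, provided $f^*$ is known to be essentially smooth.

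The gap is at the step you flag as subtle, and the argument there is circular. You get $\interior\dom f^*\neq\varnothing$ from openness of $\nabla f(\eu C)$ ``by Step 2''. But Step 2 identifies $\nabla f(\eu C)$ with $\eu C^*$ only by using that $f^*$ is essentially smooth. You obtained that by applying \autoref{lem:ess_smooth} to $f^*$, which as stated presupposes $f^*$ regular, i.e.\ $\interior\dom f^*\neq\varnothing$. If you cite \cite[Theorem 26.3]{Roc1997CvxAnalysis} directly for closed proper convex functions, its conclusion already contains $\interior\dom f^*\neq\varnothing$. With the lemma as the paper states it, you must break the loop.

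An elementary fix: suppose $\interior\dom f^*=\varnothing$. Then the convex set $\dom f^*$ lies in a hyperplane $\{p:\langle p,v\rangle=c\}$ with $v\neq0$. Hence $f=f^{**}=\sup_{p\in\dom f^*}\{\langle p,\cdot\rangle-f^*(p)\}$ satisfies $f(x+sv)=f(x)+sc$. So $f$ is affine on a segment through any $x\in\eu C$, contradicting strict convexity. This is the paper's listed correspondence ``no lines of affineness $\iff\interior\dom f^*\neq\varnothing$''. Alternatively, your injectivity and continuity arguments use nothing about $f^*$, so invariance of domain makes $\nabla f(\eu C)$ an open subset of $\dom f^*$.

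For the boundary lemma, you do not need normal cones or tangential approach. The bound you already derived, $\langle\nabla g(y_t),z-y\rangle\ge\langle q,z-y\rangle$, suffices. Choose $\delta>0$ so that the closed $\delta$-ball around $z$ lies in $\interior\dom g$ and $g\le G$ on it (\autoref{lem:lip_cont}). For $\norm{w-z}\le\delta$, convexity and $w-y_t=(w-z)+(1-t)\,(z-y)$ give
\[
\langle\nabla g(y_t),w-z\rangle\le g(w)-g(y_t)-(1-t)\,\langle\nabla g(y_t),z-y\rangle\le G-\inf_{s\in(0,1]}g(y_s)+\abs{\langle q,z-y\rangle}\,.
\]
The infimum is finite because $g$ has an affine minorant. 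Taking the supremum over $w$ bounds $\delta\,\norm{\nabla g(y_t)}$ uniformly in $t$. This contradicts $\norm{\nabla g(y_t)}\to\infty$ as $y_t\to y$.

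This proves the needed part of \cite[Theorem 26.1]{Roc1997CvxAnalysis} directly. It also yields $\dom\partial f=\eu C$, which you use tacitly when upgrading strict convexity of $f$ on $\eu C$ to essential strict convexity.
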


\begin{defn}
    We say that a function $f :\R^d\to\R\cup\{\infty\}$ is of \textbf{Legendre type} if it satisfies the assumptions of~\autoref{thm:legendre_duality}.
\end{defn}

To summarize, the condition that $f$ is regular convex ensures duality at the level of $f = f^{**}$.
The condition that $f$ is of Legendre type ensures duality at the level of ${(\nabla f)}^{-1} = \nabla f^*$.
Note also that if $f$, $f^*$ are sufficiently smooth, then by differentiating the equality $\nabla f(\nabla f^*) = \id$ one obtains the identity
\begin{align*}
    \nabla^2 f \circ \nabla f^*
    &= {[\nabla^2 f^*]}^{-1}\,.
\end{align*}
In particular, $\nabla^2 f \succeq \alpha I$ is equivalent to $[\nabla^2 f^*]{}^{-1} \preceq \alpha^{-1} I$, i.e., there is a duality between the properties of strong convexity and smoothness.
Let us prove this last fact without assuming differentiability.

\begin{lem}[convexity{--}smoothness duality]\label{lem:cvxty_smoothness_duality}
    Let $f : \R^d\to \R\cup\{\infty\}$ be regular and $\alpha$-convex for some $\alpha > 0$.
    Then, $f^*$ is $\alpha^{-1}$-smooth.
\end{lem}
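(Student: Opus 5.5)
The plan is to show first that $f^*$ is finite everywhere and differentiable with $\nabla f^*(p) = \argmax_x\{\langle p, x\rangle - f(x)\}$. Next I would prove that this maximizer map is $\alpha^{-1}$-Lipschitz. Finally, I would integrate along segments to get the smoothness inequality~\eqref{eq:smooth} with constant $\beta = 1/\alpha$.

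\textbf{Existence and uniqueness of the maximizer.} Fix $p\in\R^d$ and consider $\psi_p(x) \deq f(x) - \langle p, x\rangle$. This function is lower semicontinuous and $\alpha$-convex. Pick $x_0 \in \interior\dom f$ and a subgradient $q\in\partial f(x_0)$, which exists by~\autoref{thm:subdiff}. Strong convexity then gives the quadratic lower bound
\begin{align*}
    f(x) \ge f(x_0) + \langle q, x-x_0\rangle + \frac{\alpha}{2}\,\norm{x-x_0}^2\,.
\end{align*}
I would obtain this from the subgradient inequality at the midpoint combined with~\eqref{eq:str_cvx}, or by a limiting argument as in the proof of~\autoref{prop:cvx_equiv}. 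It follows that $\psi_p$ has bounded level sets. So~\autoref{lem:existence_min_ii} yields a minimizer $x(p)$, which is unique by strict convexity (\autoref{lem:unique_min}). In particular $f^*(p) = -\psi_p(x(p)) < \infty$, so $f^*$ is finite everywhere.

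\textbf{Subgradient and Lipschitz bound.} Since $0 \in \partial \psi_p(x(p))$, we have $p \in \partial f(x(p))$. By Fenchel--Young (\autoref{thm:fenchel_young}), this gives $x(p)\in\partial f^*(p)$. For the Lipschitz estimate I would use the strengthened subgradient inequality for $\alpha$-convex $f$: if $p\in\partial f(x)$, then
\begin{align*}
    f(y)\ge f(x)+\langle p,y-x\rangle+\frac{\alpha}{2}\,\norm{y-x}^2\qquad\text{for all}~y\,.
\end{align*}
The proof is to apply the subgradient inequality at $(1-t)\,x+t\,y$, combine it with~\eqref{eq:str_cvx}, divide by $t$, and let $t\searrow0$. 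Writing this inequality at $x(p)$ and at $x(p')$ and adding gives strong monotonicity:
\begin{align*}
    \langle p-p', x(p)-x(p')\rangle \ge \alpha\,\norm{x(p)-x(p')}^2\,.
\end{align*}
Cauchy--Schwarz then yields $\norm{x(p)-x(p')}\le \alpha^{-1}\norm{p-p'}$.

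\textbf{Differentiability and smoothness.} To show $f^*$ is differentiable with $\nabla f^* = x(\cdot)$, I would sandwich the increment. From $x(p)\in\partial f^*(p)$ we get $f^*(p') - f^*(p) \ge \langle x(p), p'-p\rangle$. From $x(p')\in\partial f^*(p')$ we get $f^*(p')-f^*(p)\le \langle x(p'), p'-p\rangle$. The gap between the two bounds is at most $\alpha^{-1}\norm{p'-p}^2 = o(\norm{p'-p})$. Thus $\nabla f^* = x(\cdot)$, and this gradient is continuous. Finally, the fundamental theorem of calculus along the segment from $p$ to $p'$, together with the Lipschitz bound, gives
\begin{align*}
    f^*(p') \le f^*(p) + \langle \nabla f^*(p), p'-p\rangle + \frac{1}{2\alpha}\,\norm{p'-p}^2\,,
\end{align*}
which is $\alpha^{-1}$-smoothness.

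The main obstacle is the first step: establishing, without differentiability of $f$ and with $f$ possibly infinite outside its domain, that $f$ has quadratic growth and that the strengthened subgradient inequality holds. Everything after that is routine.
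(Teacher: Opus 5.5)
Your proof is correct, and its core step is the same as the paper's. You add the two strong-convexity inequalities at the maximizers $x(p)$ and $x(p')$ to get strong monotonicity, then apply Cauchy--Schwarz.

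The difference lies in the preliminaries. The paper invokes the duality correspondences (\autoref{lem:ess_smooth} and the surrounding discussion, which are cited without proof). From these it takes as given that $\dom f^* = \R^d$, that the suprema defining $f^*$ are attained, and that $f^*$ is differentiable with gradient equal to the maximizer. It then proves only that $\nabla f^*$ is $\alpha^{-1}$-Lipschitz, leaving the passage to the smoothness inequality implicit.

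You derive all of this from results actually proved in the notes:
\begin{itemize}
    \item a subgradient at an interior point (\autoref{thm:subdiff}) plus strong convexity gives a quadratic minorant, hence coercivity and existence of the maximizer via \autoref{lem:existence_min_ii};
    \item \autoref{thm:fenchel_young} gives $x(p)\in\partial f^*(p)$;
    \item the two-sided subgradient sandwich upgrades the Lipschitz bound to differentiability.
\end{itemize}
You also justify the strengthened subgradient inequality for non-differentiable $f$, which the paper invokes without comment.

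You are right to finish with the fundamental theorem of calculus. The sandwich alone gives only $f^*(p')-f^*(p)-\langle\nabla f^*(p),p'-p\rangle \le \alpha^{-1}\norm{p'-p}^2$, i.e.\ smoothness constant $2/\alpha$. The integration step is what delivers the sharp constant $\alpha^{-1}$.

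The paper's route is shorter but leans on unproved machinery. Yours is self-contained and shows exactly where each hypothesis enters: lower semicontinuity gives existence of the maximizer, and non-empty interior of the domain gives the quadratic minorant.
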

\begin{proof}
    By the duality correspondences (including~\autoref{lem:ess_smooth}), $\dom f^* = \R^d$ and $f^*$ is differentiable everywhere.
    For two points $y,y'\in\R^d$, let $x,x'\in\R^d$ achieve the suprema in the definitions of $f^*(y)$, $f^*(y')$ respectively.
    By~\autoref{thm:fenchel_young}, $x = \nabla f^*(y)$ and $x' = \nabla f^*(y')$.
    Then, by strong convexity of $f - \langle \cdot, y\rangle$,
    \begin{align*}
        f(x') - \langle x', y\rangle
        &\ge f(x) - \langle x, y\rangle + \frac{\alpha}{2}\,\norm{x'-x}^2\,.
    \end{align*}
    Adding this to the analogous inequality with $x$ and $x'$ swapped,
    \begin{align*}
        \alpha\,\norm{\nabla f^*(y') - \nabla f^*(y)}^2
        = \alpha\,\norm{x'-x}^2
        &\le \langle x, y \rangle + \langle x', y' \rangle - \langle x', y \rangle - \langle x, y'\rangle \\
        &= \langle x'-x, y'-y\rangle\,.
    \end{align*}
    Rearranging this after Cauchy{--}Schwarz proves $\norm{\nabla f^*(y') - \nabla f^*(y)} \le \alpha^{-1}\,\norm{y'-y}$.
\end{proof}

\begin{cor}[contractivity of the proximal operator]\label{cor:contract_ppm}
    Let $f : \R^d\to\R\cup\{\infty\}$ be $\alpha$-convex.
    Then, $\prox_f$ is $1/(1+\alpha)$-Lipschitz.
\end{cor}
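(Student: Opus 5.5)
The plan is to realize $\prox_f$ as the gradient of a convex conjugate and then apply \autoref{lem:cvxty_smoothness_duality}. Set $\phi \deq f + \frac{1}{2}\,\norm\cdot^2$. Because $f$ is $\alpha$-convex and $\frac{1}{2}\,\norm\cdot^2$ is $1$-convex (\autoref{qu:quad_strcvx}), the function $\phi$ is $(1+\alpha)$-convex. (As usual, we take $f$ to be regular, which is implicitly required for $\prox_f$ to be well-defined; then $\phi$ is regular as well.)

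The main step is the identity $\prox_f = \nabla \phi^*$. Expanding the square gives
\begin{align*}
    f(x) + \frac{1}{2}\,\norm{y-x}^2 = \phi(x) - \langle x, y\rangle + \frac{1}{2}\,\norm y^2\,.
\end{align*}
Hence $\prox_f(y) = \argmax_x\{\langle x, y\rangle - \phi(x)\}$, which is exactly the maximizer in the definition of $\phi^*(y)$. As shown in the proof of \autoref{lem:cvxty_smoothness_duality}, via \autoref{thm:fenchel_young}, this maximizer equals $\nabla\phi^*(y)$. Concretely, if $x$ attains the supremum, then equality holds in Fenchel--Young, so $x \in \partial\phi^*(y)$. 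Since $\phi^*$ is differentiable everywhere (again by \autoref{lem:cvxty_smoothness_duality} and the surrounding duality facts), it follows that $x = \nabla\phi^*(y)$.

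It then remains to apply \autoref{lem:cvxty_smoothness_duality} to $\phi$. That lemma's proof yields the Lipschitz bound $\norm{\nabla\phi^*(y') - \nabla\phi^*(y)} \le (1+\alpha)^{-1}\,\norm{y'-y}$, which is the claim. If one prefers to avoid relying on the differentiability facts quoted without proof, the argument can be run directly. Let $x = \prox_f(y)$ and $x' = \prox_f(y')$. By $(1+\alpha)$-convexity of $\phi - \langle\cdot, y\rangle$, which is minimized at $x$, the quadratic growth bound from \eqref{eq:qg} gives
\begin{align*}
    \phi(x') - \langle x', y\rangle \ge \phi(x) - \langle x, y\rangle + \frac{1+\alpha}{2}\,\norm{x'-x}^2\,,
\end{align*}
and symmetrically with the roles of $(x,y)$ and $(x',y')$ swapped. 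Adding the two inequalities gives $(1+\alpha)\,\norm{x'-x}^2 \le \langle x'-x, y'-y\rangle$, and Cauchy--Schwarz finishes the argument.

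The only step that needs care is justifying the quadratic growth inequality at a minimizer of a possibly non-differentiable strongly convex function. I would derive it from \eqref{eq:str_cvx}: apply it with $x$ the minimizer, use $\phi_y(x) \le \phi_y((1-t)\,x + t\,x')$, divide by $t$, and let $t\searrow 0$. This avoids any differentiability assumption on $f$.
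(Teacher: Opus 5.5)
Your proposal is correct and follows the paper's route. You write $\prox_f$ as the maximizer defining the conjugate of $f + \frac{1}{2}\,\norm\cdot^2$, note that this function is $(1+\alpha)$-convex, and apply \autoref{lem:cvxty_smoothness_duality}; your direct fallback is just that lemma's proof inlined, and you justify its quadratic growth step from \eqref{eq:str_cvx}. Your identity $\prox_f = \nabla\phi^*$ also has the correct sign, whereas the paper's write-up says $-\prox_f$ is the gradient, a harmless slip that does not affect the Lipschitz constant.
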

\begin{proof}
    We can write
    \begin{align*}
        \prox_f(y)
        &\deq \argmin_{x\in\R^d}{\bigl\{ f(x) + \frac{1}{2}\,\norm{y-x}^2\bigr\}}
        = -\argmax_{x\in\R^d}{\bigl\{ \langle x, y \rangle - f(x) - \frac{1}{2}\,\norm{x}^2\bigr\}}\,.
    \end{align*}
    This shows that $-{\prox_f}$ is the gradient of the convex conjugate of the function $f + \frac{1}{2}\,\norm \cdot^2$, which is $(1+\alpha)$-convex.
\end{proof}

For a closed convex set $\eu C$, $\Pi_{\eu C} = \prox_{\chi_{\eu C}}$, so this corollary recovers~\autoref{lem:projection_contraction}.
It also shows that~\ref{eq:PPM} with an $\alpha$-convex function contracts with rate $1/(1+\alpha h)$.

\subsection*{Bibliographical notes}

The treatment of classical mechanics is based on~\cite[\S3.3]{Evans10PDE}.
Variational principles also lead to an influential perspective on acceleration, see~\cite{WibWilJor16Accel}.

The problem of minimizing an action functional can be generalized to the problem of optimal control, and in that context, the corresponding Hamilton{--}Jacobi equation is known as the \emph{Hamilton{--}Jacobi{--}Bellman equation}.
The analogy between dynamic programming and diffusions can be pushed even further to obtain ``laws of large numbers'' and ``central limit theorems'' for the former, see~\cite[\S9.4]{Bac+1992Synchro}.

The result of~\autoref{qu:ppm_pl} is from~\cite{Chen+22ProxSampler}.

\subsection*{Exercises}

\begin{question}\label{qu:cvx_conj_ex}
    Verify the assertions in~\autoref{ex:cvx_conj}.
\end{question}

\begin{question}\label{qu:ppm_pl}
    To avoid technical difficulties, assume that $f$ is differentiable everywhere and satisfies~\eqref{eq:PL} with constant $\alpha > 0$.
    The goal of this exercise is to derive the sharp rate of convergence of the~\ref{eq:PPM} in this setting (which turns out to be non-trivial).

    For any $x\in\R^d$, let ${(Q_t)}_{t\ge 0}$ denote the Hopf{--}Lax semigroup and $x_t \deq \prox_{tf}(x)$.
    From the Hamilton{--}Jacobi equation~\eqref{eq:HJ} or via direct computation, show that $\partial_t Q_t f(x) = -\norm{x_t - x}^2/(2t^2)$.
    Use this to deduce that
    \begin{align}\label{eq:HJ_decay}
        \partial_t \{Q_t f(x) - f_\star\}
        &\le - \frac{\alpha}{1+\alpha t}\,\{Q_t f(x) - f_\star\}\,.
    \end{align}
    Finally, by adapting Gr\"onwall's lemma (\autoref{lem:gronwall}), use this to prove the \emph{sharp} rate
    \begin{align*}
        f(x_h) - f_\star
        \le \frac{f(x) - f_\star}{{(1+\alpha h)}^2}\,.
    \end{align*}
\end{question}

\begin{question}\label{qu:HJ_decay}
    The inequality~\eqref{eq:HJ_decay} implies that
    \begin{align}\label{eq:HJ_decay_2}
        Q_h f(x) - f_\star \le \frac{1}{1+\alpha h}\,(f(x) - f_\star)\,.
    \end{align}
    In this exercise, we give an alternative proof of this fact under the assumption that $f$ is $\alpha$-convex.
    As a consequence, we also obtain a result for $\alpha = 0$.

    Write out the definition of $Q_h f(x)$ as an infimum, and choose as a test point the interpolant $(1-t)\,x_\star + t\,x$.
    Pick $t > 0$ to derive~\eqref{eq:HJ_decay_2}.
    Also, in the case $\alpha = 0$, show that if $f(x) - f_\star \le \norm{x-x_\star}^2/h$,
    \begin{align*}
        Q_h f(x) - f_\star
        &\le \Bigl(1 - \frac{f(x)-f_\star}{2\,\norm{x-x_\star}^2}\,h\Bigr)\,(f(x) - f_\star)\,.
    \end{align*}
\end{question}


\section{Mirror methods}

Consider the following situation.

\begin{ex}[optimization in a different norm]\label{ex:different_norm}
    Suppose that $f : \R^d \to \R\cup\{\infty\}$ is convex and that we wish to minimize it over the simplex $\Delta_d \deq \{x\in\R_+^d : \sum_{i=1}^d x[i] = 1\}$.
    If $f$ is Lipschitz, then we can apply~\ref{eq:PSD} and obtain an $\varepsilon$-approximate solution in $O(L^2 R^2/\varepsilon^2)$ iterations.
    Here, $L$ is the Lipschitz constant, and $R \le 2$ is the radius.

    For example, suppose that we have $d$ actions and that the loss of the $i$-th action is $\ell[i]$, where the losses are bounded: $\abs{\ell[i]} \le 1$.
    If we choose an action randomly according to a probability distribution $x\in\Delta_d$, the expected loss is $\langle \ell, x\rangle \eqqcolon f(x)$.
    We then seek to minimize the expected loss over $\Delta_d$.\renewcommand{\thempfootnote}{$\dagger$}\footnote{Trivially, the solution to the optimization problem is given by the distribution which puts all of its mass on $\argmin_{i\in [d]} \ell[i]$. This problem is simply meant to illustrate the pitfalls of na\"{\i}vely using the Euclidean norm, but it also forms the basis for the more interesting setting of~\autoref{ex:experts}.}
    The Lipschitz constant of $f$ is $\norm\ell$, which could be as large as $\sqrt d$ in the worst case; the resulting complexity estimate of $O(d/\varepsilon^2)$ is poor in high dimension.

    Implicit in this discussion, however, is that we are measuring the Lipschitz constant and the radius with respect to the usual Euclidean norm.
    In this setting, however, it may make more sense to use the $\ell_1$ norm, in which case the Lipschitz constant is $\norm\ell_\infty \le 1$.
\end{ex}

Until now, we have identified points $x$ and gradients $\nabla f(x)$ as part of the same space $\R^d$, but this is because of the self-dual nature of the Euclidean norm.
Suppose now that $(\eu X, \opnorm \cdot)$ is a general (finite-dimensional) normed vector space and $f : \eu X\to\R\cup\{\infty\}$.
The dual space is $(\eu X^*, \opnorm \cdot_*)$, where $\eu X^*$ is the space of linear functionals $\ell : \eu X\to\R$, equipped with the dual norm $\opnorm\ell_* \deq \sup\{\abs{\ell(x)} : \opnorm x \le 1\}$.
The derivative of $f$ at $x$ is defined to be the linearization at $x$: if there exists an element $\ell \in \eu X^*$ such that
\begin{align*}
    \abs{f(x+v)-f(x)-\ell(v)} = o(\opnorm v) \qquad\text{as}~v\to 0\,,
\end{align*}
we say that $f$ is differentiable\footnote{Strictly speaking, this is the Fr\'echet derivative.} at $x$ and we write $Df(x)$ for the functional $\ell$.
Note that in this formalism, the derivative $Df(x)$ is an element of the dual space.

Above, we wrote $Df(x)$ instead of $\nabla f(x)$ to emphasize that in this context, we should no longer think of $Df(x)$ as belonging to the original space $\eu X$.
However, when $\eu X = \R^d$, it is still convenient to identify $Df(x)$ as a vector in $\R^d$, and we therefore continue to use the notation $\nabla f(x)$.
This is fine as long as we remember the following two points:
\begin{itemize}
    \item It does not make sense to add a point $x\in\eu X$ to a gradient $\nabla f(x) \in \eu X^*$.
    \item The size of $\nabla f(x)$ should be measured in the dual norm $\opnorm \cdot_*$.
\end{itemize}
In the context of~\autoref{ex:different_norm}, the dual norm for $\norm\cdot_1$ is the $\ell_\infty$ norm $\norm \cdot_\infty$.

Immediately, the first point above rules out~\ref{eq:GD} and~\ref{eq:PSD} as sensible algorithms.
A first attempt to remedy this issue is to somehow develop analogues of these algorithms in different norms, but this is seriously complicated by the fact that non-Euclidean norms lack crucial properties, e.g., we cannot ``expand the square'' as we did in previous proofs.

Instead, the idea of~\cite{NemYud1983Complexity} is to use the Fenchel{--}Legendre duality.

\begin{center}
    Throughout this section, $\phi : \R^d\to\R\cup\{\infty\}$ is a convex function of Legendre type.
    We refer to it as the \emph{mirror map}.
\end{center}

The idea is to use the auxiliary function $\phi$ to map the iterate $x_n$ into the dual space via $x_n^* = \nabla \phi(x_n)$.
Now that we are in the dual space, it makes sense to take a gradient step: $x_{n+1}^* = x_n^* - h\,\nabla f(x_n)$.
Then, we use $\nabla \phi^*$ to return: $x_{n+1} = \nabla \phi^*(x_{n+1})$.
The goal of this section is to formalize this idea and its analysis.

\subsection{Bregman divergences and relative convexity/smoothness}

We introduce a key definition which substitutes for the squared Euclidean norm~\cite{Bre1967}.

\begin{defn}
    Given a function $\phi : \R^d\to\R\cup\{\infty\}$ of Legendre type over $\eu C_\phi$, the corresponding \textbf{Bregman divergence} associated with $\phi$ is the map $D_\phi : \R^d \times \eu C_\phi \to \R \cup \{\infty\}$ defined by
    \begin{align*}
        D_\phi(x,y)
        &\deq \phi(x) - \phi(y) - \langle\nabla \phi(y), x-y\rangle\,.
    \end{align*}
\end{defn}

In words, $D_\phi(\cdot, y)$ is defined by subtracting from $\phi$ its linearization at $y$.
We can observe the following properties:
\begin{itemize}
    \item $D_\phi\ge 0$: this is equivalent to the convexity of $\phi$.
    \item $D_\phi$ is convex with respect to its first argument.
    \item If $\phi$ is twice continuously differentiable, then
        \begin{align}\label{eq:bregman_locally_quadratic}
            D_\phi(x,y)
            \sim \frac{1}{2}\,\langle x-y, \nabla^2 \phi(y)\,(x-y)\rangle \qquad\text{as}~x\to y\,.
        \end{align}
\end{itemize}
The last property indicates that $D_\phi$ should behave as a squared distance between $x$ and $y$.
In some respects this is true, e.g., $D_\phi$ satisfies a Pythagorean inequality (\autoref{qu:bregman_properties}).
However,~\eqref{eq:bregman_locally_quadratic} is a purely local statement, and a priori there does not seem to be a reason for $D_\phi$ to have useful global properties.
For example, $D_\phi$ is \emph{asymmetric}, and $\sqrt{D_\phi}$ does not in general satisfy a triangle inequality.
Nevertheless, it turns out that $D_\phi$ is a powerful global measure of progress, which is arguably the greatest surprise of mirror methods.

\begin{ex}[mirror maps]\mbox{}
    \begin{enumerate}
        \item Let $\phi(x) = \frac{1}{2}\,\norm x^2$.
            Then, $\nabla \phi$ is the identity mapping and $D_\phi$ is one-half times the squared Euclidean distance.
            So, our study of mirror methods subsumes the preceding Euclidean methods.
        \item Let $\phi(x) = \sum_{i=1}^d \{x[i] \log x[i] - x[i]\}$ for $x\in\R_+^d$.
            Then, $\nabla \phi(x) = \log x$, where $\log$ is applied coordinate-wise.
            The associated Bregman divergence is the Kullback{--}Leibler divergence $D_\phi(x,y) = \sum_{i=1}^d \{x[i] \log(x[i]/y[i]) - x[i] + y[i]\}$.
        \item Let $\phi(X) = \tr(X\log X - X)$ for $X \succ 0$; this is known as the von Neumann entropy.
            The associated Bregman divergence is the quantum relative entropy $D_\phi(X,Y) = \tr(X\,(\log X - \log Y) - X + Y)$.
    \end{enumerate}
\end{ex}

Let us define notions of convexity and smoothness \emph{relative to} $\phi$.

\begin{defn}
    Let $f : \R^d\to\R\cup\{\infty\}$ be differentiable on $\interior \dom f \subseteq \eu C_\phi$.
    \begin{itemize}
        \item We say that $f$ is \textbf{$\alpha$-convex relative to $\phi$} if $D_f \ge \alpha D_\phi$.
        \item We say that $f$ is \textbf{$\beta$-smooth relative to $\phi$} if $D_f \le \beta D_\phi$.
    \end{itemize}
\end{defn}

Similarly to \S\ref{ssec:prelim}, there are equivalent reformulations of these definitions; see~\cite[Proposition 1.1]{LuFreNes18RelSmooth} for details.

\begin{prop}[relative convexity]
    For any $\alpha \ge 0$, the following are equivalent.
    \begin{itemize}
        \item $f$ is $\alpha$-convex relative to $\phi$.
        \item $f-\alpha\phi$ is convex.
        \item $\langle \nabla f(y) - \nabla f(x), y-x\rangle \ge \alpha\,\langle \nabla \phi(y)-\nabla \phi(x), y-x\rangle$ for all $x,y\in\interior \dom f$.
    \end{itemize}
    If $f$ is twice continuously differentiable on $\interior \dom f$, the above are also equivalent to:
    \begin{itemize}
        \item $\nabla^2 f \succeq \alpha\, \nabla^2 \phi$ on $\interior \dom f$.
    \end{itemize}
\end{prop}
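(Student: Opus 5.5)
The plan is to reduce everything to the Euclidean equivalences of \autoref{prop:cvx_equiv} (with $\alpha = 0$), applied to the auxiliary function $g \deq f - \alpha\phi$. The key observation is that Bregman divergences are linear in the generating function. Indeed, directly from the definition,
\begin{align*}
    D_g(x,y) = D_f(x,y) - \alpha\,D_\phi(x,y)\qquad\text{for all}~x\in\interior\dom f\,,\; y\in\interior\dom f\,,
\end{align*}
since the expression $\phi(x)-\phi(y)-\langle\nabla\phi(y),x-y\rangle$ is linear in $\phi$. Hence the first item, $D_f \ge \alpha D_\phi$, is literally the statement $D_g \ge 0$ on $\interior\dom f$. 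This is~\eqref{eq:str_cvx_1} with $\alpha=0$ for $g$, i.e., the first-order characterization of convexity of $g$.

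The steps, in order, are as follows. First, write out the linearity identity above. This gives (i) $\iff$ ``$g(x)\ge g(y)+\langle\nabla g(y),x-y\rangle$ for all $x,y$''. Second, invoke \autoref{prop:cvx_equiv} (case $\alpha=0$) for $g$: the tangent-line inequality is equivalent to convexity of $g$, which is item (ii). It is also equivalent to monotonicity of $\nabla g$, i.e. $\langle \nabla g(y)-\nabla g(x),y-x\rangle\ge 0$. Expanding $\nabla g = \nabla f - \alpha\nabla\phi$ gives item (iii). Third, if $f$ is $C^2$, the $C^2$ part of \autoref{prop:cvx_equiv} gives convexity of $g$ iff $\nabla^2 g\succeq 0$, i.e. $\nabla^2 f\succeq\alpha\nabla^2\phi$. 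This uses that $\phi$ is also $C^2$ there; if not, I would phrase the Hessian condition in the appropriate weak sense.

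The main obstacle is that \autoref{prop:cvx_equiv} was proved for functions on all of $\R^d$, whereas here $g$ lives on the open convex set $\interior\dom f$ (where $\phi$ is differentiable, since $\interior\dom f\subseteq\eu C_\phi$). I would handle this by noting that each step of that proof only used the fundamental theorem of calculus along segments $[x,y]$ and one-sided limits $t\searrow 0$. Both are available when $x,y$ lie in an open convex set, because the whole segment stays in the set. So the same argument goes through verbatim on $\interior\dom f$. A secondary technical point is that $g$ may be ill-defined where $f$ or $\phi$ is infinite. I would therefore state convexity of $f-\alpha\phi$ as convexity on $\interior\dom f$, which is the meaningful reading, and leave boundary behaviour to the reference~\cite{LuFreNes18RelSmooth}.
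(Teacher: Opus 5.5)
Your proposal is correct and is essentially the argument the paper intends: the paper gives no separate proof, pointing instead to the equivalences of \autoref{prop:cvx_equiv} and to \cite[Proposition 1.1]{LuFreNes18RelSmooth}, where the result follows exactly as you do, by applying the standard convexity characterizations to $f-\alpha\phi$ via the identity $D_{f-\alpha\phi} = D_f - \alpha D_\phi$. Your remark that the proof of \autoref{prop:cvx_equiv} only uses segments inside an open convex set is the right way to transplant it to $\interior\dom f$. The one extra piece of fine print is that \autoref{prop:cvx_equiv} assumes a continuous gradient. This is harmless here: under the monotonicity condition, $s\mapsto\langle \nabla (f-\alpha\phi)(x+s v), v\rangle$ is nondecreasing, hence integrable, so the fundamental theorem of calculus step still goes through.
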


\begin{prop}[relative smoothness]
    For any $\beta \ge 0$, the following are equivalent.
    \begin{itemize}
        \item $f$ is $\beta$-smooth relative to $\phi$.
        \item $\beta\phi - f$ is convex.
        \item $\langle \nabla f(y) - \nabla f(x), y-x\rangle \le \beta\,\langle \nabla \phi(y)-\nabla \phi(x), y-x\rangle$ for all $x,y\in\interior \dom f$.
    \end{itemize}
    If $f$ is twice continuously differentiable on $\interior \dom f$, the above are also equivalent to:
    \begin{itemize}
        \item $\nabla^2 f \preceq \beta\, \nabla^2 \phi$ on $\interior \dom f$.
    \end{itemize}
\end{prop}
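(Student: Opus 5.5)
The plan is to reduce everything to the corresponding Euclidean-style equivalences for ordinary convexity by working with the auxiliary function $g \deq \beta\phi - f$ on $\interior \dom f$. The key algebraic observation is that Bregman divergences are linear in the generating function. Thus, wherever both are defined,
\begin{align*}
    D_g(x,y) = \beta\,D_\phi(x,y) - D_f(x,y)\,.
\end{align*}
Hence $D_f \le \beta D_\phi$ is literally the statement $D_g \ge 0$. By definition, $D_g(x,y) \ge 0$ reads $g(x) \ge g(y) + \langle \nabla g(y), x-y\rangle$, which is the first-order characterization~\eqref{eq:str_cvx_1} of convexity with $\alpha = 0$. By~\autoref{prop:cvx_equiv}, applied to $g$ on the open convex set $\interior\dom f$, this is equivalent to convexity of $g$. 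That settles the equivalence of the first two items.

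Next, for the monotonicity formulation, I would again invoke~\autoref{prop:cvx_equiv}, now in the form~\eqref{eq:str_cvx_2} with $\alpha = 0$. It says that $g$ is convex if and only if $\langle \nabla g(y) - \nabla g(x), y-x\rangle \ge 0$ for all $x,y$. Expanding $\nabla g = \beta\nabla\phi - \nabla f$ gives exactly the third item. In the twice-differentiable case, \eqref{eq:str_cvx_3} with $\alpha=0$ says $g$ is convex if and only if $\nabla^2 g \succeq 0$, i.e.\ $\nabla^2 f \preceq \beta\nabla^2\phi$. This needs $\phi$ to be twice differentiable too, which I would assume implicitly, as the statement does.

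The main obstacle is that~\autoref{prop:cvx_equiv} was stated only for functions on all of $\R^d$, whereas here $g$ lives on the open convex set $\interior\dom f \subseteq \eu C_\phi$. I would handle this by noting that every step of that proof only uses points on the segment between $x$ and $y$ (the fundamental theorem of calculus along $x + s(y-x)$, and limits $t \searrow 0$ along the segment), so it transfers verbatim to any open convex domain. One also needs $\phi$ differentiable on $\interior\dom f$, which holds since $\phi$ is of Legendre type and $\interior\dom f \subseteq \eu C_\phi$. With this remark, the proof is a direct transcription, and the relative convexity proposition follows identically using $f - \alpha\phi$.
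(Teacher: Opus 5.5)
Your proof is correct. It is the argument the paper intends: the paper gives no proof, citing Lu--Freund--Nesterov and saying the equivalences hold ``similarly to'' \autoref{prop:cvx_equiv}. Your identity $D_{\beta\phi - f} = \beta D_\phi - D_f$, combined with \autoref{prop:cvx_equiv} at $\alpha = 0$ applied to $\beta\phi - f$ on the open convex set $\interior\dom f$, is exactly that analogy made explicit, and your observation that the proof of \autoref{prop:cvx_equiv} only uses segments correctly handles the domain issue.
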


For the case of $\phi = \frac{1}{2}\,\norm \cdot^2$, we recover the usual notions of convexity and smoothness described in \S\ref{ssec:prelim}.
These relative definitions satisfy similar properties as convexity/smoothness, e.g., if $f_1$, $f_2$ are respectively $\alpha_1$- and $\alpha_2$-convex relative to $\phi$ and $\lambda_1,\lambda_2 > 0$, then $\lambda_1 f_1+ \lambda_2 f_2$ is $(\lambda_1 \alpha_1+ \lambda_2\alpha_2)$-convex relative to $\phi$.
Also, we have a growth bound.

\begin{lem}[relative growth]\label{lem:bregman_growth}
    Suppose that $f$ is $\alpha$-convex relative to $\phi$ for some $\alpha > 0$, and that $f$ is minimized at an interior point $x_\star$ of its domain.
    Then, for all $x\in\R^d$,
    \begin{align*}
        f(x) - f_\star
        &\ge \alpha\,D_\phi(x,x_\star)\,.
    \end{align*}
\end{lem}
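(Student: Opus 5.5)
The plan is to use the definition $D_f \ge \alpha D_\phi$ at the base point $y = x_\star$, together with the first-order optimality condition at $x_\star$. Since $x_\star$ is an interior point of $\dom f$, and $f$ is differentiable on $\interior \dom f$ by the standing assumption in the definition of relative convexity, the necessary condition of \autoref{lem:necc_conditions} gives $\nabla f(x_\star) = 0$. Its proof is local, so it applies in a neighborhood of the interior point $x_\star$. Therefore
\begin{align*}
    D_f(x, x_\star) = f(x) - f(x_\star) - \langle \nabla f(x_\star), x - x_\star\rangle = f(x) - f_\star\,.
\end{align*}

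Next I will check that $D_\phi(x,x_\star)$ makes sense, i.e.\ that $x_\star \in \eu C_\phi$. This holds because $\interior \dom f \subseteq \eu C_\phi$ is part of the definition. Relative convexity then gives directly
\begin{align*}
    f(x) - f_\star = D_f(x,x_\star) \ge \alpha\,D_\phi(x,x_\star)\,,
\end{align*}
for all $x$ at which $D_f \ge \alpha D_\phi$ is asserted.

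The only delicate point is the range of $x$. The inequality $D_f \ge \alpha D_\phi$ is naturally stated for $x \in \dom f$. For $x \notin \dom f$, the left-hand side is $+\infty$ and the claim is trivial. If the definition is only available for $x \in \interior \dom f$, I would instead use the equivalent formulation that $f - \alpha\phi$ is convex. Since $g \deq f - \alpha \phi$ is convex with $\nabla g(x_\star) = -\alpha \nabla\phi(x_\star)$ at the interior point $x_\star$, the first-order convexity inequality gives
\begin{align*}
    g(x) \ge g(x_\star) - \alpha\,\langle \nabla\phi(x_\star), x - x_\star\rangle
\end{align*}
for all $x$. Rearranging this yields exactly the claim, including boundary points of the domain. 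I expect this extension to the boundary to be the main obstacle. Using the convexity of $f - \alpha\phi$ through its subgradient at $x_\star$, as guaranteed by \autoref{thm:subdiff}, handles it cleanly.
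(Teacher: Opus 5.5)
Your proposal is correct and is essentially the paper's argument: the paper's one-line proof also notes that $\nabla f(x_\star)=0$ at the interior minimizer, so $f(x)-f_\star = D_f(x,x_\star)$, and then applies $D_f \ge \alpha D_\phi$. Your separate treatment of boundary points is not needed. In the definition only the base point $y$ of $D_f(x,y)$ must lie in $\interior\dom f$, while $x$ ranges over all of $\R^d$, so the defining inequality already covers every $x$.
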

\begin{proof}
    The left-hand side is $D_f(x,x_\star)$.
\end{proof}

Other useful properties of Bregman divergences are explored in~\autoref{qu:bregman_properties}.

\subsection{Algorithms and convergence analysis}

Briefly, let us first consider the continuous-time picture.
Since we add the gradient of $f$ in the dual space, the dynamics we consider evolve according to
\begin{align}\label{eq:mirror_flow_dual}
    \partial_t \nabla \phi(x_t) = -\nabla f(x_t)\,.
\end{align}
By the chain rule, this is equivalent to the following evolution in the primal space:
\begin{align}\label{eq:mirror_flow}
    \dot x_t
    = -{[\nabla^2 \phi(x_t)]}^{-1}\,\nabla f(x_t)\,.
\end{align}
This can be interpreted as a preconditioned gradient flow.

Despite the fact that~\eqref{eq:mirror_flow_dual} and~\eqref{eq:mirror_flow} are equivalent in continuous time, they lead to different discretizations.
The discretization of~\eqref{eq:mirror_flow} is usually called natural gradient descent and it is related to the subject of information geometry~\cite{AmaNag00InfoGeo}.
In fact, one can view the use of the mirror map $\phi$ as equipping the space $\eu C_\phi$ with a Riemannian metric, namely, a local inner product $\langle u, v \rangle_x \deq \langle u,\nabla^2 \phi(x)\,v\rangle$.
This turns $\eu C_\phi$ into a so-called Hessian manifold.
From this perspective, the natural objects are geometric in nature: geodesics, length, curvature, etc.

However, this is \textbf{not} what we consider; mirror descent is obtained from discretization of~\eqref{eq:mirror_flow_dual} in the dual.
This conceptual point is so important that we isolate it into a remark.

\begin{rmk}
    The key distinguishing feature of mirror methods from preconditioned or Riemannian gradient methods is the existence of the \emph{global} progress measure given by the Bregman divergence $D_\phi$.
    In contrast, preconditioned/Riemannian gradient methods are purely \emph{local} in nature.
\end{rmk}

Now that we have emphasized the conceptual underpinnings of the methods, let us now turn to concrete algorithms.
We begin with the smooth case, and we consider the following \emph{mirror proximal gradient descent} method:
\begin{align}\label{eq:MPGD}\tag{$\msf{MPGD}$}
    x_{n+1}
    &\deq \argmin_{x\in\R^d}{\bigl\{ f(x_n) + \langle \nabla f(x_n), x-x_n \rangle + g(x) + \frac{1}{h} \, D_\phi(x, x_n) \bigr\}}\,.
\end{align}
Note that this incorporates the proximal splitting considered in \S\ref{sec:prox}, except that we replace $\frac{1}{2}\,\norm{x-x_n}^2$ with the more general $D_\phi(x,x_n)$.
We consider this iteration for the sake of generality, since it encompasses the following algorithms.
\begin{itemize}
    \item When $g = 0$, since $\nabla_1 D_\phi(x,x_n) = \nabla \phi(x) - \nabla \phi(x_n)$, the first-order optimality condition reads
        \begin{align*}
            \nabla \phi(x_{n+1})
            &= \nabla \phi(x_n) - h\,\nabla f(x_n)\,.
        \end{align*}
        This is known as \emph{mirror descent}.
    \item When $f = 0$, we obtain
        \begin{align*}
            x_{n+1}
            = \argmin_{x\in\R^d}{\bigl\{g(x) + \frac{1}{h}\,D_\phi(x,x_n)\bigr\}} \eqqcolon \prox_{hg}^\phi(x_n)\,,
        \end{align*}
        which is the \emph{mirror proximal point method}.
    \item When $g=\chi_{\eu C}$, where $\eu C \subseteq \eu C_\phi$ is a closed convex set,
        \begin{align*}
            x_{n+1}
            &= \argmin_{x\in\eu C}{\bigl\{\langle \nabla f(x_n), x-x_n\rangle + \frac{1}{h}\,\bigl(\phi(x) - \langle \nabla \phi(x_n), x-x_n \rangle\bigr)\bigr\}} \\
            &= \argmin_{x\in\eu C}{\{\phi(x) - \langle \nabla \phi(x_n) - h\,\nabla f(x_n),\, x-x_n \rangle\}} \\
            &= \Pi_{\eu C}^\phi\bigl( \nabla \phi^*(\nabla \phi(x_n) - h\,\nabla f(x_n))\bigr)\,,
        \end{align*}
        where $\Pi_{\eu C}^\phi$ is the Bregman projection (see~\autoref{qu:bregman_properties}).
        This is the mirror analogue of projected gradient descent.
\end{itemize}

\begin{thm}[convergence of~\ref{eq:MPGD}]\label{thm:MPGD}
    Let $f$ be $\alpha_f$-convex and $\beta_f$-smooth, and let $g$ be $\alpha_g$-convex, all relative to $\phi$.
    Let the step size $h$ satisfy $h \le 1/\beta_f$, let $x^+$ denote the next iterate of~\ref{eq:MPGD} started from $x$, and let $y\in\R^d$.
    Then,
    \begin{align*}
        (1+\alpha_g h)\,D_\phi(y, x^+)
        &\le (1-\alpha_f h)\,D_\phi(y,x) - h\,(F(x^+) - F(y))\,.
    \end{align*}
    In particular, if we set $y=x_\star$ and iterate, it yields
    \begin{align*}
        F(x_N) - F_\star
        &\le \frac{\alpha_f + \alpha_g}{\lambda_h^{-N}-1}\,D_\phi(x_\star,x_0)\,,
    \end{align*}
    where $\lambda_h \deq (1-\alpha_f h)/(1+\alpha_g h)$.
\end{thm}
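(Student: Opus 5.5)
We mirror the proof of~\autoref{thm:PGD}, replacing the squared Euclidean distance by the Bregman divergence. Let $\psi_x(z) \deq f(x) + \langle \nabla f(x), z-x\rangle + g(z) + \frac{1}{h}\,D_\phi(z,x)$ denote the objective defining $x^+$. The first step is a Bregman analogue of the quadratic growth inequality for $\psi_x$. Since $D_\phi(\cdot,x)$ differs from $\phi$ by an affine function, it is $1$-convex relative to $\phi$. The linear part contributes nothing, and $g$ is $\alpha_g$-convex relative to $\phi$. Hence $\psi_x$ is $(\alpha_g + 1/h)$-convex relative to $\phi$.

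Applying~\autoref{lem:bregman_growth}, extended to a minimizer characterized by $0 \in \partial\psi_x(x^+)$, gives
\begin{align*}
    \psi_x(y) \ge \psi_x(x^+) + \bigl(\alpha_g + \tfrac{1}{h}\bigr)\,D_\phi(y,x^+)\,.
\end{align*}
To justify this when $g$ is non-smooth, I would write $\psi_x = (\alpha_g + 1/h)\,\phi + r$ with $r$ convex. I would take the subgradient of $r$ at $x^+$ equal to $-(\alpha_g+1/h)\,\nabla\phi(x^+)$ from the optimality condition, which requires $x^+ \in \eu C_\phi$. The subgradient inequality for $r$ then yields exactly the display above. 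I expect this to be the main obstacle: the paper only proves~\autoref{lem:bregman_growth} for differentiable $f$ with $D_f \ge \alpha D_\phi$. So I must argue via the subgradient of $g$ and the Legendre property of $\phi$, which keeps $x^+$ in the interior so that $\nabla\phi(x^+)$ exists.

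The next two steps parallel the Euclidean proof. For the upper bound on $\psi_x(y)$, relative $\alpha_f$-convexity ($D_f \ge \alpha_f D_\phi$) gives $f(x) + \langle \nabla f(x), y-x\rangle \le f(y) - \alpha_f\,D_\phi(y,x)$. Therefore $\psi_x(y) \le F(y) + (1/h - \alpha_f)\,D_\phi(y,x)$. For the lower bound on $\psi_x(x^+)$, relative $\beta_f$-smoothness ($D_f \le \beta_f D_\phi$) gives $f(x) + \langle\nabla f(x), x^+-x\rangle \ge f(x^+) - \beta_f\,D_\phi(x^+,x)$. Therefore $\psi_x(x^+) \ge F(x^+) + (1/h - \beta_f)\,D_\phi(x^+,x) \ge F(x^+)$ when $h \le 1/\beta_f$.

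Combining these three inequalities and multiplying by $h$ yields the one-step bound. The factor is $h$ rather than $2h$, since $D_\phi$ replaces $\frac12\norm\cdot^2$. Taking $y = x$ shows that $F(x_n)$ is non-increasing.

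Finally, set $y = x_\star$ and $u_n \deq D_\phi(x_\star,x_n)$. This gives $u_{n+1} \le \lambda_h u_n - \frac{h}{1+\alpha_g h}\,(F(x_{n+1}) - F_\star)$. Apply~\autoref{lem:discrete_gronwall} with $A = \lambda_h$ and use $u_N \ge 0$ together with monotonicity to lower bound each $F(x_n) - F_\star$ by $F(x_N) - F_\star$. It remains to sum the geometric series $\frac{h}{1+\alpha_g h}\sum_{n=1}^N \lambda_h^{-n}$. Using $1 - \lambda_h = (\alpha_f+\alpha_g)h/(1+\alpha_g h)$, this sum equals $(\lambda_h^{-N}-1)/(\alpha_f+\alpha_g)$, which gives the stated rate exactly as in~\autoref{thm:PGD}.
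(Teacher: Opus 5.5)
Your proposal is correct and follows the paper's proof essentially step for step. It uses the relative growth inequality for $\psi_x$ with constant $\alpha_g + 1/h$, the upper bound on $\psi_x(y)$ from relative $\alpha_f$-convexity, the lower bound on $\psi_x(x^+)$ from relative $\beta_f$-smoothness, and then the descent property ($y = x$) together with the discrete Gr\"onwall lemma. Your extra subgradient argument for the growth step handles non-smooth $g$ and needs $x^+ \in \eu C_\phi$; the paper skips this by simply citing \autoref{lem:bregman_growth}, and your argument is a sound refinement of that step.
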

\begin{proof}
    The proof is patterned upon the proof of~\autoref{thm:PGD}.
    Let $\psi_x$ denote the objective in~\eqref{eq:MPGD} starting from $x$ (rather than $x_n$).
    Then, $\psi_x$ is $(\alpha_g+1/h)$-convex relative to $\phi$ with minimizer $x^+$, so by the growth inequality (\autoref{lem:bregman_growth}),
    \begin{align*}
        \psi_x(y)
        &\ge \psi_x(x^+) + \bigl(\alpha_g + \frac{1}{h}\bigr)\,D_\phi(y,x^+)\,.
    \end{align*}
    On one hand, by $\alpha_f$-convexity,
    \begin{align*}
        \psi_x(y)
        &= f(x) + \langle \nabla f(x), y-x\rangle + g(y) + \frac{1}{h}\,D_\phi(y,x) \\
        &= f(y) - D_f(y,x) + g(y) + \frac{1}{h}\,D_\phi(y,x)
        \le F(y) + \bigl( \frac{1}{h} - \alpha_f\bigr)\,D_\phi(y,x)\,.
    \end{align*}
    On the other hand, by $\beta_f$-smoothness,
    \begin{align*}
        \psi_x(x^+)
        &= f(x) + \langle \nabla f(x), x^+ - x \rangle + g(x^+) + \frac{1}{h}\,D_\phi(x^+, x) \\
        &= f(x^+) - D_f(x^+, x) + g(x^+) + \frac{1}{h}\,D_\phi(x^+, x)
        \ge F(x^+) + \bigl(\frac{1}{h} - \beta_f\bigr)\,D_\phi(x^+, x)\,.
    \end{align*}
    Drop the term $(1/h-\beta_f)\,D_\phi(x^+,x)$ and combine the inequalities to prove the one-step bound.
    If we set $y=x$ in the one-step bound, it yields the descent lemma $F(x^+) - F(x) \le -h^{-1}\,(1+\alpha_g h)\,D_\phi(x,x^+) \le 0$, so we can iterate the one-step bound using the discrete Gr\"onwall lemma (\autoref{lem:discrete_gronwall}).
\end{proof}

Although this result is the analogue of the smooth convergence rate for~\ref{eq:GD} (\autoref{thm:gd_fn_value}), since $\nabla\phi$ necessarily blows up at the boundary $\partial \eu C_\phi$, so can $\nabla f$.
Therefore, this theorem actually covers examples in which $f$ is not at all smooth in the usual sense.

To relate this back to~\autoref{ex:different_norm}, consider convexity/smoothness relative to a norm.

\begin{defn}
    A function $f$ is \textbf{$\alpha$-convex} (resp.\ \textbf{$\beta$-smooth}) \textbf{relative to a norm $\opnorm \cdot$} if for all $x,y\in\interior \dom f$,
    \begin{align*}
        D_f(x,y)
        &\ge \frac{\alpha}{2}\,\opnorm{y-x}^2 \qquad \bigl(\text{resp.}~D_f \le \frac{\beta}{2}\,\opnorm{y-x}^2\bigr)\,.
    \end{align*}
\end{defn}

Suppose that $\phi$ is strongly convex relative to a norm $\opnorm \cdot$.
Then, to check that $f$ is smooth relative to $\phi$, it suffices to check that $f$ is smooth relative to $\opnorm \cdot$, so the norm can act as a useful intermediary.
Moreover, whereas the Bregman structure is crucial for carrying out the iterative analysis of~\ref{eq:MPGD}, the norm structure is often convenient too, e.g., for the use of tools such as Cauchy{--}Schwarz.

To illustrate this, we now consider the non-smooth case.
Here, we assume that $f$ is Lipschitz with respect to $\opnorm \cdot$:
\begin{align*}
    \abs{f(x) - f(y)} \le L\,\opnorm{x-y} \qquad\text{for all}~x,y\in\eu C_\phi\,.
\end{align*}
We again consider~\ref{eq:MPGD}, except that $\nabla f(x_n)$ should be interpreted as a subgradient; we leave the notation unchanged because it should not cause confusion.
The Lipschitz condition is then equivalent to the subgradient bound
\begin{align*}
    \opnorm{\nabla f(x)}_* \le L \qquad\text{for all}~x\in\eu C_\phi\,.
\end{align*}

\begin{thm}[convergence of~\ref{eq:MPGD}, non-smooth case]\label{thm:MPGD_non_smooth}
    Let $f$ and $g$ be convex, and let $f$ be $L$-Lipschitz with respect to a norm $\opnorm \cdot$.
    Let $\phi$ be $\alpha_\phi$-convex relative to $\opnorm \cdot$.
    Then, for~\ref{eq:MPGD}, it holds that
    \begin{align*}
        F\bigl( \frac{1}{N} \sum_{n=1}^N x_n\bigr) - F_\star
        &\le \frac{1}{N} \sum_{n=1}^N (F(x_n) - F_\star)
        \le \frac{D_\phi(x_\star, x_0)}{Nh} + \frac{2L^2 h}{\alpha_\phi}\,.
    \end{align*}
    In particular, if $R_\phi^2 \ge D_\phi(x_\star,x_0)$ and we choose step size $h^2 = \alpha_\phi R_\phi^2/(2L^2 N)$, then
    \begin{align*}
        F\bigl( \frac{1}{N} \sum_{n=1}^N x_n\bigr) - F_\star
        &\le LR_\phi \sqrt{\frac{8}{\alpha_\phi N}}\,.
    \end{align*}
\end{thm}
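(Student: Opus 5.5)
The plan is to reuse the one-step argument from the proof of~\autoref{thm:MPGD}, this time with $\beta_f$ unavailable. Let $\psi_x$ be the objective in~\eqref{eq:MPGD} started at $x$, where $\nabla f(x)$ now denotes a subgradient. Since $g$ is convex, $\psi_x$ is $1/h$-convex relative to $\phi$, and its minimizer is $x^+$. By~\autoref{lem:bregman_growth}, for every $y$,
\begin{align*}
    \psi_x(y) \ge \psi_x(x^+) + \frac{1}{h}\,D_\phi(y,x^+)\,.
\end{align*}
For the upper bound I use only the subgradient inequality, $f(x) + \langle \nabla f(x), y-x\rangle \le f(y)$, which gives $\psi_x(y) \le F(y) + \frac{1}{h}\,D_\phi(y,x)$.

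The new work is the lower bound on $\psi_x(x^+)$, which I will obtain from Lipschitzness in place of smoothness. Write
\begin{align*}
    \psi_x(x^+) = f(x) + \langle \nabla f(x), x^+-x\rangle + g(x^+) + \frac{1}{h}\,D_\phi(x^+,x)\,.
\end{align*}
By $L$-Lipschitzness, $f(x) \ge f(x^+) - L\,\opnorm{x^+-x}$. By duality of norms together with the subgradient bound $\opnorm{\nabla f(x)}_* \le L$, we get $\langle \nabla f(x), x^+-x\rangle \ge -L\,\opnorm{x^+-x}$. Relative strong convexity of $\phi$ gives $D_\phi(x^+,x) \ge \frac{\alpha_\phi}{2}\,\opnorm{x^+-x}^2$. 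Combining these,
\begin{align*}
    \psi_x(x^+) \ge F(x^+) - 2L\,\opnorm{x^+-x} + \frac{\alpha_\phi}{2h}\,\opnorm{x^+-x}^2 \ge F(x^+) - \frac{2L^2 h}{\alpha_\phi}\,,
\end{align*}
where the last step minimizes the quadratic in $\opnorm{x^+-x}$ (the minimum of $-2Lr+\frac{\alpha_\phi}{2h}r^2$ is $-2L^2h/\alpha_\phi$). This is exactly the constant appearing in the statement.

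Chaining the three inequalities with $y=x_\star$ gives the one-step bound
\begin{align*}
    F(x^+) - F_\star \le \frac{1}{h}\,\bigl(D_\phi(x_\star,x) - D_\phi(x_\star,x^+)\bigr) + \frac{2L^2 h}{\alpha_\phi}\,.
\end{align*}
Applying it with $x=x_n$ for $n=0,\dotsc,N-1$ and summing, the Bregman terms telescope. Dropping $D_\phi(x_\star,x_N)\ge 0$ and dividing by $N$ yields the bound on $\frac1N\sum_{n=1}^N (F(x_n)-F_\star)$. The first inequality in the statement is Jensen's inequality for the convex function $F$. Finally, substituting $h^2=\alpha_\phi R_\phi^2/(2L^2N)$ makes the two terms equal, each being $LR_\phi\sqrt{2/(\alpha_\phi N)}$, so their sum is $LR_\phi\sqrt{8/(\alpha_\phi N)}$.

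The main obstacle is the first-order term. Smoothness cannot absorb it, so the Lipschitz bound must be paired with the norm-strong-convexity of $\phi$ to absorb it. One technical point needs care: since $f$ may be nonsmooth, \autoref{lem:bregman_growth} is applied to $\psi_x$ through its first-order optimality condition (the linear part plus $g$ plus $\frac1h D_\phi(\cdot,x)$), so we need the minimizer $x^+$ to lie in the interior of the domain of $\phi$. This holds because $\phi$ is of Legendre type.
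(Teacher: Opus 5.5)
Your proposal is correct and follows the paper's proof essentially step for step. The shared steps are the growth inequality for $\psi_x$, the bound $f(x^+) - f(x) - \langle \nabla f(x), x^+ - x\rangle \le 2L\,\opnorm{x^+-x}$ (the paper phrases this as a bound on $D_f(x^+,x)$), absorbing that term with the $\alpha_\phi$-strong convexity of $\phi$ to get the error $2L^2h/\alpha_\phi$, and telescoping. Your remark that $x^+$ must lie in the interior of the domain of $\phi$ to apply the growth lemma to the nonsmooth $\psi_x$ is a technical point the paper passes over silently, and it does not change the argument.
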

\begin{proof}
    Following the proof of~\autoref{thm:MPGD}, we still have
    \begin{align*}
        \psi_x(x^+) + \frac{1}{h}\,D_\phi(x_\star,x^+)
        &\le \psi_x(x_\star)
        \le F(x_\star) + \frac{1}{h}\,D_\phi(x_\star,x)\,.
    \end{align*}
    In the lower bound for $\psi_x(x^+)$, we originally used smoothness to upper bound $D_f(x^+,x)$, which is no longer available to us.
    Instead, by Cauchy{--}Schwarz,
    \begin{align*}
        D_f(x^+,x)
        = f(x^+) - f(x) - \langle \nabla f(x), x^+ - x \rangle
        &\le L\,\opnorm{x^+ - x} + \opnorm{\nabla f(x)}_*\,\opnorm{x^+ - x} \\
        &\le 2L\,\opnorm{x^+ - x}\,.
    \end{align*}
    Thus,
    \begin{align*}
        \psi_x(x^+)
        = F(x^+) - D_f(x^+, x) + \frac{1}{h}\,D_\phi(x^+, x)
        &\ge F(x^+) - 2L\,\opnorm{x^+ - x} + \frac{\alpha_\phi}{2h} \,\opnorm{x^+ - x}^2 \\
        &\ge F(x^+) - \frac{2L^2 h}{\alpha_\phi}\,.
    \end{align*}
    This leads to the one-step bound
    \begin{align*}
        D_\phi(x_\star,x^+)
        &\le D_\phi(x_\star,x) - h\,(F(x^+) - F_\star) + \frac{2L^2 h^2}{\alpha_\phi}\,.
    \end{align*}
    Iterating this inequality finishes the proof.
\end{proof}

\begin{ex}[optimization over the simplex]\label{ex:opt_simplex}
    We return to~\autoref{ex:different_norm}, and we use the entropic mirror map $\phi(x) = \sum_{i=1}^d \{x[i] \log x[i] - x[i]\}$.
    Then, $\phi$ is $1$-convex relative to the $\ell_1$-norm $\norm \cdot_1$ over the probability simplex $\Delta_d$; this is known as Pinsker's inequality (\autoref{qu:pinsker}).

    To minimize $f : \R^d\to\R\cup\{\infty\}$ over $\Delta_d$, we apply~\ref{eq:MPGD} with $g = \chi_{\Delta_d}$.
    Then,
    \begin{align*}
        \nabla \phi^*(\nabla \phi(x_n) - h\,\nabla f(x_n))
        = x_n \odot \exp(-h\,\nabla f(x_n))\,,
    \end{align*}
    where $\exp$ is applied pointwise and $\odot$ is the Hadamard (or pointwise) product.
    Also, one can check that $\Pi^\phi_{\Delta_d}(x) = x/\norm x_1$ simply normalizes the vector (\autoref{qu:bregman_proj_simplex}).
    Hence, the algorithm reads
    \begin{align*}
        x_{n+1}
        &= \frac{x_n \odot \exp(-h\,\nabla f(x_n))}{\norm{x_n \odot \exp(-h\,\nabla f(x_n))}_1}\,.
    \end{align*}

    Consider initializing at the uniform distribution $x_0 = \mb 1_d/d$.
    Then, for any $x_\star \in \Delta_d$,
    \begin{align*}
        D_\phi(x_\star,x_0)
        &= \KL(x_\star \mmid x_0)
        = \log d - \sum_{i=1}^d x_\star[i] \log \frac{1}{x_\star[i]}
        \le \log d\,,
    \end{align*}
    by Jensen's inequality.
    Consequently, we can take $R_\phi = \sqrt{\log d}$, and
    \begin{align*}
        f\bigl( \frac{1}{N} \sum_{n=1}^N x_n\bigr) - f_\star
        &\le L_1\sqrt{\frac{8\log d}{N}}\,,
    \end{align*}
    where $L_1$ is the Lipschitz constant of $f$ in the $\ell_1$ norm.
    This estimate is far better than the one described in~\autoref{ex:different_norm} for the Euclidean norm; we only pay an overhead which is logarithmic in the dimension.
\end{ex}

\subsection{Online algorithms and multiplicative weights}

Let us examine the proof of~\autoref{thm:MPGD_non_smooth} once more.
In that proof, we start with
\begin{align*}
    \psi_x(x^+) + \frac{1}{h}\,D_\phi(y,x^+)
    \le \psi_x(y)\,,
\end{align*}
which holds for all $y \in\R^d$.
If we expand out the terms, this is equivalent to
\begin{align*}
    &\langle \nabla f(x), x^+ - x\rangle + g(x^+) + \frac{1}{h}\,D_\phi(x^+,x) + \frac{1}{h}\,D_\phi(y,x^+) \\
    &\qquad \le \langle \nabla f(x), y-x \rangle + g(y) + \frac{1}{h}\,D_\phi(y,x)\,.
\end{align*}
On the left-hand side, if we apply Lipschitzness,
\begin{align*}
    \langle \nabla f(x), x^+ - x\rangle + \frac{1}{h}\,D_\phi(x^+,x)
    &\ge -\opnorm{\nabla f(x)}_*\, \opnorm{x^+ - x} + \frac{\alpha_\phi}{2h}\,\opnorm{x^+ - x}^2
    \ge - \frac{L^2 h}{2\alpha_\phi}\,.
\end{align*}
If we now specialize to the case $g = \chi_{\eu C}$, then for any $y\in\eu C$,
\begin{align*}
    \langle \nabla f(x), x-y\rangle
    &\le \frac{1}{h}\,\bigl(D_\phi(y,x) - D_\phi(y,x^+)\bigr) + \frac{L^2 h}{2\alpha_\phi}\,.
\end{align*}

Normally, we apply convexity to further lower bound the left-hand side, but let us now refrain from doing so.
We make the observation that in the derivation thus far, \emph{we have not used any property of $f$}; in fact, the same inequality holds even if $\nabla f(x)$ is replaced by an \emph{arbitrary} vector $p\in\R^d$ with bounded dual norm, provided that we redefine the update in~\ref{eq:MPGD} accordingly.

We now define the online version of mirror descent.
Let $\eu C \subseteq \eu C_\phi$ be a closed convex set, and let ${\{p_n\}}_{n\in\N}$ be an arbitrary sequence of vectors.
Define the updates
\begin{align}\label{eq:OMD}\tag{$\msf{OMD}$}
    x_{n+1}
    &\deq \argmin_{x\in \eu C}{\bigl\{\langle p_n, x-x_n\rangle + \frac{1}{h}\,D_\phi(x,x_n)\bigr\}}
    = \Pi_{\eu C}^\phi\bigl(\nabla \phi^*(\nabla \phi(x_n) - hp_n)\bigr)\,.
\end{align}
We immediately obtain the following theorem.

\begin{thm}[regret guarantee for~\ref{eq:OMD}]\label{thm:OMD}
    Let $\eu C \subseteq \eu C_\phi$ be a closed convex set, let $\phi$ be $\alpha_\phi$-convex relative to a norm $\opnorm \cdot$, and suppose that ${\{p_n\}}_{n=0}^{N-1}$ are bounded in dual norm by $L$, i.e., $\opnorm{p_n}_* \le L$ for all $n$.
    Then,~\ref{eq:OMD} satisfies
    \begin{align*}
        \sum_{n=0}^{T-1} \langle p_n, x_n\rangle
        &\le \inf_{y\in\eu C}{\Bigl\{\sum_{n=0}^{T-1} \langle p_n, y\rangle + \frac{D_\phi(y,x_0)}{h}\Bigr\}} + \frac{L^2 Th}{2\alpha_\phi}\,.
    \end{align*}
    In particular, if $R_\phi^2 \ge \sup_{y\in\eu C} D_\phi(y,x_0)$ and $h = R_\phi\sqrt{2\alpha_\phi}/(L\sqrt T)$, then
    \begin{align*}
        \sum_{n=0}^{T-1} \langle p_n, x_n\rangle
        &\le \inf_{y\in\eu C}\;\sum_{n=0}^{T-1} \langle p_n, y\rangle + LR_\phi\sqrt{2T/\alpha_\phi}\,.
    \end{align*}
\end{thm}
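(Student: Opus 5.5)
The plan is to sum the one-step inequality derived just before the statement. For each $n$, the update~\eqref{eq:OMD} is~\ref{eq:MPGD} with $g = \chi_{\eu C}$ and $\nabla f(x_n)$ replaced by $p_n$. The argument preceding the theorem then gives, for every $y\in\eu C$,
\begin{align*}
    \langle p_n, x_n - y\rangle
    &\le \frac{1}{h}\,\bigl(D_\phi(y,x_n) - D_\phi(y,x_{n+1})\bigr) + \frac{L^2 h}{2\alpha_\phi}\,.
\end{align*}
I would briefly re-justify this, since it is the only substantive step. First, $\psi_{x_n}$ (with $p_n$ in place of the gradient) is $(1/h)$-convex relative to $\phi$ on $\eu C$, with minimizer $x_{n+1}$. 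The growth inequality (\autoref{lem:bregman_growth}, or rather its constrained analogue, obtained from the first-order optimality condition over $\eu C$ as in \autoref{lem:proj_characterization}) gives $\psi_{x_n}(x_{n+1}) + h^{-1}D_\phi(y,x_{n+1}) \le \psi_{x_n}(y)$. Second, the term $\langle p_n, x_{n+1}-x_n\rangle + h^{-1}D_\phi(x_{n+1},x_n)$ is lower bounded using $\opnorm{p_n}_*\le L$, the $\alpha_\phi$-convexity of $\phi$ relative to $\opnorm\cdot$, and $ab \le \frac{\lambda}{2}a^2+\frac{1}{2\lambda}b^2$, which yields $-L^2h/(2\alpha_\phi)$. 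Here I must note that the derivation used no property of $f$, which is exactly why it applies to arbitrary $p_n$.

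Next, I sum this inequality over $n=0,\dots,T-1$. The Bregman terms telescope to $h^{-1}(D_\phi(y,x_0) - D_\phi(y,x_T)) \le h^{-1}D_\phi(y,x_0)$, since $D_\phi\ge 0$. This gives $\sum_n\langle p_n,x_n\rangle \le \sum_n\langle p_n,y\rangle + D_\phi(y,x_0)/h + L^2Th/(2\alpha_\phi)$ for every $y\in\eu C$, and taking the infimum over $y$ proves the first claim.

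For the second claim, I bound $D_\phi(y,x_0)\le R_\phi^2$ uniformly in $y\in\eu C$ and pull it out of the infimum. The remaining error is $R_\phi^2/h + L^2Th/(2\alpha_\phi)$. Substituting $h = R_\phi\sqrt{2\alpha_\phi}/(L\sqrt T)$ makes both terms equal to $LR_\phi\sqrt{T/(2\alpha_\phi)}$, and their sum is $LR_\phi\sqrt{2T/\alpha_\phi}$.

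The main obstacle is the first step. It needs the constrained growth/three-point inequality for the Bregman-regularized linear objective when the minimizer lies on the boundary of $\eu C$. I would handle this through the variational characterization $\langle p_n + h^{-1}(\nabla\phi(x_{n+1})-\nabla\phi(x_n)), y - x_{n+1}\rangle \ge 0$, combined with the three-point identity for $D_\phi$. This requires $x_{n+1}\in\eu C_\phi$, so that $\nabla\phi(x_{n+1})$ exists, and that holds because $\phi$ is of Legendre type.
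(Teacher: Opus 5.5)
Your proposal is correct and follows the paper's route. The paper likewise extracts the one-step bound $\langle p_n, x_n - y\rangle \le h^{-1}\bigl(D_\phi(y,x_n) - D_\phi(y,x_{n+1})\bigr) + L^2h/(2\alpha_\phi)$ from the proof of \autoref{thm:MPGD_non_smooth}, notes that no property of $f$ is used, and then telescopes; your step-size arithmetic also checks out. Your justification of the growth inequality at a possibly boundary minimizer (first-order optimality over $\eu C$ plus the three-point identity) fills in a point the paper glosses over by citing \autoref{lem:bregman_growth}, which as stated assumes an interior minimizer.
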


In the setting of online learning (with full information feedback), at each round $n$, the player must play an action $x_n$ belonging to some set $\eu C$ of actions.
An adversary then chooses a loss function $\ell_n$ belonging to some class of losses, the player incurs the loss $\ell_n(x_n)$, and the function $\ell_n(\cdot)$ is revealed to the player.
Thus, the total loss incurred by the player after $T$ rounds is $\sum_{n=0}^{T-1} \ell_n(x_n)$.
Since the losses are chosen in an adversarial fashion, one cannot hope to compete with a changing benchmark, so the measure of progress is to compare against the best \emph{fixed} point that one could have played in hindsight, which incurs loss $\inf_{y\in\eu C}\sum_{n=0}^{T-1} \ell_n(y)$.
The difference $\sum_{n=0}^{T-1} \ell_n(x_n) - \inf_{y\in\eu C}\sum_{n=0}^{T-1} \ell_n(y)$ is called the \emph{regret}, and the goal is to minimize it.
In particular, regret bounds that scale linearly with $T$ are often considered ``trivial'', whereas regret bounds that scale as $o(T)$ indicate that the algorithm has learned from its past mistakes.

With this context in mind,~\autoref{thm:OMD} is a regret guarantee for the~\ref{eq:OMD} algorithm for the \emph{linear bandit} problem in which the loss functions are linear, $\ell_n(\cdot) = \langle p_n, \cdot\rangle$, and the vectors belong to the dual norm ball $\{\opnorm \cdot_* \le L\}$.
This result is already interesting in the Euclidean case $\phi = \frac{1}{2}\,\norm \cdot^2$, but the simplex setting is of particular interest to its connection with a well-established algorithm.

\begin{ex}[learning from expert advice]\label{ex:experts}
    On each day $n$, an investor seeks to predict the price of a stock.
    There are $d$ so-called ``experts'' who give daily predictions.
    On the following day, the investor compares their predictions with reality and assigns them losses $\ell_n[1],\dotsc,\ell_n[d] \in [-1,1]$.
    (For example, we could set $\ell_n[i] = +1$ if expert $i$ incorrectly predicted the direction of change of the stock price on day $n$, and $\ell_n[i] = -1$ otherwise.)
    Not all of the experts are necessarily reliable, but some might be.
    Can we aggregate the expert forecasts and compete with the best of them in hindsight, i.e., incur small regret?

    The algorithm maintains a vector $x_n \in \Delta_d$ in the probability simplex.
    On each day $n$, the algorithm picks an expert $i_n \sim x_n$ and trusts the advice of the $i_n$-th expert.
    Note that the expected loss incurred by the algorithm is $\E_{i_n\sim x_n} \ell_n[i_n] = \langle \ell_n, x_n \rangle$, where $\ell_n\in\R^d$ is the vector of losses.
    (This is the online version of~\autoref{ex:different_norm}.)
    The regret is
    \begin{align*}
        \sum_{n=0}^{T-1} \langle \ell_n, x_n\rangle - \inf_{x\in\Delta_d} \sum_{n=0}^{T-1} \langle\ell_n, x\rangle
        &= \sum_{n=0}^{T-1} \langle \ell_n, x_n\rangle - \min_{i\in [d]} \sum_{n=0}^{T-1} \ell_n[i]\,.
    \end{align*}
    We update the vector $x_n$ using~\ref{eq:OMD} with $p_n = \ell_n$ and the entropic mirror map $\phi$.
    Note that $\opnorm{\ell_n}_* = \norm{\ell_n}_\infty \le 1$, and by~\autoref{ex:opt_simplex}, we can take $x_0 = \mb 1_d/d$ for which $R_\phi \le \sqrt{\log d}$.
    Therefore,~\autoref{thm:OMD} implies
    \begin{align*}
        \msf{Regret}_T(\text{\ref{eq:OMD}})
        &\le \sqrt{2T\log d}\,.
    \end{align*}
    The corresponding algorithm, with updates
    \begin{align*}
        x_{n+1}
        = \frac{x_n \odot \exp(-h\ell_n)}{\norm{x_n \odot \exp(-h\ell_n)}_1}\,,
    \end{align*}
    is known as the \emph{multiplicative weights} algorithm.
\end{ex}

\subsection*{Bibliographical notes}

The definitions and usage of relative convexity and smoothness are from~\cite{BauBolTeb17Descent, LuFreNes18RelSmooth}.
An interesting discussion of the various ways to discretize~\eqref{eq:mirror_flow_dual} and~\eqref{eq:mirror_flow} can be found in the paper~\cite{GunWooSre21Mirrorless}.
The example in~\autoref{qu:poisson_inverse} is taken from~\cite{BauBolTeb17Descent}.

The multiplicative weights algorithm is classical and has found numerous applications; see~\cite{AroHazKal12MultWeights} for a survey.

This section provides an introduction to online learning, although it should be noted that many of the interesting questions revolve around the more challenging setting of \emph{bandit feedback}, i.e., after each round $n$, the player only receives the value $\ell_n(x_n)$ of the incurred loss rather than the full loss function $\ell_n(\cdot)$.
Tackling this setting requires significant new ideas; see, e.g.,~\cite{BubCes12Bandits} for an exposition.

\subsection*{Exercises}

\begin{question}\mbox{}\label{qu:bregman_properties}
    \begin{enumerate}
        \item Prove that for all $x,x'\in\eu C_\phi$, $D_\phi(x,x') = D_{\phi^*}(\nabla \phi(x'), \nabla \phi(x))$.
        \item Let $\eu C \subseteq \eu C_\phi$ be a closed convex set and let $\Pi_{\eu C}^\phi : \eu C_\phi \to \eu C$ denote the Bregman projection operator:
            \begin{align*}
                \Pi_{\eu C}^\phi(x)
                \deq \argmin_{\eu C \cap \eu C_\phi} D_\phi(\cdot, x)\,.
            \end{align*}
            Show that $\langle \nabla \phi(\Pi_{\eu C}^\phi(x)) - \nabla \phi(x), \Pi_{\eu C}^\phi(x) - z \rangle \le 0$ for all $z\in \eu C$.
            Use this to justify the Pythagorean inequality
            \begin{align*}
                D_\phi(z, x)
                &\ge D_\phi(z, \Pi_{\eu C}^\phi(x)) + D_\phi(\Pi_{\eu C}^\phi(x), x)\,.
            \end{align*}
        \item Let $X$ be a random variable with $\E\abs{\phi(X)} < \infty$.
            For any $v\in\eu C_\phi$, establish the identity $\E D_\phi(X,v) - \E D_\phi(X,\E X) = D_\phi(\E X, v)$.
            From this, deduce that the Bregman barycenter coincides with the usual mean: $\argmin_{v\in\eu C_\phi} \E D_\phi(X, v) = \E X$.
    \end{enumerate}
\end{question}

\begin{question}\label{qu:strcvx_wrt_norm}
    Show that if $\phi$ is $\alpha$-convex relative to a norm $\opnorm \cdot$, then $\phi^*$ is $\alpha^{-1}$-smooth relative to the dual norm $\opnorm \cdot_*$.
\end{question}

\begin{question}\label{qu:pinsker}
    Prove that the entropic mirror map is $1$-convex relative to $\norm \cdot_1$ over the probability simplex $\Delta_d$.
\end{question}

\begin{question}\label{qu:bregman_proj_simplex}
    For the entropic mirror map $\phi$, prove that the Bregman projection $\Pi^\phi_{\Delta_d}$ onto the probability simplex simply normalizes the vector: $x\mapsto x/\norm x_1$.
\end{question}

\begin{question}\label{qu:mpgd_update}\mbox{}
    \begin{enumerate}
        \item More generally, show that~\ref{eq:MPGD} can be rewritten as the update
            \begin{align*}
                x_{n+1}
                &= \prox_{hg}^\phi\bigl(\nabla \phi^*(\nabla \phi(x_n) - h\,\nabla f(x_n))\bigr)\,.
            \end{align*}
        \item Consider the mirror map $\phi : x \mapsto -\sum_{i=1}^d \log x[i]$, defined over $\R_+^d$.
            Compute $\prox_{h\,\norm \cdot_1}^\phi$, the Bregman proximal operator for $\norm \cdot_1$.
    \end{enumerate}
\end{question}

\begin{question}\label{qu:mirror_holder}
    Let $\phi$ be a mirror map and assume that $F = f+g$, where $f$ is $\alpha_f$-convex and $g$ is $\alpha_g$-convex, $\alpha_f, \alpha_g \ge 0$.
    Instead of assuming that $f$ is relatively smooth, however, we instead assume that $D_f \le \beta_f D_\phi^{(1+s)/2}$ for some $0 \le s < 1$.

    Note that when $\phi = \norm \cdot^2/2$, this corresponds to
    \begin{align*}
        f(y) - f(x) - \langle \nabla f(x), y-x\rangle
        &\le \beta_f\,\bigl( \frac{\norm{y-x}}{2}\bigr){\bigsp}^{1+s}\,, \qquad \text{for all}~x,y\in\R^d\,.
    \end{align*}
    The case $s=0$ corresponds to Lipschitz $f$, whereas the case $s\nearrow 1$ corresponds to smooth $f$.
    The case $0 \le s < 1$ corresponds to partial smoothness (H\"older continuity of $\nabla f$).

    We consider the update
    \begin{align*}
        x^+
        &\deq \argmin_{y\in\R^d}{\bigl\{ f(x) + \langle \nabla f(x), y-x \rangle + g(y) + \frac{1}{h}\,D_\phi(y,x)\bigr\}}\,.
    \end{align*}
    \begin{enumerate}
        \item Prove that there is a constant $C_s > 0$ depending only on $s$ (which you do not need to specify) such that
            \begin{align*}
                &(1+\alpha_g h)\,D_\phi(y, x^+)
                \le (1-\alpha_f h)\,D_\phi(y,x) - h\,(F(x^+) - F(y)) + C_s\, {(\beta_f h)}^{2/(1-s)}\,.
            \end{align*}
        \item Suppose that $\alpha_f = \alpha_g = 0$.
            What iteration complexity does this imply to reach $F(\bar x_N) - F_\star \le \varepsilon$, as a function of $\beta_f$, $R \deq \sqrt{D_\phi(x_\star,x_0)}$, $s$, and $\varepsilon$?
            (Ignore numerical constants depending on $s$.)
    \end{enumerate}
\end{question}

\begin{question}\label{qu:poisson_inverse}
    Consider the problem of recovering an image $x\in\R_{++}^d$ from a noisy observation $y \approx Ax$, where $y\in\R_+^n$ and $A \in \R_{++}^{n\times d}$ is a matrix with positive entries.
    To solve this problem, we can set up the problem of minimizing 
    \begin{align*}
        x\mapsto D_{\phi_{\msf{ent}}}(y, Ax) + \lambda\,\norm x_1\,,
    \end{align*}
    where $\phi_{\msf{ent}}$ is the entropic mirror map.
    We apply~\ref{eq:MPGD}, using the negative logarithm as a mirror map, i.e., $\phi : x\mapsto -\sum_{i=1}^d \log x[i]$.
    Show that the first term in the objective is relatively convex and smooth, with smoothness constant bounded by $\norm y_1$.
    Deduce that we can obtain an $\varepsilon$-approximate solution in $O(\norm y_1\,D_\phi(x_\star, x_0)/\varepsilon)$ iterations.
    Also, write out the algorithm iterates explicitly.
\end{question}

\begin{question}\label{qu:zero_sum_games}
    Consider a two-player zero-sum game: the set of actions for player $1$ is $[m]$, and the set of actions for player $2$ is $[n]$.
    Let $A$ be the cost matrix for player $1$, i.e., $A[i,j]$ is the cost incurred by player $1$ (thus, the reward gained by player $2$) when player $1$ chooses action $i$ and player $2$ chooses action $j$.

    A \emph{mixed strategy} for player $1$ is a probability distribution over the action set $[m]${---}that is, it is an element of the simplex $\Delta_m$.
    Similarly, a mixed strategy for player $2$ is an element of $\Delta_n$.
    Note that if the players play according to mixed strategies $x$ and $y$ respectively, the expected cost is $\langle x,A\,y\rangle$.

    A \emph{Nash equilibrium} is a pair of mixed strategies $x_\star \in \Delta_m$, $y_\star \in \Delta_n$ such that
    \begin{align*}
        \langle x_\star, A\,y_\star\rangle = \min_{x\in\Delta_m} \max_{y\in\Delta_n}{\langle x, A\,y\rangle} = \max_{y\in\Delta_n} \min_{x\in\Delta_m}{\langle x, A\,y\rangle}\,.
    \end{align*}
    In words: even if player $1$ knows that player $2$ will use the mixed strategy $y_\star$, player $1$ has no incentive to deviate from the strategy $x_\star$, and vice versa.
    A priori, it is not obvious that a Nash equilibrium exists, but we will show how to compute it via multiplicative weights (or mirror descent).

    Indeed, consider running multiplicative weights where at each round $t$, the loss vector $\ell_t$ is the column $i_t$ of $A$, where $i_t \in \argmax_{i\in [n]} (A^\T x_t)[i]$ (this corresponds to player $2$ choosing the best action $i_t$ in response to the mixed strategy $x_t$).
    Show that, after $O((\norm A_{\max}^2 \log m)/\varepsilon^2)$ rounds, we obtain a mixed strategy $\hat x$ such that
    \begin{align*}
        \max_{y\in\Delta_m}{\langle \hat x, A\,y \rangle} \le \max_{y\in \Delta_n}\min_{x\in\Delta_m}{\langle x, A\,y\rangle} + \varepsilon\,.
    \end{align*}
    Here, $\norm A_{\max} \deq \max_{i\in [m]} \max_{j\in [n]}{\abs{A[i,j]}}$.
    Explain why this computes the value of the game, $\langle x_\star, A\,y_\star\rangle$, up to error $\varepsilon$.

    \emph{Remark}. By letting $\varepsilon\searrow 0$ and carrying out a convergence argument, this implies the existence of $x_\star$ (and by a symmetric argument, the existence of $y_\star$).
    This is known as von Neumann's minimax theorem.
\end{question}


\section{Alternating minimization}

In this section, we study the method of alternating minimization.
The goal is to minimize a function $f$ by decomposing the optimization variable $x$ into $D$ variables $x^1,\dotsc,x^D$.
In this decomposition, the individual variables do not have to be one-dimensional, so we let $x^i \in \R^{d_i}$.
The method is defined as follows:
\begin{align*}
    x_{n+1}^i
    &\deq \argmin_{x^i\in\R^{d_i}} f(x_{n+1}^1,\dotsc,x_{n+1}^{i-1}, x^i, x_n^{i+1},\dotsc,x_n^D)\,.
\end{align*}
In other words, we iterate through the variables cyclically and minimize $f$ over the $i$-th variable $x^i$, holding the other variables fixed.
The decomposition is chosen so that it is cheap to compute the minimizer over each individual variable.

\begin{ex}[low-rank matrix recovery]\label{ex:burer_monteiro}
    Suppose that we want to recover an unknown matrix $X_\star \in \R^{p_1\times p_2}$ which is observed through noisy observations $y_i \approx \langle A_i, X_\star\rangle$; here, the matrices $A_i \in \R^{p_1\times p_2}$ are part of the design and are known.
    If we further posit that $X_\star$ is low-rank, say of rank at most $r$, then we aim to solve
    \begin{align*}
        \minimize_{X \in \R^{p_1\times p_2}}\quad\sum_{i=1}^n {(y_i - \langle A_i, X \rangle)}^2 \qquad\text{such that}\qquad\rank X \le r\,.
    \end{align*}
    The rank constraint is difficult to deal with, so we instead factorize the matrix as $X = UV^\T$ where $U \in \R^{p_1\times r}$ and $V \in \R^{p_2\times r}$.
    This factorization is known as the \emph{Burer{--}Monteiro factorization}, after~\cite{BurMon03LowRank, BurMon05LocalMin}.
    The problem becomes
    \begin{align*}
        \minimize_{U\in\R^{p_1\times r},\,V\in\R^{p_2\times r}} \quad \sum_{i=1}^n {(y_i - \langle A_i, UV^\T\rangle)}^2\,.
    \end{align*}
    This is a non-convex problem, but at least it is now amenable to gradient-based methods.
    Alternatively, we can apply alternating minimization.
    In words, we minimize over $U$ while holding $V$ fixed, and then minimize over $V$ while holding $U$ fixed, and so on.
    Each iteration corresponds to solving an unconstrained least-squares problem and admits a closed-form solution.
\end{ex}

Although we present~\autoref{ex:burer_monteiro} as motivation for the design of alternating minimization methods in practice, as usual in these notes, we focus on guarantees in the convex case.
Nevertheless, the analysis of the convex case still applies to relevant problems; see the bibliographical notes for examples.

\subsection{Alternating projections}

We can use alternating minimization to find a point in the intersection of two closed convex sets $\eu C_1$ and $\eu C_2$.
In this case, we take
\begin{align*}
    f(x,y) = \chi_{\eu C_1}(x) + \chi_{\eu C_2}(y) + \norm{y-x}^2\,.
\end{align*}
If there exists $x_\star \in \eu C_1 \cap \eu C_2$, then $(x_\star,x_\star)$ is a minimizer for $f$, and the alternating minimization algorithm reads
\begin{align*}
    x_{n+1}
    &\deq \argmin_{x\in\R^d} f(x,y_n)
    = \Pi_{\eu C_1}(y_n)\,, \\
    y_{n+1}
    &\deq \argmin_{y\in\R^d} f(x_{n+1},y)
    = \Pi_{\eu C_2}(x_{n+1})\,.
\end{align*}
Thus, we alternate projecting onto $\eu C_1$ and onto $\eu C_2$.
This method is quite useful when projections onto $\eu C_1$, $\eu C_2$ individually are cheap, but the projection onto $\eu C_1\cap \eu C_2$ is expensive.
The method easily generalizes to the intersection of more than two convex sets.

As a ``warm up'', we first study the method of alternating projections.
Actually, we consider a generalization to alternating Bregman projections, which is needed for \S\ref{ssec:eot}:
\begin{align}\label{eq:ABP}\tag{$\msf{ABP}$}
    x_{n+1}
    &\deq \Pi_{\eu C_1}^\phi(y_n)\,, \qquad y_{n+1} \deq \Pi_{\eu C_2}^\phi(x_{n+1})\,,
\end{align}
where $\Pi_{\eu C}^\phi$ is the Bregman projection from~\autoref{qu:bregman_properties}.
We assume that $\eu C_1\cap\eu C_2\ne\varnothing$ and that $\eu C_1,\eu C_2 \subseteq \eu C_\phi$.

\begin{lem}\label{lem:ABP}
    For any $x_\star \in\eu C_1\cap \eu C_2$, the iterates of~\ref{eq:ABP} satisfy
    \begin{align*}
        \sum_{n=0}^\infty \{D_\phi(x_n, y_{n-1}) + D_\phi(y_n, x_n)\} \le D_\phi(x_\star, y_0)\,.
    \end{align*}
    Also, monotonicity holds:
    \begin{align*}
        D_\phi(x_\star, y_0)
        &\ge D_\phi(x_\star, x_1)
        \ge D_\phi(x_\star, y_1)
        \ge \cdots\,.
    \end{align*}
\end{lem}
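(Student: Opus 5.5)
The key tool will be the Bregman Pythagorean inequality from~\autoref{qu:bregman_properties}. For any closed convex $\eu C \subseteq \eu C_\phi$, any $x$, and any $z \in \eu C$, it gives
\begin{align*}
    D_\phi(z, x) \ge D_\phi(z, \Pi_{\eu C}^\phi(x)) + D_\phi(\Pi_{\eu C}^\phi(x), x)\,.
\end{align*}
Since $x_\star \in \eu C_1 \cap \eu C_2$, we may apply this at every projection step with $z = x_\star$, because $x_\star$ lies in whichever set we project onto. Nothing beyond this inequality is needed; the proof is a telescoping argument.

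Concretely, the step $x_{n+1} = \Pi_{\eu C_1}^\phi(y_n)$ gives
\[
D_\phi(x_\star, y_n) \ge D_\phi(x_\star, x_{n+1}) + D_\phi(x_{n+1}, y_n)\,,
\]
and the step $y_{n+1} = \Pi_{\eu C_2}^\phi(x_{n+1})$ gives
\[
D_\phi(x_\star, x_{n+1}) \ge D_\phi(x_\star, y_{n+1}) + D_\phi(y_{n+1}, x_{n+1})\,.
\]
Since Bregman divergences are non-negative (by convexity of $\phi$), dropping the last term in each line yields the monotone chain $D_\phi(x_\star, y_0) \ge D_\phi(x_\star, x_1) \ge D_\phi(x_\star, y_1) \ge \cdots$. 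This is the second claim.

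For the summability claim, add the two displayed inequalities and sum over $n = 0, \dotsc, N-1$. The terms $D_\phi(x_\star, \cdot)$ telescope, giving
\[
\sum_{n=1}^{N} \{D_\phi(x_n, y_{n-1}) + D_\phi(y_n, x_n)\} \le D_\phi(x_\star, y_0) - D_\phi(x_\star, y_N) \le D_\phi(x_\star, y_0)\,.
\]
Letting $N \to \infty$ gives the bound, because the partial sums are monotone and bounded. For the $n = 0$ term in the stated sum, I would adopt the natural indexing convention: the algorithm starts from $y_0$, so $x_0$ and $y_{-1}$ are not iterates and the sum effectively starts at $n = 1$. Alternatively, one can set $x_0 \deq y_0$ and $y_{-1} \deq y_0$, so that the $n = 0$ term is $D_\phi(y_0,y_0) + D_\phi(y_0,y_0) = 0$.

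The only point needing care is checking that the Pythagorean inequality applies at each step. This requires that the projected points lie in $\eu C_\phi$, so that $D_\phi(\cdot, y_n)$ and $D_\phi(\cdot, x_{n+1})$ are well defined, and that the Bregman projections exist. Both follow from the standing assumptions $\eu C_1, \eu C_2 \subseteq \eu C_\phi$ together with $\eu C_1 \cap \eu C_2 \ne \varnothing$. With that settled, the argument is purely algebraic.
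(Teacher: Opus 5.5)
Your proposal is correct and follows essentially the same route as the paper. You apply the Bregman Pythagorean inequality with $z = x_\star$ at each of the two projection steps, drop the nonnegative divergence terms to get monotonicity, and add and telescope to get summability; your remark on the $n=0$ term is apt, since the paper's own proof in fact sums from $n=1$.
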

\begin{proof}
    By the Pythagorean inequality (\autoref{qu:bregman_properties}),
    \begin{align*}
        D_\phi(x_\star, y_n) \ge D_\phi(x_\star, \Pi_{\eu C_1}^\phi(y_n)) + D_\phi(\Pi_{\eu C_1}^\phi(y_n), y_n)
        = D_\phi(x_\star, x_{n+1}) + D_\phi(x_{n+1},y_n)\,.
    \end{align*}
    By adding this to a similar inequality for the other projection and summing,
    \begin{align*}
        D_\phi(x_\star, y_0)
        &\ge \sum_{n=1}^\infty \{D_\phi(x_n, y_{n-1}) + D_\phi(y_n, x_n)\}\,. \qedhere
    \end{align*}
\end{proof}

We can use the preceding lemma to prove a convergence result for~\ref{eq:ABP}.
The following corollary relies on two additional technical assumptions for $\phi$ which must be checked, but note that they hold for the Euclidean case $\phi = \frac{\norm \cdot^2}{2}$.

\begin{cor}[convergence of~\ref{eq:ABP}]
    Assume that the following conditions hold:
    \begin{enumerate}
        \item For any $x\in\eu C_\phi$, the sublevel sets of $D_\phi(x,\cdot)$ are compact.
        \item If ${\{z_n\}}_{n\in\N}, {\{z_n'\}}_{n\in\N} \subseteq \eu C_\phi$ are such that $D_\phi(z_n,z_n') \to 0$, then $z_n - z_n' \to 0$.
    \end{enumerate}
    Then, the iterates of~\ref{eq:ABP} satisfy $x_n\to x_\star$ and $y_n\to x_\star$ for some $x_\star \in\eu C_1\cap\eu C_2$.
\end{cor}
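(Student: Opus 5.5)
Fix any $\bar x \in \eu C_1\cap\eu C_2$ and apply \autoref{lem:ABP} with this point. Monotonicity gives $D_\phi(\bar x, x_n) \le D_\phi(\bar x, y_0)$ and $D_\phi(\bar x, y_n)\le D_\phi(\bar x,y_0)$ for all $n$. Assumption 1 then says the iterates $\{x_n\}$, $\{y_n\}$ lie in a compact sublevel set of $D_\phi(\bar x,\cdot)$, so they are bounded. The summability bound in \autoref{lem:ABP} forces $D_\phi(y_n,x_n)\to 0$ and $D_\phi(x_{n+1},y_n)\to 0$. By Assumption 2, $y_n - x_n \to 0$ and $x_{n+1}-y_n\to 0$.

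Next extract a limit point. By compactness there is a subsequence $x_{n_k}\to x_\infty$, and then also $y_{n_k}\to x_\infty$. Since $x_{n_k}\in\eu C_1$ and $y_{n_k}\in\eu C_2$, and both sets are closed, $x_\infty\in\eu C_1\cap\eu C_2$. The point $x_\infty$ lies in the compact sublevel set, which sits inside $\eu C_\phi$ by Assumption 1 as stated. One small check is that $x_\infty\in\eu C_\phi$, so that $D_\phi(\cdot,x_\infty)$ is defined and continuity arguments apply; I would note that $\eu C_1\subseteq\eu C_\phi$ is assumed, which settles this.

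The main step is upgrading subsequential convergence to full convergence. Apply \autoref{lem:ABP} again, now with $x_\star \deq x_\infty$. This shows that the interleaved sequence $D_\phi(x_\infty, y_0), D_\phi(x_\infty,x_1), D_\phi(x_\infty,y_1),\dots$ is nonincreasing, hence converges to some limit $c\ge 0$. I need $c=0$, which requires $D_\phi(x_\infty, x_{n_k})\to 0$. Since $\phi$ is of Legendre type it is $C^1$ on $\eu C_\phi$, so $D_\phi(x_\infty,\cdot)$ is continuous at $x_\infty\in\eu C_\phi$ and vanishes there. Therefore $D_\phi(x_\infty,x_{n_k})\to D_\phi(x_\infty,x_\infty)=0$, and by monotonicity the whole sequence tends to $0$.

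Finally, Assumption 2 turns $D_\phi(x_\infty,x_n)\to0$ into $x_n\to x_\infty$, and likewise $y_n\to x_\infty$. The obstacle I anticipate is this last passage from vanishing divergence to convergence of points. Assumption 2 is stated for pairs of sequences, and here I apply it with the constant sequence $z_n = x_\infty$, which is legitimate. The other delicate point is the continuity of $D_\phi$ in its second argument, which relies on $x_\infty$ being interior to $\dom\phi$. If that fails, I would instead argue via Assumption 1 and lower semicontinuity along the subsequence.
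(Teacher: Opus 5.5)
Your proof is correct and follows the same route as the paper. Monotonicity in \autoref{lem:ABP} together with Assumption 1 gives boundedness of the iterates. Closedness of $\eu C_1,\eu C_2$, $D_\phi(y_n,x_n)\to 0$, and Assumption 2 then place the subsequential limit in $\eu C_1\cap\eu C_2$, and re-applying \autoref{lem:ABP} at that limit upgrades to full convergence.

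The only difference is that you explicitly justify $D_\phi(x_\infty,x_{n_k})\to 0$, which the paper asserts without comment. You do this via continuity of $\phi$ and $\nabla\phi$ on the open set $\eu C_\phi$, which contains $x_\infty$ because $x_\infty\in\eu C_1\subseteq\eu C_\phi$. Your fallback via lower semicontinuity is therefore never needed.
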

\begin{proof}
    The first assumption ensures that there is a convergent subsequence ${\{x_{n_k}\}}_{k\in\N}$ that converges to some $x_\star \in \eu C_\phi$.
    Since $x_n \in \eu C_1$ for all $n$ and $\eu C_1$ is closed, then $x_\star \in \eu C_1$.
    Moreover, by~\autoref{lem:ABP}, $D_\phi(\Pi_{\eu C_2}^\phi(x_n), x_n) = D_\phi(y_n, x_n) \to 0$, so the second property shows that $\Pi_{\eu C_2}^\phi(x_n) - x_n\to 0$.
    Since $\eu C_2$ is closed, $x_\star \in \eu C_2$ as well.

    To upgrade the subsequential convergence to full convergence, we observe that $D_\phi(x_\star, x_{n_k})\to 0$, whence the monotonicity statement in~\autoref{lem:ABP} implies $D_\phi(x_\star, x_n) \to 0$ and $D_\phi(x_\star, y_n) \to 0$.
    By the second assumption, $x_n \to x_\star$ and $y_n\to x_\star$.
\end{proof}

Furthermore,~\autoref{lem:ABP} implies
\begin{align}\label{eq:ABP_guarantee}
    \min_{n=1,\dotsc,N}\{D_\phi(x_n, y_{n-1}) + D_\phi(y_n,x_n)\} \le \frac{D_\phi(x_\star,y_0)}{N}\,.
\end{align}
This does not, however, imply a rate of convergence for $x_n$ to $x_\star$.
In the Euclidean case, we can derive a rate of convergence by interpreting alternating projection as an instance of~\ref{eq:PSD}, see~\autoref{qu:alt_proj_as_psd}.

\subsection{Convergence analysis for alternating minimization}

We now return to the alternating minimization method.
We use the shorthand $x^S$ to denote the components in $S$, $x^S \deq {\{x^i\}}_{i\in S}$, where we abbreviate consecutive indices $\{i,\dotsc,j\}$ as $i{:}j$.
We perform an analysis in the smooth case.
However, similarly to how gradient-based methods do not suffer from non-smoothness provided that one has access to a proximal oracle, it turns out that coordinate-based methods do not suffer from non-smoothness provided that the non-smooth part respects the coordinate decomposition.
Hence, we consider the slightly more general problem of minimizing
\begin{align*}
    F : \R^{d_1\times\cdots\times d_D} \to \R\,, \qquad
    F(x^{1:D}) \deq f(x^{1:D}) + \sum_{i=1}^D g_i(x^i)\,,
\end{align*}
where $f$ is convex and smooth, and each $g_i$ is convex.
For shorthand, write $g \deq \bigoplus_{i=1}^D g_i$, that is, $g(x^{1:D}) \deq \sum_{i=1}^D g_i(x^i)$.
The algorithm reads
\begin{align}\label{eq:AM}\tag{$\msf{AM}$}
    x_{n+1}^i
    &\in \argmin_{x^i \in \R^{d_i}}{\{f(x_{n+1}^{1:i-1}, x^i, x_n^{i+1:D}) + g_i(x^i)\}}\,.
\end{align}

\begin{thm}[convergence of~\ref{eq:AM}]\label{thm:AM}
    Assume that $f$ is convex and $\beta$-smooth, and that each $g_i$ is convex.
    Then,~\ref{eq:AM} achieves $F(x_N^{1:D}) - F_\star \le \varepsilon$ if
    \begin{align*}
        N
        &\ge \Bigl(\log_2 \frac{F(x_0^{1:D}) - F_\star}{4\beta D^2 R^2}\Bigr){\Bigsp}_+ + \frac{8\beta D^2 R^2}{\varepsilon}\,,
    \end{align*}
    where $R \deq \sup_{n\in\N}{\norm{x_n^{1:D} - x_\star^{1:D}}}$.
\end{thm}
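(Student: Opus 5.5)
Writing $\Delta_n \deq F(x_n^{1:D}) - F_\star$, the plan is the classical analysis of block coordinate minimization (Beck–Tetruashvili style). It has two parts. First, a sufficient-decrease inequality: one full sweep of~\ref{eq:AM} decreases $F$ by at least a constant times the squared norm of a suitable gradient-mapping-like quantity. Second, a lower bound on that quantity in terms of $\Delta_{n+1}$ via convexity and the bound $R$ on the distance to $x_\star$. Together they give a recursion of the form $\Delta_n - \Delta_{n+1} \ge \Delta_{n+1}^2/(c\,\beta D^2R^2)$. That recursion has a linear phase, halving $\Delta$ while it exceeds $\approx 4\beta D^2R^2$, which yields the $\log_2$ term. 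It also has a sublinear phase yielding $8\beta D^2R^2/\varepsilon$.

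\textbf{Step 1 (decrease).} Denote the intermediate points $z_n^{(i)} \deq (x_{n+1}^{1:i}, x_n^{i+1:D})$, so that $z_n^{(0)} = x_n$ and $z_n^{(D)} = x_{n+1}$. Since block $i$ is exactly minimized, $F(z_n^{(i)})$ is no larger than the value obtained by taking a block proximal gradient step from $z_n^{(i-1)}$ with step $1/\beta$. The block restriction of $f$ is $\beta$-smooth. Applying the one-step inequality~\eqref{eq:PGD_one_step} of~\autoref{thm:PGD} blockwise (with $y$ equal to the current point, which gives the descent form), we get
\[
F(z_n^{(i-1)}) - F(z_n^{(i)}) \ge \frac{\beta}{2}\,\norm{x_{n+1}^i - x_n^i}^2 .
\]
This holds because exact minimization of the block-$i$ objective, which is convex in $x^i$, dominates the prox step. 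If this is awkward, I will instead directly use that $x_{n+1}^i$ minimizes a convex function and compare with the prox point. Summing over $i$ gives $\Delta_n - \Delta_{n+1} \ge \frac{\beta}{2}\sum_i \norm{x_{n+1}^i - x_n^i}^2 = \frac{\beta}{2}\norm{x_{n+1}-x_n}^2$.

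\textbf{Step 2 (optimality gap).} By optimality of each block, $0 \in \nabla_i f(z_n^{(i)}) + \partial g_i(x_{n+1}^i)$. Hence $-\nabla_i f(z_n^{(i)}) + \nabla_i f(x_{n+1}) \in \nabla_i f(x_{n+1}) + \partial g_i(x_{n+1}^i)$, so some subgradient $s \in \partial F(x_{n+1})$ has $i$-th block equal to $\nabla_i f(x_{n+1}) - \nabla_i f(z_n^{(i)})$. By $\beta$-Lipschitzness of $\nabla f$ (\autoref{qu:smooth_implies_lipgrad}), its norm is at most $\beta\norm{x_{n+1} - z_n^{(i)}} \le \beta\norm{x_{n+1}-x_n}$. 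Therefore $\norm s \le \beta\sqrt D\,\norm{x_{n+1}-x_n} \le \beta D\norm{x_{n+1}-x_n}$. Convexity then gives $\Delta_{n+1} \le \langle s, x_{n+1}-x_\star\rangle \le \beta D R\,\norm{x_{n+1}-x_n}$. Combining with Step 1,
\[
\Delta_n - \Delta_{n+1} \ge \frac{\Delta_{n+1}^2}{2\beta D^2R^2} .
\]

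\textbf{Step 3 (solving the recursion).} This is the main bookkeeping obstacle. Let $c \deq 2\beta D^2 R^2$. In the first phase, while $\Delta_{n+1} \ge 2c$, we have $\Delta_n \ge \Delta_{n+1}(1 + \Delta_{n+1}/c) \ge 3\Delta_{n+1}$, certainly at least $2\Delta_{n+1}$. So after at most $(\log_2(\Delta_0/(4\beta D^2R^2)))_+$ steps we reach $\Delta \le 2c = 4\beta D^2R^2$. In the second phase, $\Delta_{n+1}\le\Delta_n$ gives
\[
\frac{1}{\Delta_{n+1}} - \frac{1}{\Delta_n} = \frac{\Delta_n - \Delta_{n+1}}{\Delta_n\Delta_{n+1}} \ge \frac{\Delta_{n+1}}{c\,\Delta_n}.
\]
Whenever $\Delta_{n+1} \ge \Delta_n/2$, this is at least $1/(2c)$. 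Otherwise $\Delta$ halves in a single step, which only helps. In either case, I will argue that $1/\Delta$ grows by at least $1/(2c)$ per step, or faster. Thus after $2c/\varepsilon = 4\beta D^2R^2/\varepsilon$ further steps, at most the stated $8\beta D^2R^2/\varepsilon$, we have $\Delta \le \varepsilon$. The only delicate point is handling the halving case cleanly. A halving step from $\Delta_n \le 2c$ costs one iteration and at least doubles $1/\Delta$, so it adds at least $1/(2c)$ to $1/\Delta$. This keeps the count within the stated constant.
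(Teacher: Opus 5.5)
Your Step~1 is false, and the rest of the argument depends on it. For exact block minimization there is no inequality $F(z_n^{(i-1)}) - F(z_n^{(i)}) \ge \frac{\beta}{2}\norm{x_{n+1}^i - x_n^i}^2$. Take $g = 0$ and $f(x) = \frac{\eta}{2}\norm{x}^2$ with $0<\eta<\beta$, which is convex and $\beta$-smooth. Minimizing over block $i$ sets $x_{n+1}^i = 0$ and decreases $F$ by only $\frac{\eta}{2}\norm{x_n^i}^2 < \frac{\beta}{2}\norm{x_{n+1}^i - x_n^i}^2$. With $f = g = 0$, a non-unique block minimizer can even move arbitrarily far with zero decrease. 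What \eqref{eq:PGD_one_step} with $y$ equal to the current point actually gives is $F(z_n^{(i-1)}) - F(\hat z) \ge \frac{\beta}{2}\norm{p^i - x_n^i}^2$. Here $p^i$ is the block prox-gradient point and $\hat z$ is $z_n^{(i-1)}$ with block $i$ replaced by $p^i$. Exact minimization transfers the function value, $F(z_n^{(i)}) \le F(\hat z)$, but not the displacement, since $x_{n+1}^i \neq p^i$ in general. A bound in terms of $\norm{x_{n+1}^i - x_n^i}$ would require $\beta$-strong convexity of the block objective. Your Step~2 is correct, but it bounds $\Delta_{n+1}$ by $\norm{x_{n+1}-x_n}$, which is exactly the quantity the decrease does not control.

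The missing idea is to measure progress through gradient differences rather than displacements. Let $\delta_i \deq \nabla f(z_n^{(i)}) - \nabla f(z_n^{(i-1)})$. Apply \eqref{eq:pre_coercivity} with $x = z_n^{(i)}$ and $y = z_n^{(i-1)}$; these points differ only in block $i$. Then add the subgradient inequality for $g_i$ at $x_{n+1}^i$, using the subgradient $-\nabla_i f(z_n^{(i)})$ from block optimality. The linear terms cancel, and
\[
F(z_n^{(i-1)}) - F(z_n^{(i)}) \ge \frac{1}{2\beta}\,\norm{\delta_i}^2\,.
\]
This is weaker than your claim (since $\norm{\delta_i} \le \beta\,\norm{x_{n+1}^i - x_n^i}$) but true. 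In Step~2, keep your subgradient $s \in \partial F(x_{n+1})$, but bound its blocks by telescoping rather than by Lipschitzness: $\nabla f(x_{n+1}) - \nabla f(z_n^{(i)}) = \sum_{j>i} \delta_j$. Two applications of Cauchy{--}Schwarz then give $\Delta_{n+1} \le (\sum_j \norm{\delta_j})(\sum_i \norm{x_{n+1}^i - x_\star^i}) \le DR\,(\sum_j \norm{\delta_j}^2)^{1/2}$. Combining the two bounds yields exactly your recursion $\Delta_n - \Delta_{n+1} \ge \Delta_{n+1}^2/(2\beta D^2 R^2)$. Your Step~3 then goes through as written, and its second phase even gives $4\beta D^2R^2/\varepsilon$ in place of $8\beta D^2R^2/\varepsilon$. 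This is the route the paper takes.
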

\begin{proof}
    By~\eqref{eq:pre_coercivity},
    \begin{align*}
        f(x_n^{1:D})
        &\ge f(x_{n+1}^1, x_n^{2:D}) + \langle \nabla_1 f(x_{n+1}^1, x_n^{2:D}), x_n^1 - x_{n+1}^1 \rangle \\
        &\qquad{} + \frac{1}{2\beta}\,\norm{\nabla f(x_{n+1}^1, x_n^{2:D}) - \nabla f(x_n^{1:D})}^2\,.
    \end{align*}
    On the other hand, since $\nabla_1 f(x_{n+1}^1, x_n^{2:D}) \in -\partial g_1(x_{n+1}^1)$,
    \begin{align*}
        g_1(x_n^1) + \langle \nabla_1 f(x_{n+1}^1, x_n^{2:D}), x_n^1 - x_{n+1}^1\rangle
        &\ge g_1(x_{n+1}^1)\,.
    \end{align*}
    By summing these two inequalities together with the corresponding ones for the other coordinates, we obtain a ``descent lemma''
    \begin{align*}
        F(x_n^{1:D}) - F(x_{n+1}^{1:D})
        &\ge \frac{1}{2\beta}\sum_{i=1}^D \norm{\nabla f(x_{n+1}^{1:i}, x_n^{i+1:D}) - \nabla f(x_{n+1}^{1:i-1}, x_n^{i:D})}^2\,.
    \end{align*}

    Next,
    \begin{align*}
        F(x_{n+1}^{1:D}) - F_\star
        &\le \langle \nabla f(x_{n+1}^{1:D}), x_{n+1}^{1:D} - x_\star^{1:D} \rangle + g(x_{n+1}^{1:D}) - g(x_\star^{1:D}) \\
        &= \sum_{i=1}^D \{\langle \nabla_i f(x_{n+1}^{1:D}), x_{n+1}^i - x_\star^i\rangle + g_i(x_{n+1}^i) - g_i(x_\star^i)\} \\
        &\le \sum_{i=1}^D \langle \nabla_i f(x_{n+1}^{1:D}) - \nabla_i f(x_{n+1}^{1:i}, x_n^{i+1:D}), x_{n+1}^i - x_\star^i\rangle \\
        &\le \sum_{i=1}^D \norm{\nabla f(x_{n+1}^{1:D}) - \nabla f(x_{n+1}^{1:i}, x_n^{i+1:D})}\,\norm{x_{n+1}^i - x_\star^i} \\
        &\le \sum_{i=1}^D \Bigl( \sum_{j=i}^{D-1} \norm{\nabla f(x_{n+1}^{1:j+1}, x_n^{j+2:D}) - \nabla f(x_{n+1}^{1:j}, x_n^{j+1:D})}\Bigr)\,\norm{x_{n+1}^i - x_\star^i} \\
        &\le \Bigl(\sum_{i=0}^{D-1} \norm{\nabla f(x_{n+1}^{1:i+1}, x_n^{i+2:D}) - \nabla f(x_{n+1}^{1:i}, x_n^{n+1:D})}\Bigr)\,\Bigl(\sum_{i=1}^D \norm{x_{n+1}^i - x_\star^i}\Bigr) \\
        &\le D\sqrt{\Bigl(\sum_{i=0}^{D-1} \norm{\nabla f(x_{n+1}^{1:i+1}, x_n^{i+2:D}) - \nabla f(x_{n+1}^{1:i}, x_n^{n+1:D})}^2\Bigr)\,\Bigl(\sum_{i=1}^D \norm{x_{n+1}^i - x_\star^i}^2\Bigr)} \\
        &\le DR\sqrt{\sum_{i=0}^{D-1} \norm{\nabla f(x_{n+1}^{1:i+1}, x_n^{i+2:D}) - \nabla f(x_{n+1}^{1:i}, x_n^{n+1:D})}^2}\,.
    \end{align*}
    Combined with the previous inequality, it yields, for $\Delta_n \deq F(x_n^{1:D}) - F_\star$,
    \begin{align*}
        \Delta_{n+1} - \Delta_n
        &\le - \frac{1}{2\beta}\sum_{i=1}^D \norm{\nabla f(x_{n+1}^{1:i}, x_n^{i+1:D}) - \nabla f(x_{n+1}^{1:i-1}, x_n^{i:D})}^2
        \le - \frac{1}{2\beta D^2 R^2}\,\Delta_{n+1}^2\,.
    \end{align*}
    If $\Delta_{n+1} \ge \Delta_n/2$, this yields $\Delta_{n+1} - \Delta_n \le -\Delta_n^2/(8\beta D^2 R^2)$, so in general
    \begin{align*}
        \Delta_{n+1} \le \max\bigl\{ \frac{1}{2}\,,\, \bigl(1- \frac{\Delta_n}{8\beta D^2 R^2}\bigr)\bigr\}\,\Delta_n\,.
    \end{align*}
    This implies that the error decays exponentially fast until iteration $n_0$ which satisfies $\Delta_{n_0} \le 4\beta D^2 R^2$.
    Thereafter,
    \begin{align*}
        \frac{1}{\Delta_n} - \frac{1}{\Delta_{n+1}}
        &= \frac{\Delta_{n+1}-\Delta_n}{\Delta_n \Delta_{n+1}}
        \le - \frac{1}{8\beta D^2 R^2}\,,
    \end{align*}
    which yields
    \begin{align*}
        \Delta_N
        &\le \frac{8\beta D^2 R^2\,\Delta_{n_0}}{8\beta D^2 R^2 + (N-n_0)\,\Delta_{n_0}}
        \le \frac{8\beta D^2 R^2}{N-n_0}\,. \qedhere
    \end{align*}
\end{proof}

Although~\autoref{thm:AM} provides a convergence guarantee for~\ref{eq:AM}, it incurs a poor dependence on the number of blocks $D${---}the complexity scales as $D^3$.
It turns out that this can be alleviated by randomly choosing a block at each iteration.
More precisely, define the following randomized alternating minimization algorithm:
\begin{align}\label{eq:RAM}\tag{$\msf{RAM}$}
    x_{n+1}
    &\deq \argmin_{x^{i(n)} \in \R^{d_{i(n)}}}{\{f(x_n^{1:i(n)-1}, x^{i(n)}, x_n^{i(n)+1:D}) + g_{i(n)}(x^{i(n)})\}}\,, \qquad i(n) \sim \unif([D])\,.
\end{align}
The analysis below also handles anisotropic smoothness: we assume that for each $i$,
\begin{align*}
    f(x^{1:i-1}, \cdot, x^{i+1:D})~\text{is}~\beta_i\text{-smooth for each}~x^{1:D} \in \R^{d_1\times \cdots \times d_D}\,.
\end{align*}
We refer to this condition succinctly by saying that $f$ is $\bs\beta$-smooth, where $\bs\beta = (\beta_1,\dotsc,\beta_D)$.
It induces the norm
\begin{align*}
    \norm{x^{1:D}}_{\bs\beta}
    &\deq \Bigl( \sum_{i=1}^D \beta_i\,\norm{x^i}^2 \Bigr){\Bigsp}^{1/2}\,.
\end{align*}

\begin{thm}[convergence of~\ref{eq:RAM}]\label{thm:RAM}
    Assume that $f$ is $\bs\beta$-smooth and $\alpha_f$-convex w.r.t.\ $\norm \cdot_{\bs \beta}$.
    Also, assume that $g$ is $\alpha_g$-convex w.r.t.\ $\norm \cdot_{\bs \beta}$.
    Then, the iterates of~\ref{eq:RAM} satisfy the following bounds.
    If $\alpha_f + \alpha_g > 0$, then
    \begin{align*}
        \E F(x_N^{1:D}) - F_\star
        \le \Bigl(1 - \frac{\alpha_f + \alpha_g}{(1+\alpha_g)\,D}\Bigr){\Bigsp}^N\,(F(x_0^{1:D}) - F_\star)\,.
    \end{align*}
    Otherwise, if $\alpha_f + \alpha_g = 0$, then
    \begin{align*}
        \E F(x_N^{1:D}) - F_\star
        &\le \frac{2DR_{\bs\beta}^2}{N}\,,
    \end{align*}
    where $R_{\bs\beta} \ge \sup_{n\in\N}\max\{\sqrt{F(x_n^{1:D}) - F_\star},\, \norm{x_n^{1:D} - x_\star^{1:D}}_{\bs \beta}\}$ almost surely.
\end{thm}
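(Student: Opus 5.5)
The plan is to derive a one-step inequality in expectation, mirroring the proximal gradient analysis in~\autoref{thm:PGD}, and then iterate it. Fix an iterate $x = x_n^{1:D}$ and let $x^{+,i}$ denote the point obtained by exactly minimizing over block $i$. Since the block minimization is at least as good as a block proximal-gradient step with step $1/\beta_i$, we have
\begin{align*}
    F(x^{+,i}) \le \min_{u\in\R^{d_i}}\bigl\{f(x) + \langle \nabla_i f(x), u - x^i\rangle + \tfrac{\beta_i}{2}\,\norm{u-x^i}^2 + g_i(u)\bigr\} + \sum_{j\ne i} g_j(x^j)\,.
\end{align*}
This uses the block $\beta_i$-smoothness. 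Averaging over $i\sim\unif([D])$ gives
\begin{align*}
    \E F(x^+) \le \bigl(1-\tfrac1D\bigr)\,F(x) + \tfrac1D\,\min_{u}\bigl\{f(x) + \langle\nabla f(x),u-x\rangle + \tfrac12\,\norm{u-x}_{\bs\beta}^2 + g(u)\bigr\}\,.
\end{align*}
The key point is that the separable minimization over all blocks splits into a sum of block minimizations, which is exactly what the averaging produces.

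Next I bound the full-dimensional minimum by restricting $u$ to the segment $u = (1-t)\,x + t\,x_\star$ for $t\in[0,1]$. Using $\alpha_f$-convexity of $f$ in $\norm\cdot_{\bs\beta}$, I get $f(x)+\langle \nabla f(x),u-x\rangle \le f(u) - \frac{\alpha_f}{2}\,\norm{u-x}_{\bs\beta}^2$. Using $\alpha_g$-convexity of $g$ along the segment, I get $g(u)\le (1-t)\,g(x)+t\,g(x_\star) - \frac{\alpha_g}{2}\,t(1-t)\,\norm{x-x_\star}_{\bs\beta}^2$. Together with convexity of $f$, the bracket is then at most
\begin{align*}
    F(x) - t\,(F(x)-F_\star) + \tfrac{t^2}{2}\,(1-\alpha_f)\,\norm{x-x_\star}^2_{\bs\beta} - \tfrac{\alpha_g}{2}\,t(1-t)\,\norm{x-x_\star}_{\bs\beta}^2\,.
\end{align*}
Note that $\alpha_f\le 1$ automatically, because $f$ is block-$\beta_i$-smooth.

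In the strongly convex case, I use quadratic growth of $F$ in $\norm\cdot_{\bs\beta}$ with constant $\alpha_f+\alpha_g$, i.e.\ $\norm{x-x_\star}_{\bs\beta}^2\le \frac{2}{\alpha_f+\alpha_g}\,(F(x)-F_\star)$. This follows from~\eqref{eq:str_cvx_1} at $x_\star$ together with the optimality condition $0\in\partial F(x_\star)$. Substituting turns the bound into $F(x)-F_\star$ times a quadratic in $t$. The choice to try is $t = \frac{\alpha_f+\alpha_g}{1+\alpha_g}$, which lies in $[0,1]$ since $\alpha_f\le 1$. The algebra should give a decrease factor of at least $t$ inside the bracket, so that
\begin{align*}
    \E[F(x^+)-F_\star]\le \bigl(1-\tfrac{\alpha_f+\alpha_g}{(1+\alpha_g)\,D}\bigr)\,(F(x)-F_\star)\,.
\end{align*}
Taking total expectation and iterating gives the first claim. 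Checking that this specific $t$ produces exactly this constant, by handling the $t(1-t)$ term from $g$ correctly, is the fiddly step. If the coefficients do not match cleanly, I will instead optimize $t$ directly and compare the result with the claimed factor.

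In the case $\alpha_f=\alpha_g=0$, I bound $\norm{x-x_\star}_{\bs\beta}^2\le R_{\bs\beta}^2$ and write $\Delta_n \deq \E F(x_n)-F_\star$. This gives $\Delta_{n+1}\le \Delta_n - \frac1D\max_{t\in[0,1]}\{t\Delta_n - \frac{t^2}{2}R_{\bs\beta}^2\}$. Since $\Delta_n\le R_{\bs\beta}^2$ by the definition of $R_{\bs\beta}$, the optimal choice $t=\Delta_n/R_{\bs\beta}^2$ is admissible. It yields $\Delta_{n+1}\le \Delta_n - \Delta_n^2/(2DR_{\bs\beta}^2)$. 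Here I pass $\E[\Delta^2]\ge(\E\Delta)^2$ by Jensen, which is valid because the per-sample bound holds pathwise before taking expectation. The same reciprocal trick as in the proof of~\autoref{thm:AM} then gives $1/\Delta_{n+1}-1/\Delta_n\ge 1/(2DR_{\bs\beta}^2)$. Summing yields $\Delta_N\le 2DR_{\bs\beta}^2/N$.
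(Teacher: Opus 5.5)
Your reduction to $\E_n F(x^+) \le (1-\frac1D)\,F(x) + \frac1D \min_u\{\cdots\}$, the restriction of $u$ to the segment toward $x_\star$, and your treatment of the case $\alpha_f+\alpha_g=0$ all agree with the paper. The paper writes that minimum as the Hopf--Lax envelope $Q_{1/(1-\alpha_f)}F(x)$ and then invokes \autoref{qu:HJ_decay}.

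The strongly convex case does not go through as written. Along the segment you use only plain convexity of $f$, so the leftover quadratic term is $\frac t2\,[t\,(1-\alpha_f) - \alpha_g\,(1-t)]\,\norm{x-x_\star}_{\bs\beta}^2$. At $t = \frac{\alpha_f+\alpha_g}{1+\alpha_g}$ this equals $\frac{t\,\alpha_f\,(1-\alpha_f)}{2\,(1+\alpha_g)}\,\norm{x-x_\star}_{\bs\beta}^2$, which is strictly positive whenever $0<\alpha_f<1$. Absorbing it by quadratic growth gives the factor $\frac{(1-\alpha_f)\,(1+\alpha_f+\alpha_g)}{(1+\alpha_g)^2}$ rather than $\frac{1-\alpha_f}{1+\alpha_g}$. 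For $\alpha_g=0$ this is $1-\alpha_f^2$, i.e., a contraction $1-\alpha_f^2/D$ instead of $1-\alpha_f/D$. Your fallback of optimizing $t$ does not fix this. For $\alpha_g = 0$ and $\alpha_f \le 2/3$, the best $t$ in your inequality (after quadratic growth) gives only $1-\frac{\alpha_f}{4\,(1-\alpha_f)}$, still short of $1-\alpha_f$.

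The missing observation is that $\alpha_f$-convexity of $f$ can be used twice. You already used it once, in the tangent inequality at $x$, which produces the coefficient $\frac{1-\alpha_f}{2}$. Use it again in the secant inequality along the segment: since $F$ is $(\alpha_f+\alpha_g)$-convex, $F(u) \le (1-t)\,F(x) + t\,F_\star - \frac{\alpha_f+\alpha_g}{2}\,t\,(1-t)\,\norm{x-x_\star}_{\bs\beta}^2$. The bracket minus $F_\star$ is then at most $(1-t)\,(F(x)-F_\star) + \frac t2\,[t\,(1-\alpha_f) - (1-t)\,(\alpha_f+\alpha_g)]\,\norm{x-x_\star}_{\bs\beta}^2$. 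Your choice $t=\frac{\alpha_f+\alpha_g}{1+\alpha_g}$ makes the bracketed coefficient vanish exactly. This gives the factor $1-t = \frac{1-\alpha_f}{1+\alpha_g}$, with no need for quadratic growth. It is precisely \autoref{qu:HJ_decay} with $\alpha = \alpha_f+\alpha_g$ and $h = 1/(1-\alpha_f)$, which is the paper's route.
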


Before proving the theorem, let us compare the computational costs implied by these various results in the weakly convex case.
Suppose, for the sake of argument, that computing a full gradient $\nabla f$ is $O(D)$.
If the proximal oracle for $g$ is available, we can run~\ref{eq:PGD} to obtain an $\varepsilon$-solution at cost $O(\beta DR^2/\varepsilon)$.
On the other hand, each iteration of~\ref{eq:AM} requires minimization with respect to one of the variables, leading to a total cost of roughly $O(\beta D^3 R^2/\varepsilon)$, assuming that minimization over one variable is comparable in cost to computing a partial gradient.

For~\ref{eq:RAM}, let $\beta_{\max} \deq \max_{i\in [D]}\beta_i$.
The overall smoothness of $f$ satisfies $\beta_{\max} \le \beta \le \sum_{i=1}^D \beta_i \le D\beta_{\max}$ and, ignoring the first term in the definition of $R_{\bs\beta}$, $R_{\bs\beta}^2 \le \beta_{\max} R^2$.\footnote{The result for~\ref{eq:RAM} requires an almost sure bound on the iterates, but let us ignore this detail for the sake of this discussion.}
The cost for~\ref{eq:RAM} is therefore $O(\beta_{\max} DR^2/\varepsilon)$, which is ``never worse'' than the rate for~\ref{eq:PGD}, but could be substantially better when $\beta$ is closer to the upper bound $D\beta_{\max}$.
This is the case when the directions of high smoothness are not aligned with the coordinate directions (e.g., imagine that the Hessian matrix looks like the all-ones matrix).
Thus,~\ref{eq:RAM} can ``adapt'' to directional smoothness, which is particularly appealing since~\ref{eq:RAM} has no tuning parameters (not even a step size!).

\begin{proof}[Proof of~\autoref{thm:RAM}]
    Let $y^{1:D} \in \R^{d_1\times\cdots\times d_D}$ and let $\E_n$ denote the expectation over $i(n)$ only.
    Then,
    \begin{align*}
        \E_n F(x_{n+1}^{1:D})
        &\le \E_n F(x_n^{1:i(n)-1}, y^{i(n)}, x_n^{i(n)+1:D}) \\
        &\le \E_n\bigl[f(x_n^{1:D}) + \langle \nabla_{i(n)} f(x_n^{1:D}), y^{i(n)} - x_n^{i(n)} \rangle + \frac{\beta_{i(n)}}{2}\,\norm{y^{i(n)} - x_n^{i(n)}}^2 \bigr] \\
        &\qquad{} + \E_n\Bigl[\sum_{i\ne i(n)} g_i(x_n^i) + g_{i(n)}(y^{i(n)})\Bigr] \\[0.25em]
        &= f(x_n^{1:D}) + \frac{1}{D} \,\langle \nabla f(x_n^{1:D}), y^{1:D} - x_n^{1:D}\rangle + \frac{1}{2D}\,\norm{y^{1:D} - x_n^{1:D}}_{\bs \beta}^2 \\[0.25em]
        &\qquad{} + \frac{D-1}{D}\,g(x_n^{1:D}) + \frac{1}{D}\,g(y^{1:D}) \\[0.25em]
        &\le \frac{D-1}{D}\, f(x_n^{1:D}) + \frac{1}{D}\, f(y^{1:D}) + \frac{1-\alpha_f}{2D}\,\norm{y^{1:D} - x_n^{1:D}}_{\bs \beta}^2 \\[0.25em]
        &\qquad{} + \frac{D-1}{D}\,g(x_n^{1:D}) + \frac{1}{D}\,g(y^{1:D})\,.
    \end{align*}
    By taking the infimum over $y^{1:D}$, we have shown that
    \begin{align*}
        \E_n F(x_{n+1}^{1:D})
        &\le \frac{D-1}{D}\,F(x_n^{1:D}) + \frac{1}{D}\,Q_{1/(1-\alpha_f)} F(x_n^{1:D})\,,
    \end{align*}
    where ${(Q_t)}_{t\ge 0}$ denotes the Hopf{--}Lax semigroup (\autoref{defn:hopf_lax}) with respect to $\norm \cdot_{\bs \beta}$.
    By~\autoref{qu:HJ_decay}, we have
    \begin{align*}
        \frac{Q_{1/(1-\alpha_f)} F(x_n^{1:D}) - F_\star}{F(x_n^{1:D}) - F_\star}
        \le \begin{cases}
            (1-\alpha_f)/(1+\alpha_g)\,, &\text{if}~\alpha_f + \alpha_g > 0\,, \\
            1 - (F(x_n^{1:D})-F_\star)/(2R_{\bs\beta}^2)\,, &\text{if}~\alpha_f+\alpha_g = 0\,.
        \end{cases}
    \end{align*}
    By taking the expectation again, in the first case it yields
    \begin{align*}
        \E F(x_{n+1}^{1:D}) - F_\star
        &\le \Bigl(1 - \frac{\alpha_f+\alpha_g}{(1+\alpha_g)\,D}\Bigr)\,\{\E F(x_n^{1:D}) - F_\star\}\,,
    \end{align*}
    and in the second case, by Jensen's inequality,
    \begin{align*}
        \E F(x_{n+1}^{1:D}) - F_\star
        &\le \E\Bigl[\Bigl(1 - \frac{F(x_n^{1:D})-F_\star}{2DR_{\bs\beta}^2}\Bigr)\,\{F(x_n^{1:D}) - F_\star\}\Bigr] \\[0.25em]
        &\le \Bigl(1 - \frac{\E F(x_n^{1:D})-F_\star}{2DR_{\bs\beta}^2}\Bigr)\,\{\E F(x_n^{1:D}) - F_\star\}\,.
    \end{align*}
    The results follow by iterating these inequalities.
\end{proof}

\subsection{Case study: entropic optimal transport}\label{ssec:eot}

As a case study, we apply these ideas to a concrete problem of modern interest.
Namely, over the past decade, there has been considerable interest in applications of optimal transport to machine learning, among other domains.
In this problem, we are given two probability distributions $\mu$, $\nu$ over spaces $\eu X$ and $\eu Y$ respectively, as well as a cost function $c : \eu X\times\eu Y \to\R$.
In this section, we focus on the case where $\eu X$, $\eu Y$ are finite sets, although the ideas presented here readily generalize.
The optimal transport cost between $\mu$ and $\nu$ with cost $c$ is
\begin{align*}
    \msf{OT}(\mu,\nu)
    &\deq \inf\{\E c(X,Y) : X \sim \mu\,, \, Y\sim \nu\}\,,
\end{align*}
where the infimum is taken over all \emph{couplings} $(X,Y)$, i.e., jointly defined random variables with marginal laws $\mu$ and $\nu$ respectively.
A particularly common choice is the Euclidean cost, $c(x,y) = \norm{y-x}^2$, but other choices are common too.
Since the structure of the cost does not play any role here, we leave it general.

Focus on the case where $\eu X$, $\eu Y$ are finite sets.
If we write $\gamma$ for the joint distribution of $(X,Y)$, the optimal transport problem can be written
\begin{align*}
    \minimize_{\gamma \in \R_+^{\eu X\times\eu Y}}\qquad\sum_{x\in\eu X,\,y\in\eu Y} c_{x,y}\gamma_{x,y}\qquad\text{such that}\qquad \begin{cases}
        \sum_{y\in\eu Y} \gamma_{x,y} = \mu_x~\text{for all}~x\in\eu X\,, \\
        \sum_{x\in\eu X} \gamma_{x,y} = \nu_y~\text{for all}~y\in\eu Y\,.
    \end{cases}
\end{align*}
More compactly, if we write $C = {(c_{x,y})}_{x\in\eu X,\,y\in\eu Y}$ for the cost matrix and $\mb 1_{\eu X}$, $\mb 1_{\eu Y}$ for the all-ones vectors on $\eu X$ and on $\eu Y$ respectively, this can be written
\begin{align*}
    \minimize_{\gamma \in \R_+^{\eu X\times\eu Y}}\qquad \langle C, \gamma\rangle\qquad\text{such that}\qquad \begin{cases}
        \gamma \mb 1_{\eu Y} = \mu\,, \\
        \gamma^\T \mb 1_{\eu X} = \nu\,.
    \end{cases}
\end{align*}
This is readily recognized as a linear program, but solving it as such is expensive.
There are specialized combinatorial algorithms{---}see~\cite{PeyCut19OT}{---}but the computational cost scales at least as $d^3$ if $d = \abs{\eu X} = \abs{\eu Y}$ (for simplicity, the input dimensions are the same).

On the other hand, the size of the input matrix $C$ is $d^2$, and optimistically we ask if we can solve the problem in $\widetilde O(d^2)$ time{---}that is, nearly linear time in the size of the input.
We shall see that this is the case, provided that we add some entropic regularization to the problem, as popularized in machine learning by Cuturi~\cite{Cut13Sinkhorn}.

Given a regularization parameter $\varepsilon_{\msf{reg}} > 0$, the goal is to instead solve
\begin{align}\label{eq:EOT}
    \minimize_{\gamma \in \R_+^{\eu X\times\eu Y}}\qquad \langle C, \gamma\rangle + \varepsilon_{\msf{reg}}\KL(\gamma \mmid \mu\otimes \nu)\qquad\text{such that}\qquad \begin{cases}
        \gamma \mb 1_{\eu Y} = \mu\,, \\
        \gamma^\T \mb 1_{\eu X} = \nu\,.
    \end{cases}
\end{align}
Call the value of this problem $\msf{OT}_{\varepsilon_{\msf{reg}}}(\mu,\nu)$.
Why does this make the problem so much easier to solve?
The answer is that by Kantorovich duality, the dual to the unregularized problem turns out to be
\begin{align*}
    \maximize_{f\in\R^{\eu X},\,g\in\R^{\eu Y}}\qquad\sum_{x\in\eu X} f_x \mu_x + \sum_{y\in\eu Y} g_y \nu_y\qquad\text{such that}\qquad f_x + g_y \le c_{x,y}~\text{for all}~x\in\eu X\,,\,y\in\eu Y\,,
\end{align*}
which is still a constrained problem.
On the other hand, the dual to the regularized problem is unconstrained.

\begin{thm}[EOT duality]\label{thm:eot_duality}
    Consider the following dual problem:
    \begin{align}\label{eq:eot_dual}
        \maximize_{f\in\R^{\eu X},\,g\in\R^{\eu Y}}\quad\sum_{x\in\eu X} f_x \mu_x + \sum_{y\in\eu Y} g_y \nu_y - \varepsilon_{\msf{reg}}\,\biggl(\sum_{x\in\eu X,\,y\in\eu Y} \exp\Bigl( \frac{f_x + g_y - c_{x,y}}{\varepsilon_{\msf{reg}}}\Bigr)\,\mu_x\nu_y - 1\biggr)
    \end{align}
    Let $f^\star$, $g^\star$ solve the dual problem.
    Then, $\gamma^\star$ defined by
    \begin{align}\label{eq:eot_plan}
        \gamma^\star_{x,y}
        &\deq \exp\Bigl( \frac{f^\star_x + g^\star_y - c_{x,y}}{\varepsilon_{\msf{reg}}}\Bigr)\,\mu_x \nu_y
    \end{align}
    is the unique solution to the entropic optimal transport problem.

    Moreover, $\gamma^\star$ is characterized as the unique distribution of the form~\eqref{eq:eot_plan} for some $f^\star$, $g^\star$ with the correct marginals.
\end{thm}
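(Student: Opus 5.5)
The argument is Lagrangian duality for~\eqref{eq:EOT}, but the cleanest route avoids invoking strong duality abstractly. Instead, I will prove a direct weak-duality inequality that becomes tight exactly at plans of the form~\eqref{eq:eot_plan}.

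Write $\varepsilon \deq \varepsilon_{\msf{reg}}$ and let $\gamma$ be any feasible coupling. For any $f,g$, define $\tilde\gamma_{x,y} \deq \exp((f_x+g_y-c_{x,y})/\varepsilon)\,\mu_x\nu_y$. Since $\gamma$ has marginals $\mu,\nu$, we have $\sum_{x,y}(f_x+g_y)\,\gamma_{x,y} = \langle f,\mu\rangle + \langle g,\nu\rangle$. Using the non-normalized form $\KL(\gamma\mmid\rho) = \sum \{\gamma\log(\gamma/\rho) - \gamma + \rho\}$ from the mirror-map example (which agrees with the usual one here because both $\gamma$ and $\mu\otimes\nu$ have total mass $1$), a direct computation gives the key identity
\begin{align*}
    \langle C,\gamma\rangle + \varepsilon\KL(\gamma\mmid\mu\otimes\nu) - \Bigl(\langle f,\mu\rangle + \langle g,\nu\rangle - \varepsilon\,\bigl(\textstyle\sum_{x,y}\tilde\gamma_{x,y} - 1\bigr)\Bigr) = \varepsilon\KL(\gamma\mmid\tilde\gamma)\ge 0\,.
\end{align*}
To verify it, I will expand $\log(\gamma/(\mu\otimes\nu)) = \log(\gamma/\tilde\gamma) + (f_x+g_y-c_{x,y})/\varepsilon$ and collect the mass terms. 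This identity is the heart of the proof; it is the Bregman/KL analogue of expanding a square.

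The identity yields both claims. First, primal value $\ge$ dual value for every feasible $\gamma$ and every $(f,g)$, with equality if and only if $\gamma = \tilde\gamma$, since $\KL(\gamma\mmid\tilde\gamma)=0$ only when the two coincide. Next, if $f^\star,g^\star$ maximize~\eqref{eq:eot_dual}, then the dual objective is concave and differentiable, so its gradient vanishes. The partial derivatives are $\mu_x - \sum_y \gamma^\star_{x,y}$ and $\nu_y - \sum_x \gamma^\star_{x,y}$, so the first-order conditions say exactly that $\gamma^\star$ has marginals $\mu$ and $\nu$. Hence $\gamma^\star$ is feasible, and plugging $\gamma=\gamma^\star$, $(f,g)=(f^\star,g^\star)$ into the identity shows that the primal value at $\gamma^\star$ equals the dual value. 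By weak duality, $\gamma^\star$ is therefore primal optimal. Uniqueness follows because any other feasible $\gamma$ has primal value $\ge$ the dual value at $(f^\star,g^\star)$, with equality only if $\gamma=\gamma^\star$. Alternatively, uniqueness follows from strict convexity of the KL term.

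The characterization is the same argument in reverse. Suppose some $\gamma$ of the form~\eqref{eq:eot_plan}, for some $(f,g)$, has the correct marginals. The identity with this $(f,g)$ gives primal$(\gamma)$ = dual$(f,g)$, which is $\le$ primal$(\gamma')$ for every feasible $\gamma'$. So $\gamma$ is optimal, and by uniqueness $\gamma = \gamma^\star$.

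The statement presupposes that the dual has a maximizer, and I take that as given, as the statement does. If it needed proof, I would pass to the one-dimensional invariance $(f+t,g-t)$ and use coercivity of the exponential term on the complementary subspace, which requires $\mu,\nu>0$. This existence question is the only step I expect to be delicate; the remainder is the algebraic identity.
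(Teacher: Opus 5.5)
Your proof is correct, and it takes a different route from the one the paper points to. The paper does not prove the theorem in the text. It cites a reference and, in Exercise~\ref{qu:EOT_duality}, sketches a Lagrangian argument: rewrite the objective as $\KL(\gamma \mmid \gamma^0)$ with $\gamma^0 = e^{-c}\,(\mu\otimes\nu)$, dualize the marginal constraints, swap $\min$ and $\max$ without justification, and read off the exponential form of the minimizer from the first-order conditions of the inner problem. That route \emph{derives} both the dual and the form~\eqref{eq:eot_plan}, but as written it relies on an unproved strong-duality (min--max) step.

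Your identity works the other way. It says that the primal objective at any coupling $\gamma$ minus the dual objective at $(f,g)$ equals $\varepsilon_{\msf{reg}}\KL(\gamma\mmid\tilde\gamma)$. I checked it, including the mass terms, and it holds even on null rows and columns, since $\tilde\gamma_{x,y}=0$ forces $\gamma_{x,y}=0$ there. Rather than deriving the dual, it \emph{verifies} a dual given in advance. In exchange it delivers weak duality, zero gap at a dual maximizer, uniqueness of the primal optimizer, and the characterization all at once, with no minimax theorem or constraint qualification. It is exactly the Pythagorean structure behind the paper's remark that the unique point of $\eu C^\mu\cap\eu C_\nu\cap\Gamma_c$ is optimal.

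Your characterization step does not even need a dual maximizer. Any $\gamma$ of the form~\eqref{eq:eot_plan} with the right marginals is the unique optimum directly from the identity, and its $(f,g)$ is then automatically a dual maximizer, because weak duality bounds every dual value by the primal value at $\gamma$.

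The only analytic input left is existence of a dual maximizer, which the statement presupposes. Your coercivity sketch, working modulo the invariance $(f+t,g-t)$ with $\mu,\nu>0$, is the right way to supply it.
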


For a proof, see, e.g.,~\cite[Proposition 4.3]{CheNilRig25OT}, although in this discrete setting it can be proven via Lagrange multipliers (\autoref{qu:EOT_duality}).

By replacing $c_{x,y}$ with $c_{x,y}/\varepsilon_{\msf{reg}}$ and rescaling $f_x$, $g_y$ accordingly, \underline{we may set $\varepsilon_{\msf{reg}}$ = 1} without loss of generality, so we adopt this convention henceforth.

Let $\eu D(f,g)$ denote the dual functional, i.e., the objective of~\eqref{eq:eot_dual}.
Since the dual is unconstrained, we propose to solve it by alternating maximization.
Namely, given iterates $f^n$, $g^n$, we set
\begin{align*}
    f^{n+1} \deq \argmax_{f\in\R^{\eu X}} \eu D(f, g^n)\,, \qquad g^{n+1} \deq \argmax_{g\in\R^{\eu Y}} \eu D(f^{n+1}, g)\,.
\end{align*}
By solving for the first-order conditions, the updates can be written explicitly:
\begin{align}\label{eq:sinkhorn}
    f^{n+1}_x
    &= -\log \sum_{y\in\eu Y} \exp(g^n_y - c_{x,y})\,\nu_y\,, \qquad
    g_y^{n+1}
    = -\log\sum_{x\in\eu X} \exp(f^{n+1}_x-c_{x,y})\,\mu_x\,.
\end{align}
At this point, one can try to apply~\autoref{thm:AM}, but the correct geometry for this problem is not Euclidean.

Instead, consider what happens to the matrices
\begin{align*}
    \gamma^n_{x,y} \deq \exp(f_x^n + g_y^n - c_{x,y})\,\mu_x\nu_y\,, \qquad \gamma^{n+1/2}_{x,y} \deq \exp(f_x^{n+1} + g_y^n - c_{x,y})\,\mu_x \nu_y\,.
\end{align*}
Performing the update $f^n \mapsto f^{n+1}$ implicitly performs the update $\gamma^n \mapsto \gamma^{n+1/2}$.
The $\eu X$-marginal of $\gamma^{n+1/2}$ is computed as follows:
\begin{align*}
    \sum_{y\in\eu Y} \gamma_{x,y}^{n+1/2}
    &= \mu_x \sum_{y\in\eu Y} \exp(f_x^{n+1} + g_y^n - c_{x,y})\,\nu_y
    = \mu_x\,,
\end{align*}
by~\eqref{eq:sinkhorn}.
Thus, $\gamma^{n+1/2}$ has the correct $\eu X$-marginal $\mu$, although its $\eu Y$-marginal may not be correct.
In fact, one can see that $\gamma^{n+1/2}$ is obtained from $\gamma^n$ by normalizing its rows to fix its $\eu X$-marginal.
Similarly, the update $g^n \mapsto g^{n+1}$ implicitly performs the update $\gamma^{n+1/2} \mapsto \gamma^{n+1}$, and $\gamma^{n+1}$ has the correct $\eu Y$-marginal $\nu$.
In this form, this is known as \emph{Sinkhorn's matrix scaling algorithm}~\cite{Sinkhorn1964}.
In words, Sinkhorn's algorithm iteratively ``fixes the rows, and then fixes the columns, and then fixes the rows\ldots''.

We can therefore identify Sinkhorn's algorithm as an instance of alternating Bregman projections.
Indeed, we define the constraint sets
\begin{align*}
    \eu C^\mu
    &\deq \{\gamma \in \R_+^{\eu X\times \eu Y} : \gamma \mb 1_{\eu Y} = \mu\}\,, \qquad \eu C_\nu \deq \{\gamma \in\R_+^{\eu X\times \eu Y} : \gamma^\T \mb 1_{\eu X} = \nu\}\,,
\end{align*}
and we let $\phi : \gamma \mapsto \sum_{x\in\eu X,\,y\in\eu Y} (\gamma_{x,y} \log \gamma_{x,y} - \gamma_{x,y})$ denote the entropic mirror map, then similarly to~\autoref{qu:bregman_proj_simplex} one can show that the Bregman projections onto $\eu C^\mu$ and $\eu C_\nu$ normalize the rows and columns respectively.
This yields
\begin{align*}
    \gamma^{n+1/2} = \Pi_{\eu C^\mu}^\phi(\gamma^n)\,, \qquad \gamma^{n+1} = \Pi_{\eu C_\nu}^\phi(\gamma^{n+1/2})\,.
\end{align*}

From this perspective, Sinkhorn's algorithm aims to find a point in the intersection $\eu C^\mu \cap \eu C_\nu$.
Does this mean that the intersection is a singleton, which is the solution to the entropic optimal transport problem?
No!
Note that the intersection $\eu C^\mu \cap \eu C_\nu$ only encodes the \emph{constraints} of the original problem, not the objective which depends on the cost function $c$.
In fact, by~\autoref{thm:eot_duality}, different choices of $c$ give rise to different entropic optimal couplings, so $\eu C^\mu \cap \eu C_\nu$ is certainly not a singleton.

What is true, however, is that if $\Gamma_c$ denotes the set of joint distributions of the form~\eqref{eq:eot_plan}, then the unique element of $\eu C^\mu \cap \eu C_\nu \cap \Gamma_c$ solves the entropic optimal transport problem.
This is the last statement of~\autoref{thm:eot_duality}.
Moreover, Sinkhorn's algorithm maintains the property that if we initialize at an element of $\Gamma_c$, e.g., by taking $f^0 = g^0 = 0$, then the algorithm iterates all remain in $\Gamma_c$.
So, remarkably, the alternating Bregman projections do indeed solve our problem.

Let us now see what~\autoref{lem:ABP} implies for Sinkhorn's algorithm.
In the following, we assume that we initialize at a probability distribution $\gamma^0 \in \Gamma_c$, e.g., we can take
\begin{align*}
    \gamma^0 = \frac{\exp(-c)\,(\mu\otimes \nu)}{\norm{\exp(-c)\,(\mu\otimes \nu)}_1}\,.
\end{align*}

\begin{thm}[convergence of Sinkhorn's algorithm]\label{thm:sinkhorn}
    Initialize Sinkhorn's algorithm at a probability distribution $\gamma^0 \in \Gamma_c$.
    Suppose that the number of iterations $N$ satisfies
    \begin{align*}
        N \ge \frac{2\KL(\gamma^\star \mmid \gamma^0)}{\varepsilon^2}\,.
    \end{align*}
    Then, there exists an iteration $n \in \{0,1,\dotsc,N-1\}$ and $\hat\gamma \in \{\gamma^n, \gamma^{n+1/2}\}$ such that if $\hat\mu$, $\hat\nu$ denote the marginals of $\hat\gamma$, then
    \begin{align*}
        \norm{\hat\mu-\mu}_1 + \norm{\hat\nu-\nu}_1 \le \varepsilon\,.
    \end{align*}
\end{thm}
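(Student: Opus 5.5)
Since Sinkhorn's algorithm is an instance of alternating Bregman projections with the entropic mirror map $\phi$, we will apply \autoref{lem:ABP} with $x_\star = \gamma^\star$. This point lies in $\eu C^\mu \cap \eu C_\nu$ because it is the entropic optimal coupling. Index the iterates so that the ``$y$''-sequence is $\gamma^0,\gamma^1,\dotsc$ and the ``$x$''-sequence is $\gamma^{1/2},\gamma^{3/2},\dotsc$. The lemma then gives
\begin{align*}
    \sum_{n=0}^{N-1} \bigl\{D_\phi(\gamma^{n+1/2},\gamma^n) + D_\phi(\gamma^{n+1},\gamma^{n+1/2})\bigr\} \le D_\phi(\gamma^\star,\gamma^0) = \KL(\gamma^\star \mmid \gamma^0)\,,
\end{align*}
where the last equality holds because both arguments are probability distributions, so the $-x+y$ terms of the generalized KL cancel. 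By pigeonhole, some $n \le N-1$ has both summands bounded by $\KL(\gamma^\star\mmid\gamma^0)/N \le \varepsilon^2/2$.

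The key step is to identify each Bregman divergence with a KL divergence between marginals. We have $\gamma^{n+1/2}_{x,y} = \gamma^n_{x,y}\,\mu_x/\mu^n_x$, where $\mu^n$ is the $\eu X$-marginal of $\gamma^n$. Summing the log-ratio against $\gamma^{n+1/2}$ gives $D_\phi(\gamma^{n+1/2},\gamma^n) = \KL(\mu\mmid\mu^n) + (\norm{\gamma^n}_1 - 1)$. For $n\ge1$, $\gamma^n$ has $\eu Y$-marginal $\nu$, hence is a probability distribution, and $\gamma^0$ is a probability distribution by assumption. So the extra term vanishes and $D_\phi(\gamma^{n+1/2},\gamma^n) = \KL(\mu\mmid\mu^n)$. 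Similarly, $D_\phi(\gamma^{n+1},\gamma^{n+1/2}) = \KL(\nu\mmid\nu^{n+1/2})$, where $\nu^{n+1/2}$ is the $\eu Y$-marginal of $\gamma^{n+1/2}$, which is a probability distribution since its $\eu X$-marginal is $\mu$.

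Now $\gamma^n$ has the correct $\eu Y$-marginal when $n\ge1$, and $\gamma^{n+1/2}$ always has the correct $\eu X$-marginal. So we take $\hat\gamma = \gamma^{n+1/2}$, whose marginal error is $\norm{\nu^{n+1/2}-\nu}_1$. Pinsker's inequality (\autoref{qu:pinsker}) gives $\norm{\nu^{n+1/2}-\nu}_1 \le \sqrt{2\KL(\nu\mmid\nu^{n+1/2})} \le \sqrt{2\cdot \varepsilon^2/2} = \varepsilon$.

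The main obstacle is the bookkeeping: aligning the index conventions of \autoref{lem:ABP} with the half-steps, and verifying that the mass normalization makes the generalized KL reduce to the marginal KL. A cleaner route, if constants are tight, is to use only the second summand $D_\phi(\gamma^{n+1},\gamma^{n+1/2})$ and take $\hat\gamma = \gamma^{n+1/2}$. Then $\gamma^n$ with $n=0$ never needs a correct $\eu Y$-marginal, and the $\varepsilon^2/2$ budget combined with Pinsker gives exactly the factor $2$ in the required $N$.
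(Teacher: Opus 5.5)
Your proposal is correct and follows the paper's proof: the alternating Bregman projection bound~\eqref{eq:ABP_guarantee}, then identifying each half-step divergence with a KL divergence between marginals, then Pinsker's inequality. The only difference is which summand you use. The paper evaluates the first summand via the dual potentials, $\mu^n_x = \mu_x\exp(f^n_x - f^{n+1}_x)$, which controls the $\eu X$-marginal of $\gamma^n$. You use the second summand to control the $\eu Y$-marginal of $\gamma^{n+1/2}$, which avoids needing $\gamma^n$ to have $\eu Y$-marginal $\nu$ (false in general for $n=0$) and makes the mass normalization explicit.
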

\begin{proof}
    By~\eqref{eq:ABP_guarantee}, we know that
    \begin{align*}
        \min_{n=0,1,\dotsc,N-1}{\{\KL(\gamma^{n+1/2} \mmid \gamma^n) + \KL(\gamma^{n+1} \mmid \gamma^{n+1/2})\}} \le \frac{\KL(\gamma^\star \mmid \gamma^0)}{N}\,.
    \end{align*}
    Let $\mu^n$ denote the $\eu X$-marginal of $\gamma^n$:
    \begin{align*}
        \mu^n_x
        &= \sum_{y\in\eu Y} \gamma^n_{x,y}
        = \mu_x \exp(f^n_x) \sum_{y\in\eu Y} \exp(g^n_y - c_{x,y})\,\nu_y
        = \mu_x \exp(f^n_x - f^{n+1}_x)\,.
    \end{align*}
    Therefore, since the $\eu X$-marginal of $\gamma^{n+1/2}$ is $\mu$,
    \begin{align*}
        \KL(\gamma^{n+1/2} \mmid \gamma^n)
        &= \sum_{x\in\eu X,\,y\in\eu Y} \gamma^{n+1/2}_{x,y} \log \frac{\exp(f^{n+1}_x + g^n_y - c_{x,y})}{\exp(f^n_x + g^n_y - c_{x,y})}
        = \sum_{x\in\eu X,\,y\in\eu Y} \gamma^{n+1/2}_{x,y} \,(f_x^{n+1} - f_x^n) \\
        &= \sum_{x\in\eu X} \mu_x \,(f_x^{n+1} - f_x^n)
        = \sum_{x\in\eu X} \mu_x \log \frac{\mu_x}{\mu^n_x}
        = \KL(\mu \mmid \mu^n)
        \ge \frac{1}{2}\,\norm{\mu^n - \mu}_1^2\,,
    \end{align*}
    where the last inequality is Pinsker's inequality.
    The result follows.
\end{proof}

However, this is not the last word on Sinkhorn's algorithm.
For instance, it does not provide convergence of the last iterate.
It turns out that Sinkhorn's algorithm admits a third interpretation: as an instantiation of mirror descent (\autoref{qu:sinkhorn_mirror}).
Using this, one can prove the following theorem.

\begin{thm}[convergence of Sinkhorn's algorithm, II]\label{thm:sinkhorn_ii}
    Initialize Sinkhorn's algorithm at a probability distribution $\gamma^0 \in \Gamma_c$.
    Then, if $\mu^N$ denotes the $\eu X$-marginal of $\gamma^N$,
    \begin{align*}
        \KL(\mu^N \mmid \mu)
        &\le \frac{\KL(\gamma^\star \mmid \gamma^0)}{N}\,.
    \end{align*}
\end{thm}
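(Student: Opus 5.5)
The plan is to realize the third interpretation from~\autoref{qu:sinkhorn_mirror}: Sinkhorn's algorithm is~\ref{eq:MPGD} with unit step size, entropic mirror map $\phi(\gamma) = \sum_{x,y}(\gamma_{x,y}\log\gamma_{x,y} - \gamma_{x,y})$ on $\R_{++}^{\eu X\times\eu Y}$, and composite objective $F = f + g$. Here
\begin{align*}
    f(\gamma) \deq \sum_{x\in\eu X}\Bigl(\gamma_{\eu X}(x)\log\frac{\gamma_{\eu X}(x)}{\mu_x} - \gamma_{\eu X}(x) + \mu_x\Bigr)\,, \qquad g \deq \chi_{\eu C_\nu}\,,
\end{align*}
and $\gamma_{\eu X} \deq \gamma\mb 1_{\eu Y}$ is the $\eu X$-marginal. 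On probability matrices, $f(\gamma) = \KL(\gamma_{\eu X}\mmid\mu)$, and $F_\star = 0$ is attained at $\gamma^\star$. The conclusion should then follow from the $\alpha_f = \alpha_g = 0$ case of~\autoref{thm:MPGD} with $h = 1$, whose limiting bound is $D_\phi(\gamma^\star,\gamma^0)/N = \KL(\gamma^\star\mmid\gamma^0)/N$.

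\textbf{Step 1 (identifying the iteration).} Compute $\nabla f(\gamma)_{x,y} = \log(\gamma_{\eu X}(x)/\mu_x)$. The unconstrained mirror step is then
\begin{align*}
    \nabla\phi^*\bigl(\nabla\phi(\gamma^n) - \nabla f(\gamma^n)\bigr) = \gamma^n_{x,y}\,\frac{\mu_x}{\mu^n_x}\,,
\end{align*}
which is exactly the row normalization $\gamma^{n+1/2}$. By~\autoref{qu:mpgd_update} the full~\ref{eq:MPGD} step then applies $\prox^\phi_{\chi_{\eu C_\nu}} = \Pi^\phi_{\eu C_\nu}$, which is column normalization, as noted before~\autoref{thm:sinkhorn}. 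Hence $\gamma^{n+1}$ is one~\ref{eq:MPGD} step from $\gamma^n$. This holds for every $n \ge 0$, including the starting point $\gamma^0 \notin \eu C_\nu$, because the one-step inequality in~\autoref{thm:MPGD} never requires the base point to be feasible.

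\textbf{Step 2 (relative convexity and smoothness).} The Bregman divergence of $f$ is
\begin{align*}
    D_f(\gamma',\gamma) = \sum_{x\in\eu X}\Bigl(\gamma'_{\eu X}(x)\log\frac{\gamma'_{\eu X}(x)}{\gamma_{\eu X}(x)} - \gamma'_{\eu X}(x) + \gamma_{\eu X}(x)\Bigr)\,,
\end{align*}
the generalized KL divergence between the marginals. It is nonnegative, which gives $\alpha_f = 0$. By the log-sum inequality (data processing for KL under marginalization) it is at most $D_\phi(\gamma',\gamma)$, the generalized KL divergence between the joints, which gives $\beta_f = 1$ and justifies $h = 1$. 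Also $g$ is convex, so $\alpha_g = 0$.

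\textbf{Step 3 (iterating).} Apply the one-step bound of~\autoref{thm:MPGD} with $y = \gamma^\star \in \eu C_\nu$. Since $\gamma^{n+1} \in \eu C_\nu$, we have $F(\gamma^{n+1}) = f(\gamma^{n+1})$, and the bound reads
\begin{align*}
    D_\phi(\gamma^\star,\gamma^{n+1}) \le D_\phi(\gamma^\star,\gamma^n) - f(\gamma^{n+1})\,.
\end{align*}
Summing over $n = 0,\dotsc,N-1$ gives $\sum_{n=1}^N f(\gamma^n) \le D_\phi(\gamma^\star,\gamma^0)$. The descent property ($y = x$ in the one-step bound) gives $f(\gamma^{n+1}) \le f(\gamma^n)$ for $n \ge 1$, because then $\gamma^n \in \eu C_\nu$ and $g(\gamma^n) = 0$. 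Therefore $N f(\gamma^N) \le \KL(\gamma^\star\mmid\gamma^0)$. Finally, $\gamma^N$ is a probability matrix, so $f(\gamma^N) = \KL(\mu^N\mmid\mu)$, and $D_\phi$ reduces to $\KL$ between probability matrices.

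\textbf{Main obstacle.} I expect the delicate part to be the technical bookkeeping of Steps 1 and 2 rather than the iteration. The points to check are:
\begin{itemize}
    \item $\phi$ is of Legendre type on the open orthant, and all iterates have strictly positive entries, which holds because $\gamma^0 \in \Gamma_c$ has full support and the scalings preserve this.
    \item The Bregman projection onto $\eu C_\nu$ is indeed column normalization.
    \item The growth inequality (\autoref{lem:bregman_growth}) used inside the proof of~\autoref{thm:MPGD} applies to the constrained subproblem. Its minimizer lies in the relative interior of the affine set $\eu C_\nu$, so I would use the constrained first-order condition in place of the gradient vanishing.
\end{itemize}
I would verify the last point directly from the three-point identity for Bregman divergences.
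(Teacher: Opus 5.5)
Your proposal is correct and follows the paper's intended route; the paper defers the proof to \autoref{qu:sinkhorn_mirror}. That route is: realize Sinkhorn as one step of entropic mirror descent with unit step size on the $\eu X$-marginal KL over $\eu C_\nu$, check $0$-relative convexity and $1$-relative smoothness (the latter via data processing for KL), and iterate the one-step bound of \autoref{thm:MPGD} with comparison point $\gamma^\star$. Replacing the paper's $\eu F$ by the generalized KL, with its extra affine term, changes neither $D_f$, nor the values on probability matrices, nor the iterates (the resulting global rescaling is undone by the column normalization). Your handling of the infeasible starting point $\gamma^0$ and of the growth inequality for the constrained subproblem is exactly what is needed.
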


\subsection*{Bibliographical notes}

The analysis of alternating minimization has recently inspired analyses of the coordinate ascent variational inference (CAVI) algorithm~\cite{ArnLac24CAVI, LavZan24CAVI}, the expectation maximization (EM) algorithm~\cite{CapJoh25EM}, and Gibbs sampling~\cite{AscLavZan24Gibbs}.
The proof of~\autoref{thm:RAM} is also taken from~\cite{LavZan24CAVI}.

For an introduction to optimal transport for statisticians, see~\cite{CheNilRig25OT}.
Other treatments of optimal transport, aimed at more mathematical audiences, include~\cite{Vil03Topics, Vil09OT, San15OT}.
The literature on (entropic) optimal transport is vast, so we only mention a few relevant references: the proof of~\autoref{thm:sinkhorn} is similar in spirit to~\cite{AltNilRig17Sinkhorn}; Sinkhorn's algorithm as interpreted as mirror descent in~\cite{Leg21Sinkhorn}; and the interpretation in~\autoref{qu:sinkhorn_mirror} is from~\cite{AubKorLeg22Sinkhorn}.

\subsection*{Exercises}

\begin{question}\label{qu:alt_proj_as_psd}\mbox{}
    \begin{enumerate}
        \item Consider the problem of finding a point in the intersection of two closed convex sets $\eu C_1$ and $\eu C_2$.
            Show that the alternating (Euclidean) projection method can be interpreted as subgradient descent on the function $\max_{i=1,2}\dist(\cdot, \eu C_i)$ where $\dist(x,\eu C) \deq \inf_{y\in\eu C}{\norm{y-x}}$.
            Identify the step sizes.
        \item Generalize this to the problem of finding a point in $\bigcap_{i=1}^m \eu C_i$, and show that the resulting subgradient method can be interpreted as a ``greedy'' alternating projection method.
            Use the analysis of~\autoref{thm:psd} to obtain a rate of convergence.
    \end{enumerate}
\end{question}

\begin{question}\label{qu:am_pl}
    Here, we present a simple convergence analysis of alternating minimization in the case where there are only two blocks, $f$ satisfies~\eqref{eq:PL} with constant $\alpha$ and is $\beta$-smooth, and $g = 0$.

    For any $h > 0$, by definition of alternating minimization,
    \begin{align*}
        f(x_{n+1}^1, x_n^2)
        &\le f(x_n^1 - h\,\nabla_1 f(x_n^1, x_n^2),\, x_n^2)\,.
    \end{align*}
    Apply the descent lemma for~\ref{eq:GD} (\autoref{lem:descent}), the fact that $\nabla_2 f(x_n^1, x_n^2) = 0$ (why?), and the P\L{} inequality to deduce a one-step inequality $f(x_{n+1}^1, x_n^2) - f_\star \le (1-\alpha/\beta)\,(f(x_n^1,x_n^2) - f_\star)$.
    Deduce that
    \begin{align*}
        f(x_N^1, x_N^2) - f_\star
        &\le \bigl(1 - \frac{\alpha}{\beta}\bigr){\bigsp}^{2N}\,(f(x_0^1, x_0^2) - f_\star)\,.
    \end{align*}
\end{question}

\begin{question}\label{qu:gauss_southwell}
    Consider coordinate descent with the Gauss{--}Southwell rule:
    \begin{align*}
        x_{n+1} \deq x_n - h\,\nabla_{i_n} f(x_n)\,e_{i_n}\,, \qquad i_n = \argmax_{i\in [d]}{\abs{\nabla_i f(x_n)}}\,.
    \end{align*}
    This is a ``greedy'' version of coordinate descent.
    \begin{enumerate}
        \item Show that
            \begin{align*}
                x_{n+1}
                &= \argmin_{x\in\R^d}{\bigl\{f(x_n) + \langle \nabla f(x_n), x-x_n\rangle + \frac{1}{2h}\,\norm{x-x_n}_1^2\bigr\}}\,.
            \end{align*}
        \item Therefore, argue that if $f$ is smooth in the $\ell_1$-norm,
            \begin{align*}
                f(y) \le f(x) + \langle \nabla f(x), y-x\rangle + \frac{\beta}{2}\,\norm{y-x}_1^2\,,
            \end{align*}
            then for $h = 1/\beta$ we have the descent lemma
            \begin{align*}
                f(x_{n+1})
                &\le f(x_n) - \frac{1}{2\beta}\,\norm{\nabla f(x_n)}_\infty^2\,.
            \end{align*}
        \item If $f$ satisfies a~\ref{eq:PL} inequality in the $\ell^1$-norm,
            \begin{align*}
                \norm{\nabla f(x)}_\infty^2
                &\ge 2\alpha\,(f(x) - f_\star)\,, \qquad\text{for all}~x\in\R^d\,,
            \end{align*}
            then for $\kappa = \beta/\alpha$,
            \begin{align*}
                f(x_N) - f_\star
                &\le \bigl(1 - \frac{1}{\kappa}\bigr){\bigsp}^N\,(f(x_0) - f_\star)\,.
            \end{align*}
    \end{enumerate}
    The moral of the story is that coordinate methods operate in the $\ell_1$ geometry.
\end{question}

\begin{question}\mbox{}\label{qu:EOT_duality}
    \begin{enumerate}
        \item Setting $\varepsilon_{\msf{reg}} = 1$ in~\eqref{eq:EOT}, show that the objective is equivalent to minimizing $\gamma\mapsto \KL(\gamma \mmid \gamma^0)$, where $\gamma^0_{x,y} = \mu_x \nu_y \exp(-c_{x,y})$.
        \item Introduce two Lagrange multipliers $\kappa,\lambda\in\R$ and argue that~\eqref{eq:EOT} is equivalent to
            \begin{align*}
                \min_{\gamma\in\R_+^{\eu X\times\eu Y}} \;\max_{\kappa,\lambda\in\R}\; \bigl\{\KL(\gamma \mmid \gamma^0) + \kappa\,(\gamma \mb 1_{\eu Y} - \mu) + \lambda\,(\gamma^\T \mb 1_{\eu X} - \nu)\bigr\}\,.
            \end{align*}
            Without justification, assume that we can switch the $\min$ and $\max$, so that the above problem is equivalent to
            \begin{align*}
                \max_{\kappa,\lambda\in\R}\;\min_{\gamma\in\R_+^{\eu X\times\eu Y}} \; \bigl\{\KL(\gamma \mmid \gamma^0) + \kappa\,(\gamma \mb 1_{\eu Y} - \mu) + \lambda\,(\gamma^\T \mb 1_{\eu X} - \nu)\bigr\}\,.
            \end{align*}
            Argue that the solution $\gamma$ to this problem is of the form $\gamma_{x,y} = \exp(f_x + g_y - c_{x,y})\,\mu_x \nu_y$ for some $f \in \R^{\eu X}$, $g\in\R^{\eu Y}$.
    \end{enumerate}
\end{question}

\begin{question}\label{qu:sinkhorn_mirror}
    For a joint distribution $\gamma$, let $(\Pi_{\eu X}){}_\# \gamma$ denote its $\eu X$-marginal.
    Consider the objective functional $\eu F : \gamma \mapsto \KL((\Pi_{\eu X}){}_\# \gamma \mmid \mu)$.
    Show that the iteration $\gamma^n \mapsto \gamma^{n+1}$ of Sinkhorn's algorithm can be viewed as one step of mirror descent on $\eu F$ with the entropic mirror map $\phi$, constraint set $\eu C_\nu$, and step size $1$.
    By checking relative convexity and smoothness, prove~\autoref{thm:sinkhorn_ii}.
\end{question}

\section{Stochastic optimization}

Our next topic is optimization with stochastic gradients, which dates back to the field of stochastic approximation~\cite{RobMon1951}.
Besides its relevance in situations where the gradient cannot be computed exactly, stochastic optimization is particularly important for machine learning and statistics for at least two reasons.
First, it can be viewed as a method for directly minimizing the population risk, and we can establish generalization bounds provided that we perform a single pass over our data.
Second, it is routinely used to minimize empirical risk functions by approximating the full gradient by mini-batches over the data. 
Our treatment therefore centers around these applications.

\subsection{Stochastic mirror proximal gradient descent}

We start with the fundamental convergence result.
Suppose that we wish to minimize $F = f+g$, where we only have access to stochastic gradients for $f$.
More precisely, we assume that at each $x\in\interior \dom f$, we can compute a random vector $\hat\nabla f(x)$ which is unbiased: $\E \hat\nabla f(x) = \nabla f(x)$.
Actually, we can also let $f$ be non-smooth, in which case we require that $\E\hat \nabla f(x) \in \partial f(x)$.
The analysis below can also handle the case where the stochastic gradient is biased, at the expense of an additional error term.

Let $\phi : \R^d\to\R\cup \{\infty\}$ be a mirror map.
We consider the following iteration:
\begin{align}\label{eq:SMPGD}\tag{$\msf{SMPGD}$}
    x_{n+1}
    &\deq \argmin_{x\in\R^d}{\bigl\{ f(x_n) + \langle \hat\nabla f(x_n), x-x_n \rangle + g(x) + \frac{1}{h}\,D_\phi(x,x_n) \bigr\}}\,.
\end{align}
This is the \emph{stochastic mirror proximal gradient descent} algorithm.\footnote{Yes, the name is comically long.}
For most applications, we do not need all of these aspects (stochastic, mirror, proximal) simultaneously, but we may as well include them to emphasize that a unified proof is possible.
Anyway, it is helpful to include a proximal term since it allows for projections, and the use of a Bregman divergence is a bonus since it covers stochastic mirror descent.

\begin{thm}[convergence of~\ref{eq:SMPGD}]\label{thm:SMPGD}
    Let $\phi$ be a mirror map which is $\alpha_\phi$-convex relative to a norm $\opnorm \cdot$.
    We assume that $f$ is $\alpha_f$-convex and $g$ is $\alpha_g$-convex, relative to $\phi$; we let $\lambda_h \deq (1-\alpha_f h)/(1+\alpha_g h)$.
    We assume that the stochastic gradient is unbiased.
    \begin{itemize}
        \item (smooth case) Assume that $f$ is $\beta_f$-smooth relative to $\phi$, that $h \le 1/(2\beta_f)$, and that we have a variance bound for the stochastic gradient:
            \begin{align}\label{eq:sgd_var_bd}
                \E[\opnorm{\hat\nabla f(x) -\nabla f(x)}_*^2] \le \sigma^2 d \qquad\text{for all}~x\in \eu C_\phi\,.
            \end{align}
            Then, for a suitably averaged iterate $\bar x_N$,
            \begin{align*}
                \E F(\bar x_N) - F_\star
                &\le \frac{\alpha_f+\alpha_g}{\lambda_h^{-N}-1}\,D_\phi(x_\star,x_0) + \frac{(1+\alpha_g h)\,\sigma^2 dh}{\alpha_\phi}\,.
            \end{align*}
        \item (non-smooth case) Assume that the stochastic gradients are $L^2$-bounded,
            \begin{align}\label{eq:sgd_grad_bd}
                \E[\opnorm{\hat\nabla f(x)}_*^2] \le L^2 \qquad\text{for all}~x\in\eu C_\phi\,.
            \end{align}
            Then, for a suitably averaged iterate $\bar x_N$,
            \begin{align*}
                \E F(\bar x_N) - F_\star
                &\le \frac{\alpha_f+\alpha_g}{\lambda_h^{-N}-1}\,D_\phi(x_\star,x_0) + \frac{2\,(1+\alpha_g h)\,L^2 h}{\alpha_\phi}\,.
            \end{align*}
    \end{itemize}
\end{thm}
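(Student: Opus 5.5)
The plan is to rerun the one-step argument from the proofs of~\autoref{thm:MPGD} and~\autoref{thm:MPGD_non_smooth} with $\nabla f(x)$ replaced by $\hat\nabla f(x)$. We then take conditional expectations and handle the discrepancy $\hat\nabla f - \nabla f$ through the norm structure of $\phi$.

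Fix $x = x_n$ and write $\hat\psi_x$ for the stochastic objective in~\eqref{eq:SMPGD}. It is $(\alpha_g + 1/h)$-convex relative to $\phi$ with minimizer $x^+$, so~\autoref{lem:bregman_growth} gives $\hat\psi_x(x^+) + (\alpha_g + 1/h)\,D_\phi(x_\star,x^+) \le \hat\psi_x(x_\star)$. Let $\xi \deq \hat\nabla f(x) - \nabla f(x)$. Then
\begin{align*}
    \hat\psi_x(x_\star) = \psi_x(x_\star) + \langle \xi, x_\star - x\rangle \le F_\star + \bigl(\tfrac1h - \alpha_f\bigr)\,D_\phi(x_\star,x) + \langle \xi, x_\star - x\rangle\,.
\end{align*}
Since $x$ is measurable with respect to the past, the last term has conditional mean zero by unbiasedness. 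The remaining task is to lower bound $\hat\psi_x(x^+)$ by $F(x^+)$ minus an error term; the error cannot be zero, because $x^+$ depends on $\xi$.

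In the smooth case, write $\hat\psi_x(x^+) = F(x^+) - D_f(x^+,x) + \langle \xi, x^+ - x\rangle + \frac1h\,D_\phi(x^+,x)$. Relative smoothness with $h \le 1/(2\beta_f)$ gives $-D_f(x^+,x) + \frac1h D_\phi(x^+,x) \ge \frac1{2h}D_\phi(x^+,x) \ge \frac{\alpha_\phi}{4h}\opnorm{x^+-x}^2$. By Cauchy--Schwarz and Young's inequality, $\langle \xi, x^+-x\rangle \ge -\frac{h}{\alpha_\phi}\opnorm{\xi}_*^2 - \frac{\alpha_\phi}{4h}\opnorm{x^+-x}^2$. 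Together these give $\hat\psi_x(x^+) \ge F(x^+) - \frac{h}{\alpha_\phi}\opnorm{\xi}_*^2$, and the expectation of the error is at most $h\sigma^2 d/\alpha_\phi$.

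In the non-smooth case, we follow~\autoref{thm:MPGD_non_smooth}. Convexity gives $f(x^+) - f(x) - \langle \hat\nabla f(x), x^+-x\rangle \le \langle \hat g - \hat\nabla f(x), x^+ - x\rangle$, where $\hat g\in\partial f(x^+)$, and this is bounded by $(\opnorm{\hat g}_* + \opnorm{\hat\nabla f(x)}_*)\,\opnorm{x^+-x}$. Absorbing this into $\frac{\alpha_\phi}{2h}\opnorm{x^+-x}^2$ yields an error of order $h\,(\opnorm{\hat g}_*+\opnorm{\hat\nabla f(x)}_*)^2/\alpha_\phi$. The expectation of the $\hat\nabla f(x)$ part is controlled by~\eqref{eq:sgd_grad_bd}. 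For the $\hat g$ part, I would use $\opnorm{\nabla f}_* \le \sqrt{\E\opnorm{\hat\nabla f}_*^2}\le L$ via Jensen. This gives the stated $2L^2h/\alpha_\phi$ term up to the constant. I expect the constant bookkeeping (splitting $2L$ as in the earlier proof) to need some care.

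Combining, each case gives a one-step inequality of the form
\begin{align*}
    (1+\alpha_g h)\,\E D_\phi(x_\star,x_{n+1}) \le (1-\alpha_f h)\,\E D_\phi(x_\star,x_n) - h\,(\E F(x_{n+1}) - F_\star) + h\,\mathcal E\,,
\end{align*}
with $\mathcal E = \sigma^2 dh/\alpha_\phi$ or $2L^2h/\alpha_\phi$. Dividing by $1+\alpha_g h$ and applying~\autoref{lem:discrete_gronwall} with $A=\lambda_h$ gives $\sum_{n=1}^N \lambda_h^{N-n}(\E F(x_n)-F_\star) \le (1+\alpha_g h)\,\lambda_h^N D_\phi(x_\star,x_0)/h + \sum_{n=1}^N \lambda_h^{N-n}\mathcal E$.

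Unlike~\autoref{thm:MPGD}, there is no descent lemma here, so we cannot pass to the last iterate. Instead we take $\bar x_N$ to be the weighted average with weights proportional to $\lambda_h^{-n}$ and apply Jensen's inequality to $F$. Normalizing by $\sum_n\lambda_h^{-n} = (\lambda_h^{-N}-1)/(\lambda_h^{-1}-1)$ and noting $h(\lambda_h^{-1}-1)\cdot(1+\alpha_g h)^{-1}\cdots = \alpha_f+\alpha_g$ up to the factor $(1+\alpha_g h)$ produces the two stated terms. The main obstacle is the error absorption step, in particular making the bounds hold in expectation despite the dependence of $x^+$ on the noise.
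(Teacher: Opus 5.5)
Your proposal is correct and follows the paper's proof essentially step for step. The shared skeleton is: relative growth for the stochastic model, unbiasedness to remove $\langle \xi, x_\star - x\rangle$ in expectation, absorbing the noise (smooth case) or the Lipschitz/subgradient error (non-smooth case) into the $\frac1h D_\phi(x^+,x)$ term, then discrete Gr\"onwall and a $\lambda_h^{-n}$-weighted average. Your pointwise Young step and your subgradient $\nabla f(x^+)$ with dual norm at most $L$ are only cosmetic variants of the paper's Cauchy--Schwarz-in-expectation and Lipschitz arguments, and they give exactly $\sigma^2 dh/\alpha_\phi$ and $2L^2h/\alpha_\phi$, so no extra care with constants is needed.

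One slip in the final bookkeeping: $\sum_{n=1}^N \lambda_h^{-n} = (\lambda_h^{-N}-1)/(1-\lambda_h)$, not $(\lambda_h^{-N}-1)/(\lambda_h^{-1}-1)$. Your expression is off by a factor $\lambda_h$ and would inflate the first term by $\lambda_h^{-1}$. With the correct sum, $(1+\alpha_g h)(1-\lambda_h)/h = \alpha_f+\alpha_g$ gives the first term exactly. The noise term comes out as $\mathcal E$, which is smaller than the stated one by the factor $1+\alpha_g h$.
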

\begin{proof}
    We prove a one-step inequality for
    \begin{align*}
        x^+ \deq \argmin \psi_x\,, \qquad \psi_x(y) \deq f(x) + \langle \hat\nabla f(x), y-x\rangle + g(y) + \frac{1}{h}\,D_\phi(y,x)\,.
    \end{align*}
    By the relative growth inequality (\autoref{lem:bregman_growth}), for any $y \in \eu C_\phi$,
    \begin{align*}
        \bigl(\alpha_g + \frac{1}{h}\bigr)\,D_\phi(y,x^+) + \psi_x(x^+)
        &\le \psi_x(y)\,.
    \end{align*}
    For the right-hand side, in both cases,
    \begin{align*}
        \E \psi_x(y)
        &= f(x) + \langle \nabla f(x), y-x\rangle + g(y) + \frac{1}{h}\,D_\phi(y,x)
        = F(y) - D_f(y,x) + \frac{1}{h}\,D_\phi(y,x) \\
        &\le F(y) + \frac{1-\alpha_f h}{h}\,D_\phi(y,x)\,.
    \end{align*}

    \textbf{Smooth case.}
    Here, we lower bound $\E \psi_x(x^+)$ as follows: since $h\le 1/(2\beta_f)$,
    \begin{align*}
        \E \psi_x(x^+)
        &= \E\bigl[f(x) + \langle \hat\nabla f(x), x^+ - x\rangle + g(x^+) + \frac{1}{h}\,D_\phi(x^+, x)\bigr] \\
        &= \E\bigl[f(x) + \langle \nabla f(x), x^+ - x\rangle + g(x^+) + \frac{1}{h}\,D_\phi(x^+, x) + \langle \hat\nabla f(x) - \nabla f(x), x^+ - x\rangle\bigr] \\
        &= \E\bigl[F(x^+) - D_f(x^+, x) + \frac{1}{h}\,D_\phi(x^+, x) + \langle \hat\nabla f(x) - \nabla f(x), x^+ - x\rangle\bigr] \\
        &\ge \E\bigl[F(x^+) + \frac{1}{2h}\,D_\phi(x^+, x) - \opnorm{\hat\nabla f(x) - \nabla f(x)}_*\,\opnorm{x^+ - x}\bigr] \\[0.5em]
        &\ge \E\bigl[F(x^+) + \frac{\alpha_\phi}{4h}\,\opnorm{x^+ - x}^2\bigr] - \sqrt{\E[\opnorm{\hat\nabla f(x) - \nabla f(x)}_*^2]\E[\opnorm{x^+ - x}^2]} \\[0.5em]
        &\ge \E\bigl[F(x^+) + \frac{\alpha_\phi}{4h}\,\opnorm{x^+ - x}^2\bigr] - \sqrt{\sigma^2 d \E[\opnorm{x^+ - x}^2]}
        \ge \E F(x^+) - \frac{\sigma^2 dh}{\alpha_\phi}\,.
    \end{align*}
    This leads to the one-step bound
    \begin{align*}
        (1+\alpha_g h)\E D_\phi(y, x^+)
        &\le (1-\alpha_f h)\,D_\phi(y,x) - h\,(\E F(x^+) - F(y)) + \frac{\sigma^2 dh^2}{\alpha_\phi}\,.
    \end{align*}

    \textbf{Non-smooth case.}
    In this case, note that
    \begin{align*}
        \opnorm{\nabla f(x)}_*
        &= \opnorm{\E\hat\nabla f(x)}_*
        \le \sqrt{\E[\opnorm{\hat\nabla f(x)}_*^2]}
        \le L\,,
    \end{align*}
    so that $f$ is $L$-Lipschitz with respect to $\opnorm \cdot$.
    Then,
    \begin{align*}
        \E \psi_x(x^+)
        &= \E\bigl[f(x) + \langle \hat\nabla f(x), x^+ - x\rangle + g(x^+) + \frac{1}{h}\,D_\phi(x^+, x)\bigr] \\[0.25em]
        &= \E\bigl[F(x^+) + f(x) - f(x^+) + \langle \hat\nabla f(x), x^+ - x\rangle + \frac{1}{h}\,D_\phi(x^+, x)\bigr] \\[0.25em]
        &\ge \E\bigl[F(x^+) -L\,\opnorm{x^+ - x} - \opnorm{\hat\nabla f(x)}_*\,\opnorm{x^+ - x} + \frac{1}{h}\,D_\phi(x^+, x)\bigr] \\[0.5em]
        &\ge \E\bigl[F(x^+) + \frac{\alpha_\phi}{2h}\,\opnorm{x^+ - x}^2\bigr] - 2L\sqrt{\E[\opnorm{x^+ - x}^2]}
        \ge \E F(x^+) - \frac{2L^2 h}{\alpha_\phi}\,.
    \end{align*}
    This leads to the one-step bound
    \begin{align*}
        (1+\alpha_g h)\E D_\phi(y, x^+)
        &\le (1-\alpha_f h)\,D_\phi(y,x) - h\,(\E F(x^+) - F(y)) + \frac{2L^2 h^2}{\alpha_\phi}\,.
    \end{align*}

    \textbf{Completing the proof.}
    Observe that the one-step bounds have exactly the same form in both cases.
    We take $y = x_\star$ and iterate as usual.
    Let $E = \sigma^2 dh^2/\alpha_\phi$ in the first case, and $E = 2L^2 h^2/\alpha_\phi$ in the second case.
    Then, the discrete Gr\"onwall lemma (\autoref{lem:discrete_gronwall}) and some computations yield
    \begin{align*}
        \sum_{n=1}^N \frac{\lambda_h^{N-n}}{\sum_{k=1}^N \lambda_h^k}\,\{\E F(x_n) - F_\star\}
        &\le \frac{\alpha_f + \alpha_g}{\lambda_h^{-N}-1}\,D_\phi(x_\star,x_0) + \frac{1+\alpha_g h}{h}\,E\,.
    \end{align*}
    This yields a convergence rate for a suitably averaged iterate.
\end{proof}

For simplicity, let us assume that $\alpha_\phi = 1$ and $R^2 \ge D_\phi(x_\star, x_0)$.
We let $\alpha \deq \alpha_f + \alpha_g$.
To obtain $\varepsilon$ error,~\autoref{thm:SMPGD} implies the rates in~\autoref{tab:sgd}.

\begin{table}[h]
    \centering
    \begin{tabular}{cc}
        \textbf{Assumptions} & \textbf{Iterations} \\
        convex, smooth & $O(\sigma^2 dR^2/\varepsilon^2)$ \\
        strongly convex, smooth & $O(\sigma^2 d \log(\alpha R^2/\varepsilon)/(\alpha \varepsilon))$ \\
        convex, non-smooth & $O(L^2 R^2/\varepsilon^2)$ \\
        strongly convex, non-smooth & $O(L^2 \log(\alpha R^2/\varepsilon)/(\alpha \varepsilon))$
    \end{tabular}
    \caption{Rates for~\ref{eq:SMPGD} with an appropriate step size and averaging.}\label{tab:sgd}
\end{table}

A few remarks are in order.
\begin{enumerate}
    \item The effect of the stochasticity of the gradients, at least under our assumptions, is qualitatively the same as the effect of non-smoothness.
        Indeed, the rates of $O(1/\varepsilon^2)$ under convexity and $O(1/\varepsilon)$ under strong convexity reflect the rates for~\ref{eq:PSD} in \S\ref{ssec:projected_subgradient}.
        Similarly, in this setting we do not have a descent lemma, and hence we should average the iterates.
    \item In the non-smooth case, stochasticity of the gradients does not hurt the rate at all, provided that the stochastic gradients satisfy an appropriate $L^2$ bound.
    \item By Jensen's inequality, it is not hard to see that~\eqref{eq:sgd_grad_bd} implies~\eqref{eq:sgd_var_bd} with $\sigma^2 d \le 4L^2$.
        Hence, the variance bound~\eqref{eq:sgd_var_bd} is indeed a weaker assumption, although the analysis requires a stronger assumption{---}smoothness{---}for $f$.
        At first glance, it may seem that~\eqref{eq:sgd_var_bd} and~\eqref{eq:sgd_grad_bd} are similar, but the former is a variance bound and the latter is an $L^2$ bound.
        Suppose, for instance, that $\opnorm \cdot$ is the Euclidean norm, and that $\hat\nabla f$ is computed on the basis of a mini-batch of $B$ samples.
        Then, we expect~\eqref{eq:sgd_var_bd} to decay as $1/B$, whereas~\eqref{eq:sgd_grad_bd} does not.
    \item Since stochastic optimization behaves like non-smooth optimization, we also expect that it is not possible to ``accelerate''.
        Indeed, many of the rates in~\autoref{thm:SMPGD} can be shown to be optimal up to logarithmic terms~\cite{RagRak11IBC, Aga+12StochasticOpt}.
\end{enumerate}

\subsection{Implications for statistical generalization}\label{ssec:sgd_generalization}

Suppose that we have a dataset $\{Z_i : i\in [n]\} \subseteq \eu Z$ of i.i.d.\ samples, a parameter space $\Theta \subseteq \R^d$, and a loss $\ell : \Theta \times \eu Z \to \R$.
Define the \emph{empirical risk} and the \emph{population risk}:
\begin{align*}
    \eu R_n(\theta)
    &\deq \frac{1}{n} \sum_{i=1}^n \ell(\theta; Z_i)\,, \qquad \eu R(\theta) \deq \E \eu R_n(\theta)\,.
\end{align*}
We further assume that $\theta \mapsto \ell(\theta; z)$ is $L$-Lipschitz for all $z\in\eu Z$, and that $\Theta$ is is the ball $\msf B(0,R)$ of radius $R$.

\begin{ex}[regression]
    Suppose that $Z_i = (X_i, Y_i)$ with $X_i\in\R^d$ and $Y_i \in [-1,1]$.
    Assume that for each $\theta\in\Theta$, we have a predictor $f_\theta : \R^d\to [-1,1]$.
    Then, we can consider the squared loss
    \begin{align*}
        \ell(\theta; z)
        = \ell(\theta; (x,y))
        = {(y - f_\theta(x))}^2\,,
    \end{align*}
    If we further consider linear regression, $f_\theta : x \mapsto \langle \theta, x \rangle$, then $f_\theta$ is $4r$-Lipschitz for all $z = (x,y)$ with $\norm x \le r$ and $\abs y \le 1$.
    Linear regression is not as restrictive as it may seem, since we can imagine that each $X_i$ is actually the output of a high-dimensional feature map, in which case we obtain kernel regression.
\end{ex}

The setting we have described is actually the standard one for statistical learning theory, and it covers much more than regression (e.g., classification and density estimation), but regression is a good representative example.
Also, since our conclusions below will not involve the dimension $d$, we can think of the function class as infinite-dimensional.

Define the minimizers
\begin{align*}
    \widehat \theta_n \in \argmin_{\theta\in\Theta} \eu R_n(\theta)\,, \qquad \theta^\star \in \argmin_{\theta \in \Theta} \eu R(\theta)\,.
\end{align*}
We view $\theta^\star$ as the ground truth value of the parameter that we wish to recover.
Since we do not have access to the population risk $\eu R$, we must base our procedures on the samples ${\{Z_i\}}_{i\in [n]}$, so it is natural to use $\widehat\theta_n$, the \emph{empirical risk minimizer} (ERM), as our estimator.
This is the starting point for statistical theory, but it says nothing about how we can actually compute $\widehat \theta_n$.

The power of stochastic gradient descent (SGD) is that we can view it as a method to \emph{directly} minimize the population risk.
We consider the iteration
\begin{align}\label{eq:SGD}\tag{$\msf{SGD}$}
    \theta_{k+1}
    &\deq \theta_k - h\,\nabla_\theta \ell(\theta_k; Z_{k+1})\,.
\end{align}
If we denote $\hat \nabla \eu R(\theta) = \nabla_\theta\ell(\theta; Z)$, then this is indeed an unbiased stochastic gradient: $\E\hat\nabla \eu R(\theta) = \nabla \eu R(\theta)$.
Due to our Lipschitz assumption, we also know that
\begin{align*}
    \E[\norm{\hat\nabla \eu R(\theta)}^2] \le L^2\,.
\end{align*}
However, the implicit assumption in~\autoref{thm:SMPGD} is that the randomness is fresh at each iteration, so we are not allowed to reuse any samples.
This limits the total number of iterations of~\ref{eq:SGD} to the sample size $n$, and we refer to this as \emph{one-pass} SGD\@.

From~\autoref{thm:SMPGD}, if we further assume that \emph{$\theta \mapsto \ell(\theta; z)$ is convex for every $z\in\eu Z$}, then~\ref{eq:SGD} with an optimized step size and averaging satisfies
\begin{align*}
    \E \eu R(\bar \theta_n) - \eu R(\theta^\star)
    &\lesssim \frac{LR}{\sqrt n}\,.
\end{align*}
Here, the guarantee is in terms of the difference between the expected risk of our estimator, the averaged iterate of~\ref{eq:SGD}, and the best possible risk.
This is known as the \emph{excess risk} and it is the best we can hope for, since $\theta^\star$ may not be identifiable (unique).

How does this compare to the performance of ERM\@?
Analysis of the ERM estimator starts with the following decomposition:
\begin{align*}
    \eu R(\widehat \theta_n) - \eu R(\theta^\star)
    &= \eu R(\widehat \theta_n) - \eu R_n(\widehat \theta_n) + {\underbrace{\eu R_n(\widehat \theta_n) - \eu R_n(\theta^\star)}_{\le 0}} + \eu R_n(\theta^\star) - \eu R(\theta^\star) \\
    &\le 2\sup_{\theta\in\Theta}{\abs{\eu R_n(\theta) - \eu R(\theta)}}\,.
\end{align*}
We therefore want to show that $\sup_{\theta\in\Theta}{\abs{\eu R_n(\theta) - \eu R(\theta)}}$ tends to zero at a certain rate, which is known as a uniform convergence argument.
This is a type of stochastic process (since $\eu R_n$ is random) known as an empirical process, and sophisticated tools have been developed for its study.
After applying a number of them (symmetrization, contraction principle, control of the Rademacher complexity), one can show that
\begin{align}\label{eq:ERM_rate}
    \E \eu R(\widehat \theta_n) - \eu R(\theta^\star)
    &\lesssim \frac{LR}{\sqrt n}\,,
\end{align}
just as for~\ref{eq:SGD}.

Actually, it is worth remarking that the bounds from empirical process theory depend on various notions of the complexity of the class $\{\ell(\theta; \cdot) : \theta \in \Theta\}$.
A traditional approach measures this complexity essentially by counting the number of free parameters in the class (Vapnik{--}Chervonenkis or VC theory), and would not match the dimension-free rate attained by SGD\@.
In order to do so, one needs to turn toward ``size-based'' measures of complexity that take into account the fact that $\Theta$ lies in a ball, hence the need to control the Rademacher complexity directly.

Anyway, we can show that the ERM estimator satisfies~\eqref{eq:ERM_rate}, and moreover, this argument does not require convexity of the loss.
However, when we discuss how to compute the ERM estimator, we need to assume convexity anyway.
Let us suppose that we compute it by running~\ref{eq:GD} (or more specifically,~\ref{eq:PSD}) on the empirical risk $\eu R_n$.
Since the statistical error is already $LR/\sqrt n$, we only need to optimize to this level of accuracy.
Applying~\autoref{thm:psd}, we see that the number of iterations of~\ref{eq:PSD} is roughly $n$.
This is the same number of iterations as one-pass~\ref{eq:SGD}, except that each iteration of~\ref{eq:SGD} is roughly $n$ times cheaper than the corresponding one for~\ref{eq:PSD}.

In conclusion, one-pass~\ref{eq:SGD} produces an estimator which has comparable statistical performance to the ERM estimator, with a computational cost roughly $n$ times cheaper than minimizing the empirical risk directly using~\ref{eq:PSD}.

The $n^{-1/2}$ dependence on the sample size $n$ is known as a ``slow rate''.
It can be improved when we additionally assume that for every $z\in\eu Z$, $\theta \mapsto \ell(\theta; z)$ is $\alpha$-convex for some $\alpha > 0$.
In this case,~\autoref{thm:SMPGD} yields
\begin{align*}
    \E \eu R(\bar \theta_n) - \eu R(\theta^\star)
    &\lesssim \frac{L^2}{\alpha n}\,.
\end{align*}
(Actually,~\autoref{thm:SMPGD} yields a slightly worse result by a logarithmic factor, but this can be fixed with a time-varying choice of step sizes.)
Due to strong convexity, it also implies parameter recovery:
\begin{align*}
    \sqrt{\E[\norm{\bar \theta_n - \theta^\star}^2]}
    &\lesssim \frac{L}{\alpha\sqrt n}\,.
\end{align*}

What about for the ERM estimator?
This time, one needs a refined argument based on \emph{localized} Rademacher complexities, and it again eventually yields
\begin{align*}
    \E \eu R(\widehat \theta_n) - \eu R(\theta^\star)
    &\lesssim \frac{L^2}{\alpha n}\,.
\end{align*}
As for computation, by applying~\ref{eq:PSD} and the result of~\autoref{qu:psd_str_cvx} (again, omitting the logarithmic factor which can be removed with better step sizes), the conclusion is the same: the number of iterations is the same as for~\ref{eq:SGD}, but each iteration is roughly $n$ times more expensive.

This discussion suggests that, at least when the risk is convex, averaged one-pass~\ref{eq:SGD} is expected to be an excellent estimator, both computationally and statistically.
The next subsection will further reinforce this point.

\subsection{Central limit theorem for Polyak{--}Ruppert averaging}

\textbf{Disclaimer:} This subsection is somewhat technical, so rather than tracing through all of the details, you are encouraged to follow the high-level ideas.

We now turn to a celebrated result in stochastic optimization: namely, that the iterates of SGD with Polyak{--}Ruppert averaging obey a central limit theorem.
Let
\begin{align}\label{eq:ASGD}\tag{$\msf{ASGD}$}
    \theta_{n+1}
    &\deq \theta_n - h_{n+1}\,\hat\nabla f(\theta_n)\,,
    \qquad \bar \theta_n
    \deq \frac{1}{n} \sum_{j=0}^{n-1} \theta_j\,.
\end{align}
Our goal is to show that $\sqrt n\,(\bar \theta_n - \theta^\star) \to \normal(0, \Sigma)$ for a certain covariance matrix $\Sigma$.
Throughout, we write
\begin{align*}
    \hat\nabla f(\theta_n)
    &= \nabla f(\theta_n) + \xi_{n+1}\,,
\end{align*}
where conditionally on $\theta_n$, the random vector $\xi_{n+1}$ has zero mean.
Our condition on the step sizes is
\begin{align}\label{eq:clt_step_size}
    h_n = n^{-\gamma}\,, \qquad \text{for some}~\frac{1}{2} < \gamma < 1\,.
\end{align}

We recall some preliminaries on convergence in distribution.
\begin{itemize}
    \item We say that a sequence ${\{X_n\}}_{n\in\N}$ of random vectors converges in distribution (or converges weakly in law) to a probability distribution $\mu$, denoted $X_n\todist{} \mu$, if $\E f(X_n) \to \int f\,\D \mu$ for every bounded, continuous function $f : \R^d\to\R$.
        This is the same notion of convergence as in the classical central limit theorem (CLT).
    \item We say that ${\{X_n\}}_{n\in\N}$ tends to $0$ in probability if for all $\varepsilon > 0$, $\Pr(\norm{X_n} \ge \varepsilon) \to 0$.
        If $\E\norm{X_n} \to 0$, then $X_n\to 0$ in probability; this follows from Markov's inequality.
    \item If $X_n = Y_n + Z_n$, and $Z_n \to 0$ in probability, then ${\{X_n\}}_{n\in\N}$ and ${\{Y_n\}}_{n\in\N}$ have the same distributional limit.
\end{itemize}

\paragraph*{Quadratic case.}
We begin with the quadratic case with
\begin{align*}
    f : \R^d\to\R\,,\qquad f(\theta) = \frac{1}{2}\,\langle \theta-\theta^\star, A\,(\theta-\theta^\star)\rangle\,.
\end{align*}
We assume $A \succ 0$.
We do not treat this case separately merely as a ``warm-up''; the analysis here is crucial for the general case as well.

In the following, we write $\sup_{n\ge 1} \E[\norm{\xi_n}^{2+}]< \infty$ to mean $\sup_{n\ge 1} \E[\norm{\xi_n}^{2+\delta}] < \infty$ for some $\delta > 0$, i.e., we have slightly more than two moments.

\begin{thm}[CLT for~\ref{eq:ASGD}, quadratic case]\label{thm:clt_quadratic}
    Assume that $f$ is a strongly convex quadratic function, and that the step sizes satisfy the condition~\eqref{eq:clt_step_size}.
    Assume that conditionally on $\theta_n$, each $\xi_{n+1}$ is mean zero and has covariance $\Xi_{n+1}$, such that $n^{-1} \sum_{k=1}^n \Xi_k \to S_\infty$ in probability and $\sup_{n\ge 1}{\E[\norm{\xi_n}^{2+}]} < \infty$.
    Then,
    \begin{align*}
        \sqrt n\,(\bar \theta_n - \theta^\star)
        \todist{} \normal(0, A^{-1} S_\infty A^{-1})\,.
    \end{align*}
\end{thm}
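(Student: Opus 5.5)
The plan follows the classical Polyak--Juditsky argument. Write $\Delta_n \deq \theta_n - \theta^\star$. In the quadratic case the recursion is linear:
\begin{align*}
    \Delta_{n+1} = \Delta_n - h_{n+1}\,A\Delta_n - h_{n+1}\,\xi_{n+1}\,.
\end{align*}
Rearranging gives $A\Delta_n = (\Delta_n - \Delta_{n+1})/h_{n+1} - \xi_{n+1}$. Summing over $n=0,\dotsc,N-1$ and dividing by $\sqrt N$ yields
\begin{align*}
    \sqrt N\,A\,(\bar\theta_N - \theta^\star)
    = -\frac{1}{\sqrt N}\sum_{n=1}^N \xi_n + \frac{1}{\sqrt N}\sum_{n=0}^{N-1} \frac{\Delta_n - \Delta_{n+1}}{h_{n+1}}\,.
\end{align*}
The strategy is to show that the second term tends to $0$ in probability. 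By the third bullet point on convergence in distribution, the limit is then governed by the martingale term. Multiplying by $A^{-1}$ (a continuous map) gives the stated covariance $A^{-1}S_\infty A^{-1}$.

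For the martingale term, I would invoke the martingale CLT for the array $\xi_n/\sqrt N$, which are martingale differences with respect to the natural filtration. It needs two conditions.
\begin{itemize}
    \item The conditional covariances must satisfy $N^{-1}\sum_{n\le N}\Xi_n \to S_\infty$ in probability. This is assumed.
    \item A Lindeberg condition must hold. This follows from the uniform $2+\delta$ moment bound, since $N^{-1}\sum_n \E[\norm{\xi_n}^2\,\mathbbm 1\{\norm{\xi_n}>\varepsilon\sqrt N\}] \le N^{-1}\sum_n \E\norm{\xi_n}^{2+\delta}/(\varepsilon\sqrt N)^{\delta} \to 0$.
\end{itemize}
The martingale CLT is not proven in the notes, so I would cite it as a standard fact. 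This gives $-N^{-1/2}\sum\xi_n \todist{} \normal(0,S_\infty)$.

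The remainder term is the main obstacle. Summation by parts (Abel) gives
\begin{align*}
    \sum_{n=0}^{N-1}\frac{\Delta_n-\Delta_{n+1}}{h_{n+1}} = \frac{\Delta_0}{h_1} - \frac{\Delta_N}{h_N} + \sum_{n=1}^{N-1}\Delta_n\Bigl(\frac{1}{h_{n+1}}-\frac{1}{h_n}\Bigr)\,.
\end{align*}
With $h_n = n^{-\gamma}$, the differences satisfy $1/h_{n+1}-1/h_n \lesssim n^{\gamma-1}$. It therefore suffices to have the mean-square bound $\E\norm{\Delta_n}^2 \lesssim h_n = n^{-\gamma}$. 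Given that bound, $\E\norm{\Delta_N}/h_N \lesssim N^{\gamma/2}$, which is $o(\sqrt N)$ because $\gamma<1$. Likewise $\sum_{n<N} n^{\gamma-1}\,n^{-\gamma/2} \lesssim N^{\gamma/2} = o(\sqrt N)$, and $\Delta_0/h_1$ is constant. Markov's inequality then converts these $L^1$ bounds into convergence in probability.

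To prove $\E\norm{\Delta_n}^2\lesssim h_n$, I would expand the square, in the style of Theorem~\ref{thm:SMPGD}. Let $\alpha>0$ be the smallest eigenvalue of $A$. For large $n$, so that $h_n\norm A_{\rm op}\le 1$,
\begin{align*}
    \E\norm{\Delta_{n+1}}^2 \le (1-\alpha h_{n+1})\,\E\norm{\Delta_n}^2 + h_{n+1}^2\,C\,,
\end{align*}
where $C$ bounds $\E\norm{\xi_n}^2$; this uses the fact that the cross term vanishes by conditional mean zero. An induction then gives the target bound $u_n \le K h_n$. The only thing to check is that $h_n/h_{n+1} = 1+O(1/n)$ is dominated by the contraction $1-\alpha h_{n+1}$, which holds because $\gamma<1$ makes $n^{-1}\ll n^{-\gamma}$. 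This is also the one place where $\gamma<1$ is genuinely needed, while $\gamma>1/2$ is not essential here. Finite early iterations only change constants.
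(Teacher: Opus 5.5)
Your proof is correct, but it takes a different route from the paper.

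\textbf{The paper's route.} It unrolls the linear recursion into the explicit representation \eqref{eq:clt_iterate}:
\begin{align*}
    \bar\delta_n = n^{-1}M_0^n\delta_0 - n^{-1}\sum_k A^{-1}\xi_k - n^{-1}\sum_k (M_k^n - A^{-1})\,\xi_k\,.
\end{align*}
It then kills the last term using martingale orthogonality together with the deterministic estimate $n^{-1}\sum_k \norm{M_k^n - A^{-1}}_{\rm op}\to 0$ of \autoref{lem:clt_technical}. That lemma is only sketched there, via integral approximations.

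\textbf{Your route.} You use the Polyak--Juditsky summation-by-parts identity $A\Delta_n = (\Delta_n-\Delta_{n+1})/h_{n+1} - \xi_{n+1}$. This reduces the remainder to the crude mean-square rate $\E\norm{\Delta_n}^2 \lesssim h_n$, and your short induction for that rate is fully rigorous.

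\textbf{What your route buys.}
\begin{itemize}
    \item It avoids the analysis of the matrix products $M_k^n$ entirely.
    \item The rate you prove is exactly \eqref{eq:clt_iterate_bd}, which the paper needs anyway for the general case and obtains only through a ``trickier calculation''.
    \item Your identity extends verbatim to non-quadratic $f$. The nonlinearity $\zeta_n$ just adds $-N^{-1/2}\sum_n A^{-1}\zeta_n$ to the remainder, and that is precisely where $\gamma>1/2$ genuinely enters.
    \item This agrees with your remark that only $\gamma<1$ matters in the quadratic case. The paper's sketch of \autoref{lem:clt_technical} likewise only uses $\gamma<1$.
\end{itemize}

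\textbf{What the paper's route buys.} The remainder is an explicit linear functional of the noise with deterministic weights. Its $L^2$ norm is therefore computed directly by orthogonality, with no a priori bound on the iterates. It also makes transparent how the weights $M_k^n$ approximate $A^{-1}$ under time-varying step sizes.

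Both arguments rest on the same martingale CLT. You invoke the Lindeberg version and verify its condition correctly from the $2+\delta$ moment bound, whereas the paper proves the CLT only under uniform boundedness of the noise.
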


Before turning toward the proof, let us consider a simple example.

\begin{ex}[estimating the mean of a Gaussian]\label{ex:gaussian_clt}
    Suppose that we have samples ${\{X_n\}}_{n\in\N} \simiid \normal(\theta^\star, A^{-1})$ for a known covariance matrix $A \succ 0$, and we wish to estimate the mean $\theta^\star$.
    We consider
    \begin{align*}
        \theta_{n+1}
        &= \theta_n - h_{n+1}\,A\,(\theta_n - X_{n+1})\,, \qquad \bar \theta_n = \frac{1}{n} \sum_{j=0}^{n-1} \theta_j\,.
    \end{align*}
    This corresponds to the quadratic loss function with $\xi_n = A\,(\theta^\star - X_n)$.
    In this case, the ${\{\xi_n\}}_{n\in\N}$ are i.i.d., with mean zero and covariance matrix $A$.

    If we use the sample mean, then
    \begin{align*}
        \sqrt n\,\Bigl( \frac{1}{n} \sum_{j=1}^n X_j - \theta^\star\Bigr)
        &\sim \normal(0, A^{-1})\,.
    \end{align*}
    On the other hand, the CLT for~\ref{eq:ASGD} with $S_\infty = A$ shows that
    \begin{align*}
        \sqrt n\,(\bar \theta_n - \theta^\star)
        &\to \normal(0, A^{-1})\,.
    \end{align*}
\end{ex}

The first step is to write out the iterates.
For $\delta_n \deq \theta_n - \theta^\star$,
\begin{align*}
    \delta_{n+1}
    &= \delta_n - h_{n+1}\,A\delta_n - h_{n+1} \xi_{n+1}
    = (I - h_{n+1} A)\,\delta_n - h_{n+1} \xi_{n+1}\,.
\end{align*}
Unrolling,
\begin{align*}
    \delta_n
    &= \Bigl[\prod_{j=1}^n (I-h_j A)\Bigr]\,\delta_0 - \sum_{j=1}^n \Bigl[h_j \prod_{k=j+1}^n (I-h_k A)\Bigr]\,\xi_j\,.
\end{align*}
Defining $\bar \delta_n \deq n^{-1} \sum_{j=0}^{n-1} \delta_j$, it yields
\begin{align}
    \bar \delta_n
    &= \frac{1}{n} \sum_{j=0}^{n-1} \Bigl[\prod_{k=1}^{j} (I-h_k A)\Bigr]\,\delta_0 - \frac{1}{n} \sum_{j=0}^{n-1} \sum_{k=1}^{j} \Bigl[h_k \prod_{\ell=k+1}^j (I-h_\ell A)\Bigr] \,\xi_k \nonumber\\[0.25em]
    &= \frac{1}{n} \sum_{j=0}^{n-1} \Bigl[\prod_{k=1}^{j} (I-h_k A)\Bigr]\,\delta_0 - \frac{1}{n} \sum_{k=1}^{n-1} \underbrace{h_k \sum_{j=k}^{n-1} \Bigl[\prod_{\ell=k+1}^j (I-h_\ell A)\Bigr]}_{\eqqcolon M_k^n} \,\xi_k \nonumber\\[0.25em]
    &= \frac{1}{n} \,M_0^n\delta_0 - \frac{1}{n} \sum_{k=1}^{n-1} A^{-1} \xi_k - \frac{1}{n} \sum_{k=1}^{n-1} (M_k^n - A^{-1})\, \xi_k\,,\label{eq:clt_iterate}
\end{align}
where we set $h_0 \deq 1$.
The intuition is that if all of the $h_\ell$ were the same, then $M_k^n$ would equal $h \sum_{j=k}^{n-1} {(I-hA)}^{j-k} \to A^{-1}$ as $n\to\infty$, so we hope that the last term tends to zero.

\begin{lem}\label{lem:clt_technical}
    The $M_k^n$ matrices are uniformly bounded in operator norm.
    Also,
    \begin{align*}
        \frac{1}{n} \sum_{k=1}^{n-1} {\norm{M_k^n - A^{-1}}_{\rm op}} \to 0\qquad\text{as}~n\to\infty\,.
    \end{align*}
\end{lem}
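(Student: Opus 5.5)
We plan to diagonalize. Since $A \succ 0$ is symmetric and every $M_k^n$ is a polynomial in $A$, all these matrices are simultaneously diagonalizable in an eigenbasis of $A$. It therefore suffices to prove both claims for the scalar quantities
\begin{align*}
    m_k^n(\lambda) \deq h_k \sum_{j=k}^{n-1} \prod_{\ell=k+1}^j (1-h_\ell\lambda)\,,
\end{align*}
uniformly over $\lambda$ in the spectrum of $A$, i.e., for $\lambda \in [\alpha,\beta]$ with $0<\alpha\le\beta$. Since $h_\ell = \ell^{-\gamma}\to 0$, only finitely many indices have $h_\ell\lambda > 1$. Those finitely many indices contribute bounded factors, and we absorb them into constants; equivalently, we can just treat $\ell$ larger than some $\ell_0$ and handle $k<\ell_0$ separately, since those terms carry a $1/n$ prefactor. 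For $\ell \ge \ell_0$ we use $0\le 1-h_\ell\lambda\le \exp(-h_\ell\alpha)$.

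\textbf{Uniform boundedness.} Put $H_{k,j}\deq\sum_{\ell=k+1}^j h_\ell$. Then $|m_k^n| \le h_k \sum_{j\ge k} \exp(-\alpha H_{k,j})$. We compare this sum to an integral. Because $h_\ell$ is decreasing and $h_{j}/h_{j+1}\to1$, we get
\begin{align*}
    h_k \sum_{j\ge k}\exp(-\alpha H_{k,j}) \le \frac{h_k}{h_{\cdot}}\cdot\sum_j h_{j+1}\exp(-\alpha H_{k,j}) \lesssim \frac{1}{\alpha}\,\sup_{j\ge k}\frac{h_k}{h_{j+1}}\,e^{-\alpha H_{k,j}/2}\,.
\end{align*}
Here we use $\sum_j h_{j+1}e^{-\alpha H_{k,j}/2}\lesssim \int_0^\infty e^{-\alpha s/2}\,\D s$. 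The ratio $h_k/h_{j+1}=((j+1)/k)^\gamma$ grows only polynomially, while $H_{k,j}\gtrsim (j-k)j^{-\gamma}$. Splitting into $j\le 2k$, where the ratio is at most $2^\gamma$, and $j>2k$, where $H_{k,j}\gtrsim j^{1-\gamma}$ kills any polynomial, gives a uniform bound.

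\textbf{Averaged convergence.} We use the summation-by-parts identity behind the intuition. Write $\pi_{k,j}\deq\prod_{\ell=k+1}^j(1-h_\ell\lambda)$. Then $\pi_{k,j}-\pi_{k,j+1}=h_{j+1}\lambda\,\pi_{k,j}$, and summing gives
\begin{align*}
    \lambda\sum_{j=k}^{n-1} h_{j+1}\pi_{k,j} = 1-\pi_{k,n}\,.
\end{align*}
Hence
\begin{align*}
    m_k^n - \lambda^{-1} = \sum_{j=k}^{n-1}(h_k-h_{j+1})\,\pi_{k,j} - \lambda^{-1}\pi_{k,n}\,.
\end{align*}
The two error terms are handled as follows. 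First, $\frac1n\sum_{k<n}\pi_{k,n}\le\frac1n\sum_k e^{-\alpha H_{k,n}}$. This tends to zero because $e^{-\alpha H_{k,n}}$ is tiny unless $n-k\lesssim n^\gamma\log n$, and $n^\gamma\log n = o(n)$. Second, $h_k-h_{j+1}\le h_k\min\{1,\gamma (j+1-k)/k\}$, using the mean value theorem on $x^{-\gamma}$. So that term is at most
\begin{align*}
    h_k\sum_j \min\Bigl\{1,\frac{j-k+1}{k}\Bigr\}e^{-\alpha H_{k,j}}\,.
\end{align*}
The exponential effectively restricts $j-k\lesssim k^\gamma\log k$. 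The sum is then $\lesssim h_k\cdot k^{\gamma}\log k\cdot(k^\gamma\log k/k)$, up to constants, which is $O(k^{\gamma-1}\log^2k)\to0$. The tail $j-k\gg k^\gamma\log k$ contributes negligibly by the same splitting used in the boundedness step. The Ces\`aro average of a sequence tending to $0$ tends to $0$, and this also absorbs the finitely many small-$k$ terms and the first error term. Uniformity in $\lambda\in[\alpha,\beta]$ holds because every bound depends only on $\alpha$ and $\beta$.

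The main obstacle will be the second error term. We need to quantify cleanly that $\pi_{k,j}$ localizes $j$ to a window of width about $k^\gamma$ around $k$. On that window $h$ is nearly constant, precisely because $\gamma<1$. We also need uniform control of the polynomially growing weights far outside this window. We will organize this with the single estimate $H_{k,j}\ge (j-k)(j)^{-\gamma}$ and the case split $j\le 2k$ versus $j>2k$.
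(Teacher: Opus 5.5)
Your proposal is correct and is essentially the paper's argument: your identity $m_k^n-\lambda^{-1}=\sum_{j=k}^{n-1}(h_k-h_{j+1})\,\pi_{k,j}-\lambda^{-1}\pi_{k,n}$ is exactly the paper's $AM_k^n-I=A\sum_{j=k}^{n-1}(h_k-h_{j+1})\,B_{k+1}^j-B_{k+1}^n$ read eigenvalue by eigenvalue, and both error terms are controlled by the same bound $\prod_\ell(1-h_\ell\lambda)\le\exp(-\alpha\sum_\ell h_\ell)$. The differences are only in execution. You bound the $(h_k-h_{j+1})$ term pointwise in $k$ by $O(k^{\gamma-1}\log^2 k)$ and average via Ces\`aro, where the paper uses a double-integral approximation after the substitution $t=\tau(k)$; you also spell out the early indices with $h_\ell\lambda>1$ and the uniform boundedness, which the paper leaves to ``similar arguments'' (in your boundedness display, the middle expression should simply be the split $h_k e^{-\alpha H_{k,j}}=\frac{h_k}{h_{j+1}}e^{-\alpha H_{k,j}/2}\cdot h_{j+1}e^{-\alpha H_{k,j}/2}$, which is what your final bound uses).
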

\begin{proof}[Proof sketch]
    For intuition, suppose that $S = \sum_{n=0}^\infty {(I-hA)}^n$ for $h < 1/\norm A_{\rm op}$.
    To show that $hS = A^{-1}$, one can argue that $(I-hA)\,S = S - I$, which leads to $hS = A^{-1}$.
    We want to replicate this type of proof, but it is significantly complicated due to the time-varying step sizes.
    Let $B_j^k \deq \prod_{\ell=j}^k (I-h_\ell A)$, so that $M_k^n = h_k\sum_{j=k}^{n-1} B_{k+1}^j$.
    We start with an equation for the $B$'s: since
    \begin{align*}
        B_j^{k+1}
        &= B_j^k - h_{k+1} AB_j^k
        = \cdots
        = I - A\sum_{\ell=j-1}^k h_{\ell+1} B_j^\ell\,,
    \end{align*}
    we can write
    \begin{align*}
        M_k^n
        &= h_k \sum_{j=k}^{n-1} B_{k+1}^j
        = \sum_{j=k}^{n-1} (h_k - h_{j+1} + h_{j+1})\, B_{k+1}^j
        = \sum_{j=k}^{n-1} (h_k - h_{j+1})\,B_{k+1}^j + A^{-1}\,(I-B_{k+1}^n)\,.
    \end{align*}
    Therefore,
    \begin{align*}
        AM_k^n - I
        &= A\sum_{j=k}^{n-1} (h_k - h_{j+1})\,B_{k+1}^j -B_{k+1}^n\,.
    \end{align*}
    Since $A$ is bounded above and below,
    \begin{align*}
        \frac{1}{n} \sum_{k=1}^{n-1} {\norm{M_k^n - A^{-1}}_{\rm op}}
        &\lesssim \frac{1}{n} \sum_{k=1}^{n-1} \sum_{j=k}^{n-1} {\abs{h_k - h_{j+1}}}\,\norm{B_{k+1}^j}_{\rm op} + \frac{1}{n} \sum_{k=2}^n {\norm{B_k^n}_{\rm op}}\,.
    \end{align*}
    Also, it is easy to see that for sufficiently small step sizes (which is all that matters in the asymptotic regime),
    \begin{align*}
        \norm{B_k^n}_{\rm op}
        &\le \prod_{\ell=k}^n (I-\alpha h_\ell)
        \le \exp\Bigl(-\alpha \sum_{\ell=k}^n h_\ell \Bigr)\,.
    \end{align*}

    So, for the second term,
    \begin{align*}
        \frac{1}{n} \sum_{k=2}^n {\norm{B_k^n}_{\rm op}}
        &\le \frac{1}{n} \sum_{k=1}^n \exp\Bigl(-\alpha \sum_{\ell=k}^n h_\ell\Bigr)\,.
    \end{align*}
    Let $\tau_k \deq \sum_{\ell=1}^k h_\ell \approx {(1-\gamma)}^{-1}\, k^{1-\gamma}$.
    If, for any $t > 0$, we define $\tau(t) = t^{1-\gamma}$, the summation is roughly equivalent to the following integral (up to replacing $\alpha$ by $\alpha/(1-\gamma)$):
    \begin{align*}
        I
        &\deq \int_1^n \exp(-\alpha\,(\tau(n) - \tau(k)))\,\D k\,.
    \end{align*}
    For ease of presentation, we focus on bounding the integral instead.
    By change of variables, $t = \tau(k)$,
    \begin{align*}
        I
        &\asymp \int_1^{n^{1-\gamma}} t^{\gamma/(1-\gamma)} \exp(-\alpha\,(n^{1-\gamma} - t))\,\D t
        \lesssim n^\gamma \int_1^{n^{1-\gamma}} \exp(-\alpha\,(n^{1-\gamma} - t))\,\D t
        \lesssim n^\gamma\,.
    \end{align*}
    Since $\gamma < 1$, the second term tends to zero.

    As for the first term, we follow a similar strategy and approximate it via
    \begin{align*}
        &\frac{1}{n} \iint_{1 \le k \le j \le n} \bigl( \frac{1}{k^\gamma} - \frac{1}{j^\gamma}\bigr)\,\exp(-\alpha\,(\tau(j) - \tau(k)))\,\D j\,\D k \\[0.25em]
        &\qquad \asymp \frac{1}{n} \iint_{1\le s \le t \le n^{1-\gamma}} \underbrace{(t^{\gamma/(1-\gamma)} - s^{\gamma/(1-\gamma)})}_{\text{apply Taylor expansion}}\exp(-\alpha\,(t-s))\,\D s\,\D t \\
        &\qquad \lesssim \frac{n^{2\gamma-1}}{n} \iint_{1\le s \le t \le n^{1-\gamma}} (t-s)\exp(-\alpha\,(t-s))\,\D s \,\D t
        \lesssim \frac{n^\gamma}{n} \int_1^{n^{1-\gamma}} t\exp(-\alpha t)\,\D t
        \to 0\,.
    \end{align*}
    This completes the heuristic proof of the convergence.
    The first statement, about the boundedness of the $M_k^n$, can be proved via similar arguments.
\end{proof}

Returning to~\eqref{eq:clt_iterate}, note that
\begin{align*}
    \sqrt n\,\bar \delta_n
    &= \frac{1}{n^{1/2}} \,M_0^n\delta_0 - \frac{1}{n^{1/2}} \sum_{k=1}^{n-1} A^{-1} \xi_k - \frac{1}{n^{1/2}} \sum_{k=1}^{n-1} (M_k^n - A^{-1})\, \xi_k\,,
\end{align*}
where $\E\norm{M_0^n \delta_0}/\sqrt n \to 0$.
For the last term, we use the fact that the $\xi_k$'s are orthogonal: for $k < \ell$, by conditioning on $\theta_{1:\ell-1} \deq (\theta_1,\dotsc,\theta_{\ell-1})$,
\begin{align*}
    \E\langle (M_k^n - A^{-1})\,\xi_k,\, (M_\ell^n - A^{-1})\,\xi_\ell\rangle
    &= \E\langle (M_k^n - A^{-1})\,\xi_k,\, (M_\ell^n - A^{-1})\E[\xi_\ell \mid \theta_{1:\ell-1}]\rangle
    = 0\,.
\end{align*}
Therefore,
\begin{align*}
    \frac{1}{n} \E\Bigl[\Bigl\lVert \sum_{k=1}^{n-1} (M_k^n - A^{-1})\,\xi_k \Bigr\rVert^2\Bigr]
    &= \frac{1}{n} \sum_{k=1}^{n-1} \E[\norm{(M_k^n - A^{-1})\,\xi_k}^2]
    = \frac{1}{n} \sum_{k=1}^{n-1} \langle {(M_k^n - A^{-1})}^2, \E\Xi_k\rangle \\
    &\lesssim \frac{1}{n} \sum_{k=1}^n {\norm{M_k^n - A^{-1}}_{\rm op}}
    \to 0\,.
\end{align*}
Hence, to obtain a distributional limit for $\sqrt n\,\bar \delta_n$, it suffices to obtain one for the second term above.
This will be accomplished via martingale theory.

\paragraph{Martingale CLT.}
In the context of stochastic optimization, the noise sequence ${\{\xi_n\}}_{n\in\N}$ is not i.i.d.; indeed, we want to consider
\begin{align*}
    \xi_n
    &= \hat\nabla f(\theta_n) - \nabla f(\theta_n)\,,
\end{align*}
and since the iterates ${\{\theta_n\}}_{n\in\N}$ are random and depend on the noise sequence, it leads to a complicated dependence structure for the noise sequence.
Nevertheless, it falls within the framework of martingale theory.

\begin{defn}
    An increasing sequence of $\sigma$-algebras ${\{\ms F_n\}}_{n\in\N}$ is called a \textbf{filtration}.
    We think of $\ms F_n$ as the information available to an observer up to iteration $n$.

    A sequence of random vectors ${\{X_n\}}_{n\in\N}$ is called a \textbf{martingale} if for all $n$, $X_n$ is $\ms F_n$-measurable, $\E\norm{X_n} < \infty$, and
    \begin{align*}
        \E[X_{n+1} \mid \ms F_n] = X_n\,.
    \end{align*}
\end{defn}

In other words, the difference $X_{n+1}-X_n$ is conditionally unbiased given the information $\ms F_n$ available at iteration $n$.
If we set $X_n \deq \sum_{k=1}^n \xi_k$, then ${\{X_n\}}_{n\in\N}$ is a martingale; we sometimes refer to the noise sequence ${\{\xi_n\}}_{n\in\N}$ as a \emph{martingale difference sequence}.

Our next goal is to establish the following theorem.

\begin{thm}[martingale CLT]
    Let ${\{\xi_n\}}_{n\in\N}$ be a martingale difference sequence and write $\Xi_{n+1} \deq \cov(\xi_{n+1} \mid \ms F_n)$.
    Assume that $\sup_{n\in\N}\E[\norm{\xi_n}^{2+}] < \infty$ and that $n^{-1} \sum_{k=1}^n \Xi_k \to S_\infty$ in probability.
    Then,
    \begin{align*}
        \frac{1}{\sqrt n} \sum_{k=1}^n \xi_k \todist{} \normal(0, S_\infty) \qquad\text{as}~n\to\infty\,.
    \end{align*}
\end{thm}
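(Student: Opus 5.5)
The plan is to use the characteristic-function method. By the Cramér--Wold device it suffices to fix $u\in\R^d$ and show that the scalar sums $S_n \deq n^{-1/2}\sum_{k=1}^n \langle u,\xi_k\rangle$ converge in distribution to $\normal(0,\langle u, S_\infty u\rangle)$. Since the scalars $\eta_k \deq \langle u,\xi_k\rangle$ again form a martingale difference sequence with $\sup_k\E[\abs{\eta_k}^{2+}]<\infty$ and conditional variances $v_k\deq\langle u,\Xi_k u\rangle$ with $n^{-1}\sum_{k\le n} v_k\to \sigma^2\deq\langle u,S_\infty u\rangle$ in probability, we reduce to $d=1$. We then aim to prove $\E\exp(\mathbf{i} t S_n)\to\exp(-t^2\sigma^2/2)$ for every $t\in\R$.

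The key step is the telescoping identity for the modified products
\begin{align*}
    Z_n \deq \prod_{k=1}^n \frac{\exp(\mathbf{i} t\eta_k/\sqrt n)}{\E[\exp(\mathbf{i} t\eta_k/\sqrt n)\mid\ms F_{k-1}]}\,,
\end{align*}
which satisfy $\E Z_n=1$ by iterated conditioning, since each factor has conditional mean $1$. First, Taylor-expand the conditional characteristic function: $\E[\exp(\mathbf{i} t\eta_k/\sqrt n)\mid\ms F_{k-1}] = 1 - t^2 v_k/(2n) + r_{n,k}$. The remainder satisfies $\abs{r_{n,k}}\lesssim \E[\min\{\abs{t\eta_k}^3 n^{-3/2}, \abs{t\eta_k}^2 n^{-1}\}\mid\ms F_{k-1}]$. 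Using the $2+\delta$ moment, $\sum_k \E\abs{r_{n,k}}\lesssim n\cdot n^{-1-\delta/2}\to 0$ (a Lyapunov-type condition). Second, combine this with $n^{-1}\sum v_k\to\sigma^2$ to deduce that the denominator $P_n \deq \prod_k \E[\cdots\mid\ms F_{k-1}]$ converges in probability to $\exp(-t^2\sigma^2/2)$. Here we take logarithms, using $\log(1+z)=z+O(\abs z^2)$, and bound $\sum_k (v_k/n)^2 \le \max_k (v_k/n)\cdot \sum_k v_k/n\to 0$ in probability, which follows from the moment bound.

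Finally, write
\begin{align*}
    \E\exp(\mathbf{i} tS_n) - \exp(-t^2\sigma^2/2) = \E\bigl[Z_n\,(P_n - \exp(-t^2\sigma^2/2))\bigr]\,.
\end{align*}
The main obstacle is that $Z_n$ need not be bounded: the denominators could be small, so convergence in probability of $P_n$ does not immediately pass through the expectation. The plan is to handle this by a stopping/truncation argument. Define the stopping time $\tau_n$ at which $\sum_{k\le j} v_k/n$ first exceeds $\sigma^2+1$, and replace $\eta_k$ by $\eta_k\mathbbm 1\{k\le\tau_n\}$, which is still a martingale difference sequence. Then $\abs{P_n}$ is bounded below by a deterministic constant, so $Z_n$ is uniformly bounded. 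Because $\Pr(\tau_n<n)\to0$, the truncated and original sums have the same limit. With $Z_n$ bounded and $P_n$ converging in probability, dominated convergence closes the argument. If the lower bound on $\abs{P_n}$ also requires controlling individual large $v_k$, we will additionally truncate $\eta_k$ at level $\epsilon\sqrt n$, paying an error of at most $\E[\eta_k^2;\abs{\eta_k}>\epsilon\sqrt n]/\ldots$, which vanishes by the $2+\delta$ moment bound.
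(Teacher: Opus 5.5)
Your proposal is correct in outline, but it takes a genuinely different route from the paper.

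\textbf{What the paper does.} The paper only proves the theorem under the stronger assumption that the $\xi_n$ are uniformly bounded. It multiplies by the explicit compensator $\exp(\tfrac12\langle\lambda,S_n\lambda\rangle)$, built from the predictable variances. The term $\E\abs{1-\exp(\tfrac12\langle\lambda,(S_n-S_\infty)\lambda\rangle)}$ is handled by $S_n\to S_\infty$ in probability together with boundedness of $S_n$. It then shows $\E\exp(\mathbf{i}\langle\lambda,X_n\rangle+\tfrac12\langle\lambda,S_n\lambda\rangle)\to1$ by peeling off one increment at a time through conditioning, each step costing a uniform $O(n^{-3/2})$ Taylor error. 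Boundedness is exactly what makes the compensator bounded and the remainders uniform.

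\textbf{What you do instead.} You divide by the exact product $P_n$ of conditional characteristic functions, so $\E Z_n=1$ holds with no peeling error. The work then moves to two places. First, proving $P_n\to e^{-t^2\sigma^2/2}$ in probability, via your Lyapunov bound on $\sum_k\E\abs{r_{n,k}}$ and $\max_k v_k/n\to0$; the latter follows from conditional Jensen, $\E\max_{k\le n}(v_k/n)^{1+\delta/2}\le Cn^{-\delta/2}$. Second, keeping $Z_n$ bounded by truncation. The payoff is that your argument proves the statement under the stated $2+\delta$ moment hypothesis, whereas the paper's shorter argument only covers bounded increments.

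\textbf{One step needs tightening.} The stopping-time truncation alone does not give a deterministic lower bound on $\abs{P_n}$. The stopping rule still permits a single $v_k/n$ of order one, and then a conditional characteristic function can vanish (e.g.\ $\eta_k=\pm a$ conditionally, with $ta/\sqrt n=\pi/2$). So the level truncation is not optional.

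Truncate at $\epsilon\sqrt n$ with $t^2\epsilon^2\le1$, recentering by the conditional mean if you want to keep the martingale-difference property. Then stop as you propose. Let $\tilde v_k$ denote the conditional second moment of the resulting increments. Then $\tilde v_k\le v_k$, $\tilde v_k/n\le\epsilon^2$, and $\sum_k\tilde v_k/n\le\sigma^2+1+\epsilon^2$. Now $\cos x\ge1-x^2/2$ gives $\abs{\E[e^{\mathbf{i}t\tilde\eta_k/\sqrt n}\mid\ms F_{k-1}]}\ge1-t^2\tilde v_k/(2n)\ge\tfrac12$. Hence $\abs{P_n}\ge\exp(-t^2(\sigma^2+1+\epsilon^2))$.

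The truncation costs vanish by the $2+\delta$ moment bound. The sums change by at most $2n^{-1/2}\sum_{k\le n}\E[\abs{\eta_k};\abs{\eta_k}>\epsilon\sqrt n]\le C\epsilon^{-1-\delta}n^{-\delta/2}$ in $L^1$. The averaged variances change by $n^{-1}\sum_k\E(v_k-\tilde v_k)\le C\epsilon^{-\delta}n^{-\delta/2}$. Stopping changes things only on an event of vanishing probability. Bounded convergence then closes the argument as you describe.
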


This is a special case of a more general theorem on triangular arrays of Lindeberg{--}Feller type.
For simplicity, we prove it under the stronger hypothesis that ${\{\xi_n\}}_{n\in\N}$ is \textbf{uniformly bounded}.

\begin{proof}
    Let $X_k \deq n^{-1/2} \sum_{\ell=1}^k \xi_\ell$; note that this depends on $n$ but we suppress it from the notation.
    Consider the characteristic function: for $\mb i \deq \sqrt{-1}$ and $\lambda \in \R^d$,
    \begin{align*}
        \phi_n(\lambda)
        &\deq \E\exp(\mb i\,\langle \lambda, X_n\rangle)\,.
    \end{align*}
    Due to standard results on Fourier inversion, it suffices to prove that the characteristic function $\phi_n$ converges pointwise to the characteristic function of the Gaussian,
    \begin{align*}
        \phi_\infty(\lambda)
        &\deq \E\exp(\mb i\,\langle \lambda, Z \rangle)
        = \exp\bigl( - \frac{1}{2}\,\langle \lambda, S_\infty\,\lambda\rangle\bigr)\,, \qquad Z \sim \normal(0, S_\infty)\,.
    \end{align*}
    Let $S_k \deq n^{-1} \sum_{\ell=1}^k \Xi_\ell$.
    We start by writing
    \begin{align*}
        \abs{\phi_n(\lambda) - \phi_\infty(\lambda)}
        &\le \bigl\lvert \E\bigl[\exp(\mb i\,\langle \lambda, X_n\rangle)\bigl(1- \exp\bigl( \frac{1}{2}\,\langle \lambda, S_n\,\lambda \rangle - \frac{1}{2}\,\langle \lambda, S_\infty\,\lambda\rangle\bigr)\bigr)\bigr]\bigr\rvert \\[0.25em]
        &\qquad{} + \bigl\lvert \exp\bigl( - \frac{1}{2}\,\langle \lambda, S_\infty\,\lambda \rangle\bigr)\,\bigl( \E\exp\bigl(\mb i\,\langle \lambda, X_n\rangle + \frac{1}{2}\,\langle \lambda, S_n\,\lambda \rangle\bigr) - 1\bigr)\bigr\rvert \\[0.25em]
        &\le \E\bigl\lvert 1- \exp\bigl( \frac{1}{2}\,\langle \lambda, S_n\,\lambda \rangle - \frac{1}{2}\,\langle \lambda, S_\infty\,\lambda\rangle\bigr) \bigr\rvert\\[0.25em]
        &\qquad{} + \bigl\lvert \E\exp\bigl(\mb i\,\langle \lambda, X_n\rangle + \frac{1}{2}\,\langle \lambda, S_n\,\lambda \rangle\bigr) - 1 \bigr\rvert\,.
    \end{align*}
    Since ${\{\xi_n\}}_{n\in\N}$ is bounded, say by $B$, then $\norm{\Xi_k}_{\rm op}\le B^2$, so ${\{S_n\}}_{n\in\N}$ is bounded.
    Since $S_n\to S_\infty$ in probability, the first term above tends to zero as $n\to\infty$.

    For the second term, we peel off the terms in $X_n$ by conditioning.
    Indeed,
    \begin{align*}
        \E[\exp(\mb i \,\langle \lambda, X_n \rangle) \mid \ms F_{n-1}]
        &= \E[\exp(\mb i\, \langle \lambda, X_{n-1} + n^{-1/2} \xi_n\rangle)\mid \ms F_{n-1}] \\
        &= \exp(\mb i\,\langle \lambda, X_{n-1}\rangle) \E[\exp(\mb i\, \langle \lambda, n^{-1/2} \xi_n \rangle) \mid \ms F_{n-1}]\,.
    \end{align*}
    By Taylor expansion,
    \begin{align*}
        \E[\exp(\mb i\,\langle \lambda, n^{-1/2}\xi_n \rangle) \mid \ms F_{n-1}]
        &= \E\bigl[ 1 + \frac{\mb i}{n^{1/2}}\,\langle \lambda, \xi_n \rangle - \frac{1}{2n}\,\langle \lambda, \xi_n \rangle^2 + O(n^{-3/2}) \bigm\vert \ms F_{n-1}\bigr]
    \end{align*}
    where the error term $O(n^{-3/2})$ is uniform, due to the assumption of boundedness.
    By the martingale property, this equals
    \begin{align*}
        1 - \frac{1}{2n}\,\langle \lambda, \Xi_n \,\lambda\rangle + O(n^{-3/2})
        = \exp\bigl( - \frac{1}{2n}\,\langle \lambda, \Xi_n\,\lambda \rangle + O(n^{-3/2})\bigr)\,.
    \end{align*}
    Hence,
    \begin{align*}
        \E\exp\bigl(\mb i\,\langle \lambda, X_n\rangle + \frac{1}{2}\,\langle \lambda, S_n\,\lambda \rangle\bigr)
        &= \E\exp\bigl(\mb i\,\langle \lambda, X_{n-1} \rangle + \frac{1}{2}\,\langle \lambda, S_n\,\lambda \rangle - \frac{1}{2n}\,\langle \lambda, \Xi_n\,\lambda\rangle + O(n^{-3/2})\bigr) \\[0.25em]
        &= \E\exp\bigl(\mb i\,\langle \lambda, X_{n-1} \rangle + \frac{1}{2}\,\langle \lambda\, S_{n-1}\,\lambda\rangle + O(n^{-3/2})\bigr)\,.
    \end{align*}
    Iterating,
    \begin{align*}
        \E\exp\bigl(\mb i\,\langle \lambda, X_n\rangle + \frac{1}{2}\,\langle \lambda, S_n\,\lambda \rangle\bigr)
        &= \exp(O(n^{-1/2}))\,,
    \end{align*}
    so the second error term above also tends to zero.
\end{proof}

This completes the proof of~\autoref{thm:clt_quadratic} since, if $n^{-1/2} \sum_{k=1}^n \xi_k \todist{} \normal(0, S_\infty)$, it follows that $n^{-1/2} \sum_{k=1}^n A^{-1}\xi_k \todist{} \normal(0, A^{-1} S_\infty A^{-1})$.

\paragraph{General case.}
We now return to~\ref{eq:ASGD} for a general function $f$.
In this case, the CLT still holds, where we take $A = \nabla^2 f(\theta^\star)$ to be the Hessian at the minimizer.

\begin{thm}[CLT for~\ref{eq:ASGD}, general case]\label{thm:clt}
    Assume that $f$ is a strongly convex, smooth, and has a bounded third derivative, and that the step sizes satisfy the condition~\eqref{eq:clt_step_size}.
    Assume that conditionally on $\theta_n$, each $\xi_{n+1}$ is mean zero and has covariance $\Xi_{n+1}$, such that $n^{-1} \sum_{k=1}^n \Xi_k \to S_\infty$ in probability and $\sup_{n\ge 1} \E[\norm{\xi_n}^{2+}] < \infty$.
    Then,
    \begin{align*}
        \sqrt n\,(\bar \theta_n - \theta^\star) \todist{} \normal(0, A^{-1} S_\infty A^{-1})\,, \qquad A \deq \nabla^2 f(\theta^\star)\,.
    \end{align*}
\end{thm}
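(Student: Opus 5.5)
The plan is to reduce to the quadratic case (\autoref{thm:clt_quadratic}) by linearizing the gradient around $\theta^\star$ and showing that the nonlinear remainder is negligible at the $\sqrt n$ scale. Write $\delta_n \deq \theta_n - \theta^\star$ and
\begin{align*}
    \nabla f(\theta_n) = A\delta_n + r_n\,, \qquad \norm{r_n} \le C\,\norm{\delta_n}^2\,,
\end{align*}
where $C$ is the bound on the third derivative. The recursion then becomes $\delta_{n+1} = (I-h_{n+1}A)\,\delta_n - h_{n+1}\xi_{n+1} - h_{n+1} r_n$. This is exactly the quadratic recursion with the extra forcing term $-h_{n+1} r_n$. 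Unrolling it and averaging exactly as in~\eqref{eq:clt_iterate} gives
\begin{align*}
    \sqrt n\,\bar\delta_n = (\text{quadratic-case terms}) - \frac{1}{\sqrt n}\sum_{k=1}^{n-1} M_k^n\, r_{k-1}\,,
\end{align*}
where the $M_k^n$ are the same matrices as before (with $h_k$ absorbed). The quadratic-case terms converge to $\normal(0, A^{-1}S_\infty A^{-1})$ by the argument already given: \autoref{lem:clt_technical}, the orthogonality of the $\xi_k$, and the martingale CLT. It therefore suffices to show that the remainder tends to $0$ in probability.

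Since \autoref{lem:clt_technical} gives $\sup_{k,n}\norm{M_k^n}_{\rm op} < \infty$, Markov's inequality reduces the remainder to showing
\begin{align*}
    \frac{1}{\sqrt n}\sum_{k=1}^{n} \E\norm{\delta_k}^2 \to 0\,.
\end{align*}
This requires a rate for the mean-square error of the last iterate, namely $\E\norm{\delta_k}^2 \lesssim h_k = k^{-\gamma}$. I would prove it by expanding the square:
\begin{align*}
    \E\norm{\delta_{k+1}}^2 = \E\norm{\delta_k}^2 - 2h_{k+1}\E\langle \nabla f(\theta_k),\delta_k\rangle + h_{k+1}^2\,\E\norm{\nabla f(\theta_k)+\xi_{k+1}}^2\,.
\end{align*}
By $\alpha$-strong convexity~\eqref{eq:str_cvx_2}, smoothness, and the second-moment bound on $\xi$, this gives $u_{k+1} \le (1-\alpha h_{k+1})\,u_k + C' h_{k+1}^2$ for $u_k \deq \E\norm{\delta_k}^2$ and all large $k$. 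An induction in the style of discrete Gr\"onwall (\autoref{lem:discrete_gronwall}), with the ansatz $u_k \le c\,k^{-\gamma}$, should close, since $h_k - h_{k+1} = O(k^{-\gamma-1}) \ll \alpha h_k^2$ when $\gamma<1$. With this rate,
\begin{align*}
    \frac{1}{\sqrt n}\sum_{k=1}^n k^{-\gamma} \asymp n^{1/2-\gamma} \to 0\,,
\end{align*}
precisely because $\gamma > 1/2$. This is where the lower constraint in~\eqref{eq:clt_step_size} enters.

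The remaining point is that the covariance in the limit is $A^{-1}S_\infty A^{-1}$ with $A = \nabla^2 f(\theta^\star)$ frozen. The martingale part is $-n^{-1/2}\sum_k A^{-1}\xi_k$ plus the $(M_k^n - A^{-1})$ correction. The hypothesis $n^{-1}\sum_k \Xi_k \to S_\infty$ is imposed directly, so no continuity argument for the conditional covariance is needed. The $(M_k^n-A^{-1})$ term is handled verbatim as in the quadratic case, since it only uses $\E\Xi_k$ bounded, which follows from $\sup_n\E\norm{\xi_n}^{2+}<\infty$. The initial term $n^{-1/2}M_0^n\delta_0 \to 0$ as before.

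I expect the main obstacle to be the mean-square rate $\E\norm{\delta_k}^2 \lesssim k^{-\gamma}$ with time-varying step sizes. Early iterates, where $h_k$ is not small, must be handled by noting that only finitely many such steps occur, and the induction constant has to be chosen carefully. A secondary concern is that $\sum_k M_k^n r_{k-1}$ is not a martingale, so one cannot use orthogonality there and must rely on the crude first-moment bound above. That bound suffices, but only thanks to $\gamma>1/2$.
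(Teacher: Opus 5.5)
Your proposal is correct and follows essentially the same route as the paper: you treat $\zeta_n = \nabla f(\theta_n) - A\,(\theta_n - \theta^\star)$ as an extra forcing term in the unrolled quadratic recursion. You then combine uniform boundedness of the $M_k^n$, the Taylor bound $\norm{\zeta_n} \lesssim \norm{\theta_n - \theta^\star}^2$, and the mean-square rate $\E[\norm{\theta_n - \theta^\star}^2] = O(n^{-\gamma})$ to make the remainder $O(n^{1/2-\gamma}) \to 0$. The only minor difference is that you prove the $O(k^{-\gamma})$ rate by induction on an ansatz, whereas the paper unrolls the recursion and estimates the resulting weighted sum directly.
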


Write the iterate for~\ref{eq:ASGD} as
\begin{align*}
    \theta_{n+1}
    &= \theta_n - h_{n+1}\,(\nabla f(\theta_n) + \xi_{n+1}) \\
    &= \theta_n - h_{n+1} A\,(\theta_n - \theta^\star) - h_{n+1} \xi_{n+1} - h_{n+1}\,\underbrace{(\nabla f(\theta_n) - A\,(\theta_n - \theta^\star))}_{\eqqcolon \zeta_n}\,,
\end{align*}
which leads to
\begin{align*}
    \delta_{n+1}
    &= \delta_n - h_{n+1} A \delta_n - h_{n+1}\,(\xi_{n+1}+\zeta_n)\,.
\end{align*}
By applying the derivation of~\eqref{eq:clt_iterate}, replacing $\xi_{n+1}$ with $\xi_{n+1} + \zeta_n$, we obtain
\begin{align*}
    \sqrt n\,\bar \delta_n
    &= \frac{1}{n^{1/2}}\,M_0^n \delta_0 - \frac{1}{n^{1/2}} \sum_{k=1}^{n-1} A^{-1}\,(\xi_k + \zeta_{k-1}) - \frac{1}{n^{1/2}} \sum_{k=1}^{n-1} (M_k^n - A^{-1})\,(\xi_k + \zeta_{k-1})\,.
\end{align*}
We must show that the extra terms involving the $\zeta_k$'s vanish in the limit.
Since the $M_k^n$ matrices are bounded, it suffices to show that
\begin{align*}
    \frac{1}{\sqrt n} \sum_{k=1}^n \E\norm{\zeta_k} \to 0\,.
\end{align*}
Since we assume that the third derivative of $f$ is bounded, Taylor expansion shows that
\begin{align*}
    \norm{\zeta_n}
    &= \norm{\nabla f(\theta_n) - \nabla^2 f(\theta^\star)\,(\theta_n - \theta^\star)}
    \lesssim \norm{\theta_n - \theta^\star}^2\,.
\end{align*}
By our usual argument, for $n$ large so that $h_n$ is small,
\begin{align*}
    \E[\norm{\theta_{n+1}-\theta^\star}^2]
    &= \E[\norm{\theta_n - \theta^\star}^2 - 2h_{n+1}\,\langle \nabla f(\theta_n), \theta_n - \theta^\star\rangle + h_{n+1}^2\,\norm{\hat\nabla f(\theta_n)}^2] \\
    &= \E[\norm{\theta_n - \theta^\star}^2 - 2h_{n+1}\,\langle \nabla f(\theta_n), \theta_n - \theta^\star\rangle] \\
    &\qquad{} + \E[h_{n+1}^2\,\norm{\nabla f(\theta_n)}^2 + h_{n+1}^2 \,\norm{\hat\nabla f(\theta_n) - \nabla f(\theta_n)}^2] \\
    &\le \E[(1-\alpha h_{n+1})\,\norm{\theta_n - \theta^\star}^2 + h_{n+1}^2 \tr \Xi_n] \\
    &= (1-\alpha h_{n+1})\E[\norm{\theta_n - \theta^\star}^2] + O(h_{n+1}^2)\,.
\end{align*}
Iterating,
\begin{align*}
    \E[\norm{\theta_n -\theta^\star}^2]
    &\le \exp\Bigl(-\alpha \sum_{k=1}^n h_k\Bigr)\,\norm{\theta_0 - \theta^\star}^2 + \sum_{k=1}^n O(h_k^2) \exp\Bigl(-\alpha\sum_{\ell=k+1}^n h_\ell\Bigr)\,.
\end{align*}
The estimate for the summation in the second term is similar to the computation that appears in~\autoref{lem:clt_technical}, except that it corresponds to the integral
\begin{align*}
    I'
    &\deq \int_1^n \frac{1}{k^{2\gamma}} \exp(-\alpha\,(\tau(n) - \tau(k)))\,\D k\,.
\end{align*}
A trickier calculation eventually shows that
\begin{align}\label{eq:clt_iterate_bd}
    \E[\norm{\theta_n - \theta^\star}^2]
    &\le \exp(-\Omega(n^{1-\gamma}))\,\norm{\theta_0 - \theta^\star}^2 + O(n^{-\gamma})
    = O(n^{-\gamma})\,.
\end{align}
Hence,
\begin{align*}
    \frac{1}{\sqrt n} \sum_{k=1}^n \E\norm{\zeta_k}
    &\lesssim \frac{1}{\sqrt n} \sum_{k=1}^n \E[\norm{\theta_k - \theta^\star}^2]
    \lesssim \frac{1}{\sqrt n} \sum_{k=1}^n \frac{1}{k^\gamma}
    \asymp n^{1/2-\gamma}\,.
\end{align*}
This tends to zero provided $\gamma > 1/2$, completing the proof of~\autoref{thm:clt}.

\paragraph{Application to parameter recovery.}
We now consider the example of parameter recovery in a parametric family of densities ${\{p_\theta\}}_{\theta\in \Theta}$.
Write $\ell(\theta; z) \deq \log(1/p_\theta(z))$.
In this case, the empirical risk minimizer $\widehat \theta_n$ corresponds to the maximum likelihood estimator (MLE), and if $Z_1,\dotsc,Z_n\simiid p_{\theta^\star}$, the population minimizer is indeed $\theta^\star$ (provided that the model is identifiable).

Consider one-pass averaged SGD, so that
\begin{align*}
    \xi_{k+1}
    &= \nabla_\theta\ell(\theta_k; Z_{k+1}) - \int\nabla_\theta\ell(\theta_k; z)\,p_{\theta^\star}(\D z)\,.
\end{align*}
This is conditionally unbiased, and if we define
\begin{align*}
    I(\theta)
    &\deq \cov_{p_{\theta^\star}} \nabla_\theta \ell(\theta; Z)\,,
\end{align*}
then $\Xi_{k+1} = I(\theta_k)$.
Now, adopt the following assumptions:
\begin{itemize}
    \item For each $z\in\eu Z$, the function $\theta \mapsto \ell(\theta; z)$ is strongly convex, smooth, and has a bounded third derivative.
    \item $I(\cdot)$ is Lipschitz continuous.
\end{itemize}
The second assumption, together with the fact that $\theta_n\to \theta^\star$ in probability by~\eqref{eq:clt_iterate_bd}, implies that $\Xi_n = I(\theta_{n-1}) \to I(\theta^\star)$ in probability, hence $n^{-1} \sum_{k=1}^n \Xi_k \to I(\theta^\star)$ in probability as well.
If $\sup_{n\ge 1} \E[\norm{\xi_n}^{2+}] < \infty$, then all of the assumptions of~\autoref{thm:clt} are met.

The value of $I(\cdot)$ at $\theta^\star$ is special: it is called the \emph{Fisher information matrix} and we denote it by $\ms I$:
\begin{align*}
    \ms I
    &\deq I(\theta^\star)
    = \cov_{p_{\theta^\star}} \nabla_\theta \ell(\theta^\star; Z)\,.
\end{align*}
Since
\begin{align*}
    \int \nabla_\theta \ell(\theta; z)\,p_{\theta}(\D z)
    &= -\int \nabla_\theta \log p_{\theta}(z)\,p_{\theta}(\D z)
    = -\int \nabla_\theta p_{\theta}(z) \, \D z \\
    &= -\nabla_\theta \int p_{\theta}(\D z) = 0\,,
\end{align*}
it follows that
\begin{align*}
    0
    &= \nabla_\theta \int \nabla_\theta \ell(\theta^\star; z)\,p_{\theta^\star}(\D z)
    = \int \nabla_\theta^2 \ell(\theta^\star; z)\,p_{\theta^\star}(\D z) + \int \nabla_\theta \ell(\theta^\star; z) \otimes \nabla_\theta p_{\theta^\star}(z)\,\D z \\
    &= \int \nabla_\theta^2 \ell(\theta^\star; z)\,p_{\theta^\star}(\D z) - \int \nabla_\theta \ell(\theta^\star; z) \otimes \nabla_\theta \ell(\theta^\star; z)\, p_{\theta^\star}(\D z)\,.
\end{align*}
Combined with the fact that $\int \nabla_\theta \ell(\theta^\star; z)\,p_{\theta^\star}(\D z) = 0$, we can identify the second term above as $\cov_{p_{\theta^\star}} \nabla_\theta \ell(\theta^\star; Z)$, hence
\begin{align*}
    \ms I
    &= \int \nabla_\theta^2 \ell(\theta^\star; z)\,p_{\theta^\star}(\D z)
    = \nabla^2 \eu R(\theta^\star)\,,
\end{align*}
since $\eu R(\theta) = \int \ell(\theta; z)\,p_{\theta^\star}(\D z)$ for every $\theta \in \Theta$.
Therefore,~\autoref{thm:clt} implies
\begin{align*}
    \sqrt n\,(\bar \theta_n - \theta^\star)
    &\todist{} \normal(0, \ms I^{-1})\,.
\end{align*}

On the other hand, it is classical that under these assumptions, the MLE also has an asymptotically Gaussian limit:
\begin{align*}
    \sqrt n\,(\widehat\theta_n - \theta^\star)
    \todist{} \normal(0, \ms I^{-1})\,.
\end{align*}
This is a celebrated result in statistics because the asymptotic covariance $\ms I^{-1}$ is also a lower bound on the covariance of any unbiased estimator of $\theta^\star$, by the \emph{Cram\'er{--}Rao} or \emph{information inequality}.
Moreover, via comparison of experiments, it is known that no estimator can perform better than the MLE, in the sense that the MLE is locally asymptotically minimax optimal.
We have just shown that this asymptotic optimality property also carries over to Polyak{--}Ruppert averaging of SGD\@.
Finally, we remark that when $p_\theta = \normal(\theta, A^{-1})$, this encompasses~\autoref{ex:gaussian_clt}.

\subsection{Variance reduction}

In \S\ref{ssec:sgd_generalization}, we argued that the generalization bounds for~\ref{eq:GD} and~\ref{eq:SGD} are comparable, yet the overall computational cost for~\ref{eq:GD} is roughly $n$ times larger due to the larger per-iteration cost.
In making this comparison, our assumption was that we do not aim to completely minimize the empirical risk; we simply want the optimization error to be comparable to the statistical error.
However, if our goal is indeed to fully minimize the empirical risk $\eu R_n$, then~\ref{eq:GD} can be faster than~\ref{eq:SGD} for high-accuracy solutions.

In this section, the structural assumption we impose on the objective $f$ is that it is a finite sum of $n$ functions:
\begin{align*}
    f = \frac{1}{n} \sum_{i=1}^n f_i\,.
\end{align*}
In this setting, it makes sense to measure the complexity in terms of the number of gradient evaluations of the \emph{individual} functions $f_i$.

For example, in the convex and smooth setting, assume that the cost of computing the full gradient of $\eu R_n$ is $n$ times larger than the cost of computing the gradient of a single term (corresponding to a single sample).
Then, in order to obtain an $\varepsilon$-approximate minimizer of $\eu R_n$, the overall computational cost for~\ref{eq:GD} is $O(n\beta R^2/\varepsilon)$ (\autoref{thm:gd_fn_value}).
For SGD\@, we take our stochastic gradient $\hat\nabla f(x)$ to be $\nabla f_i(x)$ for a randomly chosen index $i\sim \unif([n])$.
Then, the variance of the stochastic gradient is
\begin{align*}
    &\frac{1}{n} \sum_{i=1}^n {\norm{\nabla f_i(x) - \nabla f(x)}^2} \\
    &\qquad \lesssim \frac{1}{n} \sum_{i=1}^n {\norm{\nabla f_i(x_\star)}^2} + \frac{1}{n} \sum_{i=1}^n \norm{\nabla f_i(x) - \nabla f_i(x_\star)}^2 + \norm{\nabla f(x) - \nabla f(x_\star)}^2 \\
    &\qquad \lesssim \underbrace{\frac{1}{n} \sum_{i=1}^n {\norm{\nabla f_i(x_\star)}^2}}_{c_0} + \underbrace{\beta^2}_{c_1}\,\norm{x-x_\star}^2\,.
\end{align*}
We can apply (a variant of)~\autoref{qu:sgd_unbdd_var} to conclude that for sufficiently small $\varepsilon$, the complexity of SGD is $O(c_0 R^2/\varepsilon^2)$.
In the strongly convex and smooth setting, the rates are $\widetilde O(n\kappa \log(\alpha R^2/\varepsilon))$ and $\widetilde O(c_0/(\alpha\varepsilon))$ respectively.
In general, these rates are incomparable.

In this section, we show that we can improve upon these rates through a technique known as \emph{variance reduction}.
Namely, the method we develop runs in a number of iterations comparable to~\ref{eq:GD}, but with a per-iteration cost comparable to~\ref{eq:SGD}.

To see why there is a possibility for variance reduction, note that if we run~\ref{eq:SGD}, the variance of the stochastic gradient is bounded away from zero{---}even at the minimizer $x_\star${---}due to the presence of the $c_0$ term.
However, if the iterates of the algorithm are converging to the minimizer, $x_n \to x_\star$, then we can hope that the variance of the gradient estimator also tends to zero.

This intuition is carried out by the family of variance reduction methods, of which we pick one representative one: \emph{stochastic variance reduced gradient descent} (SVRG)~\cite{JohZha13SVRG}.
We generalize our setting to a composite objective:
\begin{align*}
    F = f + g
    = \frac{1}{n} \sum_{i=1}^n f_i + g\,.
\end{align*}
The algorithm proceeds via ``epochs'', where the $t$-th epoch runs for $N_t$ iterations.
In the $t$-th epoch, we initialize $x_0^{t} \deq x_{N_{t-1}}^{t-1}$ (that is, we start the $t$-epoch at the last iterate of the previous epoch).
The algorithm is described as follows:
\begin{align}\label{eq:SVRG}\tag{$\msf{SVRG}$}
    \begin{aligned}
        x^t_{n+1}
        &\deq \argmin_{x\in\R^d}{\bigl\{\langle \hat \nabla^t_n f, x- x^t_n \rangle + g(x) + \frac{1}{2h}\,\norm{x-x_t^n}^2\bigr\}}\,, \\
        \hat\nabla^t_n f
        &\deq \nabla f_{i^t_n}(x^t_n) - \nabla f_{i^t_n}(\bar x^t_0) + \nabla f(\bar x^t_0)\,, \qquad i^t_n \sim \unif([n])\,.
    \end{aligned}
\end{align}
Here, $\bar x_0^t$ is a certain average of iterates from the previous epoch $t-1$.
Note that in each epoch, we compute (and store) the full gradient $\nabla f(\bar x^t_0)$, which requires $n$ gradient computations, and then each subsequent iteration requires only one gradient computation.
Therefore, the $t$-th epoch requires $n+N_t$ gradient computations, and the total cost after $T$ epochs is $Tn + \sum_{t=0}^{T-1} N_t$.

The intuition here is that if we take the expectation over $i^t_n$, then
\begin{align*}
    \E\hat\nabla^t_n f
    &= \E[\nabla f_{i^t_n}(x^t_n) - \nabla f_{i^t_n}(\bar x_0^t) + \nabla f(\bar x_0^t)]
    = \nabla f(x^t_n)\,,
\end{align*}
so the gradient estimator is indeed unbiased.
But the extra centered term that we added to the gradient estimator, $-\nabla f_{i^t_n}(\bar x_0^t) + \nabla f(\bar x_0^t)$, reduces the variance: since $\bar x_0^t, x_n^t \to x_\star$, we expect that
\begin{align*}
    \hat\nabla^t_n f - \nabla f(x^t_n)
    &= \nabla f_{i^t_n}(x^t_n) - \nabla f_{i^t_n}(\bar x^t_0) + \nabla f(\bar x^t_0) - \nabla f(x^t_n) \to 0\,.
\end{align*}

\begin{thm}[convergence of~\ref{eq:SVRG}]\label{thm:SVRG}
    Assume that $f$ is $\alpha_f$-convex and $\beta$-smooth, and that $g$ is $\alpha_g$-convex.
    Then, the following assertions hold for a suitable choice of step size $h$ and averaged iterate $\bar x_0^t$.
    Let $\Delta_0 \deq F(x_0) - F_\star + \beta\,\norm{x_0 - x_\star}^2$.
    \begin{itemize}
        \item If $\alpha_f + \alpha_g = 0$, then~\ref{eq:SVRG} can achieve $\E F(\bar x_0^T) - F_\star \le \varepsilon$ with a total number of gradient evaluations at most $O(n\log(\Delta_0/\varepsilon) + \Delta_0/\varepsilon)$.
        \item If $\alpha_f + \alpha_g > 0$, then~\ref{eq:SVRG} can achieve $\E F(\bar x_0^T) - F_\star \le \varepsilon$ with a total number of gradient evaluations at most $O((n+\kappa)\log(\Delta_0/\varepsilon))$, where $\kappa \deq \beta/(\alpha_f +\alpha_g)$.
    \end{itemize}
\end{thm}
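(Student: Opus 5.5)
Within one epoch, I will combine the one-step inequality from \autoref{thm:SMPGD} (smooth case, Euclidean mirror map, so $\alpha_\phi = 1$ and $D_\phi = \frac12\norm{\cdot}^2$) with a variance bound on $\hat\nabla^t_n f$ that vanishes as the iterates and the anchor approach $x_\star$. Inside that proof the noise enters only through $\E[\norm{\hat\nabla f(x)-\nabla f(x)}^2]$, which is bounded by $\sigma^2 d$. I will keep this term explicit, so that for $h \le 1/(2\beta)$ and any $y$,
\begin{align*}
    \frac{1+\alpha_g h}{2}\,\E\norm{y-x^t_{n+1}}^2 \le \frac{1-\alpha_f h}{2}\,\E\norm{y-x^t_n}^2 - h\,(\E F(x^t_{n+1}) - F(y)) + h^2\,\E\norm{\hat\nabla^t_n f - \nabla f(x^t_n)}^2\,.
\end{align*}

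The key estimate is the variance bound. Since the estimator is unbiased, its variance is at most $\E\norm{\nabla f_i(x^t_n) - \nabla f_i(\bar x^t_0)}^2$. I will split this through $x_\star$ and apply~\eqref{eq:pre_coercivity} to each $f_i$, assuming each $f_i$ is convex and $\beta$-smooth, which is the finite-sum setting. This gives
\begin{align*}
    \frac1n\sum_i \norm{\nabla f_i(x)-\nabla f_i(x_\star)}^2 \le 2\beta\,\bigl(f(x) - f(x_\star) - \langle \nabla f(x_\star), x-x_\star\rangle\bigr)\,.
\end{align*}
Optimality gives $-\nabla f(x_\star)\in\partial g(x_\star)$, so the Bregman term is at most $F(x)-F_\star$. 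Hence the variance is at most $4\beta\,(F(x^t_n)-F_\star) + 4\beta\,(F(\bar x^t_0)-F_\star)$. Taking $h \le 1/(16\beta)$, the $F(x^t_n)$ part can be absorbed into the $-h\,(F(x^t_{n+1})-F_\star)$ term of the next step. The index shift from $n$ to $n+1$ is harmless after summing over the epoch, with one boundary term at $n=0$, where $x^t_0 = x^{t-1}_{N}$. The anchor contributes $4\beta h^2 N_t\,(F(\bar x^t_0)-F_\star)$.

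I then sum over $n = 0,\dots,N_t-1$ with $y = x_\star$ and define $\bar x^{t+1}_0$ as the weighted average of the epoch's iterates used in the discrete Gr\"onwall argument (\autoref{lem:discrete_gronwall}). By convexity this yields a recursion on the potential $\Phi_t \deq \E[F(\bar x^t_0)-F_\star] + c\,\E\norm{x^t_0-x_\star}^2$.

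In the strongly convex case, choose $h \asymp 1/\beta$ and $N_t = N \asymp \kappa$. Then $\sum_{n}\lambda_h^{-n}$ is of order $1/(\alpha h)$, and strong convexity (quadratic growth) converts the distance term into a function gap. This should give $\Phi_{t+1} \le \frac12\Phi_t$, so $T = O(\log(\Delta_0/\varepsilon))$ epochs suffice, each costing $n + O(\kappa)$ gradient evaluations, for a total of $O((n+\kappa)\log(\Delta_0/\varepsilon))$. In the weakly convex case the distance term does not contract, so I will use growing epochs $N_t \asymp 2^t$ (an SVRG$^{++}$-style schedule). Each epoch gives roughly $\E F(\bar x^{t+1}_0)-F_\star \lesssim \frac{\beta\,\E\norm{x^t_0-x_\star}^2}{N_t} + \frac{N_t}{N_t}\cdot(\text{small})\,(F(\bar x^t_0)-F_\star)$, and the potential $N_t\,(F(\bar x^t_0)-F_\star)/\beta + \norm{x^t_0-x_\star}^2$ is nonincreasing up to constants. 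The gap after epoch $T$ is therefore of order $\Delta_0/2^T$. Taking $T = \log(\Delta_0/\varepsilon)$ gives a total cost of $Tn + \sum_t N_t = O(n\log(\Delta_0/\varepsilon) + \Delta_0/\varepsilon)$.

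The main obstacle is the bookkeeping in the weakly convex case. The anchor variance $4\beta h^2 N_t\,(F(\bar x^t_0)-F_\star)$ grows with $N_t$, and it must be dominated by the $1/N_t$ gain from averaging, so the constants in $h$ and in the doubling rule have to be tuned together. My first attempt will fix $h = 1/(16\beta)$ and choose the weights so that the coefficient of the anchor gap is exactly half of the previous epoch's potential weight. This should make the potential telescope.
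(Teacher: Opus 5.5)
Your proposal is correct and follows essentially the same route as the paper. The shared steps are the one-step bound of \autoref{thm:SMPGD} with the variance left explicit, the variance controlled via~\eqref{eq:pre_coercivity} applied to each $f_i$ together with $D_f(\cdot,x_\star)\le F-F_\star$, absorption of the $F(x^t_n)$ part with $h\asymp 1/\beta$, the Gr\"onwall-weighted average as the next anchor, and a telescoping potential with doubling epochs (convex case) or a fixed $N\asymp\kappa$ with $\lambda_h^N\le 1/2$ (strongly convex case).

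One bookkeeping point needs fixing: your potential must also carry the last-iterate gap $F(x^t_0)-F_\star$. That term is paid for by the final decrease term $-h\,(F(x^{t-1}_{N_{t-1}})-F_\star)$ of the previous epoch, which has no later step to be absorbed into. Also, quadratic growth is not needed, since the distance term already contracts by the factor $\lambda_h^N\le 1/2$.
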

\begin{proof}
    We start by analyzing a single epoch; thus, for simplicity of notation, we drop the superscript $t$.
    The one-step inequality for SGD (see~\autoref{thm:SMPGD}) shows that
    \begin{align*}
        \E F(x_{n+1}) - F_\star
        &\le \frac{1-\alpha_f h}{2h}\E[\norm{x_n - x_\star}^2] - \frac{1+\alpha_g h}{2h}\E[\norm{x_{n+1} - x_\star}^2] \\
        &\qquad{} + h \E[\norm{\hat \nabla_n f - \nabla f(x_n)}^2]\,.
    \end{align*}
    We upper bound the variance of the stochastic gradient: by~\eqref{eq:pre_coercivity},
    \begin{align*}
        &\E[\norm{\nabla f_{i_n}(x_n) - \nabla f_{i_n}(\bar x_0) + \nabla f(\bar x_0) - \nabla f(x_n)}^2]
        \le \E[\norm{\nabla f_{i_n}(x_n) - \nabla f_{i_n}(\bar x_0)}^2] \\
        &\qquad \le 2\E[\norm{\nabla f_{i_n}(x_n) - \nabla f_{i_n}(x_\star)}^2] + 2\E[\norm{\nabla f_{i_n}(\bar x_0) - \nabla f_{i_n}(x_\star)}^2] \\
        &\qquad \le 2\beta \E[D_{f_{i_n}}(x_n, x_\star) + D_{f_{i_n}}(\bar x_0, x_\star)]
        = 2\beta \E[D_f(x_n, x_\star) + D_f(\bar x_0, x_\star)] \\
        &\qquad \le 2\beta \E[D_F(x_n, x_\star) + D_F(\bar x_0, x_\star)]
        = 2\beta\E[F(x_n) - F_\star + F(\bar x_0) - F_\star]\,.
    \end{align*}
    Note that this already captures the intuition above, namely, the variance decreases with the objective gap.
    Therefore, we end up with the recursion
    \begin{align*}
        \E F(x_{n+1}) - F_\star
        &\le \frac{1-\alpha_f h}{2h}\E[\norm{x_n - x_\star}^2] - \frac{1+\alpha_g h}{2h}\E[\norm{x_{n+1} - x_\star}^2] \\
        &\qquad{} + 2\beta h \E[F(x_n) - F_\star + F(\bar x_0) - F_\star]\,.
    \end{align*}
    We now choose $h = 1/(8\beta)$ so that $2\beta h = 1/4$.
    After dividing by $1+\alpha_g h$ and iterating using~\autoref{lem:discrete_gronwall}, it yields
    \begin{align*}
        \frac{\E[\norm{x_N - x_\star}^2]}{2h}
        &\le \frac{\lambda_h^N \E[\norm{x_0 - x_\star}^2]}{2h} \\
        &\qquad{} + \underbrace{\sum_{n=1}^N \lambda_h^{N-n}\, \Bigl( \frac{1}{4}\,\{\E F(x_{n-1}) - F_\star + \E F(\bar x_0) - F_\star\} - \{\E F(x_n) - F_\star\}\Bigr)}_{= (\star)}\,.
    \end{align*}
    Since by assumption $1/(4\lambda_h) \le 1/3$, the last summation is at most
    \begin{align*}
        (\star)
        &= -\sum_{n=1}^N \lambda_h^{N-n}\,\bigl(1 - \frac{1}{4\lambda_h}\bigr)\,(\E F(x_n) - F_\star) - \frac{1}{4\lambda_h}\,(\E F(x_N) - F_\star) \\[0.25em]
        &\qquad{} + \frac{\lambda_h^{N-1}}{4}\,(\E F(x_0) - F_\star) + \frac{S}{4}\,(\E F(\bar x_0) - F_\star) \\[0.25em]
        &\le - \frac{2}{3} \sum_{n=1}^N \lambda_h^{N-n}\,(\E F(x_n) - F_\star) - \frac{1}{4\lambda_h}\,(\E F(x_N) - F_\star) \\[0.25em]
        &\qquad{} + \frac{\lambda_h^{N-1}}{4}\,(\E F(x_0) - F_\star) + \frac{S}{3}\,(\E F(\bar x_0) - F_\star)\,,
    \end{align*}
    where $S \deq \sum_{n=0}^{N-1} \lambda_h^n$.
    Thus, the above inequality can be rearranged to yield
    \begin{align*}
        &\frac{\lambda_h^N \E[\norm{x_0 - x_\star}^2]}{2hS} + \frac{\lambda_h^{N-1}}{4S}\, (\E F(x_0) - F_\star) + \frac{1}{3} \,(\E F(\bar x_0) - F_\star) \\
        &\qquad{} \ge \frac{\E[\norm{x_N - x_\star}^2]}{2hS} + \frac{1}{4\lambda_h S}\, (\E F(x_N) - F_\star) + \frac{2}{3} \sum_{n=1}^N \frac{\lambda_h^{N-n}}{S}\,(\E F(x_n) - F_\star)\,.
    \end{align*}

    The goal now is to make this inequality telescope across the epochs.
    We recall that $x_0^{t+1} = x_{N_t}^t$, and we define $\bar x_0^{t+1} \deq S_t^{-1} \sum_{n=1}^{N_t} \lambda_h^{N_t-n}\,x^t_n$, where $S_t \deq \sum_{n=0}^{N_t} \lambda_h^n$.
    By applying convexity to the last term, the inequality can be rewritten
    \begin{align*}
        &\frac{\lambda_h^{N_t} \E[\norm{x_0^t - x_\star}^2]}{2hS_t} + \frac{\lambda_h^{N_t-1}}{4S_t}\, (\E F(x_0^t) - F_\star) + \frac{1}{3} \,(\E F(\bar x_0^t) - F_\star) \\[0.25em]
        &\qquad{} \ge \frac{\E[\norm{x_0^{t+1} - x_\star}^2]}{2hS_t} + \frac{1}{4\lambda_h S_t}\, (\E F(x_0^{t+1}) - F_\star) + \frac{2}{3} \,(\E F(\bar x_0^{t+1}) - F_\star)\,.
    \end{align*}
    We now divide the proof up into two cases.

    \textbf{Convex case.} In this case, $\lambda_h = 1$, so $S_t = N_t$.
    Here, we set $N_{t+1} = 2N_t$, which leads to
    \begin{align*}
        &\frac{\E[\norm{x_0^t - x_\star}^2]}{2hN_t} + \frac{1}{4N_t}\, (\E F(x_0^t) - F_\star) + \frac{1}{3} \,(\E F(\bar x_0^t) - F_\star) \\[0.25em]
        &\qquad{} \ge 2\,\Bigl[ \frac{\E[\norm{x_0^{t+1} - x_\star}^2]}{2hN_{t+1}} + \frac{1}{4N_{t+1}}\, (\E F(x_0^{t+1}) - F_\star) + \frac{1}{3} \,(\E F(\bar x_0^{t+1}) - F_\star)\Bigr]\,.
    \end{align*}
    The inequality clearly telescopes and shows that $\E F(\bar x_0^T) - F_\star \le \varepsilon$ after $T$ epochs, where $T \le \log_2[O(F(x_0) - F_\star + \beta\,\norm{x_0 - x_\star}^2)/\varepsilon]$ and $h \asymp 1/\beta$.
    The number of gradient evaluations is $Tn + \sum_{t=0}^{T-1} N_t = Tn + 2^T$, which yields the final result.

    \textbf{Strongly convex case.} In this case, we set $N_t = N$ for all $t$, where $N$ is chosen so that $\lambda_h^N \le 1/2$.
    With $h \asymp 1/\beta$, this leads to $N \asymp \kappa$ and
    \begin{align*}
        &\frac{\E[\norm{x_0^t - x_\star}^2]}{4hS} + \frac{1}{8\lambda_h S}\, (\E F(x_0^t) - F_\star) + \frac{1}{3} \,(\E F(\bar x_0^t) - F_\star) \\[0.25em]
        &\qquad{} \ge 2\,\Bigl[ \frac{\E[\norm{x_0^{t+1} - x_\star}^2]}{4hS} + \frac{1}{8\lambda_h S}\, (\E F(x_0^{t+1}) - F_\star) + \frac{1}{3} \,(\E F(\bar x_0^{t+1}) - F_\star)\Bigr]\,.
    \end{align*}
    Again, this telescopes, and the computational cost is $Tn + TN = O(T\,(n+\kappa))$.
\end{proof}

The result of~\autoref{thm:SVRG} indeed improves upon the rates for~\ref{eq:GD}.
Before presenting the final rate comparison, however, we note that the rates in~\autoref{thm:SVRG} are generally incomparable with the ones achieved via acceleration, i.e., for~\ref{eq:AGD}.
One can ask whether acceleration can also be combined with variance reduction, and the answer is yes; we state a representative result from~\cite{Sha+18ASVRG}.

\begin{thm}[accelerated~\ref{eq:SVRG}]\label{thm:ASVRG}
    Assume that each $f_i$ is convex and $\beta_i$-smooth, and that $g$ is $\alpha$-convex.\renewcommand{\thempfootnote}{$\dagger$}\footnote{The cited paper works under slightly different assumptions compared to~\autoref{thm:SVRG}, but they are broadly comparable.}
    Then, there is an algorithm which achieves the following guarantees.
    Let $\Delta_0 \deq F(x_0) - F_\star + \beta\,\norm{x_0 - x_\star}^2$.
    \begin{itemize}
        \item If $\alpha = 0$, then the algorithm obtains an $\varepsilon$-approximate solution with a total number of gradient evaluations at most $O(n\log(\Delta_0/\varepsilon) + \sqrt{n\Delta_0/\varepsilon})$.
        \item If $\alpha > 0$, then the algorithm obtains an $\varepsilon$-approximate solution with a total number of gradient evaluations at most $O((n+\sqrt{n\kappa})\log(\Delta_0/\varepsilon))$.
    \end{itemize}
\end{thm}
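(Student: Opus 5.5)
We plan to establish the strongly convex case first and then obtain the weakly convex case through the regularization reduction of \autoref{lem:strcvx_to_weakcvx}, refined so that the regularization level is halved across outer stages. The algorithm is an accelerated proximal method: we run the \ref{eq:APGD} template, with momentum and the Nesterov sequence $\lambda_n$ (or, when $\alpha>0$, a constant momentum parameter), but replace the exact gradient $\nabla f(y_k)$ by the \ref{eq:SVRG} estimator $\nabla f_{i}(y_k) - \nabla f_i(\bar x) + \nabla f(\bar x)$, with $i$ drawn nonuniformly with probability proportional to $\beta_i$. Two devices are needed. First, the step size is taken as $h\asymp 1/(\bar\beta n)$ per inner step, with $\bar\beta \deq n^{-1}\sum_i\beta_i$. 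Second, we add a negative-momentum (``Katyusha'') coupling: each point at which a gradient is queried is a convex combination of the momentum point, the proximal-gradient point, and the snapshot $\bar x$. The snapshot term is what absorbs the variance.

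The key one-step inequality is the stochastic analogue of~\eqref{eq:PGD_one_step}. Following the proof of \autoref{thm:SMPGD}, the noise enters only through $\E\langle \hat\nabla f - \nabla f(y), x^+ - y\rangle$. We bound this using the variance estimate from the proof of \autoref{thm:SVRG}, namely $\E\norm{\hat\nabla f - \nabla f(y)}^2 \le 2\bar\beta\, D_f(\bar x, y)$ (valid for importance sampling, via~\eqref{eq:pre_coercivity} applied to each $f_i$). The Bregman term $D_f(\bar x,y) = f(\bar x)-f(y)-\langle\nabla f(y),\bar x-y\rangle$ is exactly cancelled by convexity once the snapshot weight in the coupling is chosen as a constant, say $1/2$. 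This gives an inequality of the form used in \autoref{thm:AGD}:
\[
\text{progress at } z=x_\star \text{ and } z=\text{previous point} \;\le\; \frac{1}{2h}\bigl(\norm{w_k-x_\star}^2-\norm{w_{k+1}-x_\star}^2\bigr) + \tfrac12\,(F(\bar x)-F_\star)\,.
\]
Here $w_k$ is the auxiliary ``$\lambda_{k+1} y_k-(\lambda_{k+1}-1)x_k$'' sequence.

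We then telescope within an epoch of length $m\asymp n$, as in \autoref{thm:AGD}, and choose the new snapshot as a weighted average of that epoch's iterates. In the strongly convex case the momentum weight is $\tau\asymp\min\{1,\sqrt{n\alpha/\bar\beta}\}/n$ per step, and a potential of the form
\[
\Phi_t \deq c_1\,(F(\bar x^t)-F_\star) + c_2\,\norm{w^t-x_\star}^2
\]
contracts by a factor $(1+\min\{\sqrt{\alpha n/\bar\beta},\,1\})^{-1}$ per epoch. An epoch costs $O(n)$ gradient evaluations, so the total is $O((n+\sqrt{n\kappa})\log(\Delta_0/\varepsilon))$. In the weakly convex case we let the momentum parameter grow across epochs instead, so that $F(\bar x^t)-F_\star\lesssim \Delta_0\cdot\min\{2^{-t},\,n/t^2\}$. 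This gives $\log(\Delta_0/\varepsilon)$ doubling epochs followed by $\sqrt{\Delta_0/(n\varepsilon)}$ further epochs of cost $n$, for a total of $O(n\log(\Delta_0/\varepsilon)+\sqrt{n\Delta_0/\varepsilon})$. Alternatively, \autoref{lem:strcvx_to_weakcvx} with $\alpha\asymp\varepsilon/R^2$ gives the same bound up to a logarithm.

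The main obstacle is the coupling step. The variance term must be dominated by the convexity gap $D_f(\bar x,y)$ \emph{at the momentum point} $y$, not at $x_\star$ as in \autoref{thm:SVRG}, and it is this that forces the negative-momentum term. Choosing the weights so that the resulting cross terms cancel while the \ref{eq:AGD} telescoping condition $\lambda_{k+1}(\lambda_{k+1}-1)\le\lambda_k^2$ survives is where most of the algebra lies. We would first try the fixed snapshot weight $1/2$ and solve for the momentum weight $\tau$ from the cancellation condition.
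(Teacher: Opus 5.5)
The paper gives no proof of this statement. It quotes it from \cite{Sha+18ASVRG}, whose ASVRG method, unlike Katyusha, uses a single auxiliary sequence mixed with the snapshot through one momentum parameter. Your route is different: a Katyusha-style three-point coupling with negative momentum for $\alpha>0$, then a halving-regularization reduction for $\alpha=0$. That is a legitimate alternative, and the reduction half is fine. With $\sigma_0\asymp\beta$, summing the cost of a constant-factor decrease, $O(n+\sqrt{n\bar\beta/\sigma_t})$, over $\sigma_t=\sigma_0 2^{-t}$ gives exactly $O(n\log(\Delta_0/\varepsilon)+\sqrt{n\Delta_0/\varepsilon})$. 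The gap is in the strongly convex core: your parameter choices are mutually inconsistent, and as stated they lose a factor of $n$.

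\textbf{The cancellation constrains the wrong parameter.} The variance cancellation ties the snapshot weight $\tau_2$ to the gradient step $h$, not to the momentum weight $\tau_1$.
\begin{itemize}
\item The smoothness margin $\frac{1}{2h}-\frac{\bar\beta}{2}$, via Young's inequality, turns $2\bar\beta D_f(\bar x,y)$ into $\frac{h\bar\beta}{1-h\bar\beta}D_f(\bar x,y)$.
\item The coupling contributes $\tau_2\langle\nabla f(y),y-\bar x\rangle=\tau_2\,(f(y)-f(\bar x)+D_f(\bar x,y))$. This is an identity, not convexity.
\item So you need $\tau_2\ge h\bar\beta/(1-h\bar\beta)$, and there is no $\tau_1$ to solve for in that condition.
\end{itemize}

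\textbf{Why your choices lose a factor of $n$.} After summing the per-step inequality over an epoch, each new iterate carries net weight $(\tau_1+\tau_2)/\tau_1$. The snapshot gap enters with weight $\tau_2/\tau_1$ per step. Hence, after averaging, the snapshot-gap component contracts by exactly $\tau_2/(\tau_1+\tau_2)$ per epoch. With your $\tau_2=\frac12$ and $\tau_1\asymp\min\{1,\sqrt{n\alpha/\bar\beta}\}/n\le 1/n$, this factor is $1-O(\tau_1)$. You would need $\max\{n,\sqrt{n\kappa}\}$ epochs of cost $n$ per constant-factor decrease, not the claimed $(1+\min\{\sqrt{\alpha n/\bar\beta},1\})^{-1}$ per epoch.

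\textbf{How to fix it.} Fix $\tau_1$ by balancing $\tau_2/(\tau_1+\tau_2)$ against the distance contraction $(1+\alpha h/\tau_1)^{-m}$, where $h/\tau_1$ is the mirror step. This gives $\tau_1\asymp\sqrt{m\alpha h\tau_2}$, capped at a constant. It yields an epoch rate $\sqrt{m\alpha h/\tau_2}\asymp\sqrt{n\alpha/\bar\beta}$ once $\tau_2\asymp h\bar\beta$. Katyusha's scaling is $h=1/(3\bar\beta)$, $\tau_2=\frac12$, $\tau_1=\min\{\sqrt{2n\alpha/(3\bar\beta)},\frac12\}$. Your $h$ and $\tau$ are consistent with each other only if $\tau_2\asymp 1/n$.

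\textbf{Two smaller points.}
\begin{itemize}
\item With importance sampling the estimator must be reweighted: $\frac{1}{np_i}(\nabla f_i(y)-\nabla f_i(\bar x))+\nabla f(\bar x)$ with $p_i=\beta_i/\sum_j\beta_j$. As you wrote it, the estimator is biased, and the bound $2\bar\beta D_f(\bar x,y)$ holds only for the reweighted version.
\item Your direct weakly convex decay $\Delta_0\min\{2^{-t},n/t^2\}$ would require $\sqrt{n\Delta_0/\varepsilon}$ epochs, not $\sqrt{\Delta_0/(n\varepsilon)}$. A Katyusha$^{\mathrm{ns}}$-type analysis gives $\frac{F(x_0)-F_\star}{t^2}+\frac{\bar\beta R^2}{nt^2}$, so rely on the reduction for $\alpha=0$.
\end{itemize}
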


These accelerated rates are almost the best possible due to nearly matching lower bounds~\cite{WooSre16TightComp}.
Interestingly, in this setting, randomness is crucial for attaining the optimal complexity; otherwise, among the class of deterministic algorithms,~\ref{eq:AGD} is the best possible (but strictly worse than~\autoref{thm:ASVRG}).

The rates for the finite sum setting are presented in~\autoref{tab:finite_sum}.

\begin{table}[h]
    \centering
    \begin{tabular}{ccc}
        \textbf{Algorithm} & \textbf{Iterations (Convex)} & \textbf{Iterations (Strongly Convex)} \\
        {\ref{eq:SGD}${}^\dagger$} & $O(c_0 R^2/\varepsilon^2)$ & $O(c_0/(\alpha \varepsilon))$ \\
        {\ref{eq:GD}} & $O(n\Delta_0/\varepsilon)$ & $O(n\kappa\log(\Delta_0/\varepsilon))$ \\
        {\ref{eq:AGD}} & $O(n\sqrt{\Delta_0/\varepsilon})$ & $O(n\sqrt\kappa \log(\Delta_0/\varepsilon))$ \\
        {\ref{eq:SVRG}} & $O(n\log(\Delta_0/\varepsilon) + \Delta_0/\varepsilon)$ & $O((n+\kappa)\log(\Delta_0/\varepsilon))$ \\
        ASVRG & $O(n\log(\Delta_0/\varepsilon) + \sqrt{n\Delta_0/\varepsilon})$ & $O((n+\sqrt{n\kappa})\log(\Delta_0/\varepsilon))$
    \end{tabular}
    \caption{Rates for finite sum minimization.
    }\label{tab:finite_sum}
\end{table}

\subsection*{Bibliographical notes}

In the convex, Lipschitz setting, a detailed study of the role of geometry in stochastic optimization (and in particular, when it is necessary to use non-linear methods such as mirror descent) can be found in~\cite{CheLevDuc25Geometry}.

For more discussion on the statistical performance of~\ref{eq:SGD}, see~\cite{Bach24Learning}.
For an exposition to empirical process theory and statistics, see any standard reference, e.g.,~\cite{Wai19Stats}.

The CLT for~\ref{eq:ASGD} was first established in~\cite{PolJud1992Averaging}.
The treatment of the martingale CLT follows~\cite{Bil1996Probability}.
For an exposition to asymptotic statistics, see~\cite{Vaart98Asymptotic}.

The proof of~\autoref{thm:SVRG} is inspired by~\cite{AllYua16SVRG}, although care was taken to unify the convex and strongly convex proofs.

\subsection*{Exercises}

\begin{question}\label{qu:sgd_unbdd_var}
    Often, stochastic gradients do not have uniformly bounded variance.
    For example, suppose we have the objective function $f: x \mapsto \frac{1}{2n}\sum_{i=1}^n \langle a_i, x\rangle^2$, with stochastic gradient $\hat\nabla f(x) = \langle a_i, x\rangle\,a_i$ with $i \sim \unif([n])$.
    Then, the variance of the stochastic gradient is
    \begin{align*}
        \E[\norm{\hat\nabla f(x) - \nabla f(x)}^2]
        &= \frac{1}{n} \sum_{i=1}^n {\Bigl\lVert \Bigl(a_i a_i^\T - \frac{1}{n} \sum_{j=1}^n a_j a_j^\T \Bigr)\,x\Bigr\rVert^2}\,,
    \end{align*}
    which grows quadratically with $\norm x$.

    Assume therefore that $f$ is $\alpha$-strongly convex and $\beta$-smooth with respect to the Euclidean norm, and that the following variance condition holds:
    \begin{align*}
        \E[\norm{\hat\nabla f(x) - \nabla f(x)}^2]
        &\le c_0 + c_1\,\norm{x-x_\star}^2\qquad\text{for all}~x\in\R^d\,.
    \end{align*}
    Show that the iterates of stochastic gradient descent satisfy the following guarantee.
    If $\varepsilon$ is sufficiently small and the step size $h$ is chosen appropriately, then $\E f(\bar x_N) - f_\star \le \varepsilon$ for a suitably averaged iterate $\bar x_N$ and all
    \begin{align*}
        N \gtrsim \frac{c_0}{\alpha \varepsilon} \log \frac{\alpha\,\norm{x_0 - x_\star}^2}{\varepsilon}\,.
    \end{align*}
\end{question}

\begin{question}\label{qu:sgd_pl}
    Let $f : \R^d\to\R$ be $\alpha$-\ref{eq:PL} and $\beta$-smooth, $\kappa \deq \beta/\alpha$.
    Assume that we have access to a stochastic gradient $\hat\nabla f$ which is unbiased and satisfies the variance bound~\eqref{eq:sgd_var_bd} for the Euclidean norm $\norm \cdot$.
    Prove that SGD with step size $h \le 1/\beta$ achieves the bound
    \begin{align*}
        \E f(x_N) - f_\star
        &\le {(1 - \alpha h)}^N\,(f(x_0) - f_\star) + \frac{\kappa \sigma^2 dh}{2}\,.
    \end{align*}
    What rate does this imply to reach $\E f(x_N) - f_\star \le \varepsilon$?
\end{question}

\begin{question}\label{qu:sgd_regression}
    Consider linear regression with fixed design: our dataset is ${\{(X_i, Y_i)\}}_{i\in [n]}$, where the covariates $X_i$ are deterministic and fixed, and the $Y_i$ are independent with
    \begin{align*}
        Y_i = \langle \theta^\star, X_i \rangle + \xi_i\,, \qquad \xi_i \sim \normal(0, \sigma^2)\,.
    \end{align*}
    The empirical and population risks are
    \begin{align*}
        \eu R_n(\theta)
        &\deq \frac{1}{n} \sum_{i=1}^n {(Y_i - \langle \theta, X_i \rangle)}^2\,, \qquad \eu R(\theta) \deq \E \eu R_n(\theta)\,.
    \end{align*}
    \begin{enumerate}
        \item Show that the population risk is given by $\eu R(\theta) = \sigma^2 + \norm{X\,(\theta - \theta^\star)}^2/n$, where $X \in \R^{n\times d}$ is the matrix whose rows are ${\{X_i^\T\}}_{i\in [n]}$.
        \item Show that the ERM of minimal norm is the least-squares estimator $\widehat \theta_n = {(X^\T X)}^\dagger X^\T Y$, where ${}^\dagger$ denotes the Moore{--}Penrose pseudoinverse.
            Show that the excess risk of the ERM is given by
            \begin{align*}
                \E \eu R(\widehat \theta_n) - \eu R(\theta^\star) = \frac{\sigma^2 \rank X}{n}\,.
            \end{align*}
        \item Consider the iterates of GD on the empirical risk $\eu R_n$.
            Show that for a step size $h$ sufficiently small, it holds that
            \begin{align*}
                \E \eu R(\theta_N) - \eu R(\theta^\star)
                &\le \frac{\sigma^2 \rank X}{n} + O\Bigl( \frac{\norm{\theta_0 -\theta^\star}^2}{Nh}\Bigr)\,.
            \end{align*}
    \end{enumerate}
    \emph{Hints:} For all of these parts, make extensive use of the singular value decomposition of $X$.
    For the third part, write an exact recursion for $\theta_k - \theta^\star$, iterate this recursion, and then compute the excess risk.
    Use the fact that $\max_{x\in [0,1]} x\,{(1-x)}^N \lesssim 1/N$ for $N \ge 1$.
\end{question}

\section{Interior point methods}

We present polynomial-time methods for solving linear programs (LPs) and semidefinite programs (SDPs), among other structured optimization problems, through the family of interior point methods originally introduced in~\cite{Kar1984}.

\subsection{Self-concordant analysis of Newton's method}

We begin with an analysis of Newton's method: to minimize a function $f : \R^d\to\R$, we consider the iteration
\begin{align}\label{eq:NM}\tag{$\msf{NM}$}
    x_{n+1}
    &= x_n - [\nabla^2 f(x_n)]{}^{-1}\,\nabla f(x_n)\,.
\end{align}
The method is derived as follows: consider the local \emph{quadratic} approximation of $f$ around the current iterate $x_n$:
\begin{align*}
    f(x)
    &\approx f(x_n) + \langle \nabla f(x_n), x-x_n \rangle + \frac{1}{2}\,\langle x-x_n, \nabla^2 f(x_n)\,(x-x_n)\rangle\,.
\end{align*}
It is straightforward to check that, provided $\nabla^2 f(x_n) \succ 0$, the minimizer of the quadratic approximation is given by the next iterate $x_{n+1}$ of~\ref{eq:NM}.
Unlike the methods we have studied thus far, Newton's method is a second-order method in that it uses Hessian information.
Since the Hessian of $f$ may not even be invertible everywhere if $f$ is not strictly convex, Newton's method is not always well-defined, and we certainly cannot expect Newton's method to converge globally without further assumptions.
However, unlike first-order methods, Newton's method exhibits local \emph{quadratic} convergence.

\begin{thm}[local quadratic convergence of~\ref{eq:NM}]\label{thm:newton_quad_conv}
    Assume that $\nabla^2 f(x_\star) \succeq \alpha I \succ 0$ and that $\nabla^2 f$ is $\gamma$-Lipschitz in the operator norm:
    \begin{align*}
        \norm{\nabla^2 f(x) - \nabla^2 f(y)}_{\rm op}
        &\le \gamma\,\norm{x-y} \qquad\text{for all}~x,y\in\R^d\,.
    \end{align*}
    Then, provided $\norm{x_0 - x_\star} \le \alpha/(2\gamma)$,~\ref{eq:NM} satisfies
    \begin{align}\label{eq:quad_conv}
        \norm{x_{n+1} - x_\star}
        &\le \frac{\gamma}{\alpha}\,\norm{x_n - x_\star}^2
        \le \frac{1}{2}\,\norm{x_n - x_\star}\,.
    \end{align}
\end{thm}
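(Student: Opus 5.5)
The plan is the standard fundamental-theorem-of-calculus argument for Newton's method, together with an induction that keeps the iterates inside the ball $\norm{x - x_\star} \le \alpha/(2\gamma)$.

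First, control the Hessian near $x_\star$. By the Lipschitz assumption, whenever $\norm{x - x_\star} \le \alpha/(2\gamma)$ we have
\[
\nabla^2 f(x) \succeq \nabla^2 f(x_\star) - \gamma\,\norm{x - x_\star}\, I \succeq \tfrac{\alpha}{2}\, I .
\]
Hence $\nabla^2 f(x)$ is invertible and $\norm{[\nabla^2 f(x)]^{-1}}_{\rm op} \le 2/\alpha$. In particular, the Newton step is well defined on this ball.

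Second, the one-step estimate. Write $H_n \deq \nabla^2 f(x_n)$. Since $\nabla f(x_\star) = 0$, the fundamental theorem of calculus gives
\[
\nabla f(x_n) = \int_0^1 \nabla^2 f\bigl(x_\star + t\,(x_n - x_\star)\bigr)\,(x_n - x_\star)\,\D t .
\]
Substituting into the Newton update,
\[
x_{n+1} - x_\star = H_n^{-1} \int_0^1 \Bigl(H_n - \nabla^2 f\bigl(x_\star + t\,(x_n - x_\star)\bigr)\Bigr)\,(x_n - x_\star)\,\D t .
\]
The Lipschitz bound controls the integrand in operator norm by $\gamma\,(1-t)\,\norm{x_n - x_\star}$. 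Integrating over $t$ produces a factor $1/2$, and combining with $\norm{H_n^{-1}}_{\rm op} \le 2/\alpha$ yields
\[
\norm{x_{n+1} - x_\star} \le \frac{2}{\alpha}\cdot\frac{\gamma}{2}\,\norm{x_n - x_\star}^2 = \frac{\gamma}{\alpha}\,\norm{x_n - x_\star}^2 .
\]
This is the first inequality in~\eqref{eq:quad_conv}.

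Third, the second inequality and the induction. If $\norm{x_n - x_\star} \le \alpha/(2\gamma)$, then
\[
\frac{\gamma}{\alpha}\,\norm{x_n - x_\star}^2 \le \frac{1}{2}\,\norm{x_n - x_\star},
\]
which is the second inequality in~\eqref{eq:quad_conv}. It also shows that $x_{n+1}$ lies in the same ball. Inducting from the hypothesis on $x_0$, both inequalities hold for every $n$.

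There is no serious obstacle in this argument. The only point requiring care is bounding $H_n^{-1}$ by $2/\alpha$ rather than $1/\alpha$: the lower bound $\alpha$ is assumed only at $x_\star$, so at $x_n$ we only have $\alpha/2$ from the first step. Keeping this factor and the $1/2$ from $\int_0^1 (1-t)\,\D t$ together is exactly what gives the constant $\gamma/\alpha$.
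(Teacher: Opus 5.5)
Your proof is correct and follows essentially the same route as the paper: the fundamental-theorem-of-calculus representation of $x_{n+1}-x_\star$, the Lipschitz bound $\gamma\,(1-t)\,\norm{x_n - x_\star}$ on the integrand, which integrates to the factor $\gamma/2$, and the bound $\norm{[\nabla^2 f(x_n)]^{-1}}_{\rm op} \le 2/\alpha$. That last bound comes from $\lambda_{\min}(\nabla^2 f(x_n)) \ge \alpha - \gamma\,\norm{x_n - x_\star} \ge \alpha/2$, which is maintained inductively on the ball of radius $\alpha/(2\gamma)$.
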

\begin{proof}
    By Taylor expansion,
    \begin{align*}
        x_{n+1} - x_\star
        &= x_n - x_\star - [\nabla^2 f(x_n)]{}^{-1}\,\nabla f(x_n) \\[0.25em]
        &= [\nabla^2 f(x_n)]{}^{-1} \int_0^1 [\nabla^2 f(x_n) - \nabla^2 f((1-t)\,x_\star + t\,x_n)]\,(x_n - x_\star)\,\D t\,, \\[0.25em]
        \norm{x_{n+1} - x_\star}
        &\le \gamma\,\norm{[\nabla^2 f(x_n)]{}^{-1}}_{\rm op} \int_0^1 (1-t)\,\norm{x_n - x_\star}^2\,\D t \\[0.25em]
        &\le \frac{\gamma}{2}\,\norm{[\nabla^2 f(x_n)]{}^{-1}}_{\rm op} \,\norm{x_n - x_\star}^2\,.
    \end{align*}
    On the other hand,
    \begin{align*}
        \lambda_{\min}(\nabla^2 f(x_n))
        &\ge \lambda_{\min}(\nabla^2 f(x_\star)) - \gamma\,\norm{x_n - x_\star}
        \ge \alpha - \gamma\,\norm{x_n - x_\star}
        \ge \frac{\alpha}{2}\,,
    \end{align*}
    since inductively we have $\norm{x_n - x_\star} \le \alpha/(2\gamma)$.
    Thus, $\norm{[\nabla^2 f(x_n)]{}^{-1}}_{\rm op} \le 2/\alpha$.
\end{proof}

The inequality~\eqref{eq:quad_conv} implies that the error at iteration $n+1$ is proportional to the \emph{square} of the error at iteration $n$, hence ``quadratic'' convergence.
To see what rate of convergence this implies, multiply both sides by $\gamma/\alpha$, yielding
\begin{align*}
    \frac{\gamma}{\alpha}\,\norm{x_{n+1} - x_\star}
    &\le \bigl( \frac{\gamma}{\alpha}\,\norm{x_n - x_\star}\bigr){\bigsp}^2\,.
\end{align*}
Thus,
\begin{align*}
    \log \frac{\alpha}{\gamma\,\norm{x_{n+1} - x_\star}}
    &\ge 2\log \frac{\alpha}{\gamma\,\norm{x_n - x_\star}}\,.
\end{align*}
Iterating, it yields
\begin{align*}
    \norm{x_N - x_\star}
    &\le \exp\Bigl( - 2^N \log \frac{\alpha}{\gamma\,\norm{x_0 - x_\star}}\Bigr)
    \le \exp(- (\log 2)\,2^N)\,.
\end{align*}
Hence, as soon as $x_0$ lies in the region of local quadratic convergence, $\norm{x_0 - x_\star} \le \alpha/(2\gamma)$, achieving $\norm{x_N - x_\star} \le \varepsilon$ only requires a further $O(\log \log(1/\varepsilon))$ iterations.

Our main interest in Newton's method is as a subroutine for developing interior point methods.
For this purpose, local quadratic convergence is actually not as relevant as another key property: namely, affine invariance.
If $A$ is an invertible matrix, then the iterates of~\ref{eq:NM} on the transformed function $\hat x \mapsto f(A\hat x)$ are equal to $A^{-1}$ times the iterates of~\ref{eq:NM} on the original function $f$.
(This is the same notion of affine invariance that we encountered for the Frank{--}Wolfe algorithm in~\autoref{qu:fw_affine_inv}.)

From this perspective, the analysis of~\autoref{thm:newton_quad_conv} is not satisfactory, since the ratio $\gamma/\alpha$ is not affine-invariant.
Thus, instead of assuming that $\nabla^2 f$ is Lipschitz with respect to the Euclidean norm, let us instead assume that it is Lipschitz with respect to the norm generated by $\nabla^2 f$ itself.

\begin{defn}
    Let $f : \R^d\to\R\cup\{\infty\}$ be convex.
    For $x \in \interior \dom f$, the \textbf{local norm} of $v\in\R^d$ at $x$ is
    \begin{align*}
        \norm v_x
        &\deq \sqrt{\langle v, \nabla^2 f(x)\,v\rangle}\,.
    \end{align*}
    The \textbf{dual local norm} of $v^* \in \R^d$ is
    \begin{align*}
        \norm{v^*}_x^*
        &\deq \sqrt{\langle v^*, [\nabla^2 f(x)]{}^{-1}\,v^* \rangle}\,.
    \end{align*}
\end{defn}

\begin{defn}
    Let $f : \R^d\to\R\cup\{\infty\}$ be a regular convex function with an open domain.
    Then, $f$ is \textbf{self-concordant} with parameter $M > 0$ if for all $x\in\interior \dom f$ and $v\in\R^d$,
    \begin{align*}
        \abs{\nabla^3 f(x)[v, v, v]}
        &\le 2M\,\norm v_x^3\,.
    \end{align*}
    If this inequality holds with $M = 1$, we simply say that $f$ is self-concordant.
\end{defn}

The fact that $f$ is regular convex with an open domain implies that it tends to $+\infty$ at the boundary of its domain, i.e., it acts as a \emph{barrier}.
Clearly, any quadratic function is self-concordant.
The main example of a self-concordant function is $-{\log}$, which is used as a building block for further self-concordant functions in \S\ref{ssec:ipm_applications}.

\begin{ex}[self-concordance of $-{\log}$]
    Direct computation shows that for the univariate function $f : x\mapsto -\log x$ over $\R_+$,
    \begin{align*}
        f'(x) = - \frac{1}{x}\,, \qquad f''(x) = \frac{1}{x^2}\,, \qquad f''{'}(x) = - \frac{2}{x^3}\,.
    \end{align*}
    Hence,
    \begin{align*}
        \abs{f''{'}(x)}
        = \frac{2}{x^3}
        \le 2\,[f''(x)]{}^{3/2}\,,
    \end{align*}
    which shows that $-{\log}$ is self-concordant (with parameter $1$).
\end{ex}

Notice that even though $-{\log}$ blows up at the boundary of its domain, it still satisfies self-concordance.
This provides a hint as to why the notion of self-concordance is useful for constrained minimization.

Putting aside the development of further examples for now, let us describe some key properties of self-concordant functions.

\begin{defn}
    Given a convex function $f : \R^d\to\R\cup\{\infty\}$, $x \in \interior \dom f$, and $r > 0$, the \textbf{Dikin ellipsoid} of $f$ at $x$ with radius $r$ is
    \begin{align*}
        \Dikin(x, r)
        &\deq \{y\in\R^d : \norm{y-x}_x < r\}\,.
    \end{align*}
\end{defn}

Self-concordance implies that the Hessian of $f$ is stable inside the Dikin ellipsoid.

\begin{lem}[self-concordance]\label{lem:self_concordance}
    Let $f : \R^d\to\R\cup\{\infty\}$ be a self-concordant function with parameter $M$, and let $x,y\in\dom f$.
    \begin{enumerate}
        \item For any $v\in\R^d$, $-2M\,\norm v_x\,\nabla^2 f(x) \preceq \nabla^3 f(x)[v, \cdot, \cdot] \preceq 2M\,\norm v_x\,\nabla^2 f(x)$.
        \item $\Dikin(x,1/M) \subseteq \dom f$.
        \item If $y \in \Dikin(x, 1/M)$, then
            \begin{align*}
                \frac{\norm{y-x}_x}{1+M\,\norm{y-x}_x} \le \norm{y-x}_y \le \frac{\norm{y-x}_x}{1-M\,\norm{y-x}_x}\,.
            \end{align*}
        \item If $y \in \Dikin(x, 1/M)$ and $v\in\R^d$, then
            \begin{align*}
                {(1-M\,\norm{y-x}_x)}^2\,\nabla^2 f(x) \preceq \nabla^2 f(y) \preceq \frac{1}{{(1-M\,\norm{y-x}_x)}^2}\,\nabla^2 f(x)\,.
            \end{align*}
        \item It holds that
            \begin{align*}
                \langle \nabla f(y) - \nabla f(x), y-x\rangle
                &\ge \frac{\norm{y-x}_x^2}{1+M\,\norm{y-x}_x}\,.
            \end{align*}
    \end{enumerate}
\end{lem}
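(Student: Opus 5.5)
The plan is to reduce everything to one-dimensional statements along the segment $x_t \deq x + t\,(y-x)$, using item 1 as the basic tool. The standard trick is to control the scalar function $\psi(t) \deq \norm{y-x}_{x_t}^2 = \langle y-x, \nabla^2 f(x_t)\,(y-x)\rangle$.

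\textbf{Item 1.} This is a statement about the symmetric trilinear form $T = \nabla^3 f(x)$ relative to the inner product given by $H = \nabla^2 f(x)$. After the change of variables $u \mapsto H^{-1/2}u$ (assuming $H\succ 0$; the degenerate case is handled by a limiting argument or restriction), the hypothesis says $\abs{T[u,u,u]} \le 2M\,\norm u^3$. The claim is that $\abs{T[v,w,w]} \le 2M\,\norm v\,\norm w^2$. This is the classical fact that the norm of a symmetric multilinear form equals its value on the diagonal, a polarization-type inequality due to Banach. I would cite it, or prove the trilinear case directly. This step is the main technical obstacle. If a self-contained argument is needed, I would first reduce to the two-dimensional span of $v,w$. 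There I would either diagonalize the quadratic form $T[v,\cdot,\cdot]$ and use an extremal-eigenvector argument, or use the maximizer of $T[u_1,u_2,u_3]$ on the unit sphere and show it can be taken with $u_1=u_2=u_3$.

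\textbf{Items 3 and 2.} Differentiating $\psi$ and applying item 1 with $v=w=y-x$ gives $\abs{\psi'(t)} \le 2M\,\psi(t)^{3/2}$. Equivalently, $\abs{\tfrac{\D}{\D t}\,\psi(t)^{-1/2}} \le M$. Integrating from $0$ to $1$ gives $\psi(0)^{-1/2} - M \le \psi(1)^{-1/2} \le \psi(0)^{-1/2}+M$, which rearranges to item 3 since $\psi(0)^{1/2} = \norm{y-x}_x$.

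For item 2, I would argue by continuity. Take $y$ with $M\,\norm{y-x}_x<1$ and let $t^*$ be the first time $x_t$ leaves $\dom f$. The same integration on $[0,t]$ with $t<t^*$ shows $\psi(t)$ stays bounded, since $\psi(t)^{-1/2} \ge \psi(0)^{-1/2} - Mt > 0$. Consequently $\norm{\nabla^2 f}$ along the segment in this direction stays bounded, so $f(x_t)$ stays bounded as $t\nearrow t^*$. This contradicts the barrier property of regular convex functions with open domain noted above. Hence $y\in\dom f$.

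\textbf{Items 4 and 5.} For item 4, fix $u$ and set $\chi(t) \deq \langle u, \nabla^2 f(x_t)\,u\rangle$. Item 1 with $v=y-x$ gives
\[
\abs{\chi'(t)} \le 2M\,\norm{y-x}_{x_t}\,\chi(t).
\]
Item 3 applied at $x_t$ gives $\norm{y-x}_{x_t} \le r/(1-Mrt)$, where $r=\norm{y-x}_x$. Integrating $\abs{(\log\chi)'} \le 2Mr/(1-Mrt)$ over $[0,1]$ yields $\abs{\log(\chi(1)/\chi(0))} \le -2\log(1-Mr)$, which is item 4.

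For item 5, I would write
\[
\langle \nabla f(y)-\nabla f(x), y-x\rangle = \int_0^1 \psi(t)\,\D t,
\]
and use the lower bound $\psi(t) \ge r^2/(1+Mrt)^2$ from the integrated inequality above. This gives $\int_0^1 r^2/(1+Mrt)^2\,\D t = r^2/(1+Mr)$. Item 5 holds for all $y\in\dom f$, with no Dikin restriction, because this lower bound never requires positivity of $\psi(0)^{-1/2}-Mt$.
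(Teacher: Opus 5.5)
Your proposal is correct and follows the paper's proof essentially step for step. Item 1 is deferred to the theory of symmetric multilinear forms, items 2 and 3 come from integrating $\abs{(\psi^{-1/2})'} \le M$ along the segment, item 4 from Gr\"onwall with $\norm{y-x}_{x_t} \le r/(1-Mrt)$, and item 5 from integrating $\psi(t) \ge r^2/(1+Mrt)^2$. Your item 2 argument (bounded second derivative of $t\mapsto f(x_t)$ up to the exit time, contradicting lower semicontinuity) and your remark on degenerate Hessians in item 1 are somewhat more explicit than the paper's, but the route is the same.
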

\begin{proof}
    The first statement follows from general theory about multilinear forms, see~\cite[Appendix 1]{NesNem1995IntPt}.

    Let $z_t \deq (1-t)\,x+t\,y$ and $\phi(t) \deq \langle y-x, \nabla^2 f(z_t)\,(y-x)\rangle^{-1/2}$.
    Then, by the definition of self-concordance,
    \begin{align*}
        \abs{\phi'(t)}
        &= \Bigl\lvert\frac{\nabla^3 f(z_t)[y-x, y-x, y-x]}{2\,\langle y-x, \nabla^2 f(z_t)\,(y-x) \rangle^{3/2}}\Bigr\rvert
        \le M\,.
    \end{align*}
    Hence, $\phi(0) - M \le \phi(1) \le \phi(0) + M$, which yields the third statement.
    The second statement follows from the third since $y \in \Dikin(x, 1/M)$ implies that $\norm{y-x}_{z_t}$ is bounded for $t\in [0,1]$, which contradicts the fact that $f$ blows up at $\partial \dom f$.

    For the fourth statement, let $\psi(t) \deq \langle v,\nabla^2 f(z_t)\,v\rangle$.
    Then, by the definition of self-concordance and the first and third statements,
    \begin{align*}
        \abs{\psi'(t)}
        &\le \abs{\nabla^3 f(z_t)[y-x, v, v]}
        \le 2M\,\norm{y-x}_{x_t}\,\norm v_{x_t}^2
        = \frac{2M}{t}\,\norm{z_t-x}_{x_t}\,\norm v_{x_t}^2 \\
        &\le \frac{2M\,\norm{y-x}_x}{1-Mt\,\norm{y-x}_x}\,\psi(t)\,.
    \end{align*}
    Letting
    \begin{align*}
        C
        &\deq \int_0^1 \frac{2M\,\norm{y-x}_x}{1-Mt\,\norm{y-x}_x}\,\D t
        = 2\log \frac{1}{1-M\,\norm{y-x}_x}\,,
    \end{align*}
    a suitable generalization of Gr\"onwall's inequality (\autoref{lem:gronwall}) implies
    \begin{align*}
        \psi(0) \exp(-C) &\le \psi(1) \le \psi(0) \exp(C)\,.
    \end{align*}

    For the last statement,
    \begin{align*}
        \langle \nabla f(y) - \nabla f(x), y- x\rangle
        &= \int_0^1 \langle \nabla^2 f(z_t)\,(y-x), y-x\rangle\,\D t
        \ge \int_0^1 \frac{\norm{y-x}_x^2}{{(1+Mt\,\norm{y-x}_x)}^2}\,\D t \\
        &= \frac{\norm{y-x}_x^2}{1+M\,\norm{y-x}_x}\,. \qedhere
    \end{align*}
\end{proof}

We are now ready to analyze the local convergence of Newton's method under self-concordance.
It is convenient to measure convergence via the following object.

\begin{defn}
    Let $f : \R^d\to\R\cup\{\infty\}$ be convex.
    The \textbf{Newton decrement} of $f$ at $x \in \interior \dom f$ is the quantity
    \begin{align*}
        \lambda_f(x)
        \deq \norm{\nabla f(x)}_x^*
        = \norm{x^+-x}_x\,,
    \end{align*}
    where $x^+ \deq x - [\nabla^2 f(x)]{}^{-1}\,\nabla f(x)$.
\end{defn}

\begin{thm}[local quadratic convergence of~\ref{eq:NM} under self-concordance]\label{thm:newton_self_conc}
    Consider a self-concordant function $f : \R^d\to\R\cup\{\infty\}$ with parameter $M$.
    Then, for any $x\in\R^d$ with $\lambda_f(x) < 1/M$,
    \begin{align*}
        \lambda_f\bigl(x-[\nabla^2 f(x)]{}^{-1}\,\nabla f(x)\bigr)
        &\le \frac{M{\lambda_f(x)}^2}{{(1-M\lambda_f(x))}^2}\,.
    \end{align*}
\end{thm}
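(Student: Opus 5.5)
Write $x^+ \deq x - [\nabla^2 f(x)]^{-1}\nabla f(x)$, $\lambda \deq \lambda_f(x) = \norm{x^+-x}_x < 1/M$, and $v \deq x^+ - x$. The plan is to express $\nabla f(x^+)$ as an integral of Hessian differences along the segment from $x$ to $x^+$, and then control it in the dual local norm at $x^+$ using \autoref{lem:self_concordance}.

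\textbf{Well-posedness.} Since $\norm{x^+-x}_x = \lambda < 1/M$, the point $x^+$ lies in $\Dikin(x,1/M)$. By part 2 of \autoref{lem:self_concordance}, $x^+ \in \dom f$, so $\lambda_f(x^+)$ is defined.

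\textbf{Integral representation.} Set $z_t \deq x + tv$. Because $\nabla f(x) = -\nabla^2 f(x)\,v$, the fundamental theorem of calculus gives
\begin{align*}
    \nabla f(x^+) = \nabla f(x) + \int_0^1 \nabla^2 f(z_t)\,v\,\D t = \int_0^1 [\nabla^2 f(z_t) - \nabla^2 f(x)]\,v\,\D t\,.
\end{align*}
Define $G \deq \int_0^1 [\nabla^2 f(z_t) - \nabla^2 f(x)]\,\D t$, so that $\nabla f(x^+) = Gv$. Each $z_t$ satisfies $\norm{z_t - x}_x = t\lambda$. Part 4 of \autoref{lem:self_concordance} therefore gives
\begin{align*}
    \bigl({(1-Mt\lambda)}^2 - 1\bigr)\,H \preceq \nabla^2 f(z_t) - H \preceq \bigl({(1-Mt\lambda)}^{-2} - 1\bigr)\,H\,,
\end{align*}
where $H \deq \nabla^2 f(x)$. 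Integrating in $t$, the upper coefficient is $\int_0^1 [(1-Mt\lambda)^{-2} - 1]\,\D t = \frac{M\lambda}{1-M\lambda}$. The lower coefficient is $1 - (1-M\lambda+M^2\lambda^2/3)$ in absolute value, which is smaller. Hence $-\frac{M\lambda}{1-M\lambda} H \preceq G \preceq \frac{M\lambda}{1-M\lambda} H$. For a symmetric $G$ this yields $\norm{H^{-1/2} G H^{-1/2}}_{\rm op} \le \frac{M\lambda}{1-M\lambda}$, and consequently
\begin{align*}
    \norm{Gv}_x^* = \norm{H^{-1/2}GH^{-1/2}\,H^{1/2}v} \le \frac{M\lambda}{1-M\lambda}\,\norm v_x = \frac{M\lambda^2}{1-M\lambda}\,.
\end{align*}

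\textbf{Change of local norm.} It remains to pass from the dual norm at $x$ to the dual norm at $x^+$. Part 4 of \autoref{lem:self_concordance} gives $\nabla^2 f(x^+) \succeq (1-M\lambda)^2 H$. Inverting, $[\nabla^2 f(x^+)]^{-1} \preceq (1-M\lambda)^{-2} H^{-1}$, so $\norm{w}_{x^+}^* \le (1-M\lambda)^{-1}\norm w_x^*$ for every $w$. Combining with the previous bound,
\begin{align*}
    \lambda_f(x^+) = \norm{\nabla f(x^+)}_{x^+}^* \le \frac{M\lambda^2}{(1-M\lambda)^2}\,,
\end{align*}
which is the claim.

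The only delicate step is the operator bound on the averaged Hessian difference $G$. That step requires a two-sided Loewner bound, not just the one-sided bound in part 4, and it requires checking that the lower deviation is dominated by the upper one, so that a single symmetric constant $\frac{M\lambda}{1-M\lambda}$ controls both sides.
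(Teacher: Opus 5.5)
Your proof is correct and follows essentially the same route as the paper: you write $\nabla f(x^+)$ as the averaged Hessian difference applied to $x^+-x$, bound $\norm{H^{-1/2}GH^{-1/2}}_{\rm op}$ by $\frac{M\lambda}{1-M\lambda}$ via part 4 of \autoref{lem:self_concordance}, and transfer the dual local norm from $x$ to $x^+$ at the cost of a factor $(1-M\lambda)^{-1}$. Your explicit lower Loewner bound (coefficient $M\lambda - M^2\lambda^2/3$) fills in a step the paper leaves implicit: the paper bounds the quadratic form only from above, which by itself does not control the operator norm of a symmetric matrix.
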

\begin{proof}
    Let $x^+ \deq x - [\nabla^2 f(x)]{}^{-1}\,\nabla f(x)$.
    By~\autoref{lem:self_concordance},
    \begin{align*}
        \lambda_f(x^+)
        &= \norm{\nabla f(x^+)}_{x^+}^*
        \le \frac{\norm{\nabla f(x^+)}_x}{1-M\,\norm{x^+-x}_x}
        = \frac{\norm{\nabla f(x^+)}_x}{1-M\lambda_f(x)}\,.
    \end{align*}
    Then, for $z_t \deq (1-t)\,x + t\,x^+$,
    \begin{align*}
        \nabla f(x^+)
        &= \nabla f(x) + \int_0^1 \nabla^2 f(z_t)\,(x^+ - x)\,\D t
        = \underbrace{\Bigl(\int_0^1 \nabla^2 f(z_t)\,\D t - \nabla^2 f(x)\Bigr)}_{\eqqcolon \Delta}\,(x^+ - x)\,.
    \end{align*}
    Thus,
    \begin{align*}
        \norm{\nabla f(x^+)}_x^2
        &= \langle \Delta\,(x^+ - x), [\nabla^2 f(x)]{}^{-1}\,\Delta\,(x^+ - x)\rangle \\
        &\le \norm{[\nabla^2 f(x)]{}^{-1/2}\,\Delta\,[\nabla^2 f(x)]{}^{-1}\,\Delta\,[\nabla^2 f(x)]{}^{-1/2}}_{\rm op}\,\norm{x^+ - x}_x^2 \\
        &= \norm{[\nabla^2 f(x)]{}^{-1/2}\,\Delta\,[\nabla^2 f(x)]{}^{-1/2}}_{\rm op}^2\,{\lambda_f(x)}^2\,.
    \end{align*}
    To bound the operator norm, we take any unit vector $v\in\R^d$ and compute
    \begin{align*}
        &\langle v, [\nabla^2 f(x)]{}^{-1/2}\,\Delta\,[\nabla^2 f(x)]{}^{-1/2}\,v\rangle \\[0.25em]
        &\qquad = \int_0^1 \langle [\nabla^2 f(x)]{}^{-1/2}\,v, [\nabla^2 f(z_t) - \nabla^2 f(x)]\,[\nabla^2 f(x)]{}^{-1/2}\,v\rangle\,\D t \\[0.25em]
        &\qquad \le \int_0^1 \langle [\nabla^2 f(x)]{}^{-1/2}\,v, \bigl( \frac{1}{{(1-Mt\lambda_f(x))}^2} - 1\bigr)\, \nabla^2 f(x)\,[\nabla^2 f(x)]{}^{-1/2}\,v\rangle\,\D t \\[0.25em]
        &\qquad = \int_0^1 \Bigl( \frac{1}{{(1-Mt\lambda_f(x))}^2} - 1\Bigr)\,\D t
        = \frac{M\lambda_f(x)}{1-M\lambda_f(x)}\,.
    \end{align*}
    Putting everything together yields the result.
\end{proof}

\subsection{Following the central path}\label{ssec:central_path}

We now consider the following structured minimization problem:
\begin{align*}
    \minimize_{x\in\eu C}\qquad \langle a, x\rangle\,.
\end{align*}
One can also consider non-linear objective functions, but this setup is already enough to capture LPs and SDPs.

Our main assumption is that we have explicit access to a self-concordant function $\phi$ with $\overline{\dom \phi} = \eu C$.
Motivation for this assumption is provided in~\cite[\S 5.1.1]{Nes18CvxOpt}, in which Nesterov argues that a fundamental conceptual contradiction lies at the heart of black-box optimization.
Namely, black-box optimization assumes that the problem under consideration is convex, but in order to verify convexity in practice, one often needs to exploit some \emph{structure} of the problem.

The typical strategy to verify convexity is to appeal to the fact that convexity is preserved under various operations (conic combinations, composition with affine mappings, taking suprema, etc.), and thereby reduce the question to checking convexity of a few primitive building blocks.
Similarly, as we describe in \S\ref{ssec:ipm_applications}, one can develop a \emph{barrier calculus} which produces self-concordant functions for convex sets $\eu C$ which are built up out of primitive building blocks by applying basic operations.
Consequently, one can furnish a self-concordant barrier for a huge number of problems of practical interest.

Once we have such a function $\phi$, how can we use it to solve the constrained optimization problem?
As the name suggests, the family of \emph{interior point} methods maintain iterates which lie in the interior of $\eu C$ (unlike other methods, such as cutting plane methods, projected gradient methods, the simplex algorithm, etc.), by using the function $\phi$ as a barrier to exiting $\eu C$.
Although there are many types of interior point methods, here we focus on following the so-called \emph{central path}, i.e., the path
\begin{align*}
    t
    &\mapsto x_\star(t) \deq \argmin_{x\in\R^d}{\{\underbrace{t\,\langle a, x\rangle + \phi(x)}_{\eqqcolon f_t(x)}\}}\,.
\end{align*}

When $t=0$, $x_\star(0) \deq \argmin \phi$ is called the \emph{analytical center} of $\eu C$ (relative to $\phi$).
We discuss later how to obtain a point close to $x_\star(0)$ (and thereby initialize the scheme).
On the other hand, as $t\to\infty$, we expect that $x_\star(t) \to x_\star = \argmin_{x\in\eu C}{\langle a, x \rangle}$.

Suppose that the algorithm is currently on the central path at the point $x_\star(t)$.
The path following scheme proceeds by incrementing $t$ to some $t^+ > t$.
To compute the next point $x_\star(t^+)$, we must minimize the function $f_{t^+}$, and we can use the previous point $x_\star(t)$ as a warm start.
In fact, since $f_{t^+}$ is self-concordant, a natural choice is to apply a step of Newton's method, which requires $x_\star(t)$ to be in the region of local convergence for $f_{t^+}$.
This places a constraint on how fast we can increase $t$.

We remark that although the use of Newton's method is standard, it is not the only choice; one could use, e.g., a few steps of preconditioned gradient descent.

Next, let us calculate the increment for $t$, starting at $x_\star(t)$.
From~\autoref{thm:newton_self_conc}, taking $M = 1$, we want to ensure that $\lambda_{f_{t^+}}(x_\star(t)) < 1$.
However, since $ta + \nabla \phi(x_\star(t)) = 0$,
\begin{align*}
    \lambda_{f_{t^+}}(x_\star(t))
    &= \norm{t^+ a + \nabla \phi(x_\star(t))}_{x_\star(t)}^*
    = (t^+ - t)\,\norm a_{x_\star(t)}^*
    = \frac{t^+ - t}{t}\,\norm{\nabla \phi(x_\star(t))}_{x_\star(t)}^*\,.
\end{align*}
In order for this to be controlled, we want a uniform bound on $\norm{\nabla \phi(x)}_x^*$.
Note that even though the objective function is changing with time, the local norms above are unambiguous because $\nabla^2 f_t$ is independent of $t$.

\begin{defn}
    Let $f : \R^d\to\R\cup\{\infty\}$ be self-concordant.
    Then, $f$ is a \textbf{$\nu$-self-concordant barrier} if $\norm{\nabla f(x)}_x^* \le \sqrt\nu$ for all $x \in \dom f$.
\end{defn}

Assuming that $\phi$ is a $\nu$-self-concordant barrier, it follows that we can take $t^+ = (1+\Omega(1/\sqrt \nu))\,t$.
Therefore, we expect the number of iterations of the scheme to scale as $\widetilde O(\sqrt \nu)$ (up to logarithmic factors).

Before carrying out the full analysis (which takes into account the fact that we do not exactly follow the central path, as well as the issue of initialization), we first consider elements of the barrier calculus and applications.

\subsection{Barrier calculus and applications}\label{ssec:ipm_applications}

\begin{ex}[$-{\log}$ is a $1$-self-concordant barrier]\label{ex:log_barrier}
    Indeed, by explicit calculation, the self-concordant function $x \mapsto f(x) = -\log x$ satisfies
    \begin{align*}
        \frac{{f'(x)}^2}{f''(x)}
        &= 1\,.
    \end{align*}
\end{ex}

Starting from this example, we can build many more.

\begin{prop}[barrier calculus]\mbox{}\label{prop:barrier_calc}
    \begin{enumerate}
        \item Let $f_1$, $f_2$ be $\nu_1$- and $\nu_2$-self-concordant barriers respectively.
            Then, $f_1 + f_2$ is a $(\nu_1+\nu_2)$-self-concordant barrier for $\dom f_1 \cap \dom f_2$.
        \item Let $f_1$, $f_2$ be $\nu_1$- and $\nu_2$-self-concordant barriers respectively.
            Then, $(x,y)\mapsto f_1(x) + f_2(y)$ is a $(\nu_1+\nu_2)$-self-concordant barrier for $\dom f_1 \times \dom f_2$.
        \item Let $\ms A : x \mapsto Ax + b$ be an affine map and let $f$ be a $\nu$-self-concordant barrier with $\dom f \subseteq \range \ms A$.
            Then, the composition $x \mapsto f(\ms A(x))$ is a $\nu$-self-concordant barrier for the set $\ms A^{-1}(\dom f)$.
        \item Let $f$ be a $\nu$-self-concordant barrier over $\R^{d_1}\times \R^{d_2}$.
            Then, $x \mapsto \inf_{y\in\R^{d_2}} f(x, y)$ is a $\nu$-self-concordant barrier.
    \end{enumerate}
\end{prop}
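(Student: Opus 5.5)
We treat the four parts of \autoref{prop:barrier_calc} separately. Each time we verify two things: the self-concordance inequality $\abs{\nabla^3 f(x)[v,v,v]} \le 2\,\norm v_x^3$, and the barrier inequality $\norm{\nabla f(x)}_x^* \le \sqrt\nu$. For the second it is convenient to use the equivalent variational form
\begin{align*}
    {(\norm{\nabla f(x)}_x^*)}^2 = \sup_{v}\{2\,\langle \nabla f(x), v\rangle - \norm v_x^2\}\,,
\end{align*}
so that $f$ is a $\nu$-barrier if and only if $\langle \nabla f(x), v\rangle^2 \le \nu\,\langle v, \nabla^2 f(x)\,v\rangle$ for all $v$. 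This reduces every part to scalar inequalities along a fixed direction $v$.

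\emph{Sums (part 1).} Fix $v$ and set $a_i \deq \norm v_{x,i}^2 = \langle v, \nabla^2 f_i(x)\,v\rangle$. Self-concordance gives $\abs{\nabla^3(f_1+f_2)[v,v,v]} \le 2\,(a_1^{3/2} + a_2^{3/2}) \le 2\,{(a_1+a_2)}^{3/2}$, using superadditivity of $s\mapsto s^{3/2}$. For the barrier parameter, Cauchy--Schwarz gives $\abs{\langle \nabla f_1 + \nabla f_2, v\rangle} \le \sqrt{\nu_1 a_1} + \sqrt{\nu_2 a_2} \le \sqrt{(\nu_1+\nu_2)(a_1+a_2)}$. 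Regularity and an open domain are inherited, and the domain of the sum is the intersection. \emph{Products (part 2)} then follow from part 1 applied to $(x,y)\mapsto f_1(x)$ and $(x,y)\mapsto f_2(y)$. These are compositions with coordinate projections, which are covered by part 3. Alternatively, the block-diagonal Hessian gives the same computation directly.

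\emph{Affine composition (part 3).} Let $g = f\circ\ms A$. By the chain rule, $\nabla^k g(x)[v,\dotsc,v] = \nabla^k f(\ms A x)[Av,\dotsc,Av]$, so self-concordance transfers immediately. For the barrier bound, $\langle \nabla g(x), v\rangle = \langle \nabla f(\ms Ax), Av\rangle$, and $\langle \nabla f(\ms Ax), Av\rangle^2 \le \nu\,\norm{Av}_{\ms Ax}^2 = \nu\,\norm v_x^2$, using the variational form above.

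\emph{Partial minimization (part 4).} This is the main obstacle. Let $g(x) = \inf_y f(x,y)$, and assume the infimum is attained at $y(x)$ with $\nabla_y f(x,y(x)) = 0$. If the infimum is not attained, I would argue by approximation or restrict to points where it is, which I would not belabor. By the envelope theorem, $\nabla g(x) = \nabla_x f(x,y(x))$. Differentiating the optimality condition gives $Dy(x) = -f_{yy}^{-1} f_{yx}$, and hence $\nabla^2 g = f_{xx} - f_{xy} f_{yy}^{-1} f_{yx}$ (a Schur complement). Equivalently, $\langle v, \nabla^2 g(x)\,v\rangle = \min_w \norm{(v,w)}_{(x,y(x))}^2$, attained at $w = Dy(x)\,v$. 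The barrier bound is then clean: for $u = (v, Dy\,v)$, we have $\langle \nabla g, v\rangle = \langle \nabla f, u\rangle$ because $\nabla_y f = 0$, and $\norm u^2 = \langle v, \nabla^2 g\,v\rangle$. For the third derivative, differentiating $t\mapsto g(x+tv) = f(x+tv, y(x+tv))$ three times, I expect $\nabla^3 g[v,v,v] = \nabla^3 f[u,u,u]$, since the terms involving $\ddot y$ cancel by the first- and second-order optimality conditions. Checking this cancellation is the delicate step. Once it holds, $\abs{\nabla^3 g[v,v,v]} \le 2\,\norm u^3 = 2\,\norm v_{x,g}^3$. Lower semicontinuity and openness of $\dom g$ (the projection of an open set) complete the argument.
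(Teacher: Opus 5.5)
The paper does not prove this proposition; it omits the verification as tedious. So there is no argument to compare against, and your proposal has to be judged on its own.

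Your treatment of parts~1--3 is correct. Working with the directional form $\langle \nabla f(x), v\rangle^2 \le \nu\,\norm v_x^2$ is a good choice. Compositions with non-injective affine maps, such as the coordinate projections you use for part~2, have singular Hessians, and for those the inverse-Hessian dual norm is not defined.

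The step you flagged as delicate in part~4 also goes through. With $z(t) \deq (x+tv,\, y(x+tv))$ one has $\dot z = u$ and $\ddot z = (0,\ddot y)$, so
\begin{align*}
    \partial_t^3\,[f(z(t))] = \nabla^3 f(z)[u,u,u] + 3\,\nabla^2 f(z)[u,\ddot z] + \langle \nabla f(z), \dddot z\rangle\,.
\end{align*}
The last term vanishes because $\nabla_y f(z) = 0$. The middle term equals $3\,\langle \ddot y, f_{yx} v + f_{yy}\dot y\rangle = 0$ by the differentiated stationarity condition.

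\textbf{Main gap: attainment of the infimum in part~4.} This cannot be handled ``by approximation'', because part~4 is false as stated. Take $f(x,y) = -\log x - \log y$, a $2$-self-concordant barrier for $\R_{++}^2$; then $\inf_{y>0} f(x,y) = -\infty$ for every $x>0$. You need an extra hypothesis, for instance that $f(x,\cdot)$ is bounded below for every $x$ in the projection of $\dom f$. For a nondegenerate barrier this is equivalent to every fiber $\{y : (x,y)\in\dom f\}$ being bounded.

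Even with that hypothesis, attainment is a real step, not a formality:
\begin{enumerate}
    \item The map $y\mapsto f(x,y)$ is self-concordant by part~3, applied to the injective map $y\mapsto(x,y)$, and $f_{yy}\succ 0$.
    \item A nondegenerate self-concordant function that is bounded below has a unique minimizer. Damped Newton steps decrease it by at least $\lambda-\log(1+\lambda)$, so the Newton decrement eventually drops below $1$, which forces a minimizer to exist.
\end{enumerate}
This is also what makes $y(x)$ well defined and, via the implicit function theorem, smooth. Your envelope and Schur-complement computations both presuppose this.

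\textbf{Second, smaller gap: lower semicontinuity of $g$.} This is not automatic, since partial minimization does not preserve closedness in general. What you need is $g(x_n)\to\infty$ whenever $x_n\to\bar x\in\partial\dom g$. Under bounded fibers this holds. A recession-direction argument shows that the minimizers $y(x_n)$ stay bounded. Along a subsequence, $(x_n,y(x_n))$ then converges to a point outside $\dom f$, and lower semicontinuity of $f$ gives $\liminf_n g(x_n)=\infty$.
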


We do not prove these statements since the verification can be tedious.

Since the barrier parameter $\nu$ plays a decisive role in determining the iteration complexity, it is important to know what the best possible value for $\nu$ is.

\begin{prop}[lower bound for the barrier parameter]
    Let $\eu C \subseteq \R^d$ be a convex polytope such that there exists a point in $\partial \eu C$ which belongs to exactly $k$ of the $(d-1)$-dimensional facets of $\eu C$, with the normals to these facets being linearly independent.
    Then, any $\nu$-self-concordant barrier for $\eu C$ satisfies $\nu \ge k$.

    In particular, for the cube, the non-negative orthant, and the simplex, this holds with $k = d$.
\end{prop}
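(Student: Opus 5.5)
The plan is to use the first-order characterisation of a $\nu$-self-concordant barrier, namely that $\langle \nabla\phi(x), y-x\rangle \le \nu$ for every interior $x$ and every $y\in\eu C$, and to test it against the boundary point $\bar x$ lying on the $k$ facets.

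Write the active facets as $\{\langle a_i,\cdot\rangle \le b_i\}$ for $i\in[k]$, with $\langle a_i,\bar x\rangle=b_i$. By linear independence of the $a_i$, choose vectors $p_1,\dotsc,p_k$ with $\langle a_i,p_j\rangle=\delta_{ij}$. For small $\varepsilon>0$, set
\begin{align*}
    x_\varepsilon \deq \bar x - \varepsilon\sum_{j=1}^k p_j\,, \qquad h_i \deq \varepsilon\,p_i\,.
\end{align*}
Then $\langle a_i,x_\varepsilon\rangle = b_i-\varepsilon$, so $x_\varepsilon\in\interior\eu C$, since the facets not active at $\bar x$ are slack near $\bar x$. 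We also have $x_\varepsilon+\sum_i h_i=\bar x$, and each $x_\varepsilon+h_i$ lies on facet $i$, hence on $\partial\eu C$. Moreover, $x_\varepsilon - t h_i$ only decreases $\langle a_i,\cdot\rangle$ and leaves the other active constraints fixed. So $x_\varepsilon-th_i\in\eu C$ for all $0\le t\le \beta_\varepsilon$, where $\beta_\varepsilon\asymp 1/\varepsilon\to\infty$, because the inactive facets are at distance $\Omega(1)$ from $\bar x$.

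\emph{Step 1.} Prove $\langle\nabla\phi(x),y-x\rangle\le\nu$. Put $g(t)\deq\phi(x+t(y-x))$ on $[0,1)$. The barrier property gives $g'(t)^2\le \nu\,g''(t)$. If $g'(0)>0$, then $1/g'$ decreases at rate at least $1/\nu$. This forces $g'$ to blow up before time $\nu/g'(0)$, which must therefore be at least $1$. Applied to $x=x_\varepsilon$ and $y=\bar x$, this yields
\begin{align*}
    \sum_{i=1}^k \langle\nabla\phi(x_\varepsilon),h_i\rangle \le \nu\,.
\end{align*}

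\emph{Step 2 (the main obstacle).} Show $\langle\nabla\phi(x_\varepsilon),h_i\rangle\ge 1-o(1)$ as $\varepsilon\to0$; letting $\varepsilon\to0$ in Step 1 then gives $\nu\ge k$. Work with the one-dimensional function $g_i(t)\deq\phi(x_\varepsilon+th_i)$. It is defined on $(-\beta_\varepsilon,1)$, blows up at $t\to1$, and is self-concordant with $g_i''' \le 2(g_i'')^{3/2}$. I will use two facts.
\begin{itemize}
    \item[(a)] From \autoref{lem:self_concordance} (the Dikin ellipsoid lies in the domain), the point $x_\varepsilon - s h_i/\norm{h_i}_{x_\varepsilon}$ is feasible for $s<1$. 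Long feasibility in the $-h_i$ direction, with $\beta_\varepsilon\to\infty$, should force $g_i$ to be nearly flat on the far left.
    \item[(b)] Integrating $(1/\sqrt{g_i''})'\ge -1$ from $-\beta_\varepsilon$ to $0$ gives $g_i''(0)\lesssim$ something controlled, while convexity together with the blow-up at $1$ pushes $g_i'(0)$ up.
\end{itemize}
The cleanest route I would try first is Nesterov's inequality: if $x-\beta h\in\eu C$, then $\langle\nabla\phi(x),h\rangle \ge \frac{\beta}{1+\beta}\norm h_x^2$-type bounds. I would combine this with $\norm{h_i}_{x_\varepsilon}\ge1$, which follows from (a) because $x_\varepsilon+h_i\notin\interior\eu C$. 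This yields $\langle\nabla\phi(x_\varepsilon),h_i\rangle\ge \beta_\varepsilon/(1+\beta_\varepsilon)$. I would establish this inequality by an ODE comparison for $g_i$ on $(-\beta_\varepsilon,0]$ using $g_i''' \le 2(g_i'')^{3/2}$. This step needs the most care.

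For the examples, the cube $[0,1]^d$ at a vertex, the orthant at $0$, and the simplex at a vertex each have $d$ linearly independent active facet normals, so $k=d$ in each case.
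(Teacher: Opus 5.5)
Your setup and Step~1 are sound and follow the standard Nesterov--Nemirovskii route; the paper itself gives no proof and only cites \cite{NesNem1995IntPt}. Concretely, $x_\varepsilon\in\interior\eu C$, $\bar x-x_\varepsilon=\sum_i h_i$, $\norm{h_i}_{x_\varepsilon}\ge 1$ by the Dikin ellipsoid since $x_\varepsilon+h_i\in\partial\eu C$, and the blow-up argument gives $\sum_i\langle\nabla\phi(x_\varepsilon),h_i\rangle\le\nu$.

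The gap is Step~2. It is only sketched, and the sketch would not go through.

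First, the inequality you aim for, $\langle\nabla\phi(x_\varepsilon),h_i\rangle\ge\beta_\varepsilon/(1+\beta_\varepsilon)$, is false. Take $\eu C=[0,L]$, $\phi(t)=-\log t-\log(L-t)$ and $\bar x=0$. Your construction gives $x_\varepsilon=\varepsilon$, $h_1=-\varepsilon$ and $\beta_\varepsilon=L/\varepsilon-1$, so that $\langle\phi'(x_\varepsilon),h_1\rangle=1-\frac{\varepsilon}{L-\varepsilon}<1-\frac{\varepsilon}{L}=\frac{\beta_\varepsilon}{1+\beta_\varepsilon}$.

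Second, and more fundamentally, every ingredient you plan to use is unchanged if $\phi$ is replaced by $\phi+\langle c,\cdot\rangle$. This covers self-concordance of $g_i$ on $(-\beta_\varepsilon,0]$, the Dikin ellipsoids, and the blow-up at $t=1$. Meanwhile $g_i'(0)$ shifts by $\langle c,h_i\rangle$, which can be made arbitrarily negative. On a bounded polytope the perturbed function is still a self-concordant barrier, only with a larger parameter. Indeed, the Dikin ellipsoid gives $\nabla^2\phi\succeq 4D^{-2}I$ with $D$ the diameter of $\eu C$, hence $\norm c_x^*\le D\norm c/2$.

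So no lower bound on $\langle\nabla\phi(x_\varepsilon),h_i\rangle$ at fixed $\varepsilon$ can avoid $\nu$. Your heuristic (a), that $g_i$ is nearly flat far to the left, is a consequence of the barrier parameter, not of self-concordance. Likewise, in (b) the useful consequence of $\abs{((g_i'')^{-1/2})'}\le 1$ is not an upper bound on $g_i''(0)$. It is the lower bound $g_i''(-u)\ge(\norm{h_i}_{x_\varepsilon}^{-1}+u)^{-2}$ for $0\le u<\beta_\varepsilon$, using $g_i''(0)=\norm{h_i}_{x_\varepsilon}^2$.

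The missing ingredient is to run your Step~1 argument on the far-left part of the line. The ray from $x_\varepsilon-sh_i$ in direction $-h_i$ stays in $\interior\eu C$ for parameters below $\beta_\varepsilon-s$, so $-g_i'(-s)\le\nu/(\beta_\varepsilon-s)$. Combined with the lower bound on $g_i''$,
\begin{align*}
    \langle\nabla\phi(x_\varepsilon),h_i\rangle
    = g_i'(-s)+\int_0^s g_i''(-u)\,\D u
    \ge -\frac{\nu}{\beta_\varepsilon-s}+\norm{h_i}_{x_\varepsilon}-\frac{1}{\norm{h_i}_{x_\varepsilon}^{-1}+s}\,.
\end{align*}
Taking $s=\beta_\varepsilon/2$ and using $\norm{h_i}_{x_\varepsilon}\ge 1$ gives $\langle\nabla\phi(x_\varepsilon),h_i\rangle\ge 1-2\,(1+\nu)/\beta_\varepsilon$. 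Summing over $i$ and invoking Step~1 yields $\nu\ge k\,\{1-2\,(1+\nu)/\beta_\varepsilon\}$. Letting $\varepsilon\to 0$ with $\nu$ fixed then gives $\nu\ge k$.

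This is Nesterov's recession-direction inequality $\langle\nabla\phi(x),p\rangle\le-\norm p_x$, made quantitative on a long but finite segment. That is exactly what a bounded polytope requires, since it has no recession directions.
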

\begin{proof}
    See~\cite[Proposition 2.3.6]{NesNem1995IntPt}.
\end{proof}

On the other hand, for \emph{any} convex body $\eu C$, there exists a $d$-self-concordant barrier.

\begin{thm}[optimal self-concordant barriers]\label{thm:opt_barrier}
    Let $\eu C \subseteq \R^d$ be a convex body.
    The following are $d$-self-concordant barriers for $\eu C$:
    \begin{itemize}
        \item (\cite{Chewi23EntropicBarrier}) the \textbf{entropic barrier}, defined as the convex conjugate of the map $\theta \mapsto \log \int_{\eu C} \exp{\langle \theta, x\rangle}\,\D x$;
        \item (\cite{LeeYue21UnivBarrier}) the \textbf{universal barrier}, defined as the map $x\mapsto \log \vol \eu C^\circ(x)$, where $\eu C^\circ(x) \deq \{y\in\R^d : \langle y, z-x \rangle \le 1~\text{for all}~z\in\eu C\}$ is the polar of $\eu C$ with respect to $x$.
    \end{itemize}
\end{thm}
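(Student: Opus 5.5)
Both barriers are treated the same way. Self-concordance and the bound $\norm{\nabla f(x)}_x^* \le \sqrt d$ are statements along lines, so I reduce everything to one-dimensional restrictions and exploit a log-concavity structure attached to each point $x\in\interior\eu C$.

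\textbf{Entropic barrier.} Write $A(\theta) \deq \log \int_{\eu C} \exp\langle\theta,x\rangle\,\D x$ and $\phi \deq A^*$. By~\autoref{thm:legendre_duality}, $\nabla A$ is a bijection from $\R^d$ onto $\interior\eu C$; it sends $\theta$ to the mean of the exponential-family measure $\pi_\theta \propto \exp\langle\theta,\cdot\rangle\,\mathbbm 1_{\eu C}$. Moreover $\nabla^2 A(\theta) = \cov \pi_\theta$ and $\nabla^3 A(\theta)[v,v,v]$ is the third central moment of $\langle v,X\rangle$ under $\pi_\theta$.

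Duality transfers both conditions to $A$. With $x = \nabla A(\theta)$ we have $\nabla\phi(x)=\theta$ and $\nabla^2\phi(x) = [\nabla^2 A(\theta)]^{-1}$. Differentiating this inverse relation expresses $\nabla^3\phi$ through $\nabla^3 A$, so self-concordance of $\phi$ (with some constant) becomes the moment inequality $\abs{\E(Y-\E Y)^3}\lesssim (\var Y)^{3/2}$ for the log-concave variable $Y=\langle v,X\rangle$. That inequality holds for log-concave laws. The barrier condition reads $\langle\theta,[\cov\pi_\theta]^{-1}\theta\rangle\le d$, which I would prove by integration by parts: since $\theta$ is the log-density gradient of $\pi_\theta$, the Cramér--Rao/Brascamp--Lieb style identity controls $\langle\theta,\Sigma^{-1}\theta\rangle$ by $d$ plus boundary terms that have a favorable sign on a convex set.

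The main obstacle is obtaining the sharp constant $M=1$ (together with exactly $d$). The crude log-concave moment bound gives a constant larger than $1$. I would first reduce to one dimension: the marginal of $\langle v,X\rangle$ is the pushforward of $\pi_\theta$ restricted to $\eu C$, and that family is itself an exponential tilt of a log-concave density. I would then show that among one-dimensional log-concave tilts the extremal case is the exponential/gamma family. That case matches the $-\log$ barrier exactly (\autoref{ex:log_barrier}) and yields the constants $1$ and $d$.

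\textbf{Universal barrier.} Here $\vol \eu C^\circ(x) = \frac1d\int_{S^{d-1}} h_x(u)^{-d}\,\D u$, where $h_x(u)=\sup_{z\in\eu C}\langle u,z-x\rangle$. Equivalently, it is a constant times $\int_{\R^d} \exp(-h_x(y))\,\D y$, which has Laplace-transform form. This writes $\log\vol\eu C^\circ(x)$ as a log-partition function of a log-concave measure, namely $\exp(-h_{\eu C}(y)+\langle x,y\rangle)$ on the cone of directions. That places the universal barrier in the same framework, and the same moment and integration-by-parts arguments give self-concordance and parameter $d$, with the same one-dimensional extremal analysis as the bottleneck.
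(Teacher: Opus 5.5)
The paper does not prove this theorem; it defers to the two cited works. Judged on its own, your sketch reduces self-concordance correctly, but the barrier-parameter part has a genuine gap.

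\textbf{Entropic barrier.} For $\phi=A^*$ and $x=\nabla A(\theta)$ one has $\nabla\phi(x)=\theta$ and $[\nabla^2\phi(x)]^{-1}=\nabla^2A(\theta)=\cov\pi_\theta$. So $\norm{\nabla\phi(x)}_x^*\le\sqrt d$ reads $\langle\theta,\cov(\pi_\theta)\,\theta\rangle=\operatorname{Var}_{\pi_\theta}\langle\theta,X\rangle\le d$, with the covariance and not its inverse. The inequality you set out to prove is false: for $\eu C=[0,1]$ and $\theta\to\infty$, $\operatorname{Var}\pi_\theta\approx\theta^{-2}$, so $\theta^2/\operatorname{Var}\pi_\theta\approx\theta^4$.

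The correct statement is an \emph{upper} bound on a variance. Cram\'er--Rao-type integration by parts cannot give it: such arguments bound variances from below, and here the score $\nabla\log\pi_\theta=\theta$ is constant, so the interior term carries no information. A one-dimensional log-concave extremal analysis cannot give it either, because log-concavity of the law of $\langle\theta,X\rangle$ alone bounds nothing. The dimension has to enter through the convex body itself. For instance, $\{(y,t):t>0,\ y\in t\eu C\}$ is a convex cone, so by Pr\'ekopa's theorem $t\mapsto\log\int_{t\eu C}e^{\langle\theta,y\rangle}\,\D y=d\log t+A(t\theta)$ is concave. Its second derivative at $t=1$ is $\langle\theta,\nabla^2A(\theta)\,\theta\rangle-d\le 0$, which is exactly the barrier bound. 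This is the varentropy bound $\operatorname{Var}_\mu V\le d$ for log-concave $\mu\propto e^{-V}$; \cite{Chewi23EntropicBarrier} derives it from the dimensional Brascamp--Lieb inequality.

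\textbf{Universal barrier.} This is not ``the same framework''. Here $\log\vol\eu C^\circ(x)=\Lambda(x)-\log d!$, where $\Lambda(x)=\log\int e^{\langle x,y\rangle-h_{\eu C}(y)}\,\D y$ is the log-partition function itself, not its conjugate. The barrier condition is therefore $\langle m,\Sigma^{-1}m\rangle\le d$, where $m$ and $\Sigma$ are the mean and covariance of $\mu_x\propto e^{-h_x}$. This fails for general log-concave measures: for $\normal(c,I)$ it equals $\norm c^2$. What makes it true is the $1$-homogeneity of $h_x$, which your sketch never uses. Integrate by parts against the fields $y\,e^{-h_x}$, $y\,h_x\,e^{-h_x}$ and $y\,\langle v,y\rangle\,e^{-h_x}$, using Euler's identity $\langle y,\nabla h_x(y)\rangle=h_x(y)$. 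This gives $\E h_x(Y)=d$, $\operatorname{Var}h_x(Y)=d$ and $\cov(\langle v,Y\rangle,h_x(Y))=\langle v,m\rangle$. Cauchy--Schwarz then yields $\langle v,m\rangle^2\le d\,\langle v,\Sigma v\rangle$ for all $v$. So a Cram\'er--Rao argument does work here, but with the scaling score $h_x(Y)$; equivalently, $h_x(Y)\sim\mathrm{Gamma}(d,1)$ independently of $Y/h_x(Y)$.

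\textbf{The constant $M=1$.} Your reduction is right in both cases: it comes down to $\abs{\E(Y-\E Y)^3}\le 2(\operatorname{Var}Y)^{3/2}$ for log-concave $Y$ on $\R$. However, you only assert that the exponential is extremal. This closes quickly.
\begin{itemize}
\item Let $q$ be the shifted exponential density with the same mean $m$ and variance $\sigma^2$ as the density $p$ of $Y$.
\item Since $q=0$ to the left of its support and $\log p-\log q$ is concave on that support, $p-q$ has sign pattern $(+,-,+,-)$.
\item Equal mass, mean and variance force exactly three sign changes $c_1<c_2<c_3$.
\item Integrating $p-q$ against $(y-c_1)(y-c_2)(y-c_3)$ gives $\E(Y-m)^3\le\E_q(Y-m)^3=2\sigma^3$.
\item Applying this to $-Y$ gives the other side.
\end{itemize}
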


Although these results are elegant, they are quite useless in practice since the constructed barriers do not lend themselves to easy implementation.
Instead, we present the canonical example of logarithmic barriers, although many more sophisticated barriers have been developed subsequently.

\begin{ex}[logarithmic barriers]\mbox{}
    \begin{enumerate}
        \item Let $\eu C = \{x\in\R^d :Ax \le b\}$ be a polytope, where $A \in \R^{m\times d}$ and $b\in\R^m$.
            If we let ${\{a_i\}}_{i\in [m]}$ denote the rows of $A$, then $x\mapsto -\log(b_i - \langle a_i, x\rangle)$ is a $1$-self-concordant barrier for the set $\{\langle a_i, \cdot \rangle \le b_i\}$, by~\autoref{ex:log_barrier} and~\autoref{prop:barrier_calc}.
            Hence, by~\autoref{prop:barrier_calc} again, $x \mapsto -\sum_{i=1}^m \log(b_i - \langle a_i, x \rangle)$ is an $m$-self-concordant barrier for $\eu C$.

            Since we assume that $\eu C$ is a convex body (in particular, compact), we must have $m\ge d$.
            From~\autoref{thm:opt_barrier}, we know that a $d$-self-concordant barrier for $\eu C$ exists, but the logarithmic barrier is far more tractable.

        \item Let $\eu C = \mb S_+^d$ be the cone of PSD matrices.
            Then, one can show via direct calculation that $X \mapsto -\log \det X$ is a $d$-self-concordant barrier for $\eu C$, and that this is the best possible value for the barrier parameter~\cite[Theorem 5.4.3, Lemma 5.4.7]{Nes18CvxOpt}.

            This is perhaps surprising since the dimension of $\mb S_+^d$ as a vector space is $d\,(d+1)/2$.
    \end{enumerate}
\end{ex}

We are about to show that interior point methods achieve an iteration complexity of roughly $\widetilde O(\sqrt\nu)$.
With the barriers constructed above, we obtain the following results.
\begin{itemize}
    \item \textbf{Linear programs (LPs).} An LP consists of minimizing a linear function $x \mapsto \langle a, x \rangle$ over a polytope $\eu C = \{x\in\R^d : Ax\le b\}$.
        Let $A$ be of size $m\times d$.
        Then, the arithmetic cost of taking a Newton step with the logarithmic barrier is roughly $O(d^2 m)$, so the overall computational cost is $\widetilde O(d^2 m^{3/2})$.
        For $m\asymp d$, this is $\widetilde O(d^{7/2})$.

        In fact, LPs were the original motivating application for the work of~\cite{Kar1984}.
    \item \textbf{Semidefinite programs (SDPs).} An SDP consists of minimizing a linear function $X \mapsto \langle A, X\rangle$ over the PSD cone $\eu C = \mb S_+^d$, possibly with other linear constraints.
        In the case where there are no additional linear constraints, one can show that the arithmetic cost of a Newton step is $O(d^4)$, which leads to an overall computational cost of $\widetilde O(d^{9/2})$.
\end{itemize}
As mentioned in the bibliographical notes, there are numerous improvements over these basic results and it remains an active area of research.

\subsection{Convergence analysis}

Here, we roughly analyze the iteration complexity of the path following scheme.
We recall the setup from \S\ref{ssec:central_path}.

\begin{lem}[properties of self-concordant barriers]\label{lem:barrier}
    Let $f : \R^d\to\R\cup\{\infty\}$ be a $\nu$-self-concordant barrier.
    \begin{enumerate}
        \item For any $v\in\R^d$,
            \begin{align*}
                \langle \nabla f(x), v \rangle^2 \le \nu\,\langle v,\nabla^2 f(x)\,v\rangle\,.
            \end{align*}
        \item For all $x,y\in\dom f$,
            \begin{align*}
                \langle \nabla f(x), y-x \rangle \le \nu\,.
            \end{align*}
    \end{enumerate}
\end{lem}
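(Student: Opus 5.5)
The first statement is a direct rewriting of the barrier condition $\norm{\nabla f(x)}_x^* \le \sqrt\nu$. Let $H \deq \nabla^2 f(x) \succ 0$. By Cauchy{--}Schwarz in the inner product induced by $H$,
\begin{align*}
    \langle \nabla f(x), v\rangle^2
    = \langle H^{-1/2}\,\nabla f(x), H^{1/2}\,v\rangle^2
    \le \langle \nabla f(x), H^{-1}\,\nabla f(x)\rangle\,\langle v, H\,v\rangle
    \le \nu\,\norm v_x^2\,.
\end{align*}
This is exactly the claim. If one prefers not to assume $H$ invertible, the same bound follows from the variational characterization of the dual local norm.

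For the second statement, I will restrict to the segment. Fix $x,y\in\dom f$ and let $z_t \deq x + t\,(y-x)$ for $t \in [0,1]$, which stays in $\dom f$ by convexity. Define $\psi(t) \deq \langle \nabla f(z_t), y-x\rangle$. If $\psi(0) \le 0$ we are done, so assume $\psi(0) > 0$. By convexity $\psi$ is nondecreasing, with
\begin{align*}
    \psi'(t) = \langle y-x, \nabla^2 f(z_t)\,(y-x)\rangle\,.
\end{align*}
Applying the first statement at $z_t$ with $v = y-x$ gives $\psi(t)^2 \le \nu\,\psi'(t)$, so $\psi'(t) \ge \psi(t)^2/\nu$. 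Since $\psi(t) \ge \psi(0) > 0$ on the interval, we may divide and obtain $\partial_t(-1/\psi(t)) \ge 1/\nu$. This is the Riccati-type comparison that serves as the discrete substitute for Gr\"onwall's lemma (\autoref{lem:gronwall}).

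Integrating from $0$ to $t$ yields
\begin{align*}
    \frac{1}{\psi(0)} - \frac{1}{\psi(t)} \ge \frac{t}{\nu}\,.
\end{align*}
Since $1/\psi(t) > 0$, this forces $1/\psi(0) > t/\nu$ for all $t \in [0,1]$. Taking $t = 1$ gives $\psi(0) < \nu$, that is, $\langle \nabla f(x), y-x\rangle \le \nu$.

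The main care point is ensuring $\psi$ stays finite and positive on $[0,1]$. Positivity follows from monotonicity once $\psi(0) > 0$. Finiteness holds because $z_t$ lies in the open domain for $t \le 1$, where $f$ is smooth. If $y$ were only in the closure of the domain, I would run the argument for $t<1$ and pass to the limit.
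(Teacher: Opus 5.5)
Your proof is correct and follows the paper's argument: Cauchy--Schwarz in the local norm for the first part, and the Riccati-type inequality $\partial_t \psi(t)^{-1} \le -1/\nu$ along the segment $z_t$, integrated over $[0,1]$, for the second. You also spell out two details the paper leaves implicit: the case $\psi(0)\le 0$, and why $\psi$ stays positive (via monotonicity) so that $1/\psi$ is well defined.
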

\begin{proof}\mbox{}
    \begin{enumerate}
        \item By Cauchy{--}Schwarz,
            \begin{align*}
                \abs{\langle \nabla f(x), v\rangle}
                &\le \norm{\nabla f(x)}_x^*\,\norm v_x
                \le \sqrt\nu\,\norm v_x\,.
            \end{align*}
        \item For $t\in [0,1]$, let $z_t \deq (1-t)\,x+t\,y$.
            Then,
                \begin{align*}
                    \partial_t \langle \nabla f(z_t), y-x\rangle
                    &= \langle \nabla^2 f(z_t)\,(y-x), y-x\rangle
                    \ge \frac{1}{\nu}\,\langle \nabla f(z_t), y-x\rangle^2\,.
                \end{align*}
                This implies that $\partial_t \langle \nabla f(z_t), y-x\rangle^{-1} \le -1/\nu$, which leads to the desired inequality.
    \end{enumerate}
\end{proof}

\paragraph{Main stage.}
Assume that at iteration $n$, we have a pair $(t_n, x_n) \in \R_+ \times \R^d$ such that
\begin{align*}
    \lambda_{f_{t_n}}(x_n) \le \frac{1}{4}\,.
\end{align*}
When we update $t_n\mapsto t_{n+1}$, we note that
\begin{align*}
    \lambda_{t_{n+1}}(x_n)
    &= \norm{t_{n+1} a + \nabla \phi(x_n)}_{x_n}^*
    = \bigl\lVert \frac{t_{n+1}}{t_n}\,(t_n a + \nabla \phi(x_n)) + \bigl(1 - \frac{t_{n+1}}{t_n}\bigr)\,\nabla \phi(x_n)\bigr\rVert_{x_n}^* \\
    &\le \frac{t_{n+1}}{t_n}\,\lambda_{f_{t_n}}(x_n) + \bigl( \frac{t_{n+1}}{t_n} - 1\bigr)\,\sqrt \nu\,.
\end{align*}
Set $t_{n+1} = (1+c_0/\sqrt\nu)\,t_n$.
This yields
\begin{align*}
    \lambda_{t_{n+1}}(x_n)
    &\le \bigl(1 + \frac{c_0}{\sqrt \nu}\bigr)\, \frac{1}{4} + c_0
    \le \frac{1+c_0}{4} + c_0\,,
\end{align*}
since we can assume $\nu \ge 1$.
The update $x_n \mapsto x_{n+1}$ is a Newton's step for $f_{t_{n+1}}$, so by~\autoref{thm:newton_self_conc} we have
\begin{align*}
    \lambda_{t_{n+1}}(x_{n+1})
    &\le \frac{{\lambda_{t_{n+1}}(x_n)}^2}{{(1-\lambda_{t_{n+1}}(x_n))}^2}
    \le \Bigl( \frac{(1+c_0)/4+c_0}{1-(1+c_0)/4-c_0}\Bigr){\Bigsp}^2
    \le \frac{1}{4}\,,
\end{align*}
provided that $c_0$ is sufficiently small: $c_0 \le 1/16$ suffices.

This implies that
\begin{align*}
    t_N
    &= \bigl( 1 + \frac{c_0}{\sqrt \nu} \bigr){\bigsp}^N\,t_0\,,
\end{align*}
so that the value of $t$ increases exponentially fast.
Once $t$ is sufficiently large, we have a nearly optimal solution to the original problem.

\begin{lem}
    For any $(t,x)\in\R_+\times \R^d$,
    \begin{align*}
        \langle a, x\rangle - \langle a, x_\star \rangle
        &\le \frac{1}{t}\,\Bigl(\nu + \frac{(\lambda_{f_t}(x) + \sqrt\nu)\,\lambda_{f_t}(x)}{1-\lambda_{f_t}(x)}\Bigr)\,.
    \end{align*}
\end{lem}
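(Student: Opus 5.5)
The plan is to split the gap via the exact central path point $x_\star(t)$:
\begin{align*}
    \langle a, x\rangle - \langle a, x_\star\rangle = \bigl(\langle a, x_\star(t)\rangle - \langle a, x_\star\rangle\bigr) + \langle a, x - x_\star(t)\rangle\,.
\end{align*}
The first term should give the $\nu/t$ contribution, and the second term the correction involving $\lambda \deq \lambda_{f_t}(x)$.

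For the first term, optimality of $x_\star(t)$ gives $ta + \nabla\phi(x_\star(t)) = 0$. Hence
\begin{align*}
    \langle a, x_\star(t) - x_\star\rangle = \frac{1}{t}\,\langle \nabla\phi(x_\star(t)), x_\star - x_\star(t)\rangle \le \frac{\nu}{t}
\end{align*}
by the second statement of \autoref{lem:barrier}. Since $x_\star$ lies on the boundary of $\eu C = \overline{\dom\phi}$, I would apply the lemma to interior points $y \to x_\star$ and pass to the limit.

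For the second term, I write $ta = \nabla f_t(x) - \nabla\phi(x)$, so that
\begin{align*}
    t\,\langle a, x - x_\star(t)\rangle = \langle \nabla f_t(x), x - x_\star(t)\rangle - \langle \nabla\phi(x), x - x_\star(t)\rangle\,.
\end{align*}
Using the local norm at $x$ and Cauchy--Schwarz, with $\norm{\nabla f_t(x)}_x^* = \lambda$ and $\norm{\nabla\phi(x)}_x^* \le \sqrt\nu$, this is at most $(\lambda + \sqrt\nu)\,r$, where $r \deq \norm{x - x_\star(t)}_x$. It then remains to show $r \le \lambda/(1-\lambda)$.

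The main obstacle is bounding $r$ by $\lambda/(1-\lambda)$. I would use the fifth statement of \autoref{lem:self_concordance} with $M = 1$, applied to $f_t$, which is self-concordant with the same Hessian as $\phi$. Because $\nabla f_t(x_\star(t)) = 0$, it gives
\begin{align*}
    \frac{r^2}{1+r} \le \langle \nabla f_t(x), x - x_\star(t)\rangle \le \lambda\, r\,.
\end{align*}
Dividing by $r$ yields $r \le \lambda(1+r)$, hence $r \le \lambda/(1-\lambda)$, provided $\lambda < 1$. For $\lambda \ge 1$ the right-hand side of the statement is degenerate, so I would read the claim as assuming $\lambda < 1$.

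Combining the two terms gives
\begin{align*}
    \langle a, x\rangle - \langle a, x_\star\rangle \le \frac{1}{t}\Bigl(\nu + \frac{(\lambda + \sqrt\nu)\,\lambda}{1-\lambda}\Bigr)\,,
\end{align*}
which is the claimed bound.
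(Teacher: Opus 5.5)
Your proposal is correct and follows the paper's proof essentially step for step: the same split through $x_\star(t)$, the bound $\nu/t$ from \autoref{lem:barrier}, Cauchy--Schwarz in the local norm at $x$, and the fifth item of \autoref{lem:self_concordance} applied to $f_t$ to obtain $\norm{x-x_\star(t)}_x \le \lambda_{f_t}(x)/(1-\lambda_{f_t}(x))$. Your two extra remarks make explicit assumptions that the paper leaves implicit: approximating the boundary point $x_\star$ by interior points, and requiring $\lambda_{f_t}(x)<1$ (and $t>0$) for the bound to be meaningful.
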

\begin{proof}
    First, by~\autoref{lem:barrier},
    \begin{align*}
        \langle a, x_\star(t) \rangle - \langle a, x_\star\rangle
        &= \frac{1}{t}\,\langle \nabla \phi(x_\star(t)), x_\star - x_\star(t)\rangle
        \le \frac{\nu}{t}\,.
    \end{align*}
    Next,
    \begin{align*}
        \langle a, x \rangle - \langle a, x_\star(t)\rangle
        &= \frac{1}{t}\,\langle \nabla f_t(x) - \nabla \phi(x), x-x_\star(t)\rangle
        \le \frac{\lambda_{f_t}(x) + \sqrt \nu}{t}\,\norm{x-x_\star(t)}_x\,.
    \end{align*}
    From~\autoref{lem:self_concordance},
    \begin{align*}
        \frac{\norm{x-x_\star(t)}_x^2}{1+\norm{x-x_\star(t)}_x}
        &\le \langle \nabla f_t(x) - \underbrace{\nabla f_t(x_\star(t))}_{=0}, x-x_\star(t)\rangle
        \le \lambda_{f_t}(x)\,\norm{x-x_\star(t)}_x\,.
    \end{align*}
    Thus, $\norm{x-x_\star(t)}_x/(1+\norm{x-x_\star(t)}_x) \le \lambda_{f_t}(x)$, or, upon rearranging,
    \begin{align*}
        \norm{x-x_\star(t)}_x
        &\le \frac{\lambda_{f_t}(x)}{1-\lambda_{f_t}(x)}\,.
    \end{align*}
    Putting everything together completes the proof.
\end{proof}

The lemma implies that in order to obtain an $\varepsilon$-approximate solution, it suffices to take $N$ such that $t_N \gtrsim \nu/\varepsilon$.
The number of iterations is therefore $O(\sqrt \nu \log(\nu/(\varepsilon t_0)))$.

\paragraph{Preliminary stage.}
The remaining missing piece is to obtain $(t_0,x_0)$ such that $\lambda_{f_{t_0}}(x_0) \le 1/4$.
The idea here is to use another path following scheme to obtain the initial point.
Namely, if we replace the vector $a$ with $-\nabla \phi(\bar x_0)$, where $\bar x_0$ is an arbitrary point in $\interior\eu C$, we obtain the central path
\begin{align*}
    t\mapsto \bar x_\star(t) = \argmin_{\bar x\in\R^d}{\{\underbrace{-t\,\langle \nabla \phi(\bar x_0), \bar x\rangle + \phi(\bar x)}_{\eqqcolon \bar f_t(\bar x)}\}}\,.
\end{align*}
Note that this central path connects $\bar x_\star(1) = \bar x_0$ to $\bar x_\star(0) = x_\star(0) = \argmin \phi$.
Therefore, we should follow the central path by \emph{decreasing} $t$.
By a similar analysis (see~\cite[\S 5.3.5]{Nes18CvxOpt}), one can show that $O(\sqrt \nu \log(\nu\,\norm{\nabla \phi(\bar x_0)}^*_{x_\star(0)}))$ iterations of the path following scheme suffices in order to initialize the main stage.
Here, the quantity $\norm{\nabla \phi(\bar x_0)}_{x_\star(0)}^*$ is a measure of how far the initial guess $\bar x_0$ is from the true analytical center $x_\star(0)$.

Actually, this still does not fully resolve the initialization issue, since it may be difficult to find \emph{any} strictly feasible point $\bar x_0$ at all.
In some situations, one can first augment the problem so that it is trivial to find a strictly feasible starting point, and then one can use yet another path following scheme to compute a strictly feasible point for the original problem.
We omit the details.

\subsection*{Bibliographical notes}

The presentation of this section is heavily inspired by~\cite{Bub15CvxOpt}.
A comprehensive guide to interior point methods can be found in~\cite{NesNem1995IntPt}.

There are many ways to speed up interior point methods beyond the basic theory covered here, e.g., by amortizing the computations cleverly across steps, or by using improved self-concordant barriers.
This remains an active area of research and we do not survey recent developments here.

The universal barrier was introduced and shown to be $O(d)$-self-concordant in~\cite{NesNem1995IntPt}.
The entropic barrier was introduced and shown to be $(1+o(1))\,d$-self-concordant in~\cite{BubEld19Entropic}.
The cited references~\cite{LeeYue21UnivBarrier, Chewi23EntropicBarrier} in~\autoref{thm:opt_barrier} obtained the optimal barrier parameter $d$ for these two barriers respectively.

\appendix

\section{Background on symmetric matrices}

A matrix $A \in \R^{d\times d}$ is called \textbf{symmetric} if $A = A^\T$.
The fundamental fact about such matrices is that their eigenvalues are real, and they can be diagonalized.

\begin{thm}[spectral theorem for symmetric matrices]\label{thm:spectral}
    Let $A\in\R^{d\times d}$ be symmetric.
    Then, $A$ admits the decomposition $A = U\Lambda U^\T$, where $U \in\R^{d\times d}$ is an orthogonal matrix ($UU^\T = U^\T U = I$) and $\Lambda$ is a diagonal matrix whose diagonal entries are the eigenvalues $\lambda_1,\dotsc,\lambda_d \in\R$ of $A$.

    Equivalently, we can write $A = \sum_{i=1}^d \lambda_i u_i u_i^\T$, where ${\{u_i\}}_{i=1}^d$ are unit eigenvectors, thus $\norm{u_i} = 1$ and $Au_i = \lambda_i u_i$ for all $i\in [d]$.
    Moreover, the eigenvectors form an orthonormal basis for $\R^d$: $\langle u_i, u_j \rangle = 0$ if $i \ne j$.
\end{thm}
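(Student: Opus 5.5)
The plan is to prove the spectral theorem by induction on $d$, using a variational characterization to produce one real eigenvector and then restricting $A$ to the orthogonal complement. The case $d=1$ is trivial: every $1\times 1$ matrix is diagonal, and we take $U = [1]$.

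\textbf{Step 1: an eigenvector exists.} Consider $\lambda_1 \deq \max\{\langle v, A\,v\rangle : \norm v = 1\}$. The objective is continuous and the unit sphere is compact, so the maximum is attained at some unit vector $u_1$; this is the same compactness argument as in \autoref{lem:existence_minimizer}. To see that $u_1$ is an eigenvector, take any $w \perp u_1$ and $\varepsilon$ small. The curve $v_\varepsilon \deq (u_1+\varepsilon w)/\norm{u_1+\varepsilon w}$ stays on the sphere. Differentiating $\varepsilon \mapsto \langle v_\varepsilon, A\,v_\varepsilon\rangle$ at $\varepsilon = 0$ and using symmetry gives $2\,\langle w, A\,u_1\rangle = 0$, in the spirit of the first-order condition in \autoref{lem:necc_conditions}. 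Hence $A u_1 \in \spn\{u_1\}$, i.e.\ $Au_1 = \lambda u_1$ for some real $\lambda$, and $\lambda = \langle u_1, A u_1\rangle = \lambda_1$. Equivalently, one can note that $u_1$ maximizes the Rayleigh quotient $\langle v, A v\rangle/\norm v^2$ over nonzero $v$, so its gradient $2(Av - \langle v,Av\rangle v)/\norm v^2$ vanishes at $u_1$. This route produces a real eigenvalue directly, without any complex-variable argument.

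\textbf{Step 2: invariance of the complement.} Let $W \deq u_1^\perp$, a subspace of dimension $d-1$. For $w\in W$, symmetry gives $\langle A w, u_1\rangle = \langle w, A u_1\rangle = \lambda_1\,\langle w, u_1\rangle = 0$, so $AW\subseteq W$. Choose an orthonormal basis of $W$, collected as the columns of $V\in\R^{d\times(d-1)}$. Then $B \deq V^\T A V$ is a symmetric $(d-1)\times(d-1)$ matrix representing $A|_W$.

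\textbf{Step 3: induction and assembly.} By the inductive hypothesis, $B = U'\Lambda' U'^\T$ with $U'$ orthogonal. Set $U \deq [\,u_1 \mid VU'\,]$ and $\Lambda \deq \mathrm{diag}(\lambda_1, \Lambda')$. The columns of $U$ are orthonormal, since $u_1\perp \range V$ and $V^\T V = I$. Moreover $AU = U\Lambda$, because $A V U' = V B U' = V U'\Lambda'$, which uses $AV = VV^\T A V$ by invariance. Hence $A = U\Lambda U^\T$. Writing $u_i$ for the columns of $U$ gives the rank-one expansion $A=\sum_i \lambda_i u_iu_i^\T$, $Au_i = \lambda_i u_i$, and the orthonormality of $\{u_i\}$. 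Finally, the diagonal entries of $\Lambda$ are exactly the roots of $\det(A - \lambda I) = \det(\Lambda-\lambda I)$, so they are all the eigenvalues of $A$.

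The main obstacle is Step 1: making the first-order optimality argument on the sphere rigorous, in particular the derivative of the normalized curve. I would handle it by the Rayleigh quotient formulation, which removes the constraint and reduces the step to \autoref{lem:necc_conditions}, since a maximizer of a differentiable function on an open set has zero gradient.
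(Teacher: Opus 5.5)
The paper states this theorem in the appendix as standard background and gives no proof, so there is no argument of the paper's to follow. Your proof is correct and self-contained.

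Each step checks out:
\begin{itemize}
\item the maximizer of $\langle v, A\,v\rangle$ on the unit sphere gives a \emph{real} eigenpair;
\item symmetry makes $u_1^\perp$ invariant;
\item the identity $AV = VV^\T A V$ justifies $AVU' = VU'\Lambda'$;
\item $\det(A-\lambda I) = \det(\Lambda-\lambda I)$ shows that the diagonal of $\Lambda$ is the full spectrum with multiplicity, so every eigenvalue is real.
\end{itemize}

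The usual textbook route takes a complex root of the characteristic polynomial, proves it real by evaluating $\bar v^\T A\,v$ two ways, and then deflates exactly as in your Steps 2--3. Your variational first step instead avoids complex numbers and the fundamental theorem of algebra. It uses only tools already in the notes (\autoref{lem:existence_minimizer}, \autoref{lem:necc_conditions}), and it identifies $\lambda_1 = \max_{\norm{v}=1}\langle v, A\,v\rangle$ as the top eigenvalue. The paper runs that last implication in the opposite direction, deriving \autoref{lem:sym_quad_form} from the spectral theorem. You never invoke that lemma, so there is no circularity.

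Two minor points.

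First, the step you flag as the main obstacle needs no differentiation. By homogeneity, $\langle v, A\,v\rangle \le \lambda_1\norm{v}^2$ for all $v$. Take $v = u_1+\varepsilon w$ with $w\perp u_1$ and use symmetry to get $2\varepsilon\,\langle w, A\,u_1\rangle \le \varepsilon^2\,(\lambda_1\norm{w}^2 - \langle w, A\,w\rangle)$ for every real $\varepsilon$. Dividing by $\varepsilon>0$ and by $\varepsilon<0$ and letting $\varepsilon\to 0$ forces $\langle w, A\,u_1\rangle = 0$.

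Second, the gradient of $R(v) \deq \langle v, A\,v\rangle/\norm{v}^2$ is $2\,(Av - R(v)\,v)/\norm{v}^2$. Your formula has $\langle v, A\,v\rangle$ where $R(v)$ belongs. This is harmless, since the two agree at the unit vector $u_1$. Applying \autoref{lem:necc_conditions} on the open set $\R^d\setminus\{0\}$ is legitimate because, as the notes observe, its proof is purely local.
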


A special subclass of these matrices, particularly relevant for convex optimization, is the class of positive semidefinite matrices.

\begin{defn}
    A symmetric matrix $A \in\R^{d\times d}$ is \textbf{positive semidefinite} (\textbf{PSD}), written $A \succeq 0$, if all of its eigenvalues are non-negative.
    It is \textbf{positive definite} (\textbf{PD}), written $A \succ 0$, if all of its eigenvalues are strictly positive.
\end{defn}

Note that a PD matrix has positive determinant, so it is invertible.
We denote the sets of symmetric matrices, positive semidefinite matrices, and positive definite matrices by $\mb S^d$, $\mb S_+^d$, and $\mb S_{++}^d$ respectively.

\begin{lem}\label{lem:sym_quad_form}
    A symmetric matrix $A$ has eigenvalues in the range $[\underline\lambda,\overline\lambda]$ if and only if
    \begin{align*}
        \underline \lambda\,\norm v^2 \le \langle v, A\,v\rangle \le \overline \lambda\,\norm v^2 \qquad\text{for all}~v\in\R^d\,.
    \end{align*}
\end{lem}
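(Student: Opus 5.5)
Both directions reduce to the spectral theorem (\autoref{thm:spectral}). Write $A = U\Lambda U^\T$ with $U$ orthogonal and $\Lambda = \mathrm{diag}(\lambda_1,\dotsc,\lambda_d)$. For any $v\in\R^d$, set $w \deq U^\T v$. Orthogonality of $U$ gives $\norm w = \norm v$. The quadratic form then diagonalizes as
\begin{align*}
    \langle v, A\,v\rangle = \langle U^\T v, \Lambda\, U^\T v\rangle = \sum_{i=1}^d \lambda_i\, {w[i]}^2\,.
\end{align*}
This identity carries the whole argument.

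For the forward direction, assume every $\lambda_i \in [\underline\lambda, \overline\lambda]$. Bound each term $\lambda_i\,{w[i]}^2$ above by $\overline\lambda\,{w[i]}^2$ and below by $\underline\lambda\,{w[i]}^2$, using ${w[i]}^2 \ge 0$. Summing over $i$ gives $\underline\lambda\,\norm w^2 \le \langle v, A\,v\rangle \le \overline\lambda\,\norm w^2$, and $\norm w = \norm v$ yields the claimed inequality.

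For the converse, assume the two-sided inequality holds for all $v$. Test it at $v = u_i$, a unit eigenvector from the spectral theorem. Then $\langle u_i, A\,u_i\rangle = \lambda_i\,\norm{u_i}^2 = \lambda_i$ and $\norm{u_i}^2 = 1$, so the inequality reads $\underline\lambda \le \lambda_i \le \overline\lambda$. Since this holds for each $i$, all eigenvalues lie in the range.

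There is no real obstacle. The only point to check is the isometry $\norm{U^\T v} = \norm v$, which follows from $UU^\T = I$. As a remark, taking the best constants shows that $\lambda_{\min}(A) = \min_{\norm v = 1}\langle v, A\,v\rangle$ and $\lambda_{\max}(A) = \max_{\norm v = 1}\langle v, A\,v\rangle$. This is the form used throughout the notes, for instance in~\eqref{eq:str_cvx_3}.
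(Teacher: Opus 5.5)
Your proof is correct and takes essentially the same route as the paper: both use the spectral theorem to write $\langle v, A\,v\rangle = \sum_i \lambda_i \langle u_i, v\rangle^2$ (your $w[i]$ is exactly $\langle u_i, v\rangle$), together with $\sum_i \langle u_i, v\rangle^2 = \norm v^2$. The paper phrases the argument as identifying the minimum and maximum of $\langle v, A\,v\rangle$ over unit vectors as $\lambda_{\min}$ and $\lambda_{\max}$, which combines your two directions (termwise bounding, and testing at eigenvectors) into one statement.
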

\begin{proof}
    By rescaling, we can restrict to unit vectors $v$, and by~\autoref{thm:spectral},
    \begin{align*}
        \langle v, A\,v \rangle = \sum_{i=1}^d \lambda_i\,\langle u_i, v\rangle^2\,.
    \end{align*}
    Since ${\{u_i\}}_{i=1}^d$ is an orthonormal basis, $\sum_{i=1}^d \langle u_i, v \rangle^2 = \norm v^2 = 1$.
    Thus, the expression above is minimized over unit vectors $v$ when $v = u_{i_{\min}}$, where $i_{\min}$ is the index of the minimum eigenvalue $\lambda_{\min}$ of $A$, in which case $\langle v, A\,v\rangle = \lambda_{\min}\,\norm v^2$.
    Similarly, the expression above is maximized over unit vectors $v$ by taking $v$ to be an eigenvector corresponding to the maximum eigenvalue $\lambda_{\max}$, in which case $\langle v, A\,v\rangle = \lambda_{\max}\,\norm v^2$.
\end{proof}

Therefore, a matrix $A$ is PSD (resp.\ PD) if
\begin{align*}
    \langle v,A\, v \rangle \ge 0\quad(\text{resp.}>0)\,, \qquad \text{for all}~v\in\R^d\,.
\end{align*}
We next extend the definition of the symbols $\succ$, $\succeq$.

\begin{defn}
    Let $A$, $B$ be two symmetric $d\times d$ matrices.
    We write $A \succeq B$ (resp.\ $A \succ B$) if $A-B\succeq 0$ (resp.\ $A-B \succ 0$).
    This is known as the \textbf{Loewner order} (or sometimes the \textbf{PSD order}).
\end{defn}

Thus, $A \succeq B$ means
\begin{align*}
    \langle v, A\,v \rangle \ge \langle v, B\,v\rangle \qquad\text{for all}~v\in\R^d\,.
\end{align*}
Note that $\succeq$ is a \emph{partial} order, meaning that not all pairs of symmetric matrices are comparable in this ordering.
For example, the matrices
\begin{align*}
    \begin{bmatrix} 1/2 & 0 \\ 0 & 2 \end{bmatrix} \qquad\text{and}\qquad \begin{bmatrix} 2 & 0 \\ 0 & 1/2 \end{bmatrix}
\end{align*}
are incomparable, because one has a larger eigenvalue in one direction, but the other has a larger eigenvalue in a different direction.
The notations $A \succeq \alpha I$ (resp.\ $A \preceq \beta I$) are convenient ways to express that all eigenvalues of $A$ are at least $\alpha$ (resp.\ at most $\beta$).

Finally, we define the operator norm for matrices.

\begin{defn}
    Let $A \in \R^{d_1\times d_2}$ be a matrix.
    The \textbf{operator norm} of $A$ is
    \begin{align*}
        \norm A_{\rm op} \deq \max\{\norm{Av} : \norm v = 1\}\,.
    \end{align*}
\end{defn}

The operator norm has an explicit description: note that $\norm{Av}^2 = \langle v\, A^\T A\,v\rangle$, and that $A^\T A$ is symmetric.
By~\autoref{lem:sym_quad_form}, $\norm{Av}^2 \le C^2$ for all unit vectors $v\in\R^{d_2}$ if and only if the eigenvalues of $A^\T A$ lie in the range $[-C^2, C^2]$, which is true if and only if the absolute values of the eigenvalues of $A^\T A$ are bounded by $C^2$.
Thus, \emph{the operator norm of $A$ is the square root of the largest absolute eigenvalue of $A^\T A$:}
\begin{align*}
    \norm A_{\rm op} = \max\{\sqrt{\abs\lambda} : \lambda~\text{an eigenvalue of}~A^\T A\}\,.
\end{align*}
It can be shown that the eigenvalues of $A^\T A$ are the same as the eigenvalues of $AA^\T$ (except possibly with a different number of zero eigenvalues), so $\norm A_{\rm op}$ is also the square root of the largest absolute eigenvalue of $AA^\T$.

This is easier to describe when $A$ is symmetric.
In this case, $A^\T A = A^2$ has eigenvalues which are the squares of the eigenvalues of $A$.
Thus, when $A$ is symmetric,
\begin{align*}
    \norm A_{\rm op} = \max\{\abs \lambda : \lambda~\text{an eigenvalue of}~A\}\,.
\end{align*}
So, $\norm A_{\rm op} \le C$ if and only if $-CI \preceq A \preceq CI$.

\clearpage
\addcontentsline{toc}{section}{References}
\printbibliography{}

\end{document}